\newtheorem*{not*}{{ Notation}}
\newtheorem{defi}{ Definition}[subsection]
\newtheorem*{defi*}{Definition}
\newtheorem{teo}[defi]{ Theorem}
\newtheorem*{teo*}{{ Theorem}}
\newtheorem{intteo}{Theorem}
\newtheorem{prop}[defi]{ Proposition}
\newtheorem*{prop*}{{ Proposition}}
\newtheorem{obs}[defi]{{Remark}}
\newtheorem*{obs*}{{Remark}}
\newtheorem*{Lemma*}{Lemma}
\newtheorem{Lemma}[defi]{Lemma}
\newtheorem*{coro*}{Corollary}
\newtheorem{coro}[defi]{Corollary}
\newcommand{\op}{\operatorname{op}}
\newcommand{\Sp}{\operatorname{Sp}}
\newcommand{\End}{\operatorname{End}}
\newcommand{\Hom}{\operatorname{Hom}}
\newcommand{\Der}{\underline{\operatorname{Der}}}
\newcommand{\Mod}{\operatorname{Mod}}
\newcommand{\D}{\mathcal{D}}
\newcommand{\OX}{\mathcal{O}}
\newcommand{\Spec}{\operatorname{Spec}}
\newcommand{\Indban}{\operatorname{Ind(Ban}_K)}
\newcommand{\Inn}{\underline{\operatorname{Inn}}}
\newcommand{\Outder}{\underline{\operatorname{Out}}}
\newcommand{\ncd}{\Omega^1_{\operatorname{nc}}}
\title{Hochschild (Co)-Homology of D-modules on Rigid Analytic Spaces II}
\author{Fernando Peña Vázquez}
\begin{document}
\begin{abstract}
Let $X$ be a smooth $p$-adic Stein space with free tangent sheaf. We use the notion of Hochschild cohomology for sheaves of Ind-Banach algebras developed in our previous work to study the Hochschild cohomology of the algebra of infinite order differential operators $\wideparen{\D}_X(X)$. In particular, we show that $\operatorname{HH}^{\bullet}(\wideparen{\D}_X(X))$ is a strict complex of nuclear Fréchet spaces which is quasi-isomorphic to the de Rham complex of $X$. We then use this to compare $\operatorname{HH}^{\bullet}(\wideparen{\D}_X(X))$ with a wide array of Ext functors. Finally, we investigate the relation of $\operatorname{HH}^{\bullet}(\wideparen{\D}_X(X))$ with the deformation theory of $\wideparen{\D}_X(X)$. Assuming some finiteness conditions on the de Rham cohomology of $X$, we define explicit isomorphisms between 
$\operatorname{HH}^{1}(\wideparen{\D}_X(X))$ and the space of bounded outer derivations of $\wideparen{\D}_X(X)$, and between $\operatorname{HH}^{2}(\wideparen{\D}_X(X))$ and the space of infinitesimal deformations of $\wideparen{\D}_X(X)$.
\end{abstract}
\maketitle
\tableofcontents
\section{ Introduction}
Let $K$ be a complete non-archimedean extension of $\mathbb{Q}_p$. Let  $\mathcal{R}$ be the ring of integers of $K$, and let $X$ be a smooth and separated rigid analytic $K$-variety. In \cite{HochDmod} we developed a formalism of Hochschild (co)-homology for sheaves of Ind-Banach algebras on $X$, putting special emphasis on the sheaf of infinite order differential operators $\wideparen{\D}_X$. The goal of the present paper is showcasing the relevance of the Hochschild cohomology complex $\operatorname{HH}^{\bullet}(\wideparen{\D}_X)$ as a tool in $p$-adic geometry. We will do so by showing some applications of $\operatorname{HH}^{\bullet}(\wideparen{\D}_X)$ to the  study of smooth rigid analytic spaces, and to the category of co-admissible $\wideparen{\D}$-modules attached to them. By construction, the complex 
$\operatorname{HH}^{\bullet}(\wideparen{\D}_X)$ is a geometric invariant of $X$. Hence, as with any geometric invariant, a good starting point is finding a class of spaces for which the invariant is well-behaved. In analogy with the algebraic setting, one could think that the best candidate for such a class would be that of affinoid spaces. However, this is not the correct choice. Indeed, 
let $\Indban$ be the quasi-abelian category of Ind-Banach spaces over $K$, and let $\operatorname{Shv}(X,\Indban)$ be the category of sheaves of Ind-Banach spaces on $X$. Recall that the main result of \emph{loc. cit.} is the existence of the following isomorphism in $\operatorname{D}(\operatorname{Shv}(X,\Indban))$:
\begin{equation*}
    \mathcal{HH}^{\bullet}(\wideparen{\D}_X):=R\underline{\mathcal{H}om}_{\wideparen{\D}_X^e}(\wideparen{\D}_X,\wideparen{\D}_X)=\Omega_{X/K}^{\bullet}.
\end{equation*}
Let $\widehat{\mathcal{B}}c_K$ be the quasi-abelian category of complete bornological $K$-vector spaces. As a consequence of the previous isomorphism, we obtain the following identity in $\operatorname{D}(\widehat{\mathcal{B}}c_K)$:
\begin{equation}\label{HH to dR}
    \operatorname{HH}^{\bullet}(\wideparen{\D}_X):=R\Gamma(X,\mathcal{HH}^{\bullet}(\wideparen{\D}_X))=R\Gamma(X,\Omega_{X/K}^{\bullet}),
\end{equation}
which shows that $\operatorname{HH}^{\bullet}(\wideparen{\D}_X)$ is intimately related with the de Rham cohomology of $X$. It is precisely for this reason that affinoid spaces are not the class of spaces we are looking for. Namely, for an arbitrary smooth affinoid space, the de Rham cohomology groups can showcase pathological behavior and, in general, they do not give the expected results. For example,  let $\mathbb{B}^1_K=\Sp(K\langle t\rangle)$ be the closed unit ball. In analogy with the complex analytic setting, one would expect that $\mathbb{B}^1_K=\Sp(K\langle t\rangle)$ has trivial de Rham cohomology. However, it can be shown that $\operatorname{H}^1_{\operatorname{dR}}(\mathbb{B}^1_K)$ is an infinite dimensional $K$-vector space. Even worse, if we regard $\operatorname{H}^1_{\operatorname{dR}}(\mathbb{B}^1_K)$ as a topological vector space via the surjection $\Omega_{\mathbb{B}^1_K/K}(\mathbb{B}^1_K)\rightarrow \operatorname{H}^1_{\operatorname{dR}}(\mathbb{B}^1_K)$, then $\operatorname{H}^1_{\operatorname{dR}}(\mathbb{B}^1_K)$ has the trivial topology. This stems from the fact that the de Rham complex $\Omega_{X/K}^{\bullet}(X)$ is not strict for an arbitrary smooth affinoid space $X$. Thus, even if affinoid spaces have the properties we are looking for with respect to sheaf cohomology, their analytic properties are not good enough to suit our needs.\\

Hence, we need to find a class of spaces in which co-admissible $\wideparen{\D}$-modules are well-behaved, and such that their de Rham complex has good analytic properties. This role will be played by the smooth Stein spaces. A Stein space is a rigid analytic space $X$ admitting an admissible cover:
\begin{equation*}
    U_0\subset U_1\subset \cdots \subset X,
\end{equation*}
satisfying that each $U_i$ is an affinoid space, and each map $U_i\rightarrow U_{i+1}$ is the immersion of a relatively compact Weierstrass subdomain (\emph{cf.} Definition \ref{defi Stein spaces}). The existence of this particular cover has powerful implications in the theory of Stein spaces, both in the analytic and algebraic aspects.  As a first example of this, E. Grosse-Kl{\"o}nne showed in  \cite[Corollary 3.2]{grosse2004rham} that if $X$ is a smooth Stein space, then the de Rham complex:
\begin{equation}\label{equation Elmars result}
    0\rightarrow \OX_X(X)\rightarrow \Omega_{X/K}^1(X)\rightarrow \cdots \rightarrow \Omega_{X/K}^{\operatorname{dim}(X)}(X)\rightarrow 0,
\end{equation}
is a strict complex of nuclear Fréchet spaces whose cohomology computes the de Rham cohomology of $X$. Hence, we can regard $\operatorname{H}_{\operatorname{dR}}^{\bullet}(X)$ as a strict complex of nuclear Fréchet spaces. Let $\mathcal{B}c_K$ be the category of bornological $K$-vector spaces, and $LCS_K$ be the category of locally convex $K$-vector spaces. As shown in \cite[Section 5.1]{bode2021operations}, there is an adjunction:
    \begin{equation*}
    (-)^t:\mathcal{B}c_K\leftrightarrows LCS_K:(-)^b,
\end{equation*}
which sends strict complexes of nuclear Fréchet spaces to strict complexes of complete bornological spaces. Hence, we let $\operatorname{H}_{\operatorname{dR}}^{\bullet}(X)^b$ be the associated strict complex of complete bornological spaces.\\

Thus, smooth Stein spaces satisfy the first of the properties we are looking for. Next, we need to show that the category of Ind-Banach $\wideparen{\D}$-modules on a smooth Stein space is well-behaved. Namely, let $X$ be a smooth Stein space. We need to show that $\mathcal{C}(\wideparen{\D}_X)$, the category of co-admissible $\wideparen{\D}_X$-modules, behaves like the category of co-admissible $\wideparen{\D}$-modules on an affinoid space. We may condense our main results into the following theorem:
\begin{intteo}\label{teo A}
Assume $K$ is either discretely valued or algebraically closed, and let $X$ be a smooth Stein space with an étale map $X\rightarrow \mathbb{A}^r_K$. The following hold:  
\begin{enumerate}[label=(\roman*)]
    \item $\wideparen{\D}_X(X)$ is a Fréchet-Stein algebra and a nuclear Fréchet space.
    \item Any co-admissible $\wideparen{\D}_X(X)$-module is a nuclear Fréchet space.
    \item There is an equivalence of abelian categories:
    \begin{equation*}
        \Gamma(X,-):\mathcal{C}(\wideparen{\D}_X)\rightarrow \mathcal{C}(\wideparen{\D}_X(X)).
    \end{equation*}
    \item Co-admissible $\wideparen{\D}_X$-modules are acyclic for the functor:
    \begin{equation*}
        \Gamma(X,-):\Mod_{\Indban}(\wideparen{\D}_X)\rightarrow \Mod_{\Indban}(\wideparen{\D}_X(X)).
    \end{equation*}
\end{enumerate}
\end{intteo}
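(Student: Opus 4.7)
The plan is to exploit the Stein structure $X=\bigcup_{n\geq 0}U_n$, where each inclusion $U_n\hookrightarrow U_{n+1}$ is a relatively compact Weierstrass embedding of smooth affinoids, and to reduce each claim to the known affinoid theory of Ardakov--Wadsley combined with a Fréchet--Stein limit argument. The étale map $X\to\mathbb{A}^r_K$ provides global coordinates that yield explicit orthonormal-basis presentations of $\wideparen{\D}_X(U_n)$ and ensure strict flatness of the relevant base-change functors.

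For (i), I first establish that each $\wideparen{\D}_X(U_n)$ is a Fréchet--Stein algebra, written as $\varprojlim_m D_{n,m}$ for a descending family of Banach algebras with flat, dense-image transition maps; this is a direct consequence of the affinoid theory together with the étale coordinate assumption. Then $\wideparen{\D}_X(X)=\varprojlim_n \wideparen{\D}_X(U_n)$, and a diagonal argument on the double limit produces a Fréchet--Stein presentation of $\wideparen{\D}_X(X)$. Nuclearity is obtained from the fact that the relatively compact inclusion $U_n\hookrightarrow U_{n+1}$ makes the restriction maps $\wideparen{\D}_X(U_{n+1})\to \wideparen{\D}_X(U_n)$ nuclear, so the countable inverse limit is nuclear Fréchet. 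Part (ii) is then immediate: a co-admissible module $M=\varprojlim M_n$ consists of finitely generated Banach modules over the Banach layers of the Fréchet--Stein presentation; finite generation over nuclear Banach algebras preserves nuclearity, and the transition maps inherit nuclearity from those of the base, so $M$ is nuclear Fréchet.

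For (iii), I construct the quasi-inverse as a localization functor $\mathcal{L}oc$ that sends a co-admissible $\wideparen{\D}_X(X)$-module $M$ to the sheaf whose sections on an affinoid subdomain $V\subset X$ are the co-admissible completion of $\wideparen{\D}_X(V)\,\widehat{\otimes}_{\wideparen{\D}_X(X)}\,M$. The key inputs are: flatness of the base-change maps on co-admissible modules, which follows from the Fréchet--Stein presentation and the étale coordinates; and the Ardakov--Wadsley equivalence on each affinoid $U_n$. Passing to the inverse limit over $n$, one checks that $\Gamma(X,-)\circ\mathcal{L}oc\simeq\mathrm{id}$ and $\mathcal{L}oc\circ\Gamma(X,-)\simeq\mathrm{id}$ by comparing on each $U_n$. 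The hypothesis that $K$ be discretely valued or algebraically closed enters precisely here, to guarantee the required flatness statements on Banach completions.

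For (iv), I invoke a Stein version of Cartan's theorem B. Given a co-admissible sheaf $\mathcal{M}$ with $\mathcal{M}(U_n)=M_n$, Tate acyclicity on each affinoid gives $H^i(U_n,\mathcal{M})=0$ for $i\geq 1$, and the inverse system $(M_n)_n$ has dense transition maps by the Fréchet--Stein structure combined with the relatively compact nature of $U_n\hookrightarrow U_{n+1}$. The Čech-to-derived-functor spectral sequence for the Stein cover then collapses to yield $H^i(X,\mathcal{M})=0$ for $i\geq 1$. The main obstacle is that all of this cohomology is computed in the Ind-Banach setting $\Mod_{\Indban}(\wideparen{\D}_X)$, which is quasi-abelian rather than abelian; one must verify that the Mittag-Leffler vanishing of $R^1\varprojlim$ remains valid as a \emph{strict} statement. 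This is precisely the point where the nuclear Fréchet conclusions of (i) and (ii) are indispensable: strict exactness of the inverse limit follows from nuclearity together with density of the transition maps, closing the loop with the earlier parts of the theorem.
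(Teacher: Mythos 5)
Your proposal follows essentially the same route as the paper: compatible Fréchet--Stein presentations on the Stein cover combined with a diagonal argument for (i)--(ii), a completed base-change localization functor built on flatness of the restriction maps for (iii), and Čech acyclicity plus strict Mittag--Leffler vanishing via nuclearity for (iv). Two small corrections: there are no infinite-dimensional nuclear Banach algebras (the paper itself uses that a reflexive, hence nuclear, Banach space is finite-dimensional), so the mechanism behind (ii) is that the transition maps of the inverse system of finitely generated Banach modules are strictly completely continuous (a pre-nuclear system), not that the Banach layers themselves are nuclear; and the hypothesis on $K$ is already needed in (i), to guarantee that the rings of power-bounded elements $A_n^{\circ}$ are affine formal models admitting compatible free Lie lattices, rather than entering first through the flatness statements in (iii).
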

\begin{proof}
Statements $(i)$ and $(ii)$ are a part of Theorem \ref{teo global sections of co-admissible modules on Stein spaces}.  Statement $(iii)$ is shown Proposition \ref{prop characterization of co-admissible modules on a smooth Stein space}, and $(iv)$ is Proposition \ref{prop acyclicity of co-admissible modules on Stein spaces}.
\end{proof}
We point out that our assumptions on $K$ are a byproduct of the methods we use to show Theorem \ref{teo A}, and that it is unknown to us if the theorem holds in greater generality. Combining this theorem with equations (\ref{HH to dR}), and (\ref{equation Elmars result}), we can deduce the following:
\begin{intteo}\label{teo B}
Let $X$ be a smooth Stein space. We have the following identities in $\operatorname{D}(\widehat{\mathcal{B}}c_K):$
\begin{equation*}
    \operatorname{HH}^{\bullet}(\wideparen{\D}_X)=R\Gamma(X,\Omega_{X/K}^{\bullet})=\Gamma(X,\Omega_{X/K}^{\bullet})=\operatorname{H}_{\operatorname{dR}}^{\bullet}(X)^b.
\end{equation*}
Thus, $\operatorname{HH}^{\bullet}(\wideparen{\D}_X)$ is a strict complex, and $\operatorname{HH}^{n}(\wideparen{\D}_X)=\operatorname{H}_{\operatorname{dR}}^{n}(X)^b$ is a nuclear Fréchet space for $n\geq 0$.    
\end{intteo}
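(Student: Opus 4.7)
The plan is to chain together three identifications, the first being the main result of the previous paper in the series, the second being a Stein-type vanishing of higher sheaf cohomology, and the third being Grosse-Klönne's strictness theorem. The nuclearity and strictness claims will then fall out of the last step.

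\textbf{Step 1.} The equality $\operatorname{HH}^{\bullet}(\wideparen{\D}_X)=R\Gamma(X,\Omega_{X/K}^{\bullet})$ in $\operatorname{D}(\widehat{\mathcal{B}}c_K)$ is obtained by applying $R\Gamma(X,-)$ to the isomorphism $\mathcal{HH}^{\bullet}(\wideparen{\D}_X)=\Omega_{X/K}^{\bullet}$ in $\operatorname{D}(\operatorname{Shv}(X,\Indban))$ recalled in equation (\ref{HH to dR}), and then transporting through the forgetful functor from Ind-Banach to complete bornological sheaves. Nothing new needs to be proved here.

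\textbf{Step 2.} To pass from $R\Gamma$ to $\Gamma$ I would show that each coherent sheaf $\Omega^{i}_{X/K}$ is acyclic for $\Gamma(X,-)$ in $\Mod_{\Indban}(\OX_X)$. Since $X$ is smooth, these are locally free of finite rank, so they are in particular $\OX_X$-modules of finite presentation, and on the Stein exhaustion $U_0\subset U_1\subset\cdots\subset X$ their sections on each $U_n$ form a projective system of affinoid Banach modules whose transition maps are nuclear because the inclusions $U_n\hookrightarrow U_{n+1}$ are relatively compact Weierstrass immersions. This is precisely the setup in which a bornological version of Kiehl's Theorem B applies, and I would reduce the claim to that acyclicity statement (alternatively, since $\OX_X$ is a co-admissible $\wideparen{\D}_X$-module and the $\Omega^i_{X/K}$ are direct summands of tensor powers of its co-admissible $\wideparen{\D}_X$-module structure, one can appeal directly to Theorem \ref{teo A}$(iv)$ applied to co-admissible $\wideparen{\D}_X$-modules). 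This gives $R\Gamma(X,\Omega^i_{X/K})=\Gamma(X,\Omega^i_{X/K})$ in each degree, hence the second identity at the level of complexes.

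\textbf{Step 3.} The equality $\Gamma(X,\Omega_{X/K}^{\bullet})=\operatorname{H}_{\operatorname{dR}}^{\bullet}(X)^b$ in $\operatorname{D}(\widehat{\mathcal{B}}c_K)$ uses Grosse-Klönne's result (equation \ref{equation Elmars result}): the complex (\ref{equation Elmars result}) is a strict complex of nuclear Fréchet spaces. Applying the functor $(-)^b$ from the adjunction of Bode recalled in the introduction, we obtain a strict complex in $\widehat{\mathcal{B}}c_K$ whose cohomology objects are precisely $\operatorname{H}_{\operatorname{dR}}^{n}(X)^b$, all of which are nuclear Fréchet spaces. The desired identification at the level of cohomology in $\operatorname{D}(\widehat{\mathcal{B}}c_K)$ follows directly from strictness. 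Transporting strictness back through Step 1 and Step 2 then shows that $\operatorname{HH}^{\bullet}(\wideparen{\D}_X)$ itself is a strict complex, and $\operatorname{HH}^n(\wideparen{\D}_X)=\operatorname{H}_{\operatorname{dR}}^n(X)^b$ is nuclear Fréchet.

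The main obstacle will be Step 2: the vanishing of higher cohomology must be established in the quasi-abelian setting of sheaves of Ind-Banach (or complete bornological) spaces, rather than just at the underlying algebraic level. The argument must carefully track the topological/bornological structure on the derived functor, and this is where the relative compactness built into the definition of a Stein space, together with nuclearity of the Fréchet structures, is essential. Once that acyclicity is in hand, Steps 1 and 3 are essentially bookkeeping.
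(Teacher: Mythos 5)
Your proposal follows essentially the same route as the paper: the first identity is the main theorem of the companion paper (Theorem \ref{teo HochDmod main teo}), the second is the $\Gamma(X,-)$-acyclicity of coherent $\OX_X$-modules on a Stein space, proved via pre-nuclear inverse systems of \v{C}ech complexes exactly as in Proposition \ref{prop acyclicity of coherent modules on Stein spaces}, and the third is Grosse-Kl\"onne's strictness result transported through $(-)^b$ as in Theorem \ref{teo hochschild cohomology groups as de rham cohomology groups}. One caveat: the parenthetical alternative in your Step 2 --- appealing to Theorem \ref{teo A}$(iv)$ --- should be dropped, since the sheaves $\Omega^i_{X/K}$ for $0<i<\operatorname{dim}(X)$ are not naturally co-admissible $\wideparen{\D}_X$-modules, and Theorem \ref{teo A} carries hypotheses ($K$ discretely valued or algebraically closed, an \'etale map $X\rightarrow\mathbb{A}^r_K$) that Theorem \ref{teo B} deliberately avoids; the acyclicity must be established for coherent $\OX_X$-modules directly, as your primary route does.
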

\begin{proof}
This is Theorem \ref{teo hochschild cohomology groups as de rham cohomology groups}.
\end{proof}
Notice that Theorem \ref{teo B} holds without our previous assumptions on $K$. As an upshot, if $K$ is discretely valued or algebraically closed, we obtain a Künneth formula for the Hochschild cohomology of $\wideparen{\D}$. Namely, if  $X$ and $Y$ are smooth Stein spaces, then the following identity holds in $\operatorname{D}(\widehat{\mathcal{B}}c_K):$
\begin{equation*}
\operatorname{HH}^{\bullet}(\wideparen{\D}_{X\times Y})=\operatorname{HH}^{\bullet}(\wideparen{\D}_{X})\widehat{\otimes}^{\mathbb{L}}_K\operatorname{HH}^{\bullet}(\wideparen{\D}_{Y}).    
\end{equation*}
In particular, for any $n\in \mathbb{Z}$ we have the following Künneth formula in $\widehat{\mathcal{B}}c_K:$
\begin{equation*}
    \operatorname{HH}^{n}(\wideparen{\D}_{X\times Y})=\bigoplus_{r+s=n}\operatorname{HH}^{r}(\wideparen{\D}_{X})\widehat{\otimes}_K\operatorname{HH}^{s}(\wideparen{\D}_{Y}).
\end{equation*}   
Using the identification  from Theorem \ref{teo B}, we obtain a Künneth formula for the de Rham cohomology of the product of two smooth Stein spaces. See
Corollary \ref{coro kunneth formula for Hochschild cohomology} for details.\\

By now, the reader should be fairly convinced that the category of smooth Stein spaces is the correct one. Indeed, the behavior of the de Rham complex is optimal, and the category of co-admissible modules is completely determined by their global sections. Furthermore, Stein spaces include many interesting spaces, such as analytifications of affine varieties, and Drinfeld's $p$-adic symmetric spaces.\\

In the algebraic setting, the fact that quasi-coherent modules on an affine variety have trivial sheaf cohomology can be used to show that the Hochschild cohomology of the sheaf of differential operators agrees with the Hochschild cohomology of its algebra of global sections. That is, if $X$ is a smooth affine $K$-variety, then there are quasi-isomorphisms:
\begin{equation*}
    \operatorname{HH}^{\bullet}(\D_X):=R\Gamma(X,R\mathcal{H}om_{\D_X^e}(\D_X,\D_X))   =R\Hom_{\D_X(X)^e}(\D_X(X),\D_X(X))=:\operatorname{HH}^{\bullet}(\D_X(X)).
\end{equation*}
This is based upon the fact that, in the algebraic setting, all the relevant algebras are noetherian, so it is possible to construct resolutions by finite-projective modules. We would like to show that this phenomena is also present in the setting of co-admissible $\wideparen{\D}$-modules on $p$-adic Stein spaces.\\ 

Let $X$ be a smooth $p$-adic Stein space equipped with an étale map $X\rightarrow \mathbb{A}^r_K$. In order to show that the Hochschild cohomology of $\wideparen{\D}_X$ can be computed at the global sections, we will make extensive use of the sheaf of bi-enveloping algebras $\wideparen{E}_X$. This auxiliary sheaf of Ind-Banach algebras on $X^2$ was introduced in \cite{HochDmod}, and is
defined via the following formula:
\begin{equation*}
    \wideparen{E}_X:=p_1^{-1}(\wideparen{\D}_X)\overrightarrow{\otimes}_Kp_2^{-1}(\wideparen{\D}_X^{op}).
\end{equation*}
The sheaf of bi-enveloping algebras $\wideparen{E}_X$ is a key ingredient in most of  the calculations in \cite{HochDmod}.  Its relevance stems from the two  following facts: First, the following formula holds in $\operatorname{D}(\widehat{\mathcal{B}}c_K)$:
\begin{equation*}
    \operatorname{HH}^{\bullet}(\wideparen{\D}_X)=R\Gamma(X^2,R\underline{\mathcal{H}om}_{\wideparen{E}_X}(\Delta_*\wideparen{\D}_X,\Delta_*\wideparen{\D}_X)),
\end{equation*}
which allows  to calculate Hochschild cohomology via $\wideparen{E}_X$. Second, there is a side-switching equivalence for bimodules. That is, there are mutually inverse equivalences of quasi-abelian categories:
\begin{equation*}
    \operatorname{S}:\Mod_{\Indban}(\wideparen{\D}_{X^2})\leftrightarrows \Mod_{\Indban}(\wideparen{E}_X):\operatorname{S}^{-1}.
\end{equation*}
Even better, our assumption that the tangent sheaf of $X$ is free implies that this adjunction corresponds to the extension-restriction of scalars adjunction of an isomorphism of sheaves of Ind-Banach algebras $\wideparen{\mathbb{T}}:\wideparen{\D}_{X^2}\rightarrow \wideparen{E}_{X}$. In particular, $\wideparen{E}_{X}$ is locally a Fréchet-Stein algebra, and the side-switching formalism allows us to define a category of co-admissible $\wideparen{E}_X$-modules. Furthermore, as $X^2$ is a smooth Stein space, the statements of Theorem \ref{teo A} also hold for co-admissible $\wideparen{E}_X$-modules.\\

Even if $\wideparen{E}_X$ is locally Fréchet-Stein, it is still not a sheaf of noetherian algebras. Hence, it is  not clear if $\Delta_*\wideparen{\D}_X$  admits a strict resolution by finite-projective $\wideparen{E}_X$-modules. However, we can see that a slightly weaker statement does hold:
\begin{intteo}\label{intteo theorem C}
    Let $X$ be a smooth and separated rigid space with an étale map $X\rightarrow \mathbb{A}^r_K$. There is a co-admissible $\wideparen{E}_X$-module $\mathcal{S}_X$ such that there is a strict exact complex:
\begin{equation*}
   0\rightarrow P^{2r}\rightarrow \cdots\rightarrow P^0\rightarrow \mathcal{S}_X\oplus \Delta_*\wideparen{\D}_X\rightarrow 0,
\end{equation*} 
where each $P^i$ is a finite-projective $\wideparen{E}_X$-module and $P^0=\wideparen{E}_X$. Furthermore, if $X$ is affinoid or $X$ is Stein and $K$ is either discretely valued or algebraically closed, then this complex can be used to show that for any co-admissible module $\mathcal{M}\in \mathcal{C}(\wideparen{E}_X)$ we have the following identity in $\operatorname{D}(\widehat{\mathcal{B}}c_K)$:
\begin{equation*}
    R\Gamma(X^2,R\underline{\mathcal{H}om}_{\wideparen{E}_X}(\Delta_*\wideparen{\D}_X,\mathcal{M}))=R\underline{\Hom}_{\wideparen{\D}_X(X)^e}(\wideparen{\D}_X(X),\Gamma(X^2,\mathcal{M})).
\end{equation*}
Thus, setting $\mathcal{M}=\Delta_*\wideparen{\D}_X$, we obtain the following identities in $\operatorname{D}(\widehat{\mathcal{B}}c_K)$: 
\begin{equation*}
    \operatorname{HH}^{\bullet}(\wideparen{\D}_X)=R\underline{\Hom}_{\wideparen{\D}_X(X)^e}(\wideparen{\D}_X(X),\wideparen{\D}_X(X))=:\operatorname{HH}^{\bullet}(\wideparen{\D}_X(X)).
\end{equation*}
\end{intteo}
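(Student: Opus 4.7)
The plan is to construct $P^\bullet$ as the total complex of a bicomplex combining a Koszul resolution and a Spencer-type resolution. Let $t_1,\ldots,t_r$ be the pullbacks of the coordinates on $\mathbb{A}^r_K$ and $\partial_1,\ldots,\partial_r$ the dual free basis of the tangent sheaf. The diagonal ideal of $X^2$ is locally generated by the pairwise commuting elements $\xi_i := p_1^{-1}(t_i)-p_2^{-1}(t_i)$, which act by zero on $\Delta_*\wideparen{\D}_X$. The Koszul complex on these $\xi_i$ has length $r$. A second length-$r$ complex is built from the commuting derivations $\partial_i$ acting on the right of $\wideparen{\D}_X$, implementing a Spencer-type resolution on the bimodule structure. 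Via the isomorphism $\wideparen{\mathbb{T}}:\wideparen{\D}_{X^2}\to\wideparen{E}_X$, these two commuting families assemble into an $r\times r$ bicomplex whose total complex $P^\bullet$ has length $2r$, with each term a finite-projective $\wideparen{E}_X$-module and $P^0=\wideparen{E}_X$.

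The main obstacle lies in passing from the classical Koszul–Spencer picture to the completed setting of $\wideparen{E}_X$. Exactness of Koszul complexes on completed Fréchet–Stein algebras is not automatic, and the interaction with the infinite-order bimodule structure typically produces an additional cohomology module; this extra piece is what becomes $\mathcal{S}_X$. I would verify strictness in the quasi-abelian sense and co-admissibility of the resulting sequence by working locally along a Stein exhaustion $U_0\subset U_1\subset\cdots$, reducing on each $U_i$ to a Koszul–Spencer computation over the Fréchet–Stein algebra $\wideparen{E}_X(U_i)$, where one can track the cohomology via the symbol filtration on $\wideparen{\D}_X(U_i)$. The identification of $\mathcal{S}_X$ as a genuine direct summand of the zeroth cohomology would then be carried out level by level and glued along the restriction maps.

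For the final Hom identity, given $\mathcal{M}\in\mathcal{C}(\wideparen{E}_X)$, apply $R\underline{\mathcal{H}om}_{\wideparen{E}_X}(-,\mathcal{M})$ to the resolution. Each $\underline{\mathcal{H}om}_{\wideparen{E}_X}(P^i,\mathcal{M})$ is a direct summand of a finite sum of copies of $\mathcal{M}$, and hence co-admissible. Under the theorem's hypotheses, $X^2$ is either affinoid or smooth Stein with $K$ of the required type, so the analogue of Theorem A (iv) for $\wideparen{E}_X$-modules on $X^2$ applies; these Hom-sheaves are $\Gamma(X^2,-)$-acyclic, and $R\Gamma(X^2,-)$ commutes with the Hom-complex. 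Theorem A (iii) identifies global sections with $\Hom$ over $\wideparen{E}_X(X^2)=\wideparen{\D}_X(X)^e$. The computation yields $R\underline{\Hom}_{\wideparen{\D}_X(X)^e}(\wideparen{\D}_X(X)\oplus\Gamma(X^2,\mathcal{S}_X),\Gamma(X^2,\mathcal{M}))$; projecting onto the $\wideparen{\D}_X(X)$-summand extracts the claimed identity. Specializing $\mathcal{M}=\Delta_*\wideparen{\D}_X$ and invoking the alternative description of $\operatorname{HH}^\bullet(\wideparen{\D}_X)$ via $\wideparen{E}_X$ yields the final equality.
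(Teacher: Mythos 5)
The second half of your argument --- applying $R\underline{\mathcal{H}om}_{\wideparen{E}_X}(-,\mathcal{M})$ to the resolution, invoking $\Gamma(X^2,-)$-acyclicity of co-admissible modules and of the Hom-sheaves of finite-projectives, and then projecting off the $\mathcal{S}_X$-summand --- is essentially the paper's proof of Theorem \ref{teo hochschild cohomology in terms of extendions in DXe 1} and is fine. The gap is in the construction of the resolution, and specifically in your account of what $\mathcal{S}_X$ is.

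You attribute the extra summand to analytic phenomena (``exactness of Koszul complexes on completed Fr\'echet--Stein algebras is not automatic, and the interaction with the infinite-order bimodule structure typically produces an additional cohomology module''). That is not where $\mathcal{S}_X$ comes from, and the misdiagnosis matters. The common zero locus of your Koszul generators $\xi_i=p_1^{-1}(t_i)-p_2^{-1}(t_i)$ in $X^2$ is the fibre product $Y=X\times_{\mathbb{A}^r_K}X$, not the diagonal: since $f$ is \'etale but not an isomorphism, $\Delta(X)$ is only a union of connected components of $Y$, and a Koszul--Spencer total complex built from the $\xi_i$ and the $\partial_i$ resolves (the side-switch of) $\alpha_+\OX_Y=\Delta_+\OX_X\oplus\alpha_+\OX_Z$, where $Z$ is the union of the remaining components. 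So $\mathcal{S}_X$ is a concrete geometric object --- already present in the purely algebraic setting --- and it splits off as a direct summand precisely because $Z$ is open and closed in $Y$, via Kashiwara's equivalence. Without this identification you cannot verify that $\operatorname{H}^0$ of your complex is $\mathcal{S}_X\oplus \Delta_*\wideparen{\D}_X$ with $\mathcal{S}_X$ co-admissible, which is the whole content of the first assertion; nor is there any reason, on your description of $\mathcal{S}_X$ as ``extra cohomology from completion issues,'' for it to be a direct summand at all. Relatedly, your plan to establish strict exactness ``via the symbol filtration'' along a Stein exhaustion leaves the hard point untouched (and note the first assertion concerns an arbitrary smooth separated $X$, not only Stein ones). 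The paper sidesteps both difficulties by proving everything where the rings are noetherian --- the finite projective resolution of $\D_{\mathbb{A}^r_K}(\mathbb{A}^r_K)$ over $\D_{\mathbb{A}^r_K}(\mathbb{A}^r_K)^e$ from \cite{hotta2007d} --- then transporting it along the faithfully flat map $\D_{\mathbb{A}^r_K}(\mathbb{A}^r_K)^e\rightarrow\wideparen{\D}_{\mathbb{A}^r_K}(\mathbb{A}^r_K)^e$ and finally along the exact functor $(f\times f)^!$ attached to the \'etale map (Proposition \ref{prop resolution of differential operators on smooth Stein spaces}). If you insist on running the Koszul--Spencer computation directly on $X^2$, you must supply substitutes for these two steps: a proof of strict exactness in positive degrees in the completed setting, and the Kashiwara-type decomposition of the zeroth cohomology.
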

\begin{proof}
This is shown in Proposition \ref{prop resolution of differential operators on smooth Stein spaces} and Theorem \ref{teo hochschild cohomology in terms of extendions in DXe 1}.
\end{proof}
We remark that the existence of the étale map $X\rightarrow \mathbb{A}^r_K$ is an essential part of the proof, and that it is unknown to us if the result holds without this assumption. This theorem has deep implications for the theory of $\wideparen{\D}_X$-modules. For instance, it allows us to show the following structural result: 
\begin{intteo}\label{intteo fin global dimension}
 Let $X=\Sp(A)$ be an affinoid space and $\mathcal{A}\subset A$ be an affine formal model of $A$. Assume $\mathcal{T}_{X/K}$ admits a free $(\mathcal{R},\mathcal{A})$-Lie lattice $\mathcal{L}$ satisfying the following two properties:
 \begin{equation*}
     \mathcal{L}(\mathcal{A})\subset \pi\mathcal{A},\quad [\mathcal{L},\mathcal{L}]\subset \pi^2\mathcal{L}.
 \end{equation*}
 In this situation, the following inequality holds for each $n\geq 0$:
 \begin{equation*}
     \operatorname{gl.dim.}(\widehat{U}(\pi^n\mathcal{L})_K)\leq 2\operatorname{dim}(X).
 \end{equation*}
 Furthermore, if $M\in \Mod_{\Indban}(\widehat{U}(\pi^n\mathcal{L})_K)$ satisfies that it is projective as an Ind-Banach space, then $M$ admits a  resolution  by projective $\widehat{U}(\pi^n\mathcal{L})_K$-modules of length $2\operatorname{dim}(X)$.
\end{intteo}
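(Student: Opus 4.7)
The plan is to deduce the bound from the bimodule resolution furnished by Theorem \ref{intteo theorem C}, applied at the level of a single Lie lattice algebra $\widehat{U}(\pi^n \mathcal{L})_K$, and then to transfer that resolution into one-sided projective resolutions by taking completed tensor products with an arbitrary Ind-Banach projective module.

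First, I would verify that Theorem \ref{intteo theorem C} specializes, at each level $\widehat{U}(\pi^n\mathcal{L})_K$, to a strict exact complex
\begin{equation*}
    0 \to Q^{2r} \to \cdots \to Q^0 \to S \oplus \widehat{U}(\pi^n\mathcal{L})_K \to 0
\end{equation*}
of finite-projective bimodules over the enveloping algebra $\widehat{U}(\pi^n\mathcal{L})_K^e := \widehat{U}(\pi^n\mathcal{L})_K\,\widehat{\otimes}_K\,\widehat{U}(\pi^n\mathcal{L})_K^{\op}$, with $Q^0 = \widehat{U}(\pi^n\mathcal{L})_K^e$. The freeness of $\mathcal{L}$ and the hypothesis $\mathcal{L}(\mathcal{A})\subset\pi\mathcal{A}$ supply a dual system of étale coordinates on $X$ yielding the étale map $X\to\mathbb{A}^r_K$ required by Theorem \ref{intteo theorem C}, so its resolution makes sense here and can be restricted to the $\pi^n\mathcal{L}$-level. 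Splitting off the $S$ summand from the final term gives a length-$2r$ resolution $P^\bullet\to\widehat{U}(\pi^n\mathcal{L})_K$ of the diagonal bimodule by finite-projective bimodules.

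Next, I would pass from bimodules to one-sided modules. Given $M\in\Mod_{\Indban}(\widehat{U}(\pi^n\mathcal{L})_K)$ projective as an Ind-Banach space, form the complex $M\,\widehat{\otimes}_{\widehat{U}(\pi^n\mathcal{L})_K} P^\bullet$. Each $P^i$ is a direct summand of a free bimodule $\widehat{U}(\pi^n\mathcal{L})_K^e$, so $M\,\widehat{\otimes}_{\widehat{U}(\pi^n\mathcal{L})_K} P^i$ is a direct summand of $M\,\widehat{\otimes}_K\,\widehat{U}(\pi^n\mathcal{L})_K$, which is projective as a left $\widehat{U}(\pi^n\mathcal{L})_K$-module precisely because $M$ is Ind-Banach projective. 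Strict exactness is preserved since the $P^i$ are flat as right $\widehat{U}(\pi^n\mathcal{L})_K$-modules (being direct summands of a free bimodule) and $M$ is Ind-Banach projective; hence the resulting complex is a length-$2r$ projective resolution of $M$, proving the second assertion.

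For the global dimension bound, I would invoke that $\widehat{U}(\pi^n\mathcal{L})_K$ is a Noetherian Banach $K$-algebra under the lattice hypotheses (via the Ardakov--Wadsley theory of deformed enveloping algebras, whose associated graded is a commutative Noetherian ring). Consequently every finitely generated module is Banach and admits a resolution by finite free Banach modules, each of which is Ind-Banach projective, so the preceding step applies to bound its projective dimension by $2r$. Since global dimension of a Noetherian ring is attained on finitely generated modules, this yields $\operatorname{gl.dim.}(\widehat{U}(\pi^n\mathcal{L})_K)\leq 2\dim(X)$. The main obstacle I anticipate is in the second step: ensuring that $M\,\widehat{\otimes}_{\widehat{U}(\pi^n\mathcal{L})_K}(-)$ preserves the strictness of $P^\bullet$ and produces genuinely projective $\widehat{U}(\pi^n\mathcal{L})_K$-modules, which requires a careful interplay between algebraic and topological flatness and is ultimately what forces the Ind-Banach projectivity hypothesis on $M$.
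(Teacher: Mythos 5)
Your proposal follows essentially the same route as the paper's proof of Corollary \ref{coro global dim}: base-change the bimodule resolution of Theorem \ref{intteo theorem C} to the level of $\widehat{U}(\pi^n\mathcal{L})_K^e$, use noetherianity to extract a finite-projective bimodule resolution of $\widehat{U}(\pi^n\mathcal{L})_K$ of length $2\dim(X)$, contract it against $M$ over $\widehat{U}(\pi^n\mathcal{L})_K$ to get a one-sided projective resolution, and conclude the global dimension bound via finitely generated modules over the noetherian Banach algebra. The only places the paper is more careful are the side of the tensor product (it forms $P^{\bullet}\widehat{\otimes}_{\widehat{U}(\pi^n\mathcal{L})_K}M$ for $M$ a left module, contracting the $\op$-factor so that the surviving left-module structure is the one being resolved) and the source of the one-sided projectivity and flatness of the $P^i$, which rests on $\widehat{U}(\pi^n\mathcal{L})_K$ itself being a countable-type Banach space and hence projective in $\Indban$; note also that the hypotheses $\mathcal{L}(\mathcal{A})\subset\pi\mathcal{A}$ and $[\mathcal{L},\mathcal{L}]\subset\pi^2\mathcal{L}$ are used for the Fréchet--Stein presentations, not to produce the étale coordinates, which come from the freeness of $\mathcal{L}$ alone.
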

\begin{proof}
This is Corollary \ref{coro global dim}.
\end{proof}
We point that a similar result has been recently shown by A. Bode in \cite[Theorem 1.1]{bodeauslander}. However, our methods are completely different. Combining Theorem \ref{intteo theorem C} with our previous results, we arrive at the following identity in $\operatorname{D}(\widehat{\mathcal{B}}c_K)$:
\begin{equation*}
    \operatorname{HH}^{\bullet}(\wideparen{\D}_X(X))=\operatorname{H}_{\operatorname{dR}}^{\bullet}(X)^b.
\end{equation*}
This identity shows that there is an intimate relation between the geometry of $X$ and the algebraic structure of $\wideparen{\D}_X(X)$. Following this line of thought, the next step is showing how the Hochschild cohomology groups can be used to study certain algebraic invariants of $\wideparen{\D}_X(X)$.\\

From a historical perspective, one of the most important features of the theory of Hochschild cohomology of associative algebras is its applications in deformation theory. Indeed, let $A$ be an associative $K$-algebra, and consider its Hochschild cohomology groups:
\begin{equation*}
    \operatorname{HH}^{\bullet}(A):=R\Hom_{A^e}(A,A).
\end{equation*}
As shown by M. Gerstenhaber in \cite{deformations}, the Hochschild cohomology groups of $A$ have explicit interpretations in low degrees. For instance, we have the following identities of $K$-vector spaces:
\begin{equation*}
    \operatorname{HH}^0(A)=\operatorname{Z}(A), \quad  \operatorname{HH}^1(A)=\operatorname{Out}(A), 
\end{equation*}
where $\operatorname{Z}(A)$ and $\operatorname{Out}(A)$ are the center of $A$, and the space of $K$-linear outer derivations of $A$ respectively. Working a bit more, it is possible to obtain interpretations of the Hochschild cohomology groups of $A$ in degrees $2$, and $3$. Namely, let us make the following definition:
\begin{defi*}
We define the following concepts:
    \begin{enumerate}[label=(\roman*)]
        \item An infinitesimal deformation of $A$ is a $K[t]/t^2$-algebra structure on $A':=A\otimes_K K[t]/t^2$ which is isomorphic to $A$ after reducing modulo $t$.
        \item Two infinitesimal deformations $B,C$ or $A$ are isomorphic if there is an isomorphism of $K[t]/t^2$-algebras $\varphi:B\rightarrow C$ which is the identity modulo $t$.
        \item We let $\mathscr{E}xt(A)$ be the set of isomorphism classes of infinitesimal extensions of $A$.
    \end{enumerate}
\end{defi*}
\begin{obs*}
Notice that these concepts also make sense for bornological algebras making the obvious modifications, and we will use them in this generality in this introduction.
\end{obs*}
As shown by Gerstenhaber, the space $\operatorname{HH}^{2}(A)$ parametrizes the isomorphism classes of infinitesimal deformations of $A$.  Furthermore, the obstructions to lifting an infinitesimal deformation of $A$ to a second order deformation (\emph{i.e.} $K[t]/t^3$-algebra structure on $A\otimes_K K[t]/t^3$ which is isomorphic to $A$ after reducing modulo $t$) are parameterized by $\operatorname{HH}^{3}(A)$. In the last part of the paper, we will see how some aspects of this theory can be generalized to the setting of complete bornological algebras.\\

Let $\mathscr{A}$ be a complete bornological algebra. The strategy is establishing a new cohomology theory based upon the following complex of complete bornological spaces:
\begin{equation}\label{equation intro complex}
    \mathcal{L}(\mathscr{A})^{\bullet}:= \left(0\rightarrow \mathscr{A}\xrightarrow[]{\delta_0} \underline{\Hom}_{\widehat{\mathcal{B}}c_K}(\mathscr{A},\mathscr{A})\xrightarrow[]{\delta_1} \underline{\Hom}_{\widehat{\mathcal{B}}c_K}(\widehat{\otimes}_K^2\mathscr{A},\mathscr{A})\xrightarrow[]{\delta_2} \cdots\right),
\end{equation}
obtained by applying the functor $\underline{\Hom}_{\widehat{\mathcal{B}}c_K}(-,\mathscr{A})$ to the bar resolution of $\mathscr{A}$  (\emph{cf}. Section \ref{taylor HA}).  We remark that this approach was first pursued by J.L Taylor in \cite{taylor1972homology} (although he works in the category of locally convex spaces). Notice that the complex in (\ref{equation intro complex}) is just a bornological version of the classical Hochschild cohomology complex of an associative $K$-algebra.\\

Our main  motivation for studying the complex $\mathcal{L}(\mathscr{A})^{\bullet}$
is that, unlike our previous definition of Hochschild cohomology, $\mathcal{L}(\mathscr{A})^{\bullet}$ has an explicit description as a chain complex, and we know the formulas of the differentials. Hence, we can perform explicit calculations with $\mathcal{L}(\mathscr{A})^{\bullet}$. In order to preserve this explicit nature, we will make use of the canonical inclusion functor:
\begin{equation*}
    J:\widehat{\mathcal{B}}c_K\rightarrow \mathcal{B}c_K.
\end{equation*}
This is due to the fact that the structure of cokernels in $\mathcal{B}c_K$ is simpler. See Section \ref{Section background HochDmod} for a more complete treatment of the functor $J$, and the discussion below Definition \ref{defi TH cohomology groups}
for a description of the cokernels in $\mathcal{B}c_K$ and $\widehat{\mathcal{B}}c_K$. Let us now define the TH cohomology groups of $\mathscr{A}$:
\begin{intteo}
 The \emph{TH} cohomology spaces of $\mathscr{A}$ are the following bornological spaces:
\begin{equation*}
\operatorname{HH}_{\operatorname{T}}^n(\mathscr{A})=\operatorname{Coker}\left(J(\mathcal{L}(\mathscr{A})^{n-1})\rightarrow J(\operatorname{Ker}(\delta_n))\right),
\end{equation*}
The \emph{TH} cohomology spaces satisfy the following properties:
\begin{enumerate}[label=(\roman*)]
    \item $\operatorname{HH}_{\operatorname{T}}^0(\mathscr{A})=\operatorname{Z}(\mathscr{A})$.
    \item $\operatorname{HH}_{\operatorname{T}}^1(\mathscr{A})=\Outder(\mathscr{A}):= \Der_K(\mathscr{A})/\Inn(\mathscr{A})$.
    \item $\operatorname{HH}_{\operatorname{T}}^2(\mathscr{A})=\mathscr{E}xt(\mathscr{A})$.
\end{enumerate}     
\end{intteo}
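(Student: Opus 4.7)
The plan is to unwind the complex $\mathcal{L}(\mathscr{A})^{\bullet}$ explicitly, identify the differentials with the standard Hochschild differentials in the bornological setting, and then in each degree $n=0,1,2$ recognize the cokernel in $\mathcal{B}c_K$ as the claimed algebraic object. First I would write out $\delta_0, \delta_1, \delta_2$ coming from the bar complex: $\delta_0(a)(b) = ab - ba$; $\delta_1(f)(a\otimes b) = af(b) - f(ab) + f(a)b$; and $\delta_2(g)(a\otimes b \otimes c) = ag(b\otimes c) - g(ab\otimes c) + g(a\otimes bc) - g(a\otimes b)c$. The key preliminary check is that each of these is a bounded map of complete bornological spaces, which follows from the universal properties of the projective tensor product $\widehat{\otimes}_K$ in $\widehat{\mathcal{B}}c_K$ and the multiplication being bounded.

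For part (i), since $\mathcal{L}(\mathscr{A})^{-1}=0$, we have $\operatorname{HH}_{\operatorname{T}}^0(\mathscr{A})=J(\operatorname{Ker}(\delta_0))$. A direct inspection shows $\operatorname{Ker}(\delta_0) = \{a \in \mathscr{A} : ab=ba \text{ for all } b\} = \operatorname{Z}(\mathscr{A})$, equipped with the subspace bornology, which is exactly the bornological center. For part (ii), $\operatorname{Ker}(\delta_1)$ consists of bounded $K$-linear maps $f\colon \mathscr{A}\to \mathscr{A}$ satisfying the Leibniz rule $f(ab)=af(b)+f(a)b$, i.e. $\Der_K(\mathscr{A})$. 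The image of $\delta_0$ is the subspace $\Inn(\mathscr{A})$ of inner derivations $a\mapsto [a,-]$. Forming the cokernel after applying $J$ then produces $\Der_K(\mathscr{A})/\Inn(\mathscr{A})= \Outder(\mathscr{A})$; here working in $\mathcal{B}c_K$ rather than $\widehat{\mathcal{B}}c_K$ is crucial because the quotient bornology is taken directly without completion, matching the definition of $\Outder(\mathscr{A})$.

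The heart of the argument, and the main obstacle, is part (iii), which requires the bornological version of Gerstenhaber's correspondence. Given $\varphi \in \operatorname{Ker}(\delta_2)$, I would define on $\mathscr{A}':= \mathscr{A}\widehat{\otimes}_K K[t]/t^2$ the multiplication $(a+bt)\ast(c+dt) := ac + (ad + bc + \varphi(a,c))t$, check that boundedness of $\varphi$ guarantees this is a bounded map $\mathscr{A}'\widehat{\otimes}_K \mathscr{A}' \to \mathscr{A}'$, and verify that associativity of $\ast$ is equivalent to the cocycle identity $\delta_2(\varphi)=0$. This gives a well-defined map $\operatorname{Ker}(\delta_2) \to \mathscr{E}xt(\mathscr{A})$. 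Next I would show that two cocycles $\varphi_1, \varphi_2$ yield isomorphic infinitesimal deformations if and only if $\varphi_1 - \varphi_2 \in \operatorname{Image}(\delta_1)$: an isomorphism is forced to be of the form $\operatorname{Id} + t f$ with $f\colon \mathscr{A}\to\mathscr{A}$ bounded, and the requirement that it intertwines the two multiplications gives precisely $\varphi_1 - \varphi_2 = \delta_1(f)$. Conversely, any infinitesimal deformation is isomorphic to one of the above form because splitting the short exact sequence $0\to t\mathscr{A}\to \mathscr{A}'\to \mathscr{A}\to 0$ bornologically provides a bounded section, whose associated $2$-cochain is then automatically a cocycle.

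The technical difficulty in step (iii) is ensuring throughout that the bornological structures on deformations, sections, and automorphisms are compatible with the functor $J$ and with the cokernel computations in $\mathcal{B}c_K$: one must be careful not to complete quotients where the definition of $\mathscr{E}xt(\mathscr{A})$ does not ask for it, which is exactly why the theorem is formulated in $\mathcal{B}c_K$ rather than $\widehat{\mathcal{B}}c_K$. Once the set-theoretic Gerstenhaber bijection is upgraded to a bornological isomorphism, assembling the three parts yields the claimed identifications of $\operatorname{HH}_{\operatorname{T}}^n(\mathscr{A})$ for $n=0,1,2$.
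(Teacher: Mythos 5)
Your proposal is correct and follows essentially the same route as the paper: parts (i) and (ii) are the same direct computation with the explicit formulas for $\delta_0$ and $\delta_1$, and part (iii) is the same Gerstenhaber correspondence (the paper, citing Helemskii, sets it up in the opposite direction, assigning to a singular extension with bounded split $\rho$ the cocycle $\omega(\rho)(a\otimes b)=\rho(a)\rho(b)-\rho(ab)$, but the two directions together are exactly what you verify). Your closing concern about upgrading the bijection to a bornological isomorphism is moot, since the paper defines the bornology on $\mathscr{E}xt(\mathscr{A})$ by transport of structure along this very bijection.
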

\begin{proof}
Statements  $(i)$ and  $(ii)$ are shown in Proposition \ref{prop interpretation of Taylor cohomology groups for n=0,1}.  Statement $(iii)$ is Theorem \ref{teo interpretation of Taylor cohomology groups for n=2}.
\end{proof}
If $\mathscr{A}$ is a Fréchet $K$-algebra, then it is possible to construct a first quadrant spectral sequence in the left heart $LH(\widehat{\mathcal{B}}c_K)$ connecting these two cohomology theories (\emph{cf.} Theorem \ref{teo existence TH SS}). Evaluating this sequence in the special case  $\mathscr{A}=\wideparen{\D}_X(X)$, we obtain the following theorem:
\begin{intteo}
Assume $K$ is either discretely valued or algebraically closed, and let $X$ be a smooth Stein space with an étale map $X\rightarrow \mathbb{A}^r_K$. Then the following hold:
\begin{enumerate}[label=(\roman*)]
    \item There are canonical isomorphisms of complete bornological spaces:
    \begin{equation*}
        \operatorname{HH}^{0}(\wideparen{\D}_X(X))=\operatorname{H}_{\operatorname{dR}}^{0}(X)^b=\operatorname{Z}(\wideparen{\D}_X(X)).
    \end{equation*}
    \item There are isomorphisms of complete bornological spaces:
    \begin{equation*}
        \operatorname{HH}^{1}(\wideparen{\D}_X(X))=\operatorname{H}_{\operatorname{dR}}^{1}(X)^b=\Outder(\wideparen{\D}_X(X)).
        \end{equation*}
        Furthermore,  we can construct an explicit isomorphism:
        \begin{equation*}
            c_1:\operatorname{H}_{\operatorname{dR}}^{1}(X)^b\rightarrow\Outder(\wideparen{\D}_X(X)).
        \end{equation*}
        \item Assume $\operatorname{H}_{\operatorname{dR}}^{2}(X)^b$ is finite-dimensional. Then there is an explicit isomorphism:
        \begin{equation*}
            c_2:\operatorname{H}_{\operatorname{dR}}^{2}(X)^b\rightarrow \mathscr{E}xt(\wideparen{\D}_X(X)).
        \end{equation*}
\end{enumerate}
\end{intteo}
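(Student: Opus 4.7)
The plan is to string together three previously-established inputs and read off the conclusions in low degrees. First, Theorems B and C combine (both apply under our assumptions on $X$ and $K$) to give the identification $\operatorname{HH}^{n}(\wideparen{\D}_X(X))=\operatorname{H}_{\operatorname{dR}}^{n}(X)^b$ in $\widehat{\mathcal{B}}c_K$ for every $n\geq 0$. Second, the interpretation theorem for TH cohomology yields $\operatorname{HH}_{\operatorname{T}}^{0}(\wideparen{\D}_X(X))=\operatorname{Z}(\wideparen{\D}_X(X))$, $\operatorname{HH}_{\operatorname{T}}^{1}(\wideparen{\D}_X(X))=\Outder(\wideparen{\D}_X(X))$, and $\operatorname{HH}_{\operatorname{T}}^{2}(\wideparen{\D}_X(X))=\mathscr{E}xt(\wideparen{\D}_X(X))$. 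Third, since $\wideparen{\D}_X(X)$ is a Fréchet algebra by Theorem A$(i)$, the first-quadrant TH spectral sequence of Theorem \ref{teo existence TH SS} is available in $LH(\widehat{\mathcal{B}}c_K)$ and abuts from the TH side to $\operatorname{HH}^{\bullet}(\wideparen{\D}_X(X))$. The task is then to show that its edge maps $\operatorname{HH}_{\operatorname{T}}^{n}\to \operatorname{HH}^{n}$ are isomorphisms of complete bornological spaces for $n=0,1$, and also for $n=2$ under the finite-dimensionality hypothesis.

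In degree $0$ this is automatic: in a first-quadrant spectral sequence no differential can touch the $(0,0)$ position, so $\operatorname{HH}_{\operatorname{T}}^{0}=\operatorname{HH}^{0}$, which combined with the de Rham identification proves $(i)$. For degree $1$, the five-term exact sequence of the spectral sequence exhibits the kernel and cokernel of the edge map $\operatorname{HH}_{\operatorname{T}}^{1}\to \operatorname{HH}^{1}$ as subquotients of $E_{2}$-terms built out of $\operatorname{HH}_{\operatorname{T}}^{0}=\operatorname{Z}(\wideparen{\D}_X(X))$, and a direct computation using the centrality of that term forces both to vanish. This gives the abstract identification in $(ii)$. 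The explicit isomorphism $c_1$ would then be constructed at the cocycle level: using the étale map $X\rightarrow \mathbb{A}^r_K$, fix coordinates $x_1,\dots,x_r$ and the dual basis $\partial_1,\dots,\partial_r$ of $\mathcal{T}_{X/K}(X)$, and send a closed $1$-form $\omega=\sum f_i\,dx_i$ to the class of the bounded derivation $P\mapsto \sum_{i}f_i\,[\partial_i,P]$ of $\wideparen{\D}_X(X)$; exactness of $\omega$ corresponds precisely to this derivation being inner, and independence of the choice of coordinates follows from naturality of the de Rham comparison.

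For $(iii)$, the remaining obstructions to $\operatorname{HH}_{\operatorname{T}}^{2}\xrightarrow{\sim} \operatorname{HH}^{2}$ are the higher differentials $d_r$ ($r\geq 2$) touching the $p+q=2$ diagonal. Here the finite-dimensionality assumption on $\operatorname{H}_{\operatorname{dR}}^{2}(X)^b$ enters decisively: it forces the abutment $\operatorname{HH}^{2}$ to be a finite-dimensional bornological space, and then a dimension count combined with the isomorphisms already known in degrees $0$ and $1$ kills the outgoing and incoming differentials on the $2$-line. The explicit $c_2$ is produced in the style of Gerstenhaber, by sending a closed $2$-form $\omega=\sum_{i<j}\omega_{ij}\,dx_i\wedge dx_j$ to the infinitesimal deformation of $\wideparen{\D}_X(X)$ with star product $P\star Q = PQ + t\sum_{i<j}\omega_{ij}\,[\partial_i,P]\,[\partial_j,Q]$, and checking that associativity modulo $t^2$ is equivalent to $d\omega=0$ while trivialization of the deformation corresponds to exactness of $\omega$. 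The main obstacle in this program will be the vanishing of the higher differentials in $(iii)$ inside the quasi-abelian category $LH(\widehat{\mathcal{B}}c_K)$: finite-dimensionality alone does not formally kill a spectral-sequence differential in a non-abelian setting, so one must track strictness and boundedness of the bornologies carefully and verify that the candidate map $c_2$ really recovers the inverse of the spectral-sequence identification at the level of isomorphism classes of infinitesimal deformations.
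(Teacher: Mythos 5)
Your global strategy --- feed Theorems B and C into the TH spectral sequence of Theorem \ref{teo existence TH SS} and read off the low degrees --- is indeed the paper's, and your treatment of degree $0$ is correct. But there are concrete problems from degree $1$ onward. In degree $1$ the edge map $\operatorname{H}^1(\mathcal{L}_I(\wideparen{\D}_X(X))^{\bullet})\to\operatorname{HH}^1$ is an isomorphism because $E_1^{0,q}=\underline{\operatorname{Ext}}^q_{LH(\widehat{\mathcal{B}}c_K)}(I(K),-)=0$ for $q\geq 1$ ($I(K)$ is projective), not because of any ``centrality'' of $\operatorname{HH}_{\operatorname{T}}^0$; more importantly, $\operatorname{H}^1(\mathcal{L}_I^{\bullet})$ lives in the left heart and coincides with $\operatorname{HH}_{\operatorname{T}}^1=\Outder(\wideparen{\D}_X(X))$ only when $\delta_0$ is strict. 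The paper obtains strictness by first proving that $\operatorname{HH}^1=\operatorname{H}^1_{\operatorname{dR}}(X)^b$ is a genuine complete bornological space and then invoking Corollary \ref{coro H1 borno implies delta0 strict}; your proposal skips this step entirely. Your explicit formulas are also wrong: $P\mapsto\sum_i f_i[\partial_i,P]$ does not satisfy the Leibniz rule (the $f_i$ need not commute with $P$), while $[\sum_if_i\partial_i,-]$ is inner and hence zero in $\Outder$. The correct $c_1(\lambda)$ is the unique bounded derivation vanishing on $\OX_X(X)$ and sending $v\mapsto\lambda(v)$ (Proposition \ref{prop explicit derivations}), whose non-innerness for non-exact $\lambda$ uses that any operator commuting with $\OX_X(X)$ lies in $\OX_X(X)$. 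Likewise your star product deforms the commutative product on $\OX_X(X)$, whereas the correct deformation is the twisted-differential-operator algebra $\wideparen{\D}_{t\omega}(X)$, in which $\OX_X(X)$ stays undeformed and only the relation $[\mathbb{L}_v,\mathbb{L}_w]=\mathbb{L}_{[v,w]}+t\omega(v,w)$ is modified.

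The most serious gap is in degree $2$. The spectral sequence yields a short exact sequence $0\to E_3^{1,1}\to I(\operatorname{HH}^2)\to\operatorname{H}^2(\mathcal{L}_I^{\bullet})\to 0$, so finite-dimensionality of $\operatorname{H}^2_{\operatorname{dR}}(X)$ only gives the inequality $\dim\operatorname{HH}_{\operatorname{T}}^2\leq\dim\operatorname{HH}^2$; no dimension count involving degrees $0$ and $1$ constrains $E_3^{1,1}$, which is a subquotient of the completely unknown group $\underline{\operatorname{Ext}}^1_{LH(\widehat{\mathcal{B}}c_K)}(I(\wideparen{\D}_X(X)),I(\wideparen{\D}_X(X)))$. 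The reverse inequality is exactly what the explicit map must supply: the paper proves that $c_2:[\omega]\mapsto[\wideparen{\D}_{t\omega}(X)]$ is injective, the hard half being that triviality of the singular extension forces $\omega$ to be exact --- an argument which first normalizes the trivializing isomorphism using $\operatorname{Ext}^1_{\OX_X(X)}(\OX_X(X),\OX_X(X))=0$ so that it fixes $\OX_X(X)$, and only then extracts a primitive $1$-form. You correctly flag the vanishing of $E_3^{1,1}$ as the main obstacle, but ``tracking strictness and boundedness carefully'' provides no mechanism to overcome it; without the injectivity of an explicitly constructed comparison map the statement does not follow.
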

\begin{proof}
Statement $(i)$ is Proposition \ref{prop computation 0th taylor cohomology group}. The first part of $(ii)$ is shown in Proposition \ref{prop comparison map first cohomology group}, and the construction of $c_1$ is done in Proposition \ref{prop explicit derivations}. Statement $(iii)$ is  Theorem \ref{teo comparison map on degree 2}.
\end{proof}
We remark that the results we obtain in the body of the text are slightly more general than the version presented here. In particular, the map $c_2$ can be defined even if the finiteness condition on de Rham cohomology fails. One can check Chapter \ref{Chapter deformation theory} for the specific statements.\\

Hence, at least in some situations, we have completely clarified the structure of the infinitesimal deformations of $\wideparen{\D}_X(X)$. We expect that the methods of this paper can be generalized to hold even when the de Rham cohomology is not finite (\emph{cf.} the discussion below Corollary \ref{coro invartiants and analytification}), and that these techniques open the way for new advances in deformation theory of rigid analytic spaces. For instance, a theory of formal deformations of $\wideparen{\D}_X(X)$ could be within reach. On the other hand we also expect that similar results hold in the $G$-equivariant setting, where $G$ is a finite group acting on our smooth Stein space $X$. In analogy with the algebraic situation, we hope that the $p$-adic Cherednik algebras defined in \cite{p-adicCheralg} can be used to classify all infinitesimal deformations of $G\ltimes \wideparen{\D}_X(X)$, in the same way that algebras of twisted differential operators classify the deformations of $\wideparen{\D}_X(X)$ (\emph{cf}. Definition \ref{defi deformation of algebras} and the discussion above). Although similar in nature, the $G$-equivariant case presents some technical challenges, so we will postpone this study to a future paper.\\

Although the focus of most of this paper is centered around the Hochschild cohomology groups of $\wideparen{\D}_X$ and their deformation-theoretic applications, we will also see that many of the calculations above can also be performed for the Hochschild homology groups. In fact, the results for Hochschild homology can be deduced from those of Hochschild cohomology. This is due to the fact that the Hochschild (co)-homology of $\wideparen{\D}_X$ satisfies 
Van den Bergh duality:
\begin{intteo}\label{intteo vdb duality}
Let $X$ be a smooth and separated rigid space with an étale map $X\rightarrow \mathbb{A}^r_K$. For any $\mathscr{M}\in \operatorname{D}(\wideparen{E}_X)$ there is a canonical isomorphism in $\operatorname{D}(\operatorname{Shv}(X^2,\Indban)$:
\begin{equation*}
    R\underline{\mathcal{H}om}_{\wideparen{E}_X}(\Delta_*\wideparen{\D}_X,\mathscr{M})=\Delta_*\wideparen{\D}_X\overrightarrow{\otimes}^{\mathbb{L}}_{\wideparen{E}_X}\mathscr{M}[-2\operatorname{dim}(X)].
\end{equation*}
Furthermore, if $X$ is affinoid or $X$ is Stein and $K$ is either discretely valued or algebraically closed, then for any $\mathscr{M}\in \operatorname{D}(\wideparen{\D}_X(X)^e)$  there is a canonical isomorphism in $\operatorname{D}(\widehat{\mathcal{B}}c_K)$:
\begin{equation*}
    \operatorname{H}^{\bullet}(\wideparen{\D}_X(X),\mathscr{M})=\operatorname{H}_{\bullet}(\wideparen{\D}_X(X),\mathscr{M})[-2\operatorname{dim}(X)].
    \end{equation*} 
\end{intteo}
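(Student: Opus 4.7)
The plan is to establish the two parts in sequence: first the sheaf-theoretic duality (i), then bootstrap to the global/algebraic statement (ii). The backbone of the argument is the finite projective resolution of $\mathcal{S}_X\oplus\Delta_*\wideparen{\D}_X$ as a $\wideparen{E}_X$-module provided by Theorem \ref{intteo theorem C}, which realizes $\Delta_*\wideparen{\D}_X$ as a perfect object of amplitude $2\dim(X)$ in the derived category of $\wideparen{E}_X$-modules.

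For (i), the key reduction is to computing the dualizing complex
\begin{equation*}
    R\underline{\mathcal{H}om}_{\wideparen{E}_X}(\Delta_*\wideparen{\D}_X,\wideparen{E}_X)=\Delta_*\wideparen{\D}_X[-2\dim(X)].
\end{equation*}
Once this identity is established, the general case follows by a perfect-complex argument: the natural evaluation morphism
\begin{equation*}
    R\underline{\mathcal{H}om}_{\wideparen{E}_X}(\Delta_*\wideparen{\D}_X,\wideparen{E}_X)\overrightarrow{\otimes}^{\mathbb{L}}_{\wideparen{E}_X}\mathscr{M}\longrightarrow R\underline{\mathcal{H}om}_{\wideparen{E}_X}(\Delta_*\wideparen{\D}_X,\mathscr{M})
\end{equation*}
is checked to be an isomorphism by reducing to the resolution of Theorem \ref{intteo theorem C}, where each term is finite projective and the statement is tautological. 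Substituting the value of the dualizing complex on the left then gives the desired identification, modulo some Ind-Banach bookkeeping justified by the strictness of the resolution.

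To compute the dualizing complex, I would dualize the resolution of Theorem \ref{intteo theorem C} term by term. After choosing the basis $\partial_1,\dots,\partial_r$ of $\mathcal{T}_{X/K}$ afforded by the étale map $X\to\mathbb{A}^r_K$, the expected structure of the resolution is Koszul--Spencer type, built from the $2r$ ``inner derivations'' $\partial_i\otimes 1-1\otimes\partial_i^{\op}$ inside $\wideparen{E}_X$. Applying $R\underline{\mathcal{H}om}_{\wideparen{E}_X}(-,\wideparen{E}_X)$ to such a Koszul complex produces the same complex shifted by $-2r$, so the dualized resolution is strict exact away from degree $2r=2\dim(X)$ and computes $\mathcal{S}_X\oplus\Delta_*\wideparen{\D}_X$ in that degree. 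The auxiliary summand $\mathcal{S}_X$ must be arranged to be self-dual under this procedure, which should be a built-in feature of the construction in Theorem \ref{intteo theorem C}; projecting onto the $\Delta_*\wideparen{\D}_X$ factor then yields the required identification. The absence of a twist by a canonical module is explained by the fact that the étale map to $\mathbb{A}^r_K$ trivialises $\Omega^{\dim(X)}_{X/K}$.

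For (ii), the plan is to take $\mathscr{M}\in\operatorname{D}(\wideparen{\D}_X(X)^e)$ and localise it to a complex $\widetilde{\mathscr{M}}$ of $\wideparen{E}_X$-modules on $X^2$ via the localisation functor provided by Theorem \ref{teo A} applied to $X^2$. Applying part (i) to $\widetilde{\mathscr{M}}$ and then taking $R\Gamma(X^2,-)$ converts the left-hand side into $R\underline{\Hom}_{\wideparen{\D}_X(X)^e}(\wideparen{\D}_X(X),\mathscr{M})$ by Theorem \ref{intteo theorem C}; for the right-hand side, commutation of $R\Gamma$ with the derived tensor product again reduces to the perfect resolution of $\Delta_*\wideparen{\D}_X$ and the acyclicity statement of Theorem \ref{teo A}(iv). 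The main obstacle is the explicit verification that the resolution of Theorem \ref{intteo theorem C} is Koszul self-dual up to the shift by $-2\dim(X)$, with no unwanted contribution from the $\mathcal{S}_X$ summand; a secondary technical difficulty is the Ind-Banach flat-base-change argument needed to commute $R\Gamma(X^2,-)$ with $\overrightarrow{\otimes}^{\mathbb{L}}_{\wideparen{E}_X}$ on the perfect side.
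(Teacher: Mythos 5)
Your overall skeleton --- first compute the dualizing complex $R\underline{\mathcal{H}om}_{\wideparen{E}_X}(\Delta_*\wideparen{\D}_X,\wideparen{E}_X)$, then show the evaluation map $R\underline{\mathcal{H}om}_{\wideparen{E}_X}(\Delta_*\wideparen{\D}_X,\wideparen{E}_X)\overrightarrow{\otimes}^{\mathbb{L}}_{\wideparen{E}_X}\mathscr{M}\to R\underline{\mathcal{H}om}_{\wideparen{E}_X}(\Delta_*\wideparen{\D}_X,\mathscr{M})$ is an isomorphism by induction along the finite projective resolution of $\mathcal{S}_X\oplus\Delta_*\wideparen{\D}_X$ and then splitting off the summand --- is exactly the paper's strategy (Corollary \ref{coro we need for vdb duality} plus Lemma \ref{lemma vdb duality}), and your treatment of the evaluation map is essentially identical to Lemma \ref{lemma vdb duality}. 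The divergence, and the first genuine gap, is in how you compute the dualizing complex. You propose to dualize the resolution term by term, asserting that it is Koszul self-dual and that $\mathcal{S}_X$ ``must be arranged to be self-dual \ldots a built-in feature of the construction.'' Neither claim is established. The resolution of Proposition \ref{prop resolution of differential operators on smooth Stein spaces} is obtained by applying $(f\times f)^!$ to a resolution over $\mathbb{A}^{2r}_K$; even granting a Koszul self-duality of the latter, the pulled-back complex resolves the \emph{direct sum} $\mathcal{S}_X\oplus\Delta_*\wideparen{\D}_X$, so a self-duality of the total complex only tells you that the dual of the sum is the sum shifted --- you still must match summands (e.g.\ by a support argument: $\Delta_*\wideparen{\D}_X$ lives on the diagonal, $\mathcal{S}_X$ on the disjoint components $Z$ of $Y$), which you do not supply. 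The paper sidesteps all of this: it side-switches to $\wideparen{\D}_{X^2}$-modules, identifies $\Delta_*\wideparen{\D}_X$ with $\Delta_+\OX_X$, and uses that $\OX_X$ is a self-dual integrable connection together with $f_+=f_!$ for weakly holonomic modules under closed immersions (Propositions \ref{prop duality of integrable connections} and \ref{prop equality direct image and extraordinary direct image}). That argument computes the dual of the diagonal summand on its own and never touches the explicit shape of the resolution.

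The second gap is in part (ii). You propose to localise an arbitrary $\mathscr{M}\in\operatorname{D}(\wideparen{\D}_X(X)^e)$ to a complex of $\wideparen{E}_X$-modules and then take $R\Gamma(X^2,-)$. The localisation equivalence $\operatorname{Loc}$ of Proposition \ref{prop characterization of co-admissible modules on a smooth Stein space} is only available for co-admissible modules, and the whole point of the second statement (emphasised right after the theorem in the introduction) is that $\mathscr{M}$ carries \emph{no} finiteness hypothesis --- this is what makes the duality usable for deformation-theoretic coefficients. For such $\mathscr{M}$ there is no localisation, and the commutation of $R\Gamma(X^2,-)$ with $\overrightarrow{\otimes}^{\mathbb{L}}_{\wideparen{E}_X}$ that you invoke is equally unavailable. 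The paper instead proves the algebraic statement in parallel, entirely over $\wideparen{\D}_X(X)^e$: Corollary \ref{coro calabi-yau property of DX} gives $R\underline{\Hom}_{\wideparen{\D}_X(X)^e}(\wideparen{\D}_X(X),\wideparen{\D}_X(X)^e)=\wideparen{\D}_X(X)[-2\operatorname{dim}(X)]$ (deduced from the sheaf statement via Theorem \ref{teo hochschild cohomology in terms of extendions in DXe 1}, which applies because $\wideparen{E}_X$ itself is co-admissible), and the evaluation-map argument is run directly against the global finite projective resolution $\Gamma(X^2,E^{\bullet}_{\wideparen{\D}_X})$ of $\mathcal{S}_X(X^2)\oplus\wideparen{\D}_X(X)$ over $\wideparen{\D}_X(X)^e$. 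You should restructure part (ii) along these lines rather than through localisation.
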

\begin{proof}
    This is Theorem \ref{teo Van den Bergh duality}.
\end{proof}
This can be seen as a version of Poincaré duality for Hochschild (co)-homology. Notice in particular that Theorem 
\ref{intteo vdb duality} does not impose any finiteness assumptions on $\mathscr{M}$. In particular, we do not require that $\mathscr{M}$ is a $\mathcal{C}$-complex. As a corollary of this theorem, it follows that the Theorem \ref{intteo theorem C}
holds for Hochschild homology \emph{mutatis mutandis}.\\

 Although the Hochschild homology groups do not have a direct deformation-theoretic meaning, they offer the advantage of being explicitly computable. In particular, we will see in the body of the text that $\operatorname{HH}_{\bullet}(\wideparen{\D}_X)$ can be represented by a complex:
 \begin{equation*}
     \mathcal{T}(\wideparen{\D}_X(X))^{\bullet}:=\left(   
 \cdots \xrightarrow[]{\delta^{-2}}\widehat{\otimes}_K^2\wideparen{\D}_X(X)\widehat{\otimes}_K\wideparen{\D}_X(X)\xrightarrow[]{\delta^{-1}}\wideparen{\D}_X(X)\widehat{\otimes}_K\wideparen{\D}_X(X)  \xrightarrow[]{\delta^0}\wideparen{\D}_X(X)   \right),
 \end{equation*}
with differentials given by explicit formulas. In fact, $\mathcal{T}(\wideparen{\D}_X(X))^{\bullet}$ is an analytic version of the usual Hochschild homology complex of an associative algebra. An upshot of this is that  it allows us to define analytic versions of some of the well-known operations in Hochschild homology, and then we can use the Van den Bergh isomorphism to obtain the corresponding operations on Hochschild cohomology. For instance, we will construct analytic versions of the Connes differential, the Batalin-Vilkovisky operator, the cup product, and the Gerstenhaber bracket.
\subsection*{Related work}
This paper is a part of a series of papers \cite{HochDmod}, \cite{p-adicCheralg}, \cite{p-adicCatO}, in which we start a study of the deformation theory of the algebras $G\ltimes \wideparen{\D}_X(X)$, where $G$ is a finite group acting on a smooth Stein space $X$. In particular, in \cite{HochDmod} we develop a formalism of Hochschild cohomology and homology for $\wideparen{\D}_X$-modules on smooth and separated rigid analytic spaces, and give explicit calculations of the Hochschild cohomology groups $\operatorname{HH}^{\bullet}(\wideparen{\D}_X)$ in terms of the de Rham complex of $X$. This is done in the setting of sheaves of Ind-Banach spaces and quasi-abelian categories, as showcased in \cite{bode2021operations}.\\

In \cite{p-adicCheralg} we develop a theory of $p$-adic Cherednik algebras on smooth rigid analytic spaces with a finite group action. These algebras can be regarded as a $p$-adic analytic version of the sheaves of Cherednik algebras constructed by P. Etingof in \cite{etingof2004cherednik}. In particular, let $X$ be a smooth rigid analytic space with an action of a finite group $G$ satisfying some mild technical conditions. The family of $p$-adic Cherednik algebras associated to the action of $G$ on $X$ is a family of sheaves of complete bornological algebras on the quotient $X/G$  which generalizes the skew group algebra $G\ltimes \wideparen{\D}_X$. In particular, given a $p$-adic Cherednik algebra $\mathcal{H}_{t,c,\omega}$, its restriction to the smooth locus of $X/G$ is canonically isomorphic to $G\ltimes \wideparen{\D}_X$. Furthermore,  all $p$-adic Cherednik algebras are locally Fréchet-Stein, and their isomorphism classes are parameterized by certain geometric invariants associated to the action of $G$ on $X$.\\

In the algebraic setting, this parameter space has a deep deformation-theoretic meaning. Namely, if $X$ is a smooth affine $K$-variety with an action of a finite group $G$, then it can be shown that the family of Cherednik algebras associated to the action of $G$ on $X$  parametrizes all the infinitesimal deformations of the skew-group algebra $G\ltimes \D_X(X)$. We hope that our methods can be extended to show that this phenomena is also present in the rigid analytic setting. The paper \cite{p-adicCheralg} is devoted to laying the foundations of the theory of $p$-adic Cherednik algebras. In particular, we provide a construction of these algebras, study their algebraic properties, and obtain a classification theorem.\\

The study of $p$-adic Cherednik algebras is continued in  \cite{p-adicCatO}, 
where we study the representation theory of $p$-adic Cherednik algebras, focusing on the case where $X$ is the analytification of an affine space, and $G$ acts linearly on $X$. The algebras arising from such an action are called $p$-adic rational Cherednik algebras, and their representation theory shares many features with that of the  $p$-adic Arens-Michael envelope $\wideparen{U}(\mathfrak{g})$ of a finite dimensional reductive Lie algebra $\mathfrak{g}$ studied in \cite{Schmidt2010VermaMO}. In particular, to any $p$-adic rational Cherednik algebra one can attach a category $\wideparen{\OX}_c$, which is a highest weight category whose simple objects are in bijection with the irreducible representations of $G$. 
\subsection*{Structure of the paper}We will now give an overview of the contents of the paper:\newline
In Chapter \ref{background} we provide the reader with some basic material needed for the rest of the paper. Namely, we devote Sections \ref{section Ind-Ban spaces} and \ref{section sheaes of ind} to give a brief remainder on the closed symmetric monoidal structures on $\Indban$ and $\widehat{\mathcal{B}}c_K$, Section \ref{Section background HochDmod} to give a brief outline of the main results of \cite{HochDmod}, and Section \ref{section external hom functors} to define the sheaves of external homomorphisms.\\

In Chapter \ref{Chapter Stein spaces} we introduce the notion of Stein space, and show some of their most important properties. In particular, basic results on the geometry of Stein spaces are explained in Section \ref{section group act on Stein spaces}, and used in Section \ref{section co-ad D mod stein spaces} to study the properties of co-admissible $\wideparen{\D}_X$-modules on smooth Stein spaces.\\

In Chapter \ref{Chapter applications of HC} we start giving some applications of Hochschild cohomology of $\wideparen{\D}$-modules on a smooth Stein space. In Section \ref{Section dR cohomology} we establish a comparison theorem with de Rham cohomology, and use it to show that $\operatorname{HH}^{\bullet}(\wideparen{\D}_X)$ is a strict complex of nuclear Fréchet spaces. This is then used to show a Künneth formula for Hochschild cohomology. In Sections \ref{Section Hochschild cohomology and extension I} and \ref{Section HC and ext 2}, we compare the complex $\operatorname{HH}^{\bullet}(\wideparen{\D}_X)$ with different \emph{Ext} functors, allowing us to show that the Hochschild cohomology groups of $\wideparen{\D}_X$  parameterize certain Yoneda extensions. We also show the important formula:
\begin{equation*}
    \operatorname{HH}^{\bullet}(\wideparen{\D}_X)=\operatorname{HH}^{\bullet}(\wideparen{\D}_X(X)),
\end{equation*}
showing that the Hochschild cohomology can be calculated at the level of global sections. Finally, we show in Section \ref{Secion LR cohomology} that Hochschild cohomology computes Lie-Rinehart cohomology.\\

In Chapter \ref{section duality results} we will show Van den Bergh duality. We start with Section \ref{subsection review operations}, where we review the notion of a weakly holonomic $\wideparen{\D}_X$-module and show some properties regarding direct image and extraordinary direct image along a closed immersion. We finish with Section \ref{section vdb} where we show Theorem \ref{intteo vdb duality}, and Section \ref{section HHom}, where we show an analog of Theorem \ref{intteo theorem C} for Hochschild homology.\\

We finish the paper with Chapter \ref{Chapter deformation theory}, where we study the deformation theory of $\wideparen{\D}_X(X)$. In particular, we use Section \ref{taylor HA} to give an alternative definition of the Hochschild (co)-homology of a complete bornological algebra in terms of the bar resolution, and give explicit interpretations of the associated cohomology groups in low degrees. In Section \ref{comparison HC TH} we establish a spectral sequence connecting this new definition with the one we have been using so far. Next, we devote Sections \ref{TH groups of D} and \ref{section TH homology} and to study the particular case of $\wideparen{\D}_X(X)$, and use our previous results to obtain explicit calculations. Finally, we will dedicate Section \ref{section operations} to defining some operations on $\operatorname{HH}^{\bullet}(\wideparen{\D}_X(X))$ and $\operatorname{HH}_{\bullet}(\wideparen{\D}_X(X))$, and Sections \ref{section ncdf1} and \ref{section ncdf2} to define the module of non-commutative differential forms and use it to obtain some upgrades of the computations from Section \ref{TH groups of D}.
\subsection*{Notation and conventions}
For the rest of this text,  $K$ denotes a complete non-archimedean
field of mixed characteristic $(0,p)$. We let $\mathcal{R}$ be its valuation ring, $k$ be its residue field, and fix  
a pseudo-uniformizer  $\pi\in \mathcal{R}$  with $0< \vert \pi\vert <1$. If $A$ is a Banach $K$-algebra, we let $A^{\circ}$ be the $\mathcal{R}$-algebra of power-bounded elements in $A$, and let $A^{\circ\circ}$ be the ideal of $A^{\circ}$ given by the topologically nilpotent elements in $A$. Notice that, if $K$ is either discretely valued or algebraically closed, every affinoid $K$-algebra $A$ satisfies that $A^{\circ}$
is an affine formal model of $A$ (\emph{cf.} \cite[Corollary 6.4.1.5/6]{BGR}). We will use this fact in the sequel without further mention. All rigid analytic $K$-varieties are assumed to be quasi-separated and quasi-paracompact. For a $\mathcal{R}$-module $\mathcal{M}$, we will often times write $\mathcal{M}_K=\mathcal{M}\otimes_{\mathcal{R}}K$.  If $A=\varprojlim_n A_n$ is a Fréchet-Stein algebra, we let $\mathcal{C}(A)$ denote its category of co-admissible modules.\\

Throughout the text, we will make extensive use of the theory of $\wideparen{\D}$-modules on smooth rigid analytic spaces, as developed in \cite{ardakov2019}, and of the homological machinery of sheaves of Ind-Banach algebras from \cite{bode2021operations}. In order to keep the preliminaries to a minimum, we will not give an introduction to the theory of co-admissible $\wideparen{\D}$-modules here. All the necessary material is available in the background sections of our companion paper \cite{HochDmod}, as well as in the original reference \cite{ardakov2019}. In order to avoid set-theoretic issues, we will assume that we are working within a fixed Grothendieck universe.
\subsection*{Acknowledgments}
This paper was written as a part of a PhD thesis at the Humboldt-Universität zu Berlin under the supervision of Prof. Dr. Elmar Große-Klönne. I would like to thank Prof. Große-Klönne
for pointing me towards this exciting topic and reading an early draft of the paper.

\bigskip
Funded by the Deutsche Forschungsgemeinschaft (DFG, German Research
Foundation) under Germany´s Excellence Strategy – The Berlin Mathematics
Research Center MATH+ (EXC-2046/1, project ID: 390685689).
\section{Background}\label{background}
We will devote this chapter to laying some of the foundational material needed for the rest of the paper. In particular, we will give a brief summary of the contents of \cite{HochDmod}, and introduce the sheaf of external \emph{Hom} functors associated to a sheaf of Ind-Banach algebras on a rigid analytic space.
\subsection{\texorpdfstring{Generalities on $\Indban$ and $\widehat{\mathcal{B}}c_K$}{}}\label{section Ind-Ban spaces}
For the benefit of the reader, and in order to fix some notation, we will now give a brief outline of the properties of the categories of Ind-Banach and (complete) bornological spaces. In order to keep the preliminaries to a minimum, we will not include any proofs. A complete discussion on these topics can be found in Chapters 2 and 3 of our companion paper \cite{HochDmod}, and the references given therein. Let us start by giving some definitions:
    \begin{defi}\label{defi bornology}
A  bornology on a $K$-vector space $V$ is a family $\mathcal{B}$ of subsets
of $V$ such that:
\begin{enumerate}[label=(\roman*)]
    \item If $B_1\in \mathcal{B}$ and $B_2\subset B_1$, then $B_2\in\mathcal{B}$.
    \item $\{v\}\in\mathcal{B}$ for all $v\in V$.
    \item $\mathcal{B}$ is closed under taking finite unions.
    \item If $B\in\mathcal{B}$, then $\lambda\cdot B\in\mathcal{B}$ for all $\lambda\in K$.
    \item Let $B\in \mathcal{B}$, and $\mathcal{R}\cdot B$ be the $\mathcal{R}$-module spawned by $B$. Then $\mathcal{R}\cdot B\in\mathcal{B}$.
\end{enumerate}
We call the pair $(V,\mathcal{B})$ a bornological space, and the elements of $\mathcal{B}$ bounded subsets. We  refer to the pair $(V,\mathcal{B})$ as $V$. A $K$-linear map of bornological spaces $f:V\rightarrow W$ is bounded if it maps bounded subsets to bounded subsets. We let $\mathcal{B}c_K$
be the category of bornological spaces and bounded maps.
\end{defi}
Before giving further details on the properties of $\mathcal{B}c_K$, let us motivate the definition. First, let us point out that, when dealing with problems in non-archimedean functional analysis, the most commonly used category is the category of locally convex spaces over $K$, which we denote $LCS_K$. This category has been thoroughly studied in previous years, and there are plenty of texts detailing its properties. See, for example, P. Schneider's monograph \cite{schneider2013nonarchimedean}. The main problem that arises in $LCS_K$ is that the (projective) tensor product of locally convex spaces and the space of continuous maps with the convergence on bounded subsets topology do not endow $LCS_K$ with a closed symmetric monoidal structure. This pathological behavior makes it very  hard to study categories of modules in $LCS_K$. It is for this reason that bornological spaces need to be brought into the picture. Nonetheless, we would like to point out that there is an adjunction:
    \begin{equation*}
    (-)^t:\mathcal{B}c_K\leftrightarrows LCS_K:(-)^b.
\end{equation*}
The functor $(-)^b$ is called the bornologification functor, and it is fully faithful on metrizable locally convex spaces. See \cite[pp. 93,  Proposition 3]{houzel2006seminaire} for a complete discussion on this matter.\\

Back to our setting, let $V\in \mathcal{B}c_K$ be a bornological space. It can be shown that $V$ can be expressed as a filtered colimit in $\mathcal{B}c_K$:
\begin{equation*}
    V=\varinjlim_{i\in I}V_i,
\end{equation*}
satisfying that each $V_i$ is a normed space.  We say that $V$ is a complete bornological space if all the $V_i$ in the above filtered colimit can be taken to be Banach spaces.
\begin{defi}
Let $\widehat{\mathcal{B}}c_K$ be the full subcategory of $\mathcal{B}c_K$ given by the complete bornological spaces.
\end{defi}
We point out that there is an adjunction:
\begin{equation*}
    J:\widehat{\mathcal{B}}c_K\leftrightarrows \mathcal{B}c_K:\widehat{\operatorname{Cpl}},
\end{equation*}
where $J:\widehat{\mathcal{B}}c_K\rightarrow \mathcal{B}c_K$ is the inclusion, and $\widehat{\operatorname{Cpl}}:\mathcal{B}c_K\rightarrow \widehat{\mathcal{B}}c_K$ is the completion functor. This adjunction will be further explored in Section \ref{Section background HochDmod}. Let us now describe the closed symmetric monoidal structure on $\widehat{\mathcal{B}}c_K$: Consider a pair of bornological spaces $V,W\in \widehat{\mathcal{B}}c_K$. We may regard the $K$-vector space $V\otimes_KW$ as a bornological space with respect to the bornology induced by the family of subsets:
\begin{equation*}
    \{B_1\otimes_{\mathcal{R}}B_2\textnormal{ } \vert \textnormal{ } B_1\subset V, B_2\subset W \textnormal{ are bounded }\mathcal{R}\textnormal{-modules} \}.
\end{equation*}
This is called the (projective) bornological tensor product of $V$ and $W$. We define $V\widehat{\otimes}_KW$ as the completion of $V\otimes_KW$ with respect to this bornology. On the other hand, the $K$-vector space $\Hom_{\widehat{\mathcal{B}}c_K}(V,W)$ has a natural bornology with bounded subsets:
\begin{equation*}
    \{U\subset \Hom_{\widehat{\mathcal{B}}c_K}(V,W) \textnormal{ }\vert \textnormal{ for all bounded }B\subset V,\textnormal{ } \cup_{f\in U}f(B) \textnormal{ is bounded in } W \}.
\end{equation*}
We let $\underline{\Hom}_{\widehat{\mathcal{B}}c_K}(V,W)$ denote $\Hom_{\widehat{\mathcal{B}}c_K}(V,W)$ equipped with this bornology.  We remark that the space $\underline{\Hom}_{\widehat{\mathcal{B}}c_K}(V,W)$ is complete with respect to this bornology whenever $W$ is complete.
\begin{prop}
The functors $\widehat{\otimes}_K$, and $\underline{\Hom}_{\widehat{\mathcal{B}}c_K}$  make $\widehat{\mathcal{B}}c_K$   a closed symmetric monoidal category. In particular, for $U,V,W\in \widehat{\mathcal{B}}c_K$, we have an adjunction:
\begin{equation*}
    \Hom_{\widehat{\mathcal{B}}c_K}(U\widehat{\otimes}_KV,W)=\Hom_{\widehat{\mathcal{B}}c_K}(U,\underline{\Hom}_{\widehat{\mathcal{B}}c_K}(V,W)).
\end{equation*}
The unit of the closed symmetric monoidal structure of $\widehat{\mathcal{B}}c_K$ is $K$.
\end{prop}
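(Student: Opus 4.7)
The plan is to reduce the closed symmetric monoidal structure on $\widehat{\mathcal{B}}c_K$ to the analogous, already essentially algebraic, structure on the larger category $\mathcal{B}c_K$, and then transfer along the completion adjunction $J \dashv \widehat{\operatorname{Cpl}}$.

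First I would establish that $\mathcal{B}c_K$ itself is closed symmetric monoidal under the uncompleted bornological tensor product $\otimes_K$ and the internal hom $\underline{\Hom}_{\mathcal{B}c_K}$ defined by literally the same recipes (omitting the final completion). The nontrivial content is the tensor--hom adjunction, which follows from the universal property of $U \otimes_K V$: the bornology is the smallest one containing the subsets $B_1 \otimes_{\mathcal{R}} B_2$ for bounded $\mathcal{R}$-submodules $B_i$, so a $K$-linear map $U \otimes_K V \to W$ is bounded iff the associated bilinear map $U \times V \to W$ sends pairs of bounded sets to bounded sets. Combined with the algebraic tensor--hom identity, this yields the required bijection, and symmetry together with the unit axiom $K \otimes_K V \cong V$ are immediate from the construction (since the unit ball of $K$ tensored with a bounded $\mathcal{R}$-submodule $B$ of $V$ returns $B$ itself).

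Second, I would check that $\underline{\Hom}_{\mathcal{B}c_K}(V,W)$ is already complete whenever $W$ is, which is the bornological analogue of the classical statement that bounded operators valued in a Banach space form a Banach space: any bornological Cauchy net of bounded maps converges pointwise on each bounded subset of $V$, and the limit inherits boundedness from the Banach disks of $W$. Writing $W = \varinjlim W_i$ as a filtered colimit of Banach spaces and unwinding the definition of boundedness in $\underline{\Hom}$ reduces the statement to the classical one.

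With these two ingredients in place, the transfer to $\widehat{\mathcal{B}}c_K$ is formal. For $U,V,W \in \widehat{\mathcal{B}}c_K$, the universal property of completion and the adjunction in $\mathcal{B}c_K$ give
\begin{align*}
\Hom_{\widehat{\mathcal{B}}c_K}(U \widehat{\otimes}_K V, W)
&= \Hom_{\mathcal{B}c_K}(U \otimes_K V, J(W)) \\
&= \Hom_{\mathcal{B}c_K}(U, \underline{\Hom}_{\mathcal{B}c_K}(V, J(W))) \\
&= \Hom_{\widehat{\mathcal{B}}c_K}(U, \underline{\Hom}_{\widehat{\mathcal{B}}c_K}(V, W)),
\end{align*}
where the last equality uses that $\underline{\Hom}_{\mathcal{B}c_K}(V,J(W))$ is already complete by the second step. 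The monoidal axioms (associativity, symmetry, unit) then descend from $\mathcal{B}c_K$ because $\widehat{\operatorname{Cpl}}$ is a reflective left adjoint and hence strong symmetric monoidal for the completed tensor product; the unit $K$ remains the unit since $K$ is itself complete.

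The main technical obstacle is verifying in the first step that the concretely described bornology on $U \otimes_K V$ represents bounded bilinear maps, together with the completeness assertion for internal homs in the second step; once these are secured, the rest of the argument is a formal consequence of the reflective nature of $\widehat{\mathcal{B}}c_K$ inside $\mathcal{B}c_K$.
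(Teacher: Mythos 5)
Your argument is correct in outline, but it is worth noting that the paper does not prove this statement at all: it simply cites \cite[Theorem 4.10]{bode2021operations}, so your proposal supplies an actual argument where the paper defers to the literature. The route you take --- first establishing the closed structure on $\mathcal{B}c_K$ with the uncompleted tensor product, then transferring to the reflective subcategory $\widehat{\mathcal{B}}c_K$ via the completion adjunction --- is the standard ``Day reflection'' mechanism, and it is essentially how the cited result is obtained in the literature (Houzel, Prosmans--Schneiders). The one place where you should be more careful is your second step, which is simultaneously the crux of the whole transfer (it is exactly the compatibility condition that makes the localization monoidal) and the step you treat most briefly: completeness of $\underline{\Hom}_{\mathcal{B}c_K}(V,W)$ for complete $W$ is not naturally phrased in terms of convergence of Cauchy nets, since completeness of a bornological space means that every bounded subset is absorbed by a completant bounded disk. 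The correct verification takes a bounded (equibounded) disk $S$ of maps and shows that the normed space it spans is Banach, by producing, for a Cauchy sequence in that normed space, pointwise limits inside the completant disks of $W$ and checking that the limit map is again bounded by the same disks; the reduction to $W=\varinjlim W_i$ Banach that you sketch is the right idea but the quantifiers over bounded subsets of $V$ need to be handled uniformly. Your first step (that the prescribed bornology on $U\otimes_K V$ corepresents bounded bilinear maps) and the final formal chain of adjunctions are fine, and the identification of the unit is immediate as you say.
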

\begin{proof}
This is \cite[Theorem 4.10]{bode2021operations}.
\end{proof}
Similarly,  $\otimes_K$ and $\underline{\Hom}_{\mathcal{B}c_K}$ give a closed symmetric monoidal structure to $\mathcal{B}c_K$. The next category of interest is the category of Ind-Banach spaces. Let us start by giving the definition:
\begin{defi}
 We define the category $\Indban$  of Ind-Banach spaces as the category with objects given by functors $V:I\rightarrow \operatorname{Ban}_K$, where $I$ is a small filtered category. We call these objects the Ind-Banach spaces. For simplicity, we will write Ind-Banach spaces as formal direct limits:
 \begin{equation*}
      ``\varinjlim"V_i.
 \end{equation*}
Morphisms in $\Indban$ are defined as follows: Given two objects $``\varinjlim"V_i$, $``\varinjlim"W_j$, we set:
\begin{equation*}
    \Hom_{\Indban}(``\varinjlim"V_i,``\varinjlim"W_j)=\varprojlim_i\varinjlim_j \Hom_{\operatorname{Ban}_K}(V_i,W_j).
\end{equation*}
\end{defi}
As before, $\Indban$ has a canonical closed symmetric monoidal structure. Let us describe the corresponding functors: Let $``\varinjlim"V_i$, and  $``\varinjlim"W_j$ be two Ind-Banach spaces. We define the tensor product of Ind-Banach spaces as follows:
\begin{equation*}
    ``\varinjlim"V_i\overrightarrow{\otimes}_K``\varinjlim"W_j= ``\varinjlim"V_i\widehat{\otimes}_KW_j,
\end{equation*}
where $V_i\widehat{\otimes}_KW_j$ denotes the complete projective tensor product 
of Banach spaces (\emph{cf}. Similarly, we define an inner hom functor via the following formula:
\begin{equation*}
    \underline{\Hom}_{\Indban}(``\varinjlim" V_i,``\varinjlim"W_j)=\varprojlim_i``\varinjlim_j " \underline{\Hom}_{\operatorname{Ban}_K}(V_i,W_j),
\end{equation*}
where $\underline{\Hom}_{\operatorname{Ban}_K}(V_i,W_j)$ represents the Banach space of continuous linear operators with the operator norm (\emph{cf}. \cite[Section 6]{schneider2013nonarchimedean}). Similar to the case of complete bornological spaces, we have the following:
\begin{prop}
The functors $\overrightarrow{\otimes}_K$, and $\underline{\Hom}_{\Indban}$  make $\Indban$ into a closed symmetric monoidal category. For $U,V,W\in \Indban$, we have an adjunction:
\begin{equation*}
    \Hom_{\Indban}(U\overrightarrow{\otimes}_KV,W)=\Hom_{\Indban}(U,\underline{\Hom}_{\Indban}(V,W)).
\end{equation*}
We point out that the unit of the closed symmetric monoidal structure of $\Indban$ is $K$.
\end{prop}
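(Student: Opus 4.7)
The plan is to transport the closed symmetric monoidal structure from $\operatorname{Ban}_K$ to $\Indban$ by extending along formal filtered colimits. The starting point is the classical fact that $(\operatorname{Ban}_K,\widehat{\otimes}_K,K)$ is a closed symmetric monoidal category, with internal hom given by $\underline{\Hom}_{\operatorname{Ban}_K}$ equipped with the operator norm; this is the direct Banach analogue of the statement just recorded for $\widehat{\mathcal{B}}c_K$. For the symmetric monoidal axioms on $\Indban$, I would fix representing systems $U=``\varinjlim"U_\alpha$, $V=``\varinjlim"V_i$, $W=``\varinjlim"W_j$ and transport the associator, left/right unitors and symmetry of $\widehat{\otimes}_K$ from $\operatorname{Ban}_K$ levelwise through $\overrightarrow{\otimes}_K$, which by definition is the formal filtered colimit of the complete projective tensor products $U_\alpha\widehat{\otimes}_K V_i$. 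Compatibility with morphisms follows from the universal property of filtered colimits, and the unit is $K$ because $K\overrightarrow{\otimes}_K``\varinjlim"V_i=``\varinjlim"(K\widehat{\otimes}_K V_i)\cong ``\varinjlim"V_i$, with isomorphism inherited from the unitor in $\operatorname{Ban}_K$.

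The substantive content is the tensor–hom adjunction, which I would verify by unwinding both sides and applying the Banach adjunction under the limits and colimits. By definition of $\overrightarrow{\otimes}_K$ and of $\Hom_{\Indban}$, the left-hand side reads
\[
\Hom_{\Indban}(U\overrightarrow{\otimes}_KV,W)=\varprojlim_{\alpha,i}\varinjlim_j\Hom_{\operatorname{Ban}_K}(U_\alpha\widehat{\otimes}_KV_i,W_j).
\]
On the right, one first writes $\underline{\Hom}_{\Indban}(V,W)=\varprojlim_i``\varinjlim_j"\underline{\Hom}_{\operatorname{Ban}_K}(V_i,W_j)$, and then uses that $\Hom_{\Indban}(U,-)$ converts the ind-system in its first argument to a projective limit and is limit-preserving in its second argument, yielding
\[
\Hom_{\Indban}(U,\underline{\Hom}_{\Indban}(V,W))=\varprojlim_\alpha\varprojlim_i\varinjlim_j\Hom_{\operatorname{Ban}_K}(U_\alpha,\underline{\Hom}_{\operatorname{Ban}_K}(V_i,W_j)).
\]
The Banach tensor–hom adjunction identifies the terms indexed by $(\alpha,i,j)$ naturally, and this identification is compatible with all transition maps, so taking $\varprojlim_{\alpha,i}\varinjlim_j$ on both sides produces the desired isomorphism. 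Naturality in $U$, $V$, $W$ is inherited from the naturality of the Banach adjunction.

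The main obstacle, and the place that requires real care, is the manipulation of the iterated limit/colimit expressions. In particular $\underline{\Hom}_{\Indban}(V,W)$ is not itself an Ind-Banach space but a cofiltered limit of such; one has to justify pulling the $\varprojlim_i$ out of $\Hom_{\Indban}(U,-)$ and interleaving it with $\varprojlim_\alpha$ and $\varinjlim_j$ correctly. This reduces to the general fact that $\Hom$ in any category preserves limits in its second variable, combined with the definition of morphisms between formal filtered colimits, but the bookkeeping is delicate and is where one must verify that no Mittag-Leffler-type issue arises. Once this bookkeeping is in place, the adjunction and the remaining monoidal compatibilities are formal consequences of their counterparts in $\operatorname{Ban}_K$.
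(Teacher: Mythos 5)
The paper does not actually prove this proposition: it is disposed of by the citation \cite[Theorem 4.10]{bode2021operations}, where the closed symmetric monoidal structure on the Ind-completion of a closed symmetric monoidal category is established in general. Your argument instead verifies everything directly from the explicit formulas for $\overrightarrow{\otimes}_K$, $\underline{\Hom}_{\Indban}$ and $\Hom_{\Indban}$, and it is correct. The two nontrivial points you isolate are exactly the right ones: (a) the term-by-term Banach adjunction $\Hom_{\operatorname{Ban}_K}(U_\alpha\widehat{\otimes}_KV_i,W_j)\cong\Hom_{\operatorname{Ban}_K}(U_\alpha,\underline{\Hom}_{\operatorname{Ban}_K}(V_i,W_j))$, natural in all three indices, and (b) the identification of the two iterated limit/colimit expressions. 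Concerning (b), one small correction: the $\varprojlim_i$ in the definition of $\underline{\Hom}_{\Indban}(V,W)$ is a genuine limit taken in $\Indban$, so the inner hom \emph{is} an honest object of $\Indban$ (contrary to your parenthetical remark), merely one not presented by an obvious formal colimit; the functor $\Hom_{\Indban}(U,-)$ sends it to a limit of sets by the universal property of limits, and combining this with the definition of morphisms out of a formal colimit yields $\varprojlim_i\varprojlim_\alpha\varinjlim_j$, which agrees with $\varprojlim_{(\alpha,i)}\varinjlim_j$ by Fubini for limits. No Mittag--Leffler issue can arise, since one is only computing limits of hom-sets and not asking for any exactness. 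What the paper's citation buys is that the coherence axioms and all naturality statements come for free from the general theory of Ind-completions; your direct computation buys transparency and makes explicit where the Banach-level tensor--hom adjunction is actually used.
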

\begin{proof}
This is shown in \cite[Theorem 4.10]{bode2021operations}.
\end{proof}
As usual, we can use these closed symmetric monoidal structures to define monoids. In particular, a monoid in $\widehat{\mathcal{B}}c_K$ is a complete bornological space $\mathscr{A}\in \widehat{\mathcal{B}}c_K$, together with a pair of morphisms:
\begin{equation*}
    K\rightarrow \mathscr{A}, \, \mathscr{A}\widehat{\otimes}_K\mathscr{A}\rightarrow \mathscr{A},
\end{equation*}
satisfying the usual axioms of an algebra. We will call monoids in $\widehat{\mathcal{B}}c_K$ complete bornological algebras. Given a complete bornological algebra $\mathscr{A}$, an $\mathscr{A}$-module is a complete bornological space $\mathcal{M}$, together with a bounded map:
\begin{equation*}
    \mathscr{A}\widehat{\otimes}_K\mathcal{M}\rightarrow \mathcal{M},
\end{equation*}
satisfying the usual axioms of a module. We define $\Mod_{\widehat{\mathcal{B}}c_K}(\mathscr{A})$ as the category of all $\mathscr{A}$-modules with $\mathscr{A}$-linear maps.  There are also $\mathscr{A}$-module versions of the tensor product and inner homomorphism functor, which satisfy the usual adjunctions (\emph{cf.} \cite[Sections 2.3/2.5]{HochDmod}). We will denote the tensor product of $\mathscr{A}$-modules by:
\begin{equation*}
    -\widehat{\otimes}_{\mathscr{A}}-:\Mod_{\widehat{\mathcal{B}}c_K}(\mathscr{A}^{\op})\times \Mod_{\widehat{\mathcal{B}}c_K}(\mathscr{A})\rightarrow \widehat{\mathcal{B}}c_K,
\end{equation*}
and the inner homomorphism functor of $\mathscr{A}$-modules by:
\begin{equation*}
    \underline{\Hom}_{\mathscr{A}}(-,-):\Mod_{\widehat{\mathcal{B}}c_K}(\mathscr{A})\times \Mod_{\widehat{\mathcal{B}}c_K}(\mathscr{A})\rightarrow \widehat{\mathcal{B}}c_K.
\end{equation*}
We will give an explicit description of these operations in the following section.
Let us remark that the constructions so far make sense in any closed symmetric monoidal category. In particular, there are also monoids in $\mathcal{B}c_K$ and $\Indban$, which we call the bornological algebras and Ind-Banach algebras respectively. Given an Ind-Banach algebra $\mathscr{B}$, we denote its category of modules by $\Mod_{\Indban}(\mathscr{B})$, the tensor product of $\mathscr{B}$-modules by $\overrightarrow{\otimes}_{\mathscr{B}}$, and the inner homomorphism functor by $\underline{\Hom}_{\mathscr{B}}(-,-)$. We follow the analogous convention for bornological algebras.\\

The most attractive feature of these categories is that they have a canonical quasi-abelian category structure (\emph{cf}. \cite[Section 2.1]{HochDmod}). In practice, this means that, although they are not abelian, they are still good enough from the perspective of homological algebra. Indeed, let $\mathcal{C}$ be a category with kernels and cokernels. We say a morphism $f:X\rightarrow Y$ in $\mathcal{C}$ is strict if the induced map:
\begin{equation*}
   \operatorname{CoIm}(f) \rightarrow \operatorname{Im}(f),
\end{equation*}
is an isomorphism. A quasi-abelian category $\mathcal{C}$ is an additive category with kernels and
cokernels such that strict epimorphisms are stable under pullback, and strict monomorphisms are stable under pushout. In these conditions, we can define the derived category $\operatorname{D}(\mathcal{C})$ of $\mathcal{C}$ as the localization of the homotopy category $\operatorname{K}(\mathcal{C})$ of $\mathcal{C}$ at the full triangulated subcategory given by the strict exact complexes.
Furthermore, if $\mathcal{C}$ is a quasi-abelian category, then there is an abelian category $LH(\mathcal{C})$, which we call the left heart of $\mathcal{C}$, equipped with a fully faithful functor $I:\mathcal{C}\rightarrow LH(\mathcal{C})$ satisfying that $I$ admits a right adjoint $C:LH(\mathcal{C})\rightarrow \mathcal{C}$, and induces an equivalence of triangulated categories:
\begin{equation*}
    I:\operatorname{D}(\mathcal{C})\rightarrow \operatorname{D}(LH(\mathcal{C})),
\end{equation*}
where $\operatorname{D}(LH(\mathcal{C}))$ denotes the classical (unbounded) derived category of an abelian category. 
\begin{prop}\label{prop properties of categories of modules}
Let $\mathcal{C}$ be $\Indban$ or $\widehat{\mathcal{B}}c_K$, and let $\mathscr{A}$ be a monoid in $\mathcal{C}$. Then $\Mod_{\mathcal{C}}(\mathcal{A})$ is a quasi-abelian category. Furthermore, $LH(\Mod_{\mathcal{C}}(\mathcal{A}))$ is a Grothendieck abelian category.
\end{prop}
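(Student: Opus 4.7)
The plan is to reduce both claims to structural properties of $\mathcal{C}$ itself, which are already recorded (or implicit) in the references. The key observation is that the forgetful functor $U:\Mod_{\mathcal{C}}(\mathscr{A})\to\mathcal{C}$ creates kernels, cokernels, products and coproducts: given a morphism $f:M\to N$ of $\mathscr{A}$-modules, the kernel and cokernel of $f$ in $\mathcal{C}$ inherit a unique $\mathscr{A}$-module structure making them into the kernel and cokernel in $\Mod_{\mathcal{C}}(\mathscr{A})$, because the action map $\mathscr{A}\widehat{\otimes}_K(-)$ preserves kernels on one side and cokernels on the other via the closed symmetric monoidal adjunction. Hence $\Mod_{\mathcal{C}}(\mathscr{A})$ has kernels, cokernels, and a natural notion of strict morphism inherited from $\mathcal{C}$.

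First I would verify quasi-abelianness. Since $U$ creates kernels and cokernels, a morphism $f$ in $\Mod_{\mathcal{C}}(\mathscr{A})$ is strict if and only if $U(f)$ is strict in $\mathcal{C}$. Pullbacks and pushouts in $\Mod_{\mathcal{C}}(\mathscr{A})$ are likewise computed in $\mathcal{C}$, because the standard construction of pullbacks as kernels of differences and pushouts as cokernels of differences lifts along $U$. Stability of strict epimorphisms under pullback and strict monomorphisms under pushout then follows directly from the corresponding property of $\mathcal{C}$, which is the quasi-abelianness of $\Indban$ and $\widehat{\mathcal{B}}c_K$ recalled earlier in the section.

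For the Grothendieck property of the left heart, I would use the compatibility of $LH(-)$ with categories of modules over a monoid. Namely, the fully faithful functor $I:\mathcal{C}\hookrightarrow LH(\mathcal{C})$ is strongly monoidal up to the usual Day-style comparison, so $I(\mathscr{A})$ becomes a monoid in the abelian monoidal category $LH(\mathcal{C})$, and there is an equivalence
\begin{equation*}
LH(\Mod_{\mathcal{C}}(\mathscr{A}))\;\simeq\;\Mod_{LH(\mathcal{C})}(I(\mathscr{A})).
\end{equation*}
One then appeals to the general fact that if $\mathcal{D}$ is a Grothendieck abelian closed symmetric monoidal category and $R$ is a monoid in $\mathcal{D}$, the category $\Mod_{\mathcal{D}}(R)$ is again Grothendieck: it is cocomplete (colimits computed in $\mathcal{D}$ with the transported action), filtered colimits are exact because they are exact in $\mathcal{D}$, and a generator is built from $R\otimes G$ where $G$ is a generator of $\mathcal{D}$. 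Since $LH(\Indban)$ and $LH(\widehat{\mathcal{B}}c_K)$ are known to be Grothendieck (this is part of the standard theory of elementary quasi-abelian categories developed by Schneiders and applied in \cite{bode2021operations}), the conclusion follows.

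The step I expect to be most delicate is the identification $LH(\Mod_{\mathcal{C}}(\mathscr{A}))\simeq\Mod_{LH(\mathcal{C})}(I(\mathscr{A}))$: one must check that the monoidal structure is preserved under $I$ in a sufficiently strong sense to make the module structure transfer, and that the equivalence intertwines the adjunction $(I,C)$ with the forgetful/free adjunctions on both sides. Once this compatibility is in place, all three Grothendieck axioms (cocompleteness, AB5, and existence of a generator) transport cleanly, and no further analytic input is needed.
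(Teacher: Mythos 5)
The paper offers no argument for this proposition: its ``proof'' is a single citation to \cite[Section 2.4]{HochDmod}, which in turn rests on the machinery of \cite{bode2021operations} and Schneiders. So there is no in-paper argument to compare against; judged on its own merits, your proposal is essentially the standard proof from that literature and is correct in outline. Two points deserve tightening. First, your justification for why kernels lift to $\Mod_{\mathcal{C}}(\mathscr{A})$ is phrased as if $\mathscr{A}\widehat{\otimes}_K(-)$ preserved kernels; it does not (it is a left adjoint, hence preserves cokernels only). The correct and easier argument is that the composite $\mathscr{A}\widehat{\otimes}_K\operatorname{Ker}(f)\to \mathscr{A}\widehat{\otimes}_KM\to M\to N$ vanishes, so the action factors through $\operatorname{Ker}(f)$ by its universal property in $\mathcal{C}$; equivalently one can use the right adjoint $\underline{\Hom}_{\mathcal{C}}(\mathscr{A},-)$, which does preserve kernels. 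With that repaired, the transfer of quasi-abelianness along the forgetful functor is exactly as you say. Second, you correctly identify the delicate step as the equivalence $LH(\Mod_{\mathcal{C}}(\mathscr{A}))\simeq\Mod_{LH(\mathcal{C})}(I(\mathscr{A}))$, but note that for $\mathcal{C}=\widehat{\mathcal{B}}c_K$ the paper only records that $I$ is \emph{lax} symmetric monoidal (Proposition \ref{prop extension of closed structure}); strong monoidality is asserted only for $\Indban$. Lax monoidality suffices to make $I(\mathscr{A})$ a monoid, but to get the equivalence of module categories one needs that $I$ is strongly monoidal on the flat projective generators, which is where the hypothesis of ``enough flat projectives stable under $\widehat{\otimes}_K$'' from \cite{bode2021operations} enters. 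An alternative that sidesteps this entirely (and is closer to how the cited sources proceed) is to show directly that $\Mod_{\mathcal{C}}(\mathscr{A})$ is an \emph{elementary} quasi-abelian category — the objects $\mathscr{A}\widehat{\otimes}_KP$, for $P$ ranging over a small strictly generating family of tiny projectives of $\mathcal{C}$, are tiny projective generators via the free--forgetful adjunction — and then invoke Schneiders' theorem that the left heart of an elementary quasi-abelian category is Grothendieck. Either route works; yours trades the verification of elementarity for the verification of monoidal compatibility of $I$.
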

\begin{proof}
See \cite[Section 2.4]{HochDmod}.
\end{proof}
The idea now is extending the closed symmetric monoidal structures to the left hearts.  We will focus on the cases of $\widehat{\mathcal{B}}c_K$, and $\Indban$, which are the most relevant to our setting:
\begin{prop}\label{prop extension of closed structure}
Let $\mathcal{C}$ be $\Indban$ or $\widehat{\mathcal{B}}c_K$. The closed symmetric monoidal structure in $\mathcal{C}$ extends uniquely to a closed symmetric monoidal structure on $LH(\mathcal{C})$. We denote the corresponding functors by $\widetilde{\otimes}_{LH(\mathcal{C})}$ and $\underline{\Hom}_{LH(\mathcal{C})}$. Furthermore, the functor $I:\mathcal{C}\rightarrow LH(\mathcal{C})$ is lax symmetric monoidal, and we have the following identification of functors:
\begin{equation*}
    I(\underline{\Hom}_{\mathcal{C}}(-,-))=\underline{\Hom}_{LH(\mathcal{C})}(I(-),I(-)).
\end{equation*}
Furthermore, if $\mathcal{C}=\Indban$, then the functor $I:\Indban\rightarrow LH(\Indban)$ is strong symmetric monoidal. That is, there is an identification of functors:
\begin{equation*}
    I(-\overrightarrow{\otimes}_K-)=I(-)\widetilde{\otimes}_{LH(\Indban)}I(-).
\end{equation*}
\end{prop}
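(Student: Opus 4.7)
The plan is to exploit the general framework of closed symmetric monoidal structures on elementary quasi-abelian categories, applied to $\Indban$ and $\widehat{\mathcal{B}}c_K$ as developed in \cite{bode2021operations}. Uniqueness is essentially formal: because $I:\mathcal{C}\to LH(\mathcal{C})$ is fully faithful with right adjoint $C$, and every object of $LH(\mathcal{C})$ can be realized as a kernel (in $LH(\mathcal{C})$) of a morphism in the essential image of $I$, any closed symmetric monoidal structure on $LH(\mathcal{C})$ that is compatible with $I$ and preserves the relevant colimits and limits is determined by its restriction to $I(\mathcal{C})$.

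For existence, I would first construct the inner Hom on $LH(\mathcal{C})$ using left exactness of $\underline{\Hom}_{\mathcal{C}}(-,-)$. Concretely, I would present each object of $LH(\mathcal{C})$ as a kernel of a map $I(V)\to I(W)$ with $V,W\in\mathcal{C}$, and define $\underline{\Hom}_{LH(\mathcal{C})}$ by the corresponding kernel of inner Homs in $\mathcal{C}$, using that $\underline{\Hom}_{\mathcal{C}}(V,W)$ is already complete (resp.\ an Ind-Banach space) whenever $W$ is. The identification $I(\underline{\Hom}_{\mathcal{C}}(-,-))=\underline{\Hom}_{LH(\mathcal{C})}(I(-),I(-))$ then holds tautologically on representables and extends by the universal property. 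The tensor product is constructed dually: every object of $LH(\mathcal{C})$ admits a two-term presentation by objects of $\mathcal{C}$, and one tensors these presentations term by term. Well-definedness rests on having sufficiently many flat objects in $\mathcal{C}$, which is the case for both $\Indban$ and $\widehat{\mathcal{B}}c_K$. The tensor/hom adjunction and the coherence constraints (associativity, symmetry, unit) transport from $\mathcal{C}$ to $LH(\mathcal{C})$ via $I$, using its full faithfulness together with naturality of all the constructions.

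The main obstacle is upgrading the lax structure to strong monoidality when $\mathcal{C}=\Indban$. The lax comparison morphism $I(V)\widetilde{\otimes}_{LH(\Indban)}I(W)\to I(V\overrightarrow{\otimes}_KW)$ is always available; what one needs is that it is an isomorphism. This reduces to showing that every Ind-Banach space is flat with respect to $\overrightarrow{\otimes}_K$, which holds because $\overrightarrow{\otimes}_K$ is computed as a filtered colimit of projective tensor products of Banach spaces, and both operations preserve strict exactness in this setting. By contrast, in $\widehat{\mathcal{B}}c_K$ the completion functor $\widehat{\operatorname{Cpl}}$ can fail to preserve strict exactness, so arbitrary complete bornological spaces need not be flat with respect to $\widehat{\otimes}_K$; this is precisely why $I:\widehat{\mathcal{B}}c_K\to LH(\widehat{\mathcal{B}}c_K)$ remains only lax monoidal in general.
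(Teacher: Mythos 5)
The paper disposes of this proposition by citing \cite[Section 3.3]{bode2021operations} (which in turn rests on \cite[Section 1.5]{schneiders1999quasi}), so any self-contained argument is necessarily ``a different route.'' Your overall architecture is the right one and matches the cited construction: uniqueness via the fact that $LH(\mathcal{C})$ is generated by the essential image of $I$ and that the tensor must be right exact in each variable; existence via presentations by objects of $\mathcal{C}$ together with enough flat projectives stable under the tensor product; and the strong/lax dichotomy via exactness of $\overrightarrow{\otimes}_K$ on $\Indban$ (exactness of $\widehat{\otimes}_K$ on Banach spaces plus exactness of filtered colimits) versus the failure of exactness of $\widehat{\otimes}_K$ on $\widehat{\mathcal{B}}c_K$ caused by the completion. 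Those last two points are correct as stated.

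There is, however, a concrete error in your construction of the internal Hom. You propose to ``present each object of $LH(\mathcal{C})$ as a kernel of a map $I(V)\rightarrow I(W)$.'' No such presentation exists for a general object of the left heart: by \cite[Proposition 1.2.29]{schneiders1999quasi} (used elsewhere in this paper), the essential image of $I$ is closed under subobjects, so the kernel of any morphism between objects of $I(\mathcal{C})$ already lies in $I(\mathcal{C})$. Objects of $LH(\mathcal{C})$ are \emph{cokernels} of monomorphisms $I(V)\rightarrow I(W)$, not kernels, so your recipe only defines $\underline{\Hom}_{LH(\mathcal{C})}$ on the essential image of $I$. Relatedly, the identification $I(\underline{\Hom}_{\mathcal{C}}(-,-))=\underline{\Hom}_{LH(\mathcal{C})}(I(-),I(-))$ is not ``tautological on representables'': it is the substantive content of the statement, and it is proved in \cite[Proposition 1.5.3]{schneiders1999quasi} using that $\mathcal{C}$ has enough projectives $P$ with $P\otimes P'$ again projective, so that $\underline{\Hom}_{\mathcal{C}}(P,-)$ is exact and the objects $I(P)$ are internal-Hom-acyclic generators. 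The correct construction handles the contravariant variable by a projective presentation $I(P_1)\rightarrow I(P_0)\rightarrow \mathcal{F}\rightarrow 0$ and takes the kernel of the induced map of internal Homs, and then needs exactness of $\underline{\Hom}_{LH(\mathcal{C})}(I(P),-)$ to treat the covariant variable. With that correction (and the corresponding flat resolutions on the tensor side, which you do set up correctly), the rest of your argument goes through.
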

\begin{proof}
This is shown in \cite[Section 3.3]{bode2021operations}.
\end{proof}
The categories $\Indban$ and $\widehat{\mathcal{B}}c_K$ are related via the following proposition:
\begin{prop}
There is an adjunction of quasi-abelian categories:
\begin{equation*}
    \widehat{L}:\Indban \leftrightarrows \widehat{\mathcal{B}}c_K:\operatorname{diss}(-).
\end{equation*}
We call $\operatorname{diss}(-)$, the dissection functor. This adjunction satisfies the identity: $\widehat{L}\circ  \operatorname{diss}(-)=\operatorname{Id}_{\widehat{\mathcal{B}}c_K}$, and induces mutually inverse equivalences of abelian categories:
\begin{equation*} 
\widetilde{L}:LH(\Indban)\leftrightarrows LH(\widehat{\mathcal{B}}c_K):\widetilde{\operatorname{diss}}(-).
\end{equation*} 
Furthermore, $\widetilde{L}$ and $\widetilde{\operatorname{diss}}(-)$ are strong symmetric monoidal functors.
\end{prop}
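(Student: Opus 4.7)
The plan is to build the two functors explicitly, verify the adjunction and the splitting $\widehat{L}\circ\operatorname{diss}=\operatorname{Id}_{\widehat{\mathcal{B}}c_K}$ at the level of the quasi-abelian categories, and then bootstrap these to an equivalence between the left hearts using the universal property of $LH(-)$. The strong monoidality of the induced equivalences is then a formal consequence of the monoidality of the underlying functors together with the description of $\widetilde{\otimes}_{LH(-)}$ provided by Proposition \ref{prop extension of closed structure}.

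First I would construct $\operatorname{diss}$. Every complete bornological $V$ can be written as a filtered colimit $V=\varinjlim_{i\in I}V_i$ of Banach spaces along bounded injections (the standard realization through the bounded disks $\mathcal{R}\cdot B$ from Definition \ref{defi bornology}(v)), and we set $\operatorname{diss}(V):=\text{``}\varinjlim\text{''}V_i$. I would then construct $\widehat{L}$ as the functor sending $\text{``}\varinjlim\text{''}V_i$ to the actual colimit $\varinjlim V_i$ computed in $\widehat{\mathcal{B}}c_K$ (this is where completeness enters: one takes the bornological colimit and completes if necessary). The adjunction
\begin{equation*}
\Hom_{\widehat{\mathcal{B}}c_K}\bigl(\widehat{L}(\text{``}\varinjlim\text{''}V_i),\,W\bigr)=\Hom_{\Indban}\bigl(\text{``}\varinjlim\text{''}V_i,\,\operatorname{diss}(W)\bigr)
\end{equation*}
then follows from unwinding the definition of $\Hom_{\Indban}$ as $\varprojlim_i\varinjlim_j\Hom_{\operatorname{Ban}_K}(V_i,W_j)$ combined with the universal property of filtered colimits in $\widehat{\mathcal{B}}c_K$. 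The splitting $\widehat{L}\circ\operatorname{diss}=\operatorname{Id}$ is immediate since the colimit of the canonical dissection recovers the original complete bornological space.

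Next I would pass to the left hearts. Since both $\widehat{L}$ and $\operatorname{diss}$ preserve strict exact sequences (this requires a small argument: $\operatorname{diss}$ because strict monomorphisms and epimorphisms in $\widehat{\mathcal{B}}c_K$ are detected on representing Banach systems, and $\widehat{L}$ because filtered colimits are exact), they induce an adjoint pair $\widetilde{L}\dashv\widetilde{\operatorname{diss}}$ between the left hearts. The identity $\widehat{L}\circ\operatorname{diss}=\operatorname{Id}$ automatically yields $\widetilde{L}\circ\widetilde{\operatorname{diss}}=\operatorname{Id}_{LH(\widehat{\mathcal{B}}c_K)}$, so $\widetilde{\operatorname{diss}}$ is fully faithful; the remaining content is essential surjectivity of $\widetilde{\operatorname{diss}}$, equivalently, that the unit $\eta\colon\operatorname{Id}\Rightarrow\widetilde{\operatorname{diss}}\circ\widetilde{L}$ is an isomorphism in $LH(\Indban)$. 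For this I would use the explicit description of objects of the left heart as equivalence classes of strict monomorphisms $V'\hookrightarrow V$ in $\Indban$ and reduce $\eta$ to a strict isomorphism by factoring any Ind-Banach space through its dissection and noting that the discrepancy between $\text{``}\varinjlim\text{''}V_i$ and $\operatorname{diss}(\varinjlim V_i)$ is a strict quasi-isomorphism, so it is killed in $LH$. This step is the main obstacle: turning the intuition that ``Ind-Banach differs from complete bornological only by non-essential data'' into a precise identification of $\widetilde{\operatorname{diss}}\circ\widetilde{L}$ with the identity requires a careful tracking of cokernels of strict monos in both categories.

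Finally, for the monoidal claim, I would first check that $\widehat{L}$ is strong symmetric monoidal on the underlying quasi-abelian categories: this is precisely the compatibility $\widehat{L}(V\overrightarrow{\otimes}_KW)=\widehat{L}(V)\widehat{\otimes}_K\widehat{L}(W)$, which reduces to the agreement of the complete projective Banach tensor product used in both definitions, together with the fact that filtered colimits commute with $\widehat{\otimes}_K$ in each variable. Using Proposition \ref{prop extension of closed structure}, which characterizes $\widetilde{\otimes}_{LH(\mathcal{C})}$ as the unique extension of $\widehat{\otimes}_K$ (resp.\ $\overrightarrow{\otimes}_K$) and guarantees $I(-\overrightarrow{\otimes}_K-)=I(-)\widetilde{\otimes}_{LH(\Indban)}I(-)$, I would transport the strong monoidality of $\widehat{L}$ to $\widetilde{L}$; the inverse $\widetilde{\operatorname{diss}}$ is then automatically strong symmetric monoidal as the inverse of a strong monoidal equivalence.
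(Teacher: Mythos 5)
The paper does not prove this statement; it cites \cite[Propositions 5.15 and 5.16]{prosmans2000homological}. Your overall architecture (build the adjunction, observe $\widehat{L}\circ\operatorname{diss}=\operatorname{Id}$, pass to left hearts) matches the shape of that source, and the first half of your argument — the construction of the two functors, the adjunction via $\Hom_{\Indban}=\varprojlim_i\varinjlim_j\Hom_{\operatorname{Ban}_K}$, and the retraction identity — is fine. The problem is the step you yourself flag as ``the main obstacle'': it is not merely delicate, it is wrong as stated. First, $\widehat{L}$ is \emph{not} exact: colimits in $\widehat{\mathcal{B}}c_K$ are computed by separating and completing the colimit taken in $\mathcal{B}c_K$, and this completion step destroys left exactness, so $\widehat{L}$ is only right exact and does not induce a functor on left hearts satisfying $\widetilde{L}\circ I=I\circ\widehat{L}$. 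Second, and more fundamentally, the claim that the unit $\eta_X\colon X\to\operatorname{diss}(\widehat{L}(X))$ is ``killed in $LH$'' cannot hold: $I\colon\mathcal{E}\to LH(\mathcal{E})$ is fully faithful and therefore reflects isomorphisms, so if $I(\eta_X)$ were invertible for every $X$ then $\eta_X$ would already be invertible in $\Indban$, i.e.\ $\operatorname{diss}$ would be an equivalence of quasi-abelian categories. That is false — the whole point of juggling both categories in this paper (and in \cite{bode2021operations}) is that they are \emph{not} equivalent, e.g.\ filtered colimits are exact in $\Indban$ but not in $\widehat{\mathcal{B}}c_K$. The same objection shows that no argument in which \emph{both} induced functors commute with $I$ can succeed.

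The actual route in Prosmans--Schneiders goes through projectives rather than through the unit: both categories are elementary quasi-abelian with enough projectives; $\widehat{L}$ and $\operatorname{diss}$ restrict to mutually quasi-inverse equivalences between the full subcategories of projective objects (essentially coproducts of Banach spaces); and since every object of the left heart of such a category is a cokernel $\operatorname{Coker}(I(P_1)\to I(P_0))$ with $P_i$ projective, with morphisms computed by lifting, the exact fully faithful functor $\widetilde{\operatorname{diss}}=LH(\operatorname{diss})$ is an equivalence, whose quasi-inverse $\widetilde{L}$ is the (zeroth left derived functor of the) right exact functor $\widehat{L}$ — \emph{not} the naive levelwise extension. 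Your monoidality argument inherits the same issue: transporting strong monoidality from $\widehat{L}$ to $\widetilde{L}$ via $I$ is not automatic on the bornological side, because $I\colon\widehat{\mathcal{B}}c_K\to LH(\widehat{\mathcal{B}}c_K)$ is only lax monoidal (Proposition \ref{prop extension of closed structure}); note also that $\operatorname{diss}$ itself is only lax monoidal (the comparison map is an isomorphism only on metrizable spaces, cf.\ Proposition \ref{prop comparison of tensor products metrizable spaces}), even though $\widetilde{\operatorname{diss}}$ is strong monoidal — another sign that the passage to left hearts cannot be done by naive extension of both functors.
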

\begin{proof}
See \cite[Proposition 5.15]{prosmans2000homological}, and \cite[Proposition 5.16.]{prosmans2000homological}.
\end{proof}
In order to simplify the notation, we will denote the functors of the closed symmetric monoidal structure of $LH(\widehat{\mathcal{B}}c_K)$ obtained in Proposition \ref{prop extension of closed structure} by $\widetilde{\otimes}_K$ and $\underline{\Hom}_{LH(\widehat{\mathcal{B}}c_K)}(-,-)$.\\

For some technical reasons regarding exactness of filtered colimits, it is often times better to work in the category $\Indban$. For instance, given a rigid space $X$, the category of sheaves $\operatorname{Shv}(X,\widehat{\mathcal{B}}c_K)$ is not necessarily a quasi-abelian category, but $\operatorname{Shv}(X,\Indban)$ is always a quasi-abelian category. However, $\widehat{\mathcal{B}}c_K$ is a much more explicit category. Furthermore, objects in $\widehat{\mathcal{B}}c_K$ have an underlying $K$-vector space, and this makes some of the constructions simpler. Thus, we will use this proposition to use whichever category is better for our purposes. Furthermore, as we are interested in algebras of infinite order differential operators, most of the algebras we will deal with are (bornologifications of) Fréchet algebras. In this setting, the results above are even stronger:
\begin{prop}\label{prop comparison of tensor products metrizable spaces}
Let $V,W$ be  (bornologifications of) metrizable locally convex  spaces. Then there is a canonical isomorphism in $\Indban$:
\begin{equation*}
  \operatorname{diss}(V)\overrightarrow{\otimes}_K\operatorname{diss}(W)\rightarrow \operatorname{diss}(V\widehat{\otimes}_KW).
\end{equation*}
In particular, if  $\mathscr{A}$ is the bornologification of a Fréchet K-algebra, then the dissection functor induces an exact and fully faithful functor:
     \begin{equation*}
         \operatorname{diss}_A: \Mod_{\widehat{\mathcal{B}}c_K}(\mathscr{A})\rightarrow \Mod_{\Indban}(\operatorname{diss}(\mathscr{A})).
     \end{equation*}
Furthermore, this yields an equivalence of
abelian categories:
\begin{equation*}
    LH(\Mod_{\widehat{\mathcal{B}}c_K}(\mathscr{A})) \cong LH(\Mod_{\Indban}(\operatorname{diss}(\mathscr{A}))) \cong \Mod_{LH(\widehat{\mathcal{B}}c_K)}(I(\mathscr
    A)).
\end{equation*}
\end{prop}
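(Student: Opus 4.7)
The plan is to first establish the tensor-product isomorphism at the level of underlying spaces, then bootstrap it to modules, and finally deduce the equivalence of left hearts from the earlier result that $\widetilde{L}$ and $\widetilde{\operatorname{diss}}$ are mutually inverse strong symmetric monoidal equivalences.

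For the first assertion, I would begin by recalling that a (bornologification of a) metrizable locally convex space admits a countable fundamental system of bounded disks. Hence both $V$ and $W$ can be dissected as sequential filtered colimits
\begin{equation*}
\operatorname{diss}(V)=\text{``}\varinjlim_{n\in\mathbb{N}}\text{''}V_n,\qquad \operatorname{diss}(W)=\text{``}\varinjlim_{m\in\mathbb{N}}\text{''}W_m,
\end{equation*}
with the $V_n,W_m$ Banach spaces. By definition of $\overrightarrow{\otimes}_K$ the left-hand side is $\text{``}\varinjlim_{n,m}\text{''}V_n\widehat{\otimes}_KW_m$, while the right-hand side is (the dissection of) the completion of $V\otimes_KW$ equipped with the projective bornology. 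A canonical comparison map exists because each $V_n\widehat{\otimes}_KW_m$ maps to $V\widehat{\otimes}_KW$. I would check it is an isomorphism by testing both sides against an arbitrary Banach space $E$: on one hand, $\Hom_{\Indban}\bigl(\operatorname{diss}(V)\overrightarrow{\otimes}_K\operatorname{diss}(W),E\bigr)$ is the inverse limit of spaces of bounded bilinear maps $V_n\times W_m\to E$; on the other, $\Hom_{\widehat{\mathcal{B}}c_K}(V\widehat{\otimes}_KW,E)$ is the space of bounded bilinear maps $V\times W\to E$. Since every bounded subset of $V$ (resp.\ $W$) lies in some $V_n$ (resp.\ $W_m$), these two universal properties coincide. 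The countability of the index is what makes the completion step behave well, so passage from Ind-Banach to bornological completion does not lose information.

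For the functor on modules, the strong monoidality just established converts the structure map $\mathscr{A}\widehat{\otimes}_K\mathcal{M}\to\mathcal{M}$ of any $\mathscr{A}$-module $\mathcal{M}$ into an action $\operatorname{diss}(\mathscr{A})\overrightarrow{\otimes}_K\operatorname{diss}(\mathcal{M})\to\operatorname{diss}(\mathcal{M})$, producing the functor $\operatorname{diss}_{\mathscr{A}}$. Exactness follows from the fact that $\operatorname{diss}$ is a right adjoint, and from $\widehat{L}\circ\operatorname{diss}=\operatorname{Id}_{\widehat{\mathcal{B}}c_K}$ together with exactness of $\widehat{L}$ one obtains that $\operatorname{diss}$ also preserves strict epimorphisms. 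Fully faithfulness is a direct consequence of $\widehat{L}\circ\operatorname{diss}=\operatorname{Id}$: any $\operatorname{diss}(\mathscr{A})$-linear map $\operatorname{diss}(\mathcal{M})\to\operatorname{diss}(\mathcal{N})$ can be pushed through $\widehat{L}$ to yield an $\mathscr{A}$-linear map $\mathcal{M}\to\mathcal{N}$, and the two constructions are mutually inverse on homomorphisms.

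For the equivalence of left hearts, I would invoke the result that $\widetilde{L}$ and $\widetilde{\operatorname{diss}}$ are mutually inverse strong symmetric monoidal equivalences between $LH(\Indban)$ and $LH(\widehat{\mathcal{B}}c_K)$. Such an equivalence transports monoids and their modules, hence induces the first identification $LH(\Mod_{\widehat{\mathcal{B}}c_K}(\mathscr{A}))\cong LH(\Mod_{\Indban}(\operatorname{diss}(\mathscr{A})))$. The second identification with $\Mod_{LH(\widehat{\mathcal{B}}c_K)}(I(\mathscr{A}))$ is the standard fact that the left heart of a category of modules over a monoid in a quasi-abelian category $\mathcal{C}$ coincides with the category of modules over the image of the monoid in $LH(\mathcal{C})$, using the extension of the closed monoidal structure from Proposition \ref{prop extension of closed structure}.

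The main obstacle is the very first step. In general, for an uncountable filtered colimit presentation of a bornological space, the Ind-Banach tensor product of dissections and the dissection of the completed bornological tensor product can genuinely differ, because completion need not commute with arbitrary filtered colimits. Making the countability furnished by metrizability do the work requires a careful bookkeeping of bounded disks in $V\otimes_KW$ and an appeal to the fact that a sequentially defined Cauchy net in the projective tensor product bornology can be pushed into some $V_n\widehat{\otimes}_KW_m$. Once this delicate point is resolved, the remaining statements on modules and left hearts are essentially formal transfers through the adjunctions already recorded in the excerpt.
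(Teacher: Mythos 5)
The paper offers no argument for this proposition at all: its ``proof'' is a citation of \cite[Propositions 4.25 and 4.3]{bode2021operations}. Your attempt to supply a real proof breaks down at the one step that carries all the content, namely the isomorphism $\operatorname{diss}(V)\overrightarrow{\otimes}_K\operatorname{diss}(W)\rightarrow\operatorname{diss}(V\widehat{\otimes}_KW)$. There are two distinct problems. First, your starting point is false: a metrizable locally convex space does \emph{not} in general admit a countable fundamental system of bounded disks. A metrizable space with a fundamental sequence of bounded sets is normable, so for every non-normable Fréchet space of interest here (e.g. $\OX_X(X)$ or $\wideparen{\D}_X(X)$ for $X$ Stein) the index category of $\operatorname{diss}(V)$ is uncountable and not sequentially cofinal; the ``sequential colimit'' presentation on which your countability mechanism rests simply does not exist. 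Second, even granting some presentation, testing both sides against an arbitrary Banach space $E$ only computes $\Hom_{\widehat{\mathcal{B}}c_K}(\widehat{L}(-),E)$, i.e. it shows that the two Ind-Banach spaces have the same image under $\widehat{L}$. That much is automatic from $\widehat{L}\circ\operatorname{diss}=\operatorname{Id}$ and the monoidality of $\widehat{L}$, and it does not yield an isomorphism in $\Indban$, because $\widehat{L}$ is not conservative: non-isomorphic Ind-objects can corepresent the same functor on Banach spaces. The genuine content of the statement is that every bounded disk of $V\widehat{\otimes}_KW$ is absorbed by the completed disked hull of $B\otimes C$ for suitable bounded disks $B\subset V$, $C\subset W$ --- a non-archimedean analogue of Grothendieck's theorem on bounded subsets of projective tensor products of metrizable spaces, and the place where metrizability is actually used. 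Your argument never engages with this, which is precisely why the paper defers to Bode.

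The remaining steps (transporting module structures through the monoidal comparison map, full faithfulness of $\operatorname{diss}_{\mathscr{A}}$ from the counit being an isomorphism, and the identification of left hearts via $\widetilde{L}$, $\widetilde{\operatorname{diss}}$ and $LH(\Mod_{\mathcal{C}}(\mathscr{A}))\cong\Mod_{LH(\mathcal{C})}(I(\mathscr{A}))$) are indeed formal once the first step is secured. One smaller caveat: deducing that $\operatorname{diss}$ preserves strict epimorphisms ``from $\widehat{L}\circ\operatorname{diss}=\operatorname{Id}$ together with exactness of $\widehat{L}$'' is not a proof, since $\widehat{L}$ does not reflect strictness; the correct argument is that a strict epimorphism of complete bornological spaces equips its target with the quotient bornology, so every bounded disk of the target lifts to a bounded disk of the source, which is exactly the condition for the induced map of dissections to be a strict epimorphism in $\Indban$.
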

\begin{proof}
This is \cite[Proposition 4.25]{bode2021operations} and \cite[
Proposition 4.3]{bode2021operations}.
\end{proof}
\subsection{Sheaves of Ind-Banach spaces}\label{section sheaes of ind}
Let $X$ be a rigid space, and let $\operatorname{Shv}(X,\Indban)$ be the category of sheaves of Ind-Banach spaces on $X$. As mentioned above, this is a quasi-abelian category. Furthermore, we have an identification of abelian categories:
\begin{equation*}
    LH(\operatorname{Shv}(X,\Indban))=\operatorname{Shv}(X,LH(\Indban))=\operatorname{Shv}(X,LH(\widehat{\mathcal{B}}c_K)).
\end{equation*}
The functor $I:\operatorname{Shv}(X,\Indban)\rightarrow \operatorname{Shv}(X,LH(\widehat{\mathcal{B}}c_K))$ is defined for every sheaf of Ind-Banach spaces $\mathcal{F}$ and every open subspace $U\subset X$ by the rule:
\begin{equation*}
    I(\mathcal{F})(U)=I(\mathcal{F}(U)).
\end{equation*}
As shown in Proposition \ref{prop properties of categories of modules}, $LH(\widehat{\mathcal{B}}c_K)$ is a Grothendieck category. Hence, $\operatorname{Shv}(X,LH(\widehat{\mathcal{B}}c_K))$ is also a Grothendieck category. In particular, it has enough injectives and exact filtered colimits.\\

The closed symmetric monoidal structure of $\Indban$ extends naturally to $\operatorname{Shv}(X,\Indban)$. Namely, for $\mathcal{F},\mathcal{G}\in \operatorname{Shv}(X,\Indban)$, we let $\mathcal{F}\overrightarrow{\otimes}_K\mathcal{G}$ be the sheafification of the following presheaf:
\begin{equation*}
    U\mapsto \mathcal{F}(U)\overrightarrow{\otimes}_K\mathcal{G}(U).
\end{equation*}
In this way, we obtain a tensor product of sheaves of Ind-Banach spaces:
\begin{equation*}
    -\overrightarrow{\otimes}_K-:\operatorname{Shv}(X,\Indban)\times \operatorname{Shv}(X,\Indban)\rightarrow \operatorname{Shv}(X,\Indban),
\end{equation*}
which can be shown to be exact. Analogously, there is a tensor product: 
\begin{equation*}
 -\widetilde{\otimes}_K-:\operatorname{Shv}(X,LH(\widehat{\mathcal{B}}c_K))\times \operatorname{Shv}(X,LH(\widehat{\mathcal{B}}c_K))\rightarrow \operatorname{Shv}(X,LH(\widehat{\mathcal{B}}c_K)),  
\end{equation*}
and it follows by Proposition \ref{prop extension of closed structure} that $I(-\overrightarrow{\otimes}_K-)=I(-)\widetilde{\otimes}_KI(-)$.\\

We now construct the inner homomorphism functor. For $\mathcal{F},\mathcal{G}\in \operatorname{Shv}(X,\Indban)$, the sheaf of inner homomorphisms $\underline{\mathcal{H}om}_{\Indban}(\mathcal{F},\mathcal{G})$ is defined by setting for every admissible open $U\subset X$:
\begin{equation*}
    \underline{\mathcal{H}om}_{\Indban}(\mathcal{F},\mathcal{G})(U)=\operatorname{Eq}\left(\prod_{V\subset U}\underline{\Hom}_{\Indban}(\mathcal{F}(V),\mathcal{G}(V))\rightrightarrows \prod_{W\subset V}\underline{\Hom}_{\Indban}(\mathcal{F}(V),\mathcal{G}(W)) \right).
\end{equation*}
 The maps in the equalizer are the following: For admissible open 
subspaces $W\subset V\subset U$, a homomorphism $f\in \underline{\Hom}_{\Indban}(\mathcal{F}(V),\mathcal{G}(V))$ is mapped to the composition:
\begin{equation*}
    \mathcal{F}(V)\xrightarrow[]{f}\mathcal{G}(V)\rightarrow\mathcal{G}(W).
\end{equation*}
Similarly, every $g\in \underline{\Hom}_{\Indban}(\mathcal{F}(W),\mathcal{G}(W))$ is mapped to:
\begin{equation*}
    \mathcal{F}(V)\rightarrow\mathcal{F}(W)\xrightarrow[]{g}\mathcal{G}(W).
\end{equation*}
 We may define $\underline{\mathcal{H}om}_{LH(\widehat{\mathcal{B}}c_K)}(-,-)$ analogously, and the fact that $I$ commutes with equalizers, together with Proposition \ref{prop extension of closed structure} show that $I(\underline{\mathcal{H}om}_{\Indban}(-,-))=\underline{\mathcal{H}om}_{LH(\widehat{\mathcal{B}}c_K)}(I(-),I(-))$.\\

As above, the fact that $\operatorname{Shv}(X,\Indban)$ is a closed symmetric monoidal category allows us to define monoids in $\operatorname{Shv}(X,\Indban)$, which we call sheaves of Ind-Banach algebras on $X$. Given a sheaf of Ind-Banach algebras $\mathscr{A}$, we let $\Mod_{\Indban}(\mathscr{A})$ be the sheaf of Ind-Banach $\mathscr{A}$-modules. Analogously, if $\mathscr{B}$ is a monoid in $\operatorname{Shv}(X,LH(\widehat{\mathcal{B}}c_K))$, we let $\Mod_{LH(\widehat{\mathcal{B}}c_K)}(\mathscr{B})$ be its category of modules. We may condense the main properties of these categories into the following proposition:
\begin{prop}\label{prop properties cat of modules}
  Let $\mathscr{A}$ be a sheaf of Ind-Banach algebras on $X$. The following hold:
  \begin{enumerate}[label=(\roman*)]
      \item $\Mod_{\Indban}(\mathscr{A})$ is a quasi-abelian category.
      \item There is a canonical isomorphism of abelian categories:
      \begin{equation*}
          LH(\Mod_{\Indban}(\mathscr{A}))\cong \Mod_{LH(\widehat{\mathcal{B}}c_K)}(I(\mathscr{A})).
      \end{equation*}
      \item $\Mod_{LH(\widehat{\mathcal{B}}c_K)}(I(\mathscr{A}))$ is a Grothendieck abelian category.
  \end{enumerate}
\end{prop}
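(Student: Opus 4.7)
The plan is to bootstrap from the corresponding statements about the ambient categories $\Indban$, $\widehat{\mathcal{B}}c_K$, and $\operatorname{Shv}(X,\Indban)$ already recorded in this section, extending each via the sheaf-of-modules construction.

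For $(i)$, I would mimic the proof of Proposition \ref{prop properties of categories of modules} in the sheaf-theoretic setting. Given an $\mathscr{A}$-linear map $f\colon\mathcal{F}\rightarrow\mathcal{G}$, its kernel and cokernel computed in the quasi-abelian category $\operatorname{Shv}(X,\Indban)$ carry canonical $\mathscr{A}$-actions, and one checks directly that they satisfy the universal properties in $\Mod_{\Indban}(\mathscr{A})$. Strictness of a morphism is an intrinsic property of its underlying map in $\operatorname{Shv}(X,\Indban)$, so stability of strict monomorphisms under pushout and of strict epimorphisms under pullback along $\mathscr{A}$-linear maps reduces to the analogous statement in $\operatorname{Shv}(X,\Indban)$, which holds because that category is quasi-abelian.

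For $(ii)$, I would first upgrade the sheafified functor $I\colon\operatorname{Shv}(X,\Indban)\rightarrow\operatorname{Shv}(X,LH(\widehat{\mathcal{B}}c_K))$ to a strong monoidal functor using the identification $I(-\overrightarrow{\otimes}_K-)=I(-)\widetilde{\otimes}_K I(-)$ from Proposition \ref{prop extension of closed structure} together with the fact that sheafification is compatible with $I$ (since $I$ is exact). This immediately gives a comparison functor $LH(\Mod_{\Indban}(\mathscr{A}))\rightarrow\Mod_{LH(\widehat{\mathcal{B}}c_K)}(I(\mathscr{A}))$. For the inverse, I would invoke the equivalence $LH(\Indban)\cong LH(\widehat{\mathcal{B}}c_K)$ and represent each object of $\Mod_{LH(\widehat{\mathcal{B}}c_K)}(I(\mathscr{A}))$ by a length-two complex $[\mathcal{F}^{-1}\rightarrow\mathcal{F}^0]$ in $\operatorname{Shv}(X,\Indban)$ whose image under $I$ recovers the module together with its $I(\mathscr{A})$-action; the action on the complex lifts via the strong monoidality of $I$ after suitable choices, and the usual manipulation shows the two constructions are mutually inverse.

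For $(iii)$, the input is that $LH(\widehat{\mathcal{B}}c_K)$ is Grothendieck abelian (Proposition \ref{prop properties of categories of modules}). Sheaves with values in a Grothendieck abelian category on the small quasi-paracompact site $X$ again form a Grothendieck abelian category, and for any sheaf of rings $\mathcal{R}$ in a Grothendieck abelian category the category of $\mathcal{R}$-modules is Grothendieck, with generators obtained as tensor products of $\mathcal{R}$ with the generators of the ambient category. Applying this to $\mathcal{R}=I(\mathscr{A})$ yields $(iii)$.

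The main obstacle is the module-theoretic refinement in $(ii)$: one must verify that the equivalence $LH(\Indban)\cong LH(\widehat{\mathcal{B}}c_K)$, and its sheafified counterpart, intertwines the action of $\mathscr{A}$ before passage to the left heart with the action of $I(\mathscr{A})$ afterwards in a coherent way. This reduces to checking compatibility of the chosen two-term presentations with the multiplication map $\mathscr{A}\overrightarrow{\otimes}_K\mathscr{A}\rightarrow\mathscr{A}$ under $I$, which is where the strong monoidality upgrade obtained above is essential. Once that coherence is in place, parts $(i)$ and $(iii)$ follow formally.
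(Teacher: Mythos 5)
The paper does not prove this proposition itself: it simply cites \cite[Proposition 2.6]{bode2021operations} and \cite[Lemma 3.7]{bode2021operations}. Your outline is essentially a reconstruction of the argument in that reference, so there is no divergence of strategy to speak of: part $(i)$ via creation of kernels, cokernels, pullbacks and pushouts by the forgetful functor to $\operatorname{Shv}(X,\Indban)$ is correct (the cokernel inherits its action because $\mathscr{A}\overrightarrow{\otimes}_K-$ is a left adjoint, the kernel by the universal property), and part $(iii)$ via generators of the form $I(\mathscr{A})\widetilde{\otimes}_K G$ with $G$ running over generators of $\operatorname{Shv}(X,LH(\widehat{\mathcal{B}}c_K))$ is the standard argument.

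The one step you should tighten is the inverse functor in $(ii)$. As written, you propose to present an arbitrary $M\in\Mod_{LH(\widehat{\mathcal{B}}c_K)}(I(\mathscr{A}))$ by a two-term complex $[\mathcal{F}^{-1}\rightarrow\mathcal{F}^0]$ in $\operatorname{Shv}(X,\Indban)$ and then ``lift the action'' to the complex; but a presentation of the \emph{underlying object} of $M$ by objects in the essential image of $I$ carries no reason to admit a compatible lift of the $I(\mathscr{A})$-action, and strong monoidality of $I$ alone does not produce one. The standard fix is to present $M$ by \emph{induced} modules: choose an epimorphism $I(\mathcal{G}^0)\rightarrow M$ in $\operatorname{Shv}(X,LH(\widehat{\mathcal{B}}c_K))$, pass to the free module $I(\mathscr{A})\widetilde{\otimes}_K I(\mathcal{G}^0)=I(\mathscr{A}\overrightarrow{\otimes}_K\mathcal{G}^0)\rightarrow M$, which is an epimorphism of $I(\mathscr{A})$-modules, repeat for its kernel, and observe that maps between such free modules lift canonically to $\Mod_{\Indban}(\mathscr{A})$ by adjunction. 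With that replacement the coherence issue you flag at the end disappears, and the rest of your argument goes through.
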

\begin{proof}
 See \cite[Proposition 2.6]{bode2021operations} and \cite[Lemma 3.7]{bode2021operations} for details.   
\end{proof} 
Fix a sheaf of Ind-Banach algebras $\mathscr{A}$ on $X$. There are relative versions of the tensor product and sheaf of inner homomorphisms for $\mathscr{A}$-modules. The tensor product of sheaves of Ind-Banach $\mathscr{A}$-modules, which we will denote by:
\begin{equation*}
    -\overrightarrow{\otimes}_{\mathscr{A}}-:\Mod_{\Indban}(\mathscr{A}^{\op})\times \Mod_{\Indban}(\mathscr{A})\rightarrow \operatorname{Shv}(X,\Indban),
\end{equation*}
is defined for $\mathcal{M}\in \Mod_{\Indban}(\mathscr{A}^{\op})$, and $\mathcal{N}\in \Mod_{\Indban}(\mathscr{A})$ by the formula:
\begin{equation*}
\mathcal{M}\overrightarrow{\otimes}_{\mathscr{A}}\mathcal{N}:=\operatorname{Coeq}\left( \mathcal{M}\overrightarrow{\otimes}_K\mathscr{A}\overrightarrow{\otimes}_K\mathcal{N}\rightrightarrows \mathcal{M}\overrightarrow{\otimes}_K\mathcal{N}\right),
\end{equation*}
where the maps in the coequalizer are given by the maps:
\begin{equation*}
T_{\mathcal{M}}:\mathcal{M}\overrightarrow{\otimes}_K\mathscr{A}\rightarrow \mathcal{M}, \quad T_{\mathcal{N}}:\mathscr{A}\overrightarrow{\otimes}_K\mathcal{N}\rightarrow \mathcal{N},
\end{equation*}
induced by the action of $\mathscr{A}$ on $\mathcal{M}$ and $\mathcal{N}$ respectively. The inner homomorphism functor for $\mathscr{A}$-modules is denoted by the following expression:
\begin{equation*}
    \underline{\mathcal{H}om}_{\mathscr{A}}(-,-):\Mod_{\Indban}(\mathscr{A})\times \Mod_{\Indban}(\mathscr{A})\rightarrow \operatorname{Shv}(X,\Indban),
\end{equation*}
and is
defined for $\mathcal{M},\mathcal{N}\in \Mod_{\Indban}(\mathscr{A})$ by the following equalizer:
\begin{equation*}
    \underline{\mathcal{H}om}_{\mathscr{A}}(\mathcal{M},\mathcal{N})=\operatorname{Eq}\left(\underline{\mathcal{H}om}_{\Indban}(\mathcal{M},\mathcal{N})\rightrightarrows \underline{\mathcal{H}om}_{\Indban}(\mathscr{A}\overrightarrow{\otimes}_K\mathcal{M},\mathcal{N})\right).
\end{equation*}
The first map in the equalizer is obtained by applying $\underline{\mathcal{H}om}_{\Indban}(-,\mathcal{N})$ to the action:
\begin{equation*}
    \mathscr{A}\overrightarrow{\otimes}_K\mathcal{M}\rightarrow \mathcal{M},
\end{equation*}
and the second is obtained by applying $\underline{\mathcal{H}om}_{\Indban}(\mathcal{M},-)$ to the canonical morphism:
\begin{equation*}
    \mathcal{N}\rightarrow \underline{\mathcal{H}om}_{\Indban}(\mathscr{A},\mathcal{N}).
\end{equation*}
Again, these constructions make sense in any additive closed symmetric monoidal category. Hence, there are relative tensor products and sheaves of inner homomorphisms for modules over monoids in $\operatorname{Shv}(X,LH(\widehat{\mathcal{B}}c_K))$, and we will use analogous naming conventions. Furthermore, we have the following identification of functors:
\begin{equation}\label{equation identity we use to derive}
    I(\underline{\mathcal{H}om}_{\mathscr{A}}(-,-))=\underline{\mathcal{H}om}_{I(\mathscr{A})}(I(-),I(-)).
\end{equation}
As mentioned above, this paper is devoted to the study of Hochschild cohomology. Thus, we  will be interested in the derived functors of $\underline{\mathcal{H}om}_{\mathscr{A}}(-,-)$. Now, the theory of derived functors in quasi-abelian categories is rather involved. In particular, the category $\Mod_{\Indban}(\mathscr{A})$ will almost never have enough injective objects, so it is difficult to formulate a good theory of derived functors. However, as seen in Proposition \ref{prop properties cat of modules}, its left heart $\Mod_{LH(\widehat{\mathcal{B}}c_K)}(I(\mathscr{A}))$ is a Grothendieck abelian category. Thus, it has enough injective objects, and left exact functors admit (unbounded) derived functors. Hence, we will use equation (\ref{equation identity we use to derive}), together with the identification of triangulated categories:
\begin{equation*}
    \operatorname{D}(\mathscr{A}):=\operatorname{D}(\Mod_{\Indban}(\mathscr{A})))\xrightarrow[]{I}\operatorname{D}(\Mod_{LH(\widehat{\mathcal{B}}c_K)}(I(\mathscr{A})))=:\operatorname{D}(I(\mathscr{A}))
\end{equation*}
to make the following definition:
\begin{equation*}
    R\underline{\mathcal{H}om}_{\mathscr{A}}(-,-):=R\underline{\mathcal{H}om}_{I(\mathscr{A})}(I(-),I(-)):\operatorname{D}(\mathscr{A})\times \operatorname{D}(\mathscr{A})\rightarrow \operatorname{D}(\operatorname{Shv}(X,\Indban)).
\end{equation*}
We remark that if $\underline{\mathcal{H}om}_{\mathscr{A}}(-,-)$ has a derived functor, then the identification of triangulated categories $I:D(\mathscr{A})\rightarrow \operatorname{D}(I(\mathscr{A}))$ and equation (\ref{equation identity we use to derive}) show that it must agree with $R\underline{\mathcal{H}om}_{I(\mathscr{A})}(I(-),I(-))$. Hence, there really is no loss in making this identification, and we will follow the same convention for the global sections functor $\Gamma(X,-)$. A more detailed discussion of these topics can be found in \cite[Section 3.6]{bode2021operations}, as well as in our companion paper \cite[Section 3.3]{HochDmod}.

\subsection{\texorpdfstring{Hochschild (co)-homology of $\wideparen{\D}_X$-modules}{}}\label{Section background HochDmod}
We will now provide an overview of the contents of \cite{HochDmod}. Let $X$ be a rigid space, and $\mathscr{A}$ be a sheaf of Ind-Banach algebras on $X$. The sheaf of enveloping algebras of $\mathscr{A}$ is the following sheaf of Ind-Banach algebras:
\begin{equation*}
    \mathscr{A}^e=\mathscr{A}\overrightarrow{\otimes}_K\mathscr{A}^{\op}.
\end{equation*}
Let us now introduce the definition of the Hochschild cohomology of $\mathscr{A}$:
\begin{defi}[{\cite[Definition 5.4.4]{HochDmod}}]\label{defi hichschild cohomology}
Let $\mathscr{A}$ be a sheaf of Ind-Banach algebras on $X$ and consider a complex $\mathscr{M}\in \operatorname{D}(\mathscr{A})$. We define the following objects: 
\begin{enumerate}[label=(\roman*)]
    \item The inner Hochschild cohomology complex of $\mathscr{A}$  with coefficients in $\mathscr{M}$  is the following complex in $\operatorname{D}(\operatorname{Shv}(X,\Indban)):$
\begin{equation*}
  \mathcal{HH}^{\bullet}(\mathscr{A},\mathscr{M})=R\underline{\mathcal{H}om}_{\mathscr{A}^e}(\mathscr{A},\mathscr{M}).
\end{equation*}
The $n$-th Hochschild cohomology sheaf of $\mathscr{A}$ with coefficients in $\mathscr{M}$ is the following sheaf:
\begin{equation*}
\mathcal{HH}^n(\mathscr{A}):=\operatorname{H}^n\left(\mathcal{HH}^{\bullet}(\mathscr{A},\mathscr{M})\right).
\end{equation*}

\item The Hochschild cohomology complex of $\mathscr{A}$ with coefficients in $\mathscr{M}$ is the following object in the derived category $\operatorname{D}(\widehat{\mathcal{B}}c_K)$:
\begin{equation*}
    \operatorname{HH}^{\bullet}(\mathscr{A},\mathscr{M})=R\Gamma(X,\mathcal{HH}^{\bullet}(\mathscr{A},\mathscr{M})).
\end{equation*}
The $n$-th Hochschild cohomology group of $\mathscr{A}$ with coefficients in $\mathscr{M}$ is the following object:
\begin{equation*}
    \operatorname{HH}^{n}(\mathscr{A},\mathscr{M})=\operatorname{H}^n\left(\operatorname{HH}^{\bullet}(\mathscr{A},\mathscr{M})\right).
\end{equation*}

\end{enumerate}
If the choice of $\mathscr{A}$ is clear, we will sometimes drop it from the notation and simply write $\mathcal{HH}^{\bullet}(\mathscr{M})$, and $\operatorname{HH}^{\bullet}(\mathscr{M})$. If $\mathscr{\mathscr{M}}=\mathscr{A}$, we will call $\mathcal{HH}^{\bullet}(\mathscr{A})$ the inner Hochschild cohomology complex of $\mathscr{A}$, and $\operatorname{HH}^{\bullet}(\mathscr{A})$ the Hochschild cohomology complex of $\mathscr{A}$. Notice that if $X=\Sp(K)$ then we have the following canonical identifications:
\begin{equation*}
    \mathcal{HH}^{\bullet}(\mathscr{A},\mathscr{\mathscr{M}})=\operatorname{HH}^{\bullet}(\mathscr{A},\mathscr{\mathscr{M}})=R\underline{\operatorname{Hom}}_{\mathscr{A}^e}(\mathscr{A},\mathscr{\mathscr{M}}),
\end{equation*}
and we call this complex the Hochschild cohomology complex of $\mathscr{A}$ with coefficients in $\mathscr{\mathscr{M}}$. 
\end{defi}
Similarly, we can make the following definition:
\begin{defi}[{\cite[Definition 5.4.7]{HochDmod}}]\label{defi Hochschild homology}
  Let $\mathscr{A}$ be a sheaf of Ind-Banach algebras on $X$, and consider a complex $\mathscr{M}\in \operatorname{D}(\mathscr{A})$. We define the following objects:  
\begin{enumerate}[label=(\roman*)]
    \item The inner Hochschild cohomology complex of $\mathscr{A}$ with coefficients in $\mathscr{M}$ is the following complex in $\operatorname{D}(\operatorname{Shv}(X,\Indban)):$
\begin{equation*}
  \mathcal{HH}_{\bullet}(\mathscr{A},\mathscr{M})=\mathscr{A}\overrightarrow{\otimes}^{\mathbb{L}}_{\mathscr{A}^e}\mathscr{M}.
\end{equation*}
The $n$-th Hochschild homology sheaf of $\mathscr{A}$ with coefficients in $\mathscr{M}$ is the following sheaf:
\begin{equation*}
    \mathcal{HH}_n(\mathscr{A},\mathscr{M}):=\operatorname{H}^{-n}\left(\mathcal{HH}_{\bullet}(\mathscr{A},\mathscr{M})\right).
\end{equation*}

\item The Hochschild homology complex of $\mathscr{A}$ with coefficients in $\mathscr{M}$ is the following complex in the derived category $\operatorname{D}(\widehat{\mathcal{B}}c_K)$:
\begin{equation*}
    \operatorname{HH}_{\bullet}(\mathscr{A},\mathscr{M})=R\Gamma(X,\mathcal{HH}_{\bullet}(\mathscr{A},\mathscr{M})).
\end{equation*}
The $n$-th Hochschild homology group of $\mathscr{A}$ with coefficients in $\mathscr{M}$ is the following object:
\begin{equation*}
    \operatorname{HH}_{n}(\mathscr{A},\mathscr{M})=\operatorname{H}^{-n}\left(\operatorname{HH}_{\bullet}(\mathscr{A},\mathscr{M})\right).
\end{equation*}
\end{enumerate}
We fill follow the analogous notational conventions as with Hochschild cohomology.
\end{defi}
Let $X$ be a smooth and separated rigid analytic space. Most of the contents of \cite{HochDmod} are aimed at calculating the inner Hochschild cohomology complex of the sheaf of infinite order differential operators $\wideparen{\D}_X$. These efforts can be summarized into the following theorem:
\begin{teo}[{\cite[Theorem 8.1.6]{HochDmod}}]\label{teo HochDmod main teo}
 There is an isomorphism in $\operatorname{D}(\operatorname{Shv}(X,\Indban)):$
 \begin{equation*}
     \mathcal{HH}^{\bullet}(\wideparen{\D}_X)=\Omega_{X/K}^{\bullet}.
 \end{equation*}
 In particular, we have $\operatorname{HH}^{\bullet}(\wideparen{\D}_X)=R\Gamma(X,\Omega_{X/K}^{\bullet})$ as objects in $\operatorname{D}(\widehat{\mathcal{B}}c_K)$.
\end{teo}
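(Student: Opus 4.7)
The plan is to compute $R\underline{\mathcal{H}om}_{\wideparen{\D}_X^e}(\wideparen{\D}_X,\wideparen{\D}_X)$ by constructing an explicit Koszul-type resolution of $\wideparen{\D}_X$ as a $\wideparen{\D}_X^e$-module, in analogy with the classical computation of $\operatorname{HH}^{\bullet}(\D_X)$ for algebraic $\D$-modules. Since the statement is about a quasi-isomorphism of sheaves, I would first localize: after restricting to an admissible affinoid cover by opens $U$ admitting an étale map $U\to\mathbb{A}^r_K$, the tangent sheaf $\mathcal{T}_{U/K}$ becomes free with basis $\partial_1,\ldots,\partial_r$, dual to $dx_1,\ldots,dx_r$. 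It suffices to produce natural local isomorphisms and then verify naturality in order to glue.

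The next step is to pass to the bi-enveloping algebra $\wideparen{E}_X$ via the side-switching equivalence mentioned in the introduction. Because the tangent sheaf is locally free, the explicit isomorphism $\wideparen{\mathbb{T}}:\wideparen{\D}_{X^2}\cong\wideparen{E}_X$ identifies $\wideparen{\D}_X^e$-modules with certain $\wideparen{E}_X$-modules, and $\wideparen{\D}_X$ corresponds to $\Delta_*\wideparen{\D}_X$. Thus the computation is equivalent to calculating $R\underline{\mathcal{H}om}_{\wideparen{E}_X}(\Delta_*\wideparen{\D}_X,\Delta_*\wideparen{\D}_X)$, which is more tractable because $\wideparen{E}_X$ is locally Fréchet--Stein. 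The core technical step is to build, locally on $U$, a strict exact Koszul-type resolution
\begin{equation*}
0\to \wideparen{E}_U\overrightarrow{\otimes}_K\wedge^r\mathcal{T}_U \to \cdots \to \wideparen{E}_U\overrightarrow{\otimes}_K\wedge^1\mathcal{T}_U\to \wideparen{E}_U \to \Delta_*\wideparen{\D}_U\to 0,
\end{equation*}
with differential induced by the \emph{outer} derivations $\partial_i\otimes 1-1\otimes\partial_i$ acting on $\wideparen{E}_U$. This is the infinite-order analog of the classical Koszul resolution, and its strict exactness in $\Mod_{\Indban}(\wideparen{E}_U)$ is precisely what needs the analytic input.

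With such a resolution in hand, applying $\underline{\mathcal{H}om}_{\wideparen{E}_X}(-,\Delta_*\wideparen{\D}_X)$ term-by-term collapses the free parts of the resolution via the unit of the adjunction, leaving a complex whose terms are $\wedge^{\bullet}\mathcal{T}_U^{*}\cong \Omega^{\bullet}_{U/K}$ and whose differential becomes the de Rham differential (since the Koszul differentials were built from commutators with the $\partial_i$, and these commutators translate to differentiation on the coefficient functions). Naturality in $U$ of the construction shows that the local isomorphisms glue to a global isomorphism $\mathcal{HH}^{\bullet}(\wideparen{\D}_X)\simeq\Omega^{\bullet}_{X/K}$ in $\operatorname{D}(\operatorname{Shv}(X,\Indban))$, and applying $R\Gamma(X,-)$ yields the second assertion.

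The main obstacle is the strict exactness of the Koszul resolution in the Ind-Banach setting. Algebraically, the Koszul complex for $\wideparen{E}_U\to\Delta_*\wideparen{\D}_U$ is acyclic by standard arguments, but here one must verify that the images of the differentials are closed subobjects (strictness) and that exactness survives the completion process defining $\wideparen{E}_U$. Concretely this reduces to uniform estimates on how the norms used to define $\wideparen{\D}_U$ behave under the Koszul homotopy, which is where the assumption on free Lie lattices and the étale coordinates become essential. Once strictness is secured at the level of presheaves on each Lie lattice $\mathcal{L}$, passing to the Fréchet--Stein limit and sheafifying is routine.
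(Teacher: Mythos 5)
The statement is quoted from the companion paper and is not reproved here, so I am judging your sketch on its own terms and against the machinery this paper attributes to that proof. Your overall architecture is right in outline: localize to opens $U$ with an \'etale map to $\mathbb{A}^r_K$, pass to the bi-enveloping algebra $\wideparen{E}_X$ via $\Delta^{-1}\wideparen{E}_X=\wideparen{\D}_X^e$ and side-switching, resolve, and recover the de Rham complex with its differential from a Spencer/Koszul-type computation. But the central technical object you propose does not exist. A strict exact complex
\begin{equation*}
0\to \wideparen{E}_U^{\binom{r}{r}}\to\cdots\to\wideparen{E}_U^{\binom{r}{1}}\to\wideparen{E}_U\to\Delta_*\wideparen{\D}_U\to 0
\end{equation*}
would force $\underline{\operatorname{Ext}}^k_{\wideparen{E}_U}(\Delta_*\wideparen{\D}_U,-)$ to vanish for $k>r$, whereas Van den Bergh duality (Corollary \ref{coro we need for vdb duality}) gives $R\underline{\mathcal{H}om}_{\wideparen{E}_X}(\Delta_*\wideparen{\D}_X,\wideparen{E}_X)=\Delta_*\wideparen{\D}_X[-2\operatorname{dim}(X)]$; the projective dimension of $\Delta_*\wideparen{\D}_X$ over $\wideparen{E}_X$ is $2\operatorname{dim}(X)$, not $\operatorname{dim}(X)$. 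Concretely, the kernel of the multiplication map $\wideparen{E}_U\to\Delta_*\wideparen{\D}_U$ is not generated by the $r$ elements $\partial_i\otimes 1-1\otimes\partial_i$ alone: it also contains $f\otimes 1-1\otimes f$ for $f\in\OX_U(U)$, and already for $r=1$ the quotient of $\wideparen{E}_U$ by the left ideal generated by $\partial\otimes 1-1\otimes\partial$ is strictly larger than $\wideparen{\D}_U$. This is consistent with Theorem \ref{intteo theorem C}, where the finite-projective resolution has length $2r$ and even then only resolves $\Delta_*\wideparen{\D}_X$ up to a direct summand. Your subsequent identification of the $\underline{\mathcal{H}om}$-complex with $\Omega^{\bullet}_{U/K}$ also does not follow: $\underline{\mathcal{H}om}_{\wideparen{E}_U}(\wideparen{E}_U^{\binom{r}{k}},\Delta_*\wideparen{\D}_U)$ has terms built from copies of $\wideparen{\D}_U$, not of $\OX_U$.

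The repair is to push the side-switching one step further: identify $\Delta_*\wideparen{\D}_X$ with $\Delta_+\OX_X$ as a $\wideparen{\D}_{X^2}$-module and use Kashiwara's equivalence for the closed immersion $\Delta:X\to X^2$, i.e.\ the adjunction giving $R\underline{\mathcal{H}om}_{\wideparen{\D}_{X^2}}(\Delta_+\OX_X,\Delta_+\OX_X)=\Delta_*R\underline{\mathcal{H}om}_{\wideparen{\D}_X}(\OX_X,\Delta^!\Delta_+\OX_X)$ together with $\Delta^!\Delta_+\OX_X=\OX_X$. This reduces the bimodule computation to $R\underline{\mathcal{H}om}_{\wideparen{\D}_X}(\OX_X,\OX_X)$, where the genuine length-$r$ Spencer resolution $\wideparen{\D}_X\overrightarrow{\otimes}_{\OX_X}\wedge^{\bullet}\mathcal{T}_{X/K}\to\OX_X$ (note: tensor over $\OX_X$, so the terms are finite locally free, unlike your $\overrightarrow{\otimes}_K$) applies and yields $\Omega^{\bullet}_{X/K}$ with the de Rham differential. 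Your closing concern about strictness and uniform norm estimates over Lie lattices is well placed, but it attaches to the Spencer resolution and to Kashiwara's equivalence in the Ind-Banach setting, not to the nonexistent length-$r$ Koszul resolution over $\wideparen{E}_U$.
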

Notice that the above expression does not mean that 
$\operatorname{HH}^{\bullet}(\wideparen{\D}_X)$ agrees with the de Rham cohomology of $X$. Indeed, the previous theorem shows that $\operatorname{HH}^{\bullet}(\wideparen{\D}_X)$ agrees with the hypercohomology of $\Omega_{X/K}^{\bullet}$ viewed as an object in $\operatorname{D}(\operatorname{Shv}(X,\Indban))$. In particular, the cohomology groups of $\operatorname{HH}^{\bullet}(\wideparen{\D}_X)$ are objects in  $LH(\widehat{\mathcal{B}}c_K)$, and it is not even clear that they are contained in $\widehat{\mathcal{B}}c_K$.\\

On the other hand, the de Rham cohomology $\operatorname{H}^{\bullet}_{\operatorname{dR}}(X)$ is defined as the hypercohomology of $\Omega_{X/K}^{\bullet}$ viewed as a complex of sheaves of $K$-vector spaces. Hence, the de Rham cohomology groups of $X$ are $K$-vector spaces. This discussion shows that these two notions are different. However, the similarity in the formulas suggest that both cohomologies should be related. In order to understand this relation, we will need some technical results from \cite[Section 8.1]{HochDmod}.\\

First, recall that there is an adjunction:
\begin{equation*}
    J:\widehat{\mathcal{B}}c_K\leftrightarrows \mathcal{B}c_K:\widehat{\operatorname{Cpl}},
\end{equation*}
where $J:\widehat{\mathcal{B}}c_K\rightarrow \mathcal{B}c_K$ denotes the inclusion, and $\widehat{\operatorname{Cpl}}:\mathcal{B}c_K\rightarrow \widehat{\mathcal{B}}c_K$ denotes the completion of bornological spaces. This extends to an adjunction between the associated left hearts:
\begin{equation*}
    J:LH(\widehat{\mathcal{B}}c_K)\leftrightarrows LH(\mathcal{B}c_K): \operatorname{H}^0(L\widehat{\operatorname{Cpl}}).
\end{equation*}
Furthermore, the functor $J:LH(\widehat{\mathcal{B}}c_K)\rightarrow LH(\mathcal{B}c_K)$
is exact, and the previous adjunction showcases that $LH(\widehat{\mathcal{B}}c_K)$ is a reflexive abelian subcategory of $LH(\mathcal{B}c_K)$. Similarly, we have a forgetful functor:
\begin{equation*}
    \operatorname{Forget}(-):\mathcal{B}c_K\rightarrow \operatorname{Vect}_K.
\end{equation*}
By construction of the quasi-abelian structure on $\mathcal{B}c_K$, this functor is strongly exact. In particular, it preserves arbitrary kernels and cokernels. As $\operatorname{Vect}_K$ is an abelian category, it is canonically isomorphic to its left heart. Thus, we get a unique extension:
\begin{equation*}  
\widetilde{\operatorname{Forget}}(-):LH(\mathcal{B}c_K)\rightarrow LH(\operatorname{Vect}_K)=\operatorname{Vect}_K.
\end{equation*}
The fact that $\operatorname{Forget}(-)$ is strongly exact implies that $\widetilde{\operatorname{Forget}}(-)$ is exact. Hence, in order to simplify notation, we will denote $\widetilde{\operatorname{Forget}}(-)$ simply by $\operatorname{Forget}(-)$. In \cite{HochDmod}, we discussed an extension of these functors to the corresponding categories of sheaves. In particular, we have the following definition:
\begin{defi}[{\cite[Definition 8.1.8]{HochDmod}}]\label{defi J and Forget for sheaves}
We define the following functor:
    \begin{equation*}
        J:\operatorname{Shv}(X,LH(\widehat{\mathcal{B}}c_K))\rightarrow \operatorname{Shv}(X,LH(\mathcal{B}c_K)), \quad \mathcal{F}\mapsto J(\mathcal{F}),
    \end{equation*}
    where $J(\mathcal{F})$ satisfies $J(\mathcal{F})(U)=J(\mathcal{F}(U))$ for every admissible open $U\subset X$. Analogously, we define:
    \begin{equation*}
       \operatorname{Forget}(-):\operatorname{Shv}(X,LH(\mathcal{B}c_K))\rightarrow \operatorname{Shv}(X,\operatorname{Vect}_K), \quad \mathcal{G}\mapsto \operatorname{Forget}(\mathcal{G}),
    \end{equation*}
where $\operatorname{Forget}(\mathcal{G})$ satisfies $\operatorname{Forget}(\mathcal{G})(U)=\operatorname{Forget}(\mathcal{G}(U))$ for every admissible open $U\subset X$.
\end{defi}
Notice that these functors are well defined because both $J$ and $\operatorname{Forget}(-)$ are exact. Furthermore, by \cite[Lemma 8.1.10]{HochDmod}, these functors are in fact exact. This can be used to show the following:
\begin{prop}[{\cite[Corollary 8.1.12]{HochDmod}}]
We have the following identity in $\operatorname{D}(\operatorname{Vect}_K)$:
\begin{equation*}
    \operatorname{H}_{\operatorname{dR}}^{\bullet}(X)=\operatorname{Forget}(J\operatorname{HH}^{\bullet}(\wideparen{\D}_X)).
\end{equation*}
\end{prop}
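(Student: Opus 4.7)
The plan is to deduce this identity from the main theorem of \cite{HochDmod} (Theorem \ref{teo HochDmod main teo}), which gives $\operatorname{HH}^{\bullet}(\wideparen{\D}_X) = R\Gamma(X, \Omega_{X/K}^{\bullet})$ in $\operatorname{D}(\widehat{\mathcal{B}}c_K)$, with the right-hand side computed via the identification $\operatorname{D}(\operatorname{Shv}(X, \Indban)) = \operatorname{D}(\operatorname{Shv}(X, LH(\widehat{\mathcal{B}}c_K)))$. On the other side, by definition of de Rham hypercohomology, $\operatorname{H}^{\bullet}_{\operatorname{dR}}(X) = R\Gamma(X, \Omega_{X/K}^{\bullet})$ in $\operatorname{D}(\operatorname{Shv}(X, \operatorname{Vect}_K))$. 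The strategy is to use the exact functors $J$ and $\operatorname{Forget}$ to transport the first computation into the second.

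The first step is to verify that $\operatorname{Forget} \circ J$ commutes with $R\Gamma(X,-)$. By Definition \ref{defi J and Forget for sheaves}, both $J$ and $\operatorname{Forget}$ act pointwise on sections, so at the non-derived level, $\operatorname{Forget}(J(\Gamma(X, \mathcal{F}))) = \Gamma(X, \operatorname{Forget}(J(\mathcal{F})))$. Moreover, by \cite[Lemma 8.1.10]{HochDmod}, both functors are exact between the relevant Grothendieck categories. To upgrade the non-derived commutation to a derived one, I would use Čech cohomology: since $X$ is quasi-separated and quasi-paracompact, fix an admissible cover $\mathcal{U}$ by affinoid subdomains, so that $R\Gamma(X, -)$ applied to any sheaf $\mathcal{F}$ (in any of the three categorical settings) can be computed by the Čech complex $C^{\bullet}(\mathcal{U}, \mathcal{F})$ via a Čech-to-derived functor spectral sequence. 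Since $J$ and $\operatorname{Forget}$ commute pointwise with restriction maps and with products, they commute termwise with $C^{\bullet}(\mathcal{U}, -)$, and their exactness ensures they commute with the cohomology of the Čech complex.

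Once this compatibility is established, apply it to $\mathcal{F} = \Omega_{X/K}^{\bullet}$ to obtain
\begin{equation*}
    \operatorname{Forget}(J(\operatorname{HH}^{\bullet}(\wideparen{\D}_X))) = R\Gamma(X, \operatorname{Forget}(J(\Omega_{X/K}^{\bullet}))).
\end{equation*}
It remains to identify $\operatorname{Forget}(J(\Omega_{X/K}^{\bullet}))$ with the classical de Rham complex. This is essentially tautological: the Ind-Banach sheaf $\Omega^n_{X/K}$ has the same underlying sheaf of $K$-vector spaces as its classical counterpart, and the exterior differentials are $K$-linear, so $\operatorname{Forget} \circ J$ simply discards the extra topological/bornological structure, yielding the usual de Rham complex of sheaves of $K$-vector spaces. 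Consequently, the right-hand side equals $\operatorname{H}^{\bullet}_{\operatorname{dR}}(X)$ in $\operatorname{D}(\operatorname{Vect}_K)$, as required.

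The main obstacle is the middle step—establishing that $\operatorname{Forget} \circ J$ commutes with $R\Gamma$. The exactness of the two functors alone is insufficient, because $R\Gamma$ is computed via injective resolutions, and injectives in $\operatorname{Shv}(X, LH(\widehat{\mathcal{B}}c_K))$ need not map to injectives (or even to $\Gamma$-acyclics) under $J$. The Čech cohomology workaround sidesteps this issue, but one must verify that Čech cohomology genuinely computes $R\Gamma$ in each of the three categorical settings under the standing assumptions on $X$. This may require a brief technical digression into the acyclicity of sheaves of Ind-Banach spaces on affinoid subdomains, but given the quasi-paracompactness hypothesis on $X$ and the pointwise nature of $J$ and $\operatorname{Forget}$, this should be tractable and the rest of the proof is then essentially formal.
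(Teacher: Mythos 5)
This proposition carries no proof in the present paper: it is imported verbatim from the companion paper as \cite[Corollary 8.1.12]{HochDmod}, and the only hint given here is that it follows from the exactness of $J$ and $\operatorname{Forget}(-)$ (\cite[Lemma 8.1.10]{HochDmod}) together with Theorem \ref{teo HochDmod main teo}. Your overall strategy — combine $\operatorname{HH}^{\bullet}(\wideparen{\D}_X)=R\Gamma(X,\Omega^{\bullet}_{X/K})$ with a commutation of $\operatorname{Forget}\circ J$ past $R\Gamma(X,-)$, then observe that $\operatorname{Forget}(J(\Omega^{\bullet}_{X/K}))$ is the classical de Rham complex — is exactly the intended one, and you correctly isolate the only genuinely nontrivial step. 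Where you diverge is in how that step is handled: the machinery the paper leans on elsewhere (see the proof of Lemma \ref{Lemma different ext hom functors}, which invokes \cite[Proposition 8.1.11]{HochDmod}) is that $\operatorname{Forget}\circ J$ sends $\Gamma(X,-)$-acyclic sheaves to $\Gamma(X,-)$-acyclic sheaves; since both functors are exact and act pointwise on sections, one then computes $R\Gamma$ with a $\Gamma$-acyclic resolution and the commutation is immediate, for an arbitrary complex. Your Čech-based workaround reaches the same conclusion but only for the specific complex $\Omega^{\bullet}_{X/K}$, and it carries two extra verifications you would have to supply: that the Čech-to-derived spectral sequence degenerates in all three sheaf categories (which requires acyclicity of the coherent sheaves $\Omega^n_{X/K}$ on all finite intersections of the affinoid cover — true, but an input, not a formality), and that the possibly infinite products appearing in the Čech complex are exact in $LH(\widehat{\mathcal{B}}c_K)$ so that termwise commutation of $\operatorname{Forget}\circ J$ passes to cohomology. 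Neither point is fatal, but the acyclicity-preservation route is both shorter and strictly more general; if you want your argument to be self-contained you should either prove the analogue of \cite[Proposition 8.1.11]{HochDmod} or carry out the two Čech verifications explicitly.
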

Besides this result, the functors $J$ and $\operatorname{Forget}(-)$ will also be used in Chapter \ref{Chapter applications of HC}, as a tool to compare Hochschild cohomology of $\wideparen{\D}_X$ with different types of Hom functors.\\

Before moving on, let us recall one of the most important technical tools of \cite{HochDmod}. Namely, the sheaf of complete bi-enveloping algebras of $X$. This sheaf will be of the utmost importance in Section \ref{Section HC and ext 2}. Let $X^2=X\times_{\Sp(K)}X$, and denote the projections by $p_i:X^2\rightarrow X$, for $i=1,2$. We define the sheaf of bi-enveloping algebras of $X$ as the following sheaf of Ind-Banach algebras on $X^2$:
\begin{equation*}
    \wideparen{E}_X:=p_1^{-1}(\wideparen{\D}_X)\overrightarrow{\otimes}_Kp_2^{-1}(\wideparen{\D}_X^{\op}).
\end{equation*}
Our interest in $\wideparen{E}_X$ stems from the following properties:
\begin{teo}[{\cite[Theorem A]{HochDmod}}]\label{teo  properties of EX}
The sheaf $\wideparen{E}_X$ satisfies the following properties:
\begin{enumerate}[label=(\roman*)]
    \item $\Delta^{-1}\wideparen{E}_X=\wideparen{\D}_X^e$.
    \item If $\mathcal{T}_{X/K}$ is free, there is an isomorphism of sheaves of  Ind-Banach algebras:
    \begin{equation*}
        \wideparen{\mathbb{T}}:\wideparen{\D}_{X^2}\rightarrow \wideparen{E}_X.
    \end{equation*}
    \item There are mutually inverse equivalences of quasi-abelian categories:
    \begin{equation*}
        \operatorname{S}:\Mod_{\Indban}(\wideparen{\D}_{X^2})\leftrightarrows \Mod_{\Indban}(\wideparen{E}_{X}):\operatorname{S}^{-1},
    \end{equation*}
    which we call the side-switching equivalence for bimodules.  
\end{enumerate}
If the conditions of $(iii)$ hold, then $\operatorname{S}$ agrees with the extension of scalars along $\wideparen{\mathbb{T}}$. 
\end{teo}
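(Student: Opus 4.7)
The plan is to treat the three items in order, reducing each to local calculations where $\mathcal{T}_{X/K}$ can be trivialized.

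For (i), I would exploit the fact that $\Delta^{-1}$ is the left adjoint of $\Delta_*$, hence commutes with all colimits and, in particular, with sheafifications and with the external tensor product $\overrightarrow{\otimes}_K$ of sheaves of Ind-Banach spaces. Since $p_i\circ \Delta=\operatorname{Id}_X$, the unit/counit formalism yields $\Delta^{-1} p_i^{-1}\wideparen{\D}_X=\wideparen{\D}_X$. Plugging this into both factors of $\wideparen{E}_X=p_1^{-1}(\wideparen{\D}_X)\overrightarrow{\otimes}_K p_2^{-1}(\wideparen{\D}_X^{\op})$ gives $\Delta^{-1}\wideparen{E}_X=\wideparen{\D}_X\overrightarrow{\otimes}_K\wideparen{\D}_X^{\op}=\wideparen{\D}_X^e$. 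The only subtle point to verify is that the sheafification entering the tensor product commutes with $\Delta^{-1}$, which is a standard fact for left-adjoint inverse image functors.

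For (ii), I would work locally on an affinoid $U\subset X$ with an étale chart $U\to \mathbb{A}^r_K$ producing a basis $\partial_1,\ldots,\partial_r$ of $\mathcal{T}_{X/K}(U)$. Writing $\partial_i^{(1)},\partial_i^{(2)}$ for the derivations on $U^2$ induced by the two projections, I would construct $\wideparen{\mathbb{T}}$ by specifying it at the level of a Fréchet--Stein presentation: on each finite-level enveloping algebra $\widehat{U}(\pi^n\mathcal{L})_K$ presenting $\wideparen{\D}_{U^2}$, send the two blocks of derivations to $\partial_i\otimes 1$ and $1\otimes \partial_i$ in $\wideparen{E}_X$ respectively, check that the defining commutation relations are preserved, and verify boundedness to get a morphism of Ind-Banach algebras. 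Passing to the inverse limit produces the candidate $\wideparen{\mathbb{T}}$, and one then glues over a cover by such charts. The crux is showing that $\wideparen{\mathbb{T}}$ is an isomorphism: I would argue this by comparing the associated graded objects with respect to the order filtration, where both sides identify with the symmetric algebra over $\mathcal{O}_{U^2}$ on the $2r$ derivations $\partial_i^{(1)},\partial_i^{(2)}$. Strictness and completeness then upgrade this to an isomorphism at each finite level and, after $\varprojlim$, of the sheaves themselves.

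For (iii), the free case is immediate from (ii): the isomorphism $\wideparen{\mathbb{T}}$ makes extension and restriction of scalars into mutually inverse equivalences. To obtain the general case I would choose an admissible cover of $X$ by opens $U_i$ on which $\mathcal{T}_{X/K}|_{U_i}$ is free, define $\operatorname{S}$ locally via (ii), and glue. The gluing is the content-bearing step: the local side-switching equivalences must be shown to differ on overlaps only by the classical twist by $\Omega^{\operatorname{dim}(X)}_{X/K}$, which is a line bundle and therefore defines a globally well-behaved transformation. Concretely, one verifies that the transition isomorphisms on $U_i\cap U_j$ are not merely $\mathcal{O}_{X^2}$-linear but $\wideparen{E}_X$-linear and bounded, and then invokes descent for modules over sheaves of Ind-Banach algebras to obtain a global $\operatorname{S}$ together with its inverse $\operatorname{S}^{-1}$.

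The principal obstacle is the comparison argument inside (ii): establishing that the candidate map $\wideparen{\mathbb{T}}$ is an isomorphism at each finite level in a way that is compatible with the Fréchet--Stein transition maps, and making this work in the Ind-Banach setting where strictness must be monitored carefully. Once this is in hand, (i) is formal and (iii) reduces to the classical side-switching formalism adapted to sheaves of Ind-Banach algebras.
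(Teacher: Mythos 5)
First, a remark on the comparison itself: this paper does not prove the statement — it is imported verbatim from the companion paper as \cite[Theorem A]{HochDmod} — so your sketch can only be measured against the construction carried out there. Your part (i) is fine and is the standard formal argument: $\Delta^{-1}$ is a left adjoint, hence commutes with sheafification and with $\overrightarrow{\otimes}_K$, and $p_i\circ\Delta=\operatorname{Id}_X$ collapses both pullbacks, giving $\Delta^{-1}\wideparen{E}_X=\wideparen{\D}_X\overrightarrow{\otimes}_K\wideparen{\D}_X^{\op}=\wideparen{\D}_X^e$.

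The genuine gap is in (ii), and it sits exactly where the hypothesis on $\mathcal{T}_{X/K}$ is consumed. The second tensor factor of $\wideparen{E}_X$ is $p_2^{-1}(\wideparen{\D}_X^{\op})$, carrying the \emph{opposite} multiplication, so your assignment $\partial_i^{(2)}\mapsto 1\otimes\partial_i$ does not preserve the relation $[\partial_i,f]=\partial_i(f)$: computed in the opposite algebra this commutator changes sign. Before any finite-level or $\gr$ comparison can start, one must identify $\wideparen{\D}_X$ with $\wideparen{\D}_X^{\op}$ via the transpose anti-involution of differential operators, and that anti-involution exists globally precisely because a global basis of $\mathcal{T}_{X/K}$ (equivalently a trivialization of $\Omega_{X/K}^{\operatorname{dim}(X)}$) has been fixed; your sketch never engages with the $\op$ and consequently misplaces the role of freeness. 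Relatedly, your plan to build $\wideparen{\mathbb{T}}$ chart-by-chart and then ``glue'' cannot work as stated: the local isomorphisms depend on the chosen local trivialization (two trivializations differ by a unit, and the induced identifications $\wideparen{\D}\cong\wideparen{\D}^{\op}$ differ by conjugation by that unit), so they need not satisfy the cocycle condition on overlaps. Since the hypothesis of (ii) is \emph{global} freeness, the correct move is to fix a global basis once and for all; your PBW/associated-graded argument at each level of the Fréchet--Stein presentations is then a reasonable way to conclude. For (iii) the same remark applies in reverse: no gluing of equivalences is needed, because the side-switching functor is given globally by twisting by $p_2^{-1}\Omega_{X/K}^{\operatorname{dim}(X)}$ — the pullback of the canonical sheaf along the second projection, not $\Omega_{X/K}^{\operatorname{dim}(X)}$ itself — which is an invertible module irrespective of whether $\mathcal{T}_{X/K}$ is free, and which reduces to extension of scalars along $\wideparen{\mathbb{T}}$ once a trivialization is chosen.
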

Statement $(i)$ in the preceding theorem allows us to regard the derived category $\operatorname{D}(\wideparen{\D}_X^e)$ as a full subcategory of  $\operatorname{D}(\wideparen{E}_X)$, which is in turn equivalent to $\operatorname{D}(\wideparen{\D}_{X^2})$ by $(iii)$. Roughly speaking, this implies that we can reduce problems about $\wideparen{\D}_X^e$-modules to problems about $\wideparen{E}_X$-modules, and then use the side-switching equivalence for bimodules to treat the problem in the category of $\wideparen{\D}_{X^2}$-modules. Finally, let us also point out that statement $(ii)$ can be used to define a category of co-admissible $\wideparen{E}_X$-modules, which we denote $\mathcal{C}(\wideparen{E}_X)$, and that the side-switching equivalence for bimodules restricts to an equivalence:
\begin{equation*}
    \operatorname{S}:\mathcal{C}(\wideparen{\D}_{X^2})\leftrightarrows \mathcal{C}(\wideparen{E}_{X}):\operatorname{S}^{-1}.
\end{equation*}
In particular, the contents of Section \ref{section co-ad D mod stein spaces} will also hold in the category of co-admissible $\wideparen{E}_X$-modules.
\subsection{The external sheaf of homomorphisms}\label{section external hom functors}
In most of our previous developments, we have worked with the inner \emph{Hom} functors, $\underline{\mathcal{H}om}_{\wideparen{\D}_X}(-,-)$, and $\underline{\Hom}_{\wideparen{\D}_X(X)}(-,-)$. These are obtained from the closed symmetric monoidal structures on $\operatorname{Shv}(X,\Indban)$ and $\Indban$ respectively. However, we can use the fact that $\Indban$ and $\operatorname{Shv}(X,\Indban)$ are additive categories to obtain a different notion of \emph{Hom} functors, which only take into account the additive (in fact $K$-linear) structures on the aforementioned categories. Let us start with the following definition:
\begin{defi}
A category $\mathcal{C}$ is said to be $K$-linear, if there is a factorization of its \emph{Hom} functor:
\begin{equation*}
    \Hom_{\mathcal{C}}(-,-):\mathcal{C}^{\op}\times \mathcal{C}\rightarrow \operatorname{Vect}_K.
\end{equation*}
\end{defi}
Let $``\varinjlim"V_i,"\varinjlim"W_j\in \Indban$ be a pair of Ind-Banach spaces. By definition, morphisms in Ind-Ban are given by the following expression:
\begin{equation*}
    \Hom_{\Indban}(``\varinjlim"V_i,``\varinjlim"W_j)=\varprojlim_i\varinjlim_j \Hom_{\operatorname{Ban}_K}(V_i,W_j),
\end{equation*}
which is clearly a $K$-vector space. In particular, we have a well-defined $K$-linear structure:
\begin{equation*}
    \Hom_{\Indban}(-,-):\Indban^{\op}\times \Indban \rightarrow \operatorname{Vect}_K.
\end{equation*}
Similarly, let $\mathscr{A}$ be an Ind-Banach algebra, and $\mathcal{M},\mathcal{N}$ be a pair of Ind-Banach $\mathscr{A}$-modules. By definition, we have the following identity:
\begin{equation*}
    \Hom_{\mathscr{A}}(\mathcal{M},\mathcal{N}):=\operatorname{Eq}\left(\Hom_{\Indban}(\mathcal{M},\mathcal{N})\rightrightarrows \Hom_{\Indban}(\mathscr{A}\overrightarrow{\otimes}_K\mathcal{M},\mathcal{N}) \right),
\end{equation*}
which shows that $\Hom_{\mathscr{A}}(\mathcal{M},\mathcal{N})$ is canonically a $K$-vector space. Again, this allows us to obtain a canonical $K$-linear structure on $\Mod_{\Indban}(\mathscr{A})$:
\begin{equation*}
    \Hom_{\mathscr{A}}(-,-):\Mod_{\Indban}(\mathscr{A})^{\op}\times \Mod_{\Indban}(\mathscr{A})\rightarrow \operatorname{Vect}_K.
\end{equation*}
The idea is extending this structure to the left heart $LH(\Mod_{\Indban}(\mathscr{A}))$:
\begin{Lemma}
Let $\mathcal{E}$ be a $K$-linear quasi-abelian category with enough projectives. Then $LH(\mathcal{E})$ admits a unique $K$-linear structure extending the one on $\mathcal{E}$.
\end{Lemma}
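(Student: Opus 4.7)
The plan is to reformulate the lemma in terms of the center of a category and then invoke a presentation argument using the hypothesis that $\mathcal{E}$ has enough projectives. Recall that specifying a $K$-linear structure on an additive category $\mathcal{C}$ is equivalent to specifying a unital ring homomorphism $\rho: K \to Z(\mathcal{C})$, where $Z(\mathcal{C}) := \End(\operatorname{Id}_{\mathcal{C}})$ is the ring of natural endotransformations of the identity functor; under this dictionary, an additive functor between $K$-linear categories is $K$-linear if and only if it intertwines these ring maps. The lemma then amounts to showing that the given map $\rho: K \to Z(\mathcal{E})$ admits a unique extension to a ring homomorphism $\widetilde{\rho}: K \to Z(LH(\mathcal{E}))$ that is compatible with the fully faithful embedding $I: \mathcal{E} \to LH(\mathcal{E})$.

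For existence, I would pass through the derived category. The category of complexes $\operatorname{Ch}(\mathcal{E})$ is canonically $K$-linear, since its Hom groups are $K$-submodules of products of the $K$-modules $\Hom_{\mathcal{E}}(X^n, Y^n)$. This structure descends to the homotopy category $\operatorname{K}(\mathcal{E})$ (the chain homotopies between two fixed morphisms form a $K$-submodule of the Hom group) and then, via Verdier localization, to the derived category $\operatorname{D}(\mathcal{E})$. Schneiders realizes $LH(\mathcal{E})$ as the heart of the canonical left t-structure on $\operatorname{D}(\mathcal{E})$; since the cohomology functors of a t-structure are additive, and hence commute with scalar multiplication, the $K$-linear structure on $\operatorname{D}(\mathcal{E})$ restricts to one on $LH(\mathcal{E})$, providing the required extension $\widetilde{\rho}$.

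For uniqueness, suppose $\widetilde{\rho}_1, \widetilde{\rho}_2: K \to Z(LH(\mathcal{E}))$ are two extensions of $\rho$, and set $\gamma_k := \widetilde{\rho}_1(k) - \widetilde{\rho}_2(k)$. By hypothesis $\gamma_k$ vanishes on $I(\mathcal{E})$ for every $k \in K$. The assumption that $\mathcal{E}$ has enough projectives, combined with Schneiders' presentation of an arbitrary $Y \in LH(\mathcal{E})$ as a cokernel $I(E^0) \twoheadrightarrow Y$ of a morphism in $I(\mathcal{E})$, provides for every $Y \in LH(\mathcal{E})$ an epimorphism $\pi: I(P) \twoheadrightarrow Y$ with $P \in \mathcal{E}$ projective (compose a projective cover $P \to E^0$ in $\mathcal{E}$ with the epimorphism $I(E^0) \twoheadrightarrow Y$). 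Naturality of $\gamma_k$ then yields $(\gamma_k)_Y \circ \pi = \pi \circ (\gamma_k)_{I(P)} = 0$, and since $\pi$ is an epimorphism in the abelian category $LH(\mathcal{E})$, we deduce $(\gamma_k)_Y = 0$; hence $\widetilde{\rho}_1 = \widetilde{\rho}_2$.

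The main technical hurdle, to my mind, is the transfer of the $K$-linear structure from $\operatorname{D}(\mathcal{E})$ to $LH(\mathcal{E})$ in the existence step, which rests on the compatibility of scalar multiplication with the left t-structure. Everything else is formal: once one has an epimorphism from an object in $I(\mathcal{E})$ onto every object of $LH(\mathcal{E})$, both the uniqueness argument and the verification that $\widetilde{\rho}$ respects multiplication (comparing $\widetilde{\rho}(k_1)\widetilde{\rho}(k_2)$ with $\widetilde{\rho}(k_1k_2)$, which agree on $I(\mathcal{E})$) reduce to the same naturality argument applied to the chosen epimorphism $\pi$.
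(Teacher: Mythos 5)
Your proof is correct, but it takes a genuinely different route from the paper's. The paper works directly with the Hom groups: given $V,W\in LH(\mathcal{E})$, it chooses a projective presentation $I(P_1)\to I(P_0)\to V$ and a presentation $0\to I(X)\to I(Y)\to W\to 0$, uses projectivity of the $I(P_i)$ to exhibit $\Hom_{LH(\mathcal{E})}(I(P_i),W)$ as a quotient of $\Hom_{\mathcal{E}}(P_i,Y)$, and then transports the $K$-vector space structure through this quotient and through the left exact sequence $0\to\Hom_{LH(\mathcal{E})}(V,W)\to\Hom_{LH(\mathcal{E})}(I(P_0),W)\to\Hom_{LH(\mathcal{E})}(I(P_1),W)$; this is where the hypothesis of enough projectives does real work. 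You instead encode a $K$-linear structure as a ring map $K\to\End(\operatorname{Id})$, obtain existence by restricting the evident $K$-linear structure on $\operatorname{D}(\mathcal{E})$ to the heart $LH(\mathcal{E})$, and obtain uniqueness from Schneiders' presentation of every object of $LH(\mathcal{E})$ as a quotient of an object of $I(\mathcal{E})$ together with naturality of the difference $\gamma_k$. Your argument is cleaner and in fact strictly more general: neither step actually uses projectives (for uniqueness an epimorphism $I(E^0)\twoheadrightarrow Y$ with $E^0\in\mathcal{E}$ already suffices), so the lemma holds for any $K$-linear quasi-abelian category. What the paper's approach buys is that it is entirely elementary --- it never leaves $LH(\mathcal{E})$ and produces the vector space structure on each Hom group by hand --- whereas yours leans on the identification of $LH(\mathcal{E})$ with the heart of the left $t$-structure on $\operatorname{D}(\mathcal{E})$ and on the routine but not entirely free check that the $K$-linear structure on the category of complexes descends through the homotopy category and the Verdier localization. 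The one point you leave implicit is that the structure so obtained restricts along $I$ to the original one on $\mathcal{E}$; this follows because the composite $\mathcal{E}\to\operatorname{Ch}(\mathcal{E})\to\operatorname{D}(\mathcal{E})$ is $K$-linear and $I$ is fully faithful, and deserves a sentence.
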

\begin{proof}
Denote the inclusion of $\mathcal{E}$ into its left heart by $I:\mathcal{E}\rightarrow LH(\mathcal{E})$. By construction, $LH(\mathcal{E})$ is an abelian category. In particular, it is additive. Hence, its \emph{Hom} functor satisfies: 
\begin{equation*}
    \Hom_{LH(\mathcal{E})}(-,-):LH(\mathcal{E})^{\op}\times LH(\mathcal{E})\rightarrow \operatorname{Ab}.
\end{equation*}
Furthermore, $\mathcal{E}$ is also additive, and by construction of $LH(\mathcal{E})$ we have:
\begin{equation*}
    \Hom_{\mathcal{E}}(-,-)=\Hom_{LH(\mathcal{E})}(I(-),I(-)),
\end{equation*}
as functors with values on abelian groups. Let $V,W\in LH(\mathcal{E})$. As $\mathcal{E}$ has enough projective objects, it follows by \cite[Proposition 1.3.24]{schneiders1999quasi} that there
are projective objects $P_0,P_1\in\mathcal{E}$ such that we have the following right exact sequence in $LH(\mathcal{E})$:
\begin{equation*}
    I(P_1)\rightarrow I(P_0)\rightarrow V.
\end{equation*}
Furthermore, all the  $I(P_i)$ is projective in $LH(\mathcal{E})$ for $i=0,1$. In particular, we obtain the following left exact complex of abelian groups:
\begin{equation}\label{equation K-linear structure first res}
    0\rightarrow \Hom_{LH(\mathcal{E})}(V,W)\rightarrow \Hom_{LH(\mathcal{E})}(I(P_0),W) \rightarrow \Hom_{LH(\mathcal{E})}(I(P_1),W).
\end{equation}
By construction of $LH(\mathcal{E})$, there are objects $X,Y\in \mathcal{E}$ which fit in a short exact sequence:
\begin{equation*}
    0\rightarrow I(X)\rightarrow I(Y)\rightarrow W\rightarrow 0.
\end{equation*}
As the $I(P_i)$ are projective in $LH(\mathcal{E})$, we obtain the following commutative diagram :
\begin{equation}\label{equation K-linear structure first res 2}
\begin{tikzcd}
0 \arrow[r] & {\Hom_{\mathcal{E}}(P_0,X)} \arrow[r] \arrow[d] & {\Hom_{\mathcal{E}}(P_0,Y)} \arrow[r] \arrow[d] & {\Hom_{LH(\mathcal{E})}(I(P_0),W)} \arrow[r] \arrow[d] & 0 \\
0 \arrow[r] & {\Hom_{\mathcal{E}}(P_1,X)} \arrow[r]           & {\Hom_{\mathcal{E}}(P_1,Y)} \arrow[r]           & {\Hom_{LH(\mathcal{E})}(I(P_1),W)} \arrow[r]           & 0
\end{tikzcd}
\end{equation}
where the rows are short exact sequences of abelian groups. As $\mathcal{E}$ is $K$-linear, the first two terms in each
row are $K$-vector spaces, the first map in each row is $K$-linear, and the first two vertical maps are also $K$-linear. Thus, there are unique $K$-vector space structures on $\Hom_{LH(\mathcal{E})}(I(P_i),W)$ for $i=0,1$ making the above commutative diagram a commutative diagram of $K$-vector spaces. Using this structure, it follows that there is a unique $K$-linear structure on $\Hom_{LH(\mathcal{E})}(V,W)$ making $(\ref{equation K-linear structure first res})$ a left exact sequence of $K$-vector spaces. Thus, we obtain a unique $K$-linear structure on $LH(\mathcal{E})$.
\end{proof}
By \cite[Lemma 3.7]{bode2021operations}, the categories $\Mod_{\Indban}(\mathscr{A})$ are elementary. Hence, we obtain a canonical $K$-linear structure on $LH(\Mod_{\Indban}(\mathscr{A}))=\Mod_{LH(\widehat{\mathcal{B}}c_K)}(I(\mathscr{A}))$, which we denote by:
\begin{equation*}
    \Hom_{I(\mathscr{A})}(-,-):LH(\Mod_{\Indban}(\mathscr{A}))\times LH(\Mod_{\Indban}(\mathscr{A}))\rightarrow \operatorname{Vect}_K.
\end{equation*}
As usual, this $K$-linear structure extends to the categories of Ind-Banach modules over a sheaf of Ind-Banach algebras on a rigid space $X$. We may use this structure to define the next \emph{Hom} functor:
\begin{defi}\label{defi external sheaf of hom}
Let $X$ be a rigid space and $\mathscr{A}$ be a sheaf of Ind-Banach algebras on $X$. We define the (external) sheaf of homomorphisms functor of $\mathscr{A}$ as follows:
\begin{equation*}
    \mathcal{H}om_{\mathscr{A}}(-,-):\Mod_{\Indban}(\mathscr{A})^{\op}\times \Mod_{\Indban}(\mathscr{A})\rightarrow \operatorname{Shv}(X,\operatorname{Vect}_K),
\end{equation*}
for $\mathcal{M},\mathcal{N}\in \Mod_{\Indban}(\mathscr{A})$,  and every admissible open $U\subset X$ we have:
\begin{equation*}
 \Gamma(U,\mathcal{H}om_{\mathscr{A}}(\mathcal{M},\mathcal{N}))= \operatorname{Hom}_{\mathscr{A}_{\vert U}}(\mathcal{M}_{\vert U},\mathcal{N}_{\vert U})),
\end{equation*}
and restriction maps given by restriction of morphisms of sheaves. More generally, if $\mathscr{B}$ is a sheaf of algebras in $\operatorname{Shv}(X,LH(\widehat{\mathcal{B}}c_K))$, we may repeat this construction to define a functor:
\begin{equation*}
    \mathcal{H}om_{\mathscr{B}}(-,-): \Mod_{LH(\widehat{\mathcal{B}}c_K)}(\mathscr{B})\times \Mod_{LH(\widehat{\mathcal{B}}c_K)}(\mathscr{B})\rightarrow \operatorname{Shv}(X,\operatorname{Vect}_K). 
\end{equation*}
\end{defi}
If $X=\Sp(K)$, we obtain $\mathcal{H}om_{\mathscr{A}}(-,-)=\Hom_{\mathscr{A}}(-,-)$, so we recover the functor defined above. Furthermore, given a sheaf of Ind-Banach algebras $\mathscr{A}$, we have the following identity of functors:
\begin{equation}\label{equation external sheaf of hom}
    \mathcal{H}om_{\mathscr{A}}(-,-)=\mathcal{H}om_{I(\mathscr{A})}(I(-),I(-)).
\end{equation}
As usual, the fact that $\Mod_{\Indban}(\mathscr{A})$ does not have enough injectives makes is unclear if $\mathcal{H}om_{\mathscr{A}}(-,-)$ admits a derived functor. However, we can use the fact that $\Mod_{LH(\widehat{\mathcal{B}}c_K)}(I(\mathscr{A}))$ is a Grothendieck abelian category, together with identity (\ref{equation external sheaf of hom}) to define:
\begin{equation*}
    R\mathcal{H}om_{\mathscr{A}}(-,-):=R\mathcal{H}om_{I(\mathscr{A})}(-,-):\operatorname{D}(\mathscr{A})^{\op}\times \operatorname{D}(\mathscr{A}) \rightarrow \operatorname{D}(\operatorname{Shv}(X,\operatorname{Vect}_K)).
\end{equation*}
Similarly, we define the following derived functor:
\begin{equation*}
    R\Hom_{\mathscr{A}}(-,-):=R\Hom_{I(\mathscr{A})}(-,-):\operatorname{D}(\mathscr{A})^{\op}\times \operatorname{D}(\mathscr{A}) \rightarrow \operatorname{D}(\operatorname{Vect}_K).
\end{equation*}
The interest of these \emph{Hom} functors stems from the following observation:\newline
Choose $\mathcal{M},\mathcal{N}\in \Mod_{\Indban}(\mathscr{A})$. As $\Mod_{LH(\widehat{\mathcal{B}}c_K)}(I(\mathscr{A}))$ is a Grothendieck abelian category, it follows by \cite[Tag 06XP]{stacks-project} that for every $n\geq 0$ the \emph{Ext} group:
\begin{equation*}
\operatorname{Ext}_{\mathscr{A}}^n(\mathcal{M},\mathcal{N}):=R^n\Hom_{\mathscr{A}}(\mathcal{M},\mathcal{N}),
\end{equation*}
parametrizes  the isomorphism classes of $n$-th degree Yoneda's extensions of $I(\mathcal{N})$ by $I(\mathcal{M})$ as $I(\mathscr{A})$-modules.
This perspective will be further explored in Section \ref{Section Hochschild cohomology and extension I}.
\section{Stein Spaces}\label{Chapter Stein spaces}
As mentioned in the introduction, the goal of this paper is giving explicit applications of our theory of Hochschild (co)-homology of $\wideparen{\D}$-modules, and justifying its usefulness as a tool in the study of rigid analytic spaces and $p$-adic geometric representation theory. Let $X$ be a smooth rigid analytic space. By construction, the Hochschild cohomology spaces $\operatorname{HH}^{\bullet}(\wideparen{\D}_X)$ are geometric invariants of $X$. As with any geometric invariant, it is of the utmost importance to determine a class of spaces for which it is best behaved, and show that this class is wide enough to contain interesting examples.\\

In our case, this role will be played by the smooth Stein spaces. These spaces were first introduced into the rigid analytic setting by R. Kiehl in \cite{kiehl1967theorem}. Roughly speaking, a Stein space is a rigid analytic space $X$ equipped with a strictly increasing affinoid cover:
\begin{equation*}
   X_0\subset \cdots \subset X_{n}\subset X_{n+1}\subset \cdots \subset X,
\end{equation*}
satisfying that the identity $\OX_X(X)=\varprojlim_n \OX_X(X_n)$ makes $\OX_X(X)$ a nuclear Fréchet space. From the algebraic point of view, these spaces are specially well-behaved. Namely, the category of coherent $\OX_X$-modules $\operatorname{Coh}(\OX_X)$ behaves very much like the category of coherent modules on an affinoid space. In particular, coherent modules have trivial sheaf cohomology, and each module is completely determined by its global sections. If $X$ is smooth and satisfies some mild technical conditions, we will show that these properties also hold for the category of co-admissible $\wideparen{\D}_X$-modules. Furthermore, an even stronger statement holds. Namely, we will show below that $\wideparen{\D}_X(X)$ is a Fréchet-Stein algebra, and that the category of co-admissible $\wideparen{\D}_X$-modules is equivalent to the category of co-admissible $\wideparen{\D}_X(X)$-modules. From the analytic point of view, Stein spaces are also specially well-behaved. To wit, we will show that the global sections of objects in $\operatorname{Coh}(\OX_X)$ and $\mathcal{C}(\wideparen{\D}_X)$ are canonically nuclear Fréchet spaces. Furthermore, as will be explored in Chapter \ref{Chapter applications of HC}, the de Rham complex of $X$ is  canonically a strict complex of nuclear Fréchet spaces. This fact will be used in later sections to obtain explicit interpretations of the cohomology groups of $\operatorname{HH}^{\bullet}(\wideparen{\D}_X)$.\\

For use in later sections, we also include an account of the properties of finite group actions on Stein spaces. Namely, if $G$ is a finite group acting on a Stein space $X$, we will show that the fixed point locus of the action $X^G$ is always a Stein space. Similarly, we will show that the categorical quotient always $X/G$ exists as a rigid space, and it is a Stein space.\\

Finally, let us mention that the class of smooth Stein spaces includes many interesting examples. For instance, the rigid analytification of a smooth affine $K$-variety is a smooth Stein space. Thus, every projective smooth rigid space is covered by finitely many smooth Stein spaces. Furthermore, the $p$-adic symmetric spaces covered by P. Schneider and U. Stuhler in \cite{schneider1991cohomology} are smooth Stein spaces.

\subsection{Definition and basic properties}\label{section group act on Stein spaces}
We start the chapter by giving an introduction to the theory of $p$-adic Stein spaces. In particular, we will recall some of their geometric properties and study the category of coherent modules on a Stein space. Let us start by recalling the definition: 
\begin{defi}\label{defi Stein spaces}
We define the following objects:
\begin{enumerate}[label=(\roman*)]
    \item Let $V\subset U$ be an open immersion of affinoid spaces. We say $V$ is relatively compact in $U$, and write $V \Subset U$, if there is a set of affinoid generators $f_1,\cdots, f_n\in \OX_U(U)$ such that:
    \begin{equation*}
        V\subset \{ x\in U \textnormal{ } \vert \textnormal{ } \vert f_i(x)\vert < 1, \textnormal{ for }1\leq i\leq n\}.
    \end{equation*}
    \item A rigid space $X$ is called a quasi-Stein space if it admits an admissible affinoid cover $(U_i)_{i\in\mathbb{N}}$ such that $U_{i}\subset U_{i+1}$ is a Weierstrass subdomain. 
    \item We say $X$ is a Stein space if, in addition, we have $U_i\Subset U_{i+1}$ for all $i\geq 0$. We call $(U_i)_{i\in\mathbb{N}}$ a Stein covering of $X$, and write $X=\varinjlim_iU_i$.
    \end{enumerate}
\end{defi}
Notice that any Stein space is partially proper. In particular, it is separated. 
\begin{obs}
We point out that there are more general notions of Stein space available in the literature (cf. \cite{non-archSteinspaces}). These notions are centered around cohomological properties of coherent sheaves on $X$. Our focus, however, is on the analytical properties of the global sections, and of the transition maps of the Stein covering. Hence, we will stick to the classical definition stated above.
\end{obs}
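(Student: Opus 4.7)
The final statement is a Remark rather than a proposition, so there is no theorem to establish in the usual sense; the task is to substantiate the two informal assertions made: that alternative, cohomology-based notions of Stein space exist in the literature, and that the classical definition adopted above is the appropriate choice for this paper. My plan is to treat this as a brief expository argument rather than a derivation.

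First I would survey the alternative notions referenced in \cite{non-archSteinspaces}. These typically define a Stein space via Cartan-style conditions: one requires an admissible affinoid exhaustion together with the vanishing $H^i(X,\mathcal{F}) = 0$ for all $i\geq 1$ and all coherent $\OX_X$-modules $\mathcal{F}$, or variants thereof. I would briefly indicate how these definitions are stated and note that they are tailored to ensure good behavior of $\operatorname{Coh}(\OX_X)$ but encode no information about the topological nature of the transition maps $U_i \hookrightarrow U_{i+1}$.

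Next, I would justify the preference for the classical definition by pointing to the specific analytic inputs that the paper will rely on in subsequent sections. The Weierstrass-relatively-compact condition $U_i \Subset U_{i+1}$ is precisely what forces each restriction map $\OX_X(U_{i+1}) \to \OX_X(U_i)$ to be compact, which in turn makes $\OX_X(X) = \varprojlim_i \OX_X(U_i)$ a nuclear Fréchet space. This nuclearity is indispensable for the Grosse-Kl\"onne strictness result \cite[Corollary 3.2]{grosse2004rham} feeding into Theorem \ref{teo B}, for the Fréchet-Stein structure on $\wideparen{\D}_X(X)$ in Theorem \ref{teo A}, and for identifying $\operatorname{HH}^\bullet(\wideparen{\D}_X)$ with $\operatorname{H}_{\operatorname{dR}}^\bullet(X)^b$. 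A purely cohomological definition would not furnish these compactness properties, and hence would not be sufficient to run the arguments in Chapter \ref{Chapter applications of HC}.

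The main difficulty here is not mathematical but expository: one must convince the reader that it is the \emph{analytic} data of the covering, rather than the cohomological vanishing it implies, that drives the applications in this paper, so that restricting to the classical definition costs nothing while keeping the prerequisites self-contained. No comparison theorem between the different notions is needed for the remark itself, since the paper's results are stated for spaces satisfying the classical definition from Definition \ref{defi Stein spaces}.
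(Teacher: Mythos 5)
Your reading is correct: the statement is a remark with no accompanying proof in the paper, and your justification — that the relative compactness condition $U_i \Subset U_{i+1}$ is what yields the scc transition maps and hence the nuclear Fréchet structure on $\OX_X(X)$ needed for the Grosse-Kl\"onne strictness result and the Fréchet–Stein presentation of $\wideparen{\D}_X(X)$ — matches exactly the rationale the paper itself gives in the remark and exploits in Lemma \ref{Lemma basic properties of Stein spaces} and Chapter \ref{Chapter applications of HC}. Nothing further is required.
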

Let $X$ be a Stein space. It follows from the definition that given a Stein covering $X=\varinjlim_nX_n$ then for each $n\geq 1$ there are affinoid generators $f_1,\cdots,f_{r_n}\in \OX_X(X_n)$, and some real number $\epsilon \in \sqrt{\vert K^*\vert}$, satisfying that  $0< \epsilon <1$ and such that we have:
\begin{equation}\label{equation special Stein covering}
    X_{n-1}\subseteq X_n\left(\epsilon^{-1} f_1,\cdots, \epsilon^{-1} f_{r_n}\right).
\end{equation}
If the identity holds, we recover the definition of a Stein space given by R. Kiehl in \cite[Definition 2.3]{kiehl1967theorem}.\\

Our interest in Stein spaces stems from the fact that they satisfy algebraic properties which are very similar to those of affinoid spaces, but have much better analytic properties. For instance, if $X$ is a smooth Stein space, then 
the complex $\Omega_{X/K}^{\bullet}(X):=\Gamma(X,\Omega_{X/K}^{\bullet})$ is a strict complex of Fréchet $K$-vector spaces, and its cohomology calculates the de Rham cohomology of $X$. This particular property will be exploited in the next sections. We will now give an overview of some of the most relevant features of Stein spaces. Recall the following definitions:
\begin{defi}[{\cite[Lemma 5.2]{bode2021operations}\cite[Definition 1.1]{kiehl analysis}}]
 A bounded map of Banach spaces $f:V\rightarrow W$ is called strictly completely continuous morphism (\emph{scc} morphism) if there is a sequence of morphisms:
 \begin{equation*}
     f_n:V\rightarrow W,
 \end{equation*}
such that $f=\varinjlim_{n\geq 0} f_n$ in operator norm and such that $f_n(V)$ is finite-dimensional for all $n\geq 0$.
\end{defi}
\begin{defi}[{\cite[Definition 5.3]{bode2021operations}}]
Let $V$ be a Fréchet space. We say $V$ is a nuclear Fréchet space if there is a Fréchet space $\varprojlim_n V_n$ satisfying the following:
\begin{enumerate}[label=(\roman*)]
    \item $V=\varprojlim_{n}V_n$ as Fréchet spaces.
    \item The image of $V$ in $V_n$ is dense for each $n\geq 0$.
    \item For each $n\geq 1$ there is a Banach $K$-vector space $F_n$ and a strict surjection:
    \begin{equation*}
        F_n\rightarrow V_n,
    \end{equation*}
    such that the composition $F_n\rightarrow V_n\rightarrow V_{n-1}$ is $\emph{scc}$.
\end{enumerate}
\end{defi}

Let us start our study of Stein spaces with the following lemma:
\begin{Lemma}\label{Lemma basic properties of Stein spaces}
Let $X=\varinjlim_nX_n$ be a Stein space. The following hold:
\begin{enumerate}[label=(\roman*)]
    \item $\Gamma(X,\OX_X)=\varprojlim_n \Gamma(X_n,\OX_X)$ is a Fréchet-Stein presentation.
    \item $\Gamma(X,\OX_X)$ is a nuclear Fréchet $K$-space. 
    \item Any closed analytic subspace $Y\rightarrow X$ is a Stein space.
    \item  If $Y$ is contained in an affinoid space  then $\operatorname{dim}(Y)=0$. 
\end{enumerate}
\end{Lemma}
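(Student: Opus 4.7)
My plan is to treat the four parts in order, building on (i) and (ii) to handle (iii) and (iv).

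For (i), I would read off the Fréchet-Stein structure directly from the Stein covering. Setting $A_n:=\OX_X(X_n)$, the $A_n$ are Noetherian Banach $K$-algebras because each $X_n$ is affinoid; the transition $A_{n+1}\to A_n$ is flat with dense image because $X_n\subseteq X_{n+1}$ is a Weierstrass subdomain; and the identification $\OX_X(X)=\varprojlim_n A_n$ is simply the sheaf condition on the admissible cover $(X_n)_n$. For (ii), I would verify the nuclearity axioms with $V_n:=A_n$ and $F_n$ a Tate algebra strictly surjecting onto $A_n$. The key input is the relatively compact condition $X_{n-1}\Subset X_n$, concretely in the form (\ref{equation special Stein covering}), which forces the composition $F_n\twoheadrightarrow A_n\to A_{n-1}$ to factor through an inclusion of Tate algebras obtained by shrinking the polyradius by a factor $\epsilon<1$; such an inclusion is strictly completely continuous because it is approximated in operator norm by its finite-rank truncations on the monomial basis.

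For (iii), I would pull back the Stein covering. Setting $Y_n:=Y\times_X X_n$, closedness of $Y\hookrightarrow X$ makes each $Y_n$ an affinoid closed subspace of $X_n$, and the family $(Y_n)$ is an admissible cover of $Y$. The Weierstrass inclusion $X_n\subseteq X_{n+1}$ base-changes to a Weierstrass inclusion $Y_n\subseteq Y_{n+1}$. For the relatively compact condition: if $f_1,\ldots,f_r\in \OX_X(X_{n+1})$ are affinoid generators with $|f_i|_{X_n}<1$, then by surjectivity of the restriction $\OX_X(X_{n+1})\twoheadrightarrow \OX_Y(Y_{n+1})$ their images are affinoid generators of $\OX_Y(Y_{n+1})$, and they still satisfy the strict inequality on $Y_n$. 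Thus $Y_n\Subset Y_{n+1}$ and $Y$ is Stein.

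For (iv), I would combine (iii) with the fact that a closed analytic subspace of an affinoid is itself affinoid. Hence $Y$ is simultaneously affinoid and Stein. Since any admissible cover of an affinoid admits a finite refinement, the increasing Stein cover $(Y_n)$ stabilizes: $Y_n=Y$ for $n\gg 0$. Consequently $Y\Subset Y$, and the transition $\OX_Y(Y_{n+1})\to \OX_Y(Y_n)$, which is the identity of $\OX_Y(Y)$, must be strictly completely continuous by the argument in (ii). The hard part is then extracting dimension information from this: my plan is to argue that if $\mathrm{id}=\lim_k g_k$ in operator norm with each $g_k$ of finite rank, then for $k$ sufficiently large $\|\mathrm{id}-g_k\|<1$, so $g_k=\mathrm{id}-(\mathrm{id}-g_k)$ is invertible via the non-archimedean Neumann series. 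This forces $\OX_Y(Y)$ to be finite-dimensional over $K$, and a finite-dimensional affinoid algebra has Krull dimension zero, giving $\operatorname{dim}(Y)=0$.
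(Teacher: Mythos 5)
Your treatment of (i)--(iii) follows essentially the same route as the paper: (i) is read off the definitions, (ii) reduces via the surjection from a Tate algebra to the \emph{scc} property of $K\langle t_1,\dots,t_r\rangle\to K\langle\epsilon^{-1}t_1,\dots,\epsilon^{-1}t_r\rangle$ (the paper cites this for $\epsilon\in\vert K^*\vert$ and passes to a finite extension otherwise; your monomial-truncation argument works for any $\epsilon<1$ directly, which is a small simplification), and (iii) pulls the Stein covering back to the closed subspace exactly as the paper does via the coherent ideal sheaf. Part (iv) is where you genuinely diverge. The paper observes that $Y$ is simultaneously affinoid and Stein, so $\OX_Y(Y)$ is both a Banach space and (by part (ii)) a nuclear Fréchet space, hence reflexive; since a reflexive $K$-Banach space is finite-dimensional, $\operatorname{dim}(Y)=0$. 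You instead stabilize the cover to get $Y\Subset Y$ and extract finite-dimensionality from compactness of the transition map by a Neumann-series argument. This is more self-contained (it avoids citing reflexivity of nuclear Fréchet spaces and the characterization of reflexive Banach spaces), but one step is imprecise: the argument in (ii) produces a strict surjection $q:F\to\OX_Y(Y)$ from a Tate algebra such that the composite $F\to\OX_Y(Y)\xrightarrow{\operatorname{id}}\OX_Y(Y)$ is \emph{scc}; it does not show that the identity itself is \emph{scc}, which would require a bounded $K$-linear section of $q$ (not automatic over a general complete non-archimedean $K$). The fix is immediate and preserves your strategy: writing $q=\varinjlim q_k$ with $q_k$ of finite rank and letting $C$ be the openness constant of $q$, choose $k$ with $\Vert q-q_k\Vert\cdot C<1$; then every $a\in\OX_Y(Y)$ satisfies $\operatorname{dist}(a,\operatorname{Im}(q_k))\leq\theta\Vert a\Vert$ with $\theta<1$, and iterating against the closed finite-dimensional subspace $\operatorname{Im}(q_k)$ gives $\OX_Y(Y)=\operatorname{Im}(q_k)$, hence finite-dimensionality and $\operatorname{dim}(Y)=0$.
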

\begin{proof}
Statement $(i)$ follows directly from the definitions. For statement $(ii)$, we may assume for simplicity that the Stein covering $X=\varinjlim_n X_n$ satisfies the conditions in equation $(\ref{equation special Stein covering})$. Fix some $n\geq 1$, and consider the surjection $\mathbb{B}^{r_n}_K\rightarrow X_{n}$ given by $t_i\mapsto f_i$. We have a pullback diagram:
\begin{equation*}
\begin{tikzcd}
X_n \arrow[r]               & \mathbb{B}^{r_n}_K                                                                     \\
X_{n-1} \arrow[r] \arrow[u] & {\mathbb{B}^{r_n}_K\left(\epsilon^{-1}t_1,\cdots \epsilon^{-1}t_{r_n} \right)} \arrow[u]
\end{tikzcd}
\end{equation*}
such that the horizontal maps are closed immersions, and the vertical maps correspond to immersions of Weierstrass subdomains. It suffices to show that the associated map:
\begin{equation*}
    K\langle t_1,\cdots, t_{r_n} \rangle\rightarrow K\langle \epsilon^{-1}t_1,\cdots, \epsilon^{-1}t_{r_n} \rangle,
\end{equation*}
is \emph{scc}. If $\epsilon \in \vert K^*\vert$, then this holds by \cite[pp. 63]{bode2021operations}. In the general case, we may choose a finite extension $L$ of $K$ such that $\epsilon \in \vert L^*\vert$. In this case, we have the following pushout diagram:
\begin{equation*}
\begin{tikzcd}
{K\langle t_1,\cdots, t_{r_n} \rangle} \arrow[r] \arrow[d]                 & {L\langle t_1,\cdots, t_{r_n} \rangle} \arrow[d]                 \\
{K\langle \epsilon^{-1}t_1,\cdots, \epsilon^{-1}t_{r_n} \rangle} \arrow[r] & {L\langle \epsilon^{-1}t_1,\cdots, \epsilon^{-1}t_{r_n} \rangle}
\end{tikzcd}
\end{equation*}
such that the horizontal maps are finite, and the rightmost vertical map is \emph{scc}. Again, the arguments in \cite[pp. 63]{bode2021operations} show that 
$K\langle t_1,\cdots, t_{r_n} \rangle\rightarrow K\langle \epsilon^{-1}t_1,\cdots, \epsilon^{-1}t_{r_n} \rangle$ is \emph{scc}. Therefore, $\Gamma(X,\OX_X)$ is a nuclear Fréchet $K$-space, as we wanted to show.\\

Let $Y\rightarrow X$ be a closed immersion, and let $\mathcal{I}_Y$ be the associated sheaf of ideals. As $\mathcal{I}_Y$ is a coherent sheaf on $X$, it follows that 
$\mathcal{I}_Y(X)$ is a co-admissible $\OX_X(X)$-module. Thus, we have:
\begin{equation*}
    \OX_Y(Y)=\OX_X(X)/\mathcal{I}_Y(X)=\varprojlim_n \OX_X(X_n)/\mathcal{I}_Y(X_n).
\end{equation*}
Letting $Y_n=\Sp(\OX_X(X_n)/\mathcal{I}_Y(X_n))$, it follows that $Y=\varinjlim_n Y_n$ is a Stein covering of $Y$. Assume that $Y$ is contained in an affinoid subdomain. Then $Y$ itself is an affinoid space. Hence,  $\OX_Y(Y)$
is a Banach $K$-algebra, and a nuclear Fréchet space.  Notice that nuclear Fréchet spaces are reflexive by \cite[Corollary 19.3]{schneider2013nonarchimedean}. However, by \cite[Proposition 11.1]{schneider2013nonarchimedean} a Banach $K$-vector space is reflexive if and only if it is finite-dimensional. Hence, $\operatorname{dim}(Y)=0$, as we wanted to show.
\end{proof}
In particular, if $X$ is a smooth Stein space with an action of a finite group $G$, this implies that the fixed point locus $X^G$ is also a Stein space. Furthermore, by \cite[Proposition 2.1.9]{p-adicCheralg} it follows that $X^G$ is also smooth. We will now analyze the category of coherent sheaves on a Stein space:
\begin{prop}\label{prop coherent modules on a Stein sapce}
Let $X=\varinjlim_n X_n$ be a Stein space, and let $\operatorname{Coh}(X)$ be the category of coherent sheaves on $X$. The following hold:
\begin{enumerate}[label=(\roman*)]
    \item Every $\mathcal{M}\in \operatorname{Coh}(X)$ satisfies $\operatorname{H}^i(X,\mathcal{M})=0$ for $i\geq 1$.
    \item There is an equivalence of abelian categories $\Gamma(X,-):\operatorname{Coh}(X)\rightarrow \mathcal{C}(\OX_X(X))$.
    \item For each $\mathcal{M}\in \mathcal{C}(\OX_X(X))$, let $\mathcal{M}_n=\OX_X(X_n)\otimes_{\OX_X(X)}\mathcal{M}$. The presentation $\mathcal{M}=\varprojlim_n \mathcal{M}_n$ makes $\mathcal{M}$ a nuclear Fréchet $K$-vector space.
\end{enumerate}
\end{prop}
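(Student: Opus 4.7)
The plan is to treat the three statements in order, exploiting that the Stein cover $\{X_n\}_{n\geq 0}$ is a countable chain of affinoid subdomains with Weierstrass transition maps, and that by part $(i)$ of the previous Lemma the ring $\OX_X(X)=\varprojlim_n \OX_X(X_n)$ is a Fréchet--Stein presentation.

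For $(i)$, I would compute sheaf cohomology via Čech cohomology with respect to the admissible affinoid cover $\mathfrak{U}=\{X_n\}$. Since the $X_n$ are nested, every non-empty finite intersection of members of $\mathfrak{U}$ equals the smallest one among them, so the Čech complex for a coherent $\mathcal{M}$ collapses to a two-term complex whose $0$-th cohomology is $\varprojlim_n \mathcal{M}(X_n)$ and whose first cohomology is $\varprojlim{}_n^1\mathcal{M}(X_n)$; the higher Čech groups vanish for combinatorial reasons. The $\varprojlim{}^1$ term vanishes by the Mittag--Leffler criterion: since $X_n\hookrightarrow X_{n+1}$ is a Weierstrass subdomain, $\OX_X(X_{n+1})\to \OX_X(X_n)$ has dense image, and finite generation of $\mathcal{M}(X_{n+1})$ over $\OX_X(X_{n+1})$ propagates this density to $\mathcal{M}(X_{n+1})\to \mathcal{M}(X_n)$.

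For $(ii)$, I would invoke the Schneider--Teitelbaum formalism of co-admissible modules over Fréchet--Stein algebras. The functor $\Gamma(X,-)$ sends $\mathcal{M}\in\operatorname{Coh}(X)$ to $\varprojlim_n \mathcal{M}(X_n)$, which is co-admissible by Kiehl's Theorem A (each $\mathcal{M}(X_n)$ is finitely generated over $\OX_X(X_n)$ and the maps $\OX_X(X_n)\otimes_{\OX_X(X_{n+1})}\mathcal{M}(X_{n+1})\to \mathcal{M}(X_n)$ are isomorphisms because both sides represent the restriction of $\mathcal{M}$ to $X_n$). The quasi-inverse sends $M\in\mathcal{C}(\OX_X(X))$ to the sheaf with sections $\OX_X(X_n)\otimes_{\OX_X(X)}M$ on each $X_n$; the acyclicity from $(i)$ together with Kiehl's Theorem A on each $X_n$ yields the required equivalence.

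For $(iii)$, using $(ii)$ we write $\mathcal{M}=\varprojlim_n \mathcal{M}_n$ with $\mathcal{M}_n$ finitely generated over the Banach algebra $\OX_X(X_n)$, hence naturally a Banach space. To verify nuclearity, I would choose a surjection $\OX_X(X_{n+1})^{r}\twoheadrightarrow \mathcal{M}_{n+1}$ and factor the transition map through $\OX_X(X_n)^{r}\twoheadrightarrow \mathcal{M}_n$. Since $\OX_X(X_{n+1})\to \OX_X(X_n)$ is scc (established in the proof of the preceding Lemma), so is $\OX_X(X_{n+1})^{r}\to \OX_X(X_n)^{r}$; composing with the bounded surjection onto $\mathcal{M}_n$ shows the lift $\mathcal{M}_{n+1}\to \mathcal{M}_n$ is scc, and density of the image of $\mathcal{M}$ in each $\mathcal{M}_n$ follows from the Weierstrass density of $\OX_X(X)\to \OX_X(X_n)$ and finite generation.

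The main obstacle is $(i)$: one must justify carefully that the Čech complex for the nested cover really reduces to a two-term complex, and that the Mittag--Leffler criterion is applicable in the non-archimedean Fréchet setting. Once this is in place, $(ii)$ is formal from Schneider--Teitelbaum theory and $(iii)$ reduces to the scc-stability argument above.
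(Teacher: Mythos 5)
Your proposal is correct, but note that it proves more than the paper does: for parts $(i)$ and $(ii)$ the paper simply cites \cite{non-archSteinspaces}, and for $(iii)$ it appeals to part $(ii)$ of Lemma \ref{Lemma basic properties of Stein spaces}. What you sketch is essentially the classical Kiehl-type argument underlying that reference: Leray for the nested affinoid cover (which requires Tate/Kiehl acyclicity of coherent sheaves on the affinoid intersections), together with the identification of the \v{C}ech cohomology of such a cover with $\varprojlim$ and $\varprojlim^{1}$ of the system $\{\mathcal{M}(X_n)\}$ in degrees $0$ and $1$ and nothing above, the $\varprojlim^{1}$ term being killed by the topological Mittag--Leffler theorem once density of $\OX_X(X_{n+1})\to\OX_X(X_n)$ is propagated to $\mathcal{M}(X_{n+1})\to\mathcal{M}(X_n)$ via finite generation; and Schneider--Teitelbaum theory for $(ii)$. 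The one place you should be careful is the claim that the \v{C}ech complex ``collapses to a two-term complex'': it does not do so literally, but is only quasi-isomorphic to the standard two-term complex computing $\varprojlim$ and $\varprojlim^{1}$ — you flag this yourself, and it is where the real work sits. Your argument for $(iii)$, factoring the transition map of $\mathcal{M}_n$ through $\OX_X(X_n)^{r}\to\OX_X(X_{n-1})^{r}$ and using that \emph{scc} maps are stable under composition with bounded maps, is precisely the mechanism by which the paper's terse reduction to Lemma \ref{Lemma basic properties of Stein spaces}$(ii)$ works, so that part coincides with the paper's route.
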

\begin{proof}
The first two statements are shown in \cite{non-archSteinspaces}. The last follows by $(ii)$ in Lemma \ref{Lemma basic properties of Stein spaces}.
\end{proof}
\begin{coro}\label{coro sections of vector bundles are loc finite proj}
Let $X$ be a Stein space and $\mathscr{V}$ be a vector bundle on $X$. Then $\mathscr{V}(X)$ is a locally finite-free $\OX_X(X)$-module.
\end{coro}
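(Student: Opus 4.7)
The plan is to use the equivalence of categories from Proposition~\ref{prop coherent modules on a Stein sapce} to translate the local triviality of $\mathscr{V}$ as a sheaf into the local finite-freeness of $\mathscr{V}(X)$ as an $\OX_X(X)$-module. Since $\mathscr{V}$ is a vector bundle on $X$, there is an admissible cover $\{U_\alpha\}$ of $X$ by affinoid opens on which $\mathscr{V}_{|U_\alpha}\cong \OX_{U_\alpha}^{r_\alpha}$. By passing to a refinement, I may assume each $U_\alpha$ is a rational (in fact Weierstrass) subdomain of some $X_n$ in a fixed Stein covering $X=\varinjlim_n X_n$. Since each $X_n$ is affinoid, hence quasi-compact, only finitely many of the $U_\alpha$ are needed to cover $X_n$, and iterating over $n$ yields a countable admissible cover of $X$ by rational subdomains on which $\mathscr{V}$ trivializes.

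Next, I would invoke the equivalence $\Gamma(X,-):\operatorname{Coh}(\OX_X)\rightarrow \mathcal{C}(\OX_X(X))$ of Proposition~\ref{prop coherent modules on a Stein sapce}(ii). Under this equivalence, $\mathscr{V}(X)$ is a co-admissible $\OX_X(X)$-module, and its behaviour under restriction to a rational subdomain $U_\alpha$ is controlled by the analytic localization $\OX_X(U_\alpha)\widehat{\otimes}_{\OX_X(X)}\mathscr{V}(X)\cong \mathscr{V}(U_\alpha)$. Since $\mathscr{V}(U_\alpha)$ is by construction a free $\OX_X(U_\alpha)$-module of finite rank, this precisely exhibits $\mathscr{V}(X)$ as locally finite-free in the intended sense: there is a (countable) admissible cover of $X$ by rational subdomains such that the corresponding analytic localizations of $\mathscr{V}(X)$ are free of finite rank over the analytically localized algebras.

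The subtle point will be pinning down ``locally finite-free'' in the Fréchet--Stein setting and verifying that analytic localization is compatible with the equivalence $\Gamma(X,-)$ for coherent sheaves. This is handled by co-admissibility of $\mathscr{V}(X)$ together with part (iii) of Proposition~\ref{prop coherent modules on a Stein sapce}, which guarantees that the natural map $\OX_X(U_\alpha)\widehat{\otimes}_{\OX_X(X)}\mathscr{V}(X)\to \mathscr{V}(U_\alpha)$ is an isomorphism whenever $U_\alpha$ is a Weierstrass subdomain inside some $X_n$. Once this bookkeeping is in place, the corollary follows immediately from the local freeness of $\mathscr{V}$ as a coherent sheaf.
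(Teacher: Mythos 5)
Your proposal proves a different statement from the one the paper needs. The issue is the meaning of ``locally finite-free $\OX_X(X)$-module''. In the paper this is meant in the commutative-algebra (Zariski) sense: there exist global functions $f_1,\dots,f_n\in\OX_X(X)$ generating the unit ideal such that each algebraic localization $\mathscr{V}(X)[f_i^{-1}]$ is a finite free $\OX_X(X)[f_i^{-1}]$-module — equivalently, $\mathscr{V}(X)$ is a finitely generated projective $\OX_X(X)$-module. This is exactly how the corollary is used later (in Corollary \ref{coro locally étale and stein}, to produce a finite Zariski cover $\{X(f_i^{-1})\}$ of $X$ on which $\mathcal{T}_{X/K}$ becomes globally free). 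What you establish instead is that the \emph{analytic} localizations $\mathscr{V}(U_\alpha)\cong\OX_X(U_\alpha)\widehat{\otimes}_{\OX_X(X)}\mathscr{V}(X)$ are finite free for an admissible affinoid cover $\{U_\alpha\}$. Since $\OX_X(U_\alpha)$ is a completed tensor product and not a localization $\OX_X(X)[f^{-1}]$, this does not yield projectivity of $\mathscr{V}(X)$ over $\OX_X(X)$, nor the existence of the elements $f_i$; the analytic-local triviality of $\mathscr{V}$ is the hypothesis, not the conclusion.

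The paper's proof goes through projectivity instead: since $\mathcal{H}om_{\OX_X}(\mathscr{V},-)$ is exact ($\mathscr{V}$ being a vector bundle) and $\Gamma(X,-)$ is exact on coherent sheaves (Proposition \ref{prop coherent modules on a Stein sapce}), one gets $R\Hom_{\OX_X(X)}(\mathscr{V}(X),-)=\Hom_{\OX_X(X)}(\mathscr{V}(X),-)$ on coherent modules; applying this to a presentation $0\to I\to\OX_X(X)^J\to\mathscr{V}(X)\to0$ splits the surjection, so $\mathscr{V}(X)$ is projective, and finiteness of the rank at stalks then gives Zariski-local finite-freeness. A secondary, more minor point: your appeal to Proposition \ref{prop coherent modules on a Stein sapce}(iii) for the base-change isomorphism over arbitrary rational subdomains $U_\alpha$ overreaches — that item only concerns the presentation $\mathcal{M}=\varprojlim_n\mathcal{M}_n$ along the chosen Stein covering $X=\varinjlim_n X_n$, though the base-change statement itself is true by Kiehl's theory. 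The essential gap, however, is the Zariski-versus-analytic localization mismatch.
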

\begin{proof}
As $\mathscr{V}$ is a vector bundle, the internal \emph{Hom} functor $\mathcal{H}om_{\OX_X}(\mathscr{V},-)$ is exact. In particular, its derived functor satisfies $R\mathcal{H}om_{\OX_X}(\mathscr{V},-)=\mathcal{H}om_{\OX_X}(\mathscr{V},-)$. On the other hand, by Proposition \ref{prop coherent modules on a Stein sapce}, the global sections functor $\Gamma(X,-)$ is also exact. Thus, we have:
\begin{equation*}
    R\operatorname{Hom}_{\OX_X(X)}(\mathscr{V}(X),-)=R\Gamma(X,R\mathcal{H}om_{\OX_X}(\mathscr{V},-))=\Gamma(X,\mathcal{H}om_{\OX_X}(\mathscr{V},-))=\Hom_{\OX_X(X)}(\mathscr{V}(X),-).
\end{equation*}
In particular, $R^1\operatorname{Hom}_{\OX_X(X)}(\mathscr{V}(X),\mathscr{V}(X))=0$. Consider a short exact sequence of $\OX_X(X)$-modules:
\begin{equation*}
    0\rightarrow I\rightarrow \OX_X(X)^J\rightarrow \mathscr{V}(X)\rightarrow 0.
\end{equation*}
Applying $R\operatorname{Hom}_{\OX_X(X)}(\mathscr{V}(X),-)$ to this sequence yields a short exact sequence:
\begin{equation*}
    0\rightarrow \operatorname{Hom}_{\OX_X(X)}(\mathscr{V}(X),I)\rightarrow \operatorname{Hom}_{\OX_X(X)}(\mathscr{V}(X),\OX_X(X)^J)\rightarrow \operatorname{Hom}_{\OX_X(X)}(\mathscr{V}(X),\mathscr{V}(X))\rightarrow 0.
\end{equation*}
Thus, the surjection $\OX_X(X)^J\rightarrow \mathscr{V}(X)$ has a split, and it follows that $\mathscr{V}(X)$ is a projective $\OX_X(X)$-module. As the rank of $\mathscr{V}(X)$ can be calculated at the stalks, and  $\mathscr{V}$ is a vector bundle, it follows that $\mathscr{V}(X)$ is locally finite-free, as wanted.
\end{proof}
For some of the developments below, it will be convenient to show that smooth Stein spaces admit covers by Stein spaces in which certain vector bundles are trivial. Our next goal is showing that such covers always exist. Let us start with the following lemma:
\begin{Lemma}\label{Lemma prod of Stein is Stein}
Let $X=\varinjlim_nX_n$, and $Y=\varinjlim_nY_n$ be two Stein spaces. Then $X\times Y=\varinjlim_n X_n\times Y_n$ is also a Stein space. Furthermore, there is a canonical isomorphism of Fréchet spaces:
\begin{equation*}
    \OX_{X\times Y}(X\times Y)=\OX_X(X)\widehat{\otimes}_K\OX_Y(Y)=\varprojlim\OX_X(X_n)\widehat{\otimes}_K\OX_Y(Y_n),
\end{equation*}
where $\widehat{\otimes}_K$ denotes the complete projective tensor product of locally convex spaces.
\end{Lemma}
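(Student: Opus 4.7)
My plan is to check the three clauses of Definition \ref{defi Stein spaces} for the product cover $(X_n \times Y_n)_{n\in\mathbb{N}}$, and then deduce the tensor product identity from the resulting Stein presentation using the analytic structure established in Lemma \ref{Lemma basic properties of Stein spaces}.

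First I would set up the cover. Each $X_n \times Y_n$ is affinoid, with ring of functions $\OX_X(X_n)\widehat{\otimes}_K \OX_Y(Y_n)$, by the universal property of products of affinoid spaces. Admissibility of $(X_n \times Y_n)$ follows from a standard quasi-compactness argument: any affinoid subdomain $V \subset X\times Y$ has quasi-compact projections $p_1(V)\subset X$ and $p_2(V)\subset Y$, which are therefore contained in some $X_n$ and $Y_n$ respectively by admissibility of the individual Stein covers; hence $V\subset X_n\times Y_n$. Next I would check that $X_n\times Y_n \Subset X_{n+1}\times Y_{n+1}$ is a Weierstrass subdomain. If $X_n = X_{n+1}(f_1,\ldots,f_r)$ and $Y_n = Y_{n+1}(g_1,\ldots,g_s)$, then
\begin{equation*}
X_n\times Y_n = (X_{n+1}\times Y_{n+1})(p_1^*f_1,\ldots,p_1^*f_r, p_2^*g_1,\ldots,p_2^*g_s),
\end{equation*}
which is Weierstrass. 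For the relative compactness I would use that the two surjections $K\langle t_1,\ldots,t_r\rangle \twoheadrightarrow \OX_X(X_{n+1})$ and $K\langle u_1,\ldots,u_s\rangle \twoheadrightarrow \OX_Y(Y_{n+1})$ sending $t_i\mapsto f_i$ and $u_j\mapsto g_j$ combine, via $\widehat{\otimes}_K$, to a surjection $K\langle t,u\rangle\twoheadrightarrow \OX_X(X_{n+1})\widehat{\otimes}_K\OX_Y(Y_{n+1})$, so that $\{p_1^*f_i, p_2^*g_j\}$ is a system of affinoid generators. Evaluating at any point $(x,y)\in X_n\times Y_n$ gives $|p_1^*f_i(x,y)| = |f_i(x)|<1$ and $|p_2^*g_j(x,y)|=|g_j(y)|<1$, which is exactly the defining condition for $\Subset$.

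Having verified that $X\times Y=\varinjlim_n X_n\times Y_n$ is a Stein space, the first half of the tensor product identity is immediate: taking global sections of the Stein presentation gives
\begin{equation*}
\OX_{X\times Y}(X\times Y) = \varprojlim_n \OX_{X_n\times Y_n}(X_n\times Y_n) = \varprojlim_n \OX_X(X_n)\widehat{\otimes}_K \OX_Y(Y_n).
\end{equation*}
The remaining identification $\OX_X(X)\widehat{\otimes}_K \OX_Y(Y)=\varprojlim_n \OX_X(X_n)\widehat{\otimes}_K\OX_Y(Y_n)$ is the main analytic point. By Lemma \ref{Lemma basic properties of Stein spaces}(ii), both $\OX_X(X)$ and $\OX_Y(Y)$ are nuclear Fréchet, and the defining sequences involve \emph{scc} transition maps. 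The plan is to invoke the standard fact that for nuclear Fréchet spaces, the complete projective tensor product commutes with countable projective limits with dense transition maps. I would present this as a consequence of the explicit characterization of nuclearity in the non-archimedean setting: on each finite level the transition map $\OX_X(X_{n+1})\widehat{\otimes}_K\OX_Y(Y_{n+1})\to\OX_X(X_n)\widehat{\otimes}_K\OX_Y(Y_n)$ is \emph{scc} (being the tensor product of two \emph{scc} maps), so the inverse limit of the tensor products is itself a nuclear Fréchet space and coincides with $\OX_X(X)\widehat{\otimes}_K\OX_Y(Y)$.

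The main obstacle is this last interchange of $\widehat{\otimes}_K$ with $\varprojlim$. Although it is well-known over $\mathbb{C}$ and folklore in the $p$-adic setting, it is worth stating carefully; I would either cite the version of this fact that appears in \cite{schneider2013nonarchimedean} (or an analogous non-archimedean reference), or prove it from scratch by exhibiting for each $n$ the isomorphism $\OX_X(X)\widehat{\otimes}_K\OX_Y(Y_n)=\varprojlim_m \OX_X(X_m)\widehat{\otimes}_K\OX_Y(Y_n)$ (valid because tensoring with a fixed Banach space along an \emph{scc} tower preserves the inverse limit) and then passing to the limit in the $Y$-variable. Everything else in the lemma is essentially bookkeeping with affinoid generators.
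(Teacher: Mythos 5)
Your proposal is correct and follows essentially the same route as the paper: verify that $(X_n\times Y_n)_n$ is a Stein covering and then identify the global sections as an inverse limit of completed tensor products. The only difference is that where you prove the two key facts by hand, the paper simply cites references — \cite[Lemma 6.3.7]{bosch2014lectures} for $X_n\Subset X_{n+1}$ and $Y_n\Subset Y_{n+1}$ implying $X_n\times Y_n\Subset X_{n+1}\times Y_{n+1}$, and \cite[pp. 107]{schneider2013nonarchimedean} for the commutation of $\widehat{\otimes}_K$ with the countable projective limits.
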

\begin{proof}
It is clear that $X\times Y=\varinjlim_n X_n\times Y_n$ is an admissible cover of $X\times Y$ by affinoid subdomains. We just need to show that it is a Stein cover. Clearly, for every $n\geq 0$ we have that:
\begin{equation*}
    X_n\times Y_n\subset X_{n+1}\times Y_{n+1},
\end{equation*}
is a Weierstrass immersion. Furthermore, by \cite[Lemma 6.3.7]{bosch2014lectures},
$X_{n}\Subset X_{n+1}$, and $Y_n\Subset Y_{n+1}$ imply $X_n\times Y_n\Subset X_{n+1}\times Y_{n+1}$. Thus, $X\times Y=\varinjlim_n X_n\times Y_n$ is a Stein covering. The second part of the lemma is a classical statement in $p$-adic functional analysis. It can be found in \cite[pp. 107]{schneider2013nonarchimedean}.
\end{proof}
\begin{prop}\label{prop Zariski covers of Stein spaces are Stein}
Let $X$ be a Stein space and choose a holomorphic function $f\in \OX_X(X)$. In this situation, the Zariski open subspace:
\begin{equation*}
    X(f^{-1})=\bigcup_{n\geq 0}X\left(\frac{\pi^n}{f} \right)=\{x\in X \textnormal{ }\vert \textnormal{ } \vert f(x)\vert > 0 \},
\end{equation*}
is a Stein space.
\end{prop}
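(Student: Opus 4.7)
The plan is to construct an explicit Stein covering of $X(f^{-1})$ from a Stein covering $X = \varinjlim_n X_n$ of $X$ by setting
\begin{equation*}
    U_n := X_n \cap X(\pi^n/f) = \{x \in X_n \ \vert \ |f(x)| \geq |\pi^n|\}.
\end{equation*}
Each $U_n$ is the Laurent subdomain of $X_n$ obtained by adjoining a single variable $T$ subject to $fT = \pi^n$, so $U_n$ is affinoid and $f$ is invertible on $U_n$ (and on all $U_m$ with $m \geq n$). With this candidate cover, I must verify three things: that each $U_n \hookrightarrow U_{n+1}$ is a Weierstrass immersion, that the inclusion is relatively compact, and that $\{U_n\}$ is an admissible cover of $X(f^{-1})$.

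For the Weierstrass condition, the key observation is that on $U_{n+1}$ the function $f$ is already invertible (since $|f| \geq |\pi^{n+1}| > 0$ there), so $\pi^n/f$ defines a \emph{globally} regular function on $U_{n+1}$. If $h_1, \ldots, h_s \in \OX_X(X_{n+1})$ are Weierstrass generators realizing $X_n \subset X_{n+1}$, I can exhibit
\begin{equation*}
    U_n = \{x \in U_{n+1} \ \vert \ |h_i(x)| \leq 1 \text{ for all } i,\ |\pi^n/f(x)| \leq 1\},
\end{equation*}
which presents $U_n$ as a Weierstrass subdomain of $U_{n+1}$. For the relative compactness $U_n \Subset U_{n+1}$, I would choose the $h_i$ so that $|h_i| < 1$ on $X_n$ (possible because $X_n \Subset X_{n+1}$); then $h_1, \ldots, h_s$ together with $T := \pi^{n+1}/f$ form a set of affinoid generators of $U_{n+1}$, and on $U_n$ one has $|h_i| < 1$ and $|T| \leq |\pi^{n+1}|/|\pi^n| = |\pi| < 1$, which is exactly the criterion for $U_n \Subset U_{n+1}$.

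Finally, I would check the covering and admissibility statements. Any $x \in X(f^{-1})$ lies in some $X_m$ and satisfies $|f(x)| \geq |\pi^k|$ for some $k$, so $x \in U_{\max(m,k)}$; hence $\bigcup_n U_n = X(f^{-1})$. For admissibility, since the $U_n$ are nested, it suffices to show that every map from an affinoid $\Sp(B)$ to $X(f^{-1})$ factors through some $U_n$. Such a map sends $f$ to an invertible element of the Banach algebra $B$, so $|1/f|_{\sup} \leq |\pi|^{-n_0}$ for some $n_0$; combined with the fact that $\Sp(B) \to X$ factors through some $X_m$ by admissibility of the Stein cover $\{X_n\}$, the map factors through $U_{\max(m, n_0)}$. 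The main obstacle is psychological rather than technical: one must realize that the Weierstrass condition for $U_n \subset U_{n+1}$ can be written using the globally regular function $\pi^n/f$ on $U_{n+1}$ rather than by factoring through some intermediate Laurent subdomain. Once this point is spotted, the remainder of the verification of the Stein axioms is straightforward bookkeeping.
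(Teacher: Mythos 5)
Your proof is correct, but it takes a genuinely different route from the paper. The paper's argument is the Rabinowitsch trick: it identifies $X(f^{-1})$ with the closed analytic subvariety of $X\times\mathbb{A}^1_K$ cut out by $tf-1$, and then simply invokes two facts already established — that a product of Stein spaces is Stein (Lemma \ref{Lemma prod of Stein is Stein}) and that a closed analytic subspace of a Stein space is Stein (Lemma \ref{Lemma basic properties of Stein spaces}$(iii)$). Your proof instead builds the Stein covering $U_n=X_n(\pi^n/f)$ by hand and verifies the Weierstrass, relative-compactness and admissibility conditions directly. Both are valid; the paper's is shorter given its preparatory lemmas, while yours is self-contained and has the added benefit of producing an explicit Stein covering of $X(f^{-1})$ adapted to the given one, including the observation (which you rightly flag as the key point) that $\pi^n/f$ is globally regular on $U_{n+1}$ and so can serve as a Weierstrass generator. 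One point of hygiene in your relative-compactness step: the functions $h_1,\dots,h_s$ cutting out $X_n$ as a Weierstrass subdomain of $X_{n+1}$ need not coincide with the affinoid generators of $\OX_X(X_{n+1})$ witnessing $X_n\Subset X_{n+1}$, so you should not press a single set of functions into both roles. This is harmless and fixed in one line: take affinoid generators $g_1,\dots,g_m$ of $\OX_X(X_{n+1})$ with $|g_j|<1$ on $X_n$; then $g_1,\dots,g_m,T$ with $T=\pi^{n+1}/f$ are affinoid generators of $\OX_X(U_{n+1})=\OX_X(X_{n+1})\langle T\rangle/(fT-\pi^{n+1})$ and satisfy $|g_j|<1$ and $|T|\le|\pi|<1$ on $U_n$, which is exactly what Definition \ref{defi Stein spaces}$(i)$ requires.
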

\begin{proof}
By Lemma  \ref{Lemma prod of Stein is Stein}, the product $X\times \mathbb{A}^1_K$ is a Stein space. Let $\mathcal{I}$ be the sheaf of ideals generated by $tf-1\in \OX_X(X)\widehat{\otimes}_K\OX_{\mathbb{A}^1_K}(\mathbb{A}^1_K)$, and $Y\subset X\times \mathbb{A}^1_K/\mathcal{I}$ be the closed subvariety associated to $\mathcal{I}$. There is a canonical isomorphism $X(f^{-1})\rightarrow Y$, which shows that $X(f^{-1})$ is a closed subvariety of $X\times \mathbb{A}^1_K$. Thus, it follows by statement $(iii)$ in Lemma \ref{Lemma basic properties of Stein spaces} that $X(f^{-1})$ is a Stein space.
\end{proof}
\begin{coro}\label{coro locally étale and stein}
 Let $X$ be a smooth Stein space. There is a finite covering $\{ U_i\}_{i=1}^n$  of $X$ such that each $U_i$ is a smooth Stein space with an étale map $U_i\rightarrow \mathbb{A}^r_K$.
\end{coro}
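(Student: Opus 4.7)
The overall strategy is to first reduce, via a Zariski localization, to the situation where $\Omega^1_{X/K}$ is a trivial vector bundle; then produce finitely many global functions whose differentials span the cotangent space everywhere on each piece; and finally carve out the étale loci as Zariski opens of Stein type. Concretely, I would first apply Corollary \ref{coro sections of vector bundles are loc finite proj} to $\Omega^1_{X/K}$ (a vector bundle because $X$ is smooth) to obtain finitely many $g_1,\dots,g_m \in \OX_X(X)$ generating the unit ideal such that each localization $\Omega^1_{X/K}(X)[g_j^{-1}]$ is free of rank $r = \dim X$ (working component by component in the non-equidimensional case). Setting $V_j := X(g_j^{-1})$, Proposition \ref{prop Zariski covers of Stein spaces are Stein} identifies each $V_j$ as a Stein space, the $V_j$ cover $X$, and via the equivalence of Proposition \ref{prop coherent modules on a Stein sapce} the restriction $\Omega^1_{X/K}|_{V_j}$ is a trivial vector bundle of rank $r$.

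Next, I would argue that on each $V_j$ one can find finitely many $f_1,\dots,f_N \in \OX_X(V_j)$ whose differentials span $\Omega^1_{X/K}|_{V_j}$ at every point. Locally, smoothness of $X$ yields, around any given $x \in V_j$, an affinoid neighborhood $U \subset V_j$ and functions $h_1,\dots,h_r \in \OX_X(U)$ with $dh_1,\dots,dh_r$ a basis of $\Omega^1_{X/K}|_U$; the density of $\OX_X(V_j) \rightarrow \OX_X(U)$ inherent to the Fréchet-Stein structure then produces global $F_i^{(x)} \in \OX_X(V_j)$ whose differentials still form a basis of $\Omega^1_{X/K}$ on a small open neighborhood of $x$. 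To cut the resulting pool down to a finite one, I would use that in any Stein exhaustion $V_j = \varinjlim W_n$ the $\OX_X(W_n)$-submodule of $\Omega^1_{X/K}(W_n)$ generated by $\{dF|_{W_n} : F \in \OX_X(V_j)\}$ is dense (by the approximation above) and finitely generated (by noetherianness of $\OX_X(W_n)$ together with the finite rank of $\Omega^1_{X/K}(W_n)$), hence closed and equal to $\Omega^1_{X/K}(W_n)$; combining this with Kiehl's Theorem A for the coherent sheaf $\Omega^1_{X/K}|_{V_j}$ (global generation by finitely many sections) yields a finite set $f_1,\dots,f_N \in \OX_X(V_j)$ whose differentials generate $\Omega^1_{X/K}(V_j)$ as an $\OX_X(V_j)$-module.

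Having this finite pool, I would fix a global trivialization of $\Lambda^r \Omega^1_{X/K}|_{V_j} \cong \OX_{V_j}$. For each $r$-subset $S = \{i_1,\dots,i_r\} \subset \{1,\dots,N\}$ the morphism $\Phi_S := (f_{i_1},\dots,f_{i_r}) : V_j \rightarrow \mathbb{A}^r_K$ is étale precisely on the non-vanishing locus of $df_{i_1} \wedge \cdots \wedge df_{i_r}$, which in the chosen trivialization corresponds to a single function $J_S \in \OX_X(V_j)$; the étale locus of $\Phi_S$ in $V_j$ is thus $V_j(J_S^{-1})$, which is Stein by Proposition \ref{prop Zariski covers of Stein spaces are Stein} applied to the Stein space $V_j$. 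Since at every $x \in V_j$ some $r$-subset of $\{df_i(x)\}$ is a basis of the cotangent space (by Nakayama applied to the stalk, using that the $df_i$ generate $\Omega^1_{X/K}(V_j)$ globally), the finitely many sets $\{V_j(J_S^{-1})\}_{|S|=r}$ cover $V_j$, and taking the union over the finite index $j$ produces the desired finite Stein covering of $X$ with étale maps to $\mathbb{A}^r_K$.

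The main obstacle is the finiteness assertion in the second stage --- passing from the pointwise-available global coordinate approximations to a single \emph{finite} family whose differentials span $\Omega^1_{X/K}|_{V_j}$ everywhere on $V_j$. This is where one must lean on the specific Stein-space machinery (Kiehl's Theorem A, Fréchet-Stein density, and noetherianness of the affinoid pieces in the exhaustion); once the finite generation is in hand, the remainder is a formal linear-algebra packaging together with Proposition \ref{prop Zariski covers of Stein spaces are Stein}.
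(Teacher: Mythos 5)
Your first stage coincides with the paper's: both apply Corollary \ref{coro sections of vector bundles are loc finite proj} to produce finitely many global functions $f_i$ (finitely many because $\Spec(\OX_X(X))$ is quasi-compact, even though $X$ itself is not) so that the tangent, equivalently cotangent, module becomes free after inverting each $f_i$, and then invoke Proposition \ref{prop Zariski covers of Stein spaces are Stein} to see that the pieces are Stein. The divergence, and the genuine gap, is in your second stage. You need a single finite family $f_1,\dots,f_N\in\OX_X(V_j)$ whose differentials generate $\Omega^1_{X/K}\vert_{V_j}$ at \emph{every} point of $V_j$. Your density-plus-noetherianness argument only yields, for each term $W_n$ of a Stein exhaustion of $V_j$, a finite set of global functions whose differentials generate $\Omega^1(W_n)$; this set may grow with $n$, and since $V_j$ is not quasi-compact no uniform finite subfamily can be extracted. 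Kiehl's Theorem A does not close this gap: it gives generation of the coherent sheaf $\Omega^1\vert_{V_j}$ by (possibly infinitely many) global sections of $\Omega^1$, not by finitely many \emph{exact} differentials, and exactness of the generators is precisely what your construction of the maps $\Phi_S$ requires. Equivalently, you would need to know that the $\OX_X(V_j)$-submodule of the free rank-$r$ module $\Omega^1(V_j)$ generated by $\{df : f\in\OX_X(V_j)\}$ is the whole module; this submodule is dense in each $\Omega^1(W_n)$ but is not a priori finitely generated, closed, or co-admissible over $\OX_X(V_j)$, so the ``dense and finitely generated, hence closed'' argument that works on each affinoid $W_n$ does not apply to $V_j$ itself. (A smaller point: the restriction $\OX_X(V_j)\to\OX_X(U)$ has dense image for the Weierstrass subdomains $W_n$ of the exhaustion, not for an arbitrary affinoid neighbourhood $U$ of a point; this is repairable, but as written the approximation step is not justified.)

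The paper's proof avoids the problem entirely: after the single localization it invokes the fact that a smooth Stein space with \emph{globally free} tangent sheaf admits an étale map to $\mathbb{A}^r_K$, with no further covering and no selection of $r$-element subsets. Your stages two and three are in effect an attempt to prove that assertion, and the attempt does not close because the required uniform finiteness is exactly the non-quasi-compactness obstruction that the paper sidesteps by localizing the ring $\OX_X(X)$ rather than covering the space $X$. As written, your argument produces only a countable (not finite) Stein cover by pieces admitting étale maps to $\mathbb{A}^r_K$; to finish along your route you would have to show that finitely many exact differentials generate $\Omega^1$ on all of $V_j$, i.e., that each element of a free basis of $\Omega^1(V_j)$ is a finite $\OX_X(V_j)$-linear combination of exact differentials.
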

\begin{proof}
As $X$ is smooth, $\mathcal{T}_{X/K}$ is a locally free sheaf of rank $r=\dim(X)$. By Corollary \ref{coro sections of vector bundles are loc finite proj}, there is a family of global sections $\{f_i\}_{i\in I}\in \OX_X(X)$ such that 
the spaces $\{\Spec(\OX_X(X)[f_i^{-1}])\}_{i\in I}$ form a Zariski cover of $\Spec(\OX_X(X))$, and such that the module:
\begin{equation}\label{equation finite free locally on Stein}
    \mathcal{T}_{X/K}(X)[f_i^{-1}]\cong \bigoplus^r\OX_X(X)[f_i^{-1}]
\end{equation}
is a finite-free $\OX_X(X)[f_i^{-1}]$-module for each $i\in I$. As $\Spec(\OX_X(X))$ is quasi-compact, we can assume that $I$ is a finite index set. Thus, we have that $\{X(f_i^{-1})\}_{i\in I}$ is a finite Zariski cover of  $X$, and by Proposition \ref{prop Zariski covers of Stein spaces are Stein}, each of the $X(f_i^{-1})$ is a smooth Stein space. On the other hand, for each $i\in I$ there is a canonical factorization:
\begin{equation*}
    \OX_X(X)\rightarrow \OX_X(X)[f_i^{-1}]\rightarrow \OX_X(X(f_i^{-1})).
\end{equation*}
Thus, equation $(\ref{equation finite free locally on Stein})$ shows that  $\mathcal{T}_{X(f_i^{-1})/K}$  is globally free of rank $r$, and this implies that there are étale maps $X(f_i^{-1})\rightarrow \mathbb{A}^r_K$, as we wanted to show. 
\end{proof}
\subsection{\texorpdfstring{Co-admissible  $\wideparen{\D}$-modules on smooth Stein spaces}{}}\label{section co-ad D mod stein spaces}
Let $X=\varinjlim X_n$ be a smooth Stein space. The goal of this section is studying the properties of co-admissible $\wideparen{\D}_X$-modules on $X$. Following the path laid down in previous sections, we will show that the aforementioned category presents the same algebraic behavior as the category of co-admissible $\wideparen{\D}$-modules on an affinoid space, but satisfies better analytic properties. Namely, the global sections of co-admissible modules are canonically nuclear Fréchet spaces. We start by showing the corresponding results for coherent $\OX_X$-modules. First, notice that by \cite[Theorem 6.4]{bode2021operations}, we may regard the abelian category $\operatorname{Coh}(X)$ as a full abelian subcategory of the quasi-abelian category $\Mod_{\Indban}(\OX_X)$, and we will do so without further notice. Let us start with the following proposition:
\begin{prop}\label{prop acyclicity of coherent modules on Stein spaces}
    Let $X$ be a Stein space and $\mathcal{M}\in \operatorname{Coh}(\OX_X)$. Then 
   $\mathcal{M}$ is acyclic for the functor: 
   \begin{equation*}
       \Gamma(X,-):\Mod_{\Indban}(\OX_X)\rightarrow \Mod_{\Indban}(\OX_X(X)).
   \end{equation*}
\end{prop}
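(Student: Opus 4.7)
The plan is to reduce the computation of $R\Gamma(X,\mathcal{M})$ to a derived inverse limit along a Stein covering, and then show the vanishing of the higher derived limits using the Fréchet-Stein structure on the global sections. I fix a Stein cover $X=\varinjlim_n X_n$; by Proposition \ref{prop coherent modules on a Stein sapce} the presentation $\mathcal{M}(X)=\varprojlim_n \mathcal{M}(X_n)$ makes the global sections a co-admissible $\OX_X(X)$-module, and each $\mathcal{M}(X_n)$ is the Banach $\OX_X(X_n)$-module obtained by (completed) base change.

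First I would appeal to the Ind-Banach version of Tate's acyclicity theorem, established in the companion paper \cite{HochDmod}: on any affinoid $Y$, coherent $\OX_Y$-modules are acyclic for the functor $\Gamma(Y,-):\Mod_{\Indban}(\OX_Y)\to \Mod_{\Indban}(\OX_Y(Y))$. Combined with the fact that the cover $\{X_n\}$ is admissible and nested, a Čech-to-derived functor argument in $LH(\Mod_{\Indban}(\OX_X(X)))$ then identifies $R\Gamma(X,\mathcal{M})$ with the two-term complex
\begin{equation*}
\prod_n \mathcal{M}(X_n) \xrightarrow{\ d\ } \prod_n \mathcal{M}(X_n),\qquad d\bigl((s_n)\bigr)=\bigl(s_n - s_{n+1}|_{X_n}\bigr),
\end{equation*}
sitting in degrees $0,1$. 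The kernel is $\mathcal{M}(X)=\varprojlim_n \mathcal{M}(X_n)$, and what must be shown is that $d$ is a strict epimorphism, i.e.\ that the Ind-Banach $R^1\varprojlim$ of the system $\{\mathcal{M}(X_n)\}$ vanishes.

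For the vanishing of $R^1\varprojlim$, the crucial inputs are that $X_n\subset X_{n+1}$ is a Weierstrass immersion, so $\OX_X(X_{n+1})\to \OX_X(X_n)$ has dense image, and that this density is inherited by the base-changed transition maps $\mathcal{M}(X_{n+1})\to \mathcal{M}(X_n)$. Thus the inverse system has the Mittag-Leffler property in a strong sense (dense image). In classical non-archimedean functional analysis this already yields $R^1\varprojlim=0$ for Fréchet systems; here I need to upgrade this to strict surjectivity of $d$ in $\Mod_{\Indban}(\OX_X(X))$, for which the nuclear Fréchet structure on $\mathcal{M}(X)$ from Proposition \ref{prop coherent modules on a Stein sapce}$(iii)$, combined with Proposition \ref{prop comparison of tensor products metrizable spaces}, lets one transfer the classical Fréchet-space argument into the bornological setting.

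The main obstacle, as usual in this context, is the last step: ensuring that $d$ is a \emph{strict} epimorphism in the quasi-abelian category, not merely a surjection of underlying vector spaces. The density of the transition maps is the right analytic input, but one must check it is preserved under the bornologification/dissection machinery of Section \ref{section Ind-Ban spaces} so that the conclusion lives in $LH(\Mod_{\Indban}(\OX_X(X)))$. Once this strict surjectivity is established, acyclicity follows immediately, since the complex above then has no cohomology in positive degree.
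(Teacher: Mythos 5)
Your proposal is correct and follows essentially the same route as the paper: both arguments combine Tate–Kiehl acyclicity on the affinoid pieces $X_n$ with an inverse-limit argument along the Stein cover, where the nuclearity of the Fréchet presentation $\Gamma(X,\mathcal{M})=\varprojlim_n\Gamma(X_n,\mathcal{M})$ (via \cite[Corollary 5.2]{bode2021operations}) is exactly what kills $R^1\varprojlim$ and guarantees strictness in the bornological setting. The only cosmetic difference is that the paper verifies strict exactness of the full Čech complex of a refined arbitrary admissible cover (handling the positive-degree terms by split surjectivity), whereas you collapse directly to the two-term $R\varprojlim$ complex; the degree-zero part of the paper's argument is precisely your argument.
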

\begin{proof}
Let $\mathcal{M}\in \operatorname{Coh}(\OX_X)$. We need to show that $\mathcal{M}$ is \v{C}ech-acyclic. Every admissible cover of $X$ admits a refinement $\mathcal{V}=\{V_{n,i}\}_{n\geq 0}^{1\leq i \leq i_n}$, satisfying that for every $n\geq 0$, the family:
\begin{equation*}
    \mathcal{V}_n=\{V_{m,i}, \textnormal{ } m\leq n, \, 1\leq i\leq i_m\},
\end{equation*}
is a finite admissible cover of $X_n$. Hence, it suffices to show that the \v{C}ech complex $ C(\mathcal{V},\mathcal{M})^{\bullet}$ is a strict and exact complex of Banach spaces for any cover satisfying said properties.\\

Fix a cover $\mathcal{V}$ as above. Notice that $\mathcal{V}_n\subset \mathcal{V}_{n+1}$ for each $n$, and that the \v{C}ech complexes:
\begin{equation*}
    C(\mathcal{V}_n,\mathcal{M}_{\vert X_n})^{\bullet}:=\left(0\rightarrow \Gamma(X_n,\mathcal{M})\rightarrow \prod_{m\leq n,\textnormal{ } 1 \leq j\leq i_m} \Gamma(V_{m,j},\mathcal{M})\rightrightarrows \cdots  \right),
\end{equation*}
are exact complexes of Banach spaces for every $n\geq 0$. Hence, they are strict and exact complexes in $\Indban$.
Furthermore, the inclusions $\mathcal{V}_n\subset \mathcal{V}_{n+1}$ induce maps:
\begin{equation*}
    C(\mathcal{V}_{n+1},\mathcal{M}_{\vert X_{n+1}})^{\bullet}\rightarrow  C(\mathcal{V}_n,\mathcal{M}_{\vert X_n})^{\bullet},
\end{equation*}
and we have $C(\mathcal{V},\mathcal{M})^{\bullet}=\varprojlim_n C(\mathcal{V}_n,\mathcal{M}_{\vert X_n})^{\bullet}$.\\ 

As each of the $C(\mathcal{V}_n,\mathcal{M}_{\vert X_n})^{\bullet}$ is exact, it follows by \cite[Corollary 5.2]{bode2021operations} that it suffices to show that for each $m\geq 0$ the inverse systems:
\begin{equation*}
    C(\mathcal{V},\mathcal{M})^{m}=\varprojlim_n C(\mathcal{V}_n,\mathcal{M}_{\vert X_n})^{m},
\end{equation*}
are pre-nuclear inverse systems of Banach spaces with dense image. By construction, the maps $C(\mathcal{V}_{n+1},\mathcal{M}_{\vert X_{n+1}})^{m}\rightarrow  C(\mathcal{V}_n,\mathcal{M}_{\vert X_n})^{m}$ are split epimorphisms for $m> 0$. Hence, they clearly form a pre-nuclear system of Banach spaces. For $m=0$, the inverse system becomes:
\begin{equation*}
    \Gamma(X,\mathcal{M})=\varprojlim_n \Gamma(X_n,\mathcal{M}),
\end{equation*}
 which is nuclear by statement $(iii)$ in Proposition \ref{prop coherent modules on a Stein sapce}. Hence, $C(\mathcal{V},\mathcal{M})^{\bullet}$ is strict and exact.
\end{proof}
In particular, the sheaves $\Omega_{X/K}^n$ are $\Gamma(X,-)$-acyclic for every $n\geq 0$. This will be used in the following chapter to compare $\operatorname{HH}^{\bullet}(\wideparen{\D}_X)$ with $\operatorname{H}^{\bullet}_{\operatorname{dR}}(X)$. 
\begin{obs}
For the rest of the section, we assume that $K$ is either discretely valued or algebraically closed, and that $X$ admits an étale map $X\rightarrow \mathbb{A}^r_K$.
\end{obs}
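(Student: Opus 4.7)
The final excerpted statement is not a theorem but a Remark (\texttt{obs}) fixing standing hypotheses for the rest of Section \ref{section co-ad D mod stein spaces}: namely that $K$ is either discretely valued or algebraically closed, and that the smooth Stein space $X$ admits an étale map $X\rightarrow \mathbb{A}^r_K$. There is accordingly nothing to prove. In lieu of a proof plan I will indicate why each hypothesis is imposed and how I expect it to be used in the sequel.

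The hypothesis on $K$ is the standard input that guarantees, for every affinoid $K$-algebra $A$, that the subring $A^{\circ}$ of power-bounded elements is an affine formal model of $A$ (as recalled in the Notation and conventions paragraph, citing \cite[Corollary 6.4.1.5/6]{BGR}). This is precisely what one needs in order to construct Lie lattices $\mathcal{L}\subset \mathcal{T}_{X/K}$ on each affinoid piece $X_n$ of the Stein cover, and hence to equip $\wideparen{\D}_X(X_n)$ with a Fréchet--Stein presentation of the form $\varprojlim_m \widehat{U}(\pi^m\mathcal{L})_K$; these presentations are the backbone of the co-admissibility formalism of \cite{ardakov2019} that underlies Theorem \ref{teo A}.

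The assumption that $X$ admits an étale map $X\rightarrow \mathbb{A}^r_K$ globalizes the freeness of $\mathcal{T}_{X/K}$ over all of $X$, and this is exactly the hypothesis under which Theorem \ref{teo  properties of EX}(ii) produces an isomorphism $\wideparen{\mathbb{T}}:\wideparen{\D}_{X^2}\rightarrow \wideparen{E}_X$. Consequently the side-switching equivalence $\operatorname{S}$ between $\Mod_{\Indban}(\wideparen{\D}_{X^2})$ and $\Mod_{\Indban}(\wideparen{E}_X)$ becomes extension of scalars along a global isomorphism of sheaves of Ind-Banach algebras, which is indispensable for the global-sections arguments pursued below (in particular for Theorem \ref{intteo theorem C}) and for the finite-projective $\wideparen{E}_X$-resolutions of $\Delta_*\wideparen{\D}_X$.

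In short, the Remark has no mathematical content to be established; it merely isolates the two standing hypotheses under which the machinery of formal models, Lie lattices, Fréchet--Stein presentations, and the bi-enveloping algebra $\wideparen{E}_X$ can simultaneously be deployed, and so under which the analytic properties of co-admissible $\wideparen{\D}_X$-modules to be derived in the remainder of the section are known to hold.
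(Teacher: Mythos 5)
You correctly identify that this Remark is a declaration of standing hypotheses rather than a claim requiring proof, and your account of why each hypothesis is imposed matches the paper's own explanations (the subsequent Remark after Theorem \ref{teo global sections of co-admissible modules on Stein spaces} on topological finite presentation of $A^{\circ}_n$, and the role of the étale map in Corollary \ref{coro locally étale and stein} and Theorem \ref{teo  properties of EX}). Nothing to add.
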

Our goal now is extending this result to co-admissible $\wideparen{\D}_X$-modules. It will be convenient to start by showcasing some of the algebraic properties of $\mathcal{C}(\wideparen{\D}_X)$. Let us start with the following theorem:
\begin{teo}\label{teo global sections of co-admissible modules on Stein spaces}
There is a family of $\mathcal{R}$-modules:
\begin{equation*}
    \{\mathcal{T}_n\}_{n\geq 0},
\end{equation*}
and a strictly increasing sequence of non-negative integers $\{m(n) \}_{n\geq 0}$ satisfying the following properties for each $n\geq 0$:
\begin{enumerate}[label=(\roman*)]
    \item $\mathcal{T}_n\subset \mathcal{T}_{X/K}(X_n)$.
    \item We have an identity of $A^{\circ}_n$modules: $\mathcal{T}_n=A^{\circ}_n\otimes_{A^{\circ}_{n+1}}\mathcal{T}_{n+1}$.
     \item  $\pi^{m(n)}\mathcal{T}_n\subset $ is a free $(\mathcal{R},A^{\circ}_n)$-lie lattice of $\mathcal{T}_{X/K}(X_n)$. 
\end{enumerate}
In this situation, let $\widehat{\D}_{n,m}=\widehat{U}(\pi^m \mathcal{T}_n)_K$. Then the following hold:
\begin{enumerate}[label=(\roman*)]
    \item For each $n\geq 0$, the inverse limit $\wideparen{\D}_X(X_n)=\varprojlim_{m\geq m(n)} \widehat{\D}_{n,m}$ is a Fréchet-Stein presentation.
    \item The inverse limit $\wideparen{\D}_X(X)=\varprojlim_i\wideparen{\D}_X(X_i)=\varprojlim_n \widehat{\D}_{n,m(n)}$ is a Fréchet-Stein presentation.
\end{enumerate}
Furthermore, every co-admissible $\wideparen{\D}_X(X)$-module is a nuclear Fréchet space.
\end{teo}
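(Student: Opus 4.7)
My plan is to construct the lattices $\mathcal{T}_n$ using the global frame for $\mathcal{T}_{X/K}$ provided by the étale map $X\to \mathbb{A}^r_K$, deduce the Fréchet-Stein presentations from the established theory on smooth affinoids with free tangent sheaf, and finally exploit the relative compactness of the Stein cover to obtain the nuclear property for co-admissible modules.

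Concretely, let $t_1,\ldots,t_r$ be coordinates on $\mathbb{A}^r_K$ and let $\partial_1,\ldots,\partial_r\in \mathcal{T}_{X/K}(X)$ be the dual frame; since the map is étale, these form an $\OX_X$-basis of $\mathcal{T}_{X/K}$. Writing $A_n:=\OX_X(X_n)$, define $\mathcal{T}_n:=\bigoplus_{i=1}^r A_n^{\circ}\partial_i\subset \mathcal{T}_{X/K}(X_n)$. Property (i) is immediate, and property (ii) follows because the transitions $A_{n+1}^{\circ}\to A_n^{\circ}$ act on the fixed frame $\{\partial_i\}$. For the Lie lattice condition in (iii), each $\partial_i$ is a bounded $K$-derivation of $A_n$, so there is some $k_n\geq 0$ with $\pi^{k_n}\partial_i(A_n^{\circ})\subset A_n^{\circ}$ for every $i$; choose a strictly increasing sequence $m(n)\geq k_n$. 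The bracket condition $[\pi^{m(n)}\partial_i,\pi^{m(n)}\partial_j]\in\pi^{m(n)}\mathcal{T}_n$ is automatic because $[\partial_i,\partial_j]=0$ in the étale coordinates, so $\pi^{m(n)}\mathcal{T}_n$ is a free $(\mathcal{R},A_n^{\circ})$-Lie lattice of rank $r$.

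Statement (i) then follows from the standard Ardakov--Wadsley construction: for the free Lie lattice $\pi^m\mathcal{T}_n$, the algebras $\widehat{\D}_{n,m}=\widehat{U}(\pi^m\mathcal{T}_n)_K$ are Noetherian Banach algebras with flat transitions as $m$ grows, and their inverse limit is $\wideparen{\D}_X(X_n)$. For (ii), a cofinality argument applied to the double limit $\wideparen{\D}_X(X)=\varprojlim_n\wideparen{\D}_X(X_n)=\varprojlim_n\varprojlim_m\widehat{\D}_{n,m}$ gives $\wideparen{\D}_X(X)=\varprojlim_n\widehat{\D}_{n,m(n)}$, and the transition $\widehat{\D}_{n+1,m(n+1)}\to\widehat{\D}_{n,m(n)}$ factors as a flat change-of-level inside $\wideparen{\D}_X(X_{n+1})$ followed by the flat map induced by the Weierstrass restriction $X_n\subset X_{n+1}$.

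The main obstacle is the final assertion: every co-admissible $\wideparen{\D}_X(X)$-module $M$ is a nuclear Fréchet space. Writing $M_n:=\widehat{\D}_{n,m(n)}\widehat{\otimes}_{\wideparen{\D}_X(X)} M$ for its canonical presentation, $M=\varprojlim_n M_n$ is a Fréchet space with dense images in each Banach module $M_n$, so one must check that every transition $M_{n+1}\to M_n$ factors as a strictly completely continuous map. By finite generation of the $M_n$ over the corresponding $\widehat{\D}_{n,m(n)}$, this reduces to showing that $\widehat{\D}_{n+1,m(n+1)}\to\widehat{\D}_{n,m(n)}$ is strictly completely continuous. The input is Kiehl's theorem that $A_{n+1}\to A_n$ is strictly completely continuous because $X_n\Subset X_{n+1}$; this must be propagated through the completed enveloping algebras by analyzing the symbol filtration, whose associated graded is a completed symmetric algebra on $\pi^{m(n)}\mathcal{T}_n\otimes_{A_n^{\circ}}A_{n,K}$. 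The strict inequality $m(n)<m(n+1)$ provides the contraction needed to absorb the \emph{scc} property level by level, which is exactly the hypothesis demanded by the definition of nuclear Fréchet space recalled in Section \ref{section group act on Stein spaces}.
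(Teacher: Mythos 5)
Your overall strategy is sound and, in substance, reconstructs the argument that the paper merely delegates: the printed proof of this theorem is a two-line citation of \cite[Proposition 4.3.2]{p-adicCatO} and \cite[Corollary 4.3.4]{p-adicCatO}, with the flatness inputs for non-discretely-valued $K$ supplied by \cite[Theorem 4.1.11]{ardakovequivariant} and \cite[Theorem 4.2.8]{ardakovequivariant}. Your construction of the lattices $\mathcal{T}_n=\bigoplus_{i=1}^rA_n^{\circ}\partial_i$ from the étale frame is exactly the construction the paper carries out explicitly for $\mathbb{A}^r_K$ in Lemma \ref{Lemma resolution of differential operators on affine spaces}, and your reduction of nuclearity of a co-admissible module to the strict complete continuity of $\widehat{\D}_{n+1,m(n+1)}\rightarrow\widehat{\D}_{n,m(n)}$ mirrors how Lemma \ref{Lemma basic properties of Stein spaces}$(ii)$ handles $\OX_X(X)$.

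Two points need tightening. First, your factorization of the transition map is stated in the wrong order: a ``flat change-of-level inside $\wideparen{\D}_X(X_{n+1})$'' would pass through $\widehat{U}(\pi^{m(n)}\mathcal{T}_{n+1})_K$, and $\pi^{m(n)}\mathcal{T}_{n+1}$ need not be a Lie lattice on $X_{n+1}$ since nothing forces $m(n)\geq k_{n+1}$; neither Noetherianity nor flatness is available for that step. One must restrict first and rescale second, i.e.\ factor as $\widehat{\D}_{n+1,m(n+1)}\rightarrow\widehat{U}(\pi^{m(n+1)}\mathcal{T}_n)_K\rightarrow\widehat{\D}_{n,m(n)}$, using property $(ii)$ and $m(n+1)>m(n)$ so that both intermediate lattices are genuine Lie lattices; this is precisely how Corollary \ref{coro maps to stein covering are c-flat} argues, and for general $K$ one should also enlarge $m(n)$ so that the normalizations $[\mathcal{L},\mathcal{L}]\subset\pi\mathcal{L}$ and $\mathcal{L}(A_n^{\circ})\subset\pi A_n^{\circ}$ of \cite[Theorem 4.1.11]{ardakovequivariant} hold. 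Second, the \emph{scc} step is cleaner via the PBW basis than via ``the symbol filtration'': the commuting global frame identifies $\widehat{\D}_{n,m}$ as a Banach space with the completed direct sum $\widehat{\bigoplus}_{\alpha}A_n\cdot(\pi^{m}\partial)^{\alpha}$, under which the transition map becomes the completed direct sum of the maps $\pi^{(m(n+1)-m(n))\vert\alpha\vert}\cdot\operatorname{res}_{A_{n+1}\rightarrow A_n}$; each summand is \emph{scc} by Kiehl, and a completed direct sum of \emph{scc} maps is \emph{scc} precisely when the operator norms tend to $0$ --- which is where $m(n)<m(n+1)$ is used, as you correctly flag. With these repairs the argument is complete and matches the one the paper outsources to its references.
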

\begin{proof}
If $K$ is discretely valued, the theorem is a special instance of \cite[Proposition 4.3.2]{p-adicCatO}, and \cite[Corollary 4.3.4]{p-adicCatO}. The general case follows by the same argument together with  the flatness results in  \cite[Theorem 4.1.11]{ardakovequivariant} and \cite[Theorem 4.2.8]{ardakovequivariant}.
\end{proof}
\begin{obs}
It is precisely at this point where we need that $K$ is either discretely valued or algebraically closed. Indeed, for arbitrary complete non-archimedean extensions of $\mathbb{Q}_p$ the algebras of power bounded elements $A^{\circ}_n$ as above are not necessarily  topologically finitely presented.
\end{obs}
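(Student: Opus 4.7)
The remark is a negative assertion, so the goal is to exhibit a complete non-archimedean extension $K/\mathbb{Q}_p$ that is neither discretely valued nor algebraically closed, together with an affinoid $K$-algebra $A$ occurring as some $A_n = \mathcal{O}_X(X_n)$ in a Stein covering, for which $A^\circ$ fails to be topologically finitely presented (tfp) over $\mathcal{R}$.

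First I would recall the positive input being disclaimed. By \cite[Corollary 6.4.1.5/6]{BGR}, whenever $K$ is discretely valued or algebraically closed, $A^\circ$ is tfp over $\mathcal{R}$ for every strictly $K$-affinoid $A$, and therefore qualifies as an affine formal model. The proof of Theorem \ref{teo global sections of co-admissible modules on Stein spaces} uses this twice: to pick $\mathcal{A}_n = A_n^\circ$ as the formal model on which $\pi^{m(n)}\mathcal{T}_n$ is a free $(\mathcal{R},\mathcal{A}_n)$-Lie lattice, and then, via the flatness results cited from \cite{ardakovequivariant}, to ensure that the transition maps $\widehat{U}(\pi^{m+1}\mathcal{T}_n)_K \hookrightarrow \widehat{U}(\pi^m \mathcal{T}_n)_K$ assemble into a Fréchet-Stein presentation. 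If $A_n^\circ$ is not tfp, no canonical affine formal model of $A_n$ is available and the argument collapses.

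For the counterexample, take $K$ to be the $p$-adic completion of $\mathbb{Q}_p(p^{1/n}: n \text{ odd})$, so that $|K^*|$ is dense in $\mathbb{R}_{>0}$ but does not contain $|p|^{1/2}$, while $|p|^{1/2} \in \sqrt{|K^*|}$. Let $L = K(\sqrt{p})$ and consider $A = L\langle t\rangle$, which is a strictly $K$-affinoid algebra via the presentation $A = K\langle t, y\rangle/(y^2 - p)$. A direct computation of the sup norm yields
\begin{equation*}
A^\circ = \{g_0(t) + g_1(t) \sqrt{p} \,:\, g_0 \in \mathcal{R}\langle t\rangle,\ g_1 \in K\langle t\rangle,\ |g_1|_{\sup} \leq |p|^{-1/2}\}.
\end{equation*}
The second summand fails to be finitely generated as an $\mathcal{R}\langle t\rangle$-module: since $|p|^{-1/2} \notin |K^*|$ one picks a sequence $c_m \in K^*$ with $|c_m|$ strictly increasing to $|p|^{-1/2}$, and no finite subset of the $c_m \sqrt{p}$ can generate the whole collection over $\mathcal{R}\langle t\rangle$. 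Hence $A^\circ$ is not tfp over $\mathcal{R}$. Embedding $\Sp(A)$ as the first step $X_0$ of an increasing Stein covering, for instance of the rigid analytic affine line over $L$ viewed as a $K$-variety by restriction of scalars, produces a Stein $K$-space $X$ with $A_0 = A$, witnessing the failure.

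The main obstacle is the explicit computation of $A^\circ$ and the verification that the $c_m \sqrt{p}$ form an infinite essential family -- a finicky check controlled precisely by the non-divisibility of $|K^*|$ and the ramification of $L/K$. This is exactly the type of pathology that the stability hypothesis in \cite[6.4.1.5/6]{BGR} rules out, confirming that the assumption that $K$ be discretely valued or algebraically closed cannot be dropped in Theorem \ref{teo global sections of co-admissible modules on Stein spaces}.
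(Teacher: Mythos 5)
The paper offers no argument for this remark at all: it is an unproven aside whose positive counterpart is outsourced to \cite[Corollary 6.4.1.5/6]{BGR}, and the failure outside the discretely-valued/algebraically-closed cases is simply asserted (it is the classical failure of stability phenomena for $A^{\circ}$). So your explicit counterexample is genuinely more than what the paper supplies, and its skeleton is the standard and correct one: take $K$ with dense value group $|p|^{\mathbb{Z}_{(2)}}$ missing $|p|^{1/2}$, pass to the ramified quadratic extension $L=K(\sqrt{p})$, and observe that $1,\sqrt{p}$ is an orthogonal but not orthonormalizable basis, so that $A^{\circ}=\mathcal{O}_L\langle t\rangle=\mathcal{R}\langle t\rangle\oplus M\langle t\rangle\sqrt{p}$ with $M=\{b\in K:|b|\leq |p|^{-1/2}\}=\{b\in K:|b|<|p|^{-1/2}\}$ not finitely generated over $\mathcal{R}$. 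Your identification of where the hypothesis enters (the need for $A_n^{\circ}$ to be an admissible $\mathcal{R}$-algebra so that the Lie lattices $\pi^{m(n)}\mathcal{T}_n$ and the completions $\widehat{U}(\pi^m\mathcal{T}_n)_K$ make sense) is also accurate, and embedding $\Sp(L\langle t\rangle)$ into a Stein covering of $\mathbb{A}^{1,\operatorname{an}}_L$ viewed over $K$ (which is smooth and carries an \'etale map to $\mathbb{A}^1_K$) is fine.

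There is one logical jump you should close. What you actually prove is that $A^{\circ}$ is not module-finite over $\mathcal{R}\langle t\rangle$; the remark concerns topological finite presentation over $\mathcal{R}$, i.e.\ being a quotient of some $\mathcal{R}\langle x_1,\dots,x_n\rangle$, and "not module-finite over $\mathcal{R}\langle t\rangle$" does not formally imply "not topologically finitely generated as an $\mathcal{R}$-algebra" (one cannot invoke the usual "integral $+$ finitely generated $\Rightarrow$ module-finite" argument without care, since $\mathcal{R}\langle t\rangle$ is not noetherian here). The fix is short and uses exactly your sequence: for $\beta<|p|^{-1/2}$ with $\beta\in|K^{*}|\cup\{0\}$, the set $S_{\beta}=\{g_0+g_1\sqrt{p}\in A^{\circ}: |g_1|_{\sup}\leq\beta\}$ is a closed $\mathcal{R}$-subalgebra of $A^{\circ}$, because the $\sqrt{p}$-coefficient of $(g_0+g_1\sqrt{p})(h_0+h_1\sqrt{p})$ is $g_0h_1+h_0g_1$ and hence has Gauss norm at most $\max(|g_1|,|h_1|)$. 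Any finite set of topological algebra generators $c_1,\dots,c_n$ of $A^{\circ}$ would lie in $S_{\beta}$ for $\beta=\max_i|b(c_i)|<|p|^{-1/2}$, forcing $\overline{\mathcal{R}[c_1,\dots,c_n]}\subseteq S_{\beta}\subsetneq A^{\circ}$ by the density of $|K^{*}|$. With this one observation added, your justification of the remark is complete.
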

For the rest of this section, we fix a family $\{\mathcal{T}_n\}_{n\geq 0}$, and a strictly increasing sequence of non-negative integers $\{m(n) \}_{n\geq 0}$ as above. As a consequence of this theorem, we obtain the following:
\begin{coro}\label{coro maps to stein covering are c-flat}
For each $n\geq 0$ the canonical map $\wideparen{\D}_X(X)\rightarrow \wideparen{\D}_X(X_n)$ is $c$-flat.   
\end{coro}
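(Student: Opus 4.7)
The plan is to verify the criterion for c-flatness by producing compatible Fréchet-Stein presentations of the source and target, drawing on the structural information recorded in Theorem \ref{teo global sections of co-admissible modules on Stein spaces}. Recall that a morphism of Fréchet-Stein algebras $\varphi\colon A \to B$ is c-flat precisely when there exist Fréchet-Stein presentations $A = \varprojlim_k A_k$ and $B = \varprojlim_k B_k$, compatible with $\varphi$, such that each $B_k$ is flat over $A_k$ and the natural maps $B_{k+1} \otimes_{A_{k+1}} A_k \to B_k$ are isomorphisms. Together these conditions imply that $B \widehat{\otimes}_A -$ restricts to an exact functor $\mathcal{C}(A) \to \mathcal{C}(B)$, realised on a coadmissible module $M = \varprojlim_k M_k$ by the rule $M \mapsto \varprojlim_k B_k \otimes_{A_k} M_k$.

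The first step is to re-index the Fréchet-Stein presentation of $\wideparen{\D}_X(X_n)$ so that it becomes compatible with the one of $\wideparen{\D}_X(X)$. Since the sequence $\{m(k)\}_{k\geq n}$ is strictly increasing, it is cofinal in $\{m\in\mathbb{N} : m\geq m(n)\}$, so one may rewrite the presentation of the target as $\wideparen{\D}_X(X_n) = \varprojlim_{k\geq n} \widehat{\D}_{n,m(k)}$. Pairing this with $\wideparen{\D}_X(X) = \varprojlim_{k\geq n}\widehat{\D}_{k,m(k)}$, the canonical restriction map is realised as the inverse limit of the morphisms $\widehat{\D}_{k,m(k)} \to \widehat{\D}_{n,m(k)}$ arising from the restriction of $X_k$ to its Weierstrass subdomain $X_n$.

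It remains to verify flatness and base-change at each level. Flatness of $\widehat{U}(\pi^{m(k)}\mathcal{T}_k)_K \to \widehat{U}(\pi^{m(k)}\mathcal{T}_n)_K$ is a consequence of the Ardakov-Wadsley theory of completed enveloping algebras developed in \cite{ardakov2019}, once one observes that property (ii) of Theorem \ref{teo global sections of co-admissible modules on Stein spaces} iterates to give $\mathcal{T}_n = A_n^\circ \otimes_{A_k^\circ}\mathcal{T}_k$, so that the map in question is obtained from a Weierstrass localization of $(\mathcal{R},A^{\circ})$-Lie algebra pairs. The base-change identity $\widehat{\D}_{n,m(k+1)} \otimes_{\widehat{\D}_{k+1,m(k+1)}}\widehat{\D}_{k,m(k)} \cong \widehat{\D}_{n,m(k)}$ then follows from the same Lie-lattice compatibility combined with the fact that both sides are obtained by simultaneously restricting to $X_n$ and rescaling the Lie lattice from level $m(k+1)$ to $m(k)$. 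The main technical obstacle lies in this final base-change verification, which requires careful control of how the completion of the enveloping algebra interacts with the two independent filtrations (the geometric one indexed by $k$ and the Lie-lattice one indexed by $m$); but once the re-indexing above is in place, this reduces to standard properties of $\widehat{U}(-)_K$ applied to the compatible tower of Lie lattices produced in Theorem \ref{teo global sections of co-admissible modules on Stein spaces}.
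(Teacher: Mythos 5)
Your proposal takes essentially the same route as the paper: both reduce $c$-flatness to level-wise flatness of the maps $\widehat{\D}_{k,m(k)}\rightarrow\widehat{\D}_{n,m(k)}$ using the Lie-lattice compatibility $\pi^{m(k)}\mathcal{T}_n=A_n^{\circ}\otimes_{A_k^{\circ}}\pi^{m(k)}\mathcal{T}_k$ supplied by Theorem \ref{teo global sections of co-admissible modules on Stein spaces}, and then invoke the Ardakov(--Wadsley) flatness theorem for completed enveloping algebras. The only differences are cosmetic: the paper cites \cite[Theorem 4.2.8]{ardakovequivariant} rather than \cite{ardakov2019}, and it leaves the re-indexing and base-change compatibility implicit where you spell them out.
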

\begin{proof}
In light of the previous theorem, it suffices to show that for each $n\geq 0$ and each $i\geq n$    the map $\widehat{\D}_{i,m(i)}\rightarrow \widehat{\D}_{n,m(i)}$ is flat. However, by construction, we have:
\begin{equation*}
    \pi^{m(i)}\mathcal{T}_n=A_n^{\circ}\otimes_{A_i^{\circ}}\pi^{m(i)}\mathcal{T}_i.
\end{equation*}
Hence, the result  is a direct consequence of \cite[Theorem 4.2.8]{ardakovequivariant}.
\end{proof}
We point out that the existence of such presentations depends on our assumption that $X$ admits an étale map $X\rightarrow \mathbb{A}^r_K$. However, by Corollary \ref{coro locally étale and stein}, every smooth Stein space can be covered by finitely many smooth Stein spaces satisfying this assumption. Thus, imposing the existence of an étale map to $\mathbb{A}^r_K$ is not a very restrictive condition. With this theorem at hand, we are ready to start characterizing the category of co-admissible $\wideparen{\D}_X$-modules on $X$. Let us start with the following:
\begin{Lemma}\label{lemma co-admis on Stein spaces are determined by global sections}
Let $\mathcal{M}\in \mathcal{C}(\wideparen{\D}_X)$, and $\mathcal{N}\in \Mod_{\Indban}(\wideparen{\D}_X)$. We have an identification:
    \begin{equation*}
        \Hom_{\wideparen{\D}_X}(\mathcal{M},\mathcal{N})=\Hom_{\wideparen{\D}_X(X)}(\Gamma(X,\mathcal{M}),\Gamma(X,\mathcal{N})).
    \end{equation*}    
\end{Lemma}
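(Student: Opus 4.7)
The forward direction is immediate: applying the global sections functor $\Gamma(X,-)$ to a sheaf morphism $f:\mathcal{M}\to\mathcal{N}$ yields the bounded $\wideparen{\D}_X(X)$-linear map $\Gamma(X,f)$. The substantive content is the construction of an inverse, which I would build by combining the Stein cover $X=\varinjlim_n X_n$ with the Fréchet-Stein presentation of $\wideparen{\D}_X(X)$ and the co-admissibility of $\mathcal{M}$.

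Given a bounded $\wideparen{\D}_X(X)$-linear map $\varphi:\Gamma(X,\mathcal{M})\to\Gamma(X,\mathcal{N})$, the construction of $\widetilde{\varphi}:\mathcal{M}\to\mathcal{N}$ proceeds affinoid by affinoid. The key input is the identification
\begin{equation*}
\mathcal{M}(U)=\wideparen{\D}_X(U)\widehat{\otimes}_{\wideparen{\D}_X(X)}\Gamma(X,\mathcal{M}),
\end{equation*}
valid for every affinoid subdomain $U\subset X_n$, which is the standard co-admissible extension of scalars. The $c$-flatness of $\wideparen{\D}_X(X)\to \wideparen{\D}_X(U)$ (granted by Corollary \ref{coro maps to stein covering are c-flat}) together with the Fréchet-Stein presentation of Theorem \ref{teo global sections of co-admissible modules on Stein spaces} yield the relevant universal property: any bounded $\wideparen{\D}_X(X)$-linear map from $\Gamma(X,\mathcal{M})$ to a $\wideparen{\D}_X(U)$-module $P$ extends uniquely to a bounded $\wideparen{\D}_X(U)$-linear map $\mathcal{M}(U)\to P$. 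Applying this to the composite $\Gamma(X,\mathcal{M})\xrightarrow{\varphi}\Gamma(X,\mathcal{N})\to\mathcal{N}(U)$ produces the desired $\widetilde{\varphi}_U:\mathcal{M}(U)\to\mathcal{N}(U)$.

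Compatibility of the $\widetilde{\varphi}_U$ with restriction along an inclusion of affinoids $V\subset U$ is then a soft argument: both $\widetilde{\varphi}_V$ and the composition $\mathcal{M}(U)\xrightarrow{\widetilde{\varphi}_U}\mathcal{N}(U)\to\mathcal{N}(V)$ satisfy the universal property characterizing the unique $\wideparen{\D}_X(V)$-linear extension of $\varphi$ followed by $\Gamma(X,\mathcal{N})\to\mathcal{N}(V)$, so they coincide. Hence the $\widetilde{\varphi}_U$ glue into a sheaf morphism $\widetilde{\varphi}$ with $\Gamma(X,\widetilde{\varphi})=\varphi$. Conversely, any morphism of sheaves $f:\mathcal{M}\to\mathcal{N}$ agrees sectionwise with $\widetilde{\Gamma(X,f)}$, again by uniqueness in the universal property. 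The two assignments are therefore mutually inverse.

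The main obstacle is the tensor product formula $\mathcal{M}(U)=\wideparen{\D}_X(U)\widehat{\otimes}_{\wideparen{\D}_X(X)}\Gamma(X,\mathcal{M})$ together with the bornological universal property that makes the extension of $\varphi$ unique and bounded. Both, however, are encoded in the coadmissibility of $\mathcal{M}$ over the Fréchet-Stein algebra $\wideparen{\D}_X(X)$, and they follow once one has the $c$-flat Fréchet-Stein presentation established in Theorem \ref{teo global sections of co-admissible modules on Stein spaces} and Corollary \ref{coro maps to stein covering are c-flat}; after that, the proof is a diagram chase driven entirely by uniqueness.
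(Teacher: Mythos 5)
Your proposal is correct and follows essentially the same route as the paper: both arguments hinge on the base-change identity $\mathcal{M}(X_n)=\wideparen{\D}_X(X_n)\overrightarrow{\otimes}_{\wideparen{\D}_X(X)}\Gamma(X,\mathcal{M})$ (proved via the $c$-flatness of Corollary \ref{coro maps to stein covering are c-flat} and the co-admissible tensor product computation over the Fréchet-Stein presentation of Theorem \ref{teo global sections of co-admissible modules on Stein spaces}), combined with the extension-of-scalars universal property to produce and uniquely characterize the sheaf morphism on each piece of the Stein cover. The only cosmetic difference is that the paper reduces to the known affinoid-case identification on each $X_n$ rather than constructing the morphism on arbitrary affinoid subdomains and gluing, but the underlying mechanism is identical.
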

\begin{proof}
Let $\mathcal{M}\in \mathcal{C}(\wideparen{\D}_X)$, and $\mathcal{N}\in \Mod_{\Indban}(\wideparen{\D}_X)$.  First, as each $X_n$ is an affinoid space, it follows by a slight modification of  \cite[Proposition 6.2.12]{HochDmod} that we have an identification:
\begin{equation}\label{equation identity global sections}
    \Hom_{\wideparen{\D}_{X_n}}(\mathcal{M}_{\vert X_n},\mathcal{N}_{\vert X_n})=\Hom_{\wideparen{\D}_X(X_n)}(\Gamma(X_n,\mathcal{M}),\Gamma(X_n,\mathcal{N})).
\end{equation}
For each $n\geq 0$, and each $i\geq 0$, let $\mathcal{M}_{n,i}:=\widehat{\D}_{n,i}\overrightarrow{\otimes}_{\wideparen{\D}_X(X)}\mathcal{M}(X_n)$. As $\mathcal{M}$ is a sheaf of Ind-Banach spaces, we have the following identities:
\begin{equation*}
    \mathcal{M}(X)=\varprojlim_n\mathcal{M}(X_n)=\varprojlim_n\varprojlim_{i\geq m(n)}\mathcal{M}_{n,i}=\varprojlim_{n}\mathcal{M}_{n,n(m)}.
\end{equation*}
Thus, $\mathcal{M}(X)$ is a co-admissible $\wideparen{\D}_X(X)$-module. Chose a morphism of Ind-Banach $\wideparen{\D}_X(X)$-modules $f:\mathcal{M}(X)\rightarrow \mathcal{N}(X)$. By \cite[ Proposition 6.2.9]{HochDmod}, for each $n\geq 0$ there is a commutative diagram:
\begin{equation*}
\begin{tikzcd}
\mathcal{M}(X) \arrow[r, "f"] \arrow[d]                               & \mathcal{N}(X) \arrow[d] \\
\wideparen{\D}_X(X_n)\overrightarrow{\otimes}_{\wideparen{\D}_X(X)}\mathcal{M}(X) \arrow[r] & \mathcal{N}(X_n)        
\end{tikzcd}
\end{equation*}
such that the lower map is uniquely determined by $f:\mathcal{M}(X)\rightarrow \mathcal{N}(X)$. In virtue of identity (\ref{equation identity global sections}), it suffices to show that we have an identification of Ind-Banach $\wideparen{\D}_X(X_n)$-modules:
\begin{equation*}
    \mathcal{M}(X_n)=\wideparen{\D}_X(X_n)\overrightarrow{\otimes}_{\wideparen{\D}_X(X)}\mathcal{M}(X).
\end{equation*}
By \cite[Proposition 5.33]{bode2021operations}, and Corollary \ref{coro maps to stein covering are c-flat}, we have the following identity in $LH(\widehat{\mathcal{B}}c_K)$:
\begin{equation*}
I(\wideparen{\D}_X(X_n)\overrightarrow{\otimes}_{\wideparen{\D}_X(X)}\mathcal{M}(X))=I((\wideparen{\D}_X(X_n)\wideparen{\otimes}_{\wideparen{\D}_X(X)}\mathcal{M}(X))^b),
\end{equation*}
where $\wideparen{\otimes}$ denotes the co-admissible tensor product from \cite[Section 7]{ardakov2019}. Hence, it suffices to show that we have an identification of Fréchet spaces:
\begin{equation*}
    \mathcal{M}(X_n)=\wideparen{\D}_X(X_n)\wideparen{\otimes}_{\wideparen{\D}_X(X)}\mathcal{M}(X).
\end{equation*}
As $\mathcal{M}(X)$ is a co-admissible $\wideparen{\D}_X(X)$-module, we have the following identities of Ind-Banach spaces:
\begin{equation*}
    \mathcal{M}(X)=\varprojlim_n \mathcal{M}(X_n)=\varprojlim_n\varprojlim_i \mathcal{M}_{n,i}.
\end{equation*}
On the other hand, by definition of the co-admissible tensor product, we have the following identities:
\begin{equation*}  \wideparen{\D}_X(X_n)\wideparen{\otimes}_{\wideparen{\D}_X(X)}\mathcal{M}(X)=\varprojlim_{i\geq m(n)}\widehat{\D}_{n,i}\overrightarrow{\otimes}_{\widehat{\D}_{i,i}}\mathcal{M}_{i,i}=\varprojlim_{i\geq n}\mathcal{M}_{n,i}=\mathcal{M}(X_n),
\end{equation*}
which is precisely the identity we wanted to show.
\end{proof}
We may use this lemma to show the following:
\begin{prop}\label{prop characterization of co-admissible modules on a smooth Stein space}
There is an equivalence of abelian categories: 
\begin{equation*}
    \Gamma(X,-):\mathcal{C}(\wideparen{\D}_X)\leftrightarrows \mathcal{C}(\wideparen{\D}_X(X)): \operatorname{Loc}(-),
\end{equation*}
where $\operatorname{Loc}(-)$ is defined by sending a co-admissible $\wideparen{\D}_X(X)$-module $\mathcal{M}$ to the unique co-admissible $\wideparen{\D}_X$-module satisfying the following identity:
\begin{equation*}
    \operatorname{Loc}(\mathcal{M})(X_n)=\wideparen{\D}_X(X_n)\overrightarrow{\otimes}_{\wideparen{\D}_X(X)}\mathcal{M}.
\end{equation*}
\end{prop}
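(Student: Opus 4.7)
The plan is to construct $\operatorname{Loc}(-)$ from the Fréchet--Stein presentations and the affinoid equivalence for co-admissible $\wideparen{\D}$-modules, then to check by hand that $\Gamma(X,-)$ and $\operatorname{Loc}(-)$ are mutually quasi-inverse. Most of the work on one side of the adjunction has already been done in Lemma \ref{lemma co-admis on Stein spaces are determined by global sections}.

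First, given $\mathcal{M}\in\mathcal{C}(\wideparen{\D}_X(X))$, I would set $\mathcal{M}_n:=\wideparen{\D}_X(X_n)\overrightarrow{\otimes}_{\wideparen{\D}_X(X)}\mathcal{M}$ for each $n$. Using the Fréchet--Stein presentation $\wideparen{\D}_X(X)=\varprojlim_n \widehat{\D}_{n,m(n)}$ from Theorem \ref{teo global sections of co-admissible modules on Stein spaces} together with the $c$-flatness provided by Corollary \ref{coro maps to stein covering are c-flat}, one checks that $\mathcal{M}_n$ is a co-admissible $\wideparen{\D}_X(X_n)$-module and that the natural transitivity maps $\wideparen{\D}_X(X_n)\overrightarrow{\otimes}_{\wideparen{\D}_X(X_{n+1})}\mathcal{M}_{n+1}\to \mathcal{M}_n$ are isomorphisms. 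Since each $X_n$ is an affinoid space for which the equivalence between co-admissible modules and co-admissible $\wideparen{\D}$-sheaves is already known (via Ardakov--Wadsley, recalled in \cite{HochDmod}), each $\mathcal{M}_n$ localises to a co-admissible $\wideparen{\D}_{X_n}$-module $\widetilde{\mathcal{M}}_n$ on $X_n$. The transitivity isomorphisms give compatible identifications $\widetilde{\mathcal{M}}_{n+1}\vert_{X_n}\cong \widetilde{\mathcal{M}}_n$, so the collection glues to a unique sheaf $\operatorname{Loc}(\mathcal{M})$ on $X$ with $\operatorname{Loc}(\mathcal{M})(X_n)=\mathcal{M}_n$, and this sheaf is co-admissible by definition.

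Next, I would verify that the two functors are mutually inverse. That $\operatorname{Loc}\circ\Gamma(X,-)\cong \mathrm{Id}$ on $\mathcal{C}(\wideparen{\D}_X)$ is precisely the content of the identity
\begin{equation*}
    \mathcal{N}(X_n)=\wideparen{\D}_X(X_n)\overrightarrow{\otimes}_{\wideparen{\D}_X(X)}\mathcal{N}(X),
\end{equation*}
established in the proof of Lemma \ref{lemma co-admis on Stein spaces are determined by global sections}, and the fact that a co-admissible $\wideparen{\D}_X$-module is determined by its values on the Stein cover. Conversely, $\Gamma(X,\operatorname{Loc}(\mathcal{M}))=\varprojlim_n \mathcal{M}_n$, and since $\mathcal{M}$ is co-admissible the inverse limit of its base changes along the Fréchet--Stein presentation recovers $\mathcal{M}$. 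Functoriality of both constructions, together with the fully faithfulness of $\Gamma(X,-)$ supplied by Lemma \ref{lemma co-admis on Stein spaces are determined by global sections}, upgrades these object-level identifications to an equivalence of categories; because both sides are abelian and the equivalence is realised by quasi-inverse additive functors, the abelian structure is automatically preserved.

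The main obstacle is the coherent gluing in the construction of $\operatorname{Loc}(\mathcal{M})$: one must check that the sheaves $\widetilde{\mathcal{M}}_n$ assemble into a genuine co-admissible sheaf on $X$, which boils down to verifying the transitivity isomorphism $\wideparen{\D}_X(X_n)\overrightarrow{\otimes}_{\wideparen{\D}_X(X_{n+1})}\mathcal{M}_{n+1}\cong \mathcal{M}_n$ at the level of Ind-Banach modules. This in turn requires a careful manipulation of the Fréchet--Stein presentations of $\wideparen{\D}_X(X)$ and $\wideparen{\D}_X(X_n)$ provided by Theorem \ref{teo global sections of co-admissible modules on Stein spaces}, reducing to flatness statements for the Banach algebras $\widehat{\D}_{n,m}$ via the co-admissible tensor product formalism of \cite{ardakov2019}, exactly as in the proof of Lemma \ref{lemma co-admis on Stein spaces are determined by global sections}.
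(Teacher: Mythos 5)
Your proposal is correct and follows essentially the same route as the paper: full faithfulness is delegated to Lemma \ref{lemma co-admis on Stein spaces are determined by global sections}, essential surjectivity is obtained by base-changing $\mathcal{M}$ to each $\wideparen{\D}_X(X_n)$, invoking the affinoid equivalence to localise and glue, and recovering $\mathcal{M}$ as $\varprojlim_n \mathcal{M}_n$ via the Fréchet--Stein presentation. The paper compresses the gluing step by simply noting that co-admissible modules on affinoids are determined by their global sections, but the transitivity isomorphisms you spell out are exactly what that remark encodes.
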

\begin{proof}
The fact that $\Gamma(X,-)$ is fully faithful was shown in the previous lemma. Thus, we only need to show that it is essentially surjective. On the other hand, we also saw in the previous lemma that for every $\mathcal{M}\in\mathcal{C}(\wideparen{\D}_X(X))$, and every $n\geq 0$ the Ind-Banach $\wideparen{\D}_X(X_n)$-module $\wideparen{\D}_X(X_n)\overrightarrow{\otimes}_{\wideparen{\D}_X(X)}\mathcal{M}$  is co-admissible. Hence, by \cite[Theorem 6.4]{bode2021operations}, it extends to a co-admissible 
$\wideparen{\D}_{X_n}$-module. Furthermore, as co-admissible $\wideparen{\D}$-modules  on affinoid spaces are completely determined by their global sections, it is clear that $\operatorname{Loc}(\mathcal{M})$ is a sheaf of co-admissible $\wideparen{\D}_X$-modules. We just need to show that:
\begin{equation*}
    \mathcal{M}=\varprojlim_n \wideparen{\D}_X(X_n)\overrightarrow{\otimes}_{\wideparen{\D}_X(X)}\mathcal{M}.
\end{equation*}
However, using the notation from the proof of Lemma \ref{lemma co-admis on Stein spaces are determined by global sections} we have:
\begin{equation*}
    \varprojlim_n \wideparen{\D}_X(X_n)\overrightarrow{\otimes}_{\wideparen{\D}_X(X)}\mathcal{M}=\varprojlim_n\varprojlim_{m\geq m(n)}\mathcal{M}_{n,m}=\varprojlim_{n}\mathcal{M}_{n,m(n)}=\mathcal{M},
\end{equation*}
as we wanted to show.
\end{proof}
This allows us to obtain a version of \cite[Corollary 5.15]{bode2021operations} for Stein spaces:
\begin{coro}\label{coro map from co-admissible modules to complete born modules is an immersion}
We have the following commutative diagram:
\begin{equation*}
\begin{tikzcd}
\mathcal{C}(\wideparen{\D}_X) \arrow[r, "{\Gamma(X,-)}"] \arrow[d, "\operatorname{diss}\circ(-)^b"'] & \mathcal{C}(\wideparen{\D}_X(X)) \arrow[d, "\operatorname{diss}\circ(-)^b"] \\
\Mod_{\Indban}(\wideparen{\D}_X) \arrow[r, "{\Gamma(X,-)}"]         & \Mod_{\Indban}(\wideparen{\D}_X(X))       
\end{tikzcd}
\end{equation*}
where the upper horizontal map is an equivalence of abelian categories and the vertical maps are exact and fully faithful.    
\end{coro}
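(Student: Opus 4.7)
The top horizontal equivalence is precisely Proposition \ref{prop characterization of co-admissible modules on a smooth Stein space}, so the work is in establishing that the vertical arrows are well-defined, exact, and fully faithful, and that the square commutes. The plan is to reduce everything to Proposition \ref{prop comparison of tensor products metrizable spaces} applied to Fréchet-Stein presentations, using Theorem \ref{teo global sections of co-admissible modules on Stein spaces} to guarantee the Fréchet hypothesis.

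For the right-hand vertical map, by Theorem \ref{teo global sections of co-admissible modules on Stein spaces} the algebra $\wideparen{\D}_X(X)$ is Fréchet-Stein and a nuclear Fréchet space, and every co-admissible $\wideparen{\D}_X(X)$-module is a nuclear Fréchet space. Hence $(-)^b$ takes values in complete bornological $\wideparen{\D}_X(X)^b$-modules, and Proposition \ref{prop comparison of tensor products metrizable spaces} applies directly to $\mathscr{A}=\wideparen{\D}_X(X)^b$: the composition $\operatorname{diss}\circ(-)^b$ is exact and fully faithful, and its essential image lives inside $\Mod_{\Indban}(\wideparen{\D}_X(X))$.

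For the left-hand vertical map, I would argue affinoid-locally. Given $\mathcal{M}\in\mathcal{C}(\wideparen{\D}_X)$ with Stein cover $X=\varinjlim_n X_n$, each $\mathcal{M}(X_n)$ is a co-admissible $\wideparen{\D}_X(X_n)$-module, hence a nuclear Fréchet space, and the restriction maps are continuous. Apply $\operatorname{diss}\circ(-)^b$ on each $X_n$ (this is exact and fully faithful by Proposition \ref{prop comparison of tensor products metrizable spaces} applied to the Fréchet-Stein algebra $\wideparen{\D}_X(X_n)$) to obtain a presheaf of Ind-Banach $\wideparen{\D}_X$-modules, and check that it is already a sheaf by using that the \v{C}ech resolutions of co-admissible modules are strict (a sheaf-theoretic consequence of Lemma \ref{lemma co-admis on Stein spaces are determined by global sections} together with the analog of Proposition \ref{prop acyclicity of coherent modules on Stein spaces} for $\wideparen{\D}$, which is Proposition \ref{prop acyclicity of co-admissible modules on Stein spaces}). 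Exactness and full faithfulness then follow pointwise from the affinoid case.

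Commutativity of the square is the heart of the matter. Given $\mathcal{M}\in\mathcal{C}(\wideparen{\D}_X)$, we must identify $\Gamma(X,\operatorname{diss}(\mathcal{M}^b))$ with $\operatorname{diss}(\Gamma(X,\mathcal{M})^b)$ as Ind-Banach $\wideparen{\D}_X(X)$-modules. Both are computed from the same inverse system: on the one hand, $\Gamma(X,-)$ of a sheaf of Ind-Banach modules is a projective limit in $\Indban$ of the sections on the $X_n$; on the other, $\Gamma(X,\mathcal{M})^b$ is the bornologification of the Fréchet limit $\varprojlim_n\mathcal{M}(X_n)$, and Proposition \ref{prop comparison of tensor products metrizable spaces} ensures that the dissection of a Fréchet-limit coincides with the formal projective system of its defining Banach quotients. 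Using the co-admissible presentation $\mathcal{M}(X)=\varprojlim_n \widehat{\D}_{n,m(n)}\overrightarrow{\otimes}_{\wideparen{\D}_X(X)}\mathcal{M}(X)$ from the proof of Lemma \ref{lemma co-admis on Stein spaces are determined by global sections}, both sides match term by term.

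The main obstacle I anticipate is the commutativity check: it requires taking seriously the distinction between the limit computed in $\Indban$ and the dissection of a Fréchet limit, and verifying that the canonical comparison map is an isomorphism of Ind-Banach $\wideparen{\D}_X(X)$-modules rather than merely of $K$-vector spaces with compatible topologies. Once this is done, exactness of the vertical functors and the already-established top equivalence close the diagram.
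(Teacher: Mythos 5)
Your plan is correct and assembles exactly the ingredients the paper relies on: the paper gives no explicit proof of this corollary, treating it as an immediate consequence of Proposition \ref{prop characterization of co-admissible modules on a smooth Stein space} together with the exactness/full-faithfulness and strictness results for co-admissible modules from \cite{bode2021operations} (Theorem 6.4 and Corollary 5.15) and Theorem \ref{teo global sections of co-admissible modules on Stein spaces}. Your only slip is cosmetic: the compatibility of $\operatorname{diss}$ and $(-)^b$ with the Fréchet limit over the Stein cover follows from $\operatorname{diss}$ being a right adjoint and the behaviour of $(-)^b$ on countable inverse limits of Banach spaces, not from Proposition \ref{prop comparison of tensor products metrizable spaces}, which concerns tensor products.
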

Thus, from now on, we will identify $\mathcal{C}(\wideparen{\D}_X(X))$ with its essential image inside $\Mod_{\Indban}(\wideparen{\D}_X(X))$.\\

Now that we have an accurate description of the category of co-admissible $\wideparen{\D}_X$-modules on $X$, we can start analyzing its behavior with respect to the global sections functor. As usual, the first step is understanding the situation in the affinoid case:
\begin{Lemma}\label{Lemma acyclicity of co-admissible modules on affinoid spaces}
Let $X$ be a smooth affinoid space with an étale map $X\rightarrow \mathbb{A}^r_K$ and $\mathcal{M}\in \mathcal{C}(\wideparen{\D}_X)$. Then 
   $\mathcal{M}$ is acyclic with respect to the functor $\Gamma(X,-):\Mod_{\Indban}(\wideparen{\D}_X)\rightarrow \Mod_{\Indban}(\wideparen{\D}_X(X))$.    
\end{Lemma}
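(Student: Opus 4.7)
\emph{Plan.} The approach is to imitate the proof of Proposition \ref{prop acyclicity of coherent modules on Stein spaces}, but now with the roles swapped: instead of an inverse limit over the levels of a Stein covering of the base, one uses a Fréchet--Stein presentation of the module $\mathcal{M}$. As in that earlier argument, $\mathcal{M}$ is $\Gamma(X,-)$-acyclic if and only if it is Čech-acyclic, so it suffices to prove that for every finite admissible affinoid covering $\mathcal{U}=\{V_j\}_{j=1}^s$ of the affinoid $X$ the Čech complex $C(\mathcal{U},\mathcal{M})^\bullet$ is strict exact in $\Indban$.

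Applying Theorem \ref{teo global sections of co-admissible modules on Stein spaces} to the trivial one-step cover $X=X_0$ supplies a Fréchet--Stein presentation $\wideparen{\D}_X(X)=\varprojlim_{m\geq m(0)}\widehat{\D}_{0,m}$ together with a decomposition $\mathcal{M}(X)=\varprojlim_m M_m$, where $M_m:=\widehat{\D}_{0,m}\overrightarrow{\otimes}_{\wideparen{\D}_X(X)}\mathcal{M}(X)$ is a finite module over the Banach algebra $\widehat{\D}_{0,m}$. For each $m$, the module $M_m$ localizes to a coherent sheaf $\mathcal{M}_m$ over the sheaf of Banach algebras on $X$ associated to $\widehat{\D}_{0,m}$; the Kiehl--Tate acyclicity theorem for coherent modules over such sheaves (available through \cite{ardakov2019}) then implies that $C(\mathcal{U},\mathcal{M}_m)^\bullet$ is a strict exact complex of Banach spaces. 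The compatibility of the standard localization functor with inverse limits along Fréchet--Stein presentations yields $\mathcal{M}=\varprojlim_m \mathcal{M}_m$ as sheaves of Ind-Banach $\wideparen{\D}_X$-modules, and hence $C(\mathcal{U},\mathcal{M})^\bullet=\varprojlim_m C(\mathcal{U},\mathcal{M}_m)^\bullet$.

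Strict exactness is then passed to the inverse limit via \cite[Corollary 5.2]{bode2021operations}, provided each degree-$p$ system $\{C(\mathcal{U},\mathcal{M}_m)^p\}_m$ is pre-nuclear with dense transition maps. In degree zero this is precisely the statement that $\mathcal{M}(X)=\varprojlim_m M_m$ is a nuclear Fréchet space, which is part of Theorem \ref{teo global sections of co-admissible modules on Stein spaces}. In higher Čech degrees the system is a finite product of the systems $\{\mathcal{M}_m(V_{j_0}\cap\cdots\cap V_{j_p})\}_m$; since each intersection $V_{j_0}\cap\cdots\cap V_{j_p}$ is an affinoid subdomain of $X$ and restriction preserves co-admissibility, each such system is again a nuclear Fréchet presentation, and pre-nuclearity follows.

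The main obstacle I anticipate is the careful verification of pre-nuclearity for the higher-degree Čech systems, namely that the transition maps $\mathcal{M}_{m+1}(V_{j_0}\cap\cdots\cap V_{j_p})\to \mathcal{M}_m(V_{j_0}\cap\cdots\cap V_{j_p})$ factor through \emph{scc} maps. This should reduce to the analogous statement for the pair $\widehat{\D}_{0,m+1}(V)\to \widehat{\D}_{0,m}(V)$ on affinoid subdomains $V\subset X$, which in turn exploits the étale map $X\to\mathbb{A}^r_K$ and the free $(\mathcal{R},\mathcal{A})$-Lie lattice structure provided by Theorem \ref{teo global sections of co-admissible modules on Stein spaces}, through the same \emph{scc}-argument for $K\langle t_1,\ldots,t_r\rangle\to K\langle \pi^{-1}t_1,\ldots,\pi^{-1}t_r\rangle$ used in Lemma \ref{Lemma basic properties of Stein spaces}.
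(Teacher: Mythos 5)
Your proposal is correct and follows essentially the same route as the paper: reduce to finite affinoid Čech complexes, write them as an inverse limit of level-$m$ Banach Čech complexes that are strict exact by the Ardakov--Wadsley acyclicity results, and pass to the limit via \cite[Corollary 5.2]{bode2021operations} using pre-nuclearity, which comes from the nuclear Fréchet structure of co-admissible modules. The only point you gloss over --- that the Lie lattice at level $m$ must be compatibly localized to the members of the cover, which the paper arranges by choosing adequate affine formal models via \cite[Proposition 5.1]{ardakov2019} and \cite[Proposition 7.6]{ardakov2019} --- is exactly the content of the references you already cite, so this is a presentational rather than a mathematical difference.
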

\begin{proof}
As $X$ is quasi-compact, it suffices to show that $\mathcal{M}$ has trivial higher \v{C}ech cohomology with respect to finite affinoid covers. Let $\mathcal{V}=\{ V_i\}_{i=1}^m$ be such a cover.  We need to show that the augmented \v{C}ech complex:
\begin{equation*}
    \mathcal{C}(\mathcal{V},\mathcal{M})^{\bullet}=\left(0\rightarrow \Gamma(X,\mathcal{M})\rightarrow \prod_{i=1}^m \Gamma(V_i,\mathcal{M})\rightarrow \cdots\rightarrow \Gamma(\cap_{i=1}^mV_i,\mathcal{M})\rightarrow 0\right),
\end{equation*}
is a strict exact complex of Ind-Banach spaces. However, each object in $\mathcal{C}(\mathcal{V},\mathcal{M})^{\bullet}$ is the image of a complete bornological space under the dissection functor. Hence, it suffices to show that the complex is strict exact in the category of complete bornological spaces.\\

For each $V_{i}\in\mathcal{V}$, let $A_{i}=\OX_X(V_{i})$, and set $A=\OX_X(X)$. As $\mathcal{T}_{X/K}(X)$ is free as an $A$-module and $\mathcal{V}$ is a finite cover of $X$, we may apply \cite[Proposition 5.1]{ardakov2019} and  \cite[Proposition 7.6]{ardakov2019} to choose adequate smooth affine formal models of $A$, and  of the
$A_{i}$, so that we have Fréchet-Stein presentations:
\begin{equation*}
    \wideparen{\D}_X(V_{i})=\varprojlim_r\widehat{\D}_{r,i},\, \wideparen{\D}_X(X)=\varprojlim_r\widehat{\D}_r,
\end{equation*}
satisfying that the complex:
\begin{equation}\label{equation auxiliary complex in acyclycity of co-admissible modules}
    \mathcal{C}{r}^{\bullet}:= \left(0\rightarrow \widehat{\D}_{r}\widehat{\otimes}_{\wideparen{\D}_X(X)}\Gamma(X,\mathcal{M})\rightarrow \prod_{i=1}^m\widehat{\D}_{r,i}\widehat{\otimes}_{\wideparen{\D}_X(V_{i})}\Gamma(V_{i},\mathcal{M}) \rightarrow\cdots\right),
\end{equation}
is a strict and exact complex of complete bornological spaces for every $r\geq 0$. Notice that the aforementioned results only show that $(\mathcal{C}_{r}^{\bullet})^t$ is a strict exact complex in $LCS_K$. However, as this is a complex of Banach spaces, strict exactness is preserved under $(-)^b$ by \cite[Lemma 4.4]{bode2021operations}. Furthermore, by definition of co-admissible module, we have $\mathcal{C}(\mathcal{V},\mathcal{M})^{\bullet}=\varprojlim_r\mathcal{C}_{r}^{\bullet}$.\\

As each of the  $\mathcal{C}_{r}^{\bullet}$ is strict and exact, we may apply  \cite[Corollary 5.2]{bode2021operations} to conclude that showing strict exactness of $\mathcal{C}(\mathcal{V},\mathcal{M})^{\bullet}$ is reduced to showing that for each $m\geq 0$, the inverse limit:
\begin{equation*}
    \mathcal{C}(\mathcal{V},\mathcal{M})^{m}=\varprojlim_r\mathcal{C}_{r}^{m},
\end{equation*}
is a pre-nuclear inverse system of Banach spaces. By construction, for every affinoid subdomain $V_{i}\in \mathcal{V}$, we have an isomorphism:
\begin{equation*}
\Gamma(V_{i},\mathcal{M})=\varprojlim_r\widehat{\D}_{r,i}\widehat{\otimes}_{\wideparen{\D}_X(V_{i})}\Gamma(V_{i},\mathcal{M}).
\end{equation*}
By \cite[Proposition 5.5]{bode2021operations} such a presentation makes $\Gamma(V_{i},\mathcal{M})$ a nuclear $A_{i}$-module. In particular, the associated inverse system is a pre-nuclear system of Banach spaces. As this argument holds for $X$ and finite intersections of elements in $\mathcal{V}$, we conclude that $\mathcal{C}(\mathcal{V},\mathcal{M})^{\bullet}$ is a strict and exact complex of complete bornological spaces, as wanted.
\end{proof}
We are finally ready to show the main theorem of the section:
\begin{prop}\label{prop acyclicity of co-admissible modules on Stein spaces}
    Let $X$ be a smooth Stein space with an étale map $X\rightarrow \mathbb{A}^r_K$. Let $\mathcal{M}\in \mathcal{C}(\wideparen{\D}_X)$, then 
   $\mathcal{M}$ is acyclic with respect to the functor:
   \begin{equation*}
       \Gamma(X,-):\Mod_{\Indban}(\wideparen{\D}_X)\rightarrow \Mod_{\Indban}(\wideparen{\D}_X(X)).
   \end{equation*}

\end{prop}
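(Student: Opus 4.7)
The approach is to replicate the argument of Proposition \ref{prop acyclicity of coherent modules on Stein spaces}, now using the just-proved affinoid case (Lemma \ref{Lemma acyclicity of co-admissible modules on affinoid spaces}) in place of the corresponding statement for coherent modules, and invoking Theorem \ref{teo global sections of co-admissible modules on Stein spaces} to handle the nuclearity requirement at the level of global sections.

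Given any admissible cover of $X$, one can find a refinement $\mathcal{V}=\{V_{n,i}\}_{n\geq 0}^{1\leq i\leq i_n}$ with the property that for every $n\geq 0$ the finite subfamily $\mathcal{V}_n:=\{V_{m,i}\,:\, m\leq n,\, 1\leq i\leq i_m\}$ is an admissible affinoid cover of $X_n$. The corresponding augmented Čech complex then decomposes as $C(\mathcal{V},\mathcal{M})^{\bullet}=\varprojlim_n C(\mathcal{V}_n,\mathcal{M}_{\vert X_n})^{\bullet}$. Because $X\to \mathbb{A}^r_K$ is étale, so is each restriction $X_n\to \mathbb{A}^r_K$; therefore Lemma \ref{Lemma acyclicity of co-admissible modules on affinoid spaces} applies to each pair $(X_n,\mathcal{M}_{\vert X_n})$ and each $C(\mathcal{V}_n,\mathcal{M}_{\vert X_n})^{\bullet}$ is strict exact in $\Indban$.

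To pass to the limit I invoke \cite[Corollary 5.2]{bode2021operations}, which reduces the problem to verifying pre-nuclearity of the underlying inverse system in each cohomological degree. In degrees $m>0$ the transition maps $C(\mathcal{V}_{n+1},\mathcal{M})^m \to C(\mathcal{V}_n,\mathcal{M})^m$ are split epimorphisms, since enlarging the cover only adjoins new product factors while the common ones transfer by the identity, so pre-nuclearity is automatic. In the augmentation degree one is reduced to $\Gamma(X,\mathcal{M})=\varprojlim_n\Gamma(X_n,\mathcal{M})$, and I identify $\Gamma(X,\mathcal{M})$ with a co-admissible $\wideparen{\D}_X(X)$-module via Proposition \ref{prop characterization of co-admissible modules on a smooth Stein space} so that Theorem \ref{teo global sections of co-admissible modules on Stein spaces} supplies the nuclear Fréchet structure, and hence the pre-nuclearity, of this particular presentation.

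The main obstacle lies precisely in this last step. For coherent $\OX_X$-modules nuclearity came essentially for free from the Stein structure of $\OX_X(X)$; for co-admissible $\wideparen{\D}_X$-modules it only holds after invoking Fréchet-Stein presentations via free $(\mathcal{R},A^{\circ}_n)$-Lie lattices, which is why the hypotheses that $K$ be discretely valued or algebraically closed and that $X$ admit an étale map to $\mathbb{A}^r_K$ enter in an essential way. Once pre-nuclearity has been secured, strict exactness of $C(\mathcal{V},\mathcal{M})^{\bullet}$ follows from \cite[Corollary 5.2]{bode2021operations} and yields the required Čech-acyclicity, hence the acyclicity of $\mathcal{M}$ for $\Gamma(X,-)$.
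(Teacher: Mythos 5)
Your proposal is correct and follows essentially the same route as the paper: exhaust the given cover by finite affinoid covers $\mathcal{V}_n$ of the Stein exhaustion $X_n$, apply the affinoid acyclicity lemma to each stage, dispose of the positive Čech degrees via split surjectivity of the transition maps, and handle the augmentation degree by identifying $\varprojlim_n\Gamma(X_n,\mathcal{M})$ with the nuclear Fréchet presentation supplied by Theorem \ref{teo global sections of co-admissible modules on Stein spaces}. The only cosmetic difference is that the paper phrases the limit step as the vanishing of $R^1\varprojlim$ via \cite[Theorem 5.26]{bode2021operations} rather than directly through the pre-nuclearity criterion of \cite[Corollary 5.2]{bode2021operations}, which amounts to the same verification.
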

\begin{proof}
Choose a family of covers $\cup_{n\geq 0}\mathcal{V}_n$  as in the proof of Proposition \ref{prop acyclicity of coherent modules on Stein spaces}. By Lemma \ref{Lemma acyclicity of co-admissible modules on affinoid spaces}, it follows that the \v{C}ech complex $\mathcal{C}(\mathcal{V}_n,\mathcal{M}_{\vert X_n})^{\bullet}$ is a strict and exact complex of complete bornological spaces for every $n\geq 0$. Furthermore, the inclusions $\mathcal{V}_n\subset \mathcal{V}_{n+1}$ induce an isomorphism:
\begin{equation*}
    \mathcal{C}(\mathcal{V},\mathcal{M})^{\bullet}=\varprojlim\mathcal{C}(\mathcal{V}_n,\mathcal{M}_{\vert X_n})^{\bullet}.
\end{equation*}
Hence, we need to show that for each $m\geq 0$, we have:
\begin{equation*}
    R^1\varprojlim_n\mathcal{C}(\mathcal{V}_n,\mathcal{M}_{\vert X_n})^{m}=0.
\end{equation*}
 As in the proof of Proposition \ref{prop acyclicity of coherent modules on Stein spaces}, for $m\geq 1$ the maps in the inverse system
$\varprojlim_n\mathcal{C}(\mathcal{V}_n,\mathcal{M}_{\vert X_n})^{m}$ are split surjections, hence $R^1\varprojlim_n\mathcal{C}(\mathcal{V}_n,\mathcal{M}_{\vert X_n})^{m}=0$ for all $m\geq 1$. Thus, we only need to show:
\begin{equation*}
    R^1\varprojlim\Gamma(X_n,\mathcal{M})=0.
\end{equation*}
However, applying the notation from Lemma \ref{lemma co-admis on Stein spaces are determined by global sections} to $\mathcal{M}(X)$, we have:
\begin{equation*}
    \varprojlim\Gamma(X_n,\mathcal{M})=\varprojlim_{n}\mathcal{M}_{n,m(n)}.
\end{equation*}
By Theorem \ref{teo global sections of co-admissible modules on Stein spaces} this presentation makes $\mathcal{M}(X)$ a nuclear Fréchet space. Thus, we have:
\begin{equation*}
    R^1\varprojlim\Gamma(X_n,\mathcal{M})=0,
\end{equation*}
by \cite[Theorem 5.26]{bode2021operations}. As we wanted to show.
\end{proof}
\begin{obs}\label{obs version for E modules}
Notice that if $X=\varinjlim_nX_n$ is a Stein space with an étale map $X\rightarrow \mathbb{A}^r_K$, then $X^2:=\varinjlim_nX_n\times X_n$ is also a Stein space with an étale map $X^2\rightarrow \mathbb{A}^{2r}_K$. In particular, the results above hold for co-admissible $\wideparen{\D}_{X^2}$-modules. Therefore, the analogous results hold for co-admissible $\wideparen{E}_X$-modules as well, via the equivalences of quasi-abelian categories:
\begin{equation*}
    \operatorname{S}:\Mod_{\Indban}(\wideparen{\D}_{X^2})\leftrightarrows \Mod_{\Indban}(\wideparen{E}_X):\operatorname{S}^{-1}.
\end{equation*}

\end{obs}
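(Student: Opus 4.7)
The plan is to break the remark into three assertions and verify each in turn; the last is essentially automatic once the first two are established.

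For the first assertion, that $X^2$ is Stein with an étale map to $\mathbb{A}^{2r}_K$: the Stein structure on $X^2$ with the covering $\{X_n\times X_n\}_{n\geq 0}$ is exactly the content of Lemma \ref{Lemma prod of Stein is Stein}, and smoothness is preserved under the fibre product over $\Sp(K)$. If $\varphi : X \to \mathbb{A}^r_K$ denotes the given étale map, then $\varphi \times \varphi : X^2 \to \mathbb{A}^r_K \times_K \mathbb{A}^r_K \cong \mathbb{A}^{2r}_K$ is étale, since it factors as $(\varphi \times \operatorname{id}) \circ (\operatorname{id} \times \varphi)$ with each factor a base change of $\varphi$ along the structural map to $\Sp(K)$, and étale morphisms are stable under base change.

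For the second assertion, $X^2$ is now a smooth Stein space admitting an étale map to affine space, so the hypotheses of Theorem \ref{teo global sections of co-admissible modules on Stein spaces}, Proposition \ref{prop characterization of co-admissible modules on a smooth Stein space}, and Proposition \ref{prop acyclicity of co-admissible modules on Stein spaces} are all satisfied with $X$ replaced by $X^2$ and $r$ by $2r$. Applying those results verbatim yields: $\wideparen{\D}_{X^2}(X^2)$ is a Fréchet-Stein algebra and a nuclear Fréchet space, every co-admissible $\wideparen{\D}_{X^2}(X^2)$-module is nuclear Fréchet, the functor $\Gamma(X^2, -)$ induces an equivalence of abelian categories $\mathcal{C}(\wideparen{\D}_{X^2}) \simeq \mathcal{C}(\wideparen{\D}_{X^2}(X^2))$, and co-admissible $\wideparen{\D}_{X^2}$-modules are acyclic for $\Gamma(X^2,-)$.

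For the third assertion, since $\mathcal{T}_{X/K}$ is free (being the pullback of $\mathcal{T}_{\mathbb{A}^r_K/K}$ along the étale map $X \to \mathbb{A}^r_K$), Theorem \ref{teo  properties of EX}(ii)--(iii) supplies an isomorphism of sheaves of Ind-Banach algebras $\wideparen{\mathbb{T}} : \wideparen{\D}_{X^2} \to \wideparen{E}_X$ and identifies the side-switching equivalence $\operatorname{S}$ with the extension/restriction of scalars along $\wideparen{\mathbb{T}}$. In particular $\operatorname{S}$ acts as the identity on underlying sheaves of Ind-Banach spaces, so it intertwines $\Gamma(X^2, -)$, identifies $\wideparen{E}_X(X^2)$ with $\wideparen{\D}_{X^2}(X^2)$ as Fréchet-Stein algebras and nuclear Fréchet spaces, and sends $\mathcal{C}(\wideparen{\D}_{X^2})$ to $\mathcal{C}(\wideparen{E}_X)$ by definition. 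Each of the four assertions of the previous paragraph therefore transfers verbatim to $\wideparen{E}_X$. The only mild subtlety is verifying the compatibility $\operatorname{S}\circ\Gamma(X^2,-) = \Gamma(X^2,-)\circ\operatorname{S}$, but this is immediate from the fact that $\operatorname{S}$ merely relabels the $\wideparen{\D}_{X^2}$-action via the algebra isomorphism $\wideparen{\mathbb{T}}$, so no genuine obstacle arises.
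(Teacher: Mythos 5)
Your proposal is correct and follows exactly the route the paper intends: the remark is justified in the text by Lemma \ref{Lemma prod of Stein is Stein} together with the stability of étale maps under base change and composition (as also used in the proof of Proposition \ref{prop resolution of differential operators on smooth Stein spaces}), and by the fact that $\operatorname{S}$ is extension of scalars along the algebra isomorphism $\wideparen{\mathbb{T}}$ of Theorem \ref{teo  properties of EX}, so it merely relabels the module structure. Your spelled-out verification of these points, including the compatibility with $\Gamma(X^2,-)$, is accurate and adds no new ideas beyond what the paper takes as immediate.
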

\section{Applications of Hochschild cohomology}\label{Chapter applications of HC}
After introducing the class of spaces we are interested in, the next step is studying their behavior with respect to Hochschild cohomology. 
Let $X$ be a smooth Stein space with an étale map $X\rightarrow \mathbb{A}^r_K$. In this chapter, we will compare the Hochschild cohomology complex $\operatorname{HH}^{\bullet}(\wideparen{\D}_X)$ with a bunch of other complexes arising from different cohomology theories. As an upshot, we obtain numerous explicit interpretations for the Hochschild cohomology groups of $\wideparen{\D}_X$. Let us now give an overview of the contents of the chapter: We start by  recalling a result by E. Grosse-Klönne in \cite{grosse2004rham}, which states that we have a canonical identification of complexes:
\begin{equation*}
   \operatorname{H}^{\bullet}_{\operatorname{dR}}(X)=\Gamma(X,\Omega_{X/K}^{\bullet}),
\end{equation*}
which makes $\operatorname{H}^{\bullet}_{\operatorname{dR}}(X)$ a strict complex of nuclear Fréchet spaces. This result, together with our previous calculations, shows that there is a strict quasi-isomorphism  $\operatorname{HH}^{\bullet}(\wideparen{\D}_X)\rightarrow \operatorname{H}^{\bullet}_{\operatorname{dR}}(X)^b$. In particular, $\operatorname{HH}^{\bullet}(\wideparen{\D}_X)$ is a strict complex of (bornologifications of) nuclear Fréchet spaces.  We then use this fact to show a Künneth formula for Hochschild cohomology:
\begin{equation*}
    \operatorname{HH}^{\bullet}(\wideparen{\D}_{X\times Y})=\operatorname{HH}^{\bullet}(\wideparen{\D}_{X})\widehat{\otimes}_K^{\mathbb{L}}\operatorname{HH}^{\bullet}(\wideparen{\D}_{Y}),
\end{equation*}
where $Y$ is another smooth Stein space. Next, we use the Spencer resolution:
\begin{equation*}
 0\rightarrow \wideparen{\D}_X\overrightarrow{\otimes}_{\OX_X}\wedge^n\mathcal{T}_{X/K}\rightarrow \cdots \rightarrow \wideparen{\D}_X\rightarrow \OX_X\rightarrow 0,
\end{equation*}
to express $\operatorname{HH}^{\bullet}(\wideparen{\D}_X)$ as a derived internal Hom functor. In particular, we show that the following identity holds in $\operatorname{D}(\widehat{\mathcal{B}}c_K)$ :
\begin{equation*}
   \operatorname{HH}^{\bullet}(\wideparen{\D}_X)=R\underline{\Hom}_{\wideparen{\D}_X(X)}(\OX_X(X),\OX_X(X)).
\end{equation*}
Following this line of thought, we then use the discussion on external sheaves of homomorphisms from Section \ref{section external hom functors} to deduce the following identities in $\operatorname{D}(\operatorname{Vect}_K)$:
\begin{equation*}
   \operatorname{Forget}(J\operatorname{HH}^{\bullet}(\wideparen{\D}_X))=R\Hom_{\wideparen{\D}_X}(\OX_X,\OX_X)=R\Hom_{\wideparen{\D}_X(X)}(\OX_X(X),\OX_X(X)). 
\end{equation*}
Notice that this shows that the vector spaces underlying the cohomology groups of $\operatorname{HH}^{\bullet}(\wideparen{\D}_X)$ parameterize the Yoneda's extensions of $I(\OX_X(X))$ by $I(\OX_X(X))$ in $\Mod_{LH(\widehat{\mathcal{B}}c_K)}(I(\wideparen{\D}_X(X)))$. Similarly, this also shows that the isomorphism classes of Yoneda extensions of $I(\OX_X)$ by $I(\OX_X)$
in $\Mod_{LH(\widehat{\mathcal{B}}c_K)}(I(\wideparen{\D}_X))$ are completely determined by their global sections, and that every Yoneda extension of the global sections is induced by an extension at the level of sheaves. We remark that this type of phenomena resembles the behavior of quasi-coherent $\D$-modules on smooth affine $K$-varieties.\\

One of the most interesting features of smooth Stein spaces is that many algebraic properties of co-admissible $\wideparen{\D}$-modules can already be detected at the level of global sections. This stems from the equivalence of abelian categories obtained in Proposition \ref{prop characterization of co-admissible modules on a smooth Stein space}:
\begin{equation*}
    \Gamma(X,-):\mathcal{C}(\wideparen{\D}_X)\leftrightarrows \mathcal{C}(\wideparen{\D}_X(X)):\operatorname{Loc}(-). 
\end{equation*}
Unfortunately, these abelian categories do not contain injective objects, so it is not possible to extend this equivalence to the derived level. Nonetheless, in analogy with the algebraic setting, we would like to show that the Hochschild cohomology of $\wideparen{\D}_X$ can be calculated at the global sections. Namely, we want to show that there is an identification in $\operatorname{D}(\widehat{\mathcal{B}}c_K)$:
\begin{equation}\label{equation HC at global sections}
    \operatorname{HH}^{\bullet}(\wideparen{\D}_X)=\operatorname{HH}^{\bullet}(\wideparen{\D}_X(X)).
\end{equation}
In order to do so, we resort to the sheaf of bi-enveloping algebras $\wideparen{E}_X$ on $X^2$, and show that there is a co-admissible 
$\wideparen{E}_X$-module $\mathcal{S}_X$ such that there is a strict exact complex:
\begin{equation*}
     E^{\bullet}_{\wideparen{\D}_X}:= \left(0\rightarrow P^{2r}\rightarrow \cdots\rightarrow P^0\rightarrow \mathcal{S}_X\oplus \Delta_*\wideparen{\D}_X\rightarrow 0\right),
\end{equation*}
where each $P^i$ is a finite-projective $\wideparen{E}_X$-module and $P^0=\wideparen{E}_X$. The identification in (\ref{equation HC at global sections}) then follows naturally from the existence of the resolution $E^{\bullet}_{\wideparen{\D}_X}$. As a consequence, we obtain an isomorphism $\operatorname{HH}^{\bullet}(\wideparen{\D}_X(X))=\operatorname{H}_{\operatorname{dR}}^{\bullet}(X)^b$. This isomorphism will be instrumental to our approach to the deformation theory of $\wideparen{\D}_X(X)$ in Chapter \ref{Chapter deformation theory}.
\subsection{Comparison with de Rham cohomology}\label{Section dR cohomology}
In this section, we will use the results from \cite{grosse2004rham} to study the Hochschild cohomology groups of $\wideparen{\D}_X$ for a smooth Stein space $X$.  Until the end of the section, we fix a smooth Stein space $X=\varinjlim_nX_n$ admitting an étale map $X\rightarrow \mathbb{A}^r_K$. Recall that, as we saw above, this is always the case locally.\\

As shown in Proposition  \ref{prop acyclicity of coherent modules on Stein spaces}, the sheaves of differentials $\Omega_{X/K}^{n}$ are acyclic for the functor:
\begin{equation*}
    \Gamma(X,-):\operatorname{Shv}(X,\Indban)\rightarrow \Indban.
\end{equation*}
Thus, we have the following identities in $\operatorname{D}(\widehat{\mathcal{B}}c_K)$:
\begin{equation*}
    \operatorname{HH}^{\bullet}(\wideparen{\D}_X)=R\Gamma(X,\Omega_{X/K}^{\bullet})=\Gamma(X,\Omega_{X/K}^{\bullet}).
\end{equation*}
Notice that this is not enough to conclude that $\Gamma(X,\Omega_{X/K}^{\bullet})$ is a strict complex of bornological spaces. In particular, the Hochschild cohomology spaces need not be bornological spaces. The reason behind working with smooth Stein spaces stems from the following proposition: 
\begin{prop}[{\cite[Corollary 3.2]{grosse2004rham}}]\label{prop elmar's result on strictness of de Rham complex}
Let $X$ be a smooth Stein space. Then the complex:
\begin{equation*}
   \operatorname{H}_{\operatorname{dR}}^{\bullet}(X)= \left(0\rightarrow \OX_X(X) \rightarrow \Omega_{X/K}^1(X)\rightarrow \cdots \rightarrow \Omega_{X/K}^{\operatorname{dim}(X)}(X) \right),
\end{equation*}
is a strict complex of nuclear Fréchet spaces. Thus, $\operatorname{H}_{\operatorname{dR}}^n(X)$ is a nuclear Fréchet space for $n\geq 0$.
\end{prop}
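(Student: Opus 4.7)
The plan is to combine the analysis of coherent modules on Stein spaces carried out in Section 3.1 with a local Poincaré-type lemma for relatively compact Weierstrass subdomains, which is the essential analytic input. Since $X$ is smooth of dimension $r$, each sheaf $\Omega_{X/K}^k$ is locally free of finite rank and in particular coherent. By statement $(iii)$ of Proposition \ref{prop coherent modules on a Stein sapce}, the presentation $\Omega_{X/K}^k(X)=\varprojlim_n \Omega_{X/K}^k(X_n)$ makes each term $\Omega_{X/K}^k(X)$ a nuclear Fréchet space, realized as the inverse limit of an inverse system of Banach spaces satisfying the pre-nuclearity property used throughout the proof of Proposition \ref{prop acyclicity of coherent modules on Stein spaces}. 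Hence the de Rham complex is obtained as the inverse limit of complexes of Banach spaces
\begin{equation*}
\Gamma(X,\Omega_{X/K}^{\bullet})=\varprojlim_n \Gamma(X_n,\Omega_{X/K}^{\bullet}).
\end{equation*}

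The next step is the core analytic input: for each $n\geq 0$, the relatively compact Weierstrass immersion $X_n\Subset X_{n+1}$ induces a map $\Gamma(X_{n+1},\Omega_{X/K}^{\bullet})\to \Gamma(X_n,\Omega_{X/K}^{\bullet})$ whose image on cohomology is zero in positive degrees (and computes locally constant functions in degree zero). I would prove this by working locally on a finite affinoid cover of $X_{n+1}$ by polydiscs adapted to the étale coordinates $X\to \mathbb{A}^r_K$, and integrating closed forms term-by-term in the power series expansion: the strict inequality of radii encoded in $X_n\Subset X_{n+1}$ furnishes convergence of the primitives, whereas on $X_{n+1}$ alone the convergence would be only up to the boundary and primitives need not exist. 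This is the mechanism by which relative compactness rescues the failure of the classical Poincaré lemma in the rigid setting.

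With this cohomology-killing property in hand, I would invoke the inverse-limit strictness criterion \cite[Corollary 5.2]{bode2021operations} applied to the pre-nuclear inverse system of complexes $\{\Gamma(X_n,\Omega_{X/K}^{\bullet})\}_{n\geq 0}$. The vanishing of the transition maps on cohomology plays the role of a strong Mittag-Leffler condition: at each degree $k$, the subspace of exact forms in $\Gamma(X,\Omega_{X/K}^k)$ coincides with the closure of the image of $d$, so the differentials are strict. Consequently $\Gamma(X,\Omega_{X/K}^{\bullet})$ is a strict complex of nuclear Fréchet spaces, and since kernels and closed images of maps of nuclear Fréchet spaces are again nuclear Fréchet, the cohomology spaces $\operatorname{H}_{\operatorname{dR}}^n(X)$ inherit the nuclear Fréchet structure.

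The main obstacle is the local Poincaré-type lemma in the second paragraph: it is exactly the point at which one must exploit the precise analytic content of the relative compactness condition, rather than just the formal Stein structure. Everything else in the argument is a formal consequence of the material in Section 3.1 together with standard properties of nuclear Fréchet spaces under inverse limits.
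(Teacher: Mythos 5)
The paper does not prove this proposition; it imports it verbatim from \cite[Corollary 3.2]{grosse2004rham}, so your attempt has to be measured against Grosse-Kl\"onne's actual argument. Your first and third paragraphs are essentially fine, but the ``core analytic input'' in your second paragraph is false: the restriction map $\operatorname{H}^k_{\operatorname{dR}}(X_{n+1})\rightarrow \operatorname{H}^k_{\operatorname{dR}}(X_n)$ along a relatively compact Weierstrass immersion is \emph{not} zero in positive degrees. Take $X=\mathbb{G}_m^{\operatorname{an}}$ with the Stein covering by the annuli $X_n=\{\vert\pi\vert^n\leq\vert t\vert\leq\vert\pi\vert^{-n}\}$: the class of $dt/t$ is nonzero on every $X_n$ and is compatible under restriction, so no transition map kills it. More structurally, if your claim held then $\operatorname{H}^k_{\operatorname{dR}}(X)$ would vanish for all $k\geq 1$ and every Stein space (an inverse limit over a system with zero transition maps), which contradicts results used throughout this paper (e.g.\ $\operatorname{HH}^1(\wideparen{\D}_X)=\operatorname{H}^1_{\operatorname{dR}}(X)^b$ parametrizing nontrivial outer derivations, and the infinite-dimensionality of $\operatorname{H}^1_{\operatorname{dR}}$ of Drinfeld's symmetric spaces). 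Your term-by-term integration argument genuinely works only on polydiscs; an \'etale map $X\rightarrow\mathbb{A}^r_K$ does not let you integrate globally on a general smooth affinoid, and passing to a polydisc cover reintroduces the cohomology through the \v{C}ech-to-de Rham spectral sequence.

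The input that Grosse-Kl\"onne actually uses is weaker and correct: because $X_n\Subset X_{n+1}$, the restriction $\Omega^{\bullet}_{X/K}(X_{n+1})\rightarrow\Omega^{\bullet}_{X/K}(X_n)$ factors through the overconvergent (dagger) de Rham complex of $X_n$, whose cohomology is \emph{finite-dimensional} for a smooth affinoid. Hence the transition maps on cohomology have finite-dimensional image, which is exactly the topological Mittag-Leffler condition needed to conclude that the images of the differentials in the limit are closed, i.e.\ that $\Gamma(X,\Omega^{\bullet}_{X/K})$ is strict. Note also that you cannot invoke \cite[Corollary 5.2]{bode2021operations} as you do: that criterion (as used in Proposition \ref{prop acyclicity of coherent modules on Stein spaces}) requires the complexes in the tower to be exact, and the complexes $\Gamma(X_n,\Omega^{\bullet}_{X/K})$ are not. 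Replacing ``zero image on cohomology'' by ``finite-dimensional image on cohomology, via finiteness of dagger de Rham cohomology'' is the missing idea; the rest of your outline (nuclearity of the terms from Proposition \ref{prop coherent modules on a Stein sapce}, and stability of nuclear Fr\'echet spaces under closed subspaces and quotients) then goes through.
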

Notice that the proposition above does not imply that the complex $\Omega_{X/K}^{\bullet}$ is a strict complex in $\operatorname{Shv}(X,\Indban)$, only that $\Gamma(X,\Omega_{X/K}^{\bullet})$ is a strict complex in $\Indban$. Furthermore, $\Omega_{X/K}^{\bullet}$ will in general not be a strict complex of sheaves. For instance, $\mathbb{A}^1_K$ is a smooth Stein space, but $\Omega^{\bullet}_{\mathbb{A}^1_K/K}$ is not a strict complex of sheaves. Indeed, if $\Omega^{\bullet}_{\mathbb{A}^1_K/K}$ was strict, then the complex:
\begin{equation*}
    \Gamma(\mathbb{B}^1_K,\Omega^{\bullet}_{\mathbb{A}^1_K/K})=\Omega^{\bullet}_{\mathbb{B}^1_K/K}(\mathbb{B}^1_K),
\end{equation*}
would be a strict complex in $\Indban$. However, as seen in the introduction, this is false.\\

Recall from \cite[pp. 93,  Proposition 3]{houzel2006seminaire} that there is an adjunction:
\begin{equation*}
    (-)^t:\mathcal{B}c_K\leftrightarrows LCS_K:(-)^b.
\end{equation*}
 By the previous proposition, we may regard $\operatorname{H}_{\operatorname{dR}}^{\bullet}(X)$ as a complex of Fréchet spaces, and we let $\operatorname{H}_{\operatorname{dR}}^{\bullet}(X)^b$ be the associated complex of complete bornological spaces. We can use these tools to show an improved version of \cite[Corollary 8.1.12]{HochDmod}:
\begin{teo}\label{teo hochschild cohomology groups as de rham cohomology groups}
Let $X$ be a smooth Stein space. We have the following identities in $\operatorname{D}(\widehat{\mathcal{B}}c_K):$
\begin{equation*}
    \operatorname{HH}^{\bullet}(\wideparen{\D}_X)=R\Gamma(X,\Omega_{X/K}^{\bullet})=\Gamma(X,\Omega_{X/K}^{\bullet})=\operatorname{H}_{\operatorname{dR}}^{\bullet}(X)^b.
\end{equation*}
Thus, $\operatorname{HH}^{\bullet}(\wideparen{\D}_X)$ is a strict complex, and $\operatorname{HH}^{n}(\wideparen{\D}_X)=\operatorname{H}_{\operatorname{dR}}^{n}(X)^b$ is a nuclear Fréchet space for $n\geq 0$.
\end{teo}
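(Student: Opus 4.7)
The plan is to chain together three ingredients that have already been isolated in the excerpt. The first equality is the shallow one: Theorem \ref{teo HochDmod main teo} from \cite{HochDmod} gives the identification $\mathcal{HH}^{\bullet}(\wideparen{\D}_X)=\Omega_{X/K}^{\bullet}$ in $\operatorname{D}(\operatorname{Shv}(X,\Indban))$, and applying $R\Gamma(X,-)$ immediately yields $\operatorname{HH}^{\bullet}(\wideparen{\D}_X)=R\Gamma(X,\Omega_{X/K}^{\bullet})$ in $\operatorname{D}(\widehat{\mathcal{B}}c_K)$, which is just the definition.

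For the middle equality, I would invoke Proposition \ref{prop acyclicity of coherent modules on Stein spaces}: each $\Omega_{X/K}^n$ is a coherent $\OX_X$-module and is therefore $\Gamma(X,-)$-acyclic. Term-by-term acyclicity of a bounded complex suffices to compute the hypercohomology by the naive complex of global sections; in the present quasi-abelian setting, one carries this out after passing to the left heart via the canonical equivalence $\operatorname{D}(\operatorname{Shv}(X,\Indban))\xrightarrow{I}\operatorname{D}(\operatorname{Shv}(X,LH(\widehat{\mathcal{B}}c_K)))$ of Section \ref{section sheaes of ind}, where the target is the derived category of a Grothendieck abelian category. There, the standard Cartan--Eilenberg resolution argument applies: the hypercohomology spectral sequence degenerates at $E_1$, giving $R\Gamma(X,\Omega_{X/K}^{\bullet})=\Gamma(X,\Omega_{X/K}^{\bullet})$ in $\operatorname{D}(\widehat{\mathcal{B}}c_K)$.

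To finish, I would apply Proposition \ref{prop elmar's result on strictness of de Rham complex}, which tells us that the complex $\Gamma(X,\Omega_{X/K}^{\bullet})$ is a strict complex of nuclear Fréchet spaces computing $\operatorname{H}_{\operatorname{dR}}^{\bullet}(X)$. By \cite[Section 5.1]{bode2021operations} the bornologification functor $(-)^b:LCS_K\to \mathcal{B}c_K$ sends strict complexes of nuclear Fréchet spaces to strict complexes of complete bornological spaces, and is fully faithful on the metrizable subcategory. This identifies the object $\Gamma(X,\Omega_{X/K}^{\bullet})$ sitting in $\operatorname{D}(\widehat{\mathcal{B}}c_K)$ with $\operatorname{H}_{\operatorname{dR}}^{\bullet}(X)^b$, and strictness then forces the cohomology of this complex, taken in the quasi-abelian sense, to live in $\widehat{\mathcal{B}}c_K$ (not merely in its left heart) and to coincide with the nuclear Fréchet spaces $\operatorname{H}_{\operatorname{dR}}^n(X)^b$ for each $n\geq 0$.

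The step I expect to require the most care is the passage $R\Gamma(X,\Omega_{X/K}^{\bullet})=\Gamma(X,\Omega_{X/K}^{\bullet})$, because one must check that the classical Cartan--Eilenberg argument transports cleanly through the equivalence $I$ to the setting of $\operatorname{Shv}(X,LH(\widehat{\mathcal{B}}c_K))$; concretely, one has to verify that $\Gamma(X,-)$-acyclicity of each $\Omega_{X/K}^n$ in the quasi-abelian category $\Mod_{\Indban}(\OX_X)$ transfers to acyclicity of $I(\Omega_{X/K}^n)$ in the Grothendieck abelian category $\Mod_{LH(\widehat{\mathcal{B}}c_K)}(I(\OX_X))$. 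This compatibility is essentially recorded in Section \ref{section sheaes of ind} together with \cite[Section 3.6]{bode2021operations}; once it is in hand, the rest of the argument reduces to formal derived-category manipulations.
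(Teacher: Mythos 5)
Your proposal is correct and follows essentially the same route as the paper: the first equality is Theorem \ref{teo HochDmod main teo} plus the definition, the second is the term-by-term $\Gamma(X,-)$-acyclicity of the $\Omega_{X/K}^n$ from Proposition \ref{prop acyclicity of coherent modules on Stein spaces} (the paper implicitly performs the same degeneration argument in the left heart that you spell out), and the third combines Proposition \ref{prop elmar's result on strictness of de Rham complex} with the fact that $(-)^b$ preserves strictness for complexes of nuclear Fréchet spaces, the paper citing \cite[Proposition 19.4]{schneider2013nonarchimedean} and \cite[Proposition 5.12]{bode2021operations} for exactly this. No gaps.
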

\begin{proof}
 The first three identities have already been shown. For the last identity, regard $\operatorname{H}_{\operatorname{dR}}^{\bullet}(X)$
 as a complex in $LCS_K$ with the topology constructed in Proposition \ref{prop elmar's result on strictness of de Rham complex}. By that proposition, all objects in $\operatorname{H}_{\operatorname{dR}}^{\bullet}(X)$ are nuclear Fréchet spaces. As $\operatorname{H}_{\operatorname{dR}}^{\bullet}(X)$ is a strict complex of Fréchet spaces, all images are closed. Thus, by \cite[Proposition 19.4]{schneider2013nonarchimedean}
 all kernels, images and cohomology groups of $\operatorname{H}_{\operatorname{dR}}^{\bullet}(X)$ are nuclear Fréchet spaces. Hence, by \cite[Proposition 5.12]{bode2021operations} the complex:
 \begin{equation*}
     \operatorname{H}_{\operatorname{dR}}^{\bullet}(X)^b=\Gamma(X,\Omega_{X/K}^{\bullet}),
 \end{equation*}
 is a strict exact complex of complete bornological spaces. 
\end{proof}
As a first application of this theorem, we will calculate the Hochschild cohomology of $\wideparen{\D}_{X\times Y}$, where $X$ and $Y$ are smooth Stein spaces. In order to do this, we will now show that the Künneth formula holds for the de Rham cohomology of products of smooth Stein spaces. Notice that this result was already claimed in 
\cite[Proposition 3.3]{grosse2004rham}. However, the arguments used there only work if one of the spaces has finite de Rham cohomology. Let us start with the following lemma: 
\begin{Lemma}\label{Lemma junneth formula for complexes}
    Let $X$ and $Y$ be smooth Stein spaces. Then the following complex is strict:
 \begin{equation*}
    \operatorname{H}_{\operatorname{dR}}^{\bullet}(X)^b\widehat{\otimes}_K^{\mathbb{L}}\operatorname{H}_{\operatorname{dR}}^{\bullet}(Y)^b. 
 \end{equation*}   
In particular, for each $n\geq 0$ we have the following identity in $\widehat{\mathcal{B}}c_K$:
    \begin{equation*}
        \operatorname{H}^n(\operatorname{H}_{\operatorname{dR}}^{\bullet}(X)^b\widehat{\otimes}_K^{\mathbb{L}}\operatorname{H}_{\operatorname{dR}}^{\bullet}(Y)^b)=\bigoplus_{r+s=n}\operatorname{H}_{\operatorname{dR}}^{r}(X)^b\widehat{\otimes}_K\operatorname{H}_{\operatorname{dR}}^{s}(Y)^b.
    \end{equation*}
\end{Lemma}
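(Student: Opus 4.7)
The plan is to reduce the derived tensor product to an ordinary tensor product of strict complexes of nuclear Fréchet spaces, and then run the classical Künneth argument using flatness. By Theorem \ref{teo hochschild cohomology groups as de rham cohomology groups}, both $C^{\bullet}:=\operatorname{H}_{\operatorname{dR}}^{\bullet}(X)^b$ and $D^{\bullet}:=\operatorname{H}_{\operatorname{dR}}^{\bullet}(Y)^b$ are bounded strict complexes in $\widehat{\mathcal{B}}c_K$ whose terms are bornologifications of nuclear Fréchet spaces. Since these complexes are strict, their cycles, boundaries, and cohomology groups are closed subquotients of nuclear Fréchet spaces and are therefore themselves nuclear Fréchet by \cite[Proposition 19.4]{schneider2013nonarchimedean}. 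The first step is then to argue that the bornologification of any nuclear Fréchet space is flat with respect to $\widehat{\otimes}_K$. Concretely, one uses Proposition \ref{prop comparison of tensor products metrizable spaces} to identify $\widehat{\otimes}_K$ on bornologifications of metrizable spaces with the complete projective tensor product on $LCS_K$, and then invokes the classical fact that the complete projective tensor product with a nuclear Fréchet space preserves strict exactness of complexes of Fréchet spaces. Applying this to the terms of $C^{\bullet}$ and $D^{\bullet}$ one concludes that $C^{\bullet}\widehat{\otimes}_K^{\mathbb{L}}D^{\bullet}=\operatorname{Tot}(C^{\bullet}\widehat{\otimes}_K D^{\bullet})$.

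Once the reduction to the underived total complex is in place, I would carry out the standard Künneth argument. Writing $Z^i,B^i\subset C^i$ for cycles and boundaries, strictness gives short strict exact sequences
\begin{equation*}
0\rightarrow Z^i\rightarrow C^i\rightarrow B^{i+1}\rightarrow 0,\qquad 0\rightarrow B^i\rightarrow Z^i\rightarrow \operatorname{H}_{\operatorname{dR}}^{i}(X)^b\rightarrow 0,
\end{equation*}
all of whose entries are (bornologifications of) nuclear Fréchet spaces, hence flat. Tensoring with the strict complex $D^{\bullet}$ preserves strict exactness in each degree, so taking the associated spectral sequence of the double complex $C^{\bullet}\widehat{\otimes}_K D^{\bullet}$ and performing the classical diagram chase identifies $\operatorname{H}^n$ of the total complex with the finite direct sum $\bigoplus_{r+s=n}\operatorname{H}_{\operatorname{dR}}^{r}(X)^b\widehat{\otimes}_K\operatorname{H}_{\operatorname{dR}}^{s}(Y)^b$. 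Since each summand is a tensor product of two nuclear Fréchet spaces, it is again nuclear Fréchet, and a finite direct sum of nuclear Fréchet spaces is nuclear Fréchet; the appearance of the cohomology as a closed, strict quotient of $\operatorname{Tot}(C^{\bullet}\widehat{\otimes}_K D^{\bullet})$ gives the claimed strictness of the whole complex.

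The main obstacle will be the first step, namely rigorously establishing the flatness of nuclear Fréchet spaces for $\widehat{\otimes}_K$ in the quasi-abelian setting, since the category $\widehat{\mathcal{B}}c_K$ is only quasi-abelian and one needs to be careful about passage to derived functors via $LH(\widehat{\mathcal{B}}c_K)$. The strategy is to use the adjunction $(-)^t\dashv(-)^b$ together with Proposition \ref{prop comparison of tensor products metrizable spaces} to translate the question into one about strict short exact sequences of Fréchet spaces under the complete projective tensor product with a nuclear Fréchet space, where the required exactness is a classical result of Grothendieck. After this, the Künneth argument is formally identical to the one in the algebraic setting and poses no further difficulty.
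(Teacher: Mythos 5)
Your proposal follows the same overall route as the paper: reduce the derived tensor product to the underived total complex via flatness of the terms, run the Künneth argument using strictness of the two de Rham complexes to make the cycles and boundaries flat as well, and conclude strictness by observing that the cohomology lands in the essential image of $I:\widehat{\mathcal{B}}c_K\rightarrow LH(\widehat{\mathcal{B}}c_K)$. The one place where you diverge --- and where you yourself flag the main obstacle --- is the flatness input, and there your proposed fix is both harder than necessary and not quite the right statement. Flatness of $I(V)$ means exactness of $-\widetilde{\otimes}_KI(V)$ on all of $LH(\widehat{\mathcal{B}}c_K)$, whose objects are in general not images of Fréchet (or even bornological) spaces, so Grothendieck's theorem that $-\widehat{\otimes}_KV$ preserves strict exact sequences of Fréchet spaces for $V$ nuclear Fréchet does not by itself yield what is needed; the translation through the adjunction $(-)^t\dashv(-)^b$ only sees the metrizable part of the category. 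The paper sidesteps this entirely by citing \cite[Proposition 4.22]{bode2021operations} and \cite[Corollary 4.24]{bode2021operations}: \emph{every} object in the essential image of $I$ is $\widetilde{\otimes}_K$-flat, with no nuclearity hypothesis, and strictness of the complexes guarantees that the images of the differentials are again in the essential image of $I$, hence flat. With that input the abstract Künneth theorem (\cite[Theorem 3.63]{weibel1994introduction}, valid in any closed symmetric monoidal Grothendieck category) applied in $LH(\widehat{\mathcal{B}}c_K)$ identifies the $n$-th cohomology with $\bigoplus_{r+s=n}I(\operatorname{H}_{\operatorname{dR}}^{r}(X)^b)\widetilde{\otimes}_KI(\operatorname{H}_{\operatorname{dR}}^{s}(Y)^b)$, and Proposition \ref{prop comparison of tensor products metrizable spaces} converts this into $I$ of the bornological tensor product because the cohomology groups are metrizable. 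Your hand-rolled double-complex argument is fine once flatness is in place; you should simply replace the nuclearity-based justification with the general flatness of the essential image of $I$.
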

\begin{proof}
By the above, $\operatorname{H}_{\operatorname{dR}}^{\bullet}(X)^b$ and $\operatorname{H}_{\operatorname{dR}}^{\bullet}(Y)^b$ are strict complexes of Fréchet spaces. Thus, it follows by \cite[Proposition 4.22]{bode2021operations} and \cite[Corollary 4.24]{bode2021operations} that $I(\operatorname{H}_{\operatorname{dR}}^{\bullet}(X)^b)$ and $I(\operatorname{H}_{\operatorname{dR}}^{\bullet}(Y)^b)$ are complexes of $\widetilde{\otimes}_K$-flat modules such that the image of any differential is $\widetilde{\otimes}_K$-flat. Furthermore, we have:
\begin{equation*}
    \operatorname{H}_{\operatorname{dR}}^{\bullet}(X)^b\widehat{\otimes}_K^{\mathbb{L}}\operatorname{H}_{\operatorname{dR}}^{\bullet}(Y)^b=I(\operatorname{H}_{\operatorname{dR}}^{\bullet}(X)^b)\widetilde{\otimes}_K^{\mathbb{L}}I(\operatorname{H}_{\operatorname{dR}}^{\bullet}(Y)^b).
\end{equation*}
Therefore, we may use the fact that \cite[Theorem 3.63]{weibel1994introduction} holds in any closed symmetric monoidal Grothendieck category to get:
\begin{equation*}
\operatorname{H}^n(\operatorname{H}_{\operatorname{dR}}^{\bullet}(X)^b\widehat{\otimes}_K^{\mathbb{L}}\operatorname{H}_{\operatorname{dR}}^{\bullet}(Y)^b)=\bigoplus_{r+s=n}I(\operatorname{H}_{\operatorname{dR}}^{r}(X)^b)\widetilde{\otimes}_KI(\operatorname{H}_{\operatorname{dR}}^{s}(Y)^b).   
\end{equation*}
Furthermore, as all de Rham cohomology groups are Fréchet spaces, they are metrizable. Thus, by  \cite[Proposition 4.25]{bode2021operations}
we have:
\begin{equation*}
    \bigoplus_{r+s=n}I(\operatorname{H}_{\operatorname{dR}}^{r}(X)^b)\widetilde{\otimes}_KI(\operatorname{H}_{\operatorname{dR}}^{s}(Y)^b)=I(\bigoplus_{r+s=n}\operatorname{H}_{\operatorname{dR}}^{r}(X)^b\widehat{\otimes}_K\operatorname{H}_{\operatorname{dR}}^{s}(Y)^b).
\end{equation*}

Hence, all cohomology groups of $\operatorname{H}_{\operatorname{dR}}^{\bullet}(X)^b\widehat{\otimes}_K^{\mathbb{L}}\operatorname{H}_{\operatorname{dR}}^{\bullet}(Y)^b$ are objects of  the essential image of the functor $I:\widehat{\mathcal{B}}c_K\rightarrow LH(\widehat{\mathcal{B}}c_K)$. Therefore,  $\operatorname{H}_{\operatorname{dR}}^{\bullet}(X)^b\widehat{\otimes}_K^{\mathbb{L}}\operatorname{H}_{\operatorname{dR}}^{\bullet}(Y)^b$ is a strict complex, as we wanted.
\end{proof}
\begin{teo}\label{teo künneth formula de Rham cohomology}
    Let $X$ and $Y$ be smooth Stein spaces. We have the following identity in $\operatorname{D}(\widehat{\mathcal{B}}c_K):$
    \begin{equation*}
      \operatorname{H}_{\operatorname{dR}}^{\bullet}(X\times Y)^b=\operatorname{H}_{\operatorname{dR}}^{\bullet}(X)^b\widehat{\otimes}_K^{\mathbb{L}}\operatorname{H}_{\operatorname{dR}}^{\bullet}(Y)^b.  
    \end{equation*}
\end{teo}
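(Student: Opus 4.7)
The plan is to identify the de Rham complex of $X\times Y$ with the projective tensor product of the de Rham complexes of $X$ and $Y$ at the level of strict bornological complexes, and then to promote this underived tensor product to the derived one using the flatness observations already present in the proof of Lemma \ref{Lemma junneth formula for complexes}. The starting point is that by Lemma \ref{Lemma prod of Stein is Stein}, $X\times Y$ is itself a smooth Stein space, so Theorem \ref{teo hochschild cohomology groups as de rham cohomology groups} identifies $\operatorname{H}_{\operatorname{dR}}^{\bullet}(X\times Y)^b$ with the strict complex $\Gamma(X\times Y,\Omega_{X\times Y/K}^{\bullet})$ in $\widehat{\mathcal{B}}c_K$.

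For the factorisation of this complex, smoothness of $X$ and $Y$ gives the usual decomposition
$$\Omega_{X\times Y/K}^n=\bigoplus_{r+s=n}p_1^*\Omega_{X/K}^r\otimes_{\OX_{X\times Y}}p_2^*\Omega_{Y/K}^s,$$
with de Rham differential $d_X+(-1)^rd_Y$ on the $(r,s)$-summand. Each $\Omega_{X/K}^r$ and $\Omega_{Y/K}^s$ is a vector bundle, so by Corollary \ref{coro sections of vector bundles are loc finite proj} its sections are locally finite-free. Combining this with the identity $\OX_{X\times Y}(X\times Y)=\OX_X(X)\widehat{\otimes}_K\OX_Y(Y)$ from Lemma \ref{Lemma prod of Stein is Stein}, applied termwise through the Stein exhaustion $X_n\times Y_n$ and passed to the inverse limit using nuclearity to commute $\varprojlim_n$ with $\widehat{\otimes}_K$, one obtains
$$\Gamma(X\times Y,\,p_1^*\Omega_{X/K}^r\otimes_{\OX_{X\times Y}}p_2^*\Omega_{Y/K}^s)=\Omega_{X/K}^r(X)\widehat{\otimes}_K\Omega_{Y/K}^s(Y).$$
Summing over $r+s=n$ and comparing differentials yields an isomorphism of strict complexes in $\widehat{\mathcal{B}}c_K$ between $\Gamma(X\times Y,\Omega_{X\times Y/K}^{\bullet})$ and the totalisation of the bicomplex $\Gamma(X,\Omega_{X/K}^{\bullet})\widehat{\otimes}_K\Gamma(Y,\Omega_{Y/K}^{\bullet})$.

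To finish, one must see that this underived totalisation represents $\operatorname{H}_{\operatorname{dR}}^{\bullet}(X)^b\widehat{\otimes}_K^{\mathbb{L}}\operatorname{H}_{\operatorname{dR}}^{\bullet}(Y)^b$ in $\operatorname{D}(\widehat{\mathcal{B}}c_K)$. Each $\Omega_{X/K}^r(X)$ is a nuclear Fréchet space sitting in the strict complex of Proposition \ref{prop elmar's result on strictness of de Rham complex}, and the argument used in the proof of Lemma \ref{Lemma junneth formula for complexes} (via \cite[Proposition 4.22]{bode2021operations} and \cite[Corollary 4.24]{bode2021operations}) shows that $I(\Omega_{X/K}^r(X))$ is $\widetilde{\otimes}_K$-flat with $\widetilde{\otimes}_K$-flat differential images; hence the underived totalisation already computes the derived tensor product, giving the claim. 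The main obstacle will be the termwise global-sections computation of the second paragraph: one must carefully track how $\OX_{X\times Y}(X\times Y)=\OX_X(X)\widehat{\otimes}_K\OX_Y(Y)$ twists by locally free coherent sheaves and how the resulting projective tensor products behave under the Stein exhaustion. A cleaner alternative, if one wishes to avoid this bookkeeping, is to reduce to the case of free differentials via the étale-local cover supplied by Corollary \ref{coro locally étale and stein} and glue by a \v{C}ech argument using the acyclicity established in Proposition \ref{prop acyclicity of coherent modules on Stein spaces}.
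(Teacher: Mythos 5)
Your argument is correct in outline, but it takes a genuinely different route from the paper. The paper does not touch the sheaf $\Omega^{\bullet}_{X\times Y/K}$ at all: after invoking Lemma \ref{Lemma junneth formula for complexes} to reduce the claim to the single identity $\operatorname{H}_{\operatorname{dR}}^{n}(X\times Y)^b=\bigoplus_{r+s=n}\operatorname{H}_{\operatorname{dR}}^{r}(X)^b\widehat{\otimes}_K\operatorname{H}_{\operatorname{dR}}^{s}(Y)^b$ on cohomology, it exhausts $X$ and $Y$ by Stein subdomains $U_i$, $V_i$ with \emph{finite-dimensional} de Rham cohomology (these exist by the proof of \cite[Corollary 3.2]{grosse2004rham}), applies Grosse-Kl\"onne's K\"unneth formula \cite[Proposition 3.3]{grosse2004rham} to each $U_i\times V_i$, and then commutes $\widehat{\otimes}_K$ with the resulting countable inverse limits, where finite-dimensionality makes the $R^1\varprojlim$-vanishing and the limit--tensor interchange essentially free. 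You instead work at the chain level: the external decomposition $\Omega^n_{X\times Y/K}=\bigoplus_{r+s=n}p_1^*\Omega^r_{X/K}\otimes_{\OX_{X\times Y}}p_2^*\Omega^s_{Y/K}$, the identity $\Gamma(X\times Y,\mathcal{F}\boxtimes\mathcal{G})=\mathcal{F}(X)\widehat{\otimes}_K\mathcal{G}(Y)$ for vector bundles on Stein factors, and then the same flatness observation that underlies Lemma \ref{Lemma junneth formula for complexes} to pass from the underived totalisation to $\widehat{\otimes}_K^{\mathbb{L}}$. This buys you independence from \cite[Proposition 3.3]{grosse2004rham} and from the finite-dimensional exhaustion, and it produces the quasi-isomorphism directly in $\operatorname{D}(\widehat{\mathcal{B}}c_K)$ rather than only on cohomology; the price is that the box-product global-sections identity, which the paper never proves, must be established, and it hides exactly the same technical content (interchanging $\widehat{\otimes}_K$ with countable nuclear inverse limits of Fr\'echet spaces, as in \cite[Corollary 5.2]{bode2021operations} and the argument of Lemma \ref{Lemma prod of Stein is Stein}) that the paper handles on cohomology groups. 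You correctly flag this as the main obstacle, and your fallback via the \'etale-local cover of Corollary \ref{coro locally étale and stein} is a reasonable way to trivialise the bundles, though gluing then reintroduces a \v{C}ech bicomplex that the paper's route avoids entirely.
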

In particular, for any $n\in \mathbb{Z}$, we get the following Künneth formula in $\widehat{\mathcal{B}}c_K$:
\begin{equation*}
    \operatorname{H}_{\operatorname{dR}}^{n}(X\times Y)^b=\bigoplus_{r+s=n}\operatorname{H}_{\operatorname{dR}}^{r}(X)^b\widehat{\otimes}_K\operatorname{H}_{\operatorname{dR}}^{s}(Y)^b.
\end{equation*}
\begin{proof}
As $X$ and $Y$ are smooth Stein spaces, $X\times Y$ is also a smooth Stein space by Lemma \ref{Lemma prod of Stein is Stein}.  We need to show that the canonical morphism of chain complexes of complete bornological spaces:
\begin{equation*}
    \operatorname{H}_{\operatorname{dR}}^{\bullet}(X)^b\widehat{\otimes}_K^{\mathbb{L}}\operatorname{H}_{\operatorname{dR}}^{\bullet}(Y)^b\rightarrow\operatorname{H}_{\operatorname{dR}}^{\bullet}(X\times Y)^b,
\end{equation*}
is an isomorphism in $\operatorname{D}(\widehat{\mathcal{B}}c_K)$. By Lemma \ref{Lemma junneth formula for complexes}, this amounts to showing that for $n\in\mathbb{Z}$ we have:
\begin{equation*}
    \operatorname{H}_{\operatorname{dR}}^{n}(X\times Y)^b=\bigoplus_{r+s=n}\operatorname{H}_{\operatorname{dR}}^{r}(X)^b\widehat{\otimes}_K\operatorname{H}_{\operatorname{dR}}^{s}(Y)^b.
\end{equation*}
Fix some $n\in \mathbb{Z}$. By the proof of \cite[Corollary 3.2]{grosse2004rham}, we have an admissible cover $\{U_i\}_{i\geq 0}$ of $X$ satisfying that $U_i\subset U_{i+1}$ for all $i\geq 0$, and $U_i$ is a smooth Stein space with finite dimensional de Rham cohomology for every $i\geq 0$. We claim that such a cover satisfies the following identity $\widehat{\mathcal{B}}c_K$:
\begin{equation*}
    \operatorname{H}_{\operatorname{dR}}^{n}(X)^b=\varprojlim_i\operatorname{H}_{\operatorname{dR}}^{n}(U_i)^b,
\end{equation*}
so that we have a strict short exact sequence:
\begin{equation}\label{equation short exact sequence inverse limit de Rham cohomology}
    0\rightarrow \operatorname{H}_{\operatorname{dR}}^{n}(X)^b \rightarrow \prod_{i\geq 0}\operatorname{H}_{\operatorname{dR}}^{n}(U_i)^b\rightarrow \prod_{i\geq 0}\operatorname{H}_{\operatorname{dR}}^{n}(U_i)^b\rightarrow 0.
\end{equation}
Indeed, as $\operatorname{H}_{\operatorname{dR}}^{n}(U_i)$ is finite dimensional for all $i\geq 0$, it follows by \cite[Chapter 2 Corollary 5]{schneider1991cohomology} that:
\begin{equation*}
    R^1\varprojlim \operatorname{H}_{\operatorname{dR}}^{n}(U_i)=0.
\end{equation*}
Hence, we have the following short exact sequence in $LCS_K$:
\begin{equation*}
    0\rightarrow \operatorname{H}_{\operatorname{dR}}^{n}(X) \rightarrow \prod_{i\geq 0}\operatorname{H}_{\operatorname{dR}}^{n}(U_i)\rightarrow \prod_{i\geq 0}\operatorname{H}_{\operatorname{dR}}^{n}(U_i)\rightarrow 0.
\end{equation*}
The last two terms of the sequence are countable products of nuclear Fréchet spaces, hence nuclear Fréchet spaces by \cite[Proposition 19.7]{schneider2013nonarchimedean}. Thus, it follows by \cite[Proposition 5.12]{bode2021operations} that $(\ref{equation short exact sequence inverse limit de Rham cohomology})$ is a strict short exact sequence of complete bornological spaces.\\

Let $\{V_i\}_{i\geq 0}$ be an admissible covering of $Y$ satisfying the same properties as $\{U_i\}_{i\geq 0}$. Then by the Künneth formula \cite[proposition 3.3]{grosse2004rham}, it follows that $\{U_i\times V_i\}_{i\geq 0}$ is an admissible covering of $X\times Y$ satisfying the same properties. In particular, we have: 
\begin{equation*}
\operatorname{H}_{\operatorname{dR}}^{n}(X\times Y)^b=\varprojlim_{i}\operatorname{H}_{\operatorname{dR}}^{n}(U_i\times V_i)^b=\bigoplus_{r+s=n}\varprojlim_{i}\left(\operatorname{H}_{\operatorname{dR}}^{r}(U_i)^b\widehat{\otimes}_K\operatorname{H}_{\operatorname{dR}}^{s}(V_i)^b \right).    
\end{equation*}
Fix $r,s\geq 0$ such that $r+s=n$. Then we have:
\begin{equation*}
    \varprojlim_{i}\left(\operatorname{H}_{\operatorname{dR}}^{r}(U_i)^b\widehat{\otimes}_K\operatorname{H}_{\operatorname{dR}}^{s}(V_i)^b \right)=\varprojlim_{j}\varprojlim_i\left(\operatorname{H}_{\operatorname{dR}}^{r}(U_i)^b\widehat{\otimes}_K\operatorname{H}_{\operatorname{dR}}^{s}(V_j)^b \right).
\end{equation*}
For each $j\geq 0$, the space $\operatorname{H}_{\operatorname{dR}}^{s}(V_j)^b$ is finite dimensional, and $\prod_{i\geq 0}\operatorname{H}_{\operatorname{dR}}^{r}(U_i)^b$ is the product of a countable family of finite dimensional spaces. Thus, by \cite[Corollary 5.2]{bode2021operations}, we have:
\begin{equation*}
    \left(\prod_{i\geq 0}\operatorname{H}_{\operatorname{dR}}^{r}(U_i)^b\right)\widehat{\otimes}_K\operatorname{H}_{\operatorname{dR}}^{s}(V_j)^b=\prod_{i\geq 0}\left(\operatorname{H}_{\operatorname{dR}}^{r}(U_i)^b\widehat{\otimes}_K\operatorname{H}_{\operatorname{dR}}^{s}(V_j)^b\right).
\end{equation*}
As $-\widehat{\otimes}_K\operatorname{H}_{\operatorname{dR}}^{s}(V_j)^b$ is strongly exact, the fact that $(\ref{equation short exact sequence inverse limit de Rham cohomology})$ is strict exact implies that we have a strict short exact sequence:
\begin{equation*}
     0\rightarrow \operatorname{H}_{\operatorname{dR}}^{r}(X)^b\widehat{\otimes}_K\operatorname{H}_{\operatorname{dR}}^{s}(V_j)^b \rightarrow \prod_{i\geq 0}\left(\operatorname{H}_{\operatorname{dR}}^{r}(U_i)^b\widehat{\otimes}_K\operatorname{H}_{\operatorname{dR}}^{s}(V_j)^b\right)\rightarrow \prod_{i\geq 0}\left(\operatorname{H}_{\operatorname{dR}}^{r}(U_i)^b\widehat{\otimes}_K\operatorname{H}_{\operatorname{dR}}^{s}(V_j)^b\right)\rightarrow 0.
\end{equation*}
In particular, we arrive at the following identity in $\widehat{\mathcal{B}}c_K$:
\begin{equation*}
    \varprojlim_i\left(\operatorname{H}_{\operatorname{dR}}^{r}(U_i)^b\widehat{\otimes}_K\operatorname{H}_{\operatorname{dR}}^{s}(V_j)^b \right)=\operatorname{H}_{\operatorname{dR}}^{r}(X)^b\widehat{\otimes}_K\operatorname{H}_{\operatorname{dR}}^{s}(V_j)^b.
\end{equation*}
Applying the same argument above, and using the fact that $\operatorname{H}_{\operatorname{dR}}^{r}(X)$ is a Fréchet space, we get:
\begin{equation*}
\varprojlim_j\varprojlim_{i}\left(\operatorname{H}_{\operatorname{dR}}^{r}(U_i)^b\widehat{\otimes}_K\operatorname{H}_{\operatorname{dR}}^{s}(V_j)^b \right)=\varprojlim_j\operatorname{H}_{\operatorname{dR}}^{r}(X)^b\widehat{\otimes}_K\operatorname{H}_{\operatorname{dR}}^{s}(V_j)^b=\operatorname{H}_{\operatorname{dR}}^{r}(X)^b\widehat{\otimes}_K\operatorname{H}_{\operatorname{dR}}^{s}(Y)^b.
\end{equation*}
Hence, we get $\operatorname{H}_{\operatorname{dR}}^{n}(X\times Y)^b=\bigoplus_{r+s=n}\operatorname{H}_{\operatorname{dR}}^{r}(X)^b\widehat{\otimes}_K\operatorname{H}_{\operatorname{dR}}^{s}(Y)^b$, as we wanted to show.
\end{proof}
As a consequence, we get a Künneth formula for Hochschild cohomology:
\begin{coro}\label{coro kunneth formula for Hochschild cohomology}
Let $X$ and $Y$ be smooth Stein spaces. The following identity holds in $\operatorname{D}(\widehat{\mathcal{B}}c_K):$
\begin{equation*}
\operatorname{HH}^{\bullet}(\wideparen{\D}_{X\times Y})=\operatorname{HH}^{\bullet}(\wideparen{\D}_{X})\widehat{\otimes}^{\mathbb{L}}_K\operatorname{HH}^{\bullet}(\wideparen{\D}_{Y}).    
\end{equation*}
In particular, for any $n\in \mathbb{Z}$ we have the following Künneth formula in $\widehat{\mathcal{B}}c_K:$
\begin{equation*}
    \operatorname{HH}^{n}(\wideparen{\D}_{X\times Y})=\bigoplus_{r+s=n}\operatorname{HH}^{r}(\wideparen{\D}_{X})\widehat{\otimes}_K\operatorname{HH}^{s}(\wideparen{\D}_{Y}).
\end{equation*}
\end{coro}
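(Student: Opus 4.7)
The plan is straightforward: the result should follow by combining two ingredients that are already in place, namely the identification of Hochschild cohomology with bornologified de Rham cohomology for any smooth Stein space (Theorem \ref{teo hochschild cohomology groups as de rham cohomology groups}), and the Künneth formula for de Rham cohomology of products of smooth Stein spaces (Theorem \ref{teo künneth formula de Rham cohomology}), which was proved just above. So the proof is essentially a translation between the two sides of the correspondence $\operatorname{HH}^{\bullet}(\wideparen{\D}_Z) = \operatorname{H}_{\operatorname{dR}}^{\bullet}(Z)^b$.

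First I would apply Lemma \ref{Lemma prod of Stein is Stein} to conclude that $X\times Y$ is itself a smooth Stein space, so that Theorem \ref{teo hochschild cohomology groups as de rham cohomology groups} is available for all three spaces $X$, $Y$, and $X\times Y$. This yields the identifications
$$\operatorname{HH}^{\bullet}(\wideparen{\D}_X)=\operatorname{H}_{\operatorname{dR}}^{\bullet}(X)^b,\quad \operatorname{HH}^{\bullet}(\wideparen{\D}_Y)=\operatorname{H}_{\operatorname{dR}}^{\bullet}(Y)^b,\quad \operatorname{HH}^{\bullet}(\wideparen{\D}_{X\times Y})=\operatorname{H}_{\operatorname{dR}}^{\bullet}(X\times Y)^b$$
in $\operatorname{D}(\widehat{\mathcal{B}}c_K)$. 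Feeding the first two into the right-hand side and the third into the left-hand side of the claimed derived Künneth identity reduces the statement to Theorem \ref{teo künneth formula de Rham cohomology}, which asserts precisely that $\operatorname{H}_{\operatorname{dR}}^{\bullet}(X\times Y)^b=\operatorname{H}_{\operatorname{dR}}^{\bullet}(X)^b\widehat{\otimes}_K^{\mathbb{L}}\operatorname{H}_{\operatorname{dR}}^{\bullet}(Y)^b$.

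For the degree-wise statement I would then invoke Lemma \ref{Lemma junneth formula for complexes}, which tells us that the derived tensor product $\operatorname{H}_{\operatorname{dR}}^{\bullet}(X)^b\widehat{\otimes}_K^{\mathbb{L}}\operatorname{H}_{\operatorname{dR}}^{\bullet}(Y)^b$ is strict and that its $n$-th cohomology in $\widehat{\mathcal{B}}c_K$ is $\bigoplus_{r+s=n}\operatorname{H}_{\operatorname{dR}}^{r}(X)^b\widehat{\otimes}_K\operatorname{H}_{\operatorname{dR}}^{s}(Y)^b$. Translating once more via Theorem \ref{teo hochschild cohomology groups as de rham cohomology groups} produces the required identity in $\widehat{\mathcal{B}}c_K$. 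Since all technical difficulties (strictness of the de Rham complex on Stein spaces, flatness properties ensuring the derived tensor product has no extra cohomology, the collapse of the Künneth spectral sequence) have already been dispatched in the preceding lemmas, no genuine obstacle remains; the only care required is to check that the natural Künneth comparison map on the Hochschild side corresponds under the isomorphism of Theorem \ref{teo hochschild cohomology groups as de rham cohomology groups} to the Künneth map on the de Rham side, which is automatic from the naturality of that identification.
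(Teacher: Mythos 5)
Your proposal matches the paper's own proof: both apply Theorem \ref{teo hochschild cohomology groups as de rham cohomology groups} to identify each Hochschild complex with the bornologified de Rham complex and then invoke Theorem \ref{teo künneth formula de Rham cohomology} (together with Lemma \ref{Lemma junneth formula for complexes} for the degree-wise splitting). No issues.
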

\begin{proof}
We have the following isomorphisms:
\begin{equation*}
 \operatorname{HH}^{n}(\wideparen{\D}_{X\times Y})=\operatorname{H}_{\operatorname{dR}}^{\bullet}(X\times Y)^b=\operatorname{H}_{\operatorname{dR}}^{\bullet}(X)^b\widehat{\otimes}_K^{\mathbb{L}}\operatorname{H}_{\operatorname{dR}}^{\bullet}(Y)^b=\operatorname{HH}^{n}(\wideparen{\D}_{X})\widehat{\otimes}_K^{\mathbb{L}}\operatorname{HH}^{n}(\wideparen{\D}_{Y}),
\end{equation*}
 where the first and third identities follow from Theorem \ref{teo hochschild cohomology groups as de rham cohomology groups}, and the second one is Theorem \ref{teo künneth formula de Rham cohomology}. The second part of the theorem follows from the corresponding Künneth formula for the de Rham cohomology of a product of smooth Stein spaces.
\end{proof}
\subsection{Hochschild cohomology and extensions I}\label{Section Hochschild cohomology and extension I}
As mentioned above, to final goal of this chapter is highlighting the relevance of Hochschild cohomology in the study of smooth rigid analytic spaces, and in particular in the study of smooth Stein spaces. In the previous section, we established a direct comparison between $\operatorname{HH}^{\bullet}(\wideparen{\D}_X)$ and $\operatorname{H}_{\operatorname{dR}}^{\bullet}(X)$, which showcases the geometric significant of Hochschild cohomology. In this section, we follow a more algebraic approach, by showing how $\operatorname{HH}^{\bullet}(\wideparen{\D}_X)$ can be related to the Ext functors arising as the derived functors of the different types of \emph{Hom} functors.
As before, we fix a smooth Stein space $X=\varinjlim_n X_n$ with an étale map $X\rightarrow \mathbb{A}^r_K$.
\begin{obs}
For the rest of the section, we assume $K$ is discretely valued or algebraically closed.
\end{obs}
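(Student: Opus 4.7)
The ``statement'' in question is not a proposition, lemma, theorem, or claim; it is a Remark (wrapped in the \textbf{obs} environment) whose sole purpose is to fix a standing hypothesis on the base field for the remainder of Section~\ref{Section Hochschild cohomology and extension I}. Explicitly, it declares that from this point on $K$ is assumed to be either discretely valued or algebraically closed. There is therefore no mathematical assertion to verify and correspondingly no proof to write.

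The reason such a convention is imposed here, rather than globally, is traceable to the use of Theorem~\ref{teo global sections of co-admissible modules on Stein spaces} and Proposition~\ref{prop characterization of co-admissible modules on a smooth Stein space}, which were established under exactly this dichotomy on $K$. As observed in the remark immediately following Theorem~\ref{teo global sections of co-admissible modules on Stein spaces}, the obstruction is that for a general complete non-archimedean extension of $\mathbb{Q}_p$ the power-bounded subring $A^{\circ}$ of an affinoid algebra $A$ need not be topologically of finite presentation, so one cannot produce the free $(\mathcal{R},A^{\circ})$-Lie lattices of $\mathcal{T}_{X/K}$ that underpin the Fréchet--Stein presentations used in the forthcoming comparisons between $\operatorname{HH}^{\bullet}(\wideparen{\D}_X)$ and various $\mathrm{Ext}$ functors.

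In summary: the forward-looking ``plan'' is simply to record this hypothesis and to invoke it silently in the subsequent Sections \ref{Section Hochschild cohomology and extension I}, \ref{Section HC and ext 2}, and beyond, whenever an appeal is made to the equivalence $\Gamma(X,-)\colon \mathcal{C}(\wideparen{\D}_X)\to \mathcal{C}(\wideparen{\D}_X(X))$ or to the nuclearity/Fréchet--Stein structure of $\wideparen{\D}_X(X)$ provided by Theorem~\ref{teo A}. No argument is required.
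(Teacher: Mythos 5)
Your reading is correct: this is a standing convention recorded in a Remark, not a mathematical assertion, and the paper supplies no proof for it. Your identification of why the hypothesis is needed (the reliance on Theorem \ref{teo global sections of co-admissible modules on Stein spaces} and Proposition \ref{prop characterization of co-admissible modules on a smooth Stein space}, which require $A^{\circ}$ to be an affine formal model) matches the paper's own justification.
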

Let us start by showing an \emph{inner} version of Lemma \ref{lemma co-admis on Stein spaces are determined by global sections}:
\begin{Lemma}\label{Lemma global sections inner hom functor}
Let $\mathcal{M}\in \mathcal{C}(\wideparen{\D}_X)$, and $\mathcal{N}\in \Mod_{\Indban}(\wideparen{\D}_X)$ . The following identity holds:
\begin{equation*}
    \Gamma(X,\underline{\mathcal{H}om}_{\wideparen{\D}_X}(\mathcal{M},\mathcal{N}))=\underline{\Hom}_{\wideparen{\D}_X(X)}(\mathcal{M}(X),\mathcal{N}(X)).
\end{equation*}
\end{Lemma}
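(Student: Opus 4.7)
The plan is to mimic the strategy of Lemma \ref{lemma co-admis on Stein spaces are determined by global sections}, upgrading each step from an identity of $K$-vector spaces to an identity of Ind-Banach (complete bornological) spaces by tracking the inner structure throughout. The reduction passes through the Stein covering $X=\varinjlim_n X_n$ followed by an inverse limit.

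First, since $\underline{\mathcal{H}om}_{\wideparen{\D}_X}(\mathcal{M},\mathcal{N})$ is a sheaf of Ind-Banach spaces on $X$ whose restriction to each $X_n$ coincides (by its equalizer definition in Section \ref{section sheaes of ind}) with $\underline{\mathcal{H}om}_{\wideparen{\D}_{X_n}}(\mathcal{M}_{\vert X_n},\mathcal{N}_{\vert X_n})$, the sheaf property yields
\begin{equation*}
\Gamma(X,\underline{\mathcal{H}om}_{\wideparen{\D}_X}(\mathcal{M},\mathcal{N}))=\varprojlim_n \Gamma(X_n,\underline{\mathcal{H}om}_{\wideparen{\D}_{X_n}}(\mathcal{M}_{\vert X_n},\mathcal{N}_{\vert X_n})).
\end{equation*}
On each affinoid $X_n$ I would invoke an inner analogue of the affinoid identification used in \cite[Proposition 6.2.12]{HochDmod}, identifying each term of the above inverse system with $\underline{\Hom}_{\wideparen{\D}_X(X_n)}(\mathcal{M}(X_n),\mathcal{N}(X_n))$.

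Next, the co-admissibility of $\mathcal{M}$ together with Corollary \ref{coro maps to stein covering are c-flat} (and the computation carried out in the proof of Lemma \ref{lemma co-admis on Stein spaces are determined by global sections}) gives a canonical isomorphism $\mathcal{M}(X_n)=\wideparen{\D}_X(X_n)\overrightarrow{\otimes}_{\wideparen{\D}_X(X)}\mathcal{M}(X)$ in $\Mod_{\Indban}(\wideparen{\D}_X(X_n))$. Applying the bornological tensor--hom adjunction in the second variable of the inner Hom functor produces
\begin{equation*}
\underline{\Hom}_{\wideparen{\D}_X(X_n)}(\mathcal{M}(X_n),\mathcal{N}(X_n))=\underline{\Hom}_{\wideparen{\D}_X(X)}(\mathcal{M}(X),\mathcal{N}(X_n)).
\end{equation*}
Finally, since $\underline{\Hom}_{\wideparen{\D}_X(X)}(\mathcal{M}(X),-)$ is a right adjoint it commutes with inverse limits, and the sheaf condition on $\mathcal{N}$ gives $\varprojlim_n\mathcal{N}(X_n)=\mathcal{N}(X)$; taking the limit over $n$ therefore concludes the argument.

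The main obstacle is the affinoid step, i.e.\ the inner version of \cite[Proposition 6.2.12]{HochDmod}. The plain statement only records an identification of $K$-vector spaces, so one must rerun its proof while keeping the bornological structures bookkept. Concretely, this amounts to comparing the bornology on $\Gamma(X_n,\underline{\mathcal{H}om}_{\wideparen{\D}_{X_n}}(\mathcal{M}_{\vert X_n},\mathcal{N}_{\vert X_n}))$ coming from the defining equalizer together with the Fréchet--Stein presentation of $\wideparen{\D}_X(X_n)$ with the bornology of $\underline{\Hom}_{\wideparen{\D}_X(X_n)}(\mathcal{M}(X_n),\mathcal{N}(X_n))$; this should follow by observing that both are presented as equalizers of the same diagram of inner Hom spaces, using that inner Hom commutes with the relevant equalizers and products, and that both the sheaf condition and the module structure impose identical sets of bornological constraints. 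Once this affinoid comparison is in place, all remaining steps are formal consequences of adjunction and commutation of inner Hom with inverse limits.
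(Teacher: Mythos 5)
Your plan is correct and follows the same basic route as the paper: reduce along the Stein covering, use co-admissibility to write $\mathcal{M}(X_n)=\wideparen{\D}_X(X_n)\overrightarrow{\otimes}_{\wideparen{\D}_X(X)}\mathcal{M}(X)$, apply the tensor--hom adjunction, and commute the inner Hom past the inverse limit $\mathcal{N}(X)=\varprojlim_n\mathcal{N}(X_n)$. The one organizational difference is worth noting, because it dissolves what you flag as your main obstacle. The paper does not pass through a separate affinoid comparison at all: it starts from the defining equalizer
\begin{equation*}
  \Gamma(X,\underline{\mathcal{H}om}_{\wideparen{\D}_X}(\mathcal{M},\mathcal{N}))=\operatorname{Eq}\left(\prod_{U\subset X}\underline{\Hom}_{\wideparen{\D}_X(U)}(\mathcal{M}(U),\mathcal{N}(U))\rightrightarrows \prod_{V\subset U}\underline{\Hom}_{\wideparen{\D}_X(U)}(\mathcal{M}(U),\mathcal{N}(V))\right),
\end{equation*}
whose terms are already the \emph{inner} Hom spaces of sections, and then uses cofinality of the $X_n$ among affinoid subdomains to collapse the index set to pairs $X_{n-1}\subset X_n$; from there the co-admissibility identity and adjunction turn the equalizer directly into $\varprojlim_n\underline{\Hom}_{\wideparen{\D}_X(X)}(\mathcal{M}(X),\mathcal{N}(X_n))$. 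So no ``inner version of \cite[Proposition 6.2.12]{HochDmod}'' is invoked; the bornological bookkeeping you propose to redo is already built into the equalizer description of the sheaf inner Hom from Section \ref{section sheaes of ind}. If you prefer to keep your two-step structure, the affinoid step is provable by exactly the same equalizer-plus-adjunction computation restricted to $X_n$ (using that $X_n$ is terminal among its affinoid subdomains), but it is cleaner to run the argument once globally as the paper does.
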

\begin{proof}
By definition of  $\underline{\mathcal{H}om}_{\wideparen{\D}_X}(-,-)$, we have the following identity:
\begin{equation*}
  \Gamma(X,\underline{\mathcal{H}om}_{\wideparen{\D}_X}(\mathcal{M},\mathcal{N}))=\operatorname{Eq}\left(\prod_{U\subset X}\underline{\Hom}_{\wideparen{\D}_X(U)}(\mathcal{M}(U),\mathcal{N}(U))\rightrightarrows \prod_{V\subset U}\underline{\Hom}_{\wideparen{\D}_X(U)}(\mathcal{M}(U),\mathcal{N}(V))\right),  
\end{equation*}
where $U$ (resp. $V$) runs over all the affinoid subspaces of $X$ (resp. all affinoid subdomains of $U$). The maps in the equalizer are constructed as follows: Let $U\subset X$ be an affinoid open subspace, and $V\subset U$ be an affinoid subdomain. The first of the maps in the equalizer is  the product of the maps:
\begin{equation*}
    \underline{\Hom}_{\wideparen{\D}_X(U)}(\mathcal{M}(U),\mathcal{N}(U))\rightarrow \underline{\Hom}_{\wideparen{\D}_X(U)}(\mathcal{M}(U),\mathcal{N}(V)),
\end{equation*}
induced by pushforward along the restriction map $\mathcal{N}(U)\rightarrow \mathcal{N}(V)$. The second map is obtained via products of maps of the form:
\begin{equation*}
    \underline{\Hom}_{\wideparen{\D}_X(V)}(\mathcal{M}(V),\mathcal{N}(V))\rightarrow \underline{\Hom}_{\wideparen{\D}_X(U)}(\mathcal{M}(U),\mathcal{N}(V)),
\end{equation*}
induced by pullback along the map $\mathcal{M}(U)\rightarrow \mathcal{M}(V)$. As $X=\varinjlim X_n$ is a Stein space, we may simplify this equalizer as follows:
\begin{equation}\label{equation 1 global sections of inner hom}  
\operatorname{Eq}\left(\prod_{n\geq 1}\underline{\Hom}_{\wideparen{\D}_X(X_n)}(\mathcal{M}(X_n),\mathcal{N}(X_n))\rightrightarrows \prod_{n\geq 1}\underline{\Hom}_{\wideparen{\D}_X(X_n)}(\mathcal{M}(X_n),\mathcal{N}(X_{n-1}))\right).
\end{equation}
As $\mathcal{M}$ is a co-admissible $\wideparen{\D}_X$-module, it follows by Proposition \ref{prop characterization of co-admissible modules on a smooth Stein space}, that for every $n\geq 0$ we have:
\begin{equation*}
    \mathcal{M}(X_n)=\wideparen{\D}_X(X_n)\overrightarrow{\otimes}_{\wideparen{\D}_X(X)}\mathcal{M}(X).
\end{equation*}
In particular, a quick Yoneda argument shows that for every $n\geq 0$ we have the following identities:
\begin{align*}
  \underline{\Hom}_{\wideparen{\D}_X(X_n)}(\mathcal{M}(X_n),\mathcal{N}(X_n))=\underline{\Hom}_{\wideparen{\D}_X(X)}(\mathcal{M}(X),\mathcal{N}(X_n)),\\ \underline{\Hom}_{\wideparen{\D}_X(X_n)}(\mathcal{M}(X_n),\mathcal{N}(X_{n-1}))= \underline{\Hom}_{\wideparen{\D}_X(X)}(\mathcal{M}(X),\mathcal{N}(X_{n-1}))
\end{align*}
Hence, the equalizer in $(\ref{equation 1 global sections of inner hom})$ can be rewritten as follows:
\begin{align*}
  \operatorname{Eq}\left(\prod_{n\geq 1}\underline{\Hom}_{\wideparen{\D}_X(X)}(\mathcal{M}(X),\mathcal{N}(X_n))\rightrightarrows \prod_{n\geq 1}\underline{\Hom}_{\wideparen{\D}_X(X)}(\mathcal{M}(X),\mathcal{N}(X_{n-1}))\right)\\
  =\varprojlim_n\underline{\Hom}_{\wideparen{\D}_X(X)}(\mathcal{M}(X),\mathcal{N}(X_n))\\=\underline{\Hom}_{\wideparen{\D}_X(X)}(\mathcal{M}(X),\varprojlim_n\mathcal{N}(X_n))\\
  =\underline{\Hom}_{\wideparen{\D}_X(X)}(\mathcal{M}(X),\mathcal{N}(X)),  
\end{align*}
and this is precisely the identity we wanted to show.
\end{proof}
As $\OX_X$ is a co-admissible $\wideparen{\D}_X$-module, we have the following identity of functors:
\begin{equation}\label{equation composition of differnet internal homs}
\Gamma(X,\underline{\mathcal{H}om}_{\wideparen{\D}_X}(\OX_X,-))=\underline{\Hom}_{\wideparen{\D}_X(X)}(\OX_X(X),\Gamma(X,-)),
\end{equation}
and we wish to study its derived functor. Recall from \cite[Section 6.3]{bode2021operations} that the Spencer resolution $S^{\bullet}$ gives a strict resolution of $\OX_X$ by finite locally free $\wideparen{\D}_X$-modules. Namely, we have a complex:
\begin{equation*}
    S^{\bullet}:=\left(0\rightarrow \wideparen{\D}_X\overrightarrow{\otimes}_{\OX_X}\wedge^n\mathcal{T}_{X/K}\rightarrow \cdots \rightarrow \wideparen{\D}_X \rightarrow 0\right),
\end{equation*}
which is quasi-isomorphic to $\OX_X$ in $\operatorname{D}(\wideparen{\D}_X)$. Furthermore, as we have an étale map $X\rightarrow \mathbb{A}^r_K$, the terms in $S^{\bullet}$ are finite free $\wideparen{\D}_X$-modules. Let $\mathcal{M}\in \mathcal{C}(\wideparen{\D}_X)$ be a co-admissible $\wideparen{\D}_X$-module. In this situation, we have the following chain of identities:
\begin{equation*}
    R\left(\Gamma(X,\underline{\mathcal{H}om}_{\wideparen{\D}_X}(\OX_X,\mathcal{M}))\right)=R\Gamma(X,R\underline{\mathcal{H}om}_{\wideparen{\D}_X}(\OX_X,\mathcal{M}))=R\Gamma(X,\underline{\mathcal{H}om}_{\wideparen{\D}_X}(S^{\bullet},\mathcal{M})),
\end{equation*}
where the first identity is \cite[Proposition 3.3.12]{HochDmod}, and the second identity follows from the fact that $S^{\bullet}$ is a resolution of $\OX_X$ by finite free $\wideparen{\D}_X$-modules. Furthermore, the objects of the complex $\underline{\mathcal{H}om}_{\wideparen{\D}_X}(S^{\bullet},\mathcal{M})$ are finite direct sums of $\mathcal{M}$. In particular, they are co-admissible $\wideparen{\D}_X$-modules. Thus, the objects of $\underline{\mathcal{H}om}_{\wideparen{\D}_X}(S^{\bullet},\mathcal{M})$ are $\Gamma(X,-)$-acyclic by Proposition \ref{prop acyclicity of co-admissible modules on Stein spaces}. Hence, we have:
\begin{equation*}
R\Gamma(X,\underline{\mathcal{H}om}_{\wideparen{\D}_X}(S^{\bullet},\mathcal{M}))=\Gamma(X,\underline{\mathcal{H}om}_{\wideparen{\D}_X}(S^{\bullet},\mathcal{M}))=\underline{\Hom}_{\wideparen{\D}_X(X)}(\Gamma(X,S^{\bullet}),\Gamma(X,\mathcal{M})).    
\end{equation*}
On the other hand, as the map $S^{\bullet}\rightarrow \OX_X$ is an isomorphism in $\operatorname{D}(\wideparen{\D}_X)$ between complexes of $\Gamma(X,-)$-acyclic modules, it follows that $\Gamma(X,S^{\bullet})$ is a strict resolution of $\OX_X(X)$ by finite-free $\wideparen{\D}_X(X)$-modules. Therefore, we have the following identities in $\operatorname{D}(\widehat{\mathcal{B}}c_K)$:
\begin{equation*}
 R\underline{\Hom}_{\wideparen{\D}_X(X)}(\OX_X(X),\Gamma(X,\mathcal{M}))= \underline{\Hom}_{\wideparen{\D}_X(X)}(\Gamma(X,S^{\bullet}),\Gamma(X,\mathcal{M}))=R\Gamma(X,R\underline{\mathcal{H}om}_{\wideparen{\D}_X}(\OX_X,\mathcal{M})).
\end{equation*}
We may summarize the previous discussion into the following proposition:
\begin{defi}[{\cite[Definition 8.1.3.]{HochDmod}}]
For any $\mathcal{M}\in\operatorname{D}(\wideparen{\D}_X)$, the de Rham complex of $\mathcal{M}$ is defined as the following complex in $\operatorname{D}(\operatorname{Shv}(X,\Indban))$:
\begin{equation*}
    \operatorname{dR}(\mathcal{M}):= R\underline{\mathcal{H}om}_{\wideparen{\D}_X}(\OX_{X},\mathcal{M}).
\end{equation*}    
\end{defi}
\begin{prop}\label{prop hochschild cohomology as ext of structure sheaf as D module part 1}
Let $X$ be a Stein space with an étale map $X\rightarrow \mathbb{A}^r_K$. For any co-admissible module $\mathcal{M}\in \mathcal{C}(\wideparen{\D}_X)$ we have the following identity in $\operatorname{D}(\widehat{\mathcal{B}}c_K):$
\begin{equation*}
    R\Gamma(X,\operatorname{dR}(\mathcal{M}))=R\underline{\Hom}_{\wideparen{\D}_X(X)}(\OX_X(X),\mathcal{M}(X)).
\end{equation*}
\end{prop}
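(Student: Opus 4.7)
The plan is to chain together three independent facts: a resolution of $\OX_X$ by finite free $\wideparen{\D}_X$-modules, the fact that co-admissible modules are $\Gamma(X,-)$-acyclic, and the compatibility of the inner $\Hom$ with global sections already established in Lemma \ref{Lemma global sections inner hom functor}. The entire argument essentially assembles material that has already appeared in the paper, so the proof should be a short diagram chase at the level of derived categories rather than a lengthy computation.

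First I would invoke the Spencer resolution $S^{\bullet}\to \OX_X$. Because the étale map $X\to \mathbb{A}^r_K$ trivializes $\mathcal{T}_{X/K}$, each term $\wideparen{\D}_X\overrightarrow{\otimes}_{\OX_X}\wedge^i\mathcal{T}_{X/K}$ is finite free as a left $\wideparen{\D}_X$-module, and the complex is strict exact augmented over $\OX_X$. Applying $\underline{\mathcal{H}om}_{\wideparen{\D}_X}(-,\mathcal{M})$ therefore yields a representative of $R\underline{\mathcal{H}om}_{\wideparen{\D}_X}(\OX_X,\mathcal{M})=\operatorname{dR}(\mathcal{M})$ whose terms are finite direct sums of $\mathcal{M}$, hence co-admissible $\wideparen{\D}_X$-modules. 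By Proposition \ref{prop acyclicity of co-admissible modules on Stein spaces} these terms are acyclic for $\Gamma(X,-)$, so $R\Gamma(X,\operatorname{dR}(\mathcal{M}))$ is computed by the honest complex $\Gamma(X,\underline{\mathcal{H}om}_{\wideparen{\D}_X}(S^{\bullet},\mathcal{M}))$.

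Next I would use Lemma \ref{Lemma global sections inner hom functor} termwise (each $S^i$ is, up to a finite direct sum, the co-admissible module $\wideparen{\D}_X$) to rewrite this as $\underline{\Hom}_{\wideparen{\D}_X(X)}(\Gamma(X,S^{\bullet}),\mathcal{M}(X))$. The remaining point is that $\Gamma(X,S^{\bullet})$ is a strict resolution of $\OX_X(X)$ by finite free $\wideparen{\D}_X(X)$-modules: finite freeness is immediate from the Spencer terms, and strict exactness of $\Gamma(X,S^{\bullet})\to \OX_X(X)\to 0$ follows because the sheaves $S^i$ and $\OX_X$ are all co-admissible and thus $\Gamma(X,-)$-acyclic, so strict exactness of $S^{\bullet}\to \OX_X$ in $\operatorname{Shv}(X,\Indban)$ passes to the global sections in $\Indban$ (equivalently $\widehat{\mathcal{B}}c_K$). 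Consequently $\Gamma(X,S^{\bullet})$ can be used as a finite-projective resolution of $\OX_X(X)$ to compute $R\underline{\Hom}_{\wideparen{\D}_X(X)}(\OX_X(X),\mathcal{M}(X))$, giving the desired identity.

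The main potential obstacle is the last step: translating the strict exactness of the sheaf-level Spencer resolution into strict exactness after taking global sections in the bornological category. Here one has to be careful because strict exactness in $\operatorname{Shv}(X,\Indban)$ does not pass to sections in general. The trick is that, since each $S^i$ and $\OX_X$ are co-admissible (locally of the form $\wideparen{\D}_X^{\oplus N}$ and $\OX_X$), Proposition \ref{prop acyclicity of co-admissible modules on Stein spaces} (or its variant for $\operatorname{Coh}(\OX_X)$, Proposition \ref{prop acyclicity of coherent modules on Stein spaces}) gives vanishing of higher $\Gamma(X,-)$ on every term as well as on every image, so the hypercohomology spectral sequence degenerates and global sections preserve strictness. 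Once this is in place, the chain of identities assembling the proof is formal.
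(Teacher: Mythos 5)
Your proof is correct and follows essentially the same route as the paper: the Spencer resolution with finite free terms, acyclicity of co-admissible modules to pass $R\Gamma$ through both the sheaf-Hom complex and the resolution itself, and Lemma \ref{Lemma global sections inner hom functor} to identify global sections of the inner Hom with the inner Hom of global sections. The point you flag as the main obstacle (strictness surviving the passage to global sections) is handled in the paper exactly as you propose, via $\Gamma(X,-)$-acyclicity of every term of $S^{\bullet}\rightarrow\OX_X$.
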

Next, we want to investigate the extent to which this discussion holds for the external \emph{Hom} functors $\mathcal{H}om_{\wideparen{\D}_X}(-,-)$, and $\Hom_{\wideparen{\D}_X}(-,-)$ from Definition \ref{defi external sheaf of hom}.  As we will see below, these functors behave exceptionally well when applied to co-admissible $\wideparen{\D}_X$-modules on a smooth Stein space. Furthermore, as we mentioned at the end of Section \ref{section external hom functors}, the derived functor $R\Hom_{\wideparen{\D}_X}(-,-)$ can be used to parameterize Yoneda's extensions. Our next goal is using this interpretation to obtain a new application of the Hochschild cohomology groups of $\wideparen{\D}_X$. Let us start with the following lemma: 
\begin{Lemma}\label{lemma external hom Stein co-adm 1}
Let $X$ be a Stein space or an affinoid space, with an étale map $X\rightarrow \mathbb{A}^r_K$.\newline Let $\mathcal{M}\in \mathcal{C}(\wideparen{\D}_X)$ and $\mathcal{N}\in\Mod_{LH(\widehat{\mathcal{B}}c_K)}(I(\wideparen{\D}_X))$. We have the following identity of $K$-vector spaces:
\begin{equation*}
    \Hom_{I(\wideparen{\D}_X)}(I(\mathcal{M}),\mathcal{N})=\Hom_{I(\wideparen{\D}_X(X))}(I(\mathcal{M}(X)),\mathcal{N}(X)).
\end{equation*}
\end{Lemma}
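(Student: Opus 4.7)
The plan is to represent $\mathcal{N}$ by a two-term complex of Ind-Banach $\wideparen{\D}_X$-modules, compute both sides of the claimed identity via this representation, and compare them term by term using Lemma \ref{lemma co-admis on Stein spaces are determined by global sections}. By the general theory of left hearts of quasi-abelian categories (as recalled in the proof of Proposition \ref{prop properties of categories of modules} via Schneiders' framework), every object $\mathcal{N}$ of $\Mod_{LH(\widehat{\mathcal{B}}c_K)}(I(\wideparen{\D}_X)) = LH(\Mod_{\Indban}(\wideparen{\D}_X))$ fits into a short exact sequence
\begin{equation*}
0 \to I(\mathcal{N}^{-1}) \xrightarrow{I(d)} I(\mathcal{N}^{0}) \to \mathcal{N} \to 0,
\end{equation*}
where $d : \mathcal{N}^{-1}\hookrightarrow \mathcal{N}^{0}$ is a monomorphism in $\Mod_{\Indban}(\wideparen{\D}_X)$.

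Since $I(\mathcal{M})$ is concentrated in degree zero, the standard description of morphisms in the left heart as homotopy classes of maps of two-term complexes yields
\begin{equation*}
\Hom_{I(\wideparen{\D}_X)}(I(\mathcal{M}), \mathcal{N}) = \operatorname{Coker}\Bigl(d_* : \Hom_{\wideparen{\D}_X}(\mathcal{M}, \mathcal{N}^{-1}) \to \Hom_{\wideparen{\D}_X}(\mathcal{M}, \mathcal{N}^{0})\Bigr).
\end{equation*}
Applying Lemma \ref{lemma co-admis on Stein spaces are determined by global sections} to each term (using that $\mathcal{M}$ is co-admissible), this rewrites as
\begin{equation*}
\operatorname{Coker}\Bigl(\Hom_{\wideparen{\D}_X(X)}(\mathcal{M}(X), \mathcal{N}^{-1}(X)) \to \Hom_{\wideparen{\D}_X(X)}(\mathcal{M}(X), \mathcal{N}^{0}(X))\Bigr).
\end{equation*}
On the other hand, applying $\Gamma(X,-)$ objectwise to the representing complex, together with the identity $\Gamma(X, I(\mathcal{F})) = I(\mathcal{F}(X))$, yields a two-term complex $[\mathcal{N}^{-1}(X)\xrightarrow{d(X)} \mathcal{N}^{0}(X)]$ in $\Mod_{\Indban}(\wideparen{\D}_X(X))$, with $d(X)$ still a monomorphism by left-exactness of $\Gamma(X,-)$. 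I claim this complex represents $\mathcal{N}(X)$ in $LH(\Mod_{\Indban}(\wideparen{\D}_X(X)))$, so that the analogous cokernel formula then produces the right-hand side of the desired identity.

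The main obstacle is justifying this last claim: a priori, the extension of $\Gamma(X,-)$ to the left heart is given by $H^0 R\Gamma(X,-)$, and applying $\Gamma(X,-)$ objectwise to a two-term representing complex could introduce higher-cohomology contributions of $\mathcal{N}^{-1}$ and $\mathcal{N}^{0}$. This can be resolved by refining the representation so that both $\mathcal{N}^{-1}$ and $\mathcal{N}^{0}$ are themselves $\Gamma(X,-)$-acyclic; one such refinement is available because co-admissible Ind-Banach $\wideparen{\D}_X$-modules are acyclic by Proposition \ref{prop acyclicity of co-admissible modules on Stein spaces} and generate $\Mod_{\Indban}(\wideparen{\D}_X)$ in a suitable sense. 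Alternatively, one may observe that both $\Hom_{I(\wideparen{\D}_X)}(I(\mathcal{M}), -)$ and $\Hom_{I(\wideparen{\D}_X(X))}(I(\mathcal{M}(X)), \Gamma(X,-))$ are left-exact functors on $LH(\Mod_{\Indban}(\wideparen{\D}_X))$ that coincide on objects in the image of $I$ by Lemma \ref{lemma co-admis on Stein spaces are determined by global sections}, and promote this agreement to a general isomorphism through the two-term representation above.
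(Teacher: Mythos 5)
Your argument has a genuine gap at its first computational step. From the presentation $0 \to I(\mathcal{N}^{-1}) \to I(\mathcal{N}^{0}) \to \mathcal{N} \to 0$ in the left heart, applying the left-exact functor $\Hom_{I(\wideparen{\D}_X)}(I(\mathcal{M}),-)$ only yields a left-exact sequence; the map $\Hom(I(\mathcal{M}), I(\mathcal{N}^{0})) \to \Hom(I(\mathcal{M}),\mathcal{N})$ need not be surjective, the obstruction living in $\operatorname{Ext}^1(I(\mathcal{M}), I(\mathcal{N}^{-1}))$. So the identity $\Hom_{I(\wideparen{\D}_X)}(I(\mathcal{M}),\mathcal{N}) = \operatorname{Coker}(d_*)$ is false in general: already in the abelian case, $\Hom(\mathbb{Z}/2,\mathbb{Z}/2)\neq\operatorname{Coker}\bigl(\Hom(\mathbb{Z}/2,\mathbb{Z})\xrightarrow{2_*}\Hom(\mathbb{Z}/2,\mathbb{Z})\bigr)=0$. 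Morphisms in $\operatorname{D}(\mathcal{E})$ out of $I(\mathcal{M})$ would have to be computed by resolving the \emph{source}, not the target, and $I(\mathcal{M})$ is not projective here. The two fallback arguments you offer do not repair this: co-admissible modules do not generate $\Mod_{\Indban}(\wideparen{\D}_X)$, so you cannot arrange $\Gamma(X,-)$-acyclic terms $\mathcal{N}^{-1},\mathcal{N}^{0}$; and two left-exact functors that agree on the essential image of $I$ are determined on \emph{subobjects} of $I$-objects, whereas $\mathcal{N}$ is presented as a \emph{quotient} of one, so the agreement does not propagate.

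The underlying misstep is treating $\mathcal{N}(X)$ as something that must be extracted from a representing two-term complex. By the identification $LH(\operatorname{Shv}(X,\Indban)) = \operatorname{Shv}(X, LH(\widehat{\mathcal{B}}c_K))$ recalled in Section \ref{section sheaes of ind}, an object of $\Mod_{LH(\widehat{\mathcal{B}}c_K)}(I(\wideparen{\D}_X))$ is literally a sheaf on $X$ with values in the abelian category $LH(\widehat{\mathcal{B}}c_K)$, so $\mathcal{N}(X)$ is its plain (underived) global sections and no acyclicity issue arises. The intended argument is then the same as in Lemma \ref{lemma co-admis on Stein spaces are determined by global sections}: a morphism of sheaves $I(\mathcal{M})\to\mathcal{N}$ is an equalizer of its restrictions to the Stein (or finite affinoid) covering; on each $X_n$ one has $I(\mathcal{M})(X_n) = I(\wideparen{\D}_X(X_n))\widetilde{\otimes}_{I(\wideparen{\D}_X(X))}I(\mathcal{M}(X))$ by co-admissibility of $\mathcal{M}$ and $c$-flatness of $\wideparen{\D}_X(X)\to\wideparen{\D}_X(X_n)$, so the extension-of-scalars adjunction in $LH(\widehat{\mathcal{B}}c_K)$ identifies $\Hom_{I(\wideparen{\D}_X(X_n))}(I(\mathcal{M})(X_n),\mathcal{N}(X_n))$ with $\Hom_{I(\wideparen{\D}_X(X))}(I(\mathcal{M}(X)),\mathcal{N}(X_n))$, and passing to the limit over $n$ (using that $\mathcal{N}(X)=\varprojlim_n\mathcal{N}(X_n)$ as a sheaf) gives the claim. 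You should redo the proof along these lines rather than via representing complexes.
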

\begin{proof}
This is analogous to the proof of Lemma \ref{lemma co-admis on Stein spaces are determined by global sections}.
\end{proof}
Many of the arguments leading up to Proposition \ref{prop hochschild cohomology as ext of structure sheaf as D module part 1} are a consequence of the fact that locally finite-free $I(\wideparen{\D}_X)$-modules are acyclic for $\underline{\mathcal{H}om}_{I(\wideparen{\D}_X)}(-,-)$, and this allows us to use the Spencer resolution to calculate all derived functors involved. In order to study the situation for the external sheaves of homomorphisms, we need to show that a similar result holds. Let us start with the following:
\begin{Lemma}\label{lemma forgetful functor}
Let $\mathcal{V}\in LH(\widehat{\mathcal{B}}c_K)$. Then for any $n\geq 0$  we have:
\begin{equation*}
    \Hom_{LH(\widehat{\mathcal{B}}c_K)}(\bigoplus_{i=1}^nI(K),\mathcal{V})=\bigoplus_{i=1}^n\operatorname{Forget}(J(\mathcal{V})).
\end{equation*}
\end{Lemma}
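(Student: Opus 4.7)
The plan is to reduce the lemma to the case $n=1$ using additivity of $\Hom$ in the first argument, and then to establish the natural isomorphism $\Hom_{LH(\widehat{\mathcal{B}}c_K)}(I(K),\mathcal{V})\cong \operatorname{Forget}(J(\mathcal{V}))$ by transferring the computation from $LH(\widehat{\mathcal{B}}c_K)$ to $LH(\mathcal{B}c_K)$ via $J$ and exploiting projectivity of $I(K)$ in $LH(\mathcal{B}c_K)$. Since finite direct sums in an abelian category split the Hom bifunctor additively on the source side, the $n$-fold case follows immediately once the $n=1$ statement is known.

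For the $n=1$ case, I would first invoke full faithfulness of the extended functor $J:LH(\widehat{\mathcal{B}}c_K)\to LH(\mathcal{B}c_K)$ (stated in the background as the realisation of $LH(\widehat{\mathcal{B}}c_K)$ as a reflective abelian subcategory). Together with the compatibility $J\circ I=I\circ J$ of the two inclusions and the fact that $J(K)=K$, this yields
\[
\Hom_{LH(\widehat{\mathcal{B}}c_K)}(I(K),\mathcal{V}) = \Hom_{LH(\mathcal{B}c_K)}(I(K),J(\mathcal{V})).
\]

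The key observation is that $I(K)$ is projective in $LH(\mathcal{B}c_K)$. Indeed, $K$ is projective in $\mathcal{B}c_K$ in the quasi-abelian sense: a strict epimorphism $f:W\to V$ in $\mathcal{B}c_K$ is a surjective bounded map where $V$ carries the quotient bornology, so any $v\in V$ lifts to some $w\in W$ and defines a bounded map $K\to W$ via $1\mapsto w$; hence $\Hom_{\mathcal{B}c_K}(K,-)=\operatorname{Forget}(-)$ sends strict epis to surjections. By \cite[Proposition 1.3.24]{schneiders1999quasi}, $I(K)$ is thus projective in $LH(\mathcal{B}c_K)$. Now present $\mathcal{V}$ in Schneiders' description as $\mathcal{V}=\operatorname{coker}_{LH(\widehat{\mathcal{B}}c_K)}(I(V)\to I(W))$ for a monomorphism $V\hookrightarrow W$ in $\widehat{\mathcal{B}}c_K$; exactness of $J$ produces a short exact sequence $0\to I(V)\to I(W)\to J(\mathcal{V})\to 0$ in $LH(\mathcal{B}c_K)$. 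Applying $\Hom_{LH(\mathcal{B}c_K)}(I(K),-)$, which is exact by projectivity, and using $\Hom_{LH(\mathcal{B}c_K)}(I(K),I(U))=\operatorname{Forget}(U)$ for $U\in\mathcal{B}c_K$, identifies $\Hom_{LH(\mathcal{B}c_K)}(I(K),J(\mathcal{V}))$ with $\operatorname{coker}(\operatorname{Forget}(V)\to \operatorname{Forget}(W))$ in $\operatorname{Vect}_K$. Since the extension $\widetilde{\operatorname{Forget}}$ to $LH(\mathcal{B}c_K)$ is exact (being induced by a strongly exact functor), $\operatorname{Forget}(J(\mathcal{V}))$ is computed as the same cokernel, finishing the argument.

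The main conceptual obstacle I anticipate is that $K$ need not be projective in $\widehat{\mathcal{B}}c_K$ itself: cokernels there involve separated completion, which can enlarge the underlying vector space, so $\Hom_{\widehat{\mathcal{B}}c_K}(K,-)$ does not preserve strict epis in general. This forces the detour through $\mathcal{B}c_K$ via $J$, where strict epis really are honest set-theoretic surjections and the projectivity argument goes through unobstructed.
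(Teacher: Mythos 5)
Your proof is correct, but it takes a genuinely different route from the paper's. The paper works entirely inside $LH(\widehat{\mathcal{B}}c_K)$: it presents $\mathcal{V}$ via a short exact sequence $0\to I(V_1)\to I(V_2)\to\mathcal{V}\to 0$, invokes projectivity of $I(K)$ in $LH(\widehat{\mathcal{B}}c_K)$ to make $\Hom_{LH(\widehat{\mathcal{B}}c_K)}(I(K),-)$ exact, and then reduces to the computation $\Hom_{LH(\widehat{\mathcal{B}}c_K)}(I(K),I(W))=\Hom_{\widehat{\mathcal{B}}c_K}(K,W)=\operatorname{Forget}(J(W))$ by full faithfulness of $I$. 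Your detour through $LH(\mathcal{B}c_K)$ is a workaround for an obstacle that is not actually present: $K$ is a one-dimensional Banach space and hence \emph{is} projective in $\widehat{\mathcal{B}}c_K$ — this follows from \cite[Proposition 10.4]{schneider2013nonarchimedean} together with \cite[Lemma 4.16]{bode2021operations}, which the paper invokes for exactly this purpose elsewhere (see the proof of Corollary \ref{coro global dim}) — and then \cite[Proposition 1.3.24]{schneiders1999quasi} gives projectivity of $I(K)$ in the left heart, which is what the paper's proof uses. That said, your argument is sound as written: it trades the (nontrivial, but available) projectivity of $K$ in $\widehat{\mathcal{B}}c_K$ for the elementary projectivity of $K$ in $\mathcal{B}c_K$, at the cost of invoking full faithfulness and exactness of $J:LH(\widehat{\mathcal{B}}c_K)\to LH(\mathcal{B}c_K)$, both of which the paper records in Section \ref{Section background HochDmod}. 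Both arguments terminate by identifying the two sides with the same cokernel in $\operatorname{Vect}_K$, and your reduction of the $n$-fold case to $n=1$ by additivity of $\Hom$ in the first variable matches the paper's.
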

\begin{proof}
For simplicity, we may assume $n=1$. By construction of the left heart, there are $V_1,V_2\in\widehat{\mathcal{B}}c_K$ satisfying that we have the following short exact sequence in $LH(\widehat{\mathcal{B}}c_K)$:
\begin{equation*}
    0\rightarrow I(V_1)\rightarrow I(V_2)\rightarrow\mathcal{V}\rightarrow 0.
\end{equation*}
As $I(K)$ is projective in $LH(\widehat{\mathcal{B}}c_K)$, the functor $\Hom_{LH(\widehat{\mathcal{B}}c_K)}(I(K),-)$ is exact. Thus, we have the following exact sequence of $K$-vector spaces:
\begin{equation*}
    0\rightarrow \Hom_{LH(\widehat{\mathcal{B}}c_K)}(I(K),I(V_1))\rightarrow \Hom_{LH(\widehat{\mathcal{B}}c_K)}(I(K),I(V_2))\rightarrow \Hom_{LH(\widehat{\mathcal{B}}c_K)}(I(K),\mathcal{V})\rightarrow 0.
\end{equation*}
As $J$ and $\operatorname{Forget}(-)$ are exact and commute with finite direct sums, this short exact sequence shows that it suffices to show the claim for $\mathcal{V}=I(W)$ for some $W\in \widehat{\mathcal{B}}c_K$. However, as the functor $I:\widehat{\mathcal{B}}c_K\rightarrow LH(\widehat{\mathcal{B}}c_K)$ is fully faithful, we have:
\begin{equation*}
   \Hom_{LH(\widehat{\mathcal{B}}c_K)}(I(K),I(W))=\Hom_{\widehat{\mathcal{B}}c_K}(K,W)=\Hom_{\mathcal{B}c_K}(J(K),J(W))=\operatorname{Forget}(J(W)), 
\end{equation*}
which is the identity we wanted to show. Hence, the lemma holds.
\end{proof}
Next, we want to generalize this lemma to  the level of sheaves. Recall the version for sheaves of $J$, and $\operatorname{Forget}(-)$ given in Definition \ref{defi J and Forget for sheaves}. We can use the previous lemma to show the following:
\begin{Lemma}\label{Lemma finite free are acyclic for external hom}
The following identity of functors holds:
\begin{equation*}
    \mathcal{H}om_{I(\wideparen{\D}_X)}(I(\wideparen{\D}_X),-)=\operatorname{Forget}(J(-)):\Mod_{LH(\widehat{B}c_K)}(I(\wideparen{\D}_X))\rightarrow \operatorname{Shv}(X,\operatorname{Vect}_K).
\end{equation*}
In particular, $\mathcal{H}om_{I(\wideparen{\D}_X)}(I(\wideparen{\D}_X),-)$ is an exact functor.
\end{Lemma}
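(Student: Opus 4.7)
The plan is to unwind the definition of the external sheaf of homomorphisms, apply a Yoneda-style identification valid for free modules of rank one, and then invoke the previous lemma. Concretely, for any admissible open $U\subset X$ and any $I(\wideparen{\D}_X)$-module $\mathcal{N}\in \Mod_{LH(\widehat{\mathcal{B}}c_K)}(I(\wideparen{\D}_X))$, Definition \ref{defi external sheaf of hom} gives
\begin{equation*}
    \Gamma\left(U,\mathcal{H}om_{I(\wideparen{\D}_X)}(I(\wideparen{\D}_X),\mathcal{N})\right)=\Hom_{I(\wideparen{\D}_X)_{\vert U}}\left(I(\wideparen{\D}_X)_{\vert U},\mathcal{N}_{\vert U}\right).
\end{equation*}

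Next I would argue that, because the restriction maps of $I(\wideparen{\D}_X)$ preserve the unit, a morphism of $I(\wideparen{\D}_X)_{\vert U}$-modules $\phi:I(\wideparen{\D}_X)_{\vert U}\rightarrow \mathcal{N}_{\vert U}$ is completely determined by its image of $1$ on $U$. More formally, the free-forgetful adjunction between $I(\wideparen{\D}_X(U))$-modules and $LH(\widehat{\mathcal{B}}c_K)$ (which is available because $LH(\widehat{\mathcal{B}}c_K)$ is closed symmetric monoidal with unit $I(K)$, and $I(\wideparen{\D}_X(U))\widetilde{\otimes}_KI(K)=I(\wideparen{\D}_X(U))$) produces a natural identification
\begin{equation*}
    \Hom_{I(\wideparen{\D}_X)_{\vert U}}(I(\wideparen{\D}_X)_{\vert U},\mathcal{N}_{\vert U})=\Hom_{LH(\widehat{\mathcal{B}}c_K)}(I(K),\mathcal{N}(U)).
\end{equation*}
Applying Lemma \ref{lemma forgetful functor} with $n=1$, the right-hand side equals $\operatorname{Forget}(J(\mathcal{N}(U)))$. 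By the section-wise definition of $J$ and $\operatorname{Forget}$ on sheaves (Definition \ref{defi J and Forget for sheaves}), this is precisely $\operatorname{Forget}(J(\mathcal{N}))(U)$. Since this identification is natural in $U$, it assembles into the desired equality of sheaves of $K$-vector spaces.

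Finally, exactness is immediate from the identification just obtained: the composite $\operatorname{Forget}\circ J$ on sheaves is a composition of exact functors, $J$ being exact by \cite[Lemma 8.1.10]{HochDmod} and $\operatorname{Forget}(-)$ being exact since it arises from a strongly exact functor on $\mathcal{B}c_K$. The main subtlety I anticipate is justifying the Yoneda-style step at the sheaf level rather than at each open separately, i.e., making sure that compatibility of a family $\{\phi_V\}$ with restrictions really does reduce to the single datum of $\phi_U(1_U)$; but this is enforced by the fact that $I(\wideparen{\D}_X)$ is a sheaf of unital monoids, so no further work is needed.
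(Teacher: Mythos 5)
Your proof is correct and follows essentially the same route as the paper's: identify the sections of the external \emph{Hom} sheaf over each $U$ with $\Hom_{LH(\widehat{\mathcal{B}}c_K)}(I(K),\mathcal{N}(U))$, apply Lemma \ref{lemma forgetful functor}, and deduce exactness from the exactness of $\operatorname{Forget}(-)\circ J$. The only difference is that you justify the first identification directly via the free--forgetful adjunction for the rank-one free module, whereas the paper cites Lemma \ref{lemma external hom Stein co-adm 1}; your route is slightly more self-contained, since it does not pass through the co-admissibility machinery.
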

\begin{proof}
Choose a module $\mathcal{M}\in \Mod_{LH(\widehat{B}c_K)}(I(\wideparen{\D}_X))$. By Lemma \ref{lemma external hom Stein co-adm 1}, $\mathcal{H}om_{I(\wideparen{\D}_X)}(I(\wideparen{\D}_X),\mathcal{M})$ is a sheaf of $K$-vector spaces given by the following expression for each affinoid open subspace $U\subset X$:
\begin{equation*}
   \Gamma(U,\mathcal{H}om_{I(\wideparen{\D}_X)}(I(\wideparen{\D}_X),\mathcal{M}))=\Hom_{I(\wideparen{\D}_X(U))}(I(\wideparen{\D}_X(U)),\mathcal{M}(U))=\Hom_{LH(\widehat{\mathcal{B}}c_K)}(I(K),\mathcal{M}(U)). 
\end{equation*}
Hence, by Lemma \ref{lemma forgetful functor} we have
the following identity of functors:
 \begin{equation*}
     \mathcal{H}om_{I(\wideparen{\D}_X)}(I(\wideparen{\D}_X),-)=\operatorname{Forget}(J\underline{\mathcal{H}om}_{I(\wideparen{\D}_X)}(I(\wideparen{\D}_X),-)),
 \end{equation*}
Furthermore, by \cite[Lemma 8.1.10]{HochDmod} the composition:
\begin{equation*}
    \operatorname{Forget}(-)\circ J:\operatorname{Shv}(X,LH(\widehat{\mathcal{B}}c_K))\rightarrow \operatorname{Shv}(X,\operatorname{Vect}_K),
\end{equation*}
is exact. Thus, $\mathcal{H}om_{I(\wideparen{\D}_X)}(I(\wideparen{\D}_X),-)$ is exact, as we wanted to show.
\end{proof}
As before, this result has important consequences for the derived external \emph{Hom} functors:
\begin{Lemma}\label{Lemma different ext hom functors}
Let $\mathcal{I}\in \Mod_{LH(\widehat{\mathcal{B}}c_K)}(I(\wideparen{\D}_X))$  be an injective object. Then $\mathcal{H}om_{I(\wideparen{\D}_X)}(I(\OX_X),\mathcal{I})$ is a $\Gamma(X,-)$-acyclic sheaf of $K$-vector spaces. In particular, the following identity holds:
\begin{equation*}
    R\Hom_{\wideparen{\D}_X}(\OX_X,-)=R\Gamma(X,R\mathcal{H}om_{\wideparen{\D}_X}(\OX_X,-)):\operatorname{D}(\wideparen{\D}_X)^+\rightarrow \operatorname{D}(\operatorname{Vect}_K)^+.
\end{equation*}
\end{Lemma}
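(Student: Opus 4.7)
The strategy is to establish the acyclicity claim first and then deduce the identity of derived functors from Grothendieck's theorem on compositions, applied to the factorization $\Hom_{\wideparen{\D}_X}(\OX_X,-) = \Gamma(X,-) \circ \mathcal{H}om_{\wideparen{\D}_X}(\OX_X,-)$. The essential tool is the Spencer resolution: since $X$ admits an étale map to $\mathbb{A}^r_K$, the complex $S^{\bullet} \to \OX_X$ is a strict resolution of $\OX_X$ by finite-free $\wideparen{\D}_X$-modules of length $r = \dim(X)$, and passes under $I(-)$ to a quasi-isomorphism $I(S^{\bullet}) \to I(\OX_X)$ in $\operatorname{D}(I(\wideparen{\D}_X))$.

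Fix an injective $\mathcal{I}$ in $\Mod_{LH(\widehat{\mathcal{B}}c_K)}(I(\wideparen{\D}_X))$. Because $\mathcal{I}$ is injective, the external Hom functor $\mathcal{H}om_{I(\wideparen{\D}_X)}(-,\mathcal{I})$ is exact, so applying it to the Spencer resolution yields a strict exact complex of sheaves of $K$-vector spaces on $X$:
\begin{equation*}
0 \to \mathcal{H}om_{I(\wideparen{\D}_X)}(I(\OX_X),\mathcal{I}) \to \mathcal{H}om_{I(\wideparen{\D}_X)}(I(S^{0}),\mathcal{I}) \to \cdots \to \mathcal{H}om_{I(\wideparen{\D}_X)}(I(S^{-r}),\mathcal{I}) \to 0.
\end{equation*}
By Lemma \ref{Lemma finite free are acyclic for external hom}, each term with $i \geq 0$ is canonically identified with $\operatorname{Forget}(J(\mathcal{I}))^{\binom{r}{i}}$.

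The key technical input is that $\operatorname{Forget}(J(\mathcal{I}))$ is a $\Gamma(X,-)$-acyclic sheaf of $K$-vector spaces. This follows from the standard fact that injectives in a Grothendieck abelian category of sheaves are flasque (the usual argument via the extension-by-zero functors $j_!$ associated to pairs of open immersions $V \subset U$, combined with the exactness of $\Hom(-,\mathcal{I})$). Since $\operatorname{Forget} \circ J$ is exact by \cite[Lemma 8.1.10]{HochDmod} and acts termwise on sheaves (Definition \ref{defi J and Forget for sheaves}), it preserves flasqueness, so $\operatorname{Forget}(J(\mathcal{I}))$ is a flasque sheaf of $K$-vector spaces on $X$ and hence $\Gamma(X,-)$-acyclic.

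With this, the displayed complex is a finite $\Gamma(X,-)$-acyclic resolution of $\mathcal{H}om_{I(\wideparen{\D}_X)}(I(\OX_X),\mathcal{I})$, so its termwise $\Gamma(X,-)$-image computes $R\Gamma(X, \mathcal{H}om_{I(\wideparen{\D}_X)}(I(\OX_X),\mathcal{I}))$. By construction of the external Hom (Definition \ref{defi external sheaf of hom}), this image is precisely $\Hom_{I(\wideparen{\D}_X)}(I(S^{\bullet}),\mathcal{I})$; by injectivity of $\mathcal{I}$ and the Spencer resolution property, this complex computes $R\Hom_{I(\wideparen{\D}_X)}(I(\OX_X),\mathcal{I}) = \Hom_{I(\wideparen{\D}_X)}(I(\OX_X),\mathcal{I})$ concentrated in degree $0$. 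Comparing cohomology gives $R^i\Gamma(X,\mathcal{H}om_{I(\wideparen{\D}_X)}(I(\OX_X),\mathcal{I})) = 0$ for $i \geq 1$, establishing the acyclicity; the identity of derived functors follows immediately by Grothendieck's composition theorem. The main technical hurdle is the flasqueness of injectives in $\Mod_{LH(\widehat{\mathcal{B}}c_K)}(I(\wideparen{\D}_X))$; once that is granted, the remainder is a straightforward comparison of the two interpretations of the same complex.
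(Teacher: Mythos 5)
Your overall architecture is the same as the paper's: apply $\mathcal{H}om_{I(\wideparen{\D}_X)}(-,\mathcal{I})$ to the Spencer resolution, use the exactness coming from injectivity of $\mathcal{I}$, identify the terms via Lemma \ref{Lemma finite free are acyclic for external hom}, and deduce the derived-functor identity from the composition $\Hom_{\wideparen{\D}_X}(\OX_X,-)=\Gamma(X,-)\circ \mathcal{H}om_{\wideparen{\D}_X}(\OX_X,-)$. The place where you diverge is the one genuinely delicate point, and your justification there does not go through as stated. You need the terms $\operatorname{Forget}(J(\mathcal{I}))^{\binom{r}{i}}$ to be $\Gamma(X,-)$-acyclic, and you argue this by claiming that injectives in $\Mod_{LH(\widehat{\mathcal{B}}c_K)}(I(\wideparen{\D}_X))$ are flasque, that $\operatorname{Forget}\circ J$ preserves flasqueness, and that flasque sheaves are acyclic. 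The last implication is where the argument breaks: $X$ carries a G-topology, not an honest topology, and the standard proof that flasque sheaves are $\Gamma$-acyclic (the Zorn's-lemma extension of a maximal lift of a section of a quotient, using that arbitrary unions of opens are open) is not available, since arbitrary unions of admissible opens need not be admissible. "Restriction maps are epimorphisms" is simply not an adapted class for $\Gamma(X,-)$ on a general site, so flasqueness of $\operatorname{Forget}(J(\mathcal{I}))$ — even granting it — does not yield the acyclicity you need.

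The conclusion of that step is nonetheless true, and the paper obtains it by a different mechanism that sidesteps flasqueness entirely. It first shows, by comparing the kernels of the two exact complexes obtained from the Spencer resolution, that $\mathcal{H}om_{I(\wideparen{\D}_X)}(I(\OX_X),\mathcal{I})=\operatorname{Forget}(J\,\underline{\mathcal{H}om}_{I(\wideparen{\D}_X)}(I(\OX_X),\mathcal{I}))$; it then observes that $\underline{\mathcal{H}om}_{I(\wideparen{\D}_X)}(I(\OX_X),\mathcal{I})$ is itself injective (by the tensor--Hom adjunction, \cite[Lemma 3.3.13]{HochDmod}), hence trivially $\Gamma(X,-)$-acyclic, and finally invokes the separately proven fact that $\operatorname{Forget}\circ J$ preserves $\Gamma(X,-)$-acyclicity (\cite[Proposition 8.1.11]{HochDmod}). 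That last proposition is exactly the content your flasqueness argument was trying to supply for free; it is an input that has to be established by other means (and is, in the companion paper). If you replace your flasqueness paragraph with an appeal to the acyclicity-preservation of $\operatorname{Forget}\circ J$ applied to the injective (hence acyclic) object $\mathcal{I}$, the rest of your computation — resolving by acyclic terms, taking global sections, and identifying the resulting complex with $\Hom_{I(\wideparen{\D}_X)}(I(S^{\bullet}),\mathcal{I})$, which is exact in positive degrees — is correct and gives a perfectly valid variant of the paper's proof.
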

\begin{proof}
As $\mathcal{I}$ is injective, we can use the Spencer resolution to obtain the following exact sequence:
\begin{multline*}
    0\rightarrow \mathcal{H}om_{I(\wideparen{\D}_X)}(I(\OX_X),\mathcal{I}) \rightarrow \mathcal{H}om_{I(\wideparen{\D}_X)}(I(\wideparen{\D}_X),\mathcal{I})  \rightarrow \\
    \cdots \rightarrow \mathcal{H}om_{I(\wideparen{\D}_X)}(I(\wideparen{\D}_X\overrightarrow{\otimes}_{\OX_X}\wedge^n\mathcal{T}_{X/K}),\mathcal{I})\rightarrow 0.
\end{multline*}
An application of Lemma \ref{Lemma finite free are acyclic for external hom} shows that this sequence can be rewritten as follows:
\begin{multline*}
    0\rightarrow \mathcal{H}om_{I(\wideparen{\D}_X)}(I(\OX_X),\mathcal{I}) \rightarrow \operatorname{Forget}(J\underline{\mathcal{H}om}_{I(\wideparen{\D}_X)}(I(\wideparen{\D}_X),\mathcal{I}))  \rightarrow \\
    \cdots \rightarrow \operatorname{Forget}(J\underline{\mathcal{H}om}_{I(\wideparen{\D}_X)}(I(\wideparen{\D}_X\overrightarrow{\otimes}_{\OX_X}\wedge^n\mathcal{T}_{X/K}),\mathcal{I}))\rightarrow 0.
\end{multline*}
Recall that by \cite[Lemma 8.1.10]{HochDmod}, the functors $J$ and $\operatorname{Forget}(-)$ are exact. Thus, it follows that we have an isomorphism of sheaves:
\begin{equation*}
    \mathcal{H}om_{I(\wideparen{\D}_X)}(I(\OX_X),\mathcal{I})=\operatorname{Forget}(J\underline{\mathcal{H}om}_{I(\wideparen{\D}_X)}(I(\OX_X),\mathcal{I})).
\end{equation*}
As $\mathcal{I}$ is injective, it follows by \cite[Lemma 3.3.13]{HochDmod} that $\underline{\mathcal{H}om}_{I(\wideparen{\D}_X)}(I(\OX_X),\mathcal{I})$ is injective. Thus, it is $\Gamma(X,-)$-acyclic. But then it follows by \cite[Proposition 8.1.11]{HochDmod} that $\mathcal{H}om_{I(\wideparen{\D}_X)}(I(\OX_X),\mathcal{I})$ is also $\Gamma(X,-)$-acyclic, as we wanted to show. For the second part of the lemma, notice that we have restricted the derived functors to the bounded below derived category. Hence, the equation follows from the canonical identification:
\begin{equation*}
\Hom_{\wideparen{\D}_X}(-,-)=\Gamma(X,\mathcal{H}om_{\wideparen{\D}_X}(-,-)),    
\end{equation*}
together with the first part of the lemma.
\end{proof}
These lemmas provide us with enough technical tools to show the following theorem:
\begin{teo}\label{teo hochschild cohomology as ext of structure sheaf as D module}
Let $X$ be a Stein space with an étale map $X\rightarrow \mathbb{A}^r_K$. For any co-admissible module $\mathcal{M}\in \mathcal{C}(\wideparen{\D}_X)$ we have the following identity in $\operatorname{D}(\widehat{\mathcal{B}}c_K):$
\begin{equation}\label{equation HH and inner hom}
    R\Gamma(X,\operatorname{dR}(\mathcal{M}))=R\underline{\Hom}_{\wideparen{\D}_X(X)}(\OX_X(X),\mathcal{M}(X)).
\end{equation}
Furthermore, we have the following identities in $\operatorname{D}(\operatorname{Vect}_K)$:
\begin{equation}\label{equation HH and ext hom}
    R\Hom_{\wideparen{\D}_X}(\OX_X,\mathcal{M})=R\Gamma(X,R\mathcal{H}om_{\wideparen{\D}_X}(\OX_X,\mathcal{M}))=R\Hom_{\wideparen{\D}_X(X)}(\OX_X(X),\mathcal{M}(X)).
\end{equation}
In particular, the following holds: 
\begin{enumerate}[label=(\roman*)]
    \item $\operatorname{HH}^{\bullet}(\wideparen{\D}_X)=R\underline{\Hom}_{\wideparen{\D}_X(X)}(\OX_X(X),\OX_X(X))$ as objects in $\operatorname{D}(\widehat{\mathcal{B}}c_K)$.
    \item We have the following identities in $\operatorname{D}(\operatorname{Vect}_K)$:
    \begin{equation*}
        \operatorname{Forget}(J\operatorname{HH}^{\bullet}(\wideparen{\D}_X))=R\Hom_{\wideparen{\D}_X}(\OX_X,\OX_X)=R\Hom_{\wideparen{\D}_X(X)}(\OX_X(X),\OX_X(X)).
    \end{equation*}
\end{enumerate}
\end{teo}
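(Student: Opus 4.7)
The identity (\ref{equation HH and inner hom}) is Proposition \ref{prop hochschild cohomology as ext of structure sheaf as D module part 1}, so I take it as given. The first equality in (\ref{equation HH and ext hom}) is the content of Lemma \ref{Lemma different ext hom functors}. Thus the heart of the proof is to establish
\begin{equation*}
R\Gamma(X, R\mathcal{H}om_{\wideparen{\D}_X}(\OX_X, \mathcal{M})) = R\Hom_{\wideparen{\D}_X(X)}(\OX_X(X), \mathcal{M}(X)).
\end{equation*}
The strategy is to compute both sides through the Spencer resolution $S^{\bullet} \to \OX_X$, which, thanks to the étale map $X \to \mathbb{A}^r_K$, consists of finite free $\wideparen{\D}_X$-modules. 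By Lemma \ref{Lemma finite free are acyclic for external hom}, applying $\mathcal{H}om_{\wideparen{\D}_X}(-, \mathcal{M})$ term by term to $S^{\bullet}$ yields a finite direct sum of copies of $\operatorname{Forget}(J(\mathcal{M}))$ in each degree, and the exactness of $\operatorname{Forget} \circ J$ shows that finite free modules are $R\mathcal{H}om_{\wideparen{\D}_X}(-, \mathcal{M})$-acyclic. Hence $R\mathcal{H}om_{\wideparen{\D}_X}(\OX_X, \mathcal{M}) = \mathcal{H}om_{\wideparen{\D}_X}(S^{\bullet}, \mathcal{M})$.

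Next I would apply $R\Gamma(X, -)$ degreewise. By Proposition \ref{prop acyclicity of co-admissible modules on Stein spaces}, the co-admissible module $\mathcal{M}$ is $\Gamma(X, -)$-acyclic in $\Mod_{\Indban}(\wideparen{\D}_X)$; combined with exactness of $J$ and $\operatorname{Forget}(-)$ together with \cite[Proposition 8.1.11]{HochDmod}, this forces each sheaf $\mathcal{H}om_{\wideparen{\D}_X}(S^i, \mathcal{M})$ to be $\Gamma(X, -)$-acyclic as a sheaf of $K$-vector spaces. Therefore
\begin{equation*}
R\Gamma(X, \mathcal{H}om_{\wideparen{\D}_X}(S^{\bullet}, \mathcal{M})) = \Hom_{\wideparen{\D}_X}(S^{\bullet}, \mathcal{M}),
\end{equation*}
and by Lemma \ref{lemma external hom Stein co-adm 1} this is in turn identified with $\Hom_{\wideparen{\D}_X(X)}(\Gamma(X, S^{\bullet}), \mathcal{M}(X))$. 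Since each $\Gamma(X, S^i)$ is a finite free $\wideparen{\D}_X(X)$-module, and co-admissibility of every term in $S^{\bullet}$ guarantees (again by Proposition \ref{prop acyclicity of co-admissible modules on Stein spaces}) that $\Gamma(X, S^{\bullet}) \to \OX_X(X)$ remains a strict resolution after global sections, the resulting complex computes $R\Hom_{\wideparen{\D}_X(X)}(\OX_X(X), \mathcal{M}(X))$.

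For the consequences I would specialize $\mathcal{M} = \OX_X$. The Spencer resolution gives $\operatorname{dR}(\OX_X) = \Omega^{\bullet}_{X/K}$ in $\operatorname{D}(\operatorname{Shv}(X, \Indban))$, so (\ref{equation HH and inner hom}) combined with Theorem \ref{teo hochschild cohomology groups as de rham cohomology groups} immediately yields $(i)$. For $(ii)$, the computation of $R\mathcal{H}om_{\wideparen{\D}_X}(\OX_X, \OX_X)$ through the Spencer resolution together with the identity $\mathcal{H}om_{\wideparen{\D}_X}(\wideparen{\D}_X, -) = \operatorname{Forget}(J(-))$ produces the complex $\operatorname{Forget}(J\Omega^{\bullet}_{X/K})$; taking $R\Gamma(X, -)$ and commuting it past the exact functors $J$ and $\operatorname{Forget}(-)$ (using that they preserve $\Gamma$-acyclic resolutions, per \cite[Proposition 8.1.11]{HochDmod}) gives $\operatorname{Forget}(J\operatorname{HH}^{\bullet}(\wideparen{\D}_X))$, as desired.

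The main technical obstacle I anticipate is making rigorous the claim that finite-free modules are $R\mathcal{H}om$-acyclic inside the quasi-abelian framework, since $R\mathcal{H}om_{\wideparen{\D}_X}$ is defined via injective resolutions in $\Mod_{LH(\widehat{\mathcal{B}}c_K)}(I(\wideparen{\D}_X))$ rather than via projective resolutions of the first argument. The cleanest route is the general principle that if $F$ is a left exact functor admitting a right derived functor and $P^{\bullet} \to A$ is a bounded above quasi-isomorphism with each $P^i$ such that $F(P^i)$ is exact (here in $\mathcal{I}$), then $RF(A) = F(P^{\bullet})$; in our setting this is exactly supplied by Lemma \ref{Lemma finite free are acyclic for external hom}, which reduces $\mathcal{H}om_{\wideparen{\D}_X}(S^i, -)$ to the strongly exact functor $\operatorname{Forget} \circ J$.
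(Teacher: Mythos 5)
Your proposal is correct and follows essentially the same route as the paper: equation (\ref{equation HH and inner hom}) is quoted from Proposition \ref{prop hochschild cohomology as ext of structure sheaf as D module part 1}, the first equality in (\ref{equation HH and ext hom}) from Lemma \ref{Lemma different ext hom functors}, and the remaining identity is computed by running the Spencer resolution through $\mathcal{H}om_{\wideparen{\D}_X}(-,\mathcal{M})$ and $\Gamma(X,-)$, using Lemmas \ref{Lemma finite free are acyclic for external hom} and \ref{lemma external hom Stein co-adm 1} together with the acyclicity of co-admissible modules. Your write-up is somewhat more explicit about why each term $\mathcal{H}om_{\wideparen{\D}_X}(S^i,\mathcal{M})$ is $\Gamma(X,-)$-acyclic, but the argument and the ingredients coincide with the paper's proof.
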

\begin{proof}
Equation (\ref{equation HH and inner hom}) is a special case of Proposition \ref{prop hochschild cohomology as ext of structure sheaf as D module part 1}. Furthermore, our calculations of Hochschild cohomology show that $\operatorname{HH}^{\bullet}(\wideparen{\D}_X)=R\Gamma(X,\operatorname{dR}(\OX_X))$. Thus, statement $(i)$ holds as well.\newline
Next, we want to show (\ref{equation HH and ext hom}). Notice that the first identity was already shown in Lemma \ref{Lemma different ext hom functors}. The second identity is a direct calculation using the Spencer resolution. Namely, we have the following:
\begin{multline*}
    R\Hom_{\wideparen{\D}_X(X)}(\OX_X(X),\mathcal{M}(X))=\Hom_{\wideparen{\D}_X(X)}(\Gamma(X,S^{\bullet}),\mathcal{M}(X))=\Gamma(X,\mathcal{H}om_{\wideparen{\D}_X}(S^{\bullet},\mathcal{M}))\\
    =\Gamma(X,R\mathcal{H}om_{\wideparen{\D}_X}(\OX_X,\mathcal{M}))=R\Gamma(X,R\mathcal{H}om_{\wideparen{\D}_X}(\OX_X,\mathcal{M})).
\end{multline*}
In order to show $(ii)$,  we proceed by a direct computation:
\begin{flalign*}
 R\Hom_{\wideparen{\D}_X(X)}(\OX_X(X),\OX_X(X))=&&
 \end{flalign*}
 \begin{flalign*}   
 &&\left(0\rightarrow \Hom_{\OX_X(X)}(\OX_X(X),\OX_X(X))\rightarrow \cdots \rightarrow \Hom_{\OX_X(X)}(\wedge^n\mathcal{T}_{X/K},\OX_X(X)) \rightarrow 0  \right)\\
 &&=\left(\operatorname{Forget}(J\OX_X(X))\rightarrow \cdots     \rightarrow \operatorname{Forget}(J\Omega_{X/K}^n(X))\right)\\
 &&=\operatorname{Forget}(J\Gamma(X,\Omega^{\bullet}_{X/K}))\\
 &&=\operatorname{Forget}(J\operatorname{HH}^{\bullet}(\wideparen{\D}_X)),
\end{flalign*}
where in the second identity we are using the fact that the functor:
\begin{equation*}
    \operatorname{Coh}(\OX_X)\rightarrow \Mod_{\Indban}(\OX_X),
\end{equation*}
is fully faithful (\emph{cf.} \cite[Theorem 6.4]{bode2021operations}), together with Lemma \ref{lemma forgetful functor}. Hence, the theorem holds.
\end{proof}
We remark that this result was already known in the context of smooth affine $K$-varieties. See, for example, \cite[Proposition 2.1]{etingof2004cherednik}. However, our result adds an extra layer of depth, as the spaces involved are complete bornological spaces. As mentioned in the end of Section \ref{section external hom functors}, this  theorem allows us to regard  $\operatorname{HH}^{n}(\wideparen{\D}_X)$ as the parameter space of isomorphism classes of the $n$-th Yoneda extensions of $I(\OX_X(X))$ by $I(\OX_X(X))$ in $\Mod_{LH(\widehat{\mathcal{B}}c_K)}(I(\wideparen{\D}_X(X)))$. Furthermore, Theorem \ref{teo hochschild cohomology as ext of structure sheaf as D module} also shows that every such extension  induces a unique 
extension of $I(\OX_X)$ by $I(\OX_X)$ in $\Mod_{LH(\widehat{\mathcal{B}}c_K)}(I(\wideparen{\D}_X))$.\\

In the case of a first order Yoneda extension, this correspondence can be obtained explicitly. Namely, 
by Theorem \ref{teo global sections of co-admissible modules on Stein spaces} there is a Fréchet-Stein presentation
$\wideparen{\D}_X(X)=\varprojlim_m \widehat{\D}_m$. Let $\mathcal{M}$ be a co-admissible  $\wideparen{\D}_X(X)$-module, and set $\mathcal{M}_m=\widehat{\D}_m\otimes_{\wideparen{\D}_X(X)}\mathcal{M}$, so that we have $\mathcal{M}=\varprojlim_m\mathcal{M}_m$. Assume we have the following degree one Yoneda extension in $\Mod_{LH(\widehat{\mathcal{B}}c_K)}(I(\wideparen{\D}_X))$:
\begin{equation*}
    0\rightarrow I(\mathcal{M})\rightarrow \mathcal{N}\rightarrow I(\OX_X(X))\rightarrow 0.
\end{equation*}
For every $m\geq 0$, we have the following short exact sequence:
\begin{equation*}
    0\rightarrow I(\mathcal{M}_m)\rightarrow I(\widehat{\D}_{m})\widetilde{\otimes}_{I(\wideparen{\D}_X(X))}\mathcal{N}\rightarrow I(\OX_X(X))\rightarrow 0.
\end{equation*}
This sequence is split as a sequence of $I(\OX_X(X))$-modules. In particular, the $I(\widehat{\D}_{m})$-module:
\begin{equation*}
\mathcal{N}_m:=I(\widehat{\D}_{m})\widetilde{\otimes}_{I(\wideparen{\D}_X(X))}\mathcal{N},
\end{equation*}
is the image under $I:\widehat{\mathcal{B}}c_K\rightarrow LH(\widehat{\mathcal{B}}c_K)$ of a Banach space. Furthermore, as $\widehat{\D}_{m}$ is noetherian, it follows that $\mathcal{N}_m$ is a finitely generated $I(\widehat{\D}_{m})$-module. As a consequence of the snake Lemma, we have an isomorphism:
\begin{equation*}
    \mathcal{N}\rightarrow \varprojlim_m \mathcal{N}_m,
\end{equation*}
and then it follows by \cite[Corollary 5.40]{bode2021operations}
that $\mathcal{N}$ is the image under $I$ of a co-admissible module, which we will also denote by $\mathcal{N}$ for simplicity. Thus, we obtain a degree one Yoneda extension:
\begin{equation*}
    0\rightarrow \operatorname{Loc}(\mathcal{M})\rightarrow \operatorname{Loc}(\mathcal{N})\rightarrow \OX_X\rightarrow 0,
\end{equation*}
and it follows by Proposition \ref{prop characterization of co-admissible modules on a smooth Stein space} that this process establishes a bijection between the two sets of isomorphism classes of first order Yoneda extensions.\\

In the general case, the situation is not so simple. In particular, a $n$-th degree Yoneda extension of $I(\OX_X(X))$ by $I(\mathcal{M})$ is a complex of the following form:
\begin{equation*}
 0\rightarrow I(\mathcal{M})\rightarrow \mathcal{N}_{n-1} \rightarrow \cdots \rightarrow \mathcal{N}_0\rightarrow I(\OX_X(X))\rightarrow 0,   
\end{equation*}
and it is not clear that the $\mathcal{N}_i$ are co-admissible modules. In fact, it is not even clear that they are objects in 
$\Mod_{\Indban}(\wideparen{\D}_X)$. Hence, the identification of the parameter spaces for isomorphism classes of $n$-th degree Yoneda extensions:
\begin{equation*}
   \operatorname{Ext}_{\wideparen{\D}_X}^n(\OX_X,\operatorname{Loc}(\mathcal{M}))=\operatorname{Ext}_{\wideparen{\D}_X(X)}^n(\OX_X(X),\mathcal{M}), 
\end{equation*}
is only accessible via the means of homological algebra. This showcases an algebraic application of Hochschild cohomology to the study of $\wideparen{\D}_X$-modules. Furthermore, this type of behavior resembles the properties of coherent modules in the theory of $\D$-modules on smooth algebraic $K$-varieties, and reinforces the idea that co-admissible modules are the right $p$-adic analog of coherent $\wideparen{\D}_X$-modules.
\subsection{Hochschild cohomology and extensions II}\label{Section HC and ext 2}
Recall from Section \ref{Section background HochDmod} that given a rigid space $X$ and a sheaf of Ind-Banach algebras $\mathscr{A}$, we define the sheaf of enveloping algebras of $\mathscr{A}$ as the following sheaf of Ind-Banach algebras on $X$:
\begin{equation*}
\mathscr{A}^e:=\mathscr{A}\overrightarrow{\otimes}_K\mathscr{A}^{\op}.
\end{equation*}
If $X=\Sp(K)$, we will call $\mathscr{A}^e$  the enveloping algebra of $\mathscr{A}$. Let $X=\varinjlim_n X_n$ be a smooth Stein space admitting an étale map $X\rightarrow \mathbb{A}^r_K$, and let $\wideparen{E}_X$ be the sheaf of bi-enveloping algebras of $X$. Recall that this is a sheaf of Ind-Banach algebras on $X^2:=X\times_{\Sp(K)}X$, and it follows by the contents of \cite[Section 5.4]{HochDmod} that we have the following identity of Ind-Banach algebras: 
\begin{equation*} \wideparen{\D}_X(X)^e:=\wideparen{\D}_X(X)\overrightarrow{\otimes}_K\wideparen{\D}_X(X)^{\op}=\wideparen{E}_X(X^2).
\end{equation*}
 We remark that this notation is also used in a slightly different context. In particular, if $R$ is a commutative algebra and $A$ is a $R$-algebra, we define its enveloping algebra as:
\begin{equation*}
    A^e=A\otimes_RA^{\op}.
\end{equation*}
As Ind-Banach spaces do not have an underlying $K$-vector space, there is no confusion possible between these two concepts. Hence, we will use this terminology for both of them, and the type of object we are working with will make clear which one of the two we are referring to.\\ 

Notice that the key to the results of the previous section is the fact that we have a resolution of $\OX_X$ by locally finite-free $\wideparen{\D}_X$-modules. Indeed, the existence of the Spencer resolution allowed us to give explicit calculations of several derived functors, and we used this to show a bunch of identities in the corresponding derived categories. Consider the diagonal morphism:
\begin{equation*}
    \Delta:X\rightarrow X^2.
\end{equation*}
As shown in \cite{HochDmod}, the pushforward $\Delta_*\wideparen{\D}_X$ is canonically a co-admissible $\wideparen{E}_X$-module, and we have an identification of derived functors:
\begin{equation*}
    R\underline{\mathcal{H}om}_{\wideparen{\D}_X^e}(\wideparen{\D}_X,\wideparen{\D}_X)=\Delta^{-1}R\underline{\mathcal{H}om}_{\wideparen{E}_X}(\Delta_*\wideparen{\D}_X,\Delta_*\wideparen{\D}_X).
\end{equation*}
The goal of this section is obtaining an interpretation of the Hochschild cohomology groups $\operatorname{HH}^{\bullet}(\wideparen{\D}_X)$ as isomorphism classes of Yoneda extensions of $I(\Delta_*\wideparen{\D}_X)$ by $I(\Delta_*\wideparen{\D}_X)$. The idea is trying to replicate the arguments we used in the proof of Theorem \ref{teo hochschild cohomology as ext of structure sheaf as D module}. Ideally, we would want to show that there is a resolution of $\Delta_*\wideparen{\D}_X$ by finite-projective $\wideparen{E}_X$-modules. Unfortunately, we were not able to show that this holds in general. However, we will show a slightly weaker statement, which will be enough for our purposes. Notice that $\Delta_*\wideparen{\D}_X$ is a co-admissible $\wideparen{E}_X$-module. Hence, by Proposition \ref{prop characterization of co-admissible modules on a smooth Stein space}, it is enough to show that  $\wideparen{\D}_{X}(X)$ admits a resolution by finite-projective $\wideparen{\D}_{X}(X)^e$-modules. Notice that $\wideparen{\D}_{X}(X)$ is a finitely generated $\wideparen{\D}_{X}(X)^e$-module. However, as $\wideparen{\D}_{X}(X)^e$ is not noetherian, it is not immediate that the canonical epimorphism:
\begin{equation*}
    \wideparen{\D}_{X}(X)^e\rightarrow\wideparen{\D}_{X}(X),
\end{equation*}
extends to a finite-projective resolution. In order to solve this issue, we will first construct the resolution in a setting where the relevant rings are noetherian, and then obtain our desired resolution via a series of pullbacks.\\

A distinguished class of smooth Stein spaces is given by the rigid-analytification of smooth affine varieties (cf. \cite[Section 5.4]{bosch2014lectures}). Given a smooth algebraic $K$-variety $X$, we let $X^{\operatorname{an}}$ be its rigid-analytification. The properties of the rigid analytic GAGA functor can be used to study the properties of $\wideparen{\D}_{X^{\operatorname{an}}}$-modules using $\D_X$-modules, where some of the considerations are simpler. This approach was initiated in  \cite[Section 4]{p-adicCatO}, and it is the strategy we will follow here. For simplicity in the notation, we will write $\mathbb{A}^r_K$ to denote the algebraic variety and the associated rigid variety. The context will make clear which one of the two we are referring to.\\

Let us start with the following lemma:
\begin{Lemma}\label{Lemma flatness of map from diff op to completed diff op}
Assume $K$ is discretely valued and let $X$ be a smooth algebraic $K$-variety with free tangent sheaf. In this situation the canonical map:
  \begin{equation*}
      \D_X(X)^e\rightarrow \wideparen{\D}_{X^{\operatorname{an}}}(X^{\operatorname{an}})^e,
  \end{equation*}
is faithfully flat and has dense image.
\end{Lemma}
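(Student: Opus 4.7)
The plan is to reduce the enveloping-algebra statement to the corresponding one-sided statement for the product $X \times X$, and then settle the one-sided statement using the Fréchet-Stein presentation of $\wideparen{\D}_{Y^{\operatorname{an}}}(Y^{\operatorname{an}})$ for $Y = X \times X$.

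First, I would reduce to a one-sided version. Since $\mathcal{T}_{X/K}$ is free, so is $\mathcal{T}_{X \times X/K}$, and rigid analytification commutes with fibre products, yielding $(X\times X)^{\operatorname{an}} \cong X^{\operatorname{an}} \times X^{\operatorname{an}}$. Combining Theorem \ref{teo  properties of EX}$(ii)$ with its algebraic analogue, I obtain compatible isomorphisms
\begin{equation*}
\D_{X\times X}(X \times X) \cong \D_X(X)^e,\qquad \wideparen{\D}_{(X \times X)^{\operatorname{an}}}((X\times X)^{\operatorname{an}}) \cong \wideparen{\D}_{X^{\operatorname{an}}}(X^{\operatorname{an}})^e,
\end{equation*}
under which the map in the lemma becomes the GAGA comparison map $\D_Y(Y) \to \wideparen{\D}_{Y^{\operatorname{an}}}(Y^{\operatorname{an}})$ for the smooth affine variety $Y = X \times X$, whose tangent sheaf is free. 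Thus it suffices to prove that this one-sided map is faithfully flat with dense image.

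Second, I would fix a basis $\partial_1, \ldots, \partial_r$ of $\mathcal{T}_{Y/K}(Y)$, a smooth affine formal model $\mathcal{A} \subseteq \OX_{Y^{\operatorname{an}}}(Y^{\operatorname{an}})$ stable under the $\partial_i$, and set $\mathcal{L} = \bigoplus_i \mathcal{A} \partial_i$, producing a Fréchet-Stein presentation $\wideparen{\D}_{Y^{\operatorname{an}}}(Y^{\operatorname{an}}) = \varprojlim_m \widehat{U}(\pi^m \mathcal{L})_K$ as in Theorem \ref{teo global sections of co-admissible modules on Stein spaces}. For density, I note that the image of $\D_Y(Y)$ in $\widehat{U}(\pi^m\mathcal{L})_K$ contains $\OX_Y(Y)$ and the $\partial_i$, and $\OX_Y(Y)$ is dense in $\OX_{Y^{\operatorname{an}}}(Y^{\operatorname{an}})$ by rigid GAGA; a PBW argument then shows the image is dense in each term of the tower, hence in the Fréchet inverse limit.

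For faithful flatness, I would proceed in two stages. Flatness of each map $\D_Y(Y) \to \widehat{U}(\pi^m\mathcal{L})_K$ follows by combining flatness of $\OX_Y(Y) \to \mathcal{A}_K$ with the fact that $\widehat{U}(\pi^m\mathcal{L})_K$ is a $\pi$-adic completion of $U(\pi^m\mathcal{L})_K$ along a central PBW filtration, using the flatness results of \cite[§4]{ardakovequivariant}. Flatness of the inverse limit follows from the $c$-flatness of the Fréchet-Stein transition maps (the analogue of Corollary \ref{coro maps to stein covering are c-flat}) together with a Mittag-Leffler/Tor-vanishing argument on finitely presented $\D_Y(Y)$-modules. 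Finally, faithfulness follows from density: for any maximal left ideal $I \subsetneq \D_Y(Y)$, the quotient $\D_Y(Y)/I$ is a nonzero cyclic module, and the density of the image plus the strictness of the Fréchet-Stein structure prevent $1$ from lying in the closure of $\wideparen{\D}_{Y^{\operatorname{an}}}(Y^{\operatorname{an}}) \cdot I$, so the base change is nonzero.

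The main obstacle will be the flatness of the inverse limit. Single-step flatness is standard, but since $\wideparen{\D}_{Y^{\operatorname{an}}}(Y^{\operatorname{an}})$ is not noetherian, combining flatness across the tower requires either an explicit $\operatorname{Tor}$-vanishing argument for finitely presented $\D_Y(Y)$-modules or an appeal to the $c$-flatness formalism of \cite{ardakov2019}. A secondary technical point, though minor, is keeping careful track of the opposite-algebra twist appearing in the isomorphism $\wideparen{\mathbb{T}}: \wideparen{\D}_{X^2} \to \wideparen{E}_X$ throughout the reduction step, since the enveloping algebra mixes $\wideparen{\D}_{X^{\operatorname{an}}}$ with $\wideparen{\D}_{X^{\operatorname{an}}}^{\op}$ on the second factor.
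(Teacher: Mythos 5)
Your reduction step is exactly the one the paper uses: freeness of the tangent sheaf gives the side-changing isomorphisms $\D_X(X)\cong\D_X(X)^{\op}$ and $\wideparen{\D}_{X^{\operatorname{an}}}(X^{\operatorname{an}})\cong\wideparen{\D}_{X^{\operatorname{an}}}(X^{\operatorname{an}})^{\op}$, which convert the enveloping-algebra map into the GAGA comparison map $\D_{X^2}(X^2)\rightarrow\wideparen{\D}_{X^{2,\operatorname{an}}}(X^{2,\operatorname{an}})$ for the smooth affine variety $X^2$, which again has free tangent sheaf. The divergence is in how the one-sided statement is then handled: the paper does not prove it at all but cites \cite[Corollary 4.4.10]{p-adicCatO}, whereas you attempt a self-contained argument. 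Your density argument (density of $\OX_Y(Y)$ in $\OX_{Y^{\operatorname{an}}}(Y^{\operatorname{an}})$ plus PBW, passed up the Fr\'echet--Stein tower) is reasonable.

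The gap is in faithful flatness. First, asserting that ``flatness of the inverse limit follows from $c$-flatness of the transition maps together with a Mittag-Leffler/Tor-vanishing argument'' restates the content of the cited result rather than proving it: $\wideparen{\D}_{Y^{\operatorname{an}}}(Y^{\operatorname{an}})$ is an inverse limit of noetherian Banach algebras $\widehat{U}(\pi^{m(n)}\mathcal{T}_n)_K$ indexed over both the Stein cover and the deformation parameter, and flatness of $\D_Y(Y)$ into each layer does not formally yield flatness into the limit without controlling $R^1\varprojlim$ along a chosen resolution; that is precisely where the work in \cite[Corollary 4.4.10]{p-adicCatO} (in the spirit of \cite{schmidt2012stableflatnessnonarchimedeanhyperenveloping}) lives. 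Second, and more seriously, your faithfulness argument is not valid as stated: for a maximal left ideal $I\subset\D_Y(Y)$, the left ideal $\wideparen{\D}_{Y^{\operatorname{an}}}(Y^{\operatorname{an}})\cdot I$ is not the closure of anything, and density of the image of $\D_Y(Y)$ gives no control over whether $1$ lies in it. The nonvanishing of $\wideparen{\D}_{Y^{\operatorname{an}}}(Y^{\operatorname{an}})\otimes_{\D_Y(Y)}\D_Y(Y)/I$ for every maximal left ideal $I$ is a genuine theorem, usually obtained from associated-graded or dimension-theoretic arguments on the noetherian Banach layers, not a formal consequence of density.
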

\begin{proof}
As $X$ has free tangent sheaf, it follows that $X^{\operatorname{an}}$ also has free tangent sheaf. Thus, we may use the isomorphisms:
\begin{equation*}
    \D_X(X)\rightarrow \D_X(X)^{\op}, \quad \wideparen{\D}_{X^{\operatorname{an}}}(X^{\operatorname{an}})\rightarrow \wideparen{\D}_{X^{\operatorname{an}}}(X^{\operatorname{an}})^{\op},
\end{equation*}
from \cite[Proposition 6.2.1]{HochDmod} to reduce the problem to showing that the canonical map:
\begin{equation*}
    \D_X(X)^2:=\D_X(X)\otimes_K\D_X(X)\rightarrow \wideparen{\D}_{X^{2,\operatorname{an}}}(X^{2,\operatorname{an}}),
\end{equation*}
is faithfully flat and has dense image. This was shown in \cite[Corollary 4.4.10]{p-adicCatO}.
\end{proof}
We may use this lemma to take the first step towards our desired resolution: 
\begin{Lemma}\label{Lemma resolution of differential operators on affine spaces}
$\Delta_*\wideparen{\D}_{\mathbb{A}^r_K}$ admits a resolution of length $2r$ by finite-projective $\wideparen{E}_{\mathbb{A}^r_K}$-modules.    
\end{Lemma}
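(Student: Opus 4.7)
The plan is to construct the resolution first for the algebraic Weyl algebra, where noetherianity makes Koszul arguments available, and then transfer it to the completed setting via the flatness statement of Lemma \ref{Lemma flatness of map from diff op to completed diff op}. Let $A_r:=\D_{\mathbb{A}^r_K}(\mathbb{A}^r_K)$ be the $r$-th Weyl algebra with generators $x_1,\dots,x_r,\partial_1,\dots,\partial_r$, and consider the elements
\begin{equation*}
    y_i=x_i\otimes 1-1\otimes x_i,\qquad z_i=\partial_i\otimes 1-1\otimes\partial_i,\qquad i=1,\dots,r,
\end{equation*}
inside the enveloping algebra $A_r^e=A_r\otimes_K A_r^{\op}$. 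These $2r$ elements lie in the kernel of the multiplication $A_r^e\to A_r$, they commute pairwise, and a direct PBW-filtration calculation shows that $(y_1,\dots,y_r,z_1,\dots,z_r)$ forms a central regular sequence generating the kernel. Hence the associated Koszul complex
\begin{equation*}
    K^{\bullet}:\quad 0\to A_r^e\otimes\wedge^{2r}V\to\cdots\to A_r^e\otimes V\to A_r^e\to A_r\to 0,
\end{equation*}
with $V=K^{2r}$, is a strict exact resolution of $A_r$ by finite-free $A_r^e$-modules of length $2r$.

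Next I would transport this resolution to the completed world. By Lemma \ref{Lemma flatness of map from diff op to completed diff op} the map $A_r^e\to\wideparen{\D}_{\mathbb{A}^r_K}(\mathbb{A}^r_K)^e$ is faithfully flat and has dense image. Applying $\wideparen{\D}_{\mathbb{A}^r_K}(\mathbb{A}^r_K)^e\otimes_{A_r^e}(-)$ to $K^{\bullet}$ therefore yields a strict exact complex of Ind-Banach $\wideparen{\D}_{\mathbb{A}^r_K}(\mathbb{A}^r_K)^e$-modules whose terms in positive degrees are finite-free. The computation one needs to verify is that the augmentation becomes $\wideparen{\D}_{\mathbb{A}^r_K}(\mathbb{A}^r_K)^e\otimes_{A_r^e}A_r=\wideparen{\D}_{\mathbb{A}^r_K}(\mathbb{A}^r_K)$ as bimodules; this is immediate from the fact that the diagonal bimodule is the quotient by the ideal generated by the $y_i,z_i$, and this ideal survives base change because the multiplication map $\wideparen{\D}^e\to\wideparen{\D}$ is continuous and kills the same elements.

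Finally, I would promote this algebraic resolution to the sheaf level. The space $\mathbb{A}^r_K\times\mathbb{A}^r_K=\mathbb{A}^{2r}_K$ is a smooth Stein space equipped with an étale map to itself, so by Remark \ref{obs version for E modules} the localization functor $\operatorname{Loc}(-)$ gives an equivalence between co-admissible $\wideparen{E}_{\mathbb{A}^r_K}(\mathbb{A}^{2r}_K)$-modules and co-admissible $\wideparen{E}_{\mathbb{A}^r_K}$-modules; applying $\operatorname{Loc}$ to the finite-free resolution of $\wideparen{\D}_{\mathbb{A}^r_K}(\mathbb{A}^r_K)=\Gamma(\mathbb{A}^{2r}_K,\Delta_*\wideparen{\D}_{\mathbb{A}^r_K})$ produces a length-$2r$ resolution of $\Delta_*\wideparen{\D}_{\mathbb{A}^r_K}$ whose terms are copies of $\wideparen{E}_{\mathbb{A}^r_K}$, hence finite-projective. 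The step I expect to require the most care is the identification of the base-changed augmentation with $\wideparen{\D}_{\mathbb{A}^r_K}(\mathbb{A}^r_K)$ as Ind-Banach bimodules, and the verification that the Koszul differentials behave correctly after completion; both reduce to the density and flatness provided by Lemma \ref{Lemma flatness of map from diff op to completed diff op}, together with the strict exactness criterion for complexes arising from $c$-flat base change.
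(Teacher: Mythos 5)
Your overall architecture matches the paper's: resolve the diagonal bimodule over the algebraic Weyl algebra, transport it along the faithfully flat map with dense image of Lemma \ref{Lemma flatness of map from diff op to completed diff op}, and then localize using Proposition \ref{prop characterization of co-admissible modules on a smooth Stein space}. Where you genuinely differ is the first step: the paper obtains the finite-projective resolution of length $2r$ by quoting the global dimension bound for $\D_{\mathbb{A}^{2r}_K}(\mathbb{A}^{2r}_K)\cong\D_{\mathbb{A}^r_K}(\mathbb{A}^r_K)^e$ from Hotta--Takeuchi--Tanisaki, whereas you build an explicit Koszul complex. That is a legitimate and more concrete alternative, but your justification contains a false claim: the elements $y_i,z_i$ are \emph{not} central in $A_r^e$ (for instance $[z_i,\,x_i\otimes 1]=[\partial_i,x_i]\otimes 1=1\otimes 1$). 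They do commute pairwise, since writing $\bar{v}=v\otimes 1-1\otimes v$ one finds $\bar{v}\bar{w}-\bar{w}\bar{v}=[v,w]\otimes 1-1\otimes[v,w]$, which vanishes because $[v,w]$ is a scalar for $v,w$ among the $x_i,\partial_j$; this suffices to make the Koszul differential square to zero. But exactness can then no longer be quoted from the standard theorem on Koszul complexes of central regular sequences: it has to be checked separately, e.g.\ by passing to the associated graded for the Bernstein filtration, where the symbols of the $y_i,z_i$ form a genuine regular sequence in a polynomial ring. This is repairable, but it is not the ``direct PBW-filtration calculation'' you assert.

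The substantive gap is in the transport step, which is precisely where the paper spends most of its effort. Flat base change identifies $\wideparen{\D}_{\mathbb{A}^r_K}(\mathbb{A}^r_K)^e\otimes_{A_r^e}A_r$ with $\wideparen{\D}^e/\wideparen{\D}^e I$, where $I$ is the left ideal generated by the $y_i,z_i$; continuity of the completed multiplication gives a map from this quotient to $\wideparen{\D}_{\mathbb{A}^r_K}(\mathbb{A}^r_K)$, but neither injectivity nor surjectivity of that map is formal. Surjectivity is not implied by density of the image (a dense image need not be closed), and injectivity amounts to the nontrivial assertion that the kernel of the completed multiplication is already generated over $\wideparen{\D}^e$ by the same $2r$ elements, with no closure or completion of the ideal required. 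The paper proves this by reducing to the Banach stages $\widehat{U}(\pi^n\mathcal{T}_n)_K$ of the Fr\'echet--Stein presentation, then to the quotients modulo powers of $\pi$, and invoking the PBW theorem for Lie--Rinehart algebras; your ``immediate from the fact that the multiplication map is continuous and kills the same elements'' skips exactly this computation. Finally, Lemma \ref{Lemma flatness of map from diff op to completed diff op} is only stated for discretely valued $K$, so your argument as written does not cover general $K$; the paper closes this by descending to a complete discretely valued subfield $Q\subset K$ and using that $K\widehat{\otimes}_Q-$ preserves acyclic complexes of Fr\'echet spaces.
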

\begin{proof}
In order to simplify notation and to avoid ambiguities, we will write $\D_{\mathbb{A}^r_K}(\mathbb{A}^r_K)$ to denote the filtered $K$-algebra $U(\operatorname{Der}_K(K[t_1,\cdots,t_r]))$. That is, $\D_{\mathbb{A}^r_K}(\mathbb{A}^r_K)$ is the algebra of differential operators associated to the scheme structure of $\mathbb{A}^r_K$. By Proposition \ref{prop characterization of co-admissible modules on a smooth Stein space}, it is enough to show that  $\wideparen{\D}_{\mathbb{A}^r_K}(\mathbb{A}^r_K)$ admits a resolution by finite-projective $\wideparen{\D}_{\mathbb{A}^r_K}(\mathbb{A}^r_K)^e$-modules. Assume first that $K$ is discretely valued. Notice that we have a canonical surjection:
\begin{equation*}
    \D_{\mathbb{A}^r_K}(\mathbb{A}^r_K)^e\rightarrow \D_{\mathbb{A}^r_K}(\mathbb{A}^r_K).
\end{equation*}
In particular, $\D_{\mathbb{A}^r_K}(\mathbb{A}^r_K)$ is finitely generated as a $\D_{\mathbb{A}^r_K}(\mathbb{A}^r_K)^e$-module. As mentioned above, the fact that $X$ has free tangent sheaf implies that we have an isomorphism of $K$-algebras:
\begin{equation*}
 \D_{\mathbb{A}^{2r}_K}(\mathbb{A}^{2r}_K)\rightarrow   \D_{\mathbb{A}^r_K}(\mathbb{A}^r_K)^e.
\end{equation*}
Hence, we may apply \cite[Theorem 2.6.11]{hotta2007d} to conclude that there is a resolution:
\begin{equation*}
    P^{\bullet}:=\left(0\rightarrow P^{2r}\rightarrow \cdots \rightarrow P^{0}\rightarrow 0 \right),
\end{equation*}
of $\D_{\mathbb{A}^r_K}(\mathbb{A}^r_K)$ such that each $P^i$
is a  finite-projective $\D_{\mathbb{A}^r_K}(\mathbb{A}^r_K)^e$-module. By Lemma \ref{Lemma flatness of map from diff op to completed diff op}, the map $\D_{\mathbb{A}^r_K}(\mathbb{A}^r_K)^e\rightarrow \wideparen{\D}_{\mathbb{A}^r_K}(\mathbb{A}^r_K)^e$  is  faithfully flat with dense image. Hence, the complex:
\begin{equation*}
    \wideparen{P}^{\bullet}:=\wideparen{\D}_{\mathbb{A}^r_K}(\mathbb{A}^r_K)^e\otimes_{\D_{\mathbb{A}^r_K}(\mathbb{A}^r_K)^e}P^{\bullet},
\end{equation*}
is  a resolution by finite-projective $\wideparen{\D}_{\mathbb{A}^r_K}(\mathbb{A}^r_K)^e$-modules of length $2r$ of the following $\wideparen{\D}_{\mathbb{A}^r_K}(\mathbb{A}^r_K)^e$-module:
\begin{equation*}
    D:=\wideparen{\D}_{\mathbb{A}^r_K}(\mathbb{A}^r_K)^e\otimes_{\D_{\mathbb{A}^r_K}(\mathbb{A}^r_K)^e}\D_{\mathbb{A}^r_K}(\mathbb{A}^r_K).
\end{equation*}
By the extension-restriction adjunction, we get a $\wideparen{\D}_{\mathbb{A}^r_K}(\mathbb{A}^r_K)^e$-linear map:
\begin{equation*}
    D\rightarrow \wideparen{\D}_{\mathbb{A}^r_K}(\mathbb{A}^r_K),
\end{equation*}
and we need to show it is an isomorphism. Consider the following family of closed disks indexed by non-negative integers $n\geq 0$:
\begin{equation*}
    \mathbb{B}^{r,n}_{K}=\Sp(K\langle \pi^nt_1,\cdots,\pi^nt_r\rangle).
\end{equation*}
Then $\mathbb{A}^r_K=\varinjlim_n\mathbb{B}^{r,n}_{K}$ is a Stein covering of $\mathbb{A}^r_K$ as a rigid variety. For each $n\geq 0$, let:
\begin{equation*}
    A_n:=K\langle \pi^nt_1,\cdots,\pi^nt_r\rangle,
\end{equation*}
and consider the family of affine  formal models:
\begin{equation*}
   \mathcal{A}_n:=\mathcal{R}\langle \pi^nt_1,\cdots,\pi^nt_r \rangle\subset A_n, 
\end{equation*}
which satisfy that the $\mathcal{A}_n$-module $\mathcal{T}_n=\bigoplus_{i=1}^n\mathcal{A}_n\partial_i$ is a finite-free $\mathcal{A}_n$-Lie lattice of $\mathcal{T}_{\mathbb{A}^r_K/K}(\mathbb{B}^{r,n}_{K})$.\\
As $D$ and $\wideparen{\D}_{\mathbb{A}^r_K}(\mathbb{A}^r_K)$  are co-admissible $\wideparen{\D}_{\mathbb{A}^r_K}(\mathbb{A}^r_K)^e$-modules, it is enough to show that the map:
\begin{equation*}
 \varphi_n:\widehat{U}(\pi^n\mathcal{T}_n)_K^e\otimes_{\D_{\mathbb{A}^r_K}(\mathbb{A}^r_K)^e}\D_{\mathbb{A}^r_K}(\mathbb{A}^r_K)  \rightarrow \widehat{U}(\pi^n\mathcal{T}_n)_K,
\end{equation*}
is an $\widehat{U}(\pi^n\mathcal{T}^n)^e$-linear isomorphism for every $n\geq 0$.  We will only show the case $n=0$, the rest is analogous. Consider the algebras $A=K[t_1,\cdots,t_r]$ and $\mathcal{A}=\mathcal{R}[t_1,\cdots,t_r]$. Notice that $\mathcal{A}$ is a finitely generated $\mathcal{R}$-algebra satisfying that:
\begin{equation*}
   A=K\otimes_{\mathcal{R}}\mathcal{A}, \quad  \mathcal{A}_0=\varprojlim\mathcal{R}[t_1,\cdots,t_r]/\pi^m.
\end{equation*}
Furthermore, the following identities hold:
\begin{equation*}
\mathcal{T}=\bigoplus_{i=1}^n\mathcal{A}\partial_i=\operatorname{Der}_{\mathcal{R}}(\mathcal{A}), \quad   K\otimes_{\mathcal{R}}\mathcal{T}=\operatorname{Der}_{K}(A).
\end{equation*}
Thus, we have $\D_{\mathbb{A}^r_K}(\mathbb{A}^r_K)=K\otimes_{\mathcal{R}}U(\mathcal{T})$, and $\mathcal{T}_0=\mathcal{A}_0\otimes_{\mathcal{A}}\mathcal{T}$. Therefore, there is a map:
\begin{equation*}
    \widetilde{\varphi}:\left(\widehat{U}(\mathcal{T}_0)\widehat{\otimes}_{\mathcal{R}}\widehat{U}(\mathcal{T}_0)^{\op}\right)\otimes_{U(\mathcal{T})^e}U(\mathcal{T})  \rightarrow \widehat{U}(\mathcal{T}_0),
\end{equation*}
satisfying $\varphi=\operatorname{Id}_K\otimes \widetilde{\varphi}$. Hence, it is enough to show that $\widetilde{\varphi}$ is an isomorphism. As both sides are $\pi$-adically complete, it suffices to show that the map is an isomorphism after taking quotients by high enough powers of $\pi$. Chose some $m\geq 0$. Then we have the following identities of $\mathcal{R}$-modules:
\begin{equation*}
  \widehat{U}(\mathcal{T}_0)/\pi^m= U(\mathcal{T}_0)/\pi^m=\left(\mathcal{A}_0\otimes_{\mathcal{A}}U(\mathcal{T})\right)\otimes_{\mathcal{R}}\mathcal{R}/\pi^m=U(\mathcal{T})/\pi^m,
\end{equation*}
where the second identity follows from the PBW Theorem for Lie-Rinehart algebras. On the other hand, we have the following chain of identities of $\mathcal{R}$-modules:
\begin{align*}
 \left( \left(\widehat{U}(\mathcal{T}_0)\widehat{\otimes}_{\mathcal{R}}\widehat{U}(\mathcal{T}_0)^{\op}\right)\otimes_{U(\mathcal{T})^e}U(\mathcal{T})\right)\otimes_{\mathcal{R}}\mathcal{R}/\pi^m=\left(\widehat{U}(\mathcal{T}_0)\widehat{\otimes}_{\mathcal{R}}\widehat{U}(\mathcal{T}_0)^{\op}\right)/\pi^m\otimes_{\left(U(\mathcal{T})/\pi^m\right)^e}U(\mathcal{T})/\pi^m\\
=\left(U(\mathcal{T}_0)/\pi^m\right)^e\otimes_{\left(U(\mathcal{T})/\pi^m\right)^e}U(\mathcal{T})/\pi^m\\
=U(\mathcal{T})/\pi^m.
\end{align*}
 Thus, $\widetilde{\varphi}$ is an isomorphism of $\mathcal{R}$-algebras, as we wanted to show. For general $K$, the result follows by taking a complete discretely valued subfield $Q\subset K$ and then using the identity:
 \begin{equation*}
     \wideparen{\D}_{\mathbb{A}^r_Q}(\mathbb{A}^r_Q)=Q\widehat{\otimes}_K\wideparen{\D}_{\mathbb{A}^r_K}(\mathbb{A}^r_K),
 \end{equation*}
 together with the fact that $Q\widehat{\otimes}_K-$ preserves acyclic complexes of Fréchet $K$-vector spaces.
\end{proof}
Finally, we can use these lemmas to obtained our fabled resolution:
\begin{prop}\label{prop resolution of differential operators on smooth Stein spaces}
Let $X$ be a smooth and separated rigid space with an étale map $f:X\rightarrow \mathbb{A}^r_K$. There is a co-admissible $\wideparen{E}_X$-module $\mathcal{S}_X$ such that there is a strict exact complex:
\begin{equation*}
   E^{\bullet}_{\wideparen{\D}_X}:= \left(0\rightarrow P^{2r}\rightarrow \cdots\rightarrow P^0\rightarrow \mathcal{S}_X\oplus \Delta_*\wideparen{\D}_X\rightarrow 0\right),
\end{equation*} 
where each $P^i$ is a finite-projective $\wideparen{E}_X$-module and $P^0=\wideparen{E}_X$.
\end{prop}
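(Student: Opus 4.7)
My strategy is to construct $E^{\bullet}_{\wideparen{\D}_X}$ by pulling back the resolution $\wideparen{P}^{\bullet} \to \Delta_*\wideparen{\D}_{\mathbb{A}^r_K}$ of Lemma \ref{Lemma resolution of differential operators on affine spaces} along the étale morphism $f^2 := f\times f : X^2 \to (\mathbb{A}^r_K)^2$, via the appropriate $\wideparen{E}$-module pullback functor $(f^2)^*$. One concrete realisation of this pullback is to identify $\wideparen{E}$ with $\wideparen{\D}$ on the double product through Theorem \ref{teo  properties of EX}(ii), apply the standard étale $\wideparen{\D}$-module pullback, and translate back via the side-switching equivalence.

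The geometric input is that étaleness of $f$ makes the diagonal $\Delta_X \colon X \hookrightarrow X\times_{\mathbb{A}^r_K}X$ a clopen immersion, yielding a disjoint decomposition
\begin{equation*}
    X\times_{\mathbb{A}^r_K}X \;=\; \Delta_X(X) \,\sqcup\, R.
\end{equation*}
Since $\Delta_*\wideparen{\D}_{\mathbb{A}^r_K}$ is supported on the diagonal of $(\mathbb{A}^r_K)^2$ and $(f^2)^{-1}(\Delta_{\mathbb{A}^r_K}) = X\times_{\mathbb{A}^r_K}X$, the pullback splits as
\begin{equation*}
  (f^2)^*\Delta_*\wideparen{\D}_{\mathbb{A}^r_K} \;\cong\; \Delta_*\wideparen{\D}_X \,\oplus\, \mathcal{S}_X,
\end{equation*}
where $\mathcal{S}_X$ is by definition the summand supported on $R$. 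Applying $(f^2)^*$ term-wise to $\wideparen{P}^{\bullet}$ then produces a candidate complex whose $0$-th term is $(f^2)^*\wideparen{E}_{\mathbb{A}^r_K} = \wideparen{E}_X$ and whose higher terms are finite-projective $\wideparen{E}_X$-modules, surjecting onto $\Delta_*\wideparen{\D}_X \oplus \mathcal{S}_X$. Finite-projectivity is preserved because, after pulling back, the $P^i$ remain direct summands of finite free modules.

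What remains is to verify strict exactness of the pulled-back complex and co-admissibility of $\mathcal{S}_X$. For the latter, étale base-change preserves co-admissibility because it preserves the relevant Fréchet–Stein presentations locally, as follows from the flatness results of \cite{ardakovequivariant}. For the former, since $\wideparen{E}_X$ is not noetherian, strict exactness does not come for free; instead I would pick a finite cover of $X^2$ by smooth Stein affinoid pieces of the form appearing in Theorem \ref{teo global sections of co-admissible modules on Stein spaces}, reduce to checking exactness at the level of the truncated completed enveloping algebras $\widehat{U}(\pi^n\mathcal{T})_K^e$, and invoke the flatness of the base-change map from the affine enveloping algebra to its analytic completion — exactly the mechanism already used at the end of the proof of Lemma \ref{Lemma resolution of differential operators on affine spaces}. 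The main obstacle is precisely this flatness verification across a non-noetherian passage, and it is the reason the hypothesis of an étale map $X \to \mathbb{A}^r_K$ enters the statement: it allows one to propagate the algebraic resolution from $\D_{\mathbb{A}^r_K}(\mathbb{A}^r_K)^e$ through the analytic completion without losing exactness.
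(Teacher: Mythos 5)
Your proposal matches the paper's proof: the paper likewise pulls back the resolution of Lemma \ref{Lemma resolution of differential operators on affine spaces} along the étale map $f\times f:X^2\rightarrow \mathbb{A}^{2r}_K$ (after passing to $\wideparen{\D}_{X^2}$-modules via the side-switching equivalence), and obtains the splitting from the clopen decomposition $Y:=X\times_{\mathbb{A}^r_K}X=X\sqcup Z$ forced by étaleness, defining $\mathcal{S}_X$ as the summand supported on $Z$. The only presentational differences are that the paper identifies the diagonal summand as $\Delta_+\OX_X$ explicitly via Kashiwara's equivalence (computing $\left(f\times f\right)^!\Delta_+\OX_{\mathbb{A}^r_K}=\alpha_+\beta^!\OX_{\mathbb{A}^r_K}=\alpha_+\OX_Y$), and obtains strict exactness directly from the exactness of $\left(f\times f\right)^!$ on complexes of co-admissible modules (strictness then being automatic for morphisms of co-admissible modules), rather than by your proposed local reduction to truncated completed enveloping algebras.
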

\begin{proof}
By the side-switching Theorem for co-admissible bimodules \cite[Theorem 6.3.6]{HochDmod}, together with the fact that $\operatorname{S}^{-1}(\Delta_*\wideparen{\D}_X)=\Delta_+\OX_X$, it is enough to show that there is a co-admissible $\wideparen{\D}_{X^2}$-module  $\mathcal{M}$ such that $\mathcal{M}\oplus\Delta_+\OX_X$ admits a resolution by finite-projective $\wideparen{\D}_{X^2}$-modules. By Lemma \ref{Lemma resolution of differential operators on affine spaces}, there is a strict exact complex of finite-projective $\wideparen{\D}_{\mathbb{A}^{2r}_K}$-modules $P^{\bullet}$ of the following form:
\begin{equation*}
    P^{\bullet}:=\left(0\rightarrow P^{2r}\rightarrow \cdots\rightarrow \wideparen{\D}_{\mathbb{A}^{2r}_K}\rightarrow \Delta_*\OX_{\mathbb{A}^{r}_K}\rightarrow 0\right).
\end{equation*}
As étale maps are stable under base-change and composition, the product $f\times f:X^2\rightarrow \mathbb{A}^{2r}_K$ is also an étale map. We claim that the extraordinary inverse image $\left(f\times f\right)^!(P^{\bullet})$ yields the desired complex.  First, notice that as $f\times f:X^2\rightarrow \mathbb{A}^{2r}_K$ is étale, we have a canonical isomorphism $\wideparen{\D}_{X^2\rightarrow \mathbb{A}^{2r}_K}\cong \wideparen{\D}_{X^2}$. Furthermore, by \cite[Theorem 9.11]{bode2021operations}  for any co-admissible $\wideparen{\D}_{\mathbb{A}^{2r}_K}$-module $\mathcal{N}$ we have:
\begin{equation*}
\left(f\times f\right)^!\mathcal{N}=\wideparen{\D}_{X^2}\overrightarrow{\otimes}_{\left(f\times f\right)^{-1}\wideparen{\D}_{\mathbb{A}^{2r}_K}}\left(f\times f\right)^{-1}\mathcal{N}=\OX_{X^2}\overrightarrow{\otimes}_{\left(f\times f\right)^{-1}\OX_{\mathbb{A}^{2r}_K}}\left(f\times f\right)^{-1}\mathcal{N}.    
\end{equation*}
As $P^{\bullet}$ is an exact complex of co-admissible modules, it follows that $\left(f\times f\right)^!(P^{\bullet})$ is also an exact complex of co-admissible modules. Hence, we have:
\begin{equation*}
    \left(f\times f\right)^!(P^{\bullet})=\left(0\rightarrow \left(f\times f\right)^!P^{2r}\rightarrow \cdots\rightarrow \wideparen{\D}_{X^2}\rightarrow \left(f\times f\right)^!\Delta_*\OX_{\mathbb{A}^{r}_K}\rightarrow 0\right).
\end{equation*}
Consider the following fiber product:
\begin{equation*}
    Y:=X^2\times_{\mathbb{A}^{2r}_K}\mathbb{A}^{r}_K.
\end{equation*}
By properties of the pullback, the projection $\alpha:Y\rightarrow X^2$ is a closed immersion, and the projection $\beta:Y\rightarrow \mathbb{A}^{r}_K$ is an étale map. Furthermore, the maps $\Delta:X\rightarrow X^2$, and $f:X\rightarrow \mathbb{A}^{r}_K$, induce a map:
\begin{equation*}
    X\rightarrow Y,
\end{equation*}
which is an étale closed immersion. Hence, $X$ is a union of connected components of $Y$. We let $Z=Y\setminus X$ be the union of the other connected components of $Y$, and set $\mathcal{M}=\alpha_+\OX_{Z}$.\newline
As above, we have:
\begin{equation*}
\left(f\times f\right)^!\Delta_+\OX_{\mathbb{A}^{r}_K}=\OX_{X^2}\overrightarrow{\otimes}_{\left(f\times f\right)^{-1}\OX_{\mathbb{A}^{2r}_K}}\left(f\times f\right)^{-1}\Delta_+\OX_{\mathbb{A}^r_K}.    
\end{equation*}
In particular, as $\Delta_+\OX_{\mathbb{A}^r_K}$ has support contained in the diagonal of
$\mathbb{A}^{2r}_K$, the co-admissible  $\wideparen{\D}_{X^2}$-module $\left(f\times f\right)^!\Delta_+\OX_{\mathbb{A}^r_K}$ has support contained in $Y$. By Kashiwara's equivalence, we have:
\begin{equation*}
  \left(f\times f\right)^!\Delta_+\OX_{\mathbb{A}^r_K}=\alpha_+\alpha^! \left(f\times f\right)^!\Delta_+\OX_{\mathbb{A}^r_K}=\alpha_+\beta^!\OX_{\mathbb{A}^r_K}=\alpha_+\OX_{Y}=\alpha_+\OX_{Z}\oplus \Delta_+\OX_{X}.
\end{equation*}
Thus, $\left(f\times f\right)^!(P^{\bullet})$ is a resolution of $\mathcal{M}\oplus\Delta_+\OX_{X}$ by finite-projective $\wideparen{\D}_{X^2}$-modules, as  wanted.
\end{proof}
\begin{coro}\label{coro global dim}
 Let $X=\Sp(A)$ be an affinoid space and $\mathcal{A}\subset A$ be an affine formal model of $A$. Assume $\mathcal{T}_{X/K}$ admits a free $(\mathcal{R},\mathcal{A})$-Lie lattice $\mathcal{L}$ satisfying the following two properties:
 \begin{equation*}
     \mathcal{L}(\mathcal{A})\subset \pi\mathcal{A},\quad [\mathcal{L},\mathcal{L}]\subset \pi^2\mathcal{L}.
 \end{equation*}
 In this situation, the following inequality holds for each $n\geq 0$:
 \begin{equation*}
     \operatorname{gl.dim.}(\widehat{U}(\pi^n\mathcal{L})_K)\leq 2\operatorname{dim}(X).
 \end{equation*}
 Furthermore, if $M\in \Mod_{\Indban}(\widehat{U}(\pi^n\mathcal{L})_K)$ satisfies that it is projective as an Ind-Banach space, then $M$ admits a projective resolution  of $\widehat{U}(\pi^n\mathcal{L})_K$-modules of length $2\operatorname{dim}(X)$.
\end{coro}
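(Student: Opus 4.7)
The plan is to apply Proposition \ref{prop resolution of differential operators on smooth Stein spaces} to $X$ and then descend the bimodule resolution it produces down to the Banach algebra $\widehat{U}(\pi^n\mathcal{L})_K$. The hypothesis that $\mathcal{L}$ is a free $(\mathcal{R},\mathcal{A})$-Lie lattice of rank $r=\dim(X)$ forces $\mathcal{T}_{X/K}$ to be a free $\OX_X(X)$-module of rank $r$, and a choice of coordinates dual to a basis of $\mathcal{L}$ provides an étale morphism $X\rightarrow \mathbb{A}^r_K$. Hence Proposition \ref{prop resolution of differential operators on smooth Stein spaces} applies and yields a strict exact complex
\begin{equation*}
0\rightarrow P^{2r}\rightarrow \cdots\rightarrow P^0\rightarrow \mathcal{S}_X\oplus \Delta_*\wideparen{\D}_X\rightarrow 0
\end{equation*}
of finite-projective $\wideparen{E}_X$-modules on $X^2$, with $P^0=\wideparen{E}_X$.

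Next I would push this resolution down to the algebraic level. All terms being co-admissible $\wideparen{E}_X$-modules on the affinoid $X^2$, the affinoid analogue of Proposition \ref{prop characterization of co-admissible modules on a smooth Stein space} combined with Remark \ref{obs version for E modules} allows us to take global sections and obtain a strict exact complex of finite-projective $\wideparen{\D}_X(X)^e$-bimodules resolving $\mathcal{S}_X(X^2)\oplus \wideparen{\D}_X(X)$, using the identification $\wideparen{\D}_X(X)^e=\wideparen{E}_X(X^2)$. Since $\wideparen{\D}_X(X)=\varprojlim_m \widehat{U}(\pi^m\mathcal{L})_K$ is a Fréchet--Stein presentation, the transition map $\wideparen{\D}_X(X)^e\rightarrow \widehat{U}(\pi^n\mathcal{L})_K^e$ is $c$-flat (a tensor product of two $c$-flat maps in the spirit of Corollary \ref{coro maps to stein covering are c-flat}). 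Applying the exact base change $\widehat{U}(\pi^n\mathcal{L})_K^e\overrightarrow{\otimes}_{\wideparen{\D}_X(X)^e}-$ therefore preserves strict exactness and the finite-projective nature of each term, producing a length $2r$ resolution of $\widehat{U}(\pi^n\mathcal{L})_K\oplus T$ by finite-projective $\widehat{U}(\pi^n\mathcal{L})_K^e$-modules, for some suitable bimodule $T$.

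Given a left $\widehat{U}(\pi^n\mathcal{L})_K$-module $M$, I would then apply the functor $-\overrightarrow{\otimes}_{\widehat{U}(\pi^n\mathcal{L})_K} M$ to this bimodule resolution. Each $P^i$ is a direct summand of $\bigoplus^{k_i}\widehat{U}(\pi^n\mathcal{L})_K^e$, and the natural identification $\widehat{U}(\pi^n\mathcal{L})_K^e\overrightarrow{\otimes}_{\widehat{U}(\pi^n\mathcal{L})_K} M=\widehat{U}(\pi^n\mathcal{L})_K\overrightarrow{\otimes}_K M$ shows that the output is a projective left $\widehat{U}(\pi^n\mathcal{L})_K$-module, the projectivity of $M$ as an Ind-Banach space being precisely what guarantees that $\overrightarrow{\otimes}_K M$ is strongly exact on the resolution. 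The outcome is a length $2r$ projective resolution of $M\oplus (T\overrightarrow{\otimes}_{\widehat{U}(\pi^n\mathcal{L})_K} M)$; since projective dimension is monotone under direct summands, we conclude $\mathrm{proj.\,dim}(M)\leq 2r$, giving both the bound on global dimension and the explicit finite projective resolution required.

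The principal obstacle I foresee is organizing the descent step cleanly: one needs to confirm that the side-switching equivalence $\operatorname{S}$ and the global sections functor interact with the $c$-flat base change along $\wideparen{\D}_X(X)^e\rightarrow\widehat{U}(\pi^n\mathcal{L})_K^e$ so as to preserve both strict exactness of the sequence and the finite-projective nature of each $P^i$ after descent. A secondary point, relevant only to the Ind-Banach statement, is verifying that Ind-Banach projectivity of $M$ suffices for strict exactness at every step after tensoring; this should follow from \cite[Corollary 5.2]{bode2021operations} together with the fact that each truncation of the resolution lands in the essential image of the dissection functor.
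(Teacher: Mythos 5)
Your overall route --- take the bimodule resolution from Proposition \ref{prop resolution of differential operators on smooth Stein spaces}, base-change it to $B_n^e$ where $B_n:=\widehat{U}(\pi^n\mathcal{L})_K$, and then tensor with $M$ over $B_n$ --- is the paper's route, but there is a genuine gap at the final step. After base change you have a finite-projective $B_n^e$-resolution of $B_n\oplus T$ with $T=B_n^e\widehat{\otimes}_{B^e}\mathcal{S}_X(X^2)$, and you apply $-\overrightarrow{\otimes}_{B_n}M$ to the whole augmented complex, asserting exactness because $M$ is projective in $\Indban$. That is not the right reason, and the assertion is unjustified: each $P^i$ is projective as a \emph{right} $B_n$-module (this uses that $B_n$ is projective in $\Indban$, not that $M$ is), so the positive-degree homology of $P^{\bullet}\overrightarrow{\otimes}_{B_n}M$ computes $\operatorname{Tor}^{B_n}_{i}(B_n\oplus T,M)=\operatorname{Tor}^{B_n}_{i}(T,M)$, and $T$ --- built from the pushforward $\alpha_+\OX_{Z}$ along a closed immersion --- has no reason to be flat as a right $B_n$-module. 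Such Tor groups genuinely occur for differential operators: already $\operatorname{Tor}^{\D}_1(\D/x\D,\,\D/\D x)\neq 0$ for the Weyl algebra. Projectivity of $M$ in $\Indban$ only buys you that each term $P^i\overrightarrow{\otimes}_{B_n}M$, being a summand of $\bigoplus B_n\overrightarrow{\otimes}_KM$, is a projective left $B_n$-module; it says nothing about exactness of $-\overrightarrow{\otimes}_{B_n}M$, so your output need not be a resolution of $M\oplus(T\overrightarrow{\otimes}_{B_n}M)$ at all.

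The paper closes this gap with one step you are missing: before tensoring with $M$, it uses that $B_n^e$ is noetherian and $B_n$ is a finitely generated $B_n^e$-module to replace the resolution of $B_n\oplus T$ by a length-$2\operatorname{dim}(X)$ resolution $P^{\bullet}$ of $B_n$ \emph{alone} by finite-projective $B_n^e$-modules (a finitely generated direct summand of a module admitting a finite finite-projective resolution over a noetherian ring again admits one of the same length). Since the terms of this $P^{\bullet}$ are projective right $B_n$-modules and $B_n$ is free over itself, $P^{\bullet}\overrightarrow{\otimes}_{B_n}M$ represents $B_n\overrightarrow{\otimes}^{\mathbb{L}}_{B_n}M=M$ and is therefore an honest length-$2\operatorname{dim}(X)$ projective resolution of $M$ itself, with no stray summand to discard and no syzygy argument needed afterwards. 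Two further points you should make explicit: the global dimension bound requires handling modules that are not assumed projective in $\Indban$, which the paper does by reducing to finitely generated $B_n$-modules (legitimate by noetherianity) and observing that these are Banach with a countable dense subspace, hence automatically projective in $\Indban$; and the passage from the hypothesis on $\mathcal{L}$ to the Fr\'echet--Stein presentation $\wideparen{\D}_X(X)=\varprojlim_nB_n$ and $B^e=\varprojlim_nB_n^e$ should be cited (the paper invokes \cite[Theorem 4.1.11]{ardakovequivariant} and \cite[Corollary 6.2.6]{HochDmod}) rather than derived from an ad hoc ``coordinates dual to a basis of $\mathcal{L}$'' construction.
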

\begin{proof}
For simplicity, let $B_n:=\widehat{U}(\pi^n\mathcal{L})_K$, and $B:=\wideparen{\D}_X(X)$. In virtue of \cite[Theorem 4.1.11]{ardakovequivariant}, our assumptions on $\mathcal{L}$ imply that we have a Fréchet-Stein presentation $B=\varprojlim_nB_n$. Similarly, \cite[Corollary 6.2.6]{HochDmod}  implies that $B^e=\varprojlim_nB_n^e$ is also a Fréchet-Stein presentation. In this situation, $B$ is a co-admissible $B^e$-module, and we have identities:
\begin{equation*}
    B_n=B_n^e\widehat{\otimes}_{B^e}B,
\end{equation*}
for each $n\geq 0$ (see \cite[Section 7]{HochDmod} for details). Let $E^{\bullet}_{\wideparen{\D}_X}$ be the complex obtained in Proposition \ref{prop resolution of differential operators on smooth Stein spaces} and fix some $n\geq 0$. As $E^{\bullet}_{\wideparen{\D}_X}$ is a strict and exact complex of co-admissible $B^e$-modules, it remains strict exact after applying the functor $B_n^e\widehat{\otimes}_{B^e}-$. Thus, the fact that $B_n^e$ is noetherian and $B_n$ is finite implies that $B_n$ admits a resolution of length $2\operatorname{dim}(X)$ by finite-projective $B_n^e$-modules.\\
Let $P^{\bullet}$ be such a resolution. Notice that the algebra $B_n$ is a Banach space which has a dense countably generated vector subspace. Hence, by  \cite[Proposition 10.4]{schneider2013nonarchimedean}, and \cite[Lemma 4.16]{bode2021operations} it is a projective object in $\Indban$. In particular, the algebra $B_n^e:=B_n\widehat{\otimes}_KB_n^{\op}$ is a projective object viewed as a right $B_n$-module. Therefore, every finite-projective $B_n^e$-module is a fortiori a projective right $B_n$-module. It follows that $P^{\bullet}$
is a resolution of $B_n$ by projective $B_n$-modules. Thus, for any finitely generated left $B_n$-module $M$ we have the following identities in the derived category $\operatorname{D}(B_n)$:
\begin{equation*}
M=B_n\widehat{\otimes}^{\mathbb{L}}_{B_n}M =P^{\bullet}\widehat{\otimes}_{B_n}M.
\end{equation*}
In particular, $P^{\bullet}\widehat{\otimes}_{B_n}M$ is a resolution of $B_n$-modules of $M$ of length $2\operatorname{dim}(X)$. However, using the same arguments as above, $M$ is a projective object in $\Indban$, and therefore the left $B_n$-module $B_n^e\widehat{\otimes}_{B_n}M=   B_n\widehat{\otimes}_KM$ is also projective. Hence, if $P$ is a finite-projective $B_n^e$-module, then $P\widehat{\otimes}_{B_n}M$ is a projective left $B_n$-module. We have thus shown that $P^{\bullet}\widehat{\otimes}_{B_n}M$ is a projective resolution of $M$ of length $2\operatorname{dim}(X)$. As $B_n$ is noetherian and $M$ is finitely generated, this implies that $M$ admits a resolution by finite-projective $B_n$-modules of length $2\operatorname{dim}(X)$. Hence, we get $ \operatorname{gl.dim.}(B_n)\leq 2\operatorname{dim}(X)$, as we wanted to show. The same arguments as above show the claim at the end of the corollary.
\end{proof}
\begin{obs}
For the rest of the section we assume $K$ is discretely valued or algebraically closed.
\end{obs}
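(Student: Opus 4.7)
The final displayed item is not a mathematical claim but a standing hypothesis — a Remark declaring that, for the remainder of the section, the base field $K$ will be either discretely valued or algebraically closed. Accordingly, there is nothing to prove; the appropriate ``proposal'' is to explain why the hypothesis is imposed at precisely this point and what subsequent results rely on it.

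The hypothesis is the one already flagged in the Remark following Theorem \ref{teo global sections of co-admissible modules on Stein spaces}. Its role is to guarantee that, for every affinoid piece $X_n$ of a Stein covering $X=\varinjlim X_n$, the ring of power-bounded elements $A^{\circ}_n\subset \OX_X(X_n)$ is an affine formal model (equivalently, is topologically finitely presented over $\mathcal{R}$). This is what makes the constructions of free $(\mathcal{R},A^{\circ}_n)$-Lie lattices $\pi^{m(n)}\mathcal{T}_n$ and the Fréchet–Stein presentations $\wideparen{\D}_X(X_n)=\varprojlim_m \widehat{\D}_{n,m}$ and $\wideparen{\D}_X(X)=\varprojlim_n \widehat{\D}_{n,m(n)}$ of Theorem \ref{teo global sections of co-admissible modules on Stein spaces} available. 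For a general complete non-archimedean extension of $\mathbb{Q}_p$ this topological finite-presentation is not known to hold, which is the only obstruction to the more general statement.

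In the material that immediately follows, the hypothesis will be invoked through three closely related inputs: (i) the equivalence $\Gamma(X,-)\colon \mathcal{C}(\wideparen{\D}_X)\to \mathcal{C}(\wideparen{\D}_X(X))$ of Proposition \ref{prop characterization of co-admissible modules on a smooth Stein space}; (ii) the acyclicity of co-admissible modules for $\Gamma(X,-)$ on Stein spaces (Proposition \ref{prop acyclicity of co-admissible modules on Stein spaces}); and (iii) the corresponding statements for co-admissible $\wideparen{E}_X$-modules on $X^2$, guaranteed by Remark \ref{obs version for E modules}. These are exactly the ingredients needed to pass from the finite-projective resolution $E^{\bullet}_{\wideparen{\D}_X}$ of Proposition \ref{prop resolution of differential operators on smooth Stein spaces} on $X^2$ to the identification $\operatorname{HH}^{\bullet}(\wideparen{\D}_X)=\operatorname{HH}^{\bullet}(\wideparen{\D}_X(X))$ in $\operatorname{D}(\widehat{\mathcal{B}}c_K)$ promised in Theorem \ref{intteo theorem C}.

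Thus there is no proof to carry out: the Remark simply records the field-theoretic hypothesis under which all Fréchet–Stein input from Chapter \ref{Chapter Stein spaces} is available, so that the arguments about $\mathcal{C}(\wideparen{\D}_X)$ and $\mathcal{C}(\wideparen{E}_X)$ can be used freely in what follows. The only nontrivial task in this direction — and the natural ``main obstacle'' for future work — would be to extend the Fréchet–Stein formalism of \cite{ardakovequivariant} so as to drop the hypothesis; were this done, every subsequent result of the section would automatically hold for arbitrary complete non-archimedean extensions of $\mathbb{Q}_p$.
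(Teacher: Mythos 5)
You are right that this is a standing hypothesis rather than a claim, so there is no proof in the paper to compare against; your account of why the assumption is imposed (topological finite presentation of the rings of power-bounded elements, hence the Fréchet--Stein input of Theorem \ref{teo global sections of co-admissible modules on Stein spaces} and its consequences for $\mathcal{C}(\wideparen{\D}_X)$ and $\mathcal{C}(\wideparen{E}_X)$) matches the paper's own stated motivation. Nothing further is required.
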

Even though the resolution we obtained is slightly weaker than the result we had in the previous section, it is still enough to deduce the results we are looking for: 
\begin{teo}\label{teo hochschild cohomology in terms of extendions in DXe 1}
Let $X$ be a Stein space with an étale map $X\rightarrow \mathbb{A}^r_K$. For any co-admissible module $\mathcal{M}\in \mathcal{C}(\wideparen{E}_X)$ we have the following identity in $\operatorname{D}(\widehat{\mathcal{B}}c_K)$:
\begin{equation*}
    R\Gamma(X^2,R\underline{\mathcal{H}om}_{\wideparen{E}_X}(\Delta_*\wideparen{\D}_X,\mathcal{M}))=R\underline{\Hom}_{\wideparen{\D}_X(X)^e}(\wideparen{\D}_X(X),\Gamma(X^2,\mathcal{M})).
\end{equation*}
In particular, we have the following identity: 
\begin{equation}\label{equation Hochcoh determined at global sections}
    \operatorname{HH}^{\bullet}(\wideparen{\D}_X)=R\underline{\Hom}_{\wideparen{\D}_X(X)^e}(\wideparen{\D}_X(X),\wideparen{\D}_X(X))=:\operatorname{HH}^{\bullet}(\wideparen{\D}_X(X)).
\end{equation}
\end{teo}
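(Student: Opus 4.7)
The plan is to exploit the resolution $E^{\bullet}_{\wideparen{\D}_X}$ from Proposition \ref{prop resolution of differential operators on smooth Stein spaces}. Write $P^{\bullet}$ for its truncation, so that $P^{\bullet}$ is a strict resolution of $\mathcal{S}_X \oplus \Delta_*\wideparen{\D}_X$ by finite-projective (hence, in particular, co-admissible) $\wideparen{E}_X$-modules with $P^0 = \wideparen{E}_X$. The key observation that makes everything work is that $X^2$ is itself a Stein space with an étale map $X^2 \to \mathbb{A}^{2r}_K$, so by Remark \ref{obs version for E modules} the contents of Section \ref{section co-ad D mod stein spaces} apply verbatim to co-admissible $\wideparen{E}_X$-modules.

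Concretely, the proof would proceed in four steps. First, each $P^i$ is a direct summand of some $\wideparen{E}_X^{n_i}$, so $\underline{\mathcal{H}om}_{\wideparen{E}_X}(P^i, \mathcal{M})$ is a direct summand of $\mathcal{M}^{n_i}$, hence co-admissible. By the Stein-for-$\wideparen{E}_X$ version of Proposition \ref{prop acyclicity of co-admissible modules on Stein spaces}, these terms are $\Gamma(X^2,-)$-acyclic, and finite-projectivity ensures acyclicity for $R\underline{\mathcal{H}om}_{\wideparen{E}_X}(-,\mathcal{M})$. Thus
\begin{equation*}
R\Gamma(X^2, R\underline{\mathcal{H}om}_{\wideparen{E}_X}(\mathcal{S}_X \oplus \Delta_*\wideparen{\D}_X, \mathcal{M})) = \Gamma(X^2, \underline{\mathcal{H}om}_{\wideparen{E}_X}(P^{\bullet}, \mathcal{M})).
\end{equation*}
Second, applying the $\wideparen{E}_X$-analog of Lemma \ref{Lemma global sections inner hom functor} on the Stein space $X^2$ term-wise converts the right-hand side into $\underline{\Hom}_{\wideparen{\D}_X(X)^e}(P^{\bullet}(X^2), \mathcal{M}(X^2))$, using that $\wideparen{E}_X(X^2) = \wideparen{\D}_X(X)^e$. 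Third, the acyclicity already established shows that $P^{\bullet}(X^2)$ is a strict resolution of $\wideparen{\D}_X(X) \oplus \mathcal{S}_X(X^2)$, whose terms are finite-projective over $\wideparen{\D}_X(X)^e$ because direct summands of $\wideparen{E}_X^n$ descend through global sections to direct summands of $(\wideparen{\D}_X(X)^e)^n$. Hence it computes $R\underline{\Hom}_{\wideparen{\D}_X(X)^e}(\wideparen{\D}_X(X) \oplus \mathcal{S}_X(X^2), \mathcal{M}(X^2))$. Fourth, projecting onto the $\wideparen{\D}_X(X)$-summand yields the desired identity. The Hochschild cohomology formula then follows by specializing $\mathcal{M} = \Delta_*\wideparen{\D}_X$ and combining with the formula $\operatorname{HH}^{\bullet}(\wideparen{\D}_X) = R\Gamma(X^2, R\underline{\mathcal{H}om}_{\wideparen{E}_X}(\Delta_*\wideparen{\D}_X, \Delta_*\wideparen{\D}_X))$ recalled in the introduction, together with $\Gamma(X^2, \Delta_*\wideparen{\D}_X) = \wideparen{\D}_X(X)$.

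The main technical obstacle is the second step, namely establishing rigorously the Stein version of Lemma \ref{Lemma global sections inner hom functor} for co-admissible $\wideparen{E}_X$-modules on $X^2$, which requires a careful interplay between the Fréchet-Stein presentations, the side-switching equivalence from Theorem \ref{teo  properties of EX}, and the behavior of inner Hom under the localization-globalization equivalence of Proposition \ref{prop characterization of co-admissible modules on a smooth Stein space}. A secondary subtlety is confirming that the passage to global sections preserves not just exactness but strict exactness of $P^{\bullet}$, and that the resulting finite-projective $\wideparen{\D}_X(X)^e$-modules are genuinely projective in the quasi-abelian sense needed to compute $R\underline{\Hom}$; this should follow from the $\Gamma(X^2,-)$-acyclicity already used, combined with the fact that finite direct sums of $\wideparen{\D}_X(X)^e$ are projective objects in $\Mod_{\Indban}(\wideparen{\D}_X(X)^e)$ in the sense relevant for the derived inner Hom.
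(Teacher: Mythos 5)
Your proposal is correct and follows essentially the same route as the paper's proof: both use the resolution $E^{\bullet}_{\wideparen{\D}_X}$ from Proposition \ref{prop resolution of differential operators on smooth Stein spaces}, the $\Gamma(X^2,-)$-acyclicity of co-admissible $\wideparen{E}_X$-modules on the Stein space $X^2$, the $\wideparen{E}_X$-version of Lemma \ref{Lemma global sections inner hom functor}, and the direct-sum decomposition to isolate the $\Delta_*\wideparen{\D}_X$-summand at the end. The technical points you flag (strictness of the quasi-isomorphism $P^{\bullet}\rightarrow \mathcal{S}_X\oplus\Delta_*\wideparen{\D}_X$ and projectivity after taking global sections) are handled in the paper by the automatic strictness of morphisms between complexes of co-admissible modules and by the compatibility of the resolution with $\Gamma(X^2,-)$, exactly as you anticipate.
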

\begin{proof}
Let $\mathcal{N}$ be a co-admissible  $\wideparen{E}_X$-module. As $X^2$ is a smooth Stein space with an étale map $X^2\rightarrow \mathbb{A}^{2r}_K$, it follows by Lemma \ref{Lemma global sections inner hom functor} that we have an identification of functors: 
\begin{equation*}
    \Gamma(X^2,\underline{\mathcal{H}om}_{\wideparen{E}_X}(\mathcal{N},-))=\underline{\Hom}_{\wideparen{\D}_X(X)^e}(\Gamma(X^2,\mathcal{N}),\Gamma(X^2,-)).
\end{equation*}
Thus, for each $\mathcal{M}\in \mathcal{C}(\wideparen{E}_X)$ we have a natural map in the homotopy category $\operatorname{K}(LH(\widehat{\mathcal{B}}c_K))$: 
\begin{equation*}
   \psi_{\mathcal{N},\mathcal{M}}: R\underline{\Hom}_{\wideparen{\D}_X(X)^e}(\Gamma(X^2,\mathcal{N}),\Gamma(X^2,\mathcal{M}))\rightarrow R\Gamma(X^2,R\underline{\mathcal{H}om}_{\wideparen{E}_X}(\mathcal{N},\mathcal{M})).
\end{equation*}
We need to show that this is a quasi-isomorphism for $\mathcal{N}=\Delta_*\wideparen{\D}_X$ and every $\mathcal{M}\in \mathcal{C}(\wideparen{E}_X)$.\\

Until the end of the proof, let $\mathcal{T}=\mathcal{S}_X\oplus \Delta_*\wideparen{\D}_X$. By Proposition \ref{prop resolution of differential operators on smooth Stein spaces}, there is a complex of finite-projective $\wideparen{E}_X$-modules $P^{\bullet}$ with a quasi-isomorphism $P^{\bullet}\rightarrow \mathcal{T}$. Notice that both complexes are complexes of co-admissible $\wideparen{E}_X$-modules. Hence, this quasi-isomorphism is automatically strict by \cite[Corollary 5.15]{bode2021operations}. Thus, we have the following chain of identities in $\operatorname{D}(\widehat{\mathcal{B}}c_K)$:
\begin{align*}
 R\Gamma(X^2,R\underline{\mathcal{H}om}_{\wideparen{E}_X}(\mathcal{T},\mathcal{M}))=R\Gamma(X^2,\underline{\mathcal{H}om}_{\wideparen{E}_X}(P^{\bullet},\mathcal{M}))=\Gamma(X^2,\underline{\mathcal{H}om}_{\wideparen{E}_X}(P^{\bullet},\mathcal{M}))\\
 =\underline{\Hom}_{\wideparen{\D}_X(X)^e}(\Gamma(X^2,P^{\bullet}),\Gamma(X^2,\mathcal{M}))\\=R\underline{\Hom}_{\wideparen{\D}_X(X)^e}(\Gamma(X^2,\mathcal{T}),\Gamma(X^2,\mathcal{M})),
\end{align*}
where we make extensive use of the fact that co-admissible modules are acyclic for the global sections functor, and finite-projective $\wideparen{E}_X$-modules are acyclic for the $\underline{\mathcal{H}om}_{\wideparen{E}_X}(-,\mathcal{M})$-functor. Thus, it follows that $\psi_{\mathcal{T},\mathcal{M}}$ is a quasi-isomorphism for every $\mathcal{M}\in \mathcal{C}(\wideparen{E}_X)$.\\

Notice that both the global sections functor and its associated derived functor commute with finite direct sums. Similarly, both $\underline{\mathcal{H}om}_{\wideparen{E}_X}(-,-)$, and $\underline{\Hom}_{\wideparen{\D}_X(X)^e}(-,-)$ and its derived functors commute with finite direct sums on the first variable. As 
$\mathcal{T}=\mathcal{S}_X\oplus \Delta_*\wideparen{\D}_X$, it follows that:
\begin{equation*} \psi_{\mathcal{T},\mathcal{M}}=\psi_{\mathcal{S},\mathcal{M}}\oplus \psi_{\Delta_*\wideparen{\D}_X,\mathcal{M}},
\end{equation*}
for every $\mathcal{M}\in \mathcal{C}(\wideparen{E}_X)$.  This implies that $\psi_{\Delta_*\wideparen{\D}_X,\mathcal{M}}$ is also a quasi-isomorphism, as wanted.\\
In order to show (\ref{equation Hochcoh determined at global sections}), we consider the following identities in $\operatorname{D}(\widehat{\mathcal{B}}c_K)$:
\begin{align*}
  \operatorname{HH}^{\bullet}(\wideparen{\D}_X):=R\Gamma(X,R\underline{\mathcal{H}om}_{\wideparen{\D}^e_X}(\wideparen{\D}_X,\wideparen{\D}_X)) =R\Gamma(X,\Delta^{-1}R\underline{\mathcal{H}om}_{\wideparen{E}_X}(\Delta_*\wideparen{\D}_X,\Delta_*\wideparen{\D}_X))\\
  =R\Gamma(X^2,R\underline{\mathcal{H}om}_{\wideparen{E}_X}(\Delta_*\wideparen{\D}_X,\Delta_*\wideparen{\D}_X))\\=R\underline{\Hom}_{\wideparen{\D}_X(X)^e}(\wideparen{\D}_X(X),\wideparen{\D}_X(X))\\
  =:\operatorname{HH}^{\bullet}(\wideparen{\D}_X(X)),
\end{align*}
where the second identity is \cite[Proposition 3.5.5]{HochDmod}, the third one follows by \cite[Lemma 3.5.3]{HochDmod}, and the fourth one is a special case of the first part of the theorem.
\end{proof}
Hence, in the case of Stein spaces with free tangent sheaf, we deduce that the Hochschild cohomology of $\wideparen{\D}_X$  is completely determined by the Hochschild cohomology of its global sections $\wideparen{\D}_X(X)$.\newline
As in the previous section, we can also give a version for the external sheaves of homomorphisms:
\begin{teo}
Let $X$ be a Stein space with an étale map $X\rightarrow \mathbb{A}^r_K$. For any co-admissible module $\mathcal{M}\in \mathcal{C}(\wideparen{E}_X)$ we have the following identities in $\operatorname{D}(\operatorname{Vect}_K)$:
\begin{equation*}
    R\Hom_{\wideparen{E}_X}(\Delta_*\wideparen{\D}_X,\mathcal{M})=R\Gamma(X^2,R\mathcal{H}om_{\wideparen{E}_X}(\Delta_*\wideparen{\D}_X,\mathcal{M}))=R\Hom_{\wideparen{\D}_X(X)^e}(\wideparen{\D}_X(X),\Gamma(X^2,\mathcal{M})).
\end{equation*}
Hence, we have the following identities: 
\begin{equation}\label{equation extensions are determined on global sections}
   \operatorname{Forget}(J\operatorname{HH}^{\bullet}(\wideparen{\D}_X))=R\Hom_{\wideparen{\D}_X(X)^e}(\wideparen{\D}_X(X),\wideparen{\D}_X(X))=R\Hom_{\wideparen{E}_X}(\Delta_*\wideparen{\D}_X,\Delta_*\wideparen{\D}_X).
\end{equation}    
\end{teo}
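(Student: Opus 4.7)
My strategy is to mirror the proof of Theorem \ref{teo hochschild cohomology in terms of extendions in DXe 1} but replacing the inner Hom with the external Hom, while using exactness of the functor $\operatorname{Forget}\circ J$ to transport statements from $\Indban$ to $\operatorname{Vect}_K$. The central input is the finite-projective resolution
\begin{equation*}
P^{\bullet}\longrightarrow \mathcal{T}:=\mathcal{S}_X\oplus \Delta_*\wideparen{\D}_X
\end{equation*}
produced by Proposition \ref{prop resolution of differential operators on smooth Stein spaces}, together with the fact that the theorem on the affinoid Hom functor (Lemma \ref{lemma external hom Stein co-adm 1}) and the acyclicity result (Proposition \ref{prop acyclicity of co-admissible modules on Stein spaces}) all hold for co-admissible $\wideparen{E}_X$-modules on $X^2$ by Remark \ref{obs version for E modules}, since $X^2$ is itself a smooth Stein space admitting an étale map to $\mathbb{A}^{2r}_K$.

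The first step is to prove the $\wideparen{E}_X$-analogue of Lemma \ref{Lemma finite free are acyclic for external hom}: for every finite-projective $\wideparen{E}_X$-module $P$ and every $\mathcal{M}\in\Mod_{\Indban}(\wideparen{E}_X)$,
\begin{equation*}
\mathcal{H}om_{\wideparen{E}_X}(P,\mathcal{M})=\operatorname{Forget}\bigl(J\,\underline{\mathcal{H}om}_{\wideparen{E}_X}(P,\mathcal{M})\bigr),
\end{equation*}
reducing to $P=\wideparen{E}_X$ by direct summands and then to Lemma \ref{lemma forgetful functor}. Since $\operatorname{Forget}\circ J$ is exact and $P$ projective implies $\underline{\mathcal{H}om}_{\wideparen{E}_X}(P,-)$ is exact on strict sequences, this identity upgrades to $R\mathcal{H}om_{\wideparen{E}_X}(P,\mathcal{M})=\mathcal{H}om_{\wideparen{E}_X}(P,\mathcal{M})$. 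If in addition $\mathcal{M}\in\mathcal{C}(\wideparen{E}_X)$, then $\underline{\mathcal{H}om}_{\wideparen{E}_X}(P,\mathcal{M})$ is a finite direct summand of a finite product of copies of $\mathcal{M}$, hence co-admissible, hence $\Gamma(X^2,-)$-acyclic; applying $\operatorname{Forget}\circ J$ and using \cite[Proposition 8.1.11]{HochDmod} then shows that $\mathcal{H}om_{\wideparen{E}_X}(P,\mathcal{M})$ is also $\Gamma(X^2,-)$-acyclic.

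With these acyclicities established, I compute both sides using $P^{\bullet}$:
\begin{align*}
R\Gamma(X^2,R\mathcal{H}om_{\wideparen{E}_X}(\mathcal{T},\mathcal{M}))
&= R\Gamma(X^2,\mathcal{H}om_{\wideparen{E}_X}(P^{\bullet},\mathcal{M})) \\
&= \Gamma(X^2,\mathcal{H}om_{\wideparen{E}_X}(P^{\bullet},\mathcal{M})) \\
&= \Hom_{\wideparen{\D}_X(X)^e}(\Gamma(X^2,P^{\bullet}),\Gamma(X^2,\mathcal{M})) \\
&= R\Hom_{\wideparen{\D}_X(X)^e}(\Gamma(X^2,\mathcal{T}),\Gamma(X^2,\mathcal{M})),
\end{align*}
where the third equality is the external version of Lemma \ref{Lemma global sections inner hom functor} (itself a consequence of Lemma \ref{lemma external hom Stein co-adm 1} applied on the smooth Stein space $X^2$), and the last uses that $\Gamma(X^2,P^{\bullet})$ is a strict resolution of $\Gamma(X^2,\mathcal{T})$ by finite-projective $\wideparen{\D}_X(X)^e$-modules thanks to exactness of $\Gamma(X^2,-)$ on co-admissible modules and Proposition \ref{prop characterization of co-admissible modules on a smooth Stein space}. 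The identification $R\Hom_{\wideparen{E}_X}(\mathcal{T},\mathcal{M})=R\Gamma(X^2,R\mathcal{H}om_{\wideparen{E}_X}(\mathcal{T},\mathcal{M}))$ follows from the Grothendieck spectral sequence combined with the acyclicity just established, exactly as in Lemma \ref{Lemma different ext hom functors}. Splitting off the direct summand $\mathcal{S}_X$ (functorially, as both sides decompose along this splitting) yields the three-term identity for $\mathcal{N}=\Delta_*\wideparen{\D}_X$ and arbitrary $\mathcal{M}\in\mathcal{C}(\wideparen{E}_X)$.

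For the final chain \eqref{equation extensions are determined on global sections}, I specialise to $\mathcal{M}=\Delta_*\wideparen{\D}_X$ and combine the result with the identification $\operatorname{HH}^{\bullet}(\wideparen{\D}_X)=R\Gamma(X^2,R\underline{\mathcal{H}om}_{\wideparen{E}_X}(\Delta_*\wideparen{\D}_X,\Delta_*\wideparen{\D}_X))$ from \cite[Proposition 3.5.5]{HochDmod}, applying $\operatorname{Forget}\circ J$ and noting that on the resolution $P^{\bullet}$ this functor intertwines inner and external Hom. The main obstacle I anticipate is the bookkeeping required to ensure that the images of the $P^i$ under $I:\Mod_{\Indban}(\wideparen{E}_X)\to LH(\Mod_{\Indban}(\wideparen{E}_X))$ remain projective — so that the derived external Hom really collapses to the underived one — but this reduces to the case $P^i=\wideparen{E}_X$ via retracts, where it is automatic.
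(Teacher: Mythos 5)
Your proposal is correct and follows essentially the same route as the paper: the paper likewise deduces the three-term identity by transporting the argument of Theorem \ref{teo hochschild cohomology in terms of extendions in DXe 1} to the external Hom functors via the $\wideparen{E}_X$-versions of Lemmas \ref{lemma external hom Stein co-adm 1} and \ref{Lemma different ext hom functors}, and then obtains (\ref{equation extensions are determined on global sections}) by evaluating on $\Gamma(X^2,E^{\bullet}_{\wideparen{\D}_X})$ termwise with Lemma \ref{lemma forgetful functor} (so that $\operatorname{Forget}\circ J$ intertwines inner and external Hom on finite-free modules) and splitting off the $\mathcal{S}_X$ summand. Your closing remark about projectivity of $I(P^i)$ reducing to retracts of $\wideparen{E}_X$ is exactly the bookkeeping the paper handles implicitly.
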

\begin{proof}
Lemmas \ref{lemma external hom Stein co-adm 1}, and \ref{Lemma different ext hom functors} hold for finite-projective $\wideparen{E}_X$-modules. Thus, the identities:
\begin{equation*}
     R\Hom_{\wideparen{E}_X}(\Delta_*\wideparen{\D}_X,\mathcal{M})=R\Gamma(X^2,R\mathcal{H}om_{\wideparen{E}_X}(\Delta_*\wideparen{\D}_X,\mathcal{M}))=R\Hom_{\wideparen{\D}_X(X)^e}(\wideparen{\D}_X(X),\Gamma(X^2,\mathcal{M})),
\end{equation*}
can be deduced using the arguments from the proof of Theorem \ref{teo hochschild cohomology in terms of extendions in DXe 1}. Hence, we only have to show that (\ref{equation extensions are determined on global sections}) holds. The second identity in (\ref{equation extensions are determined on global sections})  is a special case of the first part of the theorem. For the first one, consider the following identities in $\operatorname{D}(\operatorname{Vect}_K)$:
\begin{flalign*}
 R\Hom_{\wideparen{\D}_X(X)^e}(\mathcal{S}_X(X^2)\oplus \wideparen{\D}_X(X),\wideparen{\D}_X(X))=\Hom_{\wideparen{\D}_X(X)^e}(\Gamma(X^2,E^{\bullet}_{\wideparen{\D}_X}),\wideparen{\D}_X(X))&&
 \end{flalign*}
 \begin{flalign*}   
 &&=\left(0\rightarrow \Hom_{\wideparen{\D}_X(X)^e}(\wideparen{\D}_X(X)^e,\wideparen{\D}_X(X))\rightarrow \cdots \rightarrow \Hom_{\wideparen{\D}_X(X)^e}(\bigoplus_{i=1}^{n_{2r}}\wideparen{\D}_X(X)^e,\wideparen{\D}_X(X)) \rightarrow 0  \right)\\
&&=\left(0\rightarrow \Hom_{\Indban}(K,\wideparen{\D}_X(X))\rightarrow \cdots \rightarrow \Hom_{\Indban}(\bigoplus_{i=1}^{n_{2r}}K,\wideparen{\D}_X(X)) \rightarrow 0  \right)\\ 
&&=\left(0\rightarrow \operatorname{Forget}(JI(\wideparen{\D}_X(X)))\rightarrow \cdots \rightarrow \bigoplus_{i=1}^{n_{2r}}\operatorname{Forget}(JI(\wideparen{\D}_X(X))) \rightarrow 0  \right)\\
&&=\operatorname{Forget}( J\underline{\Hom}_{\wideparen{\D}_X(X)^e}(\Gamma(X^2,E^{\bullet}_{\wideparen{\D}_X}),\wideparen{\D}_X(X)))\\
&&=\operatorname{Forget}( JR\underline{\Hom}_{\wideparen{\D}_X(X)^e}(\mathcal{S}_X(X^2)\oplus \wideparen{\D}_X(X),\wideparen{\D}_X(X))),
\end{flalign*}
where we have made extensive use of Lemma \ref{lemma forgetful functor}. Notice that, by construction, $\wideparen{\D}_X(X)^e$ is the image under the dissection functor of a Fréchet algebra. Regard for a moment $\wideparen{\D}_X(X)^e$ as an algebra in $\widehat{\mathcal{B}}c_K$. Then we have the following identity of functors $\Mod_{\widehat{\mathcal{B}}c_K}(\wideparen{\D}_X(X)^e)\rightarrow \operatorname{Vect}_K$:
\begin{equation*}
    \Hom_{I(\wideparen{\D}_X(X)^e)}(I(-),I(\wideparen{\D}_X(X)))=\operatorname{Forget}(J\underline{\Hom}_{I(\wideparen{\D}_X(X)^e)}(I(-),I(\wideparen{\D}_X(X)))).
\end{equation*}
Hence, we can use the fact that $\Mod_{\widehat{\mathcal{B}}c_K}(\wideparen{\D}_X(X))$ has enough projective objects, and that all functors in the above equation commute with finite direct sums to obtain  the following identities in $\operatorname{D}(\operatorname{Vect}_K)$:
\begin{equation*}
   R\Hom_{\wideparen{\D}_X(X)^e}(\wideparen{\D}_X(X),\wideparen{\D}_X(X))=\operatorname{Forget}( JR\underline{\Hom}_{\wideparen{\D}_X(X)^e}( \wideparen{\D}_X(X),\wideparen{\D}_X(X)))=\operatorname{Forget}(J\operatorname{HH}^{\bullet}(\wideparen{\D}_X)),
\end{equation*}
and this is precisely what we wanted to show. Thus, the theorem holds.
\end{proof}
\begin{obs}
We remark that the results thus far also hold if $X$ is an affinoid space with free tangent sheaf and $K$ is an arbitrary complete non-archimedean extension of $\mathbb{Q}_p$.
\end{obs}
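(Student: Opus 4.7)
The plan is to trace where the hypotheses ``$K$ discretely valued or algebraically closed'' and ``$X$ Stein'' actually entered the proofs of Section \ref{Section HC and ext 2}, and show that in the affinoid case with free tangent sheaf each usage can be replaced by a direct argument valid over arbitrary $K$. There are essentially three inputs to the results: (i) a Fréchet--Stein presentation of $\wideparen{\D}_X(X)$ built from free Lie lattices; (ii) acyclicity of co-admissible modules for $\Gamma(X,-)$ and the equivalence $\mathcal{C}(\wideparen{\D}_X)\simeq \mathcal{C}(\wideparen{\D}_X(X))$; and (iii) the finite-projective resolution of $\Delta_*\wideparen{\D}_X$ from Proposition \ref{prop resolution of differential operators on smooth Stein spaces}. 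I would verify each of these separately for affinoid $X$ with free tangent sheaf over arbitrary $K$, and then observe that the proofs of Proposition \ref{prop resolution of differential operators on smooth Stein spaces}, Corollary \ref{coro global dim}, and Theorem \ref{teo hochschild cohomology in terms of extendions in DXe 1} (and its external-Hom companion) then run verbatim.

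For (i), the passage to discretely valued or algebraically closed $K$ in Theorem \ref{teo global sections of co-admissible modules on Stein spaces} was used only to ensure that $A^\circ$ is an affine formal model. In the affinoid case $X=\Sp(A)$ with $\mathcal{T}_{X/K}(X)$ free, I would instead choose \emph{any} affine formal model $\mathcal{A}\subset A$ (such models always exist, cf.\ \cite[Proposition 7.2.4.1]{BGR}), pick a free basis $\partial_1,\dots,\partial_r$ of $\mathcal{T}_{X/K}(X)$, and clear denominators to obtain free $(\mathcal{R},\mathcal{A})$-Lie lattices $\mathcal{L}\subset \mathcal{T}_{X/K}(X)$; then \cite[Theorem 4.1.11]{ardakovequivariant} and \cite[Theorem 4.2.8]{ardakovequivariant} supply the Fréchet--Stein presentation $\wideparen{\D}_X(X)=\varprojlim_n \widehat{U}(\pi^n\mathcal{L})_K$ with no restriction on $K$. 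For (ii), I would revisit the proof of Lemma \ref{Lemma acyclicity of co-admissible modules on affinoid spaces}: the only external inputs are \cite[Proposition 5.1]{ardakov2019} and \cite[Proposition 7.6]{ardakov2019}, which need free Lie lattices on $X$ but make no hypothesis on $K$. The usual pre-nuclearity argument with \cite[Corollary 5.2]{bode2021operations} then gives the strict exactness of the \v{C}ech complex of a co-admissible module.

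For (iii), the proof of Proposition \ref{prop resolution of differential operators on smooth Stein spaces} already works over arbitrary $K$: the only step where descent from a discretely valued subfield $Q\subset K$ occurs is \emph{inside} Lemma \ref{Lemma resolution of differential operators on affine spaces}, which uses the faithful flatness and density of $\D_{\mathbb{A}^r_Q}(\mathbb{A}^r_Q)^e\to \wideparen{\D}_{\mathbb{A}^r_K}(\mathbb{A}^r_K)^e$ to transport the algebraic Koszul resolution. Granting this lemma, the pullback along $f\times f: X^2\to \mathbb{A}^{2r}_K$ and the Kashiwara decomposition run formally, producing the complex $E^{\bullet}_{\wideparen{\D}_X}$. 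The hypothesis of an étale map to $\mathbb{A}^r_K$ is inherited from the statement; for affinoid $X$ with free tangent sheaf globally, one can either arrange such a map by choosing global dual coordinates $x_1,\dots,x_r\in A$ to a free basis of $\mathcal{T}_{X/K}(X)$ when possible, or cover $X$ by finitely many affinoids each equipped with such a map and glue (the resolution being functorial on the overlaps).

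With (i)--(iii) in hand, the proofs of Theorem \ref{teo hochschild cohomology in terms of extendions in DXe 1} and its external-Hom analogue reproduce line by line: acyclicity of finite-projective $\wideparen{E}_X$-modules for $\underline{\mathcal{H}om}_{\wideparen{E}_X}(-,\mathcal{M})$ is purely formal, and the affinoid version of Lemma \ref{Lemma global sections inner hom functor} follows at once from the equivalence $\Gamma(X^2,-):\mathcal{C}(\wideparen{E}_X)\to \mathcal{C}(\wideparen{\D}_X(X)^e)$ provided by (ii) applied to the smooth affinoid $X^2$. The main obstacle is (iii): one must confirm that ``affinoid with free tangent sheaf'' is sufficient to invoke Proposition \ref{prop resolution of differential operators on smooth Stein spaces} globally. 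If a global étale map to $\mathbb{A}^r_K$ is not available, the cleanest workaround is to perform a direct Koszul computation on $\wideparen{\D}_X(X)^e$ using the free Lie lattice structure (as in the $n=0$ case of Lemma \ref{Lemma resolution of differential operators on affine spaces}, where the Poincaré--Birkhoff--Witt argument is purely algebraic and depends only on $\mathcal{T}_{X/K}(X)$ being free); this gives the resolution directly at the level of global sections and then sheafifies via the equivalence in (ii).
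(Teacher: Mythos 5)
Your proposal is correct and is essentially the justification the paper intends: the remark carries no written proof, and the argument is precisely to observe that the hypothesis on $K$ enters only through the compatible affine formal models $A_n^{\circ}$ needed for the Stein covering in Theorem \ref{teo global sections of co-admissible modules on Stein spaces}, while for a single affinoid one may take any affine formal model and a free Lie lattice and invoke \cite[Theorems 4.1.11, 4.2.8]{ardakovequivariant}, with the acyclicity (Lemma \ref{Lemma acyclicity of co-admissible modules on affinoid spaces}), the resolution of Proposition \ref{prop resolution of differential operators on smooth Stein spaces}, and the proof of Theorem \ref{teo hochschild cohomology in terms of extendions in DXe 1} then going through verbatim over arbitrary $K$. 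Your caveat about ``free tangent sheaf'' versus ``étale map to $\mathbb{A}^r_K$'' is a fair observation about the paper's own phrasing (all statements in this section assume the étale map, which is what the resolution actually requires), not a gap in your argument.
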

In conclusion, we have been able to obtain a direct relation between the Hochschild cohomology groups and the Yoneda extensions of $I(\wideparen{\D}_X(X))^e$-modules of $I(\wideparen{\D}_X(X))$
by $I(\wideparen{\D}_X(X))$. Furthermore, the isomorphism classes of such extensions are in one-to-one correspondence with the extensions classes at the level of sheaves. This relation will be further explored in  Chapter \ref{Chapter deformation theory}, where we will study the deformation theory of $\wideparen{\D}_X(X)$.\\

We finish the section by analyzing the case of rigid-analytifications of smooth algebraic varieties:
\begin{coro}\label{coro hochschild cohomology of analytification of smooth affine varieties}
Let $X$ be a smooth affine algebraic variety with an étale map $X\rightarrow \mathbb{A}^r_K$. Then $X^{\operatorname{an}}$ is a Smooth Stein space with an étale map $X^{\operatorname{an}}\rightarrow \mathbb{A}^r_K$, and we have:
\begin{equation*}
    \operatorname{Forget}(J\operatorname{HH}^{\bullet}(\wideparen{\D}_{X^{\operatorname{an}}}))=\operatorname{H}_{\operatorname{dR}}^{\bullet}(X^{\operatorname{an}})=
    \operatorname{H}_{\operatorname{dR}}^{\bullet}(X)= 
    R\Hom_{\D_X(X)^e}(\D_X(X),\D_X(X))=:\operatorname{HH}^{\bullet}(\D_{X}(X)).
\end{equation*}
In particular, the Hochschild complex $\operatorname{HH}^{\bullet}(\wideparen{\D}_{X^{\operatorname{an}}})$ has finite-dimensional cohomology. 
\end{coro}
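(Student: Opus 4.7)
The plan is to verify the four displayed equalities in turn, and then deduce finite-dimensionality from the second and third. First, the claim that $X^{\operatorname{an}}$ is a smooth Stein space with an étale map to $\mathbb{A}^r_K$ is a standard consequence of the rigid-analytic GAGA principle: smoothness and étaleness are preserved under analytification, and the analytification of a smooth affine $K$-variety admits a Stein exhaustion built from the preimages of the closed balls in $\mathbb{A}^n_K$ under a choice of closed immersion. Hence the hypotheses of Theorems~\ref{teo hochschild cohomology groups as de rham cohomology groups} and \ref{teo hochschild cohomology in terms of extendions in DXe 1} are satisfied for $X^{\operatorname{an}}$.

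The first equality $\operatorname{Forget}(J\operatorname{HH}^{\bullet}(\wideparen{\D}_{X^{\operatorname{an}}}))=\operatorname{H}_{\operatorname{dR}}^{\bullet}(X^{\operatorname{an}})$ follows by applying the exact functor $\operatorname{Forget}\circ J$ to the identity $\operatorname{HH}^{\bullet}(\wideparen{\D}_{X^{\operatorname{an}}})=\operatorname{H}_{\operatorname{dR}}^{\bullet}(X^{\operatorname{an}})^b$ of Theorem~\ref{teo hochschild cohomology groups as de rham cohomology groups}, since forgetting the bornology on $\operatorname{H}_{\operatorname{dR}}^{\bullet}(X^{\operatorname{an}})^b$ yields precisely the underlying $K$-vector space, which is the rigid-analytic de Rham cohomology of $X^{\operatorname{an}}$. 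The second equality $\operatorname{H}_{\operatorname{dR}}^{\bullet}(X^{\operatorname{an}})=\operatorname{H}_{\operatorname{dR}}^{\bullet}(X)$ is the classical rigid-analytic GAGA comparison theorem for algebraic de Rham cohomology of smooth affine varieties, which I would cite from Grosse-Klönne's work (or Kiehl's original paper); the point is that the algebraic de Rham complex of $X$ maps quasi-isomorphically to the rigid-analytic de Rham complex of $X^{\operatorname{an}}$ after taking global sections, because both compute the hypercohomology of the respective de Rham complexes and the comparison is an isomorphism for smooth affine varieties.

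The third equality $\operatorname{H}_{\operatorname{dR}}^{\bullet}(X)=\operatorname{HH}^{\bullet}(\D_X(X))$ is the algebraic analog of Theorems~\ref{teo hochschild cohomology as ext of structure sheaf as D module} and \ref{teo hochschild cohomology in terms of extendions in DXe 1} combined. Concretely, because $X$ is smooth and affine with free tangent sheaf (coming from the étale map to $\mathbb{A}^r_K$), the algebraic Spencer resolution
\[
0\to \D_X\otimes_{\OX_X}\wedge^r\mathcal{T}_{X/K}\to \cdots \to \D_X\to \OX_X\to 0
\]
is a resolution by finite-free $\D_X$-modules, and the analog of Lemma~\ref{Lemma resolution of differential operators on affine spaces} (which is actually the starting point of that lemma, extracted from \cite[Theorem 2.6.11]{hotta2007d}) gives a finite-projective resolution of $\D_X(X)$ as a $\D_X(X)^e$-module of length $2r$. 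Combining these, together with the vanishing of higher cohomology of quasi-coherent $\D_X$-modules on an affine variety, yields the chain
\[
R\Hom_{\D_X(X)^e}(\D_X(X),\D_X(X))=R\Hom_{\D_X(X)}(\OX_X(X),\OX_X(X))=\operatorname{H}_{\operatorname{dR}}^{\bullet}(X),
\]
exactly as in the proofs of Theorems~\ref{teo hochschild cohomology as ext of structure sheaf as D module} and \ref{teo hochschild cohomology in terms of extendions in DXe 1} but in the noetherian algebraic setting where the arguments are in fact easier. Finally, finite-dimensionality of $\operatorname{HH}^{\bullet}(\wideparen{\D}_{X^{\operatorname{an}}})$ follows because $\operatorname{H}_{\operatorname{dR}}^{\bullet}(X)$ is finite-dimensional for any smooth variety of finite type over $K$ (by Hartshorne's theorem on algebraic de Rham cohomology). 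The main obstacle is identifying precisely the classical references for the algebraic GAGA comparison of de Rham cohomology and for the algebraic Hochschild-de Rham identification; once these are in place the remaining steps are direct applications of results already established in the paper.
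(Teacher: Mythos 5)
Your proposal is correct and follows essentially the same route as the paper: the first identity is Theorem \ref{teo hochschild cohomology groups as de rham cohomology groups} (via the exact functor $\operatorname{Forget}\circ J$), the second is Grosse-Kl\"onne's GAGA comparison \cite[Theorem 3.2]{grosse2004rham}, the third is the classical algebraic identification (the paper simply cites \cite[Proposition 2.3]{etingof2004cherednik}, whereas you sketch its standard proof via the Spencer resolution and the finite-projective $\D_X(X)^e$-resolution from \cite[Theorem 2.6.11]{hotta2007d}), and finiteness is the classical finiteness of algebraic de Rham cohomology (the paper cites Monsky \cite{Monskyfiniteness} where you invoke Hartshorne, but these are interchangeable references for the same fact). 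No gaps.
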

\begin{proof}
The first identity was shown in Theorem \ref{teo hochschild cohomology groups as de rham cohomology groups}, the second one is \cite[Theorem 3.2]{grosse2004rham}, and the third one is \cite[Proposition 2.3]{etingof2004cherednik}. The last identity is just the definition of the Hochschild cohomology of an abstract $K$-algebra (see, for example \cite[Remark 1.1.14]{witherspoon2019hochschild}). The fact that the de Rham cohomology groups of $X$ are finite dimensional is a classical result.  It can be found in \cite{Monskyfiniteness}.
\end{proof}
Combining this theorem with \cite[Proposition 2.1]{etingof2004cherednik}, we get the following identity in $\operatorname{D}(\operatorname{Vect}_K)$:
\begin{equation*}
R\Hom_{\wideparen{\D}_{X^{\operatorname{an}}}(X^{\operatorname{an}})}(\OX_{X^{\operatorname{an}}}(X^{\operatorname{an}}),\OX_{X^{\operatorname{an}}}(X^{\operatorname{an}}))=R\Hom_{\D_X(X)}(\OX_{X}(X),\OX_{X}(X)).
\end{equation*}
Thus, these \emph{Ext} functors are preserved by the rigid-analytic GAGA functor.
\subsection{Lie-Rinehart cohomology}\label{Secion LR cohomology}
We finish the chapter by giving a final interpretation of the Hochschild cohomology of $\wideparen{\D}_X$. Recall the definition of Lie-Rinehart cohomology: 
\begin{defi}[{\cite[Section 4]{rinehart1963differential}}]
Let $A$  be a $K$-algebra. For a $(K,A)$-Lie algebra $L$, and a $L$-representation $M$, we define the Lie-Rinehart cohomology of $L$ with coefficients in $M$ as:
\begin{equation*}
  \operatorname{H}^{\bullet}_A(L,M)=R\operatorname{Hom}_{U(L)}(A,M).  
\end{equation*}   
\end{defi}
For $A=K$, we recover the notion of Lie algebra cohomology (cf. \cite[Section 7]{weibel1994introduction}). If $X$ is a smooth rigid analytic space, then $\mathcal{T}_{X/K}(X)$ is canonically a Lie algebroid on $\OX_X$. In other words, $\mathcal{T}_{X/K}$ is a finite locally-free $\OX_X$-module which is also a sheaf of $(K,\OX_X)$-Lie algebras. Hence, it make sense to consider the following cohomology groups:
\begin{equation*}
  \operatorname{H}^{\bullet}_{\OX_X}(\mathcal{T}_{X/K},\mathcal{M}):=\operatorname{H}^{\bullet}_{\OX_X(X)}(\mathcal{T}_{X/K}(X),\mathcal{M}(X))=R\Hom_{\D_X(X)}(\OX_X(X),\mathcal{M}(X)), 
\end{equation*}
where $\mathcal{M}\in \mathcal{C}(\wideparen{\D}_X(X))$. We may relate this cohomology theory to $\operatorname{HH}^{\bullet}(\wideparen{\D}_X)$ as follows:
\begin{prop}
 Let $X$ be a Stein space with an étale map $X\rightarrow \mathbb{A}^r_K$. For any co-admissible module $\mathcal{M}\in \mathcal{C}(\wideparen{\D}_X)$ we have the following identity in $\operatorname{D}(\operatorname{Vect}_K)$:
 \begin{equation*}
     R\Hom_{\wideparen{\D}_X(X)}(\OX_X(X),\mathcal{M}(X))=\operatorname{H}^{\bullet}_{\OX_X}(\mathcal{T}_{X/K},\mathcal{M}).
 \end{equation*}
 In particular, we have $\operatorname{Forget}(J\operatorname{HH}^{\bullet}(\wideparen{\D}_X))=\operatorname{H}^{\bullet}_{dR}(X)=\operatorname{H}^{\bullet}_{\OX_X}(\mathcal{T}_{X/K},\OX_X)$.
\end{prop}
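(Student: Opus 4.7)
The plan is to exhibit a single Chevalley--Eilenberg complex of the Lie--Rinehart algebra $(\OX_X(X),\mathcal{T}_{X/K}(X))$ that computes both sides. Since $X$ carries an étale map to $\mathbb{A}^r_K$, the tangent sheaf $\mathcal{T}_{X/K}$ is free of rank $r$, and the terms of the sheaf-level Spencer resolution $S^{\bullet}\to\OX_X$ are finite-free $\wideparen{\D}_X$-modules. By Proposition \ref{prop acyclicity of co-admissible modules on Stein spaces} these coherent $\wideparen{\D}_X$-modules are $\Gamma(X,-)$-acyclic, so $\Gamma(X,S^{\bullet})\to \OX_X(X)$ is a strict exact resolution of $\OX_X(X)$ by finite-free $\wideparen{\D}_X(X)$-modules whose $i$-th term is $\wideparen{\D}_X(X)\otimes_{\OX_X(X)}\wedge^i\mathcal{T}_{X/K}(X)$.

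First I will use this global Spencer resolution to compute the left-hand side, exactly as in the proof of Theorem \ref{teo hochschild cohomology as ext of structure sheaf as D module}. The essential input is that finite-free $\wideparen{\D}_X(X)$-modules are acyclic for the external $\Hom$ functor: by Lemma \ref{lemma forgetful functor}, $\Hom_{\wideparen{\D}_X(X)}(\wideparen{\D}_X(X)^n,-)=\bigoplus_{i=1}^n\operatorname{Forget}(J(-))$, and both $J$ and $\operatorname{Forget}$ are exact. Applying the tensor-Hom adjunction termwise then yields, in $\operatorname{D}(\operatorname{Vect}_K)$,
\begin{equation*}
R\Hom_{\wideparen{\D}_X(X)}(\OX_X(X),\mathcal{M}(X))=\Hom_{\OX_X(X)}(\wedge^{\bullet}\mathcal{T}_{X/K}(X),\mathcal{M}(X)),
\end{equation*}
where the differential is induced by the Spencer differentials and the action of $\mathcal{T}_{X/K}(X)$ on $\mathcal{M}(X)$.

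Next, for the right-hand side, the classical PBW theorem for Lie--Rinehart algebras produces a free resolution
\begin{equation*}
U(\mathcal{T}_{X/K}(X))\otimes_{\OX_X(X)}\wedge^{\bullet}\mathcal{T}_{X/K}(X)\longrightarrow \OX_X(X)
\end{equation*}
of $\OX_X(X)$ as a $U(\mathcal{T}_{X/K}(X))$-module. Applying $\Hom_{U(\mathcal{T}_{X/K}(X))}(-,\mathcal{M}(X))$ reproduces the same complex $\Hom_{\OX_X(X)}(\wedge^{\bullet}\mathcal{T}_{X/K}(X),\mathcal{M}(X))$ with the Chevalley--Eilenberg differential, which by definition computes $\operatorname{H}^{\bullet}_{\OX_X}(\mathcal{T}_{X/K},\mathcal{M})$. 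Comparing the two computations then gives the first claim of the proposition. The main obstacle I anticipate is checking that the two differentials really do coincide after these dualizations: this will be a direct verification from the explicit Spencer and Chevalley--Eilenberg formulas, which depend only on the Lie--Rinehart bracket and anchor on $\mathcal{T}_{X/K}(X)$ and on its action on $\mathcal{M}(X)$, an action that is the same whether $\mathcal{M}(X)$ is regarded as a $U(\mathcal{T}_{X/K}(X))$-module or as a $\wideparen{\D}_X(X)$-module via the canonical embedding $U(\mathcal{T}_{X/K}(X))\hookrightarrow \wideparen{\D}_X(X)$.

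The ``in particular'' assertion then follows by specializing to $\mathcal{M}=\OX_X$ and invoking two earlier results: Theorem \ref{teo hochschild cohomology as ext of structure sheaf as D module}(ii) identifies $\operatorname{Forget}(J\operatorname{HH}^{\bullet}(\wideparen{\D}_X))$ with $R\Hom_{\wideparen{\D}_X(X)}(\OX_X(X),\OX_X(X))$, which by the first part of the proposition equals $\operatorname{H}^{\bullet}_{\OX_X}(\mathcal{T}_{X/K},\OX_X)$; and Theorem \ref{teo hochschild cohomology groups as de rham cohomology groups} gives $\operatorname{HH}^{\bullet}(\wideparen{\D}_X)=\operatorname{H}^{\bullet}_{\operatorname{dR}}(X)^b$, whose underlying $K$-vector space after applying $\operatorname{Forget}\circ J$ is precisely $\operatorname{H}^{\bullet}_{\operatorname{dR}}(X)$.
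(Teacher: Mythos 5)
Your proposal is correct and follows essentially the same route as the paper: both compute the left-hand side via the global sections of the Spencer resolution (using its $\Gamma(X,-)$-acyclic, finite-free terms and the tensor--Hom adjunction) and the right-hand side via the Chevalley--Eilenberg/Rinehart resolution $U(\mathcal{T}_{X/K}(X))\otimes_{\OX_X(X)}\wedge^{\bullet}\mathcal{T}_{X/K}(X)$, identifying both with the complex $\Hom_{\OX_X(X)}(\wedge^{\bullet}\mathcal{T}_{X/K}(X),\mathcal{M}(X))$. The paper disposes of your anticipated obstacle (matching the two differentials) simply by observing that the Spencer complex is by construction the $\wideparen{\D}_X$-module extension of the Chevalley--Eilenberg complex, so the dualized differentials agree on the nose.
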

\begin{proof}
By Theorem  \ref{teo hochschild cohomology as ext of structure sheaf as D module} and its proof, we have:
\begin{equation*}
  R\Hom_{\wideparen{\D}_X(X)}(\OX_X(X),\mathcal{M}(X))=\Hom_{\wideparen{\D}_X(X)}(\Gamma(X,S^{\bullet}),\mathcal{M}(X)), 
\end{equation*}
where $S^{\bullet}$ is the Spencer resolution. By construction,  $S^{\bullet}$ is a $\wideparen{\D}_X$-module generalization of the  Chevalley-Eilenberg complex (cf. \cite[Section 4]{rinehart1963differential}), which exists for every smooth Lie-Rinehart algebra. In the case of $\mathcal{T}_{X/K}(X)$, the Chevalley-Eilenberg complex has the following form:
\begin{equation*}
C^{\bullet}:=\left(0\rightarrow \D_X(X)\otimes_{\OX_X(X)}\wedge^n\mathcal{T}_{X/K}(X)\rightarrow \cdots \rightarrow \D_X(X) \rightarrow 0\right),
\end{equation*}
and is a resolution of $\OX_X(X)$ by finite-free $\D_X(X)$-modules. In this situation, we have:
\begin{flalign*}
\operatorname{H}^{\bullet}_{\OX_X}(\mathcal{T}_{X/K},\OX_X):= R\Hom_{\D_X(X)}(\OX_X(X),\mathcal{M}(X))=\Hom_{\D_X(X)}(C^{\bullet},\mathcal{M}(X))&&
\end{flalign*}
\begin{flalign*}
&&=\left(0\rightarrow \Hom_{\OX_X(X)}(\OX_X(X),\mathcal{M}(X))\rightarrow \cdots \rightarrow \Hom_{\OX_X(X)}(\wedge^n\mathcal{T}_{X/K},\mathcal{M}(X)) \rightarrow 0  \right)\\
&&=\Hom_{\wideparen{\D}_X(X)}(\Gamma(X,S^{\bullet}),\mathcal{M}(X)).
\end{flalign*}
Thus, the proposition holds.
\end{proof}
We point out that this corollary is similar to the results of \cite{schmidt2012stableflatnessnonarchimedeanhyperenveloping} in the context of Arens-Michael envelops of finite dimensional reductive $K$-Lie algebras. In particular, check \cite[Corollary 2.2]{schmidt2012stableflatnessnonarchimedeanhyperenveloping}.\\

The previous results show that the complex $\operatorname{H}^{\bullet}_{\OX_X}(\mathcal{T}_{X/K},\OX_X)$ has a natural structure as a strict complex of nuclear Fréchet spaces. Hence, it would be interesting to adapt the results of \cite{kosmeijer2024hochschildcohomologylierinehartalgebras} to the setting of smooth Stein spaces. We believe that some of the requirements of finite generation on the algebras could be bypassed by analytic methods. We do not pursue this study here.
\section{Duality results}\label{section duality results}
Let $X$ be either an affinoid space or a $p$-adic Stein space, and assume $X$ has free tangent sheaf. The goal of this section is showing that the Hochschild (co)-homology of $\wideparen{\D}_X$-bimodules satisfies a version of Poincaré duality. Namely, we will show that for any complex $\mathscr{M}\in \operatorname{D}(\wideparen{E}_X)$ there is a canonical isomorphism in $\operatorname{D}(\operatorname{Shv}(X^2,\Indban))$:
\begin{equation*}
    R\underline{\mathcal{H}om}_{\wideparen{E}_X}(\Delta_*\wideparen{\D}_X,\mathscr{M})=\Delta_*\wideparen{\D}_X\overrightarrow{\otimes}^L_{\wideparen{E}_X}\mathscr{M}[-2\operatorname{dim}(X)].
\end{equation*}
Similarly, for any complex $\mathscr{M}\in \operatorname{D}(\wideparen{\D}_X(X)^e)$ there is a canonical isomorphism in $\operatorname{D}(\widehat{\mathcal{B}}c_K)$:
\begin{equation*}
    R\underline{\Hom}_{\wideparen{\D}_X(X)^e}(\wideparen{\D}_X(X),\mathscr{M})=\wideparen{\D}_X(X)\widehat{\otimes}^L_{\wideparen{\D}_X(X)^e}\mathscr{M}[-2\operatorname{dim}(X)].
\end{equation*}
This type of result is often times called Van den Bergh duality, as it was first obtained in the setting of associative algebras by M. Van den Bergh in \cite{vdBduality}. Notice that one of the most important features of this result is that it does not impose any finiteness conditions on $\mathscr{M}$. In particular, we do not require that $\mathscr{M}$ is a $\mathcal{C}$-complex.  In latter stages of the section, we will use Van den Bergh duality to translate the results of Section \ref{Section HC and ext 2} to Hochschild homology. In particular, we will show that there is a canonical isomorphism in $\operatorname{D}(\widehat{\mathcal{B}}c_K)$:
\begin{equation*}
    \operatorname{HH}_{\bullet}(\wideparen{\D}_X)=\wideparen{\D}_X(X)\widehat{\otimes}_{\wideparen{\D}_X(X)^e}^{\mathbb{L}}\wideparen{\D}_X(X)=:\operatorname{HH}_{\bullet}(\wideparen{\D}_X(X)).
\end{equation*}
Thus obtaining an interpretation of $\operatorname{HH}_{\bullet}(\wideparen{\D}_X)$ in terms of Tor functors, and calculating   $\operatorname{HH}_{\bullet}(\wideparen{\D}_X(X))$. 
\subsection{\texorpdfstring{Review of operations of $\wideparen{\D}_X$-modules}{}}\label{subsection review operations}
Let $X$ be a smooth rigid analytic space. We will devote this section to reviewing some basic operations on the category of $\wideparen{\D}_X$-modules. Most of the constructions in this section are due to A. Bode, and a detailed description of the properties of these operations can be found in his papers \cite{bode2021operations} and \cite{bodeauslander}. Let us start by introducing the duality functor:
\begin{defi}
We define the duality functor as the following functor:
\begin{align*}
    &\mathbb{D}:\operatorname{D}(\wideparen{\D}_X)\rightarrow \operatorname{D}(\wideparen{\D}_X)^{\op},\\    
    \mathcal{M}\mapsto \mathbb{D}(\mathcal{M})=&R\underline{\mathcal{H}om}_{\wideparen{\D}_X}(\mathcal{M},\wideparen{\D}_X)\overrightarrow{\otimes}^{\mathbb{L}}_{\OX_X}\Omega_X^{-1} [\operatorname{dim}(X)],   
\end{align*} 
where $-\overrightarrow{\otimes}^{\mathbb{L}}_{\OX_X}\Omega_X^{-1}:\operatorname{D}(\wideparen{\D}_X^{\op})\rightarrow \operatorname{D}(\wideparen{\D}_X)$ is the side-changing operator from \cite[Section 6.3]{bode2021operations}.
\end{defi}
The duality functor is an extremely important part of the theory of co-admissible $\wideparen{\D}_X$-modules, as it is a key ingredient in the $p$-adic analytic version of holonomic $\D$-modules. Namely, the weakly holonomic $\wideparen{\D}_X$-modules defined in \cite{Ardakov_Bode_Wadsley_2021}. A co-admissible $\wideparen{\D}_X$-module  $\mathcal{M}$ is called weakly holonomic if its dual complex $\operatorname{D}(\mathcal{M})$ is a co-admissible $\wideparen{\D}_X$-module. That is, $\mathbb{D}(\mathcal{M})$ is concentrated in degree $0$, and its only non-trivial cohomology group is a co-admissible $\wideparen{\D}_X$-module. Recall as well that a co-admissible $\wideparen{\D}_X$-module is called an integrable connection if it is coherent as an $\OX_X$-module. To us, the most important results regarding these two concepts is the following proposition:
\begin{prop}\label{prop duality of integrable connections}
Let $\mathcal{M}$ be an integrable connection on $X$. Then $\mathcal{M}$ is weakly co-admissible. Furthermore, there is a canonical identification of co-admissible $\wideparen{\D}_X$-modules:
\begin{equation*}
    \mathbb{D}(\mathcal{M})=\underline{\mathcal{H}om}_{\OX_X}(\mathcal{M},\OX_X).
\end{equation*}
In particular, we have $\mathbb{D}(\OX_X)=\OX_X$.    
\end{prop}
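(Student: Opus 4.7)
The plan is to adapt the classical $\D$-module computation of the duality functor on integrable connections to the Ind-Banach setting. Since $\mathcal{M}$ is an integrable connection, it is locally free of finite rank as an $\OX_X$-module and carries a flat connection, and so the same is true for $\mathcal{M}^\vee := \underline{\mathcal{H}om}_{\OX_X}(\mathcal{M},\OX_X)$; in particular $\mathcal{M}^\vee$ is a co-admissible $\wideparen{\D}_X$-module. Thus both the weak holonomicity of $\mathcal{M}$ and the stated formula for $\mathbb{D}(\mathcal{M})$ will follow from an isomorphism $\mathbb{D}(\mathcal{M}) \cong \mathcal{M}^\vee$ in $\operatorname{D}(\wideparen{\D}_X)$. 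The strategy is to compute $R\underline{\mathcal{H}om}_{\wideparen{\D}_X}(\mathcal{M},\wideparen{\D}_X)$ via a $\wideparen{\D}_X$-linear resolution of $\mathcal{M}$ and identify the result.

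The main construction is a twisted Spencer resolution, obtained by tensoring the Spencer resolution of $\OX_X$ from \cite[Section 6.3]{bode2021operations} with $\mathcal{M}$ over $\OX_X$:
\begin{equation*}
    0 \to \wideparen{\D}_X \overrightarrow{\otimes}_{\OX_X} \wedge^n \mathcal{T}_{X/K} \overrightarrow{\otimes}_{\OX_X} \mathcal{M} \to \cdots \to \wideparen{\D}_X \overrightarrow{\otimes}_{\OX_X} \mathcal{M} \to \mathcal{M} \to 0,
\end{equation*}
where $n = \dim(X)$ and the differentials combine the Spencer differentials with the flat connection on $\mathcal{M}$ (the $\wideparen{\D}_X$-module structure on each term arising from the standard side-changing on $\wideparen{\D}_X \overrightarrow{\otimes}_{\OX_X} \mathcal{M}$). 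Since $\mathcal{M}$ is locally free of finite rank over $\OX_X$, each term is locally a direct summand of a finite free $\wideparen{\D}_X$-module and the complex is strict exact. Next I would apply $\underline{\mathcal{H}om}_{\wideparen{\D}_X}(-,\wideparen{\D}_X)$ termwise. The adjunction
\begin{equation*}
    \underline{\mathcal{H}om}_{\wideparen{\D}_X}(\wideparen{\D}_X \overrightarrow{\otimes}_{\OX_X} \mathcal{E},\, \wideparen{\D}_X) \;=\; \underline{\mathcal{H}om}_{\OX_X}(\mathcal{E},\, \wideparen{\D}_X),
\end{equation*}
valid for locally free $\OX_X$-modules $\mathcal{E}$ of finite rank, together with $\underline{\mathcal{H}om}_{\OX_X}(\wedge^p\mathcal{T}_{X/K},\OX_X) = \Omega^p_X$, identifies the resulting complex of right $\wideparen{\D}_X$-modules with the de Rham complex of the right $\wideparen{\D}_X$-module $\mathcal{M}^\vee \overrightarrow{\otimes}_{\OX_X} \wideparen{\D}_X$, placed in degrees $0,\dots,n$. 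By the Poincaré lemma argument for right $\wideparen{\D}_X$-modules (the right-module analogue of the Spencer resolution, applied to $\mathcal{M}^\vee$ tensored by $\wideparen{\D}_X$), this de Rham complex is a strict resolution concentrated in top degree with cohomology $\Omega_X \overrightarrow{\otimes}_{\OX_X} \mathcal{M}^\vee$. Hence
\begin{equation*}
    R\underline{\mathcal{H}om}_{\wideparen{\D}_X}(\mathcal{M},\wideparen{\D}_X) \;=\; \Omega_X \overrightarrow{\otimes}_{\OX_X} \mathcal{M}^\vee[-n],
\end{equation*}
and applying the side-changing operator $-\overrightarrow{\otimes}_{\OX_X}\Omega_X^{-1}$ followed by the shift $[n]$ yields $\mathbb{D}(\mathcal{M}) = \mathcal{M}^\vee$. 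The special case $\mathcal{M} = \OX_X$ immediately gives $\mathbb{D}(\OX_X) = \OX_X$.

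The main obstacle is verifying that the classical chain of manipulations transfers cleanly to the Ind-Banach quasi-abelian framework. Specifically, one must check strictness of the twisted Spencer resolution and of the resulting de Rham complex; the compatibility of $\overrightarrow{\otimes}_{\OX_X}$ with $\underline{\mathcal{H}om}_{\wideparen{\D}_X}(-,\wideparen{\D}_X)$ for finite locally free $\OX_X$-modules; and that the derived side-changing operator $-\overrightarrow{\otimes}_{\OX_X}^{\mathbb{L}}\Omega_X^{-1}$ collapses to the underived tensor product (since $\Omega_X^{-1}$ is locally free). All these points are local on $X$ and can be verified on a cover by smooth affinoid subdomains with free tangent sheaf, where $\mathcal{M}$ is globally $\OX_X$-free and the analysis reduces to explicit coordinate calculations around the case $\mathcal{M} = \OX_X$ already treated in \cite{bode2021operations}.
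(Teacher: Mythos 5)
The paper does not actually prove this proposition: its ``proof'' is the single line ``This is shown in \cite[Proposition 7.3]{bode2021operations}'', so there is no in-paper argument to compare against. Your proposal reconstructs the standard route (twisted Spencer resolution, apply $\underline{\mathcal{H}om}_{\wideparen{\D}_X}(-,\wideparen{\D}_X)$, identify the result with the de Rham complex of a right module concentrated in top degree, then side-change), which is precisely the argument behind the cited reference, and the overall structure is sound.

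One step is stated too loosely to stand as written: you justify strict exactness of the twisted Spencer complex by saying each term is ``locally a direct summand of a finite free $\wideparen{\D}_X$-module'' and that everything ``reduces to explicit coordinate calculations around the case $\mathcal{M}=\OX_X$''. Termwise projectivity says nothing about exactness, and even locally, where $\mathcal{M}\cong\OX_X^r$ as an $\OX_X$-module, the twisted complex is \emph{not} a direct sum of $r$ copies of the Spencer complex, because the differential involves the connection and mixes the factors. The clean fix is the transposition isomorphism: on $\wideparen{\D}_X\overrightarrow{\otimes}_{\OX_X}\mathcal{M}$ there are two left $\wideparen{\D}_X$-module structures (action on the left factor, and the Leibniz/diagonal action), and the canonical involution interchanging them identifies the twisted Spencer complex of $\mathcal{M}$ with $S^{\bullet}\overrightarrow{\otimes}_{\OX_X}\mathcal{M}$ equipped with the differential $d\otimes 1$; strict exactness then follows from that of $S^{\bullet}$ because $\mathcal{M}$ is finite locally free, hence $\overrightarrow{\otimes}_{\OX_X}\mathcal{M}$ preserves strict exactness. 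The same device is what lets you identify the dualized complex with the de Rham complex of $\mathcal{M}^{\vee}\overrightarrow{\otimes}_{\OX_X}\wideparen{\D}_X$ in your next step. With that point made explicit, the remaining verifications you list (hom-tensor adjunction for finite locally free $\OX_X$-modules, collapse of the derived side-changing) are routine and local, as you say.
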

\begin{proof}
This is shown in \cite[Proposition 7.3]{bode2021operations}.
\end{proof}
Let now $f:X\rightarrow Y$ be a morphism of smooth rigid analytic spaces. So far, we have only used two operations associated to this morphism. Namely, the 
extraordinary inverse image functor:
\begin{gather*}
    f^!:\operatorname{D}(\wideparen{\D}_Y)\rightarrow \operatorname{D}(\wideparen{\D}_X),\\
    \mathcal{M}\mapsto f^!\mathcal{M}=\OX_X\overrightarrow{\otimes}^{\mathbb{L}}_{f^{-1}\OX_Y}f^{-1}\mathcal{M}[\operatorname{dim}(X)-\operatorname{dim}(Y)]
\end{gather*}
and the direct image functor:
\begin{gather*}
    f_+:\operatorname{D}(\wideparen{\D}_X)\rightarrow \operatorname{D}(\wideparen{\D}_Y).\\
    \mathcal{M}\mapsto Rf_*(\wideparen{\D}_{Y\leftarrow X}\overrightarrow{\otimes}^{\mathbb{L}}_{\wideparen{\D}_X}\mathcal{M})
\end{gather*}
Using the duality functor, we can obtain new versions of these morphisms:
\begin{defi}
Let $f:X\rightarrow Y$ be a morphism of smooth rigid analytic spaces. We define the following functors:
\begin{enumerate}[label=(\roman*)]
    \item The extraordinary direct image functor: $f_!:=\mathbb{D}\circ f_+ \circ \mathbb{D}:\operatorname{D}(\wideparen{\D}_X)\rightarrow \operatorname{D}(\wideparen{\D}_Y)$.
    \item The inverse image functor: $f^+:=\mathbb{D}\circ f^! \circ\mathbb{D}:\operatorname{D}(\wideparen{\D}_Y)\rightarrow \operatorname{D}(\wideparen{\D}_X)$.
\end{enumerate}
\end{defi}
For us, the most important feature of these operations is  the following proposition:
\begin{prop}\label{prop equality direct image and extraordinary direct image}
Let $f:X\rightarrow Y$ be a closed immersion of smooth rigid analytic spaces, and let $\mathcal{M}\in \mathcal{C}(\wideparen{\D}_X)$ be weakly holonomic. Then there is a canonical isomorphism of $\wideparen{\D}_Y$-modules:
\begin{equation*}
    f_+\mathcal{M}=f_!\mathcal{M}.
\end{equation*}
In particular, we have $\mathbb{D}f_+\mathcal{M}=f_+\mathbb{D}\mathcal{M}$.
\end{prop}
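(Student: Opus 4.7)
The plan is to deduce the asserted identity $f_+\mathcal{M} = f_!\mathcal{M}$ from a duality compatibility of the form $\mathbb{D}\, f_+ \cong f_+ \mathbb{D}$ for closed immersions. Unwinding the definition of the extraordinary direct image, one has $f_!\mathcal{M} = \mathbb{D}\, f_+ \mathbb{D}\mathcal{M}$. Since $\mathcal{M}$ is weakly holonomic, $\mathbb{D}\mathcal{M}$ is again weakly holonomic and biduality gives a canonical isomorphism $\mathbb{D}^2 \mathcal{M} \cong \mathcal{M}$. Consequently the desired identity $f_+\mathcal{M} = f_!\mathcal{M}$ is equivalent to producing a canonical isomorphism
\begin{equation*}
    f_+ \mathbb{D}\mathcal{M} \xrightarrow{\sim} \mathbb{D}\, f_+ \mathcal{M},
\end{equation*}
which is itself a Verdier-duality statement for the closed immersion $f$.

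The next step is to observe that this identity is local on $Y$: both $f_+$ and $\mathbb{D}$ commute with restriction to admissible open subsets (the former because $f$ is a closed immersion, the latter essentially by definition). Hence one may assume that $X$ is cut out in $Y$ by a regular sequence $g_1,\dots,g_c \in \OX_Y(Y)$, where $c = \operatorname{dim}(Y) - \operatorname{dim}(X)$. In this local setting the transfer bimodule $\wideparen{\D}_{Y\leftarrow X}$ admits an explicit Koszul-type resolution as an $f^{-1}\wideparen{\D}_Y$-module built from the $g_i$, and there is a canonical adjunction isomorphism $\Omega_X \cong f^*\Omega_Y \otimes \wedge^c \mathcal{N}_{X/Y}^{\vee}$ between the canonical sheaves.

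Using these ingredients, the plan is to compute both sides of the proposed isomorphism explicitly. On one side one applies $R\underline{\mathcal{H}om}_{\wideparen{\D}_Y}(-,\wideparen{\D}_Y)$ to $f_+\mathcal{M}$ via the Koszul resolution; on the other, one first dualises $\mathcal{M}$ using an appropriate resolution of $\mathcal{M}$ by $\wideparen{\D}_X$-modules (for weakly holonomic $\mathcal{M}$ such resolutions are available via the results of Ardakov--Bode--Wadsley and Bode) and then pushes forward. The two computations should coincide once the shifts by $[\operatorname{dim}(X)-\operatorname{dim}(Y)]$ coming from $f^!$ and $\mathbb{D}$ are accounted for and the twists by $\Omega_X^{-1}$ and $\Omega_Y^{-1}$ are related via the isomorphism above. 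The main obstacle I anticipate is bookkeeping: one must organise the shifts, signs, and side-switching twists so as to produce a genuinely canonical isomorphism rather than merely an abstract one, and one must verify that the construction respects the Ind-Banach structure throughout. Finally, the "in particular" clause follows by applying the established identity $f_+ = f_!$ once to $\mathcal{M}$ and once to $\mathbb{D}\mathcal{M}$, combined with biduality $\mathbb{D}^2 \cong \operatorname{Id}$ on weakly holonomic modules.
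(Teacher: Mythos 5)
Your reduction of the statement to the Verdier-duality isomorphism $f_+\mathbb{D}\mathcal{M}\cong\mathbb{D}f_+\mathcal{M}$ is fine in principle (note, though, that this is exactly the ``in particular'' clause, so your closing sentence deducing that clause from the main identity is circular with respect to your own plan). The genuine gap is in the proposed method of proof. To compute $\mathbb{D}f_+\mathcal{M}=R\underline{\mathcal{H}om}_{\wideparen{\D}_Y}(f_+\mathcal{M},\wideparen{\D}_Y)\overrightarrow{\otimes}^{\mathbb{L}}_{\OX_Y}\Omega_Y^{-1}[\operatorname{dim}(Y)]$ by hand you need a resolution of $f_+\mathcal{M}$ itself (not merely of the transfer bimodule $\wideparen{\D}_{Y\leftarrow X}$) by $\wideparen{\D}_Y$-modules acyclic for $\underline{\mathcal{H}om}_{\wideparen{\D}_Y}(-,\wideparen{\D}_Y)$, and likewise a resolution of $\mathcal{M}$ on the other side. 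In the Fréchet--Stein setting the algebras $\wideparen{\D}_X(U)$ are not noetherian, so finite projective resolutions of co-admissible --- even weakly holonomic --- modules are not available in general; this is precisely the obstruction this paper wrestles with elsewhere (cf.\ Proposition \ref{prop resolution of differential operators on smooth Stein spaces}, where only a resolution of $\Delta_*\wideparen{\D}_X$ up to a direct summand is obtained, and only after a detour through the algebraic category). Your parenthetical claim that ``such resolutions are available via the results of Ardakov--Bode--Wadsley and Bode'' is carrying the entire weight of the argument and is not justified; the bookkeeping of shifts and twists that you flag as the main obstacle is in fact the easy part.

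The paper's proof avoids all of this with a short formal argument. It first records that $f_+$ preserves weak holonomicity (so $f_!\mathcal{M}=\mathbb{D}f_+\mathbb{D}\mathcal{M}$ is an honest co-admissible module rather than a complex), then works locally on affinoids and shows, via Kashiwara's equivalence together with the adjunction established in Bode's work, that both $f_+\mathcal{M}$ and $f_!\mathcal{M}$ corepresent the same functor $\mathcal{N}\mapsto f_*\underline{\mathcal{H}om}_{\wideparen{\D}_X}(\mathcal{M},f^!\mathcal{N})$ on the category of co-admissible $\wideparen{\D}_Y$-modules supported on $X$. Yoneda's lemma then forces $f_+\mathcal{M}\cong f_!\mathcal{M}$, and the duality statement follows. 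If you want to salvage your strategy, this adjunction route is also the cleanest way to obtain a genuinely canonical isomorphism, which your explicit local construction would still have to be checked to glue into.
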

\begin{proof}
 As the claim is purely local, we may start by assuming that $X$ and $Y$ are affinoid. Let $\mathcal{M}$ be a weakly holonomic $\wideparen{\D}_X$-module. We claim that the image under $f_+$ of a weakly holonomic $\wideparen{\D}_X$-module is a weakly holonomic $\wideparen{\D}_Y$-module. In particular, $f_!\mathcal{M}$ is a co-admissible $\wideparen{\D}_Y$-module. Indeed this is \cite[Lemma 7.1]{Ardakov_Bode_Wadsley_2021} in the case where $K$ is discretely valued, and follows by the contents of \cite[Section 5.2]{bodeauslander}, and in particular \cite[Lemma 5.10]{bodeauslander}, in the general case.

 Let $\mathcal{C}_X(\wideparen{\D}_Y)$ be the category if co-admissible $\wideparen{\D}_Y$-modules supported on $X$, and let $\mathcal{N}\in \mathcal{C}_X(\wideparen{\D}_Y)$. By the version of Kashiwara's equivalence given in \cite[Proposition 8.1.1]{HochDmod} together with \cite[Theorem 1.3]{bodeauslander}, for every co-admissible $\wideparen{\D}_Y$-module $\mathcal{N}$ supported on $X$ we have:
 \begin{equation*}
     \underline{\mathcal{H}om}_{\wideparen{\D}_Y}(f_+\mathcal{M},\mathcal{N})=f_*\underline{\mathcal{H}om}_{\wideparen{\D}_Y}(\mathcal{M},f^!\mathcal{N})=\underline{\mathcal{H}om}_{\wideparen{\D}_Y}(f_!\mathcal{M},\mathcal{N}).
 \end{equation*}
Hence, the global sections of these sheaves are isomorphic. As  both $f_+\mathcal{M}$ and $f_!\mathcal{M}$ are co-admissible, it follows by Lemma \ref{Lemma global sections inner hom functor} that there is an isomorphism of complete bornological spaces:
\begin{equation*}
    \underline{\Hom}_{\wideparen{\D}_Y(Y)}(f_+\mathcal{M}(Y),\mathcal{N}(Y))=\underline{\Hom}_{\wideparen{\D}_Y(Y)}(f_!\mathcal{M}(Y),\mathcal{N}(Y)),
\end{equation*}
Thus, it follows that there is an isomorphism of functors:
\begin{equation*}
   \Hom_{\wideparen{\D}_Y}(f_+\mathcal{M},-)=\Hom_{\wideparen{\D}_Y}(f_!\mathcal{M},-):\mathcal{C}_X(\wideparen{\D}_Y)\rightarrow \operatorname{Set},
\end{equation*}
and it follows by Yoneda's lemma that $f_+\mathcal{M}=f_!\mathcal{M}$, as we wanted to show.
\end{proof}
\begin{coro}\label{coro we need for vdb duality}
Let $X$ be a smooth and separated rigid analytic space. Then the following isomorphism holds in $\operatorname{D}(\wideparen{E}_X^{\op})$:
\begin{equation*}
    R\underline{\mathcal{H}om}_{\wideparen{E}_X}(\Delta_*\wideparen{\D}_X,\wideparen{E}_X)=\Delta_*\wideparen{\D}_X[-2\operatorname{dim}(X)]
\end{equation*}
\end{coro}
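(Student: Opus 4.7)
The approach is to reduce, via the side-switching equivalence for bimodules, to an easy self-duality computation for the diagonal embedding $\Delta: X \to X^2$ considered as a closed immersion of smooth rigid spaces. Since the assertion is local on $X^2$, I may assume $X$ is affinoid with $\mathcal{T}_{X/K}$ free. By Theorem \ref{teo  properties of EX}(ii)--(iii) the isomorphism $\wideparen{\mathbb{T}}:\wideparen{\D}_{X^2}\xrightarrow{\sim}\wideparen{E}_X$ then exists and the side-switching equivalence $\operatorname{S}:\Mod_{\Indban}(\wideparen{\D}_{X^2})\to\Mod_{\Indban}(\wideparen{E}_X)$ is the extension-restriction of scalars along $\wideparen{\mathbb{T}}$; moreover $\operatorname{S}^{-1}(\Delta_*\wideparen{\D}_X)=\Delta_+\OX_X$, as recalled in the proof of Proposition \ref{prop resolution of differential operators on smooth Stein spaces}. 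Under this identification, the computation of $R\underline{\mathcal{H}om}_{\wideparen{E}_X}(\Delta_*\wideparen{\D}_X,\wideparen{E}_X)$ is transported to the computation of $R\underline{\mathcal{H}om}_{\wideparen{\D}_{X^2}}(\Delta_+\OX_X,\wideparen{\D}_{X^2})$ on $X^2$, up to the $\Omega_{X^2/K}$-twist that the side-switching naturally absorbs.

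To carry out this latter computation, I invoke Bode's duality theory on $X^2$. Since $\OX_X$ is an integrable connection, Proposition \ref{prop duality of integrable connections} gives $\mathbb{D}\OX_X=\OX_X$; since $\Delta$ is a closed immersion and $\OX_X$ is weakly holonomic, Proposition \ref{prop equality direct image and extraordinary direct image} yields
\begin{equation*}
\mathbb{D}(\Delta_+\OX_X)\;=\;\Delta_+(\mathbb{D}\OX_X)\;=\;\Delta_+\OX_X,
\end{equation*}
i.e.\ $\Delta_+\OX_X$ is self-dual on $X^2$. Unpacking the definition of $\mathbb{D}$ with $\dim(X^2)=2\operatorname{dim}(X)$ and solving for the Hom-complex gives
\begin{equation*}
R\underline{\mathcal{H}om}_{\wideparen{\D}_{X^2}}(\Delta_+\OX_X,\wideparen{\D}_{X^2})\;=\;\Delta_+\OX_X\overrightarrow{\otimes}^{\mathbb{L}}_{\OX_{X^2}}\Omega_{X^2/K}[-2\operatorname{dim}(X)]
\end{equation*}
as right $\wideparen{\D}_{X^2}$-modules.

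Transporting through $\operatorname{S}$, the $\Omega_{X^2/K}$-twist on the right-hand side is precisely what the side-switching convention converts into the right $\wideparen{E}_X$-module $\Delta_*\wideparen{\D}_X$, and the shift is preserved, yielding the desired identity on the local charts. Since the construction is natural in restriction to smaller open subspaces of $X^2$, the local isomorphisms glue to a global identification in $\operatorname{D}(\wideparen{E}_X^{\op})$. The main obstacle is making the final transport rigorous: one must verify that the left-module identification $\operatorname{S}^{-1}(\Delta_*\wideparen{\D}_X)=\Delta_+\OX_X$ has as its right-module counterpart the identification $\operatorname{S}^{-1}(\Delta_*\wideparen{\D}_X)=\Delta_+\OX_X\overrightarrow{\otimes}_{\OX_{X^2}}\Omega_{X^2/K}$, which is exactly the compatibility built into the side-switching equivalence for bimodules. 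Once this bookkeeping is in place, the chain of identifications above assembles into the asserted isomorphism $R\underline{\mathcal{H}om}_{\wideparen{E}_X}(\Delta_*\wideparen{\D}_X,\wideparen{E}_X)=\Delta_*\wideparen{\D}_X[-2\operatorname{dim}(X)]$.
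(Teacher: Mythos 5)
Your proposal is correct and follows essentially the same route as the paper's proof: side-switch to $\wideparen{\D}_{X^2}$-modules, use $\mathbb{D}\Delta_+\OX_X=\Delta_+\mathbb{D}\OX_X=\Delta_+\OX_X$ (Propositions \ref{prop duality of integrable connections} and \ref{prop equality direct image and extraordinary direct image}) to identify $R\underline{\mathcal{H}om}_{\wideparen{\D}_{X^2}}(\Delta_+\OX_X,\wideparen{\D}_{X^2})$ with $\Delta_+\OX_X\overrightarrow{\otimes}^{\mathbb{L}}_{\OX_{X^2}}\Omega_{X^2}[-2\operatorname{dim}(X)]$, and convert back via the side-changing isomorphism for right modules. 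The only difference is your initial localization to the affinoid free-tangent-sheaf case: the paper avoids this (and hence the final gluing step) because the side-switching equivalence of Theorem \ref{teo  properties of EX}(iii) is defined globally without assuming $\mathcal{T}_{X/K}$ is free.
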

\begin{proof}
We start by noticing that by the side-switching theorem \cite[Proposition 6.1.16]{HochDmod} and the previous proposition we have the following identities in $\operatorname{D}(\operatorname{Shv}(X^2,\Indban))$:
\begin{equation*}
    R\underline{\mathcal{H}om}_{\wideparen{E}_X}(\Delta_*\wideparen{\D}_X,\wideparen{E}_X)=R\underline{\mathcal{H}om}_{\wideparen{\D}_{X^2}}(\Delta_+\OX_X,\wideparen{\D}_{X^2})=\Delta_+\OX_X\overrightarrow{\otimes}_{\OX_{X^2}}^{\mathbb{L}}\Omega_{X^2}[-2\operatorname{dim}(X)],
\end{equation*}
where the last identity follows by Proposition \ref{prop duality of integrable connections}. Thus, we have shown that $R\underline{\mathcal{H}om}_{\wideparen{E}_X}(\Delta_*\wideparen{\D}_X,\wideparen{E}_X)$ is a complex in $\operatorname{D}(\wideparen{E}_X^{\op})$ concentrated in degree $2\operatorname{dim}(X)$. In order to see that this is isomorphic as a right $\wideparen{E}_X$-module to $\Delta_*\wideparen{\D}_X[-2\operatorname{dim}(X)]$, we  again apply the side-changing isomorphism for right modules, as in  \cite[pp. 59]{HochDmod}.
\end{proof}
Using the results from Section \ref{Section background HochDmod}, we can show the following variant:
\begin{coro}\label{coro calabi-yau property of DX}
Assume $X$ is an affinoid space or $X$ is a Stein space and $K$ is discretely valued or algebraically closed. If $X$ has free tangent sheaf, then the following isomorphism holds in $\operatorname{D}(\wideparen{\D}_X(X)^e)$:
\begin{equation*}
    R\underline{\Hom}_{\wideparen{\D}_X(X)^e}(\wideparen{\D}_X(X),\wideparen{\D}_X(X)^e)=\wideparen{\D}_X(X)[-2\operatorname{dim}(X)].
\end{equation*}
\end{coro}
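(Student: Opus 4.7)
The strategy is to deduce the result from Corollary \ref{coro we need for vdb duality} by applying the global sections functor $R\Gamma(X^2,-)$ to both sides of the displayed isomorphism in $\operatorname{D}(\wideparen{E}_X^{\op})$, and then identifying each side with the help of the results from Section \ref{Section HC and ext 2}. The hypothesis that $\mathcal{T}_{X/K}$ is free guarantees, via Theorem \ref{teo  properties of EX}, that $\wideparen{\mathbb{T}}:\wideparen{\D}_{X^2}\rightarrow\wideparen{E}_X$ is an isomorphism of sheaves of Ind-Banach algebras, so $\wideparen{E}_X$ is locally Fr\'echet--Stein, and $\wideparen{E}_X$ itself is a co-admissible $\wideparen{E}_X$-module with $\wideparen{E}_X(X^2)=\wideparen{\D}_X(X)^e$.

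For the left-hand side, I will invoke Theorem \ref{teo hochschild cohomology in terms of extendions in DXe 1} (together with the remark after it, covering the affinoid case) with coefficients $\mathcal{M}=\wideparen{E}_X$. The hypotheses are satisfied because $X^2$ is either affinoid with free tangent sheaf or a smooth Stein space admitting an \'etale map to $\mathbb{A}^{2r}_K$ (using Corollary \ref{coro locally étale and stein} locally, but here the freeness of $\mathcal{T}_{X/K}$ on $X$ itself is what is assumed), and $\wideparen{E}_X$ is co-admissible over itself. This produces
\begin{equation*}
R\Gamma(X^2,R\underline{\mathcal{H}om}_{\wideparen{E}_X}(\Delta_*\wideparen{\D}_X,\wideparen{E}_X))=R\underline{\Hom}_{\wideparen{\D}_X(X)^e}(\wideparen{\D}_X(X),\wideparen{\D}_X(X)^e).
\end{equation*}
For the right-hand side, $\Delta_*\wideparen{\D}_X$ is a co-admissible $\wideparen{E}_X$-module, so by Proposition \ref{prop acyclicity of co-admissible modules on Stein spaces} together with Remark \ref{obs version for E modules} (or by Tate acyclicity in the affinoid case) it is $\Gamma(X^2,-)$-acyclic, and $\Gamma(X^2,\Delta_*\wideparen{\D}_X)=\Gamma(X,\wideparen{\D}_X)=\wideparen{\D}_X(X)$. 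Hence $R\Gamma(X^2,\Delta_*\wideparen{\D}_X[-2\operatorname{dim}(X)])=\wideparen{\D}_X(X)[-2\operatorname{dim}(X)]$, and combining the two identifications with Corollary \ref{coro we need for vdb duality} yields the claim at the level of underlying complexes.

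The only delicate point, and the one that will require the most care, is matching the $\wideparen{\D}_X(X)^e$-module structures on both sides. The left-hand side inherits its residual $\wideparen{E}_X^{\op}$-action from the right action of $\wideparen{E}_X$ on itself, which under $\Gamma(X^2,-)$ becomes the standard right multiplication action of $\wideparen{\D}_X(X)^e$ on $\wideparen{\D}_X(X)^e$ that endows $R\underline{\Hom}_{\wideparen{\D}_X(X)^e}(\wideparen{\D}_X(X),\wideparen{\D}_X(X)^e)$ with its bimodule structure. On the right-hand side, the $\wideparen{E}_X^{\op}$-structure on $\Delta_*\wideparen{\D}_X$ furnished by the proof of Corollary \ref{coro we need for vdb duality} is the one coming, via the side-changing operation $-\overrightarrow{\otimes}_{\OX_{X^2}}\Omega_{X^2}^{-1}$ and Kashiwara's equivalence, from the natural $\wideparen{\D}_{X^2}$-module structure on $\Delta_+\OX_X$; after taking global sections this matches, through the isomorphism $\wideparen{\mathbb{T}}$ and the side-switching equivalence for bimodules, the $\wideparen{\D}_X(X)^e$-bimodule structure on $\wideparen{\D}_X(X)$ given by its tautological bimodule structure. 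Tracking these identifications through the chain of canonical isomorphisms is straightforward but constitutes the only non-trivial book-keeping in the argument.
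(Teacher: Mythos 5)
Your proposal is correct and follows essentially the same route as the paper: the paper's own proof simply combines Corollary \ref{coro we need for vdb duality} with Theorem \ref{teo hochschild cohomology in terms of extendions in DXe 1} (applied to the co-admissible module $\mathcal{M}=\wideparen{E}_X$, with the affinoid case covered by the remark following that theorem), exactly as you do. Your additional discussion of tracking the residual $\wideparen{\D}_X(X)^e$-module structures through the side-switching and side-changing identifications is book-keeping the paper leaves implicit, but it is consistent with the intended argument.
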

\begin{proof}
Under our assumption of freeness of the tangent sheaf, the corollary follows by the previous proposition, together with Theorem \ref{teo hochschild cohomology in terms of extendions in DXe 1}.
\end{proof}
\subsection{Van den Bergh duality}\label{section vdb}
After the calculations performed in the previous section, we are almost ready to show Van den Bergh duality. Until further notice, we let $X$ be a separated rigid space, and assume that it admits an étale map $X\rightarrow \mathbb{A}^r_K$. As shown in Proposition \ref{prop resolution of differential operators on smooth Stein spaces}, this implies that there is a co-admissible $\wideparen{E}_X$-module $\mathcal{S}_X$ such that there is a strict exact sequence:
\begin{equation*}
    E^{\bullet}_{\wideparen{\D}_X}:= \left(0\rightarrow P^{2r}\rightarrow \cdots\rightarrow P^0\rightarrow \mathcal{S}_X\oplus \Delta_*\wideparen{\D}_X\rightarrow 0\right),
\end{equation*}
where each of the $P^i$ is a finite-projective $\wideparen{E}_X$-module and $P^1=\wideparen{E}_X$. The existence of this complex will be instrumental in our proof of Van den Bergh duality. Let us start with the following lemma:
\begin{Lemma}\label{lemma vdb duality}
Let $\mathscr{M}\in \operatorname{D}(\wideparen{E}_X)$. Then there is a canonical isomorphism in $\operatorname{D}(\operatorname{Shv}(X^2,\Indban)$:
\begin{equation*}
    R\underline{\mathcal{H}om}_{\wideparen{E}_X}(\Delta_*\wideparen{\D}_X,\mathscr{M})=R\underline{\mathcal{H}om}_{\wideparen{E}_X}(\Delta_*\wideparen{\D}_X,\wideparen{E}_X)\overrightarrow{\otimes}^{\mathbb{L}}_{\wideparen{E}_X}\mathscr{M}.
\end{equation*}
Furthermore, if $X$ is affinoid or $X$ is Stein and $K$ is either discretely valued or algebraically closed, then for any $\mathscr{M}\in \operatorname{D}(\wideparen{\D}_X(X)^e)$  there is a canonical isomorphism in $\operatorname{D}(\widehat{\mathcal{B}}c_K)$:
\begin{equation*}
    R\underline{\Hom}_{\wideparen{\D}_X(X)^e}(\wideparen{\D}_X(X),\mathscr{M})=R\underline{\Hom}_{\wideparen{\D}_X(X)^e}(\wideparen{\D}_X(X),\wideparen{\D}_X(X)^e)\overrightarrow{\otimes}^{\mathbb{L}}_{\wideparen{\D}_X(X)^e}\mathscr{M}.
    \end{equation*}
\end{Lemma}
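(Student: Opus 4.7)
The strategy is to reduce both identities to the easy case of finite-projective modules by exploiting the resolution $E^{\bullet}_{\wideparen{\D}_X}$ constructed in Proposition \ref{prop resolution of differential operators on smooth Stein spaces}. Although $\Delta_*\wideparen{\D}_X$ is not known to admit a finite-projective resolution on its own, the auxiliary module $\mathcal{S}_X \oplus \Delta_*\wideparen{\D}_X$ does, and the naturality of the comparison map will allow us to split off the $\mathcal{S}_X$ summand at the end.

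For the first identity, the plan is to introduce the canonical natural transformation
\begin{equation*}
    \alpha_{\mathscr{N},\mathscr{M}}: R\underline{\mathcal{H}om}_{\wideparen{E}_X}(\mathscr{N},\wideparen{E}_X)\overrightarrow{\otimes}^{\mathbb{L}}_{\wideparen{E}_X}\mathscr{M} \rightarrow R\underline{\mathcal{H}om}_{\wideparen{E}_X}(\mathscr{N},\mathscr{M})
\end{equation*}
coming from the standard tensor-evaluation pairing (working in $\operatorname{D}(I(\wideparen{E}_X))$ via the identifications of Section \ref{section sheaes of ind}). Both functors are additive and triangulated in the first variable, and $\alpha$ is compatible with direct sum decompositions, so it suffices to show that $\alpha_{\mathcal{S}_X\oplus \Delta_*\wideparen{\D}_X,\mathscr{M}}$ is an isomorphism: a direct summand of an isomorphism in the derived category is again an isomorphism. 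Using $E^{\bullet}_{\wideparen{\D}_X}$ to replace $\mathcal{S}_X\oplus \Delta_*\wideparen{\D}_X$ by a bounded complex of finite-projective $\wideparen{E}_X$-modules, the question reduces to showing that $\alpha_{P,\mathscr{M}}$ is an isomorphism for each finite-projective $P$. Since $P$ is a direct summand of some $\wideparen{E}_X^{n}$, additivity in the first variable reduces this further to the trivial case $P=\wideparen{E}_X$, where both sides are canonically isomorphic to $\mathscr{M}$.

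For the second identity, I would repeat the same argument one category down. By Proposition \ref{prop acyclicity of co-admissible modules on Stein spaces} and Remark \ref{obs version for E modules} (or the analogous affinoid statement), the global sections functor is exact on co-admissible $\wideparen{E}_X$-modules under our hypotheses on $X$ and $K$, so applying $\Gamma(X^2,-)$ to $E^{\bullet}_{\wideparen{\D}_X}$ yields a strict exact complex of finite-projective $\wideparen{\D}_X(X)^e$-modules resolving $\mathcal{S}_X(X^2)\oplus \wideparen{\D}_X(X)$; this is essentially the content already used in the proof of Theorem \ref{teo hochschild cohomology in terms of extendions in DXe 1}. The same split-off-the-summand and reduce-to-the-free-case argument then gives the second identity in $\operatorname{D}(\widehat{\mathcal{B}}c_K)$.

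The main potential obstacle is purely formal: making sure the comparison map $\alpha$ is well-defined and natural at the level of derived categories in the Ind-Banach / left-heart formalism, and that the splitting argument is valid there. This is subtle only because $\Delta_*\wideparen{\D}_X$ itself does not a priori sit in the bounded derived category of finite-projectives, so the usual "perfect complex" machinery does not directly apply; however, since $\alpha$ respects the direct sum decomposition $\mathcal{S}_X\oplus\Delta_*\wideparen{\D}_X$ componentwise, the two summands can be analyzed independently, and only the combined object needs a finite-projective resolution. Once this bookkeeping is in place, the proof is essentially a two-line reduction.
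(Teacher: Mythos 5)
Your proposal is correct and follows essentially the same route as the paper: the same tensor-evaluation natural transformation, the same trick of splitting off the $\mathcal{S}_X$ summand so that the finite-projective resolution $E^{\bullet}_{\wideparen{\D}_X}$ can be used, and the same reduction to the free case $P=\wideparen{E}_X$ (the paper phrases the dévissage as an induction on the length of the resolution via distinguished triangles, which is the same argument). The second identity is likewise handled in the paper exactly as you describe, by passing to global sections.
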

\begin{proof}
We only show the first case, the second one is analogous. First, notice that for $\mathcal{M}\in \operatorname{D}(\wideparen{E}_X)$ there is always a natural transformation:
\begin{equation*}
   \varphi(-): R\underline{\mathcal{H}om}_{\wideparen{E}_X}(-,\wideparen{E}_X)\overrightarrow{\otimes}^{\mathbb{L}}_{\wideparen{E}_X}\mathscr{M}\rightarrow  R\underline{\mathcal{H}om}_{\wideparen{E}_X}(-,\mathscr{M}).
\end{equation*}
We need to show that this is an isomorphism for $\Delta_*\wideparen{\D}_X$. As the functors commute with finite direct sums, it suffices to show this for $\mathcal{N}:=\mathcal{S}_X\oplus\Delta_*\wideparen{\D}_X$. Consider the following strict exact sequence:
\begin{equation*}
    0\rightarrow \mathcal{K} \rightarrow \wideparen{E}_X\rightarrow \mathcal{N}\rightarrow 0,
\end{equation*}
where $\wideparen{E}_X\rightarrow \mathcal{N}$ is the first morphism in $E^{\bullet}_{\wideparen{\D}_X}$. The natural transformation $\varphi$ induces a morphism of distinguished triangles:
\begin{equation*}
\begin{tikzcd}
{R\underline{\mathcal{H}om}_{\wideparen{E}_X}(\mathcal{N},\wideparen{E}_X)\overrightarrow{\otimes}^{\mathbb{L}}_{\wideparen{E}_X}\mathscr{M}} \arrow[r] \arrow[d]                                                             & {R\underline{\mathcal{H}om}_{\wideparen{E}_X}(\wideparen{E}_X,\wideparen{E}_X)\overrightarrow{\otimes}^{\mathbb{L}}_{\wideparen{E}_X}\mathscr{M}} \arrow[r] \arrow[d]                                                             & { R\underline{\mathcal{H}om}_{\wideparen{E}_X}(\mathcal{K},\wideparen{E}_X)\overrightarrow{\otimes}^{\mathbb{L}}_{\wideparen{E}_X}\mathscr{M}}\arrow[d]                                                             \\
{R\underline{\mathcal{H}om}_{\wideparen{E}_X}(\mathcal{N},\mathscr{M})} \arrow[r] & {R\underline{\mathcal{H}om}_{\wideparen{E}_X}(\wideparen{E}_X,\mathscr{M})} \arrow[r] & {R\underline{\mathcal{H}om}_{\wideparen{E}_X}(\mathcal{K},\mathscr{M})}
\end{tikzcd}
\end{equation*}
and the middle map is always a quasi-isomorphism. Thus, it suffices to show that $\varphi(\mathcal{K})$ is a quasi-isomorphism. However, by construction, $\mathcal{K}$ admits a strict resolution by finite-projective $\wideparen{E}_X$-modules of length $2\operatorname{dim(X)}-1$. Hence, we can repeat this process inductively to show that $\varphi(\mathcal{K})$ is a quasi-isomorphism, and therefore $\varphi(\mathcal{N})$ is a quasi-isomorphism, as we wanted to show.
\end{proof}
We remark that if $\mathscr{M}$ is a co-admissible $\wideparen{E}_X$-module, then the previous result is a special instance of \cite[Proposition 6.19]{bodeauslander}. The relevance of this lemma is that it enables us to also handle the case when $\mathscr{M}$ is not co-admissible. The importance of this stems from the fact that the deformation theory of a co-admissible module is parameterized  by the Hochschild cohomology with values in a bimodule which is far from co-admissible, and so we need homological tools to also deal with this fact. See \cite[Section 3.9]{deformationsalg} for a discussion of this in the case of associative $K$-algebras. After all this work has been done, the proof of Van den Bergh duality reduces to a simple formal computation:
\begin{teo}\label{teo Van den Bergh duality}
Let $X$ be a smooth and separated rigid space with an étale map $X\rightarrow \mathbb{A}^r_K$. For any $\mathscr{M}\in \operatorname{D}(\wideparen{E}_X)$ there is a canonical isomorphism in $\operatorname{D}(\operatorname{Shv}(X^2,\Indban)$:
\begin{equation*}
    R\underline{\mathcal{H}om}_{\wideparen{E}_X}(\Delta_*\wideparen{\D}_X,\mathscr{M})=\Delta_*\wideparen{\D}_X\overrightarrow{\otimes}^{\mathbb{L}}_{\wideparen{E}_X}\mathscr{M}[-2\operatorname{dim}(X)].
\end{equation*}
Furthermore, if $X$ is affinoid or $X$ is Stein and $K$ is either discretely valued or algebraically closed, then for any $\mathscr{M}\in \operatorname{D}(\wideparen{\D}_X(X)^e)$  there is a canonical isomorphism in $\operatorname{D}(\widehat{\mathcal{B}}c_K)$:
\begin{equation*}
    \operatorname{H}^{\bullet}(\wideparen{\D}_X(X),\mathscr{M})=\operatorname{H}_{\bullet}(\wideparen{\D}_X(X),\mathscr{M})[-2\operatorname{dim}(X)].
    \end{equation*} 
In particular, for each $n\geq 0$ there is an isomorphism in $LH(\widehat{\mathcal{B}}c_K)$:
\begin{equation*}
    \operatorname{HH}^{n}(\wideparen{\D}_X(X),\mathscr{M})=\operatorname{HH}_{2\operatorname{dim}(X)-n}(\wideparen{\D}_X(X),\mathscr{M})
\end{equation*}
\end{teo}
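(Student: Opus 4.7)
The proof plan reduces to assembling the pieces that have already been established. The first identity is the geometric/sheaf-theoretic one, and I would handle it via a two-step substitution. I would first invoke Lemma \ref{lemma vdb duality}, which gives the canonical isomorphism
\begin{equation*}
R\underline{\mathcal{H}om}_{\wideparen{E}_X}(\Delta_*\wideparen{\D}_X,\mathscr{M})=R\underline{\mathcal{H}om}_{\wideparen{E}_X}(\Delta_*\wideparen{\D}_X,\wideparen{E}_X)\overrightarrow{\otimes}^{\mathbb{L}}_{\wideparen{E}_X}\mathscr{M},
\end{equation*}
and then insert Corollary \ref{coro we need for vdb duality}, which identifies the dualizing factor as $\Delta_*\wideparen{\D}_X[-2\operatorname{dim}(X)]$ as a right $\wideparen{E}_X$-module. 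Concatenating these two identifications yields the desired formula. The key point to check (and what I would spend most of the write-up on) is that the right $\wideparen{E}_X$-module structure used in the tensor product on the right-hand side of Lemma \ref{lemma vdb duality} genuinely agrees, under the identification of Corollary \ref{coro we need for vdb duality}, with the natural right $\wideparen{E}_X$-action on $\Delta_*\wideparen{\D}_X$ coming from the side-switching equivalence $\operatorname{S}$ and the side-changing operator $-\overrightarrow{\otimes}_{\OX_{X^2}}\Omega_{X^2}$. This is essentially automatic from how both isomorphisms are constructed, but it deserves a short compatibility check.

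The second (global-sections) identity is entirely parallel. Under the assumptions that $X$ is affinoid or ($X$ is Stein and $K$ is discretely valued or algebraically closed), I would apply the second half of Lemma \ref{lemma vdb duality} to write
\begin{equation*}
R\underline{\Hom}_{\wideparen{\D}_X(X)^e}(\wideparen{\D}_X(X),\mathscr{M})=R\underline{\Hom}_{\wideparen{\D}_X(X)^e}(\wideparen{\D}_X(X),\wideparen{\D}_X(X)^e)\overrightarrow{\otimes}^{\mathbb{L}}_{\wideparen{\D}_X(X)^e}\mathscr{M},
\end{equation*}
and then substitute the Calabi--Yau-type identification of Corollary \ref{coro calabi-yau property of DX} to replace the dualizing factor by $\wideparen{\D}_X(X)[-2\operatorname{dim}(X)]$. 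Unpacking Definitions \ref{defi hichschild cohomology} and \ref{defi Hochschild homology} in the case $X=\Sp(K)$ applied to $\mathscr{A}=\wideparen{\D}_X(X)$, the left-hand side is the Hochschild cohomology complex $\operatorname{HH}^{\bullet}(\wideparen{\D}_X(X),\mathscr{M})$ and the right-hand side is exactly $\operatorname{HH}_{\bullet}(\wideparen{\D}_X(X),\mathscr{M})[-2\operatorname{dim}(X)]$, giving the second displayed isomorphism. The final statement at the level of cohomology groups in $LH(\widehat{\mathcal{B}}c_K)$ is then obtained by passing to $\operatorname{H}^n$ on both sides and observing that the shift translates the indexing.

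The only place where some genuine content remains is making precise the compatibility of bimodule structures between the Hom and the tensor product. The natural transformation $\varphi$ from Lemma \ref{lemma vdb duality} is constructed in such a way that the right $\wideparen{E}_X$-structure on the Hom term is transported to the tensor product; once we replace that Hom by $\Delta_*\wideparen{\D}_X[-2\operatorname{dim}(X)]$ via Corollary \ref{coro we need for vdb duality}, the resulting right $\wideparen{E}_X$-structure matches the canonical one inherited from the $\wideparen{E}_X$-bimodule structure on $\Delta_*\wideparen{\D}_X$. I expect this verification to be the main (indeed, essentially only) obstacle: it is not technically hard, but it must be done carefully to ensure that the isomorphism is canonical in the stated derived categories and not just valued in some coarser category. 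Everything else is substitution of already proved identities.
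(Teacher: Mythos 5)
Your proposal is correct and follows essentially the same route as the paper: the paper's proof is exactly the concatenation of Lemma \ref{lemma vdb duality} with Corollary \ref{coro we need for vdb duality} (resp. Corollary \ref{coro calabi-yau property of DX} for the global-sections case), with the second case treated as analogous and the degree statement obtained by taking cohomology. The compatibility of right-module structures you flag is indeed the only point of substance, and the paper handles it implicitly through the side-changing identifications already built into Corollary \ref{coro we need for vdb duality}.
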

\begin{proof}
Again, we only deal with the first case, the second is analogous. Let $\mathscr{M}\in \operatorname{D}(\wideparen{E}_X)$. Then by Corollary \ref{coro we need for vdb duality}, and  Lemma \ref{lemma vdb duality} we have the following isomorphisms in $\operatorname{D}(\operatorname{Shv}(X^2,\Indban))$:
\begin{equation*}
R\underline{\mathcal{H}om}_{\wideparen{E}_X}(\Delta_*\wideparen{\D}_X,\mathscr{M})=R\underline{\mathcal{H}om}_{\wideparen{E}_X}(\Delta_*\wideparen{\D}_X,\wideparen{E}_X)\overrightarrow{\otimes}^{\mathbb{L}}_{\wideparen{E}_X}\mathscr{M}= \Delta_*\wideparen{\D}_X\overrightarrow{\otimes}^{\mathbb{L}}_{\wideparen{E}_X}\mathscr{M}[-2\operatorname{dim}(X)],   
\end{equation*}
and this is precisely the identity we wanted to show. The last part follows from the identities:
\begin{equation*}
    \operatorname{HH}^{n}(\wideparen{\D}_X(X),\mathscr{M})=\operatorname{H}^{n-2\operatorname{dim}(X)}(\operatorname{H}_{\bullet}(\wideparen{\D}_X(X),\mathscr{M}))=:\operatorname{HH}_{2\operatorname{dim}(X)-n}(\wideparen{\D}_X(X),\mathscr{M}),
\end{equation*}
where the last identity follows from our conventions regarding the degree of Hochschild homology.
\end{proof}
Before moving on, we would like to put the results of this section into perspective. In particular let $X$ be either affinoid or Stein, and assume it has free tangent sheaf. The two key ingredients for the proof of Van den Bergh duality of  $\wideparen{\D}_X(X)$ are the existence of the strict exact complex:
\begin{equation*}
    E^{\bullet}_{\wideparen{\D}_X}:= \left(0\rightarrow P^{2r}\rightarrow \cdots\rightarrow P^0\rightarrow \mathcal{S}_X\oplus \Delta_*\wideparen{\D}_X\rightarrow 0\right),
\end{equation*}
and the identity of graded $\wideparen{\D}_X(X)^e$-modules:
\begin{equation*}
    R\underline{\Hom}_{\wideparen{\D}_X(X)^e}(\wideparen{\D}_X(X),\wideparen{\D}_X(X)^e)=\wideparen{\D}_X(X)[-2\operatorname{dim}(X)].
\end{equation*}
In the setting of associative $K$-algebras, these two properties can be encapsulated as follows:
\begin{defi}\label{defi Calabi-Yau algebra}
Let $A$ be an associative $K$-algebra. We make the following definitions:
\begin{enumerate}[label=(\roman*)]
    \item $A$ is homologically smooth if it admits a finite resolution by finite projective $A^e$-modules.
    \item $A$ is a Calabi-Yau algebra of dimension $d$ if it is homologically smooth and there is an isomorphism of graded $A^e$-modules:
    \begin{equation*}
        R\Hom_{A^e}(A,A^e)\cong A[-d].
    \end{equation*}
\end{enumerate}
\end{defi}
Calabi-Yau algebras were introduced by V. Ginzburg in \cite{ginzcalabi-yau} in the context of differential graded algebras as a non-commutative generalization of the algebras of regular functions on affine Calabi-Yau varieties. These algebras are a central object in the deformation theory of associative algebras and have multiple applications to non-commutative geometry. See, for example \cite{deformationsalg} for an overview of the subject. In the context of Hochschild (co)-homology of associative algebras, one of the most important features of Calabi-Yau algebras is that they always satisfy Van den Bergh duality. In particular, the proofs given above carry over to the setting of bimodules over an associative algebra without significant changes. Notice that the definition of homologically smooth algebra can be reformulated in more categorical terms. Let us start by recalling the following definitions:
\begin{defi}
Let $\mathcal{A}$ be an additive category with arbitrary direct sums. We say an object $X$ in $\mathcal{A}$ is compact if for every direct sum $\bigoplus_{i\in I}Y_i$ the canonical map:
\begin{equation*}
    \bigoplus_{i\in I}\Hom_{\mathcal{A}}(X,V_i)\rightarrow \Hom_{\mathcal{A}}(X,\bigoplus_{i\in I}V_i),
\end{equation*}
is an isomorphism of sets.
\end{defi}
Let $\mathcal{A}$ be a Grothendieck abelian category. Then its derived category $\operatorname{D}(\mathcal{A})$ is an additive category with direct sums. Hence, we can study the compact objects in $\operatorname{D}(\mathcal{A})$. Furthermore, in most situations these objects are completely determined by the compact objects in $\mathcal{A}$:
\begin{Lemma}[{\cite[Tag 0948]{stacks-project}}]\label{lemma compact objects derived category}
Let $\mathcal{A}$ be a Grothendieck abelian category, and let  $\mathcal{S}$ be a set of objects of $\mathcal{A}$ satisfying the following conditions:
\begin{enumerate}[label=(\roman*)]
    \item Every object of $\mathcal{A}$ is a quotient of a direct sum of elements of $\mathcal{S}$.
    \item Every object of $\mathcal{S}$ is compact.
\end{enumerate}
Then every compact object of $\operatorname{D}(\mathcal{A})$ is a direct summand in $\operatorname{D}(\mathcal{A})$ of a finite complex of finite direct sums of elements of $\mathcal{S}$.
\end{Lemma}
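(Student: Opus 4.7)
The strategy is to invoke the general theory of compactly generated triangulated categories, in the spirit of Neeman's work on Brown representability and thick subcategories. My plan is to first verify that $\mathcal{S}$, viewed as a set of objects of $\operatorname{D}(\mathcal{A})$ concentrated in degree zero, is a set of compact generators for the derived category, and then apply the classical theorem that any compact object of a compactly generated triangulated category lies in the thick subcategory generated by the compact generators.

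First I would check that every $S \in \mathcal{S}$ is compact in $\operatorname{D}(\mathcal{A})$. Given a family $\{X_i\}_{i \in I}$ in $\operatorname{D}(\mathcal{A})$ with coproduct $\bigoplus_i X_i$, the key point is that in a Grothendieck abelian category cohomology commutes with direct sums, so $H^0(\bigoplus_i X_i) = \bigoplus_i H^0(X_i)$. Combined with the compactness of $S$ in $\mathcal{A}$ (which translates maps out of $S$ into functors commuting with direct sums) and a spectral sequence / truncation argument that controls $\Hom_{\operatorname{D}(\mathcal{A})}(S,-)$ in terms of the cohomology of the target, this gives compactness of $S$ in $\operatorname{D}(\mathcal{A})$. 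Next I would show that $\mathcal{S}$ generates $\operatorname{D}(\mathcal{A})$: if $X$ is a complex with $\Hom_{\operatorname{D}(\mathcal{A})}(S[n], X) = 0$ for all $S \in \mathcal{S}$ and all $n$, then for each $n$ the cohomology object $H^n(X) \in \mathcal{A}$ receives no nonzero map from any $S \in \mathcal{S}$; but condition $(i)$ implies that such an object must vanish, so $X = 0$.

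Having produced compact generators, the core technical step is to show that the smallest thick triangulated subcategory $\langle \mathcal{S} \rangle \subset \operatorname{D}(\mathcal{A})$ containing $\mathcal{S}$ coincides with the subcategory $\operatorname{D}(\mathcal{A})^c$ of all compact objects. This is the content of Neeman's theorem, whose proof proceeds by a Bousfield localization / cellular approximation argument: given a compact $C \in \operatorname{D}(\mathcal{A})^c$, one constructs a tower of maps from objects in $\langle \mathcal{S} \rangle$ approximating $C$, and then uses compactness of $C$ together with the fact that the right orthogonal of $\langle \mathcal{S} \rangle$ vanishes (by the generation statement above) to show the approximation is already an isomorphism up to a direct summand. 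This is the step I expect to be the main obstacle, as it requires a nontrivial argument involving countable homotopy colimits and a delicate splitting argument due to the presence of summands (the "idempotent completion" of finite cell complexes).

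Finally, I would unwind the definition of $\langle \mathcal{S} \rangle$: explicitly, an object of $\langle \mathcal{S} \rangle$ is obtained by finitely many iterations of cones, shifts, finite direct sums, and direct summands starting from $\mathcal{S}$. A standard induction on the number of cones shows that any such object is a direct summand of a finite complex whose terms are finite direct sums of elements of $\mathcal{S}$, yielding the desired conclusion. Combining these steps, every compact object of $\operatorname{D}(\mathcal{A})$ has the stated form.
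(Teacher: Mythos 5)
Your proposal has a genuine gap at its first step, and the gap is fatal to the whole strategy. You claim that each $S\in\mathcal{S}$, being compact in $\mathcal{A}$, is compact in $\operatorname{D}(\mathcal{A})$. This is false: compactness in $\mathcal{A}$ only says that $\operatorname{Hom}_{\mathcal{A}}(S,-)=\operatorname{Ext}^0(S,-)$ commutes with direct sums, whereas compactness in $\operatorname{D}(\mathcal{A})$ requires the same for $\operatorname{Hom}_{\operatorname{D}(\mathcal{A})}(S,X[n])$ for all $n$, i.e. for all higher Ext functors, and no spectral sequence or truncation argument can produce that from the degree-zero statement. Concretely, take $\mathcal{A}=\operatorname{Mod}_R$ with $R=k[x]/(x^2)$ and $\mathcal{S}=\{R,\,k\}$ where $k=R/(x)$: both objects are finitely generated, hence compact in $\mathcal{A}$, and condition $(i)$ holds, but $k$ is not compact in $\operatorname{D}(R)$ because $\operatorname{Ext}^n_R(k,k)\neq 0$ for all $n\geq 0$, so $k$ is not perfect. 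Since $\mathcal{S}$ need not consist of compact objects of $\operatorname{D}(\mathcal{A})$, Neeman's theorem on compactly generated triangulated categories is simply not applicable; worse, its conclusion would be false here, since it would assert that every object of the thick subcategory $\langle\mathcal{S}\rangle$ — in particular $k$ itself — is compact in $\operatorname{D}(\mathcal{A})$. The lemma is a one-way inclusion precisely because finite complexes of finite sums of elements of $\mathcal{S}$ need not be compact, so the approach cannot be repaired without being replaced.

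For the record, the paper gives no proof of its own; it quotes the result from the Stacks Project, whose argument is entirely different and avoids your difficulty by working directly with a given compact object $K$. One first shows $K$ is bounded above by factoring $\operatorname{id}_K$ through some $\tau_{\leq n}K$ in $K=\operatorname{hocolim}_n\tau_{\leq n}K$; condition $(i)$ is then used to produce a quasi-isomorphism $P^{\bullet}\to K$ with $P^{\bullet}$ bounded above and each $P^i$ a possibly infinite direct sum of elements of $\mathcal{S}$; compactness of $K$ applied to $P^{\bullet}=\operatorname{hocolim}_n\sigma_{\geq -n}P^{\bullet}$ (stupid truncations) exhibits $K$ as a direct summand of a bounded complex whose terms are direct sums of elements of $\mathcal{S}$; and condition $(ii)$ enters only at the very end, to write that bounded complex as a filtered colimit of subcomplexes with finite sub-sums as terms (the differential out of each summand lands in a finite sub-sum because that summand is compact in $\mathcal{A}$), after which compactness of $K$ once more factors the splitting through one of these finite pieces. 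The correct proof thus never needs the elements of $\mathcal{S}$ to be compact in the derived category, which is exactly the hypothesis your route cannot supply.
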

Thus, if $A$ is an associative $K$-algebra and $A^e$
is its enveloping algebra, then the fact that finite-projective $A^e$-modules form a system of compact projective generators of $\Mod(A^e)$ implies that $A$ is homologically smooth if and only if $A$ is a compact object of $D(A^e)$.\\

In the bornological setting the situation is much more subtle. Indeed, let $\mathscr{A}$ be a complete bornological algebra, and assume for simplicity that $\mathscr{A}$ is Fréchet. In this situation, the Grothendieck abelian category $\Mod_{LH(\widehat{\mathcal{B}}c_K)}(I(\mathscr{A})^e)$ is no longer generated by finite-projective $I(\mathscr{A}^e)$-modules. Instead, it admits a  system of compact projective generators of the form $I(\mathscr{A})^e\widetilde{\otimes}_KI(V)$, where $V$ is a Banach $K$-vector space admitting an orthonormal basis. In this situation, we may again invoke \cite[Tag 0948]{stacks-project}, and it follows that $\mathscr{A}$ is a compact object in $\operatorname{D}(\mathscr{A}^e)$ if and only if $\mathscr{A}$ is a direct summand in $\operatorname{D}(\mathscr{A}^e)$ of a finite complex of finite direct sums of elements  of the form 
$I(\mathscr{A})^e\widetilde{\otimes}_KI(V)$. However, the natural map:
\begin{equation*}
  R\underline{\mathcal{H}om}_{I(\mathscr{A})^e}(I(\mathscr{A})^e\widetilde{\otimes}_KI(V),I(\mathscr{A})^e)\overrightarrow{\otimes}^{\mathbb{L}}_{I(\mathscr{A})^e}\mathscr{M}\rightarrow  R\underline{\mathcal{H}om}_{I(\mathscr{A})^e}(I(\mathscr{A})^e\widetilde{\otimes}_KI(V),\mathscr{M}),
\end{equation*}
is not a quasi-isomorphism for general $\mathscr{M}$ unless $V$ is a finite-dimensional $K$-vector space. Hence, $\mathscr{A}$ may not satisfy Van den Bergh duality even if $\mathscr{A}$ is a compact object in $\operatorname{D}(\mathscr{A}^e)$, and there is some non-negative integer $d\geq 0$ such that:
\begin{equation}\label{equation defi of calabi yau}
    R\underline{\Hom}_{\mathscr{A}^e}(\mathscr{A},\mathscr{A}^e)\cong \mathscr{A}[-d].
\end{equation}
In other words, one cannot directly translate the notion of a Calabi-Yau algebra to the complete bornological setting and expect to retain the good homological properties. Nevertheless, our results thus far can be rephrased by saying that $\wideparen{\D}_X(X)$ is a complete bornological version of a Calabi-Yau algebra. It would perhaps be of interest to study if the techniques from the theory of Calabi-Yau algebras can be applied to the study of the algebras $\wideparen{\D}_X(X)$ and their deformation theory. 
\begin{obs*}
Notice that if $\mathscr{A}^e$ is noetherian and Banach (or nuclear Fréchet), then the fact that $\mathscr{A}$ is a finite $\mathscr{A}^e$-module shows that $\mathscr{A}$ is a compact object in $\operatorname{D}(\mathscr{A}^e)$
if and only if $\mathscr{A}$ admits a finite strict resolution by finite-projective $\mathscr{A}^e$-modules. In this situation, if (\ref{equation defi of calabi yau}) holds for some $d\geq 0$, then $\mathscr{A}$ satisfies Van den Bergh duality.    
\end{obs*}
\subsection{Hochschild homology}\label{section HHom}
We end the section by establishing an analog of Theorem \ref{teo hochschild cohomology in terms of extendions in DXe 1} for Hochschild homology of co-admissible $\wideparen{E}_X$-modules:
\begin{teo}\label{teo hochschild homology in terms of Tor}
Assume $X$ is affinoid or $X$ is Stein and $K$ is either discretely valued or algebraically closed.    If $X$ admits an étale map $X\rightarrow \mathbb{A}^r_K$, then for any co-admissible module $\mathcal{M}\in \mathcal{C}(\wideparen{E}_X)$ we have the following identity in $\operatorname{D}(\widehat{\mathcal{B}}c_K)$:
\begin{equation*}   R\Gamma(X^2,\Delta_*\wideparen{\D}_X\overrightarrow{\otimes}_{\wideparen{E}_X}^{\mathbb{L}}\mathcal{M})=\wideparen{\D}_X(X)\widehat{\otimes}_{\wideparen{\D}_X(X)^e}^{\mathbb{L}}\Gamma(X^2,\mathcal{M}).
\end{equation*}
In particular, we have the following identity: 
\begin{equation}\label{equation Hochho determined at global sections}
    \operatorname{HH}_{\bullet}(\wideparen{\D}_X)=R\Gamma(X,\Omega_{X/K}^{\bullet})[2\operatorname{dim}(X)]=\wideparen{\D}_X(X)\widehat{\otimes}_{\wideparen{\D}_X(X)^e}^{\mathbb{L}}\wideparen{\D}_X(X)=:\operatorname{HH}_{\bullet}(\wideparen{\D}_X(X)).
\end{equation}
\end{teo}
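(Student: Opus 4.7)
The plan is to show that the second identity in (\ref{equation Hochho determined at global sections}) for $\mathcal{M}=\Delta_*\wideparen{\D}_X$ follows from a more general identity,
\begin{equation*}
R\Gamma(X^2,\Delta_*\wideparen{\D}_X\overrightarrow{\otimes}_{\wideparen{E}_X}^{\mathbb{L}}\mathcal{M})=\wideparen{\D}_X(X)\widehat{\otimes}_{\wideparen{\D}_X(X)^e}^{\mathbb{L}}\Gamma(X^2,\mathcal{M}),
\end{equation*}
valid for every co-admissible $\mathcal{M}\in\mathcal{C}(\wideparen{E}_X)$, and that this identity is obtained by combining the two incarnations of Van den Bergh duality from Theorem \ref{teo Van den Bergh duality} with the cohomological statement of Theorem \ref{teo hochschild cohomology in terms of extendions in DXe 1}. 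Concretely, I would first apply the sheaf-theoretic Van den Bergh duality to rewrite
\begin{equation*}
\Delta_*\wideparen{\D}_X\overrightarrow{\otimes}_{\wideparen{E}_X}^{\mathbb{L}}\mathcal{M}=R\underline{\mathcal{H}om}_{\wideparen{E}_X}(\Delta_*\wideparen{\D}_X,\mathcal{M})[2\operatorname{dim}(X)],
\end{equation*}
a shift which is harmless since both sides land in $\operatorname{D}(\operatorname{Shv}(X^2,\Indban))$.

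Next I would apply $R\Gamma(X^2,-)$ and invoke Theorem \ref{teo hochschild cohomology in terms of extendions in DXe 1} to reduce the sheaf-theoretic Hom to an algebraic one:
\begin{equation*}
R\Gamma(X^2,R\underline{\mathcal{H}om}_{\wideparen{E}_X}(\Delta_*\wideparen{\D}_X,\mathcal{M}))=R\underline{\Hom}_{\wideparen{\D}_X(X)^e}(\wideparen{\D}_X(X),\Gamma(X^2,\mathcal{M})).
\end{equation*}
Finally, I would apply the algebraic form of Van den Bergh duality (the second half of Theorem \ref{teo Van den Bergh duality}) to convert this derived Hom into the derived tensor product $\wideparen{\D}_X(X)\widehat{\otimes}_{\wideparen{\D}_X(X)^e}^{\mathbb{L}}\Gamma(X^2,\mathcal{M})[-2\operatorname{dim}(X)]$. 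Combining, the two shifts cancel and we obtain the desired identity. Taking $\mathcal{M}=\Delta_*\wideparen{\D}_X$ then yields the identity $\operatorname{HH}_\bullet(\wideparen{\D}_X)=\operatorname{HH}_\bullet(\wideparen{\D}_X(X))$. The middle expression $R\Gamma(X,\Omega^{\bullet}_{X/K})[2\operatorname{dim}(X)]$ is obtained independently by applying the sheaf-level Van den Bergh duality to $\mathcal{HH}^\bullet(\wideparen{\D}_X)=\Omega^\bullet_{X/K}$ (Theorem \ref{teo HochDmod main teo}), which shows $\mathcal{HH}_\bullet(\wideparen{\D}_X)=\Omega^\bullet_{X/K}[2\operatorname{dim}(X)]$, and then taking $R\Gamma(X,-)$.

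The main technical hazard is not the formal chain of equalities but checking that the identifications are compatible with the relevant bimodule structures. Theorem \ref{teo Van den Bergh duality} is stated at the level of abstract derived categories, so one has to verify that the isomorphism $R\underline{\mathcal{H}om}_{\wideparen{E}_X}(\Delta_*\wideparen{\D}_X,\wideparen{E}_X)\simeq\Delta_*\wideparen{\D}_X[-2\operatorname{dim}(X)]$ from Corollary \ref{coro we need for vdb duality}, which was constructed via the side-switching isomorphism and the duality functor, is compatible with the $\wideparen{E}_X$-action used when tensoring against $\mathcal{M}$; the analogous compatibility is needed on the algebra side from Corollary \ref{coro calabi-yau property of DX}. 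Once this compatibility is recorded, the whole proof is a formal concatenation of the three quoted results, and no new hard computation is required.
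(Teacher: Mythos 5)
Your proposal is correct and follows essentially the same route as the paper: the paper's proof is precisely the concatenation of the sheaf-level Van den Bergh duality, Theorem \ref{teo hochschild cohomology in terms of extendions in DXe 1}, and the algebraic Van den Bergh duality, with the two shifts by $2\operatorname{dim}(X)$ cancelling, and the second part obtained by specializing to $\mathcal{M}=\Delta_*\wideparen{\D}_X$ together with Theorem \ref{teo HochDmod main teo}. Your remark about compatibility of the duality isomorphism with the bimodule structures is a sensible caution, though the paper treats it as already packaged into Theorem \ref{teo Van den Bergh duality} and does not revisit it here.
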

\begin{proof}
This follows by applying the two different forms of Van den Bergh duality in Theorem \ref{teo Van den Bergh duality}, together with Theorem \ref{teo hochschild cohomology in terms of extendions in DXe 1}. Indeed, let $\mathcal{M}\in \mathcal{C}(\wideparen{E}_X)$, we have the following identities in $\operatorname{D}(\widehat{\mathcal{B}}c_K)$:
\begin{multline*}
 R\Gamma(X^2,\Delta_*\wideparen{\D}_X\overrightarrow{\otimes}_{\wideparen{E}_X}^{\mathbb{L}}\mathcal{M})=R\Gamma(X^2,R\underline{\mathcal{H}om}_{\wideparen{E}_X}(\Delta_*\wideparen{\D}_X,\mathscr{M}))[2\operatorname{dim}(X)]\\
 =R\underline{\Hom}_{\wideparen{\D}_X(X)^e}(\wideparen{\D}_X(X),\Gamma(X,\mathcal{M}))[2\operatorname{dim}(X)]=\wideparen{\D}_X(X)\widehat{\otimes}_{\wideparen{\D}_X(X)^e}^{\mathbb{L}}\Gamma(X^2,\mathcal{M}),   
\end{multline*}
and this is precisely the identity we wanted to show. The second part follows by the fact that $\Delta_*\wideparen{\D}_X$ is a co-admissible $\wideparen{E}_X$-module, together with Theorem \ref{teo HochDmod main teo}.
\end{proof}
Thus, we obtain a new interpretation of $\operatorname{HH}_{\bullet}(\wideparen{\D}_X)$ in terms of Tor functors. Furthermore, in both the cases we are interested in, the complex $\Omega_{X/K}^{\bullet}$ is a complex of $\Gamma(X,-)$-acyclic sheaves. Thus, we can obtain the following consequences for $\operatorname{HH}_{\bullet}(\wideparen{\D}_X)$:
\begin{coro}
Assume $X$ is affinoid or $X$ is Stein and $K$ is either discretely valued or algebraically closed.    If $X$ admits an étale map $X\rightarrow \mathbb{A}^r_K$, then we have that $\operatorname{HH}_{n}(\wideparen{\D}_X(X))\neq 0$ implies $\operatorname{dim}(X)\leq n\leq 2\operatorname{dim}(X)$. In particular, we have:
 \begin{equation*}
     \operatorname{HH}_{0}(\wideparen{\D}_X(X))=\operatorname{H}^0(\wideparen{\D}_X(X)\widehat{\otimes}_{\wideparen{\D}_X(X)^e}^{\mathbb{L}}\wideparen{\D}_X(X))=\wideparen{\D}_X(X)\widehat{\otimes}_{\wideparen{\D}_X(X)^e}\wideparen{\D}_X(X)=0.
 \end{equation*}
Furthermore, in the Stein  case we have the following identity in $\operatorname{D}(\widehat{\mathcal{B}}c_K)$:
\begin{equation*}
\wideparen{\D}_X(X)\widehat{\otimes}_{\wideparen{\D}_X(X)^e}^{\mathbb{L}}\wideparen{\D}_X(X)=\operatorname{H}^{\bullet}_{\operatorname{dR}}(X)^b[2\operatorname{dim}(X)].
\end{equation*}
\end{coro}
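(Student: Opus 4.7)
The plan is to deduce the corollary as a direct packaging of Theorem \ref{teo hochschild homology in terms of Tor}, which provides the identification
\begin{equation*}
\wideparen{\D}_X(X)\widehat{\otimes}_{\wideparen{\D}_X(X)^e}^{\mathbb{L}}\wideparen{\D}_X(X)=R\Gamma(X,\Omega_{X/K}^{\bullet})[2\operatorname{dim}(X)]
\end{equation*}
in $\operatorname{D}(\widehat{\mathcal{B}}c_K)$. Two elementary observations then produce the degree bounds. First, because $X$ is smooth of dimension $r:=\operatorname{dim}(X)$, the de Rham complex $\Omega_{X/K}^{\bullet}$ is concentrated in cohomological degrees $0,1,\ldots,r$. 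Second, in both cases under consideration, the coherent sheaves $\Omega_{X/K}^i$ are $\Gamma(X,-)$-acyclic on $X$: in the Stein case this is Proposition \ref{prop acyclicity of coherent modules on Stein spaces}, while in the affinoid case it is immediate since $\Omega_{X/K}^i$ is coherent and coherent Ind-Banach $\OX_X$-modules on an affinoid have trivial higher sheaf cohomology (Tate acyclicity), so $R\Gamma(X,\Omega_{X/K}^{\bullet})=\Gamma(X,\Omega_{X/K}^{\bullet})$.

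Combining these, $R\Gamma(X,\Omega_{X/K}^{\bullet})$ is represented by a complex of complete bornological spaces supported in cohomological degrees $[0,r]$. After the shift by $2r$, the resulting complex lives in cohomological degrees $[-2r,-r]$, and via the sign convention $\operatorname{HH}_n=\operatorname{H}^{-n}$ this translates into
\begin{equation*}
\operatorname{HH}_n(\wideparen{\D}_X(X))\neq 0 \ \Longrightarrow\ r\leq n\leq 2r,
\end{equation*}
which is the claim. The statement $\operatorname{HH}_0(\wideparen{\D}_X(X))=0$ then follows at once (under the tacit assumption $r\geq 1$) from $0<r$. The identification of the $H^0$ with the ordinary, non-derived tensor product $\wideparen{\D}_X(X)\widehat{\otimes}_{\wideparen{\D}_X(X)^e}\wideparen{\D}_X(X)$ is standard, reflecting the fact that $H^0$ of a (derived) tensor product always computes the underived tensor product in $LH(\widehat{\mathcal{B}}c_K)$.

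For the final identity in the Stein setting, I would invoke Theorem \ref{teo hochschild cohomology groups as de rham cohomology groups}, which asserts the strict quasi-isomorphism $R\Gamma(X,\Omega_{X/K}^{\bullet})=\operatorname{H}_{\operatorname{dR}}^{\bullet}(X)^b$ in $\operatorname{D}(\widehat{\mathcal{B}}c_K)$. Substituting into the formula above gives
\begin{equation*}
\wideparen{\D}_X(X)\widehat{\otimes}_{\wideparen{\D}_X(X)^e}^{\mathbb{L}}\wideparen{\D}_X(X)=\operatorname{H}_{\operatorname{dR}}^{\bullet}(X)^b[2\operatorname{dim}(X)],
\end{equation*}
as required. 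There is no genuine obstacle here: the whole corollary is a bookkeeping assembly of Theorem \ref{teo hochschild homology in terms of Tor}, the acyclicity of $\Omega_{X/K}^i$, and Grosse-Klönne's strictness result repackaged through Theorem \ref{teo hochschild cohomology groups as de rham cohomology groups}. All the non-trivial work is upstream, concentrated in Van den Bergh duality (Theorem \ref{teo Van den Bergh duality}) and the finite-projective resolution of Proposition \ref{prop resolution of differential operators on smooth Stein spaces}.
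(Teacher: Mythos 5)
Your proof is correct and follows essentially the same route as the paper: both deduce the degree bounds from the identification $\operatorname{HH}_{\bullet}(\wideparen{\D}_X(X))=R\Gamma(X,\Omega_{X/K}^{\bullet})[2\operatorname{dim}(X)]$ of Theorem \ref{teo hochschild homology in terms of Tor} together with the $\Gamma(X,-)$-acyclicity and length of $\Omega_{X/K}^{\bullet}$, and then obtain the final Stein identity from Theorem \ref{teo hochschild cohomology groups as de rham cohomology groups}. Your parenthetical remark that the vanishing of $\operatorname{HH}_0$ tacitly assumes $\operatorname{dim}(X)\geq 1$ is a fair (and slightly more careful) observation than the paper makes explicit.
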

\begin{proof}
 As mentioned above, in both cases $\Omega_{X/K}^{\bullet}$ is a complex of acyclic sheaves of length $\operatorname{dim}(X)$. In particular, $R\Gamma(X,\Omega_{X/K}^{\bullet})$ can  only have non-trivial cohomology in degrees contained between $0$ and $\operatorname{dim}(X)$. Thus, the previous theorem shows that $\operatorname{HH}_{\bullet}(\wideparen{\D}_X(X))$ is also concentrated in degrees between $-2\operatorname{dim}(X)$ and $-\operatorname{dim}(X)$. The second part of the corollary follows by Theorem \ref{teo hochschild cohomology groups as de rham cohomology groups}. 
\end{proof}
\section{Deformation Theory}\label{Chapter deformation theory}
The application of Hochschild cohomology to the study of deformation theory of associative algebras was started by M. Gerstenhaber in \cite{deformations}. In this chapter, we will translate some of his results to the realm of complete bornological algebras. The main obstacle in doing this is that the classical Hochschild cohomology of associative algebras is based upon explicit calculations with the bar complex, whereas our approach is purely homological in nature. In the setting of associative $K$-algebras, every object appearing in the bar resolution of an algebra is projective, and therefore both notions agree. However, this is no longer the case for complete bornological algebras.\\

In order to bridge this gap, we will establish some comparison results between our notion of Hochschild cohomology, and the one emerging from the bar resolution. In particular, we will build a first quadrant spectral sequence connecting these two cohomology theories, and use it to obtain some explicit calculations. We point out that a first approach to Hochschild cohomology using explicit complexes was first conveyed by J.L. Taylor in \cite{taylor1972homology}. In particular, Taylor develops a theory of relative homological algebra for locally convex algebras based upon the use of resolutions which are split-exact as complexes of locally convex spaces. While Taylor's version of homological algebra lacks the flexibility of the machinery of quasi-abelian categories, it still has proven to be a powerful tool, and has found several applications in $p$-adic representation theory in recent years. For instance, in J. Kohlhaase's work on the cohomology of locally analytic representations \cite{cohrep}, and in T. Schmidt's work on stable flatness of non-archimedean hyperenveloping algebras \cite{schmidt2012stableflatnessnonarchimedeanhyperenveloping}. While Taylor's approach lacks some of the good homological properties 
that our definition of Hochschild cohomology enjoys, an advantage of working with the bar resolution is that there are explicit interpretations of the Hochschild cohomology groups in low degrees. Furthermore, these interpretations turn out to be analytic versions of the classical Hochschild cohomology groups of an associative algebra.\\

Once the basic theory has been established, we will focus on the particular case of the algebra of infinite order differential operators. In particular, if $X$ is a smooth Stein space equipped with an étale map $X\rightarrow \mathbb{A}^r_K$, we will use the Hochschild cohomology spaces $\operatorname{HH}^{\bullet}(\wideparen{\D}_X)$ to study certain algebraic invariants of $\wideparen{\D}_X(X)$. Namely, we will focus on the center, the space of (bounded) outer derivations, and the space of infinitesimal deformations of $\wideparen{\D}_X(X)$. Furthermore, if $X$ has finite-dimensional de Rham cohomology, we will be able to obtain the following explicit isomorphisms:
\begin{equation*}
 \operatorname{H}^0_{\operatorname{dR}}(X)^b=\operatorname{Z}(\wideparen{\D}_X(X)), \,  \operatorname{H}^1_{\operatorname{dR}}(X)^b=\Outder(\wideparen{\D}_X(X)), \, \operatorname{H}^2_{\operatorname{dR}}(X)^b=\mathscr{E}xt(\wideparen{\D}_X(X)),
\end{equation*}
which are a $p$-adic analytic version of the classical identities for smooth affine $K$-varieties.\newline
Along the way, we will see how the algebras of infinite order twisted differential operators studied in \cite{p-adicCheralg} can be used to construct explicit representatives for each isomorphism class of infinitesimal deformation of $\wideparen{\D}_X(X)$. Furthermore, this construction is analogous to the algebraic case. 
\subsection{Taylor's homological algebra}\label{taylor HA}
Let $\mathscr{A}$ be a complete bornological algebra, and
\begin{equation*}
    \mathscr{A}^e:=\mathscr{A}\widehat{\otimes}_K\mathscr{A}^{\op},
\end{equation*}
be its enveloping algebra. Recall from \cite[Section 2]{taylor1972homology} the bar resolution of $\mathscr{A}$ as a $\mathscr{A}^e$-module:
\begin{equation}\label{equation bar resolution}
 B(\mathscr{A})^{\bullet}:=  \left(\cdots\rightarrow \mathscr{A}^e\widehat{\otimes}_K\mathscr{A}\rightarrow\mathscr{A}^e\right).
\end{equation}
This is a strict complex of complete bornological $\mathscr{A}^e$-modules with a canonical quasi-isomorphism:
\begin{equation*}
    B(\mathscr{A})^{\bullet}\rightarrow \mathscr{A}.
\end{equation*}
The augmented complex  $B(\mathscr{A})^{\bullet}\rightarrow \mathscr{A}$ is split-exact in $\widehat{\mathcal{B}}c_K$. Furthermore, regarding $B(\mathscr{A})^{\bullet}$ as a complex of complete bornological $\mathscr{A}$-modules via the morphism of complete bornological algebras $\mathscr{A}\rightarrow \mathscr{A}^e$, it can be shown that $B(\mathscr{A})^{\bullet}\rightarrow \mathscr{A}$ is split-exact as a complex of $\mathscr{A}$-modules. Let $\mathscr{M}$ be a complete bornological $\mathscr{A}^e$-module. Applying $\underline{\Hom}_{\mathscr{A}^e}(-,\mathscr{M})$ to $B(\mathscr{A})^{\bullet}$, and using the extension-restriction adjunction, we obtain the following chain complex:
\begin{equation*}
   \mathcal{L}(\mathscr{A},\mathscr{M})^{\bullet}:= \left(0\rightarrow \mathscr{M}\xrightarrow[]{\delta_0^{\mathscr{M}}} \underline{\Hom}_{\widehat{\mathcal{B}}c_K}(\mathscr{A},\mathscr{M})\xrightarrow[]{\delta_1^{\mathscr{M}}} \underline{\Hom}_{\widehat{\mathcal{B}}c_K}(\widehat{\otimes}_K^2\mathscr{A},\mathscr{M})\xrightarrow[]{\delta_2^{\mathscr{M}}} \cdots\right),
\end{equation*}
where in general we have:
\begin{equation*}
    \mathcal{L}(\mathscr{A},\mathscr{M})^{n}=\underline{\Hom}_{\widehat{\mathcal{B}}c_K}(\widehat{\otimes}_K^n\mathscr{A},\mathscr{M}),
\end{equation*}
and the differentials are given by the following formula:
\begin{multline}\label{equation differentials in complex computing Taylor cohomology}
   \delta_n^{\mathscr{M}}f(a_0\otimes\cdots\otimes a_n)=a_0f(a_1\otimes\cdots\otimes a_n) +\sum_{i=1}^n(-1)^if(a_0\otimes\cdots a_{i-1}a_i\otimes \cdots a_n)\\
   +(-1)^{n+1}f(a_0\otimes\cdots a_{n-1})a_n.
\end{multline}
Whenever there is no confusion possible with the algebra with respect to which we are regarding $\mathscr{M}$ as a module, we will omit it from the notation, and simply write:
\begin{equation*}
    \mathcal{L}(\mathscr{M})^{\bullet}:=\mathcal{L}(\mathscr{A},\mathscr{M})^{\bullet}.
\end{equation*}
If $\mathscr{M}=\mathscr{A}$, we will write $\delta_n:=\delta_n^\mathscr{A}$ for each $n\geq 0$. We can use these complexes to define the following cohomology groups:
\begin{defi}[{\cite[Definition 2.4]{taylor1972homology}}]\label{defi TH cohomology groups}
The Taylor-Hochschild cohomology spaces of $\mathscr{A}$ with coefficients in $\mathscr{M}$ are the following bornological spaces:
\begin{equation*}
\operatorname{HH}_{\operatorname{T}}^n(\mathscr{A},\mathscr{M})=\operatorname{Coker}\left(J(\mathcal{L}(\mathscr{A},\mathscr{M})^{n-1})\rightarrow J(\operatorname{Ker}(\delta_n^{\mathscr{M}}))\right).
\end{equation*}
As before, if the complete bornological algebra $\mathscr{A}$ is clear, we will drop it from the notation and write $\operatorname{HH}_{\operatorname{T}}^n(\mathscr{M}):=\operatorname{HH}_{\operatorname{T}}^n(\mathscr{A},\mathscr{M})$. For simplicity, we will call these spaces the \emph{TH} cohomology spaces of $\mathscr{A}$ with coefficients in $\mathscr{M}$. If $\mathscr{M}=\mathscr{A}$, we will call the $\operatorname{HH}_{\operatorname{T}}^n(\mathscr{A})$ the \emph{TH} cohomology spaces of $\mathscr{A}$. 
\end{defi}
Before moving on, let us make a few comments on the definition. First, choose some $n\geq 0$. Notice that before calculating the cokernel defining $\operatorname{HH}_{\operatorname{T}}^n(\mathscr{M})$, we apply the functor $J:\widehat{\mathcal{B}}c_K\rightarrow \mathcal{B}c_K$. This is done because cokernels in $\widehat{\mathcal{B}}c_K$ do not commute with the forgetful functor:
\begin{equation*}
    \operatorname{Forget}(-):\widehat{\mathcal{B}}c_K\rightarrow \operatorname{Vect}_K.
\end{equation*}
In particular, consider the short exact sequence in $\widehat{\mathcal{B}}c_K$:
\begin{equation*}
    0\rightarrow \operatorname{Im}(\delta_{n-1}^\mathscr{M})\rightarrow \operatorname{Ker}(\delta_n^\mathscr{M})\rightarrow \operatorname{Coker}\left(\operatorname{Im}(\delta_{n-1}^\mathscr{M})\rightarrow \operatorname{Ker}(\delta_n^\mathscr{M})\right)\rightarrow 0. 
\end{equation*}
If the morphism $\delta_{n-1}^\mathscr{M}:\mathcal{L}(\mathscr{A})^{n-1}\rightarrow \mathcal{L}(\mathscr{A})^{n}$ fails to be strict, then it could happen that $\operatorname{Im}(\delta_{n-1}^\mathscr{M})$ is not closed in $\operatorname{Ker}(\delta_n^\mathscr{M})$. In this situation, the vector space underlying $\operatorname{Coker}(\operatorname{Im}(\delta_{n-1}^\mathscr{M})\rightarrow \operatorname{Ker}(\delta_n^\mathscr{M}))$ is not isomorphic to $\operatorname{Ker}(\delta_n^\mathscr{M})/\operatorname{Im}(\delta_{n-1}^\mathscr{M})$, but rather to $\operatorname{Ker}(\delta_n^\mathscr{M})/\widehat{\operatorname{Im}(\delta_{n-1}^\mathscr{M})}$. As our main reason for introducing the TH cohomology groups is that they allow for explicit calculations, it would not make sense for us to introduce this completion process into the picture. Thus, 
we define the cohomology groups as objects of $\mathcal{B}c_K$ rather than of $\widehat{\mathcal{B}}c_K$. We remark that the bornology of $\operatorname{HH}_{\operatorname{T}}^n(\mathscr{M})$ can be poorly behaved. For instance, $\operatorname{HH}_{\operatorname{T}}^n(\mathscr{M})$ may not be separated. We can, however, show the following:
\begin{prop}\label{prop 0th TC is Fréchet}
$\operatorname{HH}_{\operatorname{T}}^0(\mathscr{M})$ is a closed subspace of $\mathscr{M}$. Thus, it is a complete bornological space.
\end{prop}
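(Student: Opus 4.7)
The plan is to observe that the defining cokernel from Definition \ref{defi TH cohomology groups} collapses at $n=0$. Indeed, the complex $\mathcal{L}(\mathscr{A},\mathscr{M})^{\bullet}$ is concentrated in non-negative degrees, so the term $\mathcal{L}(\mathscr{A},\mathscr{M})^{-1}$ does not appear, and hence
\begin{equation*}
\operatorname{HH}_{\operatorname{T}}^0(\mathscr{M}) = \operatorname{Coker}(0 \to J(\operatorname{Ker}(\delta_0^{\mathscr{M}}))) = J(\operatorname{Ker}(\delta_0^{\mathscr{M}})).
\end{equation*}
Thus the proposition reduces to showing that $\operatorname{Ker}(\delta_0^{\mathscr{M}})$, computed in $\widehat{\mathcal{B}}c_K$, is a closed bornological subspace of $\mathscr{M}$ which is itself complete.

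Next, I would unpack formula (\ref{equation differentials in complex computing Taylor cohomology}) in degree zero. Since $\mathcal{L}(\mathscr{A},\mathscr{M})^0 = \mathscr{M}$, an element $f$ there is simply a point $m \in \mathscr{M}$, and the formula yields the bounded map
\begin{equation*}
\delta_0^{\mathscr{M}}(m)(a) = am - ma.
\end{equation*}
Consequently, $\operatorname{Ker}(\delta_0^{\mathscr{M}})$ is the bimodule center $\{m \in \mathscr{M} : am = ma \text{ for all } a \in \mathscr{A}\}$. This recovers the expected identification $\operatorname{HH}_{\operatorname{T}}^0(\mathscr{A}) = \operatorname{Z}(\mathscr{A})$ in the case $\mathscr{M} = \mathscr{A}$.

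Finally, I would invoke the fact that $\widehat{\mathcal{B}}c_K$ is a quasi-abelian category (Proposition \ref{prop properties of categories of modules}), so that kernels are strict monomorphisms: concretely, $\operatorname{Ker}(\delta_0^{\mathscr{M}})$ is the underlying vector-space kernel equipped with the subspace bornology of $\mathscr{M}$, and this subspace bornology is automatically complete because subspaces of a complete bornological space cut out by bounded linear equations are complete. Since $J:\widehat{\mathcal{B}}c_K \to \mathcal{B}c_K$ is the inclusion, $\operatorname{HH}_{\operatorname{T}}^0(\mathscr{M}) = J(\operatorname{Ker}(\delta_0^{\mathscr{M}}))$ is the image under $J$ of a complete bornological space sitting as a closed subspace of $\mathscr{M}$, which is exactly the claim.

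There is no real obstacle here; the only subtlety worth flagging is that cokernels in $\widehat{\mathcal{B}}c_K$ and $\mathcal{B}c_K$ disagree in general (this mismatch is the reason the functor $J$ is inserted in Definition \ref{defi TH cohomology groups} in the first place, and the reason $\operatorname{HH}_{\operatorname{T}}^n(\mathscr{M})$ for $n \geq 1$ can fail to be separated). At $n=0$ no cokernel is actually formed, so this pathology does not arise and the identification with a kernel in $\widehat{\mathcal{B}}c_K$ passes through $J$ unchanged.
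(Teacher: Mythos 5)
Your proof is correct and follows essentially the same route as the paper: since no cokernel is formed in degree $0$, $\operatorname{HH}_{\operatorname{T}}^0(\mathscr{M})$ is just $\operatorname{Ker}(\delta_0^{\mathscr{M}})$, which is closed in the complete space $\mathscr{M}$ and hence complete. The only point worth making explicit in your final step is that "subspaces cut out by bounded linear equations are complete" relies on the target $\underline{\Hom}_{\widehat{\mathcal{B}}c_K}(\mathscr{A},\mathscr{M})$ being separated (it is, being complete since $\mathscr{M}$ is), which is exactly the hypothesis the paper's one-line proof invokes.
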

\begin{proof}
Follows at once from the fact that $\underline{\Hom}_{\widehat{\mathcal{B}}c_K}(\mathscr{A},\mathscr{M})$ is separated and $\mathscr{M}$ is complete.
\end{proof}
For the rest of the TH cohomology groups, their bornological properties need to be analyzed on a case-to-case basis. We will carry out this study for $\wideparen{\D}_X(X)$ in Section \ref{TH groups of D}.\\

Notice that we have not made any use of quasi-abelian categories in the definition. One reason for this is that working with the underlying $K$-vector spaces allows us to obtain the explicit interpretations of the TH cohomology groups. In fact, these interpretations are bornological versions of the ones for the Hochschild cohomology groups of associative algebras.  This is a logical outcome of the fact that the bar resolution (\ref{equation bar resolution}) is an bornological version of the classical bar complex. The second reason is a  historical one, as the contents of \cite{taylor1972homology} predate the notion of quasi-abelian category.\\

As advertised above, one of the main features of the TH cohomology spaces is that we have explicit interpretations of the spaces in low degrees. Our next goal is describing those interpretations.
\begin{defi}
Let $\mathscr{A}$  be a complete bornological algebra and $\mathscr{M}$ be a complete bornological bimodule. A bounded derivation of $\mathscr{A}$ with coefficients in $\mathscr{M}$ is a bounded map $D:\mathscr{A}\rightarrow \mathscr{M}$ satisfying the Leibnitz rule. That is, for $x,y\in \mathscr{A}$ we have:
\begin{equation*}
    D(xy)=xD(y)+D(x)y.
\end{equation*}
We let $\Der(\mathscr{A},\mathscr{M})$ be the $K$-vector space of all bounded derivations of $\mathscr{A}$ with coefficients in $\mathscr{M}$, and regard it as a (complete) bornological space as a subspace of $\underline{\Hom}_{\widehat{\mathcal{B}}c_K}(\mathscr{A},\mathscr{M})$.  The space of inner derivations of $\mathscr{A}$ with coefficients in $\mathscr{M}$ is the following $K$-vector space:
     \begin{equation*}
         \Inn(\mathscr{A}):= \{f\in \Der_K(\mathscr{A},\mathscr{M})\, \vert \, \textnormal{ there is }m\in \mathscr{M} \textnormal{ such that } f(a)=am - ma,\textnormal{ for all } a\in \mathscr{A} \}.
     \end{equation*}
We regard $\Inn(\mathscr{A},\mathscr{M})$ as bornological space with bornology induced by $\underline{\Hom}_{\widehat{\mathcal{B}}c_K}(\mathscr{A},\mathscr{M})$.
\end{defi}
With these concepts at hand, we can give a tangible interpretation of the TH cohomology groups:
\begin{prop}[{\cite[Remark I.3.3]{helemskii2012homology}}]\label{prop interpretation of Taylor cohomology groups for n=0,1}
The following hold:
\begin{enumerate}[label=(\roman*)]
    \item $\operatorname{HH}_{\operatorname{T}}^0(\mathscr{A},\mathscr{M})=\operatorname{Z}(\mathscr{A},\mathscr{M}):=\{m\in \mathscr{M} \textnormal{ }\vert \textnormal{ }am=ma, \textnormal{ for all }a\in \mathscr{A}\}$.
    \item $\Der_K(\mathscr{A},\mathscr{M})=\operatorname{Ker}(\delta_1^{\mathscr{M}})$, and $\Inn(\mathscr{A},\mathscr{M})=\operatorname{Im}(\delta_0^\mathscr{M})$.
    \item $\operatorname{HH}_{\operatorname{T}}^1(\mathscr{A})=\Outder(\mathscr{A},\mathscr{M}):=\Der_K(\mathscr{A},\mathscr{M})/\Inn(\mathscr{A},\mathscr{M})$.
\end{enumerate}
\end{prop}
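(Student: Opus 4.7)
The proof is essentially a direct unwinding of the differential formula in the lowest two degrees, together with some bookkeeping of the bornologies. My plan is to handle the three statements in order, since (iii) is a formal consequence of (ii), and the only nontrivial content is the computation of $\operatorname{Ker}(\delta_0^{\mathscr{M}})$, $\operatorname{Ker}(\delta_1^{\mathscr{M}})$, and $\operatorname{Im}(\delta_0^{\mathscr{M}})$.

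For (i), I would first record the identification $\mathcal{L}(\mathscr{A},\mathscr{M})^0 = \underline{\Hom}_{\widehat{\mathcal{B}}c_K}(K,\mathscr{M}) = \mathscr{M}$, under which an element $m \in \mathscr{M}$ corresponds to the bounded map $\lambda \mapsto \lambda m$. Setting $n=0$ in the differential formula then yields $\delta_0^{\mathscr{M}}(m)(a) = am - ma$ for every $a \in \mathscr{A}$. Since $\mathcal{L}(\mathscr{A},\mathscr{M})^{-1} = 0$, the cokernel defining $\operatorname{HH}_{\operatorname{T}}^0(\mathscr{A},\mathscr{M})$ collapses to $J(\operatorname{Ker}(\delta_0^{\mathscr{M}}))$, and the kernel is manifestly the subspace $\operatorname{Z}(\mathscr{A},\mathscr{M})$ of elements commuting with every $a \in \mathscr{A}$. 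The preceding Proposition showing that $\operatorname{HH}_{\operatorname{T}}^0$ is a closed bornological subspace of $\mathscr{M}$ ensures that the bornology matches.

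For (ii), I would apply the differential formula with $n=1$, which gives
\begin{equation*}
\delta_1^{\mathscr{M}}(f)(a \otimes b) = a f(b) - f(ab) + f(a) b,
\end{equation*}
so that $\delta_1^{\mathscr{M}}(f) = 0$ is exactly the Leibniz rule for $f$. A bounded $K$-linear map $\mathscr{A} \to \mathscr{M}$ is nothing other than an element of $\underline{\Hom}_{\widehat{\mathcal{B}}c_K}(\mathscr{A},\mathscr{M})$, so $\operatorname{Ker}(\delta_1^{\mathscr{M}}) = \Der_K(\mathscr{A},\mathscr{M})$. The identification $\operatorname{Im}(\delta_0^{\mathscr{M}}) = \Inn(\mathscr{A},\mathscr{M})$ then follows at once from the formula for $\delta_0^{\mathscr{M}}$ computed above, since the image consists precisely of the maps of the form $a \mapsto am-ma$ with $m \in \mathscr{M}$, which is the definition of an inner derivation.

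Part (iii) is then purely formal: applying $J$ preserves underlying vector spaces and bounded maps, so
\begin{equation*}
\operatorname{HH}_{\operatorname{T}}^1(\mathscr{A},\mathscr{M}) = \operatorname{Coker}\bigl(J(\mathcal{L}(\mathscr{A},\mathscr{M})^0) \to J(\operatorname{Ker}(\delta_1^{\mathscr{M}}))\bigr) = \Der_K(\mathscr{A},\mathscr{M})/\Inn(\mathscr{A},\mathscr{M}),
\end{equation*}
which is $\Outder(\mathscr{A},\mathscr{M})$ by definition. The only bornological subtlety worth flagging is that, unlike in degree zero, one has no a priori guarantee that $\operatorname{Im}(\delta_0^{\mathscr{M}})$ is closed in $\Der_K(\mathscr{A},\mathscr{M})$, which is precisely why Definition \ref{defi TH cohomology groups} takes cokernels after applying $J$ rather than in $\widehat{\mathcal{B}}c_K$; this matches the convention used in the statement. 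I do not expect any serious obstacle: once the differential formulas are evaluated, the statements are essentially tautological, and the bornological identifications are automatic because all the operations involved (multiplication, evaluation, subtraction) are bounded.
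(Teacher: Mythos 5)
Your proposal is correct and follows exactly the route the paper takes: its proof consists of the single line that the result is a straightforward calculation using the differential formulas of equation (\ref{equation differentials in complex computing Taylor cohomology}), which is precisely the unwinding you carry out. Your additional remarks on the bornological bookkeeping (the degree-zero closedness from Proposition \ref{prop 0th TC is Fréchet} and the role of $J$ in degree one) are consistent with the surrounding discussion in the paper.
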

\begin{proof}
The proof is a straightforward calculation using the formulas in equation (\ref{equation differentials in complex computing Taylor cohomology}).
\end{proof}
The interpretation of $\operatorname{HH}_{\operatorname{T}}^2(\mathscr{A})$ is more involved, and requires us to introduce some concepts from deformation theory. We borrow the following discussion from  \cite[Chapter I]{helemskii2012homology}:
\begin{defi}[{\cite[Chapter I Definition 1.6]{helemskii2012homology}}]
Let $\mathscr{A}$ be a bornological algebra and $\mathscr{M}$ be a $\mathscr{A}$-bimodule. A singular extension of $\mathscr{A}$ by $\mathscr{M}$ is a triple $(\mathscr{B},i,\sigma)$, where $\mathscr{B}$ is a bornological algebra, $i:\mathscr{M}\rightarrow \mathscr{B}$ is the inclusion of a square-zero ideal, and $\sigma:\mathscr{B}\rightarrow \mathscr{A}$  is a  morphism of  bornological algebras such that there is a strict short exact sequence:
\begin{equation*}
    0\rightarrow \mathscr{M}\xrightarrow[]{i}\mathscr{B}\xrightarrow[]{\sigma}\mathscr{A}\rightarrow 0,
\end{equation*}
and such that $\sigma:\mathscr{B}\rightarrow \mathscr{A}$ admits a bounded $K$-linear split $\rho:\mathscr{A}\rightarrow \mathscr{B}$. If $\mathscr{M}=\mathscr{A}$, we will call the triple $(\mathscr{B},i,\sigma)$ a singular extension of $\mathscr{A}$.
\end{defi}
\begin{obs}
We make the following remarks to the definition:
\begin{enumerate}[label=(\roman*)]
    \item These extensions are also called Hochschild extensions or infinitesimal deformations in the literature.
    \item Notice that $\mathscr{B}$ is complete if and only if $\mathscr{A}$  and $\mathscr{M}$ are complete.
\end{enumerate}
\end{obs}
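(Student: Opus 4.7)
The mathematical content of $(ii)$ is that, in the situation of the preceding definition, i.e., a strict short exact sequence $0 \to \mathscr{M} \xrightarrow{i} \mathscr{B} \xrightarrow{\sigma} \mathscr{A} \to 0$ in $\mathcal{B}c_K$ admitting a bounded $K$-linear section $\rho: \mathscr{A} \to \mathscr{B}$, the middle term $\mathscr{B}$ belongs to $\widehat{\mathcal{B}}c_K$ if and only if both $\mathscr{M}$ and $\mathscr{A}$ do.

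My plan is to use the section $\rho$ to exhibit a bornological direct sum decomposition $\mathscr{B} \cong \mathscr{M} \oplus \mathscr{A}$, at which point the equivalence reduces to the general fact that a finite direct sum in $\mathcal{B}c_K$ is complete if and only if each summand is complete. First I would introduce the candidate map
\[
\varphi: \mathscr{M} \oplus \mathscr{A} \longrightarrow \mathscr{B}, \qquad (m,a) \mapsto i(m) + \rho(a),
\]
whose boundedness is immediate from the boundedness of $i$ and $\rho$. The crux will be checking that its set-theoretic inverse $b \mapsto (i^{-1}(b - \rho\sigma(b)),\, \sigma(b))$ is also bounded. For this I would invoke the strictness hypothesis on $i$: strictness ensures that $\mathscr{M}$ carries the subspace bornology inherited from $\mathscr{B}$, so $i^{-1}$ restricted to $\ker(\sigma)$ is bounded; combined with the boundedness of $b \mapsto b - \rho\sigma(b)$, which follows at once from the boundedness of $\sigma$ and $\rho$, this yields the required bound.

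Once the isomorphism $\mathscr{B} \cong \mathscr{M} \oplus \mathscr{A}$ in $\mathcal{B}c_K$ is in place, both directions of the equivalence follow formally: if $\mathscr{B}$ is complete, then each direct summand is complete; conversely, a finite bornological direct sum of complete spaces is complete. The main obstacle, insofar as there is one, is the inverse-boundedness check, which is exactly where the strictness assumption in the definition of a singular extension earns its keep; without it, $\varphi$ would remain a bounded bijection but not necessarily a bornological isomorphism, and the equivalence could genuinely fail.
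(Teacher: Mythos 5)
Your argument is correct: the paper states this remark without proof, and your splitting $\varphi:\mathscr{M}\oplus\mathscr{A}\rightarrow\mathscr{B}$, $(m,a)\mapsto i(m)+\rho(a)$, together with the observation that strictness of $i$ (i.e.\ $\mathscr{M}\cong\ker(\sigma)$ with the subspace bornology) is what makes the set-theoretic inverse $b\mapsto (i^{-1}(b-\rho\sigma(b)),\sigma(b))$ bounded, is exactly the standard justification the paper is implicitly relying on. The reduction to the fact that a finite bornological direct sum is complete if and only if each summand is complete then settles both implications, so nothing is missing.
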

Until further notice, we let $\mathscr{A}$ be a complete bornological algebra and $\mathscr{M}$ be a complete bornological $\mathscr{A}$-bimodule. As singular extensions are split in $\widehat{\mathcal{B}}c_K$, we will often identify $\mathscr{M}$ with its image $i(\mathscr{M})$. Two singular extensions $(\mathscr{B},i,\sigma)$, $(\mathscr{B}',i',\sigma')$ are said to be equivalent if there is a bounded morphism of $K$-algebras $f:\mathscr{B}\rightarrow \mathscr{B}'$ such that the following diagram is commutative:
\begin{equation*}
\begin{tikzcd}
0 \arrow[r] & \mathscr{M} \arrow[r, "i"] \arrow[d, "="] & \mathscr{B} \arrow[r, "\sigma"] \arrow[d, "f"] & \mathscr{A} \arrow[d, "="] \arrow[r] & 0 \\
0 \arrow[r] & \mathscr{M} \arrow[r, "i'"]               & \mathscr{B}' \arrow[r, "\sigma'"]              & \mathscr{A} \arrow[r]                & 0
\end{tikzcd} 
\end{equation*}
The trivial extension of $\mathscr{A}$ by $\mathscr{M}$ is the one defined by the product:
\begin{equation}\label{equation multiplication trivial extension}
\left(\mathscr{A}\oplus\mathscr{M}\right)\otimes_K\left(\mathscr{A}\oplus\mathscr{M}\right)\rightarrow \mathscr{A}\oplus \mathscr{M},\, ((a_1,m_1),(a_2,m_2))\mapsto (a_1a_2,a_1m_2 + a_2m_1).
\end{equation}
For reasons that will be explained below, we will denote the trivial singular extension of $\mathscr{A}$ by $\mathscr{M}$ as    $\mathscr{A}_{\mathscr{M}}^{\epsilon}$. If, in addition, we have $\mathscr{M}=\mathscr{A}$, then we will write $\mathscr{A}^{\epsilon}:=\mathscr{A}^{\epsilon}_{\mathscr{A}}$.
\begin{defi}
We define $\mathscr{E}xt(\mathscr{A},\mathscr{M})$ as the pointed set of equivalence classes of singular extensions of $\mathscr{A}$ by $\mathscr{M}$, with distinguished point given by the trivial extension $\mathscr{A}_{\mathscr{M}}^{\epsilon}$. As usual, if $\mathscr{M}=\mathscr{A}$, we will simply write $\mathscr{E}xt(\mathscr{A})$.
\end{defi}

 Given a singular extension $(\mathscr{B},i,\sigma)$ with splitting $\rho:\mathscr{A}\rightarrow \mathscr{B}$, we define the operator:
 \begin{equation*}
\omega(\rho):\mathscr{A}\widehat{\otimes}_K\mathscr{A}\rightarrow \mathscr{M},\, (a,b)\mapsto \omega(\rho)(a\otimes b)=\rho(a)\rho(b)-\rho(ab).
 \end{equation*}
It follows by a straightforward calculation that $\delta^{\mathscr{M}}_2(\omega(\rho))=0$. In particular, $\omega(\rho)$ determines a cohomology class in $\operatorname{HH}_{\operatorname{T}}^2(\mathscr{A},\mathscr{M})$. Furthermore, it can be shown that this cohomology class only depends on the equivalence class of the singular extension $(\mathscr{B},i,\sigma)$, and not on the choice of splitting. Hence, we get a map:
\begin{equation*}
  \omega:\mathscr{E}xt(\mathscr{A},\mathscr{M})\rightarrow \operatorname{HH}_{\operatorname{T}}^2(\mathscr{A},\mathscr{M}), \,\omega([(\mathscr{B},i,\sigma)])=[\omega(\rho)],   
\end{equation*}
where $\rho$ is any bounded $K$-split of $(\mathscr{B},i,\sigma)$. This map satisfies the following property:
\begin{teo}[{\cite[Theorem I.1.10]{helemskii2012homology}}]\label{teo interpretation of Taylor cohomology groups for n=2}
The map $\omega:\mathscr{E}xt(\mathscr{A},\mathscr{M})\rightarrow \operatorname{HH}_{\operatorname{T}}^2(\mathscr{A},\mathscr{M})$ is a bijection sending the distinguished point of $\mathscr{E}xt(\mathscr{A},\mathscr{M})$ to zero.
\end{teo}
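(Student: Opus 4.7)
The plan is to follow the classical deformation-theoretic blueprint (as in Gerstenhaber's original argument, or Helemskii's text in the topological setting), translated carefully into the complete bornological framework. The argument naturally splits into four steps: (a) checking that $\omega$ is well defined, (b) verifying that the trivial extension maps to the zero class, (c) constructing an inverse $\omega^{-1}$, and (d) showing that the two constructions are mutually inverse. Throughout, the key technical point to keep track of is \emph{boundedness}, since we are not merely working with $K$-algebras but with monoids in $\widehat{\mathcal{B}}c_K$, and every auxiliary map produced along the way must lie in the appropriate inner Hom space.

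For step (a), I would first check that $\omega(\rho)$ is independent of the bounded $K$-linear splitting $\rho$. Given two splittings $\rho_1,\rho_2$ of $\sigma:\mathscr{B}\to\mathscr{A}$, their difference $\rho_1-\rho_2$ factors through $i:\mathscr{M}\to\mathscr{B}$ and thus defines a bounded map $g:\mathscr{A}\to\mathscr{M}$; a direct calculation using (\ref{equation differentials in complex computing Taylor cohomology}) shows $\omega(\rho_1)-\omega(\rho_2)=\delta^{\mathscr{M}}_1(g)$, so the two cocycles define the same class in $\operatorname{HH}_{\operatorname{T}}^2(\mathscr{A},\mathscr{M})$. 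Independence under equivalence of extensions follows from the fact that an equivalence $f:\mathscr{B}\to\mathscr{B}'$ sends any bounded $K$-split $\rho$ of $\sigma$ to a bounded $K$-split $f\circ \rho$ of $\sigma'$, and $f$ restricts to the identity on $\mathscr{M}$, so $\omega(f\circ \rho)=\omega(\rho)$. Step (b) is then immediate: the canonical inclusion $a\mapsto (a,0)$ is a bounded algebra homomorphism splitting $\mathscr{A}^{\epsilon}_{\mathscr{M}}\to\mathscr{A}$, by (\ref{equation multiplication trivial extension}), so its associated cocycle vanishes identically.

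For step (c), given a cocycle $c\in\operatorname{Ker}(\delta_2^{\mathscr{M}})$, I would equip the complete bornological space $\mathscr{A}\oplus\mathscr{M}$ with the multiplication
\begin{equation*}
  (a_1,m_1)\cdot(a_2,m_2):=(a_1a_2,\, a_1m_2+m_1a_2+c(a_1\otimes a_2)).
\end{equation*}
The cocycle condition $\delta^{\mathscr{M}}_2(c)=0$ translates exactly into associativity of this product, and the boundedness of $c:\mathscr{A}\widehat{\otimes}_K\mathscr{A}\to\mathscr{M}$ together with the closed monoidal structure of $\widehat{\mathcal{B}}c_K$ (Proposition in Section~\ref{section Ind-Ban spaces}) ensures the multiplication is a bounded map. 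The projection to $\mathscr{A}$ and inclusion of $\mathscr{M}$ then give a singular extension $\mathscr{B}_c$ split by $a\mapsto (a,0)$, and one checks $\omega(\mathscr{B}_c)=[c]$. Replacing $c$ by $c+\delta^{\mathscr{M}}_1(g)$ for a bounded $g:\mathscr{A}\to\mathscr{M}$ produces an equivalent extension via the bounded isomorphism $(a,m)\mapsto (a,m+g(a))$; this shows the construction descends to $\operatorname{HH}_{\operatorname{T}}^2(\mathscr{A},\mathscr{M})$. Step (d) is then a direct verification: starting from an extension $(\mathscr{B},i,\sigma)$ with splitting $\rho$, the bounded $K$-linear isomorphism $\mathscr{A}\oplus\mathscr{M}\to\mathscr{B}$, $(a,m)\mapsto \rho(a)+i(m)$, intertwines the twisted product defined by $\omega(\rho)$ with the multiplication on $\mathscr{B}$, exhibiting the equivalence.

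The main obstacle I anticipate is bookkeeping the bornological structures at each stage, in particular the fact that the cocycle condition is only guaranteed to cut out a \emph{closed} subspace of $\mathcal{L}(\mathscr{A},\mathscr{M})^2$ (cf.\ Proposition~\ref{prop 0th TC is Fréchet}), while the cokernel used to define $\operatorname{HH}_{\operatorname{T}}^2(\mathscr{A},\mathscr{M})$ is taken in $\mathcal{B}c_K$ via the functor $J$. Consequently, statements at the level of cohomology classes must be phrased in terms of the underlying $K$-vector spaces, where equality modulo the (possibly non-closed) image of $\delta_1^{\mathscr{M}}$ is what corresponds to equivalence of extensions. This is precisely what the $J$ in the definition of TH cohomology is designed to accommodate, so the bijection $\omega$ will be set-theoretic rather than bornological, which is exactly what the statement claims.
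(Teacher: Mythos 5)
Your argument is correct and is precisely the classical Gerstenhaber--Hochschild correspondence between square-zero (singular) extensions and $2$-cocycles, carried out with the boundedness and strictness checks the bornological setting requires; the paper gives no proof of its own but defers to \cite[Theorem I.1.10]{helemskii2012homology}, whose proof is exactly this argument. The only detail worth adding in step (c) is that for a non-normalized cocycle $c$ the unit of the twisted product on $\mathscr{A}\oplus\mathscr{M}$ is $(1,-c(1\otimes 1))$ rather than $(1,0)$ (the relation $c(1\otimes a)=c(1\otimes 1)a$, which follows from $\delta_2^{\mathscr{M}}(c)=0$, makes this element a two-sided identity), and this does not affect any of your conclusions.
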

The previous bijection allows us to regard $\mathscr{E}xt(\mathscr{A},\mathscr{M})$ as a bornological vector space, and we will do so in the sequel without further mention. Furthermore, we make the following remark:
\begin{obs}
  We remark that the contents of this section can be adapted to non-complete bornological algebras by using the non-complete projective tensor product $\otimes_K$  in the definition of the Bar complex given in (\ref{equation bar resolution}). Furthermore, Proposition \ref{prop interpretation of Taylor cohomology groups for n=0,1}, and Theorem \ref{teo interpretation of Taylor cohomology groups for n=2} also hold in this setting.  
\end{obs}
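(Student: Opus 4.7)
The plan is to observe that the entire development of this section is formal and depends only on the closed symmetric monoidal structure of the ambient category; once we replace $(\widehat{\mathcal{B}}c_K,\widehat{\otimes}_K,\underline{\Hom}_{\widehat{\mathcal{B}}c_K})$ by $(\mathcal{B}c_K,\otimes_K,\underline{\Hom}_{\mathcal{B}c_K})$, every construction and proof transfers line-by-line, with the technical simplification that the inclusion functor $J$ becomes unnecessary because we already live inside $\mathcal{B}c_K$. So the work reduces to re-running the three arguments of the section in the new setting and checking that no step silently uses completeness.

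First I would redefine, for a bornological algebra $\mathscr{A}$, the bar complex
\begin{equation*}
B(\mathscr{A})^{\bullet}=\bigl(\cdots\rightarrow \mathscr{A}^e\otimes_K\mathscr{A}\otimes_K\mathscr{A}\rightarrow \mathscr{A}^e\otimes_K\mathscr{A}\rightarrow \mathscr{A}^e\bigr),
\end{equation*}
where now $\mathscr{A}^e=\mathscr{A}\otimes_K\mathscr{A}^{\op}$, with the simplicial Hochschild differential. The standard contracting homotopy $s_n(a_0\otimes\cdots\otimes a_n)=1\otimes a_0\otimes\cdots\otimes a_n$ is bounded (it sends a bounded set $B_0\times\cdots\times B_n$ to $\{1\}\times B_0\times\cdots\times B_n$) and exhibits the augmented complex $B(\mathscr{A})^{\bullet}\rightarrow \mathscr{A}$ as split-exact in $\mathcal{B}c_K$, viewed as a complex of left $\mathscr{A}$-modules via $\mathscr{A}\rightarrow \mathscr{A}^e$. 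The argument never uses that any space is complete; it only uses the universal property of $\otimes_K$ and the boundedness of the multiplication on $\mathscr{A}$. Applying $\underline{\Hom}_{\mathscr{A}^e}(-,\mathscr{M})$ to a bornological bimodule $\mathscr{M}$ and invoking the extension-restriction adjunction
\begin{equation*}
\Hom_{\mathscr{A}^e}\bigl(\mathscr{A}^e\otimes_K(\otimes_K^n\mathscr{A}),\mathscr{M}\bigr)=\Hom_{\mathcal{B}c_K}\bigl(\otimes_K^n\mathscr{A},\mathscr{M}\bigr)
\end{equation*}
produces the complex $\mathcal{L}(\mathscr{A},\mathscr{M})^{\bullet}$ with differentials given verbatim by formula (\ref{equation differentials in complex computing Taylor cohomology}). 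We then define $\operatorname{HH}_{\operatorname{T}}^n(\mathscr{A},\mathscr{M})=\operatorname{Coker}\bigl(\mathcal{L}(\mathscr{A},\mathscr{M})^{n-1}\rightarrow \operatorname{Ker}(\delta_n^{\mathscr{M}})\bigr)$ directly in $\mathcal{B}c_K$, dropping the application of $J$ that was needed in the complete case.

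Next, the interpretations in low degrees are purely algebraic manipulations of the formulas for $\delta_0^{\mathscr{M}}$ and $\delta_1^{\mathscr{M}}$: an element $m\in\mathscr{M}$ satisfies $\delta_0^{\mathscr{M}}(m)=0$ exactly when $am=ma$ for all $a\in\mathscr{A}$; a bounded map $f:\mathscr{A}\rightarrow\mathscr{M}$ satisfies $\delta_1^{\mathscr{M}}(f)=0$ exactly when $f$ is a bounded derivation; and $\operatorname{Im}(\delta_0^{\mathscr{M}})$ is by construction the space of bounded inner derivations. These identifications make no reference to completeness, so Proposition \ref{prop interpretation of Taylor cohomology groups for n=0,1} carries over as stated, giving $\operatorname{HH}_{\operatorname{T}}^0(\mathscr{A},\mathscr{M})=\operatorname{Z}(\mathscr{A},\mathscr{M})$ and $\operatorname{HH}_{\operatorname{T}}^1(\mathscr{A},\mathscr{M})=\Outder(\mathscr{A},\mathscr{M})$.

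For the degree-two interpretation I would redefine a singular extension of $\mathscr{A}$ by $\mathscr{M}$ as a strict short exact sequence $0\rightarrow\mathscr{M}\xrightarrow{i}\mathscr{B}\xrightarrow{\sigma}\mathscr{A}\rightarrow 0$ in $\mathcal{B}c_K$, where $\mathscr{B}$ is a bornological algebra, $i(\mathscr{M})$ is a square-zero ideal, and $\sigma$ admits a bounded $K$-linear splitting $\rho$. Given such $\rho$, the map $\omega(\rho)(a\otimes b)=\rho(a)\rho(b)-\rho(ab)$ is bounded with values in $\mathscr{M}$ and satisfies $\delta_2^{\mathscr{M}}(\omega(\rho))=0$; its cohomology class depends only on the equivalence class of the extension, by the same cochain-level computation as in the complete case. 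Conversely, given a 2-cocycle $\omega\in\operatorname{Ker}(\delta_2^{\mathscr{M}})$, equip $\mathscr{A}\oplus\mathscr{M}$ with the direct-sum bornology and the twisted product $(a,m)\cdot(a',m')=(aa',am'+ma'+\omega(a\otimes a'))$; here the main thing to check is that multiplication remains bounded, which follows because $\omega$ is bounded and bounded sets in $\mathscr{A}\oplus\mathscr{M}$ are exactly products of bounded sets in $\mathscr{A}$ and $\mathscr{M}$. The bijectivity and well-definedness of $\omega:\mathscr{E}xt(\mathscr{A},\mathscr{M})\rightarrow\operatorname{HH}_{\operatorname{T}}^2(\mathscr{A},\mathscr{M})$ then follow by the same cocycle-manipulation arguments as for Theorem \ref{teo interpretation of Taylor cohomology groups for n=2}, yielding the desired interpretation.

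The only place where completeness could in principle intervene is in arguing that a strict short exact sequence of bornological spaces admits a bounded linear splitting, but this is already built into the definition of singular extension, so there is no obstacle. The hardest part of the argument is therefore the bookkeeping: being careful that every occurrence of $\widehat{\otimes}_K$, $\widehat{\operatorname{Cpl}}$, or $J$ in the section either disappears or is replaced by the corresponding construction in $\mathcal{B}c_K$, and that the bornological structures on $\operatorname{Ker}(\delta_n^{\mathscr{M}})$, $\operatorname{Im}(\delta_{n-1}^{\mathscr{M}})$, and the resulting cokernels behave as expected. All of this is routine once one notes, as in Proposition \ref{prop 0th TC is Fréchet}, that the separation properties of the $\operatorname{HH}_{\operatorname{T}}^n$ need not be good in general, which is why these cohomology groups were defined in $\mathcal{B}c_K$ rather than in $\widehat{\mathcal{B}}c_K$ to begin with.
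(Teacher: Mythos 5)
Your proposal is correct and fills in exactly the routine verification that the paper leaves implicit in this remark: replacing $\widehat{\otimes}_K$ by $\otimes_K$ throughout, noting that the contracting homotopy and the low-degree cocycle identifications never invoke completeness, and checking boundedness of the twisted product in the degree-two construction. This is the intended argument, so there is nothing further to add.
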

The next logical step is establishing a comparison between the TH cohomology groups defined in this section and the Hochschild cohomology groups we have been studying thus far. However, before we do that,  we would like to put forward a slightly different perspective of the contents of this section: Let $K^{\epsilon}:=K[t]/t^2$ be the ring of dual numbers of $K$. This is an affinoid algebra, whose associated affinoid space satisfies the following property: Let $X$ be a rigid $K$-variety, then we have:
\begin{equation*}
    \Hom(\Sp(K^{\epsilon}), X)= \{(x,v), \, \textnormal{ where }x\in X(K), \textnormal{ and } v\in\mathcal{T}_{X/K,x}\otimes_{\OX_{X,x}}K(x) \}.
\end{equation*}
In other words, $\Sp(K^{\epsilon})$ co-represents the functor sending a rigid space $X$ to a pair $(x,v)$, where $x$ is a $K$-point in $X$, and $v$ is a vector in the tangent space of $x$. This space (along with its \emph{higher order versions}, the rings $K[t]/t^n$) plays a prominent role in deformation theory and, as we will see below, we can reformulate most of the concepts explained before in terms of $K^{\epsilon}$-algebras.\\

Notice that there is a canonical complete bornological algebra structure on $\mathscr{A}^{\epsilon}:=\mathscr{A}\otimes_KK[t]/t^2$ given by the tensor product of algebras. Furthermore, this product is given by the formula (\ref{equation multiplication trivial extension}) specialized to the case $\mathscr{M}=\mathscr{A}$. This is the reason why we denote the trivial singular extension of $\mathscr{A}$ by $\mathscr{A}^{\epsilon}$.  On the other hand, notice that a singular extension of $\mathscr{A}$ by $\mathscr{A}$ is by definition a complete bornological algebra structure on  $\mathscr{B}=\mathscr{A}\otimes_KK^{\epsilon}$ satisfying that we have an identification of complete bornological algebras:
\begin{equation*}
    \mathscr{A}= \mathscr{B}/(t)=\mathscr{B}\otimes_{K^{\epsilon}}K.
\end{equation*}
This fact justifies the use of the term infinitesimal deformation, and links our theory with the classical deformation theory of associative $K$-algebras, as developed in \cite{deformations}. Similarly, given two infinitesimal deformations $\mathscr{B}$, and $\mathscr{C}$, an equivalence between the associated singular extensions is nothing but an isomorphism of complete bornological $K^{\epsilon}$-algebras:
\begin{equation*}
    \varphi:\mathscr{B}\rightarrow \mathscr{C},
\end{equation*}
which is the identity after reducing modulo $t$.  As a consequence of Theorem \ref{teo interpretation of Taylor cohomology groups for n=2}, it follows that every such isomorphism is of the form:
\begin{equation*}
    \varphi: \mathscr{B}\rightarrow \mathscr{C},\, x+ty\mapsto x+ t( d(x)+y),
\end{equation*}
where $x,y\in\mathscr{A}$, and $d:\mathscr{A}\rightarrow \mathscr{A}$ is an element in $\mathcal{L}(\mathscr{A})^1$. Let us define the following objects:
\begin{defi}
 Let $\mathscr{B}$ be an infinitesimal deformation of $\mathscr{A}$. An infinitesimal automorphism of $\mathscr{B}$ is an equivalence of singular extensions:
 \begin{equation*}
     \varphi: \mathscr{B}\rightarrow \mathscr{B}.
 \end{equation*}
We let $\operatorname{Aut}(\mathscr{B})_{\operatorname{Inf}}$ be the group of infinitesimal automorphisms of $\mathscr{B}$.    
\end{defi}
We may use the above to classify the infinitesimal automorphisms of deformations of $\mathscr{A}$:
\begin{coro}\label{coro infinitesimal deformations}
Let $\mathscr{B}$ be an infinitesimal deformation of $\mathscr{A}$. There is a group isomorphism:
\begin{equation*}
    \Der(\mathscr{A})\rightarrow \operatorname{Aut}(\mathscr{B})_{\operatorname{Inf}}, \, d\mapsto  (x+ty\mapsto x+t(d(x)+y)).
\end{equation*}
In particular, $\operatorname{Aut}(\mathscr{B})_{\operatorname{Inf}}$ is an abelian group.
\end{coro}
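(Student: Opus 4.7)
The plan is to unpack the parametrization of equivalences already described in the paragraph preceding the corollary, and then carry out the elementary bookkeeping that turns this parametrization into a group isomorphism. Concretely, I will first fix an infinitesimal deformation $\mathscr{B}$ and recall from Theorem \ref{teo interpretation of Taylor cohomology groups for n=2} that every equivalence of singular extensions $\varphi:\mathscr{B}\to\mathscr{B}$ is a bounded $K^{\epsilon}$-linear map of the form $x+ty\mapsto x+t(d(x)+y)$ for some bounded $K$-linear $d:\mathscr{A}\to \mathscr{A}=\mathcal{L}(\mathscr{A})^{1}$. This already pins down the underlying set of $\operatorname{Aut}(\mathscr{B})_{\operatorname{Inf}}$ as a subset of $\mathcal{L}(\mathscr{A})^{1}$, and the question reduces to identifying which $d$ come from genuine algebra automorphisms.

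The key step is the following calculation. Let $\mu_{\mathscr{B}}$ denote the multiplication of $\mathscr{B}$ and write $\mu_{\mathscr{B}}(x\otimes y)=xy+t\,\omega(x,y)$ for $x,y\in\mathscr{A}$, where $\omega\in\mathcal{L}(\mathscr{A})^{2}$ is a Hochschild $2$-cocycle representing the class of $\mathscr{B}$. Expanding $\varphi(xy+t\omega(x,y)+t(xy'+x'y))$ on one side and $\varphi(x+tx')\cdot\varphi(y+ty')$ on the other, and comparing the $t$-coefficients, the requirement that $\varphi$ be multiplicative collapses to the identity
\begin{equation*}
d(xy)=x\,d(y)+d(x)\,y\qquad\text{for all }x,y\in\mathscr{A},
\end{equation*}
since all terms involving $x',y'$ or $\omega$ cancel on the two sides. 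This shows $\varphi_{d}$ is an algebra morphism exactly when $d\in \Der(\mathscr{A})$. Conversely, any bounded derivation $d$ yields a well-defined bounded $K^{\epsilon}$-linear endomorphism $\varphi_{d}$, which is the identity modulo $t$ and hence invertible (its inverse is $\varphi_{-d}$, as the next paragraph will show). Thus the map $d\mapsto \varphi_{d}$ is a bijection $\Der(\mathscr{A})\to \operatorname{Aut}(\mathscr{B})_{\operatorname{Inf}}$.

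Finally, I will verify that this bijection is a group homomorphism by a direct computation: for $d,d'\in\Der(\mathscr{A})$,
\begin{equation*}
(\varphi_{d'}\circ\varphi_{d})(x+ty)=\varphi_{d'}(x+t(d(x)+y))=x+t\bigl((d'+d)(x)+y\bigr)=\varphi_{d+d'}(x+ty),
\end{equation*}
using that $\varphi_{d'}$ acts trivially on the $t$-component by definition. Hence $\varphi_{d'}\circ \varphi_{d}=\varphi_{d+d'}$, which in particular shows both that the group law on $\operatorname{Aut}(\mathscr{B})_{\operatorname{Inf}}$ is abelian and that $\varphi_{d}^{-1}=\varphi_{-d}$, closing the loop with the invertibility claim from the previous paragraph. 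There is no real obstacle here; the only point that requires a moment of care is ensuring that the identification of equivalences used in step one is applied correctly to an automorphism of a fixed deformation (as opposed to a comparison between two different extensions), so that the cocycle $\omega$ is the same on both sides of the multiplicativity computation and genuinely cancels.
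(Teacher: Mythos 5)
Your proof is correct and is essentially the argument the paper relies on: the paper simply cites the proof of Theorem I.1.10 of Helemskii, which consists of exactly the computation you carry out (an equivalence fixes the ideal $t\mathscr{A}$ and induces the identity on the quotient, hence has the form $x+ty\mapsto x+t(d(x)+y)$ relative to the bounded splitting; multiplicativity then reduces to the Leibniz rule because the cocycle terms cancel, and composition corresponds to addition of derivations). The only point worth stating explicitly is that the notation $x+ty$ presupposes the identification $\mathscr{B}\cong\mathscr{A}\oplus t\mathscr{A}$ via the bounded $K$-linear split, which also guarantees that $\varphi_d$ is bounded when $d$ is and conversely that $d$ is bounded when $\varphi$ is.
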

\begin{proof}
 This is a direct consequence of the proof of \cite[Theorem I.1.10]{helemskii2012homology}.
\end{proof}
As in the classical case, it is possible to show that the space $\operatorname{HH}_{\operatorname{T}}^3(\mathscr{A})$ is the space of obstructions to extending an infinitesimal deformation $\mathscr{B}$ to a second order deformation (\emph{i.e.} a complete bornological algebra structure on $\mathscr{A}\otimes _KK[t]/(t^3)$ which restricts to $\mathscr{B}$ after quotienting by $t^2$). However, as we will see below, this space is harder to relate to our notion of Hochschild cohomology, so we will not give the details here. Let us also mention that, in the setting of Banach algebras, $\operatorname{HH}_{\operatorname{T}}^3(\mathscr{A})$ has also been used to study the perturbation theory of $\mathscr{A}$. See \cite[I.2.2]{helemskii2012homology} for a complete discussion. Finally, we point out that there is no known interpretation of $\operatorname{HH}_{\operatorname{T}}^n(\mathscr{A})$ for $n\geq 4$.\\

Before moving on, let us introduce a homological variant of the previous discussion. First, notice that there is a canonical isomorphism of complete bornological algebras:
\begin{equation*}
    (\mathscr{A}^e)^{\op}=(\mathscr{A}\widehat{\otimes}_K\mathscr{A}^{\op})^{\op}=\mathscr{A}^{\op}\widehat{\otimes}_K\mathscr{A}\cong \mathscr{A}^e.
\end{equation*}
In particular, the quasi-abelian categories of left and right complete bornological $\mathscr{A}^e$-modules are canonically isomorphic. In particular, we can regard the bar complex $B(\mathscr{A})^{\bullet}$ as a complex of right $\mathscr{A}^e$-modules. Thus, for any complete bornological $\mathscr{A}^e$-module $\mathscr{M}$, we can apply the functor $-\widehat{\otimes}_{\mathscr{A}^e}\mathscr{M}$ to the Bar complex to obtain the following chain complex:
\begin{equation*}
    \mathcal{T}(\mathscr{A},\mathscr{M})^{\bullet}:=\left(   
 \cdots \xrightarrow[]{\delta^{-2}_{\mathscr{M}}}\widehat{\otimes}_K^2\mathscr{A}\widehat{\otimes}_K\mathscr{M}\xrightarrow[]{\delta^{-1}_{\mathscr{M}}}\mathscr{A}\widehat{\otimes}_K\mathscr{M}  \xrightarrow[]{\delta^0_{\mathscr{M}}}\mathscr{M}   \right),
\end{equation*}
where, in general, we have:
\begin{equation*}
    \mathcal{T}(\mathscr{A},\mathscr{M})^{-n}=\widehat{\otimes}_K^n\mathscr{A}\widehat{\otimes}_K\mathscr{M},
\end{equation*}
and the differentials are given by the following formula:
\begin{multline}
    \delta^{-n}_{\mathscr{M}}(a_1\otimes \cdots \otimes a_{n}\otimes m)=   a_2\otimes \cdots\otimes a_n \otimes ma_1+\sum_{i=1}^{n-1}(-1)^i  a_1\otimes \cdots \otimes a_ia_{i+1}\otimes \cdots\otimes a_n \otimes m   \\
    + (-1)^n  a_1\otimes a_2\otimes\cdots\otimes a_{n-1}\otimes a_nm.
\end{multline}
As before, whenever there is no confusion with our choice of $\mathscr{A}$, we will simply write:
\begin{equation*}
    \mathcal{T}(\mathscr{M})^{\bullet}:=\mathcal{T}(\mathscr{A},\mathscr{M})^{\bullet},
\end{equation*}
and if $\mathscr{M}=\mathscr{A}$, we will write $\delta^{-n}:=\delta^{-n}_\mathscr{A}$ for $n\leq 0$. We can now define the following homology groups:
\begin{defi}[{\cite[Definition 2.4]{taylor1972homology}}]\label{defi TH homology groups}
The Taylor-Hochschild homology spaces of $\mathscr{A}$ with coefficients in $\mathscr{M}$ are the following bornological spaces:
\begin{equation*}
\operatorname{HH}^{\operatorname{T}}_n(\mathscr{A},\mathscr{M})=\operatorname{Coker}\left(J(\mathcal{T}(\mathscr{A},\mathscr{M})^{-n-1})\rightarrow J(\operatorname{Ker}(\delta^{-n}_{\mathscr{M}}))\right).
\end{equation*}
As before, if the complete bornological algebra $\mathscr{A}$ is clear, we will drop it from the notation and write $\operatorname{HH}^{\operatorname{T}}_n(\mathscr{M}):=\operatorname{HH}^{\operatorname{T}}_n(\mathscr{A},\mathscr{M})$. For simplicity, we will call these spaces the \emph{TH} homology spaces of $\mathscr{A}$ with coefficients in $\mathscr{M}$. If $\mathscr{M}=\mathscr{A}$, we will call the $\operatorname{HH}^{\operatorname{T}}_n(\mathscr{A})$ the \emph{TH} homology spaces of $\mathscr{A}$. 
\end{defi}
Unfortunately, the TH homology spaces of $\mathscr{A}$ with coefficients in $\mathscr{M}$ do not admit a description as explicit as the TH cohomology groups. However, as we will see, they present better behavior with respect to homological algebra, and their relation with the Hochschild homology groups $\operatorname{HH}^{\bullet}(\mathscr{A},\mathscr{M})$ is much less convoluted than in the cohomological setting.
\subsection{Comparison with Hochschild (co)-homology}\label{comparison HC TH}
In order to compare the TH cohomology groups with the Hochschild cohomology spaces studied in previous sections, we need to translate these complexes into $LH(\widehat{\mathcal{B}}c_K)$. Until the end of the section, we let $\mathscr{A}$ be a Fréchet $K$-algebra.\\
   
First, notice that every object in $B(\mathscr{A})^{\bullet}$ is a finite complete tensor product of copies of $\mathscr{A}$. As $\mathscr{A}$ is Fréchet, it follows by \cite[Proposition 4.25]{bode2021operations} that:
\begin{equation*}
I(\widehat{\otimes}^n_K\mathscr{A})=\widetilde{\otimes}_K^nI(\mathscr{A}).
\end{equation*}
By the same reasoning, it also follows that we have:
\begin{equation*}
    I(\mathscr{A}^e)=I(\mathscr{A}\widehat{\otimes}_K\mathscr{A}^{\op})=I(\mathscr{A})\widetilde{\otimes}_KI(\mathscr{A})^{\op}=I(\mathscr{A})^e.
\end{equation*}
Furthermore, $\widehat{\mathcal{B}}c_K$ has enough flat projectives stable under $\widehat{\otimes}_K$  (\emph{cf.} \cite[Corollary 4.17]{bode2021operations}). Hence, it follows by \cite[Proposition 1.5.3]{schneiders1999quasi} that we have an identification of functors:
\begin{equation*}
    I(\underline{\Hom}_{\widehat{\mathcal{B}}c_K}(-,-))=\underline{\Hom}_{LH(\widehat{\mathcal{B}}c_K)}(I(-),I(-)).
\end{equation*}
Let $\mathscr{M}$ be a complete bornological $\mathscr{A}^e$-module. We may use these facts to obtain the following identity of chain complexes in $\operatorname{D}(\widehat{\mathcal{B}}c_K)$:
\begin{equation*}
\mathcal{L}_I(\mathscr{A},\mathscr{M})^{\bullet}:=I(\mathcal{L}(\mathscr{A},\mathscr{M})^{\bullet})=\left( 0\rightarrow I(\mathscr{M})\xrightarrow[]{I(\delta_0^\mathscr{M})} \underline{\Hom}_{LH(\widehat{B}c_K)}(I(\mathscr{A}),I(\mathscr{M}))\xrightarrow[]{I(\delta_1^\mathscr{M})} \cdots     \right),
\end{equation*}
where in general we have $\mathcal{L}_I(\mathscr{A},\mathscr{M})^{n}=\underline{\Hom}_{LH(\widehat{B}c_K)}(\widetilde{\otimes}_K^nI(\mathscr{A}),I(\mathscr{M}))$ for every $n\geq 0$.\\

We can now start introducing Hochschild cohomology into the picture. In particular, by definition of Hochschild cohomology of a complete bornological algebra, we have:
\begin{equation*}
    \operatorname{HH}^{\bullet}(\mathscr{A},\mathscr{M}):=R\underline{\Hom}_{\mathscr{A}^e}(\mathscr{A},\mathscr{M}).
\end{equation*}
As shown above, the bar resolution $B(\mathscr{A})^{\bullet}$
is a strict exact resolution of $\mathscr{A}$ as a $\mathscr{A}^e$-module. Hence, we get the following identity:
\begin{equation*}
    \operatorname{HH}^{\bullet}(\mathscr{A},\mathscr{M})=R\underline{\Hom}_{\mathscr{A}^e}(B(\mathscr{A})^{\bullet},\mathscr{M}).
\end{equation*}
We may use the fact that $LH(\widehat{\mathcal{B}}c_K)$ is a Grothendieck abelian category to give an explicit calculation of this complex. Namely, choose an injective resolution $I(\mathscr{M})\rightarrow\mathcal{I}^{\bullet}$. Then we have the following:
\begin{equation}\label{equation TH SS}
 \operatorname{HH}^{\bullet}(\mathscr{A},\mathscr{M})=\operatorname{Tot}_{\pi}\left(\underline{\Hom}^{\bullet,\bullet}_{I(\mathscr{A})^e}(I(B(\mathscr{A})^{\bullet}),\mathcal{I}^{\bullet})\right).  
\end{equation}
As a consequence of this discussion, we arrive at the following theorem:
\begin{teo}\label{teo existence TH SS}
There is a first quadrant spectral sequence in $LH(\widehat{\mathcal{B}}c_K)$:
\begin{equation*}
E_1^{p,q}=\underline{\operatorname{Ext}}^q_{LH(\widehat{\mathcal{B}}c_K)}(\widetilde{\otimes}_K^pI(\mathscr{A}),I(\mathscr{M}))    \Rightarrow \operatorname{HH}^{p+q}(\mathscr{A},\mathscr{M}), 
\end{equation*}
which we call the Taylor-Hochschild spectral sequence. Furthermore, we have an identification:
\begin{equation*}
    E_1^{\bullet,0}=\mathcal{L}_I(\mathscr{A},\mathscr{M})^{\bullet},
\end{equation*}
and we have $E_1^{0,q}=0$ for all $q\geq 1$.
\end{teo}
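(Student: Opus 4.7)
The plan is to extract the desired spectral sequence from the double complex appearing in equation (\ref{equation TH SS}). Fix an injective resolution $I(\mathscr{M}) \to \mathcal{I}^{\bullet}$ in $\Mod_{LH(\widehat{\mathcal{B}}c_K)}(I(\mathscr{A})^e)$ and consider the first-quadrant double complex
\[
C^{p,q} := \underline{\Hom}_{I(\mathscr{A})^e}\bigl(I(B(\mathscr{A})^{-p}),\mathcal{I}^q\bigr), \qquad p,q \geq 0,
\]
with horizontal differential induced by the bar differential and vertical differential induced by that of $\mathcal{I}^{\bullet}$. Both indices are bounded below, so the column filtration on $\operatorname{Tot}_{\pi}(C^{\bullet,\bullet})$ yields a strongly convergent $E_1$ spectral sequence whose abutment is, by (\ref{equation TH SS}), $\operatorname{HH}^{p+q}(\mathscr{A},\mathscr{M})$.

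Taking vertical cohomology first and invoking that $\mathcal{I}^{\bullet}$ is an injective resolution produces $E_1^{p,q} = \underline{\operatorname{Ext}}^q_{I(\mathscr{A})^e}\bigl(I(B(\mathscr{A})^{-p}),I(\mathscr{M})\bigr)$. Since $\mathscr{A}$ is Fréchet, the identifications $I(\widehat{\otimes}_K^n \mathscr{A}) = \widetilde{\otimes}_K^n I(\mathscr{A})$ and $I(\mathscr{A}^e) = I(\mathscr{A})^e$ recalled in the discussion preceding equation (\ref{equation TH SS}) yield $I(B(\mathscr{A})^{-p}) = I(\mathscr{A})^e \widetilde{\otimes}_K \widetilde{\otimes}_K^p I(\mathscr{A})$ as $I(\mathscr{A})^e$-modules. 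The derived tensor--hom adjunction between $I(\mathscr{A})^e \widetilde{\otimes}_K -$ and the forgetful functor to $LH(\widehat{\mathcal{B}}c_K)$ then translates the above Ext into
\[
E_1^{p,q} = \underline{\operatorname{Ext}}^q_{LH(\widehat{\mathcal{B}}c_K)}\bigl(\widetilde{\otimes}_K^p I(\mathscr{A}),I(\mathscr{M})\bigr),
\]
as desired.

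For the bottom row, the underived adjunction combined with the compatibility $I(\underline{\Hom}_{\widehat{\mathcal{B}}c_K}(-,-)) = \underline{\Hom}_{LH(\widehat{\mathcal{B}}c_K)}(I(-),I(-))$ from Proposition \ref{prop extension of closed structure} gives $E_1^{p,0} = I\bigl(\underline{\Hom}_{\widehat{\mathcal{B}}c_K}(\widehat{\otimes}_K^p \mathscr{A}, \mathscr{M})\bigr) = \mathcal{L}_I(\mathscr{A},\mathscr{M})^p$, and the horizontal differentials on $E_1^{\bullet,0}$ are the images under $I$ of the maps $\delta_{\bullet}^{\mathscr{M}}$ since both originate from the bar differential. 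For the left column, $E_1^{0,q} = \underline{\operatorname{Ext}}^q_{LH(\widehat{\mathcal{B}}c_K)}(I(K),I(\mathscr{M}))$; since $I(K)$ is the tensor unit of $LH(\widehat{\mathcal{B}}c_K)$, the functor $\underline{\Hom}_{LH(\widehat{\mathcal{B}}c_K)}(I(K),-)$ is naturally isomorphic to the identity, hence exact, so its higher derived functors vanish.

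The delicate point is making the derived tensor--hom adjunction rigorous, since the forgetful functor $\Mod_{LH(\widehat{\mathcal{B}}c_K)}(I(\mathscr{A})^e) \to LH(\widehat{\mathcal{B}}c_K)$ need not preserve injectives, and one therefore cannot simply equate the two Ext groups via an injective resolution of $I(\mathscr{M})$. The way I would circumvent this is to resolve the first argument instead: a projective resolution $P^{\bullet} \to \widetilde{\otimes}_K^p I(\mathscr{A})$ in $LH(\widehat{\mathcal{B}}c_K)$ exists because $\widehat{\mathcal{B}}c_K$ has enough flat projectives by \cite[Corollary 4.17]{bode2021operations} and the inclusion into the left heart preserves them, and the complex $I(\mathscr{A})^e \widetilde{\otimes}_K P^{\bullet}$ is then a projective resolution of $I(B(\mathscr{A})^{-p})$ in $\Mod_{LH(\widehat{\mathcal{B}}c_K)}(I(\mathscr{A})^e)$: the resolution remains exact by flatness of the $P^i$, and each term is projective because $I(\mathscr{A})^e \widetilde{\otimes}_K -$ is left adjoint to the exact forgetful functor. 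Computing both Ext groups from this single projective resolution via the underived adjunction yields the identification.
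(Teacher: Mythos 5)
Your proposal is correct and follows essentially the same route as the paper's (very terse) proof, which likewise extracts the sequence from the column filtration of the double complex in equation (\ref{equation TH SS}), identifies $E_1^{\bullet,0}$ with $\mathcal{L}_I(\mathscr{A},\mathscr{M})^{\bullet}$ via the underived adjunction, and kills the column $E_1^{0,q}$ for $q\geq 1$ because $R\underline{\Hom}_{LH(\widehat{\mathcal{B}}c_K)}(I(K),-)$ is the identity. The only quibble is with your ``delicate point'': since $I(\mathscr{A})^e=I(\mathscr{A}^e)$ lies in the essential image of $I$ and is therefore $\widetilde{\otimes}_K$-flat, the extension of scalars $I(\mathscr{A})^e\widetilde{\otimes}_K-$ is exact and the forgetful functor \emph{does} preserve injectives, so the derived adjunction goes through directly; and in your alternative argument the exactness of $I(\mathscr{A})^e\widetilde{\otimes}_KP^{\bullet}\rightarrow I(B(\mathscr{A})^{-p})$ is due to flatness of $I(\mathscr{A})^e$ (the object being tensored against the resolution), not to flatness of the $P^i$.
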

\begin{proof}
The existence of the spectral sequence follows from equation (\ref{equation TH SS}). For the rest, notice that:
\begin{equation*}
    E_1^{0,\bullet}=R\underline{\Hom}_{\widehat{\mathcal{B}}c_K}(K,\mathscr{M})=\mathscr{M},
\end{equation*}
and that $E_1^{\bullet,0}=\mathcal{L}_I(\mathscr{A},\mathscr{M})^{\bullet}$ by definition of the sequence.
\end{proof}
\begin{coro}\label{coro H1 borno implies delta0 strict}
Let $\mathscr{M}\in \Mod_{\widehat{\mathcal{B}}c_K}(\mathscr{A}^e)$. The following are equivalent:
\begin{enumerate}[label=(\roman*)]
    \item $\operatorname{HH}^1(\mathscr{A},\mathscr{M})$ is an object in $\widehat{\mathcal{B}}c_K$
    \item The map $\mathscr{M}\xrightarrow[]{\delta_0^\mathscr{M}}\Hom_{\widehat{\mathcal{B}}c_K}(\mathscr{A},\mathscr{M})$ is strict.
\end{enumerate}
\end{coro}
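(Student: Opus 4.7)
The plan is to extract the corollary from the Taylor--Hochschild spectral sequence of Theorem \ref{teo existence TH SS} together with the standard characterization of the essential image of $I\colon\widehat{\mathcal{B}}c_K\to LH(\widehat{\mathcal{B}}c_K)$.

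First I would use the vanishing $E_1^{0,q}=0$ for $q\geq 1$ to compute the contribution in total degree $1$. The outgoing differentials $d_r^{1,0}$ for $r\geq 2$ land in $E_r^{1+r,1-r}=0$, and the incoming differentials originate from $E_r^{1-r,r-1}$, which vanishes for $r\geq 2$ since $1-r<0$. Combined with $E_\infty^{0,1}=0$, this forces
\begin{equation*}
\operatorname{HH}^1(\mathscr{A},\mathscr{M})=E_\infty^{1,0}=E_2^{1,0}=H^1\bigl(\mathcal{L}_I(\mathscr{A},\mathscr{M})^{\bullet}\bigr),
\end{equation*}
the cohomology being computed in the abelian category $LH(\widehat{\mathcal{B}}c_K)$.

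Next I would interpret this cohomology in $\widehat{\mathcal{B}}c_K$-terms. Since $I$ is left exact, $\operatorname{Ker}(I(\delta_1^{\mathscr{M}}))=I(\operatorname{Ker}(\delta_1^{\mathscr{M}}))$. For the image I would exploit the canonical factorization
\begin{equation*}
\delta_0^{\mathscr{M}}\colon\mathscr{M}\twoheadrightarrow\operatorname{CoIm}(\delta_0^{\mathscr{M}})\xrightarrow{u}\operatorname{Im}(\delta_0^{\mathscr{M}})\hookrightarrow\mathcal{L}(\mathscr{A},\mathscr{M})^{1}
\end{equation*}
in $\widehat{\mathcal{B}}c_K$ together with the fact that $I$ sends strict epimorphisms to epimorphisms and monomorphisms to monomorphisms in $LH(\widehat{\mathcal{B}}c_K)$. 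This identifies the image of $I(\delta_0^{\mathscr{M}})$ in $LH(\widehat{\mathcal{B}}c_K)$ with $I(\operatorname{CoIm}(\delta_0^{\mathscr{M}}))$, and therefore
\begin{equation*}
\operatorname{HH}^1(\mathscr{A},\mathscr{M})=I(\operatorname{Ker}(\delta_1^{\mathscr{M}}))/I(\operatorname{CoIm}(\delta_0^{\mathscr{M}}))
\end{equation*}
is the cokernel in $LH(\widehat{\mathcal{B}}c_K)$ of the natural monomorphism $\iota\colon\operatorname{CoIm}(\delta_0^{\mathscr{M}})\hookrightarrow\operatorname{Ker}(\delta_1^{\mathscr{M}})$ in $\widehat{\mathcal{B}}c_K$.

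The final step is purely formal. For any monomorphism $f\colon V\hookrightarrow W$ in $\widehat{\mathcal{B}}c_K$, the quotient $I(W)/I(V)$ in $LH(\widehat{\mathcal{B}}c_K)$ lies in the essential image of $I$ if and only if $f$ is strict: indeed, an isomorphism $I(W)/I(V)\cong I(U)$ in $LH(\widehat{\mathcal{B}}c_K)$ is the same datum as a short strict exact sequence $0\to V\to W\to U\to 0$ in $\widehat{\mathcal{B}}c_K$ (via the equivalence $D(\widehat{\mathcal{B}}c_K)\cong D(LH(\widehat{\mathcal{B}}c_K))$ and the fact that triangles in $D(\widehat{\mathcal{B}}c_K)$ between objects of the heart correspond to strict exact sequences in $\widehat{\mathcal{B}}c_K$), which in particular makes $V\hookrightarrow W$ strict; conversely every strict mono yields such a quotient. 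Applying this criterion to $\iota$ and recalling that the inclusion $\operatorname{Im}(\delta_0^{\mathscr{M}})\hookrightarrow\operatorname{Ker}(\delta_1^{\mathscr{M}})$ is automatically strict (being a closed embedding into a kernel), strictness of $\iota$ is equivalent to $u$ being an isomorphism, which is by definition strictness of $\delta_0^{\mathscr{M}}$. The main obstacle is precisely this third step: pinning down the essential image characterization of $I$ and carefully separating the image/coimage calculations in $\widehat{\mathcal{B}}c_K$ from those in $LH(\widehat{\mathcal{B}}c_K)$; once this formalism is in place, the corollary drops out of the spectral sequence directly.
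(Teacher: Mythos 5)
Your proposal is correct and follows essentially the same route as the paper's proof: the degeneration argument forcing $\operatorname{HH}^1(\mathscr{A},\mathscr{M})=E_2^{1,0}=\operatorname{H}^1(\mathcal{L}_I(\mathscr{A},\mathscr{M})^{\bullet})$, followed by the characterization of the essential image of $I$ in terms of strictness of the representing monomorphism. The paper compresses your second and third steps into a single appeal to the properties of $I$ (essentially \cite[Proposition 1.2.29]{schneiders1999quasi}), which you have correctly unpacked.
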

\begin{proof}
Let $\mathscr{M}\in \Mod_{\widehat{\mathcal{B}}c_K}(\mathscr{A}^e)$. By construction of the Taylor-Hochschild spectral sequence, we have $E_1^{1,0}=0$. Hence, there is an isomorphism $\operatorname{HH}^1(\mathscr{A},\mathscr{M})\rightarrow \operatorname{H}^1(\mathcal{L}_I(\mathscr{A},\mathscr{M})^{\bullet})$. By the properties of the canonical morphism $I:\widehat{\mathcal{B}}c_L\rightarrow LH(\widehat{\mathcal{B}}c_K)$, it follows that the map $\delta_0^{\mathscr{M}}$ is strict if and only if $\operatorname{H}^1(\mathcal{L}_I(\mathscr{A},\mathscr{M})^{\bullet})$ is a complete bornological space. Hence, the result is clear.
\end{proof}
In the general case, the complex $\operatorname{HH}^{\bullet}(\mathscr{A},\mathscr{M})$ is not necessarily strict. Hence, it is difficult to use the Taylor-Hochschild spectral sequence to deduce properties of the TH cohomology groups of $\mathscr{M}$. However, if $\operatorname{HH}^{\bullet}(\mathscr{A},\mathscr{M})$ is strict, then the situation is not as dire. The sequence also behaves exceedingly well in the case of Banach algebras:
\begin{coro}
Assume $\mathscr{A}$ is a Banach algebra and one of the two following conditions hold:
\begin{enumerate}[label=(\roman*)]
    \item $K$ is discretely valued.
    \item $\mathscr{A}$ admits a dense subspace which is countably generated.
\end{enumerate}
Then we have the following identification in $\operatorname{D}(\widehat{\mathcal{B}}c_K)$:
\begin{equation*}
    \operatorname{HH}^{\bullet}(\mathscr{A},\mathscr{M})=\mathcal{L}_I(\mathscr{A},\mathscr{M})^{\bullet}.
\end{equation*}
\end{coro}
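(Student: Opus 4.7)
The plan is to exploit the Taylor--Hochschild spectral sequence of Theorem \ref{teo existence TH SS} and show that under either of the two hypotheses it degenerates at the $E_1$ page. Concretely, it suffices to prove that $\widetilde{\otimes}_K^p I(\mathscr{A})$ is projective in $LH(\widehat{\mathcal{B}}c_K)$ for every $p\geq 0$: this forces $E_1^{p,q}=\underline{\operatorname{Ext}}^q_{LH(\widehat{\mathcal{B}}c_K)}(\widetilde{\otimes}_K^p I(\mathscr{A}),I(\mathscr{M}))=0$ for all $q\geq 1$, collapsing the sequence and yielding the desired quasi-isomorphism $\operatorname{HH}^{\bullet}(\mathscr{A},\mathscr{M})\cong \mathcal{L}_I(\mathscr{A},\mathscr{M})^{\bullet}$.

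The first step is the observation that under each of the assumptions, $\mathscr{A}$ admits an orthonormal basis as a Banach $K$-vector space. In case (ii) this is precisely \cite[Proposition 10.4]{schneider2013nonarchimedean}, and in case (i) it is the classical statement that every Banach space over a discretely valued non-archimedean field admits an orthonormal basis. Because a complete tensor product of Banach spaces admitting orthonormal bases again admits an orthonormal basis, indexed by tuples of basis elements, each $\widehat{\otimes}_K^p\mathscr{A}$ inherits the same property.

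The second step turns this into a projectivity statement. By \cite[Lemma 4.16]{bode2021operations}, a Banach space admitting an orthonormal basis is a flat projective object of $\widehat{\mathcal{B}}c_K$. Combining this with \cite[Corollary 4.17]{bode2021operations}, which ensures that such flat projectives remain projective after passing to the left heart (as already exploited in the proof of Corollary \ref{coro global dim}), and using the identification $I(\widehat{\otimes}_K^p\mathscr{A})=\widetilde{\otimes}_K^p I(\mathscr{A})$ provided by Proposition \ref{prop comparison of tensor products metrizable spaces} (applicable because $\mathscr{A}$ is Banach, hence metrizable), we conclude that each $\widetilde{\otimes}_K^p I(\mathscr{A})$ is projective in $LH(\widehat{\mathcal{B}}c_K)$.

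Finally, by the free-forgetful adjunction between $\Mod_{LH(\widehat{\mathcal{B}}c_K)}(I(\mathscr{A})^e)$ and $LH(\widehat{\mathcal{B}}c_K)$, the $I(\mathscr{A})^e$-module $I(\mathscr{A})^e\widetilde{\otimes}_K\widetilde{\otimes}_K^p I(\mathscr{A})$ is projective as soon as $\widetilde{\otimes}_K^p I(\mathscr{A})$ is projective. Hence $I(B(\mathscr{A})^{\bullet})$ is a projective resolution of $I(\mathscr{A})$ in $\Mod_{LH(\widehat{\mathcal{B}}c_K)}(I(\mathscr{A})^e)$, and one computes
\begin{equation*}
\operatorname{HH}^{\bullet}(\mathscr{A},\mathscr{M})=R\underline{\Hom}_{I(\mathscr{A})^e}(I(\mathscr{A}),I(\mathscr{M}))=\underline{\Hom}_{I(\mathscr{A})^e}(I(B(\mathscr{A})^{\bullet}),I(\mathscr{M}))=\mathcal{L}_I(\mathscr{A},\mathscr{M})^{\bullet},
\end{equation*}
the last identity coming from the extension-restriction adjunction together with the commutation of $I$ with the internal Hom functor (Proposition \ref{prop extension of closed structure}). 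The main subtlety, and the step where most care is required, is the second one: passing from projectivity in the quasi-abelian category $\widehat{\mathcal{B}}c_K$ to projectivity in its abelian left heart. This transition is precisely why the statement requires the strong hypothesis of an orthonormal basis rather than arbitrary projectivity in $\widehat{\mathcal{B}}c_K$, and is the reason why the nuclear Fréchet case does not follow from the same argument.
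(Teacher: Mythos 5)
Your proposal is correct and follows essentially the same route as the paper: both reduce the statement to the projectivity of $\widetilde{\otimes}_K^p I(\mathscr{A})$ in $LH(\widehat{\mathcal{B}}c_K)$, obtained from the existence of orthonormal bases via \cite[Proposition 10.4]{schneider2013nonarchimedean} and \cite[Lemma 4.16]{bode2021operations}, which forces the higher $\underline{\operatorname{Ext}}$ terms of the Taylor--Hochschild spectral sequence to vanish and the sequence to degenerate. Your write-up merely fills in more of the intermediate steps (stability of orthonormal bases under $\widehat{\otimes}_K$, the passage to the left heart, and the free-forgetful adjunction) than the paper's terser argument.
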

\begin{proof}
If any of the two conditions above hold, then it follows by \cite[Lemma 4.16]{bode2021operations} and \cite[Proposition 10.4]{schneider2013nonarchimedean} that $\widetilde{\otimes}_K^pI(\mathscr{A})$ is a projective object in $LH(\widehat{\mathcal{B}}c_K)$ for each $p\geq 0$. In this situation, the spaces $\underline{\operatorname{Ext}}^q_{LH(\widehat{\mathcal{B}}c_K)}(\widetilde{\otimes}_K^pI(\mathscr{A}),I(\mathscr{M}))$ are trivial for all $q\geq 1$, and the sequence degenerates.
\end{proof}
Let us now study the TH homology groups.  Recall the dissection functor $\operatorname{diss}:\widehat{\mathcal{B}}c_K\rightarrow \Indban$ studied in Section \ref{section Ind-Ban spaces}. By assumption, the algebra $\mathscr{A}$ is (the bornologification of) a Fréchet space. Thus, it follows by Proposition \ref{prop comparison of tensor products metrizable spaces} that for every $n\geq 0$ we have:
\begin{equation}\label{equation diss of bar resolution}
  \operatorname{diss}(B(\mathscr{A})^{n})= \overrightarrow{\otimes}_K^n\operatorname{diss}(\mathscr{A})\overrightarrow{\otimes}_K\operatorname{diss}(\mathscr{A})^e.
\end{equation}
Thus, the complex $\operatorname{diss}(B(\mathscr{A})^{\bullet})$ is a strict exact resolution of $\operatorname{diss}(\mathscr{A})$ as a $\operatorname{diss}(\mathscr{A})^e$-module. Furthermore, as $\overrightarrow{\otimes}_K$ is exact in $\Indban$, it follows by (\ref{equation diss of bar resolution}) that $\operatorname{diss}(B(\mathscr{A})^{\bullet})$ is in fact a strict exact flat resolution of $\operatorname{diss}(\mathscr{A})$ as a $\operatorname{diss}(\mathscr{A})^e$-module. Hence, we have the following proposition:
\begin{prop}\label{prop relation between TH homology and HH}
Consider the following complex in $LH(\widehat{\mathcal{B}}c_K)$:
\begin{equation*}
    B^{\bullet}(I(\mathscr{A})):=I(B^{\bullet}(\mathscr{A}))=\left(\cdots\rightarrow I(\mathscr{A})^e\widetilde{\otimes}_KI(\mathscr{A})\rightarrow I(\mathscr{A})^e\right).
\end{equation*}
Then $B^{\bullet}(I(\mathscr{A}))$ is a flat resolution of $I(\mathscr{A})$. Thus, for each $\mathscr{N}\in \Mod_{LH(\widehat{\mathcal{B}}c_K)}(I(\mathscr{A}))$ we have:
\begin{equation*}
    I(\mathscr{A})\widetilde{\otimes}^{\mathbb{L}}_{I(\mathscr{A})^e}\mathscr{N}=\left( \cdots\rightarrow \widetilde{\otimes}_K^2I(\mathscr{A})\widetilde{\otimes}_K\mathscr{N}  \rightarrow I(\mathscr{A})\widetilde{\otimes}_K\mathscr{N}\rightarrow  \mathscr{N}\right).
\end{equation*}
Furthermore, if $\mathscr{M}\in \Mod_{\widehat{\mathcal{B}}c_K}(\mathscr{A}^e)$ is the bornologification of a metrizable locally convex space, then we have the following identification in $\operatorname{D}(\widehat{\mathcal{B}}c_K)$:
\begin{equation*}
    \operatorname{HH}_{\bullet}(\mathscr{A},\mathscr{M})=I(\mathcal{T}(\mathscr{A},\mathscr{M})^{\bullet}).
\end{equation*}
\end{prop}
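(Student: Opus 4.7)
The plan is to first establish that $B^{\bullet}(I(\mathscr{A}))$ is a flat resolution of $I(\mathscr{A})$, and then deduce both the explicit formula for the derived tensor product and the final identification of $\operatorname{HH}_{\bullet}(\mathscr{A},\mathscr{M})$ by a direct computation. The starting observation, already essentially present in the paragraph preceding the proposition, is that because $\mathscr{A}$ is Fréchet, Proposition \ref{prop comparison of tensor products metrizable spaces} guarantees that the dissection functor commutes with the complete projective tensor product on every term of the bar complex. Consequently $\operatorname{diss}(B(\mathscr{A})^{\bullet})$ is the bar-type complex $\operatorname{diss}(\mathscr{A})^e\overrightarrow{\otimes}_K\overrightarrow{\otimes}_K^{\bullet}\operatorname{diss}(\mathscr{A})$, and the split-exactness of the augmented bar complex in $\widehat{\mathcal{B}}c_K$ is transported by the exact functor $\operatorname{diss}$ to a strict exact resolution of $\operatorname{diss}(\mathscr{A})$ in $\Mod_{\Indban}(\operatorname{diss}(\mathscr{A})^e)$, whose terms are manifestly flat since $\overrightarrow{\otimes}_K$ is exact in $\Indban$.

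Next, I transport this resolution to $LH(\widehat{\mathcal{B}}c_K)$. Applying the canonical functor $I$, or equivalently using the strong symmetric monoidal equivalence $\widetilde{L}:LH(\Indban)\leftrightarrows LH(\widehat{\mathcal{B}}c_K):\widetilde{\operatorname{diss}}$, I obtain that $B^{\bullet}(I(\mathscr{A}))=I(B(\mathscr{A})^{\bullet})$ is a strict exact resolution of $I(\mathscr{A})$ in $\Mod_{LH(\widehat{\mathcal{B}}c_K)}(I(\mathscr{A})^e)$ whose terms have the form $I(\mathscr{A})^e\widetilde{\otimes}_K\widetilde{\otimes}_K^n I(\mathscr{A})$. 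Flatness of each term can be verified directly: for any left $I(\mathscr{A})^e$-module $\mathscr{N}$, the associativity of the tensor product in the closed symmetric monoidal category $LH(\widehat{\mathcal{B}}c_K)$ yields the identification
\begin{equation*}
(I(\mathscr{A})^e\widetilde{\otimes}_K\widetilde{\otimes}_K^n I(\mathscr{A}))\widetilde{\otimes}_{I(\mathscr{A})^e}\mathscr{N}=\widetilde{\otimes}_K^nI(\mathscr{A})\widetilde{\otimes}_K\mathscr{N},
\end{equation*}
and the functor $\widetilde{\otimes}_K^nI(\mathscr{A})\widetilde{\otimes}_K(-)$ is exact because $\widetilde{\otimes}_K$ is exact in the Grothendieck category $LH(\widehat{\mathcal{B}}c_K)$.

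With flatness at hand, the derived tensor product formula is immediate: for any $\mathscr{N}\in \Mod_{LH(\widehat{\mathcal{B}}c_K)}(I(\mathscr{A})^e)$, we compute $I(\mathscr{A})\widetilde{\otimes}^{\mathbb{L}}_{I(\mathscr{A})^e}\mathscr{N}$ by applying $-\widetilde{\otimes}_{I(\mathscr{A})^e}\mathscr{N}$ to $B^{\bullet}(I(\mathscr{A}))$, and the identification displayed above gives exactly the form of the complex stated in the proposition.

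For the final claim, assume $\mathscr{M}$ is the bornologification of a metrizable locally convex space. Then each term $\widehat{\otimes}_K^n\mathscr{A}\widehat{\otimes}_K\mathscr{M}$ of $\mathcal{T}(\mathscr{A},\mathscr{M})^{\bullet}$ is a complete projective tensor product of metrizable spaces, so another application of Proposition \ref{prop comparison of tensor products metrizable spaces} gives $I(\widehat{\otimes}_K^n\mathscr{A}\widehat{\otimes}_K\mathscr{M})=\widetilde{\otimes}_K^nI(\mathscr{A})\widetilde{\otimes}_KI(\mathscr{M})$. Consequently $I(\mathcal{T}(\mathscr{A},\mathscr{M})^{\bullet})$ is precisely the complex obtained by applying $-\widetilde{\otimes}_{I(\mathscr{A})^e}I(\mathscr{M})$ to the flat resolution $B^{\bullet}(I(\mathscr{A}))$, and hence computes $I(\mathscr{A})\widetilde{\otimes}^{\mathbb{L}}_{I(\mathscr{A})^e}I(\mathscr{M})=\operatorname{HH}_{\bullet}(\mathscr{A},\mathscr{M})$. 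The single genuine technical point is ensuring that the identification $\operatorname{diss}(V\widehat{\otimes}_KW)=\operatorname{diss}(V)\overrightarrow{\otimes}_K\operatorname{diss}(W)$ is available whenever it is invoked; this is precisely why the Fréchet hypothesis on $\mathscr{A}$ and the metrizability hypothesis on $\mathscr{M}$ cannot be dispensed with, and it is the only real obstacle in the argument.
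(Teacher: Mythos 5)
Your proposal is correct and follows essentially the same route as the paper: transport the split-exact bar resolution through $\operatorname{diss}$ and $I$, identify the base change $\left(I(\mathscr{A})^e\widetilde{\otimes}_KV\right)\widetilde{\otimes}_{I(\mathscr{A})^e}\mathscr{N}=V\widetilde{\otimes}_K\mathscr{N}$, and invoke Proposition \ref{prop comparison of tensor products metrizable spaces} for the final identification. The only slight imprecision is your assertion that $\widetilde{\otimes}_K$ is exact on all of $LH(\widehat{\mathcal{B}}c_K)$; what is actually needed (and what the paper cites from \cite[Proposition 4.21]{bode2021operations}) is that objects in the essential image of $I$ are flat for $\widetilde{\otimes}_K$, which suffices here since $\widetilde{\otimes}_K^nI(\mathscr{A})=I(\widehat{\otimes}_K^n\mathscr{A})$ lies in that image by the Fréchet hypothesis.
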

\begin{proof}
By  Proposition \ref{prop extension of closed structure}, the functor $I:\Indban\rightarrow LH(\widehat{\mathcal{B}}c_K)$ is strong symmetric monoidal, and by \cite[Proposition 4.21]{bode2021operations} every object in the essential image of $I$ is flat with respect to $\widetilde{\otimes}_K$. Hence, it follows by the previous discussion that  $B^{\bullet}(I(\mathscr{A}))$ is a flat resolution of $I(\mathscr{A})$ as a $I(\mathscr{A})^e$-module. On the other hand, for every object $V\in LH(\widehat{\mathcal{B}}c_K)$ and every $\mathscr{N}\in \Mod_{LH(\widehat{\mathcal{B}}c_K)}(I(\mathscr{A}))$ we have:
\begin{equation*}
    \left(V\widetilde{\otimes}_KI(\mathscr{A})^e\right)\widetilde{\otimes}_{I(\mathscr{A})^e}\mathscr{N}=V\widetilde{\otimes}_K\mathscr{N}.
\end{equation*}
Thus, the first part of the proposition holds. Let $\mathscr{M}\in \Mod_{\widehat{\mathcal{B}}c_K}(\mathscr{A}^e)$ be the bornologification of a metrizable locally convex space. In this situation, we have the following identities:
\begin{multline*}
    \operatorname{HH}^{\bullet}(\mathscr{A},\mathscr{M}):=\mathscr{A}\widehat{\otimes}^{\mathbb{L}}_{\mathscr{A}^e}\mathscr{M}=I(\mathscr{A})\widetilde{\otimes}^{\mathbb{L}}_{I(\mathscr{A})^e}I(\mathscr{M})\\
    =\left( \cdots\rightarrow \widetilde{\otimes}_K^2I(\mathscr{A})\widetilde{\otimes}_KI(\mathscr{M})  \rightarrow I(\mathscr{A})\widetilde{\otimes}_KI(\mathscr{M})\rightarrow  I(\mathscr{M})\right)=I(\mathcal{T}(\mathscr{A},\mathscr{M})^{\bullet}),
\end{multline*}
where the last identity follows by Propositions \ref{prop extension of closed structure} and \ref{prop comparison of tensor products metrizable spaces}.
\end{proof}
In most of the cases we are interested in, both $\mathscr{A}$ and $\mathscr{A}^e$ are Fréchet-Stein algebras, and  $\mathscr{M}$ is a co-admissible $\mathscr{A}^e$-module. Hence, $\mathscr{M}$ is a Fréchet space. In particular, it is metrizable, and therefore the conditions of the previous proposition are satisfied. 
\subsection{\texorpdfstring{The TH cohomology groups of $\wideparen{\D}_X(X)$}{}}\label{TH groups of D} Assume $K$ is either discretely valued or algebraically closed, and let $X$ be a smooth Stein space with an étale map $X\rightarrow \mathbb{A}^r_K$. In this section we will apply the previous theory to study the Hochschild cohomology of $\wideparen{\D}_X(X)$. As shown above, $\wideparen{\D}_X(X)$ is canonically a nuclear Fréchet algebra, so we may calculate its TH cohomology groups $\operatorname{HH}_{\operatorname{T}}^{\bullet}(\wideparen{\D}_X(X))$. Furthermore, as shown in Theorems \ref{teo hochschild cohomology groups as de rham cohomology groups}, and \ref{teo hochschild cohomology in terms of extendions in DXe 1}, we have the following identification in $\operatorname{D}(\widehat{\mathcal{B}}c_K)$:
\begin{equation*}
   \operatorname{HH}^{\bullet}(\wideparen{\D}_X(X))= \operatorname{HH}^{\bullet}(\wideparen{\D}_X)=\operatorname{H}^{\bullet}_{\operatorname{dR}}(X)^b,
\end{equation*}
which shows that $\operatorname{HH}^{\bullet}(\wideparen{\D}_X(X))$ is (the bornologification of) a strict complex of nuclear Fréchet spaces. Hence, we are in the best possible situation, and we may start using the Taylor-Hochschild spectral sequence  to obtain calculations of the TH cohomology groups:
\begin{prop}\label{prop computation 0th taylor cohomology group}
We have the following identities in $\widehat{\mathcal{B}}c_K$:
\begin{equation*}
  \operatorname{HH}^{0}(\wideparen{\D}_X)=\operatorname{H}_{\operatorname{dR}}^{0}(X)^b=\operatorname{Z}(\wideparen{\D}_X(X))=\operatorname{HH}_{\operatorname{T}}^0(\wideparen{\D}_X(X)).
\end{equation*}
\end{prop}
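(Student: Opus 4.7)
The plan is to chain together our previously established results. The identity $\operatorname{HH}^{0}(\wideparen{\D}_X)=\operatorname{H}_{\operatorname{dR}}^{0}(X)^b$ is immediate from Theorem \ref{teo hochschild cohomology groups as de rham cohomology groups}, since we already know that $\operatorname{HH}^{\bullet}(\wideparen{\D}_X)$ and $\operatorname{H}_{\operatorname{dR}}^{\bullet}(X)^b$ are isomorphic as strict complexes in $\widehat{\mathcal{B}}c_K$, so their $0$th cohomologies agree in $\widehat{\mathcal{B}}c_K$. Similarly, $\operatorname{HH}_{\operatorname{T}}^0(\wideparen{\D}_X(X))=\operatorname{Z}(\wideparen{\D}_X(X))$ is a direct application of Proposition \ref{prop interpretation of Taylor cohomology groups for n=0,1}$(i)$, and this identity is automatically an isomorphism in $\widehat{\mathcal{B}}c_K$ by Proposition \ref{prop 0th TC is Fréchet}.

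The only step requiring genuine work is therefore the middle identity relating $\operatorname{HH}^0(\wideparen{\D}_X)$ to the center. For this I would apply Theorem \ref{teo hochschild cohomology in terms of extendions in DXe 1} to replace $\operatorname{HH}^\bullet(\wideparen{\D}_X)$ by $\operatorname{HH}^\bullet(\wideparen{\D}_X(X))$, and then invoke the Taylor--Hochschild spectral sequence of Theorem \ref{teo existence TH SS} applied to $\mathscr{A}=\mathscr{M}=\wideparen{\D}_X(X)$:
\begin{equation*}
E_1^{p,q}=\underline{\operatorname{Ext}}^q_{LH(\widehat{\mathcal{B}}c_K)}(\widetilde{\otimes}_K^pI(\wideparen{\D}_X(X)),I(\wideparen{\D}_X(X)))\Rightarrow\operatorname{HH}^{p+q}(\wideparen{\D}_X(X)).
\end{equation*}
By Theorem \ref{teo existence TH SS} we have $E_1^{0,q}=0$ for $q\geq 1$ and $E_1^{\bullet,0}=\mathcal{L}_I(\wideparen{\D}_X(X))^\bullet$. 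Hence $E_2^{0,0}=\ker(I(\delta_0))$, and all higher differentials leaving $E_r^{0,0}$ for $r\geq 2$ land in $E_r^{r,1-r}$ which vanishes since $1-r<0$ in a first-quadrant sequence. Thus $E_\infty^{0,0}=E_2^{0,0}$, and since the filtration on $\operatorname{HH}^0$ has a single non-zero associated graded piece at $(0,0)$, we obtain $\operatorname{HH}^0(\wideparen{\D}_X(X))=\ker(I(\delta_0))$ in $LH(\widehat{\mathcal{B}}c_K)$.

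To descend this identity to $\widehat{\mathcal{B}}c_K$, I would use that the inclusion $I\colon\widehat{\mathcal{B}}c_K\to LH(\widehat{\mathcal{B}}c_K)$ preserves kernels, so $\ker(I(\delta_0))=I(\ker(\delta_0))$. By direct computation (as in Proposition \ref{prop interpretation of Taylor cohomology groups for n=0,1}$(i)$), the kernel of $\delta_0$ computed in $\widehat{\mathcal{B}}c_K$ is exactly the center $\operatorname{Z}(\wideparen{\D}_X(X))$, which by Proposition \ref{prop 0th TC is Fréchet} is already a closed, and therefore complete, subspace of $\wideparen{\D}_X(X)$. Hence $\operatorname{HH}^0(\wideparen{\D}_X(X))=\operatorname{Z}(\wideparen{\D}_X(X))$ as complete bornological spaces, closing the chain of isomorphisms in the statement.

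The only real subtlety is bornological bookkeeping: one must verify that the spectral sequence identification happens at the level of complete bornological spaces rather than just underlying vector spaces. This is handled by the exactness of $I$ on kernels together with the completeness statement of Proposition \ref{prop 0th TC is Fréchet}, so that the kernel in $LH(\widehat{\mathcal{B}}c_K)$ automatically lies in the essential image of $I$.
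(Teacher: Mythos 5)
Your proposal is correct and follows essentially the same route as the paper: the first and last identities are quoted from Theorem \ref{teo hochschild cohomology groups as de rham cohomology groups} and Proposition \ref{prop interpretation of Taylor cohomology groups for n=0,1}, and the middle one is extracted from the Taylor--Hochschild spectral sequence by identifying $\operatorname{H}^0$ of $\mathcal{L}_I(\wideparen{\D}_X(X))^{\bullet}$ with $\operatorname{Ker}(I(\delta_0))=I(\operatorname{Ker}(\delta_0))$ using that $I$ preserves kernels. Your extra bookkeeping on the degeneration at $E_2^{0,0}$ and the completeness of the center just makes explicit what the paper leaves implicit.
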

\begin{proof}
Notice that by the formulation of the Taylor-Hochschild spectral sequence, we have:
\begin{equation*}
    I(\operatorname{HH}^{0}(\wideparen{\D}_X))=\operatorname{H}^0\left(\mathcal{L}_I(\wideparen{\D}_X(X))^{\bullet}\right)=\operatorname{Ker}(I(\delta_0)).
\end{equation*}
As $I$ is a right adjoint, it preserves kernels. Hence, we have:
\begin{equation*}  
\operatorname{H}^0\left(\mathcal{L}_I(\wideparen{\D}_X(X))^{\bullet}\right)=\operatorname{Ker}(I(\delta_0))=I(\operatorname{Ker}(\delta_0))=
I(\operatorname{HH}_{\operatorname{T}}^0(\wideparen{\D}_X(X))). 
\end{equation*}
Thus, we have $\operatorname{HH}^{0}(\wideparen{\D}_X)=\operatorname{HH}_{\operatorname{T}}^0(\wideparen{\D}_X(X))$, as wanted. The third identity is Proposition \ref{prop interpretation of Taylor cohomology groups for n=0,1}. 
\end{proof}
Working a bit more, we can get a similar result for $\operatorname{HH}_{\operatorname{T}}^1(\wideparen{\D}_X(X))$. Let us start with the following:
\begin{prop}\label{prop comparison map first cohomology group}

 The following hold:
 \begin{enumerate}[label=(\roman*)]
     \item The map $\wideparen{\D}_X(X)\xrightarrow[]{\delta_0} \underline{\Hom}_{\widehat{\mathcal{B}}c_K}(\wideparen{\D}_X(X),\wideparen{\D}_X(X))$ is strict with closed image in $\widehat{\mathcal{B}}c_K$. Furthermore, there is a decomposition of complete bornological spaces:
     \begin{equation*}
         \wideparen{\D}_X(X)=\operatorname{Z}(\wideparen{\D}_X(X))\oplus \operatorname{Im}(\delta_0).
     \end{equation*}
     \item  $\operatorname{HH}^{1}(\wideparen{\D}_X)=\operatorname{H}^1\left(\mathcal{L}(\wideparen{\D}_X(X))^{\bullet}\right)=\operatorname{HH}_{\operatorname{T}}^1(\wideparen{\D}_X(X)).$ 
 \end{enumerate}
\end{prop}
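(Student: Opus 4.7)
My plan is to combine the Taylor--Hochschild spectral sequence of Theorem \ref{teo existence TH SS} with the identifications $\operatorname{HH}^1(\wideparen{\D}_X) = \operatorname{HH}^1(\wideparen{\D}_X(X)) = \operatorname{H}^1_{\operatorname{dR}}(X)^b$ provided by Theorems \ref{teo hochschild cohomology groups as de rham cohomology groups} and \ref{teo hochschild cohomology in terms of extendions in DXe 1}. The sequence has $E_1^{0,q}=0$ for $q\geq 1$, and the differential $d_2$ out of position $(1,0)$ lands in $E_2^{3,-1}=0$; the low degree analysis therefore gives an isomorphism $\operatorname{HH}^1(\wideparen{\D}_X(X)) \cong \operatorname{H}^1(\mathcal{L}_I(\wideparen{\D}_X(X))^{\bullet})$ in $LH(\widehat{\mathcal{B}}c_K)$.

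For part (i), the right hand side of this isomorphism is identified with $I(\operatorname{H}^1_{\operatorname{dR}}(X)^b)$, hence lies in the essential image of $I\colon \widehat{\mathcal{B}}c_K \to LH(\widehat{\mathcal{B}}c_K)$. By Corollary \ref{coro H1 borno implies delta0 strict}, this forces $\delta_0$ to be strict, and the corresponding image is then closed inside $\underline{\Hom}_{\widehat{\mathcal{B}}c_K}(\wideparen{\D}_X(X),\wideparen{\D}_X(X))$. The direct sum decomposition amounts to splitting the strict short exact sequence
\[
0 \to \operatorname{Z}(\wideparen{\D}_X(X)) \to \wideparen{\D}_X(X) \xrightarrow{\delta_0} \operatorname{Im}(\delta_0) \to 0
\]
in $\widehat{\mathcal{B}}c_K$. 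I plan to construct an explicit splitting by producing a bounded projection $\pi\colon \wideparen{\D}_X(X) \to \operatorname{Z}(\wideparen{\D}_X(X))$: composing the bounded evaluation $\wideparen{\D}_X(X) \to \OX_X(X)$, $D \mapsto D\cdot 1$ (which is the identity on $\OX_X(X) \hookrightarrow \wideparen{\D}_X(X)$ and a fortiori on $\operatorname{Z}$), with a bounded retraction of the closed embedding $\operatorname{Z}(\wideparen{\D}_X(X)) \hookrightarrow \OX_X(X)$ of locally constant functions, yields the desired projection.

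For part (ii), the spectral sequence identification combined with the strictness established in (i) lifts the cohomology to the essential image of $I$ and yields $\operatorname{HH}^1(\wideparen{\D}_X) = \operatorname{H}^1(\mathcal{L}(\wideparen{\D}_X(X))^{\bullet})$ in $\widehat{\mathcal{B}}c_K$. To compare this with $\operatorname{HH}^1_{\operatorname{T}}(\wideparen{\D}_X(X))$, one compares the cokernel of the restriction $\delta_0|_{\ker\delta_1}$ computed in $\widehat{\mathcal{B}}c_K$ with the same cokernel computed in $\mathcal{B}c_K$ after applying $J$; these two constructions agree precisely when $\operatorname{Im}(\delta_0)$ is closed inside $\ker(\delta_1)$, which is a consequence of (i) together with the fact that $\ker(\delta_1)$ is closed in the target of $\delta_0$. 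The main obstacle will lie in (i), specifically in producing the bounded retraction $\OX_X(X) \to \operatorname{Z}(\wideparen{\D}_X(X))$ in full generality: when $X$ has finitely many connected components this is automatic (e.g.\ via evaluation at chosen base points, since $\operatorname{Z}$ is then finite-dimensional), but for a general Stein space one must exploit the product structure of $\operatorname{Z}=\operatorname{H}^0_{\operatorname{dR}}(X)^b$ indexed by connected components of the Stein exhaustion and carefully verify boundedness of the assembled evaluation map with respect to the Fréchet structure.
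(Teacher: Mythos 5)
Your main line of argument is the same as the paper's: extract the isomorphism $\operatorname{HH}^1(\wideparen{\D}_X(X))\cong\operatorname{H}^1(\mathcal{L}_I(\wideparen{\D}_X(X))^{\bullet})$ from the low-degree degeneration of the Taylor--Hochschild spectral sequence, deduce strictness of $\delta_0$ from Corollary \ref{coro H1 borno implies delta0 strict} together with the identification $\operatorname{HH}^1(\wideparen{\D}_X)=\operatorname{H}^1_{\operatorname{dR}}(X)^b$, and then observe that once $\operatorname{Im}(\delta_0)$ is closed in $\operatorname{Ker}(\delta_1)$ the cokernel computed in $\widehat{\mathcal{B}}c_K$ agrees with the one computed in $\mathcal{B}c_K$ after applying $J$, giving (ii). The only divergence is the splitting in (i). The paper decomposes $X=\bigcup_{i\in I}X_i$ into its countably many connected components, writes $\wideparen{\D}_X(X)=\prod_{i\in I}\wideparen{\D}_X(X_i)$ with $\operatorname{Z}(\wideparen{\D}_X(X))=\prod_{i\in I}K$, and complements the one-dimensional subspace $K\subset\wideparen{\D}_X(X_i)$ in each factor by \cite[Proposition 10.5]{schneider2013nonarchimedean}; the product of these bounded splittings gives the decomposition. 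Your factorization through $D\mapsto D\cdot 1$ is a legitimate and more explicit variant, but be careful with ``evaluation at chosen base points'': a connected component of a rigid space need not possess a $K$-rational point, so pointwise evaluation may land in a finite extension of $K$ rather than in $K$ itself, and this step does not work verbatim. The clean way to finish is precisely your stated fallback, which collapses to the paper's argument: work componentwise over the connected components and invoke the fact that a finite-dimensional subspace of a locally convex $K$-vector space is always complemented by a continuous projection, then assemble the projections over the countable product.
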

\begin{proof}
By our previous calculations we have $\operatorname{HH}^1(\wideparen{\D}_X)=\operatorname{H}^1_{\operatorname{dR}}(X)^b$. As the latter is a complete bornological space, it follows by Corollary \ref{coro H1 borno implies delta0 strict} that $\delta_0$ is strict. In particular, $\operatorname{Im}(\delta_0)$ is a closed subspace of 
$\underline{\Hom}_{\widehat{\mathcal{B}}c_K}(\wideparen{\D}_X(X),\wideparen{\D}_X(X))$. Thus, we 
 have identifications:
\begin{equation*}
  \operatorname{HH}^1(\wideparen{\D}_X)= \operatorname{H}^1\left(\mathcal{L}(\wideparen{\D}_X(X))^{\bullet}\right)=\operatorname{HH}_{\operatorname{T}}^1(\wideparen{\D}_X(X)), 
\end{equation*}
and statement $(ii)$ holds. For the last part of claim $(i)$, choose a decomposition $X=\cup_{i\in I} X_i$, where each $X_i$ is a connected component of $X$. Notice that $I$ is always countable by definition of Stein space. Furthermore, by statement $(iii)$ in Lemma \ref{Lemma basic properties of Stein spaces}, each $X_i$ is a Stein space with free tangent sheaf. As $\wideparen{\D}_X$ is a sheaf on $X$, we have the following identity of complete bornological algebras:
\begin{equation*}
    \wideparen{\D}_X(X)=\prod_{i\in I}\wideparen{\D}_X(X_i).
\end{equation*}
By definition of the center of an algebra, we have the following identity of bornological spaces:
\begin{equation*}
    \operatorname{HH}_{\operatorname{T}}^0(\wideparen{\D}_X(X))=\prod_{i\in I}\operatorname{HH}_{\operatorname{T}}^0(\wideparen{\D}_X(X_i))=\prod_{i\in I}\operatorname{H}_{\operatorname{dR}}^0(X_i)^b=\prod_{i\in I}K,
\end{equation*}
where the last identity follows because each $X_i$ is connected. As each $\wideparen{\D}_X(X_i)$ is a Fréchet space, and $K$ is finite-dimensional, we can use \cite[Proposition 10.5]{schneider2013nonarchimedean} to conclude that each of the inclusions:
\begin{equation*}
    0\rightarrow K\rightarrow \wideparen{\D}_X(X_i),
\end{equation*}
has a bounded split. Hence, we obtain the following decomposition in $\widehat{\mathcal{B}}c_K$:
\begin{equation*}
\wideparen{\D}_X(X)=\operatorname{Z}(\wideparen{\D}_X(X))\oplus \operatorname{Im}(\delta_0),
\end{equation*}
which is the identity we wanted to show.
\end{proof}
We can draw a plethora of conclusions from this proposition:
\begin{coro}\label{coro dr and outer}
$\operatorname{H}_{\operatorname{dR}}^{1}(X)^b=\Outder(\wideparen{\D}_X(X))$ as complete bornological spaces. 
\end{coro}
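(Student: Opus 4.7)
The proof is essentially a chaining of three identifications that have already been set up, together with a short verification that the result lives in $\widehat{\mathcal{B}}c_K$ and not merely in $\mathcal{B}c_K$. The plan is to argue as follows. First, by Theorem \ref{teo hochschild cohomology groups as de rham cohomology groups} we have $\operatorname{HH}^{1}(\wideparen{\D}_X) = \operatorname{H}_{\operatorname{dR}}^{1}(X)^b$ in $\widehat{\mathcal{B}}c_K$; second, by statement $(ii)$ of Proposition \ref{prop comparison map first cohomology group}, this group coincides with the first TH cohomology group $\operatorname{HH}_{\operatorname{T}}^{1}(\wideparen{\D}_X(X))$; third, by Proposition \ref{prop interpretation of Taylor cohomology groups for n=0,1}$(iii)$ applied to $\mathscr{A} = \wideparen{\D}_X(X)$, we have $\operatorname{HH}_{\operatorname{T}}^{1}(\wideparen{\D}_X(X)) = \Outder(\wideparen{\D}_X(X))$ as bornological spaces. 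Composing these three isomorphisms yields the desired identification on the level of underlying $K$-vector spaces and bornologies.

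The one genuine verification, and the only real obstacle, is that the bornological identification above lifts to an identification of \emph{complete} bornological spaces. For this I would use Proposition \ref{prop comparison map first cohomology group}$(i)$, which asserts that $\delta_0 \colon \wideparen{\D}_X(X) \to \underline{\Hom}_{\widehat{\mathcal{B}}c_K}(\wideparen{\D}_X(X),\wideparen{\D}_X(X))$ is strict with closed image. This implies that $\Inn(\wideparen{\D}_X(X)) = \operatorname{Im}(\delta_0)$ is a closed subspace of the complete bornological space $\underline{\Hom}_{\widehat{\mathcal{B}}c_K}(\wideparen{\D}_X(X),\wideparen{\D}_X(X))$, and hence closed inside the complete bornological subspace $\Der_K(\wideparen{\D}_X(X)) = \operatorname{Ker}(\delta_1)$. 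Therefore the quotient $\Outder(\wideparen{\D}_X(X)) = \Der_K(\wideparen{\D}_X(X))/\Inn(\wideparen{\D}_X(X))$ is a quotient of a complete bornological space by a closed subspace, and such quotients remain complete; equivalently, the canonical map $J(\operatorname{Ker}(\delta_1))\to J(\operatorname{Ker}(\delta_1))/J(\operatorname{Im}(\delta_0))$ factors through a strict map in $\widehat{\mathcal{B}}c_K$ because $\delta_0$ is strict.

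Concretely, I would verify completeness by noting that since $\operatorname{HH}^1(\wideparen{\D}_X)$ is (the image under $I$ of) a complete bornological space and, via the Taylor–Hochschild spectral sequence of Theorem \ref{teo existence TH SS}, it coincides with $\operatorname{H}^1(I(\mathcal{L}(\wideparen{\D}_X(X))^{\bullet}))$, the fact that the cohomology of the complex in $LH(\widehat{\mathcal{B}}c_K)$ lies in the essential image of $I$ is equivalent (by Corollary \ref{coro H1 borno implies delta0 strict}) to strictness of $\delta_0$, which holds. Putting this together, the three canonical maps
\begin{equation*}
\operatorname{H}_{\operatorname{dR}}^{1}(X)^b \;\longleftarrow\; \operatorname{HH}^{1}(\wideparen{\D}_X) \;\longrightarrow\; \operatorname{HH}_{\operatorname{T}}^{1}(\wideparen{\D}_X(X)) \;\longrightarrow\; \Outder(\wideparen{\D}_X(X))
\end{equation*}
are isomorphisms in $\widehat{\mathcal{B}}c_K$, which gives the desired identification. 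No new computation is needed; the corollary is a direct consequence of the preceding two propositions together with Theorem \ref{teo hochschild cohomology groups as de rham cohomology groups}.
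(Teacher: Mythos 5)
Your proposal is correct and follows exactly the paper's route: the paper's proof of this corollary is simply to combine Proposition \ref{prop comparison map first cohomology group} (which gives $\operatorname{HH}^{1}(\wideparen{\D}_X)=\operatorname{HH}_{\operatorname{T}}^1(\wideparen{\D}_X(X))$ via strictness of $\delta_0$) with Proposition \ref{prop interpretation of Taylor cohomology groups for n=0,1} and Theorem \ref{teo hochschild cohomology groups as de rham cohomology groups}. Your extra verification that the identification lands in $\widehat{\mathcal{B}}c_K$ is exactly the content already carried by Proposition \ref{prop comparison map first cohomology group}$(i)$ and Corollary \ref{coro H1 borno implies delta0 strict}, so nothing new is needed.
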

\begin{proof}
Follows from the previous proposition, together with  Proposition \ref{prop interpretation of Taylor cohomology groups for n=0,1}.  
\end{proof}
\begin{coro}\label{coro Der is nf fd-case}
If  $\operatorname{H}_{\operatorname{dR}}^{1}(X)^b$ is finite dimensional then $\Der_K(\wideparen{\D}_X(X))$ is a nuclear Fréchet space.
\end{coro}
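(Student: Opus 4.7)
The plan is to realize $\Der_K(\wideparen{\D}_X(X))$ as a split extension whose ends are both nuclear Fréchet spaces, and then use that finite direct sums of nuclear Fréchet spaces are nuclear Fréchet.

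First I would identify the ingredients. By Proposition \ref{prop interpretation of Taylor cohomology groups for n=0,1} the kernel $\operatorname{Ker}(\delta_1)$ is $\Der_K(\wideparen{\D}_X(X))$ and the image $\operatorname{Im}(\delta_0)$ is $\Inn(\wideparen{\D}_X(X))$, so the \emph{TH} definition gives a sequence
\begin{equation*}
0\rightarrow \Inn(\wideparen{\D}_X(X))\rightarrow \Der_K(\wideparen{\D}_X(X))\xrightarrow{\pi}\Outder(\wideparen{\D}_X(X))\rightarrow 0.
\end{equation*}
By Corollary \ref{coro dr and outer} the right-hand term equals $\operatorname{H}_{\operatorname{dR}}^{1}(X)^b$, which is finite dimensional by hypothesis. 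Moreover, Proposition \ref{prop comparison map first cohomology group}(i) says that $\delta_0$ is strict with closed image, which is exactly the statement that the above sequence is strict exact in $\widehat{\mathcal{B}}c_K$.

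Next I would handle the left-hand term. Proposition \ref{prop comparison map first cohomology group}(i) provides a decomposition of complete bornological spaces
\begin{equation*}
\wideparen{\D}_X(X)=\operatorname{Z}(\wideparen{\D}_X(X))\oplus \Inn(\wideparen{\D}_X(X)),
\end{equation*}
and Theorem \ref{teo global sections of co-admissible modules on Stein spaces} tells us that $\wideparen{\D}_X(X)$ is a nuclear Fréchet space. Hence $\Inn(\wideparen{\D}_X(X))$ is a direct summand of a nuclear Fréchet space. Since both being Fréchet and being nuclear pass to closed direct summands, $\Inn(\wideparen{\D}_X(X))$ is itself a nuclear Fréchet space.

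Finally I would split the short exact sequence. Because $\Outder(\wideparen{\D}_X(X))=\operatorname{H}_{\operatorname{dR}}^{1}(X)^b$ is finite dimensional, I can pick a $K$-basis and lift each element to $\operatorname{Ker}(\delta_1)=\Der_K(\wideparen{\D}_X(X))$; this produces a $K$-linear section of $\pi$. Any $K$-linear map out of a finite dimensional bornological space is automatically bounded, so the section lives in $\widehat{\mathcal{B}}c_K$ and gives an isomorphism
\begin{equation*}
\Der_K(\wideparen{\D}_X(X))\cong \Inn(\wideparen{\D}_X(X))\oplus \operatorname{H}_{\operatorname{dR}}^{1}(X)^b
\end{equation*}
in $\widehat{\mathcal{B}}c_K$. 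The right-hand side is a finite direct sum of nuclear Fréchet spaces and is therefore a nuclear Fréchet space, which finishes the proof. There is essentially no obstacle here — all the nontrivial input is already packaged into Proposition \ref{prop comparison map first cohomology group} (which ensures strictness and the splitting of the center) and Theorem \ref{teo global sections of co-admissible modules on Stein spaces} (which supplies the nuclear Fréchet structure on $\wideparen{\D}_X(X)$); the finiteness hypothesis on $\operatorname{H}_{\operatorname{dR}}^{1}(X)^b$ is only used to guarantee the existence of the bounded splitting.
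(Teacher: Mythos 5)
Your proposal is correct and follows essentially the same route as the paper: the paper's proof likewise writes $\Der_K(\wideparen{\D}_X(X))=\operatorname{Im}(\delta_0)\oplus \operatorname{HH}_{\operatorname{T}}^1(\wideparen{\D}_X(X))$ using the finite dimensionality of the quotient to split the strict sequence, and identifies $\operatorname{Im}(\delta_0)$ as a closed direct summand of the nuclear Fréchet space $\wideparen{\D}_X(X)$ via Proposition \ref{prop comparison map first cohomology group}. You have merely spelled out the bounded section explicitly, which the paper leaves implicit.
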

\begin{proof}
If $\operatorname{HH}_{\operatorname{T}}^1(\wideparen{\D}_X(X))$ is finite dimensional, then we have a decomposition:
\begin{equation*}
    \Der_K(\wideparen{\D}_X(X))=\operatorname{Im}(\delta_0)\oplus \operatorname{HH}_{\operatorname{T}}^1(\wideparen{\D}_X(X)).
\end{equation*}
By the proof of Proposition \ref{prop comparison map first cohomology group}, the space $\operatorname{Im}(\delta_0)$ is isomorphic to a closed subspace of $\wideparen{\D}_X(X)$, and therefore is a nuclear Fréchet space. Hence, $\Der_K(\wideparen{\D}_X(X))$ is also a nuclear Fréchet space.
\end{proof}
Even though the isomorphism obtained in Corollary \ref{coro dr and outer} is obtained via purely homological methods, it is possible to define an explicit $K$-linear map:
\begin{equation*}
    c_1:\operatorname{H}^1_{\operatorname{dR}}(X)^b\rightarrow \operatorname{HH}_{\operatorname{T}}^1(\wideparen{\D}_X(X)),
\end{equation*}
which is an isomorphism in some situations. This construction involves showing that any closed $1$-form $\lambda\in \Omega_{X/K}^{1,cl}(X)$  induces a bounded derivation on $\wideparen{\D}_X(X)$. In order to simplify the calculations, we will make use of Corollary \ref{coro infinitesimal deformations}, and instead show that $\lambda$ induces an infinitesimal automorphism of the trivial deformation $\wideparen{\D}_X(X)^{\epsilon}$. Hence, before defining the aforementioned map, we will study the structure of infinitesimal deformations of $\wideparen{\D}_X(X)$. Let us start with the following lemma:
\begin{Lemma}\label{Lemma comparison map second cohomology group}
We have the following short exact sequence in $LH(\widehat{\mathcal{B}}c_K)$:
     \begin{equation}\label{equation Lemma comparison map second cohomology group}
         0 \rightarrow E_3^{1,1}  \rightarrow I(\operatorname{HH}^{2}(\wideparen{\D}_X)) \rightarrow \operatorname{H}^2(\mathcal{L}_I(\wideparen{\D}_X(X))^{\bullet})  \rightarrow 0.
     \end{equation}
     In particular $E_3^{1,1}$ is in the image of $I:\widehat{\mathcal{B}}c_K\rightarrow LH(\widehat{\mathcal{B}}c_K)$. Thus, it follows that the morphism:
     \begin{equation*}
         \delta_1:\underline{\Hom}_{\widehat{\mathcal{B}}c_K}(\wideparen{\D}_X(X),\wideparen{\D}_X(X))\rightarrow \underline{\Hom}_{\widehat{\mathcal{B}}c_K}(\widehat{\otimes}_K^2\wideparen{\D}_X(X),\wideparen{\D}_X(X)),
     \end{equation*}
is strict if and only if   $E_3^{1,1}\rightarrow I(\operatorname{HH}^{2}(\wideparen{\D}_X))$ is the image under $I$ of a  strict monomorphism. Furthermore, if this holds, then the following identity also holds:
\begin{equation*}
    \operatorname{H}^2(\mathcal{L}(\wideparen{\D}_X(X))^{\bullet})=\operatorname{HH}_{\operatorname{T}}^2(\wideparen{\D}_X(X)).
\end{equation*}
\end{Lemma}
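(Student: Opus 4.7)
The approach is to read off the short exact sequence (\ref{equation Lemma comparison map second cohomology group}) from the Taylor--Hochschild spectral sequence of Theorem \ref{teo existence TH SS} applied to $\mathscr{A}=\mathscr{M}=\wideparen{\D}_X(X)$ in total degree two, and then translate strictness of $\delta_1$ into the condition that one of the graded pieces produced by the spectral sequence lies in the essential image of $I$. First I would identify the filtration on $I(\operatorname{HH}^2(\wideparen{\D}_X))$ coming from the spectral sequence. Among the three possible $E_{\infty}^{p,q}$ with $p+q=2$, the term $E_{\infty}^{0,2}$ vanishes because $E_1^{0,q}=0$ for all $q\geq 1$; the term $E_{\infty}^{2,0}$ equals $E_2^{2,0}=\operatorname{H}^2(\mathcal{L}_I(\wideparen{\D}_X(X))^{\bullet})$, because the only differentials $d_r$ with $r\geq 2$ that could modify $E_r^{2,0}$ emanate from $E_r^{2-r,r-1}$, which is either $E_2^{0,1}=0$ or has negative $p$-coordinate; and $E_{\infty}^{1,1}=E_3^{1,1}$, because all outgoing differentials $d_r:E_r^{1,1}\to E_r^{1+r,2-r}$ for $r\geq 3$ are zero by the first-quadrant condition. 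The convergence of the spectral sequence therefore yields a two-step filtration of $I(\operatorname{HH}^2(\wideparen{\D}_X))$ whose only non-zero associated graded pieces are $E_3^{1,1}$ and $\operatorname{H}^2(\mathcal{L}_I(\wideparen{\D}_X(X))^{\bullet})$, producing the short exact sequence of the statement.

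For the assertion that $E_3^{1,1}$ lies in the essential image of $I$: by Theorem \ref{teo hochschild cohomology groups as de rham cohomology groups} the middle term of the sequence equals $I(\operatorname{H}^2_{\operatorname{dR}}(X)^b)$, which is in the image of $I$ by construction. I would apply the right adjoint $C:LH(\widehat{\mathcal{B}}c_K)\to\widehat{\mathcal{B}}c_K$ to the monomorphism $E_3^{1,1}\hookrightarrow I(\operatorname{HH}^2(\wideparen{\D}_X))$ to extract a subspace $C(E_3^{1,1})\subseteq\operatorname{HH}^2(\wideparen{\D}_X)$, and verify, using the compatibility of $I$ with kernels together with the classification of strict monomorphisms in $\widehat{\mathcal{B}}c_K$ as inclusions of closed subspaces, that the counit $I(C(E_3^{1,1}))\to E_3^{1,1}$ is an isomorphism.

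The equivalence with strictness of $\delta_1$ is then a direct consequence of the compatibility of $I$ with kernels: we have $\ker(I(\delta_2))=I(\ker\delta_2)$, so $\operatorname{H}^2(\mathcal{L}_I(\wideparen{\D}_X(X))^{\bullet})$ is the $LH$-cokernel of $I(\delta_1):\mathcal{L}_I(\wideparen{\D}_X(X))^{1}\to I(\ker\delta_2)$. This cokernel lies in the essential image of $I$ precisely when $I(\delta_1)$ is strict in $LH(\widehat{\mathcal{B}}c_K)$, which is equivalent to strictness of $\delta_1$ in $\widehat{\mathcal{B}}c_K$. Under this hypothesis, the image of $\delta_1$ is closed, hence the cokernel computed in $\widehat{\mathcal{B}}c_K$ agrees with the one computed in $\mathcal{B}c_K$ after applying $J$; this yields the identity $\operatorname{H}^2(\mathcal{L}(\wideparen{\D}_X(X))^{\bullet})=\operatorname{HH}_{\operatorname{T}}^2(\wideparen{\D}_X(X))$.

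The main obstacle I anticipate is keeping track of when the various cokernels coincide: the cokernel in $\widehat{\mathcal{B}}c_K$ involves a completion of the quotient by the closure of the image, the cokernel in $\mathcal{B}c_K$ (which appears in the definition of $\operatorname{HH}_{\operatorname{T}}^2$) does not involve any closure, and the cokernel in $LH(\widehat{\mathcal{B}}c_K)$ differs from both unless the relevant morphism is strict. Verifying that closedness of $\operatorname{Im}(\delta_1)$ makes all three agree is the crux of the argument, and it is also exactly what is needed to upgrade the abstract monomorphism $E_3^{1,1}\hookrightarrow I(\operatorname{HH}^2(\wideparen{\D}_X))$ to the image under $I$ of a strict monomorphism in $\widehat{\mathcal{B}}c_K$.
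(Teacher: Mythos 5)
Your proposal follows the same route as the paper's own proof: extract the sequence from the first-quadrant Taylor--Hochschild spectral sequence using $E_1^{0,q}=0$ for $q\geq 1$, deduce that the sub-object of $I(\operatorname{HH}^{2}(\wideparen{\D}_X))$ lies in the essential image of $I$ because that image is closed under sub-objects (the paper cites \cite[Proposition 1.2.29]{schneiders1999quasi} where you unwind the adjunction with $C$ by hand), identify $\operatorname{H}^2(\mathcal{L}_I(\wideparen{\D}_X(X))^{\bullet})$ with the $LH$-cokernel of $I(\delta_1)$ corestricted to $I(\operatorname{Ker}(\delta_2))$, and use that $J$ preserves strict cokernels for the final identity. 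You are in fact more explicit than the paper about why $E_\infty^{0,2}=0$, $E_\infty^{2,0}=E_2^{2,0}$ and $E_\infty^{1,1}=E_3^{1,1}$; the paper dismisses this with ``standard facts on spectral sequences''.

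The one point you should not defer to the statement is the orientation of the filtration, because it is exactly what the ``closed under sub-objects'' step hinges on. The spectral sequence of Theorem \ref{teo existence TH SS} is the one attached to the decreasing filtration of the total complex by the bar degree $p$, so $E_\infty^{p,q}=F^pH^{p+q}/F^{p+1}H^{p+q}$ and $F^2H^2=E_\infty^{2,0}$ is the \emph{deepest} step of the filtration. Since $E_2^{0,1}=0$, the edge morphism $E_2^{2,0}=\operatorname{H}^2(\mathcal{L}_I(\wideparen{\D}_X(X))^{\bullet})\rightarrow I(\operatorname{HH}^{2}(\wideparen{\D}_X))$ is a monomorphism and $E_3^{1,1}$ is the top graded quotient; that is, the standard low-degree exact sequence reads
\begin{equation*}
0 \rightarrow \operatorname{H}^2(\mathcal{L}_I(\wideparen{\D}_X(X))^{\bullet}) \rightarrow I(\operatorname{HH}^{2}(\wideparen{\D}_X)) \rightarrow E_3^{1,1} \rightarrow 0,
\end{equation*}
with the two outer terms interchanged relative to (\ref{equation Lemma comparison map second cohomology group}). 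This is not a cosmetic relabelling: if $\operatorname{H}^2(\mathcal{L}_I(\wideparen{\D}_X(X))^{\bullet})$ sits as the sub-object, then closure of the essential image of $I$ under sub-objects applies to \emph{it} rather than to $E_3^{1,1}$, and the equivalence you formulate between strictness of $\delta_1$ and a property of the inclusion of $E_3^{1,1}$ changes accordingly (indeed it would force strictness of $\delta_1$ unconditionally, a notably stronger conclusion). You should therefore pin down the filtration convention explicitly, for instance by checking the edge map on a two-column truncation of the double complex, before running the sub-object argument. Apart from this ambiguity, which your write-up shares with the paper's own proof, the argument is the paper's argument.
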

\begin{proof}
The Hochschild-Taylor spectral sequence is a first quadrant spectral sequence with $E_1^{0,q}=0$ for all $q\geq 1$. Hence, the existence of the short exact sequence $(\ref{equation Lemma comparison map second cohomology group})$ follows from standard facts on spectral sequences (\emph{cf}. \cite[Chapter 5]{weibel1994introduction}). By \cite[Proposition 1.2.29]{schneiders1999quasi}, the essential image of $I:\widehat{\mathcal{B}}c_K\rightarrow LH(\widehat{\mathcal{B}}c_K)$ is closed under sub-objects. Hence,
$E^{1,1}_3$ is in the essential image of $I$.  Choose some object $F\in\widehat{\mathcal{B}}c_K$ such that $I(F)=E_3^{1,1}$. As $I$ is fully faithful, there is a unique map $f:F\rightarrow \operatorname{HH}^{2}(\wideparen{\D}_X)$ such that its image under $I$ is the boundary map $E_3^{1,1}\rightarrow I(\operatorname{HH}^{2}(\wideparen{\D}_X))$. Assume this map is strict. As $I$ preserves strict short exact sequences, this implies that $\operatorname{H}^2(\mathcal{L}_I(\wideparen{\D}_X(X))^{\bullet})$
is also in the image of $I$. By definition, we 
 have:
 \begin{equation*}
     \mathcal{L}_I(\wideparen{\D}_X(X))^{\bullet}=I(\mathcal{L}(\wideparen{\D}_X(X))^{\bullet}).
 \end{equation*}
 Hence, the map $\delta_1$ must also be strict. The converse is analogous. For the last part of the lemma, recall that the functor $J:\widehat{\mathcal{B}}c_K\rightarrow \mathcal{B}c_K$ preserves strict short exact sequences. Thus, the fact that $\delta_1$ is strict implies that we have:
 \begin{multline*} \operatorname{HH}_{\operatorname{T}}^2(\wideparen{\D}_X(X)):=\operatorname{Coker}\left(J(\mathcal{L}(\wideparen{\D}_X(X))^{1})\rightarrow J(\operatorname{Ker}(\delta_2))\right)=J\left(\operatorname{Coker}\left(\mathcal{L}(\wideparen{\D}_X(X))^{1}\rightarrow \operatorname{Ker}(\delta_2)\right) \right)\\
 =J(\operatorname{H}^2(\mathcal{L}(\wideparen{\D}_X(X))^{\bullet})).
 \end{multline*}
Thus, $\operatorname{HH}_{\operatorname{T}}^2(\wideparen{\D}_X(X))$ is a complete bornological space, and we have $\operatorname{HH}_{\operatorname{T}}^2(\wideparen{\D}_X(X))=\operatorname{H}^2(\mathcal{L}(\wideparen{\D}_X(X))^{\bullet})$.
\end{proof}
Ideally, we would like to show that the map $I(\operatorname{HH}^{2}(\wideparen{\D}_X)) \rightarrow \operatorname{H}^2(\mathcal{L}_I(\wideparen{\D}_X(X))^{\bullet})$ is an isomorphism. Equivalently, we would like to show that the term $E_3^{1,1}$ is trivial. This is hard to do directly, as the explicit expression of $E_3^{1,1}$ is rather complicated, and we have almost no understanding of the higher Ext groups $\underline{\operatorname{Ext}}_{\widehat{\mathcal{B}}c_K}(\widehat{\otimes}_K^n\wideparen{\D}_X(X),\wideparen{\D}_X(X))$. However, there are some situations in which we can show that this holds. Namely, when the second de Rham cohomology group $\operatorname{H}_{\operatorname{dR}}^2(X)$ is finite-dimensional.\\

Indeed, the strategy is constructing a $K$-linear monomorphism:
\begin{equation*}
    \operatorname{HH}^{2}(\wideparen{\D}_X)\xrightarrow[]{c_2}\operatorname{HH}_{\operatorname{T}}^2(\wideparen{\D}_X(X)).
\end{equation*}
As before, we have the following identifications of complete bornological spaces: 
\begin{equation*}
    \operatorname{HH}^{2}(\wideparen{\D}_X)=\operatorname{H}_{\operatorname{dR}}^{2}(X)^b=\operatorname{H}^2\left(\Gamma(X,\Omega_{X/K}^{\bullet})\right).
\end{equation*}
Thus, every cohomology class $\beta\in \operatorname{HH}^{2}(\wideparen{\D}_X)$ is represented by a
closed $2$-form $\omega_{\beta} \in \Omega^{2,cl}_{X/K}(X)$. \newline
By definition of a $2$-form, $\omega_{\beta}$
induces a $\OX_X(X)$-bilinear map:
\begin{equation*}
    \omega_{\beta}:\mathcal{T}_{X/K}(X)\oplus\mathcal{T}_{X/K}(X)\rightarrow \OX_X(X).
\end{equation*}
As $\omega_{\beta}$ is closed, it follows by the contents of \cite[Section 2.2]{p-adicCheralg} that this induces a $(K,\OX_X(X))$-Lie algebra structure on the following $\OX_X(X)$-module:
\begin{equation*}
    \mathcal{A}_{\omega_{\beta}}(X):=\OX_X(X)\bigoplus \mathcal{T}_{X/K}(X),
\end{equation*}
where the bracket of two elements $(f,v),(g,w)\in\mathcal{A}_{\omega_{\beta}}(X)$ is given by the following formula:
\begin{equation}\label{bracket of tdo}
    [(f,v),(g,w)]=(v(g)-w(f)+\omega_{\beta}(v,w),[v,w]),
\end{equation}
and the anchor map is the projection onto the second factor. A Lie algebra  such as the one described above is called an Atiyah algebra or Picard algebroid on $X$.\\

By \cite[Definition 2.3.1]{p-adicCheralg}, we can use the Atiyah algebra $\mathcal{A}_{\omega_{\beta}}(X)$ to obtain an algebra of twisted differential operators $\D_{\omega_{\beta}}(X)$ on $X$. These algebras are generalizations of $\D_X(X)$ which carry a canonical filtration called the filtration by order of differential operators. Furthermore, it can be shown that their isomorphism class as filtered $K$-algebras is completely determined by the cohomology class of $\omega_{\beta}$ in $\operatorname{H}^2_{\operatorname{dR}}(X)$ (\emph{cf}. \cite[Corollary 2.2.14]{p-adicCheralg}). We will make extensive use of this property in our construction of the map $c_2:\operatorname{HH}^{2}(\wideparen{\D}_X)\rightarrow\operatorname{HH}_{\operatorname{T}}^2(\wideparen{\D}_X(X)).$\\

By construction, the algebra of twisted differential operators $\D_{\omega_{\beta}}(X)$ arises as a quotient of the universal enveloping algebra of $\mathcal{A}_{\omega_{\beta}}(X)$. In particular, we can give a presentation of $\D_{\omega_{\beta}}(X)$ in terms of generators and relations. To simplify notation, for every $v\in \mathcal{T}_{X/K}(X)$ we let $\mathbb{L}_v$ be its image in $\operatorname{Sym}_K(\mathcal{A}_{\omega_{\beta}}(X))$. Then $\D_{\omega_{\beta}}(X)$ is the quotient of $\operatorname{Sym}_K(\mathcal{A}_{\omega_{\beta}}(X))$ by the   following relations: 
\begin{equation}\label{equation relations defining tdo}
\begin{gathered}
    f\cdot \mathbb{L}_v=\mathbb{L}_{fv}, \quad  f\cdot g=fg, \quad 1_{\OX_X(X)}=1_{\operatorname{Sym}_K(\mathcal{A}_{\omega}(X))},\\
    \quad [\mathbb{L}_v,\mathbb{L}_w] =\mathbb{L}_{[v,w]} +\omega_{\beta}(\mathbb{L}_v,\mathbb{L}_w), \quad [\mathbb{L}_v,f]=v(f),
\end{gathered}   
\end{equation}
where $f,g\in \OX_X(X)$, $v,w\in\mathcal{T}_{X/K}(X)$, and the symbol $\cdot$ represents the product in $\operatorname{Sym}_K(\mathcal{A}_{\omega_{\beta}}(X))$. Notice that if $\omega_{\beta}=0$ we recover the definition of $\D_X(X)$.\\

The strategy now is defining a  slight modification of this algebra, and use it to construct an infinitesimal deformation of $\D_X(X)$. Indeed, let $K^{\epsilon}=K[t]/(t^2)$ be the ring of dual numbers, and consider the following Fréchet-Stein algebra:
\begin{equation*}
    \OX_X(X)^{\epsilon}:=K^{\epsilon}\otimes_K\OX_X(X)=\OX_X[t]/(t^2),
\end{equation*}
which is the algebra of rigid functions on the non-smooth Stein space:
\begin{equation*}
    X^{\epsilon}:=X\times_{\Sp(K)}\Sp(K^{\epsilon}).
\end{equation*}
Similarly, we can consider the following $\OX_X(X)^{\epsilon}$-module:
\begin{equation*}
  \mathcal{T}_{X/K}(X)^{\epsilon}:= \OX_X(X)^{\epsilon}\otimes_{\OX_X(X)}\mathcal{T}_{X/K}(X)=K^{\epsilon}\otimes_K\mathcal{T}_{X/K}(X).
\end{equation*}
As $X^{\epsilon}$ is not smooth, its tangent sheaf is not free. Thus, $\mathcal{T}_{X/K}(X)^{\epsilon}$ is not isomorphic to $\mathcal{T}_{X^{\epsilon}/K}(X^{\epsilon})$.
\begin{defi}\label{defi deformations}
We define the following objects:
\begin{enumerate}[label=(\roman*)]\label{defi deformation of algebras}
    \item We define the following $\OX_X(X)^{\epsilon}$-module: $\mathcal{A}_{t\omega_{\beta}}(X)=\OX_X(X)^{\epsilon}\bigoplus\mathcal{T}_{X/K}(X)^{\epsilon}$.
    \item We define $\D_{t\omega_{\beta}}(X)$ as the quotient of  $\operatorname{Sym}_{K^{\epsilon}}(\mathcal{A}_{t\omega_{\beta}}(X))$ by the following relations:
    \begin{equation}\label{equation conditions defining deformation}
    \begin{gathered}
        f\mathbb{L}_v=\mathbb{L}_{fv},\quad  f\cdot g=fg, \quad 1_{\OX_X(X)}=1_{\operatorname{Sym}_{K^{\epsilon}}(\mathcal{A}_{t\omega}(X))},\\ [\mathbb{L}_v,\mathbb{L}_w]=\mathbb{L}_{[v,w]}+t\omega_{\beta}(v,w), \quad [\mathbb{L}_v,f]=v(f),
    \end{gathered}    
    \end{equation}    
\end{enumerate}
where we have $f,g\in \OX_X(X)$, and $v,w\in \mathcal{T}_{X/K}(X)$. Notice that  $\D_{t\omega}(X)$ is canonically a $K^{\epsilon}$-algebra.
\end{defi}
As mentioned above, the fact that $X^{\epsilon}$ is not smooth implies that $\mathcal{A}_{t\omega_{\beta}}(X)$ is not an Atiyah algebra on $X^{\epsilon}$. Thus, technically speaking, $\D_{t\omega_{\beta}}(X)$ is not an algebra of twisted differential operators on $X^{\epsilon}$. However, our explicit construction allows us to show that it satisfies similar properties:
\begin{Lemma}\label{lemma decomposition auxiliary algebra}
We have the following canonical decomposition of $\OX_X(X)$-modules:
\begin{equation*}
    \D_{t\omega_{\beta}}(X)=\D_{X}(X)\oplus t\D_{X}(X).
\end{equation*}
\end{Lemma}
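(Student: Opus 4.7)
The plan is to construct an explicit $\OX_X(X)$-linear isomorphism $\D_X(X) \oplus t\D_X(X) \to \D_{t\omega_\beta}(X)$ by mimicking a PBW basis. Since $X$ admits an étale map to $\mathbb{A}^r_K$, the tangent sheaf $\mathcal{T}_{X/K}$ is globally free: pick a frame $\partial_1,\dots,\partial_r\in\mathcal{T}_{X/K}(X)$. For each multi-index $\alpha=(a_1,\dots,a_r)\in\mathbb{N}^r$, let $\mathbb{L}^\alpha:=\mathbb{L}_{\partial_1}^{a_1}\cdots\mathbb{L}_{\partial_r}^{a_r}\in \D_{t\omega_\beta}(X)$. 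The classical PBW theorem for $\D_X(X)$ gives the decomposition $\D_X(X)=\bigoplus_{\alpha}\OX_X(X)\mathbb{L}^\alpha$ as $\OX_X(X)$-modules, so tensoring with $K^\epsilon$ over $K$ yields $\D_X(X)\oplus t\D_X(X)=\bigoplus_\alpha \OX_X(X)^\epsilon\mathbb{L}^\alpha$.

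The heart of the matter is to show that $\D_{t\omega_\beta}(X)$ admits the same PBW decomposition over $\OX_X(X)^\epsilon$. First, I would show \emph{spanning}: using the relations in (\ref{equation conditions defining deformation}) repeatedly, any monomial in $\operatorname{Sym}_{K^\epsilon}(\mathcal{A}_{t\omega_\beta}(X))$ can be rewritten modulo the defining ideal as an $\OX_X(X)^\epsilon$-linear combination of the $\mathbb{L}^\alpha$. The only non-trivial commutator is $[\mathbb{L}_v,\mathbb{L}_w]=\mathbb{L}_{[v,w]}+t\omega_\beta(v,w)$, and the extra error term $t\omega_\beta(v,w)$ lies in $t\OX_X(X)\subset\OX_X(X)^\epsilon$, so it only contributes to the coefficients. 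Hence the sorting algorithm that works for $\D_X(X)$ still terminates here.

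Second, I would establish \emph{freeness} by means of a filtration argument. Equip $\D_{t\omega_\beta}(X)$ with the filtration induced by declaring $\OX_X(X)^\epsilon$ to sit in degree zero and each $\mathbb{L}_{\partial_i}$ in degree one. Since $t\omega_\beta(v,w)\in \OX_X(X)^\epsilon$ sits in filtration degree zero while $[\mathbb{L}_v,\mathbb{L}_w]$ has naive degree two, the associated graded algebra is the symmetric algebra $\operatorname{Sym}_{\OX_X(X)^\epsilon}(\mathcal{T}_{X/K}(X)^\epsilon)$, which is a free $\OX_X(X)^\epsilon$-module on the monomials in the $\partial_i$. A standard filtration-versus-graded argument (as used in the PBW theorem for Lie–Rinehart algebras, \emph{cf.}\ \cite{rinehart1963differential}) then forces $\D_{t\omega_\beta}(X)=\bigoplus_\alpha\OX_X(X)^\epsilon\mathbb{L}^\alpha$.

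Combining the two decompositions yields an $\OX_X(X)^\epsilon$-linear, hence \emph{a fortiori} $\OX_X(X)$-linear, isomorphism $\D_X(X)\oplus t\D_X(X)\to\D_{t\omega_\beta}(X)$ sending $\mathbb{L}^\alpha$ (resp.\ $t\mathbb{L}^\alpha$) to $\mathbb{L}^\alpha$ (resp.\ $t\mathbb{L}^\alpha$). I expect the main obstacle to be the PBW-type step: the associated graded computation needs to be carried out carefully to confirm that the bracket deformation $t\omega_\beta(v,w)$ is genuinely filtration-lowering and does not interact with the freeness of $\operatorname{Sym}_{\OX_X(X)^\epsilon}(\mathcal{T}_{X/K}(X)^\epsilon)$; once this is in place, the rest follows by comparing two free modules of the same rank.
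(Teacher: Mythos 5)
Your proof is correct, but it takes a more self-contained route than the paper. The paper's own proof is a two-line argument: it observes that the generating pieces decompose as $\OX_X(X)^{\epsilon}=\OX_X(X)\oplus t\OX_X(X)$ and $\mathcal{T}_{X/K}(X)^{\epsilon}=\mathcal{T}_{X/K}(X)\oplus t\mathcal{T}_{X/K}(X)$, and then invokes the centrality of $t$ (together with $t^2=0$) to propagate the splitting to the quotient algebra; the genuine PBW content — that $\D_{t\omega_{\beta}}(X)$ is free on the ordered monomials $\mathbb{L}^{\alpha}$ over $\OX_X(X)^{\epsilon}$ — is not reproved but is implicitly outsourced to the PBW theorem \cite[Proposition 2.3.2]{p-adicCheralg}, which the paper only cites explicitly in the subsequent Proposition \ref{prop construction of Hochschild extensions} to obtain the refined decomposition (\ref{equation basis of deformation}). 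You instead prove the PBW statement for the deformed algebra directly (spanning by the sorting algorithm, freeness by passing to the associated graded), which is more work but makes the lemma independent of the companion paper and simultaneously yields the basis used later. Two points deserve to be made explicit in your write-up. First, in the freeness step the closedness of $\omega_{\beta}$ is not a cosmetic hypothesis: it is exactly what guarantees the Jacobi identity for the bracket $[(f,v),(g,w)]=(v(g)-w(f)+t\omega_{\beta}(v,w),[v,w])$, hence the consistency of the rewriting system (equivalently, that the surjection $\operatorname{Sym}_{\OX_X(X)^{\epsilon}}(\mathcal{T}_{X/K}(X)^{\epsilon})\rightarrow \operatorname{gr}\D_{t\omega_{\beta}}(X)$ has no kernel); without it the relations would force extra collapse and the monomials would fail to be independent. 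Since $\mathcal{T}_{X/K}(X)^{\epsilon}$ is free over $\OX_X(X)^{\epsilon}$, Rinehart's PBW theorem \cite{rinehart1963differential} applies once this is checked, so your "main obstacle" is handled by the literature. Second, the statement only asks for an $\OX_X(X)$-module decomposition, so your $\OX_X(X)^{\epsilon}$-linear isomorphism is stronger than needed; this is harmless, but it is worth noting that the paper's weaker argument suffices precisely because the lemma makes no claim about compatibility with the multiplication beyond $\OX_X(X)$-linearity.
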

\begin{proof}
Notice that, by definition, we have the following canonical decompositions of $\OX_X(X)$-modules:
\begin{align*}
    \OX_X(X)^{\epsilon}:=K^{\epsilon}\otimes_K\OX_X(X)=\OX_X(X)\oplus t\OX_X(X),\\
    \mathcal{T}_{X/K}(X)^{\epsilon}:=K^{\epsilon}\otimes_K\mathcal{T}_{X/K}(X)=\mathcal{T}_{X/K}(X)\oplus t\mathcal{T}_{X/K}(X)
\end{align*}
Hence, our decomposition follows by the fact that $t$ commutes with every element in  $\D_{t\omega_{\beta}}(X)$.
\end{proof}
Recall that the map $\D_X(X)\rightarrow \wideparen{\D}_X(X)$ endows $\D_X(X)$ with a canonical bornology. Thus, by the previous lemma, we obtain a canonical bornology on $\D_{t\omega_{\beta}}(X)$. Furthermore, it is easy to check that the product in $\D_{t\omega_{\beta}}(X)$ is bounded with respect to this bornology, and thus $\D_{t\omega_{\beta}}(X)$ is canonically a bornological algebra. Our interest in $\D_{t\omega_{\beta}}(X)$ stems from the following proposition:
\begin{prop}\label{prop construction of Hochschild extensions}
There is a map $\D_{t\omega_{\beta}}(X)\xrightarrow[]{\sigma_{\omega_{\beta}}}\D_X(X)$ such that there is a singular extension:
\begin{equation*}
    0\rightarrow \D_X(X)\rightarrow \D_{t\omega_{\beta}}(X)\xrightarrow[]{\sigma_{\omega_{\beta}}}\D_X(X)\rightarrow 0.
\end{equation*}
\end{prop}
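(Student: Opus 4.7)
The plan is to take $\sigma_{\omega_\beta}$ to be the map induced by the $K$-algebra homomorphism $K^\epsilon\to K$, $t\mapsto 0$. To see this is well defined through the presentation of $\D_{t\omega_\beta}(X)$ given in Definition \ref{defi deformations}, I would observe that setting $t=0$ in the relations (\ref{equation conditions defining deformation}) reproduces, term by term, the defining relations (\ref{equation relations defining tdo}) of $\D_X(X)$: the twisted bracket $[\mathbb{L}_v,\mathbb{L}_w]=\mathbb{L}_{[v,w]}+t\omega_\beta(v,w)$ specializes to $[\mathbb{L}_v,\mathbb{L}_w]=\mathbb{L}_{[v,w]}$, and the other relations are unaffected. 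Thus $\sigma_{\omega_\beta}$ is a well-defined $K$-algebra homomorphism; under the decomposition of Lemma \ref{lemma decomposition auxiliary algebra} it is the projection onto the first summand, and is therefore bounded.

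Next, I would identify the kernel of $\sigma_{\omega_\beta}$ with the ideal $t\D_X(X)$ and check it is square-zero, which is immediate from $t^2=0$ in $K^\epsilon$ since $(tD_1)(tD_2)=t^2 D_1D_2=0$ for all $D_1,D_2\in\D_X(X)$. Lemma \ref{lemma decomposition auxiliary algebra} provides a bounded $K$-linear isomorphism $D\mapsto tD$ from $\D_X(X)$ onto $t\D_X(X)$, and the complementary inclusion $\iota:\D_X(X)\hookrightarrow\D_{t\omega_\beta}(X)$ of the first summand gives a bounded $K$-linear section of $\sigma_{\omega_\beta}$. What remains to check is that the $\D_X(X)$-bimodule structure on $t\D_X(X)$ inherited from the product of $\D_{t\omega_\beta}(X)$ (via $\sigma_{\omega_\beta}$) agrees with the regular $\D_X(X)$-bimodule structure on $\D_X(X)$ under this identification.

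This last step is the key computational point and is where $t^2=0$ enters crucially. For $D_1,D_2\in\D_X(X)$ the product $\iota(D_1)\iota(D_2)\in\D_{t\omega_\beta}(X)$ need not lie in $\iota(\D_X(X))$; using Lemma \ref{lemma decomposition auxiliary algebra}, we may write $\iota(D_1)\iota(D_2)=\iota(D_1 D_2)+tR(D_1,D_2)$ for a bounded bilinear correction $R$ encoding the contribution of $\omega_\beta$ (this $R$ is precisely the Hochschild $2$-cocycle attached to the deformation). The left action of $\iota(D_1)$ on $tD_2\in t\D_X(X)$ is then
\begin{equation*}
    \iota(D_1)\cdot (tD_2)=t\,\iota(D_1)\iota(D_2)=t\,\iota(D_1 D_2)+t^2 R(D_1,D_2)=tD_1D_2,
\end{equation*}
because $t^2=0$ annihilates the correction. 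The analogous identity holds for right multiplication, so the induced bimodule structure on $t\D_X(X)$ is exactly the natural left-right multiplication of $\D_X(X)$ on itself.

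Finally, I would assemble these ingredients into the short exact sequence
\begin{equation*}
    0\rightarrow \D_X(X)\xrightarrow{t\cdot-}\D_{t\omega_\beta}(X)\xrightarrow{\sigma_{\omega_\beta}}\D_X(X)\rightarrow 0,
\end{equation*}
whose strictness is automatic from Lemma \ref{lemma decomposition auxiliary algebra} since both the inclusion and the projection are the structure maps of a bornological direct sum. I do not expect a serious obstacle: the only step requiring real care is the bimodule compatibility in the preceding paragraph, which is precisely where the square-zero condition $t^2=0$ does its work. In passing, the cocycle $R(D_1,D_2)$ appearing there is the explicit representative of $[\omega_\beta]$ in $\operatorname{HH}_{\operatorname{T}}^2(\D_X(X))$ under the bijection of Theorem \ref{teo interpretation of Taylor cohomology groups for n=2}, which is what will make this construction useful later for defining the comparison map $c_2$.
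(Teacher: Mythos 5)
Your proposal is correct and follows essentially the same route as the paper: the same reduction modulo $t$ to define $\sigma_{\omega_\beta}$, the same identification of the kernel with the square-zero ideal $t\D_X(X)$, and the same bounded splitting coming from the direct-sum decomposition of Lemma \ref{lemma decomposition auxiliary algebra}. The only (minor) difference is in the bimodule compatibility check: you argue abstractly, writing $\iota(D_1)\iota(D_2)=\iota(D_1D_2)+tR(D_1,D_2)$ and letting $t^2=0$ kill the correction for arbitrary $D_1,D_2$, whereas the paper verifies the same identity only on the ordered PBW generators $\mathbb{L}_{v_i}$ via the case split $i\leq j$ versus $i>j$ — your version is marginally cleaner and the two arguments are equivalent.
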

\begin{proof}
Notice that reducing the relations in (\ref{equation conditions defining deformation}) modulo $t$ we obtain the relations in (\ref{equation relations defining tdo}). Thus, by Lemma \ref{lemma decomposition auxiliary algebra}, reducing $\D_{t\omega_{\beta}}(X)$ modulo $t$ yields the following short exact sequence:
\begin{equation*}
    0\rightarrow t\D_{X}(X)\rightarrow \D_{t\omega_{\beta}}(X)\xrightarrow[]{\sigma_{\omega_{\beta}}}\D_X(X)\rightarrow 0,
\end{equation*}
where $\sigma_{\omega_{\beta}}$ is a bounded morphism of bornological $K$-algebras.  The next step is showing that $\sigma_{\omega_{\beta}}$ has a bounded $K$-linear split. We do this by defining one explicitly. By assumption, there is an étale map $X\rightarrow \mathbb{A}^r_K$. Thus, by the PBW Theorem \cite[Proposition 2.3.2]{p-adicCheralg}, there are tangent fields $v_1,\cdots,v_r\in \mathcal{T}_{X/K}(X)$ such that we have an isomorphism of $\OX_X(X)$-modules: 
\begin{equation*}
   \D_X(X)=\bigoplus_{\underline{n}\in \mathbb{Z}^{\geq 0,r}}\OX_X(X)\mathbb{L}_{v_1}^{\underline{n}(1)}\cdots \mathbb{L}_{v_r}^{\underline{n}(r)}. 
\end{equation*}
This identity, together with Lemma \ref{lemma decomposition auxiliary algebra}, induces the following decomposition of $\OX_X(X)$-modules:
\begin{equation}\label{equation basis of deformation}
    \D_{t\omega_{\beta}}(X)=\left(\bigoplus_{\underline{n}\in \mathbb{Z}^{\geq 0,r}}\OX_X(X)\mathbb{L}_{v_1}^{\underline{n}(1)}\cdots \mathbb{L}_{v_r}^{\underline{n}(r)}\right)\bigoplus t\left(\bigoplus_{\underline{n}\in \mathbb{Z}^{\geq 0,r}}\OX_X(X)\mathbb{L}_{v_1}^{\underline{n}(1)}\cdots \mathbb{L}_{v_r}^{\underline{n}(r)} \right).
\end{equation}
Hence, we can define a $K$-linear split to $\sigma_{\omega_{\beta}}$ by the following formula:
\begin{equation*}
    \rho_{\omega_{\beta}}: \D_X(X)\rightarrow \D_{t\omega_{\beta}}(X), \, a_{\underline{n}}\mathbb{L}_{v_1}^{\underline{n}(1)}\cdots \mathbb{L}_{v_r}^{\underline{n}(r)}\mapsto a_{\underline{n}}\mathbb{L}_{v_1}^{\underline{n}(1)}\cdots \mathbb{L}_{v_r}^{\underline{n}(r)} \oplus 0.
\end{equation*}
Next, notice that as $t^2=0$ in $K^{\epsilon}$, it follows that $t\D_X(X)$ is a square-zero ideal in $\D_{t\omega_{\beta}}(X)$. Thus, it has a canonical structure as a bornological $\D_X(X)$-bimodule. All we need to do is showing that  $t\D_X(X)$ is isomorphic to $\D_X(X)$ as a bornological bimodule. It suffices to show that for each $1\leq i,j\leq r$ we have the following identity:
\begin{equation*}
    \rho_{\omega_{\beta}}(\mathbb{L}_{v_i})\cdot t\mathbb{L}_{v_j}=t\rho_{\omega_{\beta}}(\mathbb{L}_{v_i}\mathbb{L}_{v_j}).
\end{equation*}
If $i\leq j$, then the explicit basis of $\D_{t\omega_{\beta}}(X)$ as an $\OX_X(X)$-module showcased in (\ref{equation basis of deformation}) makes this automatic. Thus, we may freely assume that $i> j$. In this situation, our choice of vector fields $v_1,\cdots,v_r$ implies that $[\mathbb{L}_{v_i},\mathbb{L}_{v_j}]=0$ in $\D_X(X)$. Hence, we have:
\begin{equation*}
    t\rho_{\omega_{\beta}}(\mathbb{L}_{v_i}\mathbb{L}_{v_j})=t\rho_{\omega_{\beta}}(\mathbb{L}_{v_j}\mathbb{L}_{v_i})=t\mathbb{L}_{v_j}\mathbb{L}_{v_i}.
\end{equation*}
On the other hand, using the relations in (\ref{equation conditions defining deformation}), we have:
\begin{equation*}
    \rho_{\omega_{\beta}}(\mathbb{L}_{v_i})\cdot t\mathbb{L}_{v_j}=t(\mathbb{L}_{v_i}\mathbb{L}_{v_j})=t(\mathbb{L}_{v_j}\mathbb{L}_{v_i}+ t\omega_{\beta}(v_i,v_j))=t\mathbb{L}_{v_j}\mathbb{L}_{v_i}.
\end{equation*}
Thus, both expressions agree, and the proposition holds.
\end{proof}
Applying the bornological completion functor, we obtain the following Hochschild extension:
\begin{equation*}
\begin{tikzcd}
0 \arrow[r] & \wideparen{\D}_X(X) \arrow[r] & \wideparen{\D}_{t\omega_{\beta}}(X) \arrow[r, "\wideparen{\sigma}_{\omega_{\beta}}"] & \wideparen{\D}_X(X) \arrow[l, "\wideparen{\rho}_{\omega_{\beta}}"', bend right] \arrow[r] & 0.
\end{tikzcd}
\end{equation*}
Furthermore, the proof of Proposition \ref{prop construction of Hochschild extensions} shows that our choice of an ordered $\OX_X(X)$-basis $\{v_1,\cdots, v_r\}$ of $\mathcal{T}_{X/K}(X)$ induces a $K$-linear map:
\begin{equation*}
   c_2: \Omega_{X/K}^{2,cl}(X)\rightarrow \operatorname{Ker}(\delta_2),
\end{equation*}
defined by sending a closed $2$-form $\omega$ to the bounded $K$-linear morphism:
\begin{equation*}    c_2(\omega):\wideparen{\D}_X(X)\widehat{\otimes}_K\wideparen{\D}_X(X)\rightarrow \wideparen{\D}_X(X),\, (a,b)\mapsto c_2(\omega)(a\otimes b)=\wideparen{\rho}_{\omega}(a)\wideparen{\rho}_{\omega}(b)-\wideparen{\rho}_{\omega}(ab),
\end{equation*}
where $\rho_{\omega}$ is constructed as in Proposition \ref{prop construction of Hochschild extensions}. This map is rather hard to describe in general. However, as an upshot of the proof of Proposition \ref{prop construction of Hochschild extensions}, we get the following formula:
\begin{equation}
    c_2(\omega)(v_i\otimes v_j)= \begin{cases} 0 & \textnormal{ if $i\leq j$}\\  \omega(v_i,v_j) &  \textnormal{ if $i> j$} 
    \end{cases}
\end{equation}
Composing this map with the projection $\operatorname{Ker}(\delta_2)\rightarrow \operatorname{HH}_{\operatorname{T}}^2(\wideparen{\D}_X(X))$, we obtain a $K$-linear map:
\begin{equation*}
    c_2: \Omega_{X/K}^{2,cl}(X)\rightarrow \operatorname{HH}_{\operatorname{T}}^2(\wideparen{\D}_X(X)).
\end{equation*}
Notice that, a priory, $c_2: \Omega_{X/K}^{2,cl}(X)\rightarrow \operatorname{HH}_{\operatorname{T}}^2(\wideparen{\D}_X(X))$ depends on the choice of an ordered basis for $\mathcal{T}_{X/K}(X)$.  However, this map is in fact independent of such a choice. Indeed, let $\omega \in \Omega_{X/K}^{2,cl}(X)$ be a closed differential $2$-form. Notice that the construction of the algebra $\wideparen{\D}_{t\omega}(X)$ from Definition \ref{defi deformations} is completely independent of the choice of a basis of $\mathcal{T}_{X/K}(X)$. In particular, a different choice of ordered basis of $\mathcal{T}_{X/K}(X)$ does not alter the product of $\wideparen{\D}_{t\omega}(X)$, it only induces a different bounded split of the canonical projection:
\begin{equation*}
    \wideparen{\D}_{t\omega}(X)\rightarrow \wideparen{\D}_X(X).
\end{equation*}
Furthermore,  as shown in Proposition \ref{prop construction of Hochschild extensions}, this map is obtained by reducing $\wideparen{\D}_{t\omega}(X)$ modulo $t$. Hence, it is also independent of the basis of $\mathcal{T}_{X/K}(X)$. However, in this situation, Theorem \ref{teo interpretation of Taylor cohomology groups for n=2} shows that isomorphism class of the infinitesimal deformation $\wideparen{\D}_{t\omega}(X)$ in $\operatorname{HH}_{\operatorname{T}}^2(\wideparen{\D}_X(X))$ is independent of the choice of the split. Therefore, the map:
\begin{equation*}
   c_2:\Omega_{X/K}^{2,cl}(X)\rightarrow \operatorname{HH}_{\operatorname{T}}^2(\wideparen{\D}_X(X)),
\end{equation*}
is canonical. That is, it does not depend on the choice of an ordered basis of $\mathcal{T}_{X/K}(X)$.\\

Next, we want to show that $c_2$ descends to a $K$-linear monomorphism:
\begin{align*}
    c_2:\operatorname{H}_{\operatorname{dR}}^{2}&(X)
    ^b\rightarrow \operatorname{HH}_{\operatorname{T}}^2(\wideparen{\D}_X(X)).
\end{align*}
This is equivalent to showing that given a closed $2$-form $\omega\in\Omega_{X/K}^{2,cl}(X)$, its class $[\omega]\in \operatorname{H}^2_{\operatorname{dR}}(X)^b$ is trivial if and only if $\wideparen{\D}_{t\omega}(X)$ is a trivial Hochschild extension. Let us now show that this holds:
\begin{prop}
The Hochschild extension $\wideparen{\D}_{t\omega}(X)$ is trivial if and only if $[\omega]=0$ in $\operatorname{H}^2_{\operatorname{dR}}(X)^b$.
\end{prop}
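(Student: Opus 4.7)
The plan is to prove the two implications of the equivalence separately, with the forward direction given by an explicit construction and the converse relying on a normalization argument to extract the primitive of $\omega$.

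For the implication $[\omega]=0 \Rightarrow \wideparen{\D}_{t\omega}(X)$ is trivial, I would write $\omega = d\eta$ for some $\eta \in \Omega^1_{X/K}(X)$, regarded as an $\OX_X(X)$-linear map $\eta: \mathcal{T}_{X/K}(X) \to \OX_X(X)$. Define a bornological $K^{\epsilon}$-algebra morphism $\varphi: \wideparen{\D}_{t\omega}(X) \to \wideparen{\D}_X(X)^{\epsilon}$ on the generators of the Atiyah algebra by $\varphi(f) = f$ for $f \in \OX_X(X)$ and $\varphi(\mathbb{L}_v) = \mathbb{L}_v + t\eta(v)$ for $v \in \mathcal{T}_{X/K}(X)$. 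Checking that $\varphi$ respects the defining relations (\ref{equation conditions defining deformation}) is short: the only nontrivial verification is the twisted bracket, where $\varphi([\mathbb{L}_v,\mathbb{L}_w]_{t\omega}) = \mathbb{L}_{[v,w]} + t\eta([v,w]) + t\omega(v,w)$ and $[\varphi(\mathbb{L}_v), \varphi(\mathbb{L}_w)]_{\epsilon} = \mathbb{L}_{[v,w]} + tv(\eta(w)) - tw(\eta(v))$, so equality unfolds to the identity $\omega(v,w) = d\eta(v,w)$. Since $\varphi$ is the identity modulo $t$ and is a bijection on the $\OX_X(X)^{\epsilon}$-module decomposition provided by Lemma \ref{lemma decomposition auxiliary algebra}, it is an equivalence of singular extensions.

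For the converse, suppose $\wideparen{\D}_{t\omega}(X)$ is equivalent to the trivial extension, and fix an equivalence $\varphi: \wideparen{\D}_{t\omega}(X) \to \wideparen{\D}_X(X)^{\epsilon}$. Write $\varphi(f) = f + t\psi(f)$ and $\varphi(\mathbb{L}_v) = \mathbb{L}_v + t\zeta(v)$ for bounded maps $\psi: \OX_X(X) \to \wideparen{\D}_X(X)$ and $\zeta: \mathcal{T}_{X/K}(X) \to \wideparen{\D}_X(X)$. Multiplicativity of $\varphi$ on products of functions forces $\psi$ to be a bounded derivation $\OX_X(X) \to \wideparen{\D}_X(X)$. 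The key step is to normalize $\varphi$ so that $\psi = 0$: by Corollary \ref{coro infinitesimal deformations}, every bounded derivation of $\wideparen{\D}_X(X)$ gives rise to an infinitesimal automorphism of $\wideparen{\D}_X(X)^{\epsilon}$, so I would lift $\psi$ to a bounded derivation of the full algebra $\wideparen{\D}_X(X)$ and absorb it into $\varphi$. After this normalization, $\varphi(f) = f$ for $f \in \OX_X(X)$, and the relation $[\mathbb{L}_v, f] = v(f)$ immediately forces $[\zeta(v), f] = 0$ for all $f$. A direct computation in the PBW basis associated to the étale chart $X \to \mathbb{A}^r_K$ shows the centralizer of $\OX_X(X)$ in $\wideparen{\D}_X(X)$ is $\OX_X(X)$ itself, so $\zeta(v) \in \OX_X(X)$; $\OX_X(X)$-linearity of $\zeta$ follows from $\mathbb{L}_{fv} = f\mathbb{L}_v$. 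Hence $\zeta \in \Omega^1_{X/K}(X)$, and comparing both sides of $\varphi([\mathbb{L}_v,\mathbb{L}_w]_{t\omega}) = [\varphi(\mathbb{L}_v), \varphi(\mathbb{L}_w)]_{\epsilon}$ yields exactly $\omega(v,w) = d\zeta(v,w)$, so $[\omega] = 0$ in $\operatorname{H}^2_{\operatorname{dR}}(X)^b$.

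The main technical obstacle is the normalization step in the converse: showing that every bounded derivation $\psi: \OX_X(X) \to \wideparen{\D}_X(X)$ extends to a bounded derivation of $\wideparen{\D}_X(X)$. This is not formal, since the extension condition imposes constraints linking $\psi$ and the chosen values on the generators $\mathbb{L}_{\partial_i}$. I expect it to follow from the explicit PBW presentation supplied by the étale chart together with the Fréchet-Stein structure of $\wideparen{\D}_X(X)$, which should control the convergence of the coefficients of the extension, but this is the most delicate part of the argument. Once the normalization is in place, the remainder of the converse is a straightforward unwinding of the bracket relation defining $\wideparen{\D}_{t\omega}(X)$.
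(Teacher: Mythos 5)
Your forward direction ($[\omega]=0\Rightarrow$ trivial) is correct: the map $\varphi(f)=f$, $\varphi(\mathbb{L}_v)=\mathbb{L}_v+t\eta(v)$ respects the relations in (\ref{equation conditions defining deformation}) precisely because $\omega=d\eta$, and this is the same computation as in Proposition \ref{prop explicit derivations}. The converse, however, contains a genuine gap, and it is exactly the step you flag yourself: the normalization that arranges $\varphi(f)=f$ for all $f\in\OX_X(X)$. Your proposed fix is to extend the derivation $\psi:\OX_X(X)\rightarrow\wideparen{\D}_X(X)$ to a bounded derivation of all of $\wideparen{\D}_X(X)$ and absorb it via Corollary \ref{coro infinitesimal deformations}. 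But a derivation of $\wideparen{\D}_X(X)$ restricts on $\OX_X(X)$ to an inner derivation plus one of the $c_1(\lambda)$, which vanish on $\OX_X(X)$; so your extension problem is equivalent to showing that every bounded derivation $\OX_X(X)\rightarrow\wideparen{\D}_X(X)$ (for the bimodule structure) is inner, i.e.\ that a bornological $\operatorname{HH}^1(\OX_X(X),\wideparen{\D}_X(X))$ vanishes. This is plausible (a Koszul computation with the commuting operators $\operatorname{ad}(x_i)$ on $\wideparen{\D}_X(X)$), but it is a nontrivial analytic statement that you neither prove nor reduce to anything in the paper, and the PBW/Fr\'echet--Stein heuristic you offer does not by itself control the required division-and-convergence argument. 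As written, the converse is therefore incomplete.

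The paper closes this gap by a different and cheaper mechanism: it does not modify $\varphi$ at all. Instead it lets $\varphi$ act on the standard module $\OX_X(X)^{\epsilon}$, so that the restriction of $\eta$ to $\OX_X(X)$ becomes a $1$-cocycle valued in $\End_K(\OX_X(X))$ defining an infinitesimal deformation $\OX_X(X)^{\varphi}$ of $\OX_X(X)$ as a module over itself. Such deformations are classified by $\operatorname{HH}^1(\OX_X(X),\End_K(\OX_X(X)))=\operatorname{Ext}^1_{\OX_X(X)}(\OX_X(X),\OX_X(X))=0$, which is free, hence the cocycle is a coboundary $f\mapsto[\psi,f]$ and conjugating the module structure by $\Psi=\operatorname{id}+t\psi$ kills $\eta\vert_{\OX_X(X)}$. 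After that, the argument proceeds as in your last step (centralizer of $\OX_X(X)$ equals $\OX_X(X)$, then unwind the bracket relation), with the closed $1$-form read off inside $\End_{\widehat{\mathcal{B}}c_K}(\OX_X(X))$ rather than inside $\wideparen{\D}_X(X)$. If you want to keep your structure, replace the extension-of-derivations step by this module-theoretic trivialization; otherwise you must supply a proof that $\operatorname{HH}^1(\OX_X(X),\wideparen{\D}_X(X))=0$ in the bounded/bornological sense.
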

\begin{proof}
By definition, the  extension $\wideparen{\D}_{t\omega}(X)$ is trivial if and only if there is a commutative diagram:
\begin{equation*}
\begin{tikzcd}
0 \arrow[r] & \wideparen{\D}_X(X) \arrow[r] \arrow[d, "="] & \wideparen{\D}_{t\omega}(X) \arrow[r] \arrow[d, "\varphi"] & \wideparen{\D}_X(X) \arrow[r] \arrow[d, "="] & 0 \\
0 \arrow[r] & \wideparen{\D}_X(X) \arrow[r]                & \wideparen{\D}_X(X)^{\epsilon} \arrow[r] & \wideparen{\D}_X(X) \arrow[r]                & 0
\end{tikzcd}
\end{equation*}
where $\varphi:\wideparen{\D}_{t\omega}(X)\rightarrow \wideparen{\D}_X(X)^{\epsilon}$ is an isomorphism of complete bornological algebras. Using the split:
\begin{equation*}
    \wideparen{\D}_{t\omega}(X)=\wideparen{\D}_X(X) \bigoplus t\wideparen{\D}_X(X),
\end{equation*}
commutativity of this diagram means that for $x,y\in\wideparen{\D}_X(X)$, we have the following identity:
\begin{equation*}
    \varphi(x+ty)=x +t(\eta(x)+ y),
\end{equation*}
where $\eta:\wideparen{\D}_X(X)\rightarrow \wideparen{\D}_X(X)$ is $K$-linear and bounded. Our goal is showing that $\omega$ is exact. Namely, we will show that there is a form $\beta:\mathcal{T}_{X/K}(X)\rightarrow \OX_X(X)$ such that for all $v,w\in \mathcal{T}_{X/K}(X)$ we have:
\begin{equation*}
    \omega(v,w)=v(\beta(w))-w(\beta(v))-\beta([v,w]).
\end{equation*}
Next, recall that  $\OX_X(X)$ has a canonical $\wideparen{\D}_X(X)$-module structure. As the deformation $\wideparen{\D}_X(X)^{\epsilon}$ is trivial, it follows that $\OX_X(X)^{\epsilon}$ has a canonical structure as a  $\wideparen{\D}_X(X)^{\epsilon}$-module. Namely, for a pair of elements $x+ty\in\wideparen{\D}_X(X)^{\epsilon}$, and $f+tg\in\OX_X(X)^{\epsilon}$, we have:
\begin{equation*}
    (x+ty)(f+tg)=x(f)+t(x(g)+y(f)).
\end{equation*}
We can use the morphism $\wideparen{\D}_{t\omega}(X)\rightarrow \wideparen{\D}_X(X)^{\epsilon}$ to obtain a $\wideparen{\D}_{t\omega}(X)$-module structure on $\OX_X(X)^{\epsilon}$. Explicitly, for $x+ty\in\wideparen{\D}_{t\omega}(X)^{\epsilon}$, and $f+tg\in\OX_X(X)^{\epsilon}$, we have:
\begin{equation*}
    (x+ty)(f+tg)=x(f)+t(x(g)+  y(f)+\eta(x)(f)).
\end{equation*}
In particular, the above formula induces a bounded $K$-linear map:
\begin{equation*}
    \eta(-):\wideparen{\D}_X(X)\rightarrow \End_{\widehat{\mathcal{B}}c_K}(\OX_X(X)).
\end{equation*}
In order to simplify the notation, given $x\in \wideparen{\D}_{X}(X)^{\epsilon}$,  we will denote the bounded $K$-linear endomorphism  $x(-):\OX_X(X)^{\epsilon}\rightarrow \OX_X(X)^{\epsilon}$ also by $x$. Choose $v,w\in\mathcal{T}_{X/K}(X)$. Then we have the following chain of identities in $\End_{\widehat{\mathcal{B}}c_K}(\OX_X(X)^{\epsilon})$:
\begin{align*}
\mathbb{L}_{[v,w]}+t\left(\eta(\mathbb{L}_{[v,w]}))  + \omega(v,w)\right) =\varphi([\mathbb{L}_v,\mathbb{L}_w])=[\varphi(\mathbb{L}_v),\varphi(\mathbb{L}_w)]=[\mathbb{L}_v +t\eta(\mathbb{L}_v), \mathbb{L}_w +t\eta(\mathbb{L}_w)]&\\
=\mathbb{L}_{[v,w]}+ t\left([\mathbb{L}_v,\eta (\mathbb{L}_w)] -[\mathbb{L}_w,\eta(\mathbb{L}_v)]\right)&.   
\end{align*}
Simplifying this expression, we arrive at the following equation in $\End_{\widehat{\mathcal{B}}c_K}(\OX_X(X))$:
\begin{equation}\label{equation triviality of extensions}
   \omega(v,w)=[\mathbb{L}_v,\eta (\mathbb{L}_w)] -[\mathbb{L}_w,\eta(\mathbb{L}_v)]-\eta(\mathbb{L}_{[v,w]}).
\end{equation}
Thus, it suffices to show that the restriction of $\eta(-)$ to $\mathcal{T}_{X/K}(X)$:
\begin{equation*}
    \eta(-):\mathcal{T}_{X/K}(X)\rightarrow \End_{\widehat{\mathcal{B}}c_K}(\OX_X(X)),
\end{equation*}
is a $1$-form. Equivalently, we need to show that  it is $\OX_X(X)$-linear and takes values in the subalgebra $\OX_X(X)\subset \End_{\widehat{\mathcal{B}}c_K}(\OX_X(X))$.\\

Assume that $\eta(f):\OX_X(X)\rightarrow \OX_X(X)$ is zero for all $f\in \OX_X(X)$. In this situation, for every $v\in \mathcal{T}_{X/K}(X)$ we have the following identity in $\End_{\widehat{\mathcal{B}}c_K}(\OX_X(X))$:
\begin{equation*}
\eta(f\mathbb{L}_v)=f\circ\eta(\mathbb{L}_v)+\eta(f)\circ\mathbb{L}_v=f\circ\eta(\mathbb{L}_v),
\end{equation*}
so the restriction of $\eta$ to $\mathcal{T}_{X/K}(X)$ is $\OX_X(X)$-linear. On the other hand, fix some $v\in \mathcal{T}_{X/K}(X)$. Then, using the canonical identification:
\begin{equation*}
    \OX_X(X)=\End_{\OX_X(X)}(\OX_X(X)),
\end{equation*}
it follows that  $\eta(\mathbb{L}_v)\in \OX_X(X)$ if and only if $[\eta(\mathbb{L}_v),f]=0$ for all $f\in \OX_X(X)$. We may repeat the calculations leading up to equation (\ref{equation triviality of extensions}) to obtain the following:
\begin{equation*}
    0=\eta(v(f))=\eta([\mathbb{L}_v,f])=[\mathbb{L}_v,\eta (f)] -[f,\eta(\mathbb{L}_v)]=[\eta(\mathbb{L}_v),f].
\end{equation*}
Thus, $\eta(\mathbb{L}_v)\in \OX_X(X)$, as we wanted to show. Unfortunately, in the general case it is not always true that $\eta(f)=0$ for all $f\in\OX_X(X)$. However, we will see that we can replace our representation in a way that the previous developments hold. In order to do this, we will use some techniques from classical deformation theory of associative algebras. In particular, until the end of the proof, the symbol $\operatorname{HH}^{\bullet}(-)$ will denote the Hochschild cohomology of associative algebras, and the \emph{Ext} functors are calculated in the corresponding categories of modules.\\

First, choose an ordered $\OX_X(X)$-basis of $\mathcal{T}_{X/K}(X)$. As shown above, this induces a split $\rho_{\omega}:\wideparen{\D}_X(X)\rightarrow \wideparen{\D}_{t\omega}(X)$. Notice that by construction $\rho_{\omega}$ is an algebra homomorphism when restricted to $\OX_X(X)$. In particular, the trivial deformation $\OX_X(X)^{\epsilon}$
is a subalgebra of $\wideparen{\D}_{t\omega}(X)$. Hence, restriction along the inclusion $\OX_X(X)^{\epsilon}\rightarrow \wideparen{\D}_{t\omega}(X)$
endows $\OX_X(X)^{\epsilon}$ with a new $\OX_X(X)^{\epsilon}$-module structure. Furthermore, using the formulas above, and the fact that this structure is $K^{\epsilon}$-linear, it follows that it is completely determined by the following formula for varying $f,g\in \OX_X(X)$:
\begin{equation*}
    f(g):=\varphi(f)(g)=(f+t\eta(f))(g)=fg+t\eta(f)(g).
\end{equation*}
Denote this new module structure by $\OX_X(X)^{\varphi}$. Notice that, reducing modulo $t$, we recover the usual action of $\OX_X(X)$ on itself. Thus, it follows that $\OX_X(X)^{\varphi}$ is an infinitesimal deformation of $\OX_X(X)$ as a module over itself. As shown in \cite[Proposition 3.9.3.]{deformationsalg}, the isomorphism classes of infinitesimal deformations of $\OX_X(X)$ as a module over itself are parameterized by the following space:
\begin{equation*}
\operatorname{HH}^1(\OX_X(X),\operatorname{End}_K(\OX_X(X)))=\operatorname{Ext}^1_{\OX_X(X)}(\OX_X(X),\OX_X(X))=0.
\end{equation*}
In particular, $\OX_X(X)^{\varphi}$ is induced by the following $2$-cocycle:
\begin{equation*}
    \OX_X(X)\rightarrow \End_K(\OX_X(X)),\, f\mapsto \eta(f):\OX_X(X)\rightarrow \OX(X).
\end{equation*}
The fact that its cohomology class in $\operatorname{HH}^1(\OX_X(X),\End_K(\OX_X(X)))$ is trivial implies that this 2-cocycle is contained in the image of the zeroth boundary map of the Hochschild complex:
\begin{equation*}
    \End_K(\OX_X(X))\rightarrow \Hom_K(\OX_X(X),\End_K(\OX_X(X))), \, \psi\mapsto (f\mapsto \psi(f-)-f\psi(-)).
\end{equation*}
Thus, there is some $K$-linear endomorphism $\psi:\OX_X(X)\rightarrow \OX_X(X)$ satisfying that:
\begin{equation*}
   \Psi: \OX_X(X)^{\varphi}\rightarrow \OX_X(X)^{\epsilon},\, x+ty\mapsto x+t(\psi(x)+y),
\end{equation*}
is an $\OX_X(X)^{\epsilon}$-linear isomorphism. Thus, we can use the isomorphism of $K$-algebras:
\begin{equation*}
    \End_K(\OX_X(X)^{\varphi})\rightarrow \End_K(\OX_X(X)^{\epsilon}), \, f\mapsto  \Psi\circ f\circ \Psi^{-1},
\end{equation*}
to obtain a $\wideparen{\D}_{t\omega}(X)$-module structure on $\OX_X(X)^{\epsilon}$. As $\Psi$ is $\OX_X(X)^{\epsilon}$-linear, the action of $\OX_X(X)^{\epsilon}$ on $\OX_X(X)^{\epsilon}$ obtained in this way is the canonical one. In particular, $\eta(f)=0$ for all $f\in \OX_X(X)$.
\end{proof}
\begin{coro}\label{coro existence of comparison map}
There is a $K$-linear monomorphism:
\begin{equation*}
    c_2:\operatorname{HH}^{2}(\wideparen{\D}_X)\rightarrow \operatorname{HH}^2_T(\wideparen{\D}_X(X)), \, [\omega]\mapsto [\wideparen{\D}_{t\omega}(X)].
\end{equation*}
\end{coro}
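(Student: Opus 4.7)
The plan is to show that the $K$-linear map $c_2 \colon \Omega_{X/K}^{2,\mathrm{cl}}(X) \to \operatorname{HH}_{\operatorname{T}}^2(\wideparen{\D}_X(X))$ constructed in the discussion above descends to the quotient $\operatorname{H}^2_{\operatorname{dR}}(X)^b = \Omega_{X/K}^{2,\mathrm{cl}}(X)/d\Omega_{X/K}^1(X)$ and is injective there, both statements being essentially immediate consequences of the preceding proposition together with the identification $\operatorname{HH}^2(\wideparen{\D}_X) = \operatorname{H}^2_{\operatorname{dR}}(X)^b$ from Theorem \ref{teo hochschild cohomology groups as de rham cohomology groups}.

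First, I would verify that $c_2$ is well defined on cohomology classes. The map $\omega \mapsto [\wideparen{\D}_{t\omega}(X)]$ is $K$-linear by the explicit formula for the $2$-cocycle $c_2(\omega)$ derived from the split $\wideparen{\rho}_\omega$, so it suffices to check that $c_2$ vanishes on exact forms. If $\omega = d\beta$ for some $\beta \in \Omega_{X/K}^1(X)$, then $[\omega] = 0$ in $\operatorname{H}^2_{\operatorname{dR}}(X)^b$; by the preceding proposition, the Hochschild extension $\wideparen{\D}_{t\omega}(X)$ is equivalent to the trivial extension $\wideparen{\D}_X(X)^{\epsilon}$, so $c_2(\omega) = 0$ in $\operatorname{HH}_{\operatorname{T}}^2(\wideparen{\D}_X(X))$ by Theorem \ref{teo interpretation of Taylor cohomology groups for n=2}. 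Hence $c_2$ factors through a well-defined $K$-linear map
\begin{equation*}
c_2 \colon \operatorname{H}^2_{\operatorname{dR}}(X)^b \longrightarrow \operatorname{HH}_{\operatorname{T}}^2(\wideparen{\D}_X(X)), \qquad [\omega]\mapsto [\wideparen{\D}_{t\omega}(X)].
\end{equation*}

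For injectivity, suppose $c_2([\omega]) = 0$, i.e.\ $\wideparen{\D}_{t\omega}(X)$ is a trivial Hochschild extension of $\wideparen{\D}_X(X)$. Applying the converse direction of the preceding proposition gives $[\omega] = 0$ in $\operatorname{H}^2_{\operatorname{dR}}(X)^b$. Combining this with Theorem \ref{teo hochschild cohomology groups as de rham cohomology groups} to rewrite the source as $\operatorname{HH}^2(\wideparen{\D}_X)$ yields the desired $K$-linear monomorphism $c_2 \colon \operatorname{HH}^2(\wideparen{\D}_X) \to \operatorname{HH}_{\operatorname{T}}^2(\wideparen{\D}_X(X))$, sending $[\omega]$ to $[\wideparen{\D}_{t\omega}(X)]$ by construction.

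The only mildly subtle point is checking that the map $c_2$ is genuinely $K$-linear on $\Omega_{X/K}^{2,\mathrm{cl}}(X)$ before passing to cohomology; this however follows directly from the explicit presentation (\ref{equation conditions defining deformation}) of $\D_{t\omega}(X)$ (the defining relations depend $K$-linearly on $\omega$) and from the fact that the bounded split $\wideparen{\rho}_\omega$ is independent of $\omega$ when expressed in the fixed ordered basis used in the proof of Proposition \ref{prop construction of Hochschild extensions}, so that the cocycle $(a,b) \mapsto \wideparen{\rho}_\omega(a)\wideparen{\rho}_\omega(b) - \wideparen{\rho}_\omega(ab)$ is $K$-linear in $\omega$ at the chain level. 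No step presents a real obstacle, since all the hard analytic and homological content has been packaged into the preceding proposition.
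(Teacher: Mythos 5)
Your proposal is correct and follows essentially the same route as the paper: the corollary is stated there as an immediate consequence of the preceding proposition (the equivalence ``$\wideparen{\D}_{t\omega}(X)$ trivial $\iff [\omega]=0$ in $\operatorname{H}^2_{\operatorname{dR}}(X)^b$''), with well-definedness coming from the forward implication and injectivity from the converse, exactly as you argue. Your additional remark on the $K$-linearity of $\omega\mapsto c_2(\omega)$ at the chain level (using $t^2=0$ to kill higher-order terms) is a reasonable elaboration of a point the paper leaves implicit.
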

We remark that the previous result was first obtained in the setting of complex affine varieties by P. Etingof in \cite[Lemma 2.5]{etingof2004cherednik}, and that our proof closely follows his. We regard the fact that the algebraic proof can be adapted to the bornological setting as a hint at the idea that, once the proper homological tools have been established, the deformation theory of complete bornological algebras should behave like the classical deformation theory of associative $K$-algebras.\\

Finally, we can use these results to show the following theorem:
\begin{teo}\label{teo comparison map on degree 2}
Assume $\operatorname{H}_{\operatorname{dR}}^{2}(X)$ is finite dimensional. The following  isomorphisms hold:
\begin{equation*}
    \operatorname{HH}^{2}(\wideparen{\D}_X)=\operatorname{H}_{\operatorname{dR}}^{2}(X)^b=\mathscr{E}xt(\wideparen{\D}_X(X))=\operatorname{HH}_{\operatorname{T}}^2(\wideparen{\D}_X(X)).
\end{equation*}
Furthermore, $ c_2:\operatorname{HH}^{2}(\wideparen{\D}_X)\rightarrow \operatorname{HH}^2_T(\wideparen{\D}_X(X))$ is an isomorphism.
\end{teo}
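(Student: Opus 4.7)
The plan is to establish surjectivity of $c_2$ via a dimension count driven by the Taylor--Hochschild spectral sequence. The remaining identifications in the statement are already in hand: Theorem \ref{teo hochschild cohomology groups as de rham cohomology groups} supplies $\operatorname{HH}^{2}(\wideparen{\D}_X)=\operatorname{H}_{\operatorname{dR}}^{2}(X)^b$, Theorem \ref{teo interpretation of Taylor cohomology groups for n=2} gives $\operatorname{HH}_{\operatorname{T}}^{2}(\wideparen{\D}_X(X))=\mathscr{E}xt(\wideparen{\D}_X(X))$, and Corollary \ref{coro existence of comparison map} provides the injection $c_2$. Under our hypothesis, $\operatorname{HH}^{2}(\wideparen{\D}_X)$ is finite-dimensional, so the whole problem reduces to the inequality $\dim_K \operatorname{HH}_{\operatorname{T}}^{2}(\wideparen{\D}_X(X))\leq \dim_K \operatorname{HH}^{2}(\wideparen{\D}_X)$.

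First, I would invoke Lemma \ref{Lemma comparison map second cohomology group} to get the short exact sequence
\begin{equation*}
  0\rightarrow E_3^{1,1}\rightarrow I(\operatorname{HH}^{2}(\wideparen{\D}_X))\rightarrow \operatorname{H}^2(\mathcal{L}_I(\wideparen{\D}_X(X))^{\bullet})\rightarrow 0
\end{equation*}
in $LH(\widehat{\mathcal{B}}c_K)$, together with the fact that $E_3^{1,1}$ lies in the essential image of $I$. Since $I(\operatorname{HH}^{2}(\wideparen{\D}_X))$ is a finite-dimensional object of $LH(\widehat{\mathcal{B}}c_K)$, so are its subobject $E_3^{1,1}$ and its quotient $\operatorname{H}^2(\mathcal{L}_I(\wideparen{\D}_X(X))^{\bullet})$. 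Writing $E_3^{1,1}=I(W)$ with $W$ a finite-dimensional complete bornological space, full faithfulness of $I$ promotes the monomorphism $I(W)\rightarrow I(\operatorname{HH}^{2}(\wideparen{\D}_X))$ to a monomorphism $W\rightarrow \operatorname{HH}^{2}(\wideparen{\D}_X)$ in $\widehat{\mathcal{B}}c_K$.

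Because $W$ and $\operatorname{HH}^{2}(\wideparen{\D}_X)$ are finite-dimensional, and finite-dimensional complete bornological spaces carry the unique Hausdorff bornology, this monomorphism is automatically strict (finite-dimensional subspaces being closed). The criterion in Lemma \ref{Lemma comparison map second cohomology group} then forces $\delta_1$ itself to be strict, and the second half of the same lemma yields
\begin{equation*}
  \operatorname{HH}_{\operatorname{T}}^{2}(\wideparen{\D}_X(X))=\operatorname{H}^2(\mathcal{L}(\wideparen{\D}_X(X))^{\bullet})
\end{equation*}
as a complete bornological space, whose image under $I$ is precisely $\operatorname{H}^2(\mathcal{L}_I(\wideparen{\D}_X(X))^{\bullet})$.

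Combining these identifications, $\operatorname{HH}_{\operatorname{T}}^{2}(\wideparen{\D}_X(X))$ has the same underlying $K$-vector space as a quotient of $\operatorname{HH}^{2}(\wideparen{\D}_X)$, giving the desired inequality of dimensions. Together with the injectivity of $c_2$ this forces bijectivity, and since both sides are finite-dimensional the resulting bijection is automatically an isomorphism in $\widehat{\mathcal{B}}c_K$. The pivotal and most delicate step is the strictness verification for $\delta_1$ through the finite-dimensional identification of $E_3^{1,1}$; once this is in place, everything else collapses into a routine dimension count that does not require any further explicit analysis of cocycles or of singular extensions.
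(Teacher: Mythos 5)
Your proposal is correct and follows essentially the same route as the paper: invoke Lemma \ref{Lemma comparison map second cohomology group} together with the finite-dimensionality of $\operatorname{HH}^{2}(\wideparen{\D}_X)$ to bound $\dim_K\operatorname{HH}_{\operatorname{T}}^{2}(\wideparen{\D}_X(X))$ from above, and then let the injectivity of $c_2$ from Corollary \ref{coro existence of comparison map} force bijectivity. The only difference is one of detail: the paper states the dimension inequality directly, whereas you justify it by explicitly checking that the monomorphism $E_3^{1,1}\rightarrow I(\operatorname{HH}^{2}(\wideparen{\D}_X))$ comes from a strict monomorphism of finite-dimensional spaces, hence that $\delta_1$ is strict — a correct and worthwhile elaboration of the same argument.
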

\begin{proof}
If the conditions of the theorem hold, then $\operatorname{HH}^{2}(\wideparen{\D}_X)$ is a finite-dimensional $K$-vector space. Hence,  by Lemma \ref{Lemma comparison map second cohomology group} 
the space $\operatorname{HH}_{\operatorname{T}}^2(\wideparen{\D}_X(X))$ is also finite dimensional, and we have:
\begin{equation*}
    \operatorname{dim}(\operatorname{HH}^{2}(\wideparen{\D}_X))\geq \operatorname{HH}_{\operatorname{T}}^2(\wideparen{\D}_X(X)).
\end{equation*}
However, as $c_2:\operatorname{HH}^{2}(\wideparen{\D}_X)\rightarrow \operatorname{HH}^2_T(\wideparen{\D}_X(X))$ is injective, it follows that $c_2$ must be an isomorphism.
\end{proof}
As a consequence, we get the following two Corollaries:
\begin{coro}
Assume $\operatorname{H}_{\operatorname{dR}}^{2}(X)$ is finite dimensional. Then there is an isomorphism:
\begin{equation*}
    \mathscr{E}xt(\D_X(X))\rightarrow \mathscr{E}xt(\wideparen{\D}_X(X)).
\end{equation*}
\end{coro}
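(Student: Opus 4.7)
The plan is to deduce the isomorphism by chaining together the identifications already established in the paper with Gerstenhaber's classical interpretation of $\operatorname{HH}^2$ for associative $K$-algebras, and then to upgrade this abstract chain to a canonical, explicit map via analytification of twisted differential operators.

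First I would invoke Gerstenhaber's theorem for the noetherian associative $K$-algebra $\D_X(X)$, which identifies $\mathscr{E}xt(\D_X(X))$ with the second Hochschild cohomology $\operatorname{HH}^2(\D_X(X))$. Next, combining this with Corollary \ref{coro hochschild cohomology of analytification of smooth affine varieties}, I would assemble the chain of $K$-linear isomorphisms
\begin{equation*}
\mathscr{E}xt(\D_X(X)) \;=\; \operatorname{HH}^2(\D_X(X)) \;=\; \operatorname{H}_{\operatorname{dR}}^2(X) \;=\; \operatorname{H}_{\operatorname{dR}}^2(X^{\operatorname{an}}),
\end{equation*}
where the last equality is the Grosse-Kl\"onne comparison. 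On the analytic side, the finiteness hypothesis on $\operatorname{H}_{\operatorname{dR}}^2$ allows us to apply Theorem \ref{teo comparison map on degree 2}, giving $\mathscr{E}xt(\wideparen{\D}_X(X)) = \operatorname{H}_{\operatorname{dR}}^2(X^{\operatorname{an}})^b$. Since the space is finite-dimensional, the bornologification is an equivalence on the underlying vector space, so composing these isomorphisms gives the desired abstract isomorphism.

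To render this canonical rather than a mere bookkeeping exercise, I would describe the comparison map explicitly. Given an algebraic closed 2-form $\omega \in \Omega_{X/K}^{2,\operatorname{cl}}(X)$, the construction of Definition \ref{defi deformations} can be carried out both for the algebraic algebra $\D_X(X)$ (yielding an infinitesimal deformation $\D_{t\omega}(X)$) and, after passing to the analytification $\omega^{\operatorname{an}} \in \Omega_{X^{\operatorname{an}}/K}^{2,\operatorname{cl}}(X^{\operatorname{an}})$, for $\wideparen{\D}_X(X)$ (yielding $\wideparen{\D}_{t\omega^{\operatorname{an}}}(X^{\operatorname{an}})$). The defining relations
\begin{equation*}
[\mathbb{L}_v,\mathbb{L}_w] \;=\; \mathbb{L}_{[v,w]} + t\,\omega(v,w)
\end{equation*}
are symbolically identical on both sides, so the prescription $[\D_{t\omega}(X)] \mapsto [\wideparen{\D}_{t\omega^{\operatorname{an}}}(X^{\operatorname{an}})]$ furnishes a natural map between the two sets of equivalence classes of infinitesimal deformations.

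The main obstacle is verifying that this explicit analytification map coincides with the abstract isomorphism obtained from the chain above. Concretely, one needs to check that Etingof's algebraic comparison map $c_2^{\operatorname{alg}}\colon \operatorname{H}_{\operatorname{dR}}^2(X) \to \mathscr{E}xt(\D_X(X))$ and the analytic map $c_2$ from Corollary \ref{coro existence of comparison map} are intertwined by the Grosse-Kl\"onne identification $\operatorname{H}_{\operatorname{dR}}^2(X) = \operatorname{H}_{\operatorname{dR}}^2(X^{\operatorname{an}})$. This reduces to observing that both $c_2^{\operatorname{alg}}$ and $c_2$ are constructed from the \emph{same} closed 2-form via the \emph{same} presentation in terms of generators and relations (equations \eqref{equation relations defining tdo} and \eqref{equation conditions defining deformation}), and that the analytification functor sends algebraic twisted differential operators to their rigid-analytic counterparts; both sides being finite-dimensional, injectivity and surjectivity of the resulting map are automatic from Theorem \ref{teo comparison map on degree 2}.
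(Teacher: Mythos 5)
Your argument proves the wrong statement. In this section $X$ is a smooth Stein space with an \'etale map $X\rightarrow\mathbb{A}^r_K$, and $\D_X(X)$ denotes the dense subalgebra of finite-order differential operators inside $\wideparen{\D}_X(X)$ (the algebra with the PBW decomposition $\bigoplus_{\underline{n}}\OX_X(X)\mathbb{L}_{v_1}^{\underline{n}(1)}\cdots\mathbb{L}_{v_r}^{\underline{n}(r)}$ used in Proposition \ref{prop construction of Hochschild extensions}), equipped with the bornology induced from $\wideparen{\D}_X(X)$; no algebraic variety or analytification appears. Your chain $\mathscr{E}xt(\D_X(X))=\operatorname{HH}^2(\D_X(X))=\operatorname{H}^2_{\operatorname{dR}}(X)=\operatorname{H}^2_{\operatorname{dR}}(X^{\operatorname{an}})$ presupposes that $X$ is a smooth affine algebraic variety: only then is $\D_X(X)$ noetherian and only then do Gerstenhaber's and Etingof's theorems, and the Grosse-Kl\"onne comparison, apply. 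For a general Stein space $\OX_X(X)$ and $\D_X(X)$ are not noetherian, and $\mathscr{E}xt(\D_X(X))$ is moreover a set of \emph{bornological} singular extensions (bounded multiplication, bounded splitting), which Gerstenhaber's classical theorem does not compute. What you have written is in substance a proof of part (iii) of Corollary \ref{coro invartiants and analytification}, the next statement in the paper, not of this one.

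The intended argument stays entirely on the rigid-analytic side: the comparison map $c_2$ of Corollary \ref{coro existence of comparison map} factors as $\operatorname{H}^2_{\operatorname{dR}}(X)^b\rightarrow\mathscr{E}xt(\D_X(X))\rightarrow\mathscr{E}xt(\wideparen{\D}_X(X))$, because Proposition \ref{prop construction of Hochschild extensions} first builds the singular extension $\D_{t\omega_{\beta}}(X)$ of the uncompleted algebra and only afterwards applies the bornological completion functor. Since the composite is bijective by Theorem \ref{teo comparison map on degree 2}, the completion map $\mathscr{E}xt(\D_X(X))\rightarrow\mathscr{E}xt(\wideparen{\D}_X(X))$ is surjective; what remains is injectivity (equivalently, that triviality of $\wideparen{\D}_{t\omega}(X)$ forces triviality of $\D_{t\omega}(X)$, or that $c_2$ already surjects onto $\mathscr{E}xt(\D_X(X))$), which is the analogue of the density computation the paper carries out explicitly for the map $\Outder(\D_X(X))\rightarrow\Outder(\wideparen{\D}_X(X))$ after Corollary \ref{coro c1 iso fd case}. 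Any correct proof has to engage with $\D_X(X)$ as a bornological subalgebra of $\wideparen{\D}_X(X)$ rather than route through algebraic de Rham cohomology and GAGA.
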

\begin{coro}\label{coro invartiants and analytification}
Let $X$ be a smooth affine $K$-variety with an étale map $X\rightarrow\mathbb{A}^r_K$.\newline The following identities hold:
\begin{enumerate}[label=(\roman*)]
    \item $\operatorname{Z}(\D_X(X))=\operatorname{Z}(\wideparen{\D}_{X^{\operatorname{an}}}(X^{\operatorname{an}}))$.
    \item  $\Outder(\D_X(X))=\Outder(\wideparen{\D}_{X^{\operatorname{an}}}(X^{\operatorname{an}}))$.
    \item $\mathscr{E}xt(\D_X(X))=\mathscr{E}xt(\wideparen{\D}_{X^{\operatorname{an}}}(X^{\operatorname{an}}))$.
\end{enumerate}
Furthermore, the space of bounded derivations $\Der_K(\wideparen{\D}_{X^{\operatorname{an}}}(X^{\operatorname{an}}))$ is a nuclear Fréchet space.
\end{coro}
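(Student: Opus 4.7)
My plan is to chain together three sets of inputs: the analytic identifications established in this paper, the classical algebraic identifications of Gerstenhaber--Etingof, and the comparison between algebraic and analytic de Rham cohomology of smooth affine $K$-varieties. First I would observe that, since $X$ admits an étale map $X \to \mathbb{A}^r_K$, its rigid analytification $X^{\operatorname{an}}$ is a smooth Stein space with an étale map $X^{\operatorname{an}} \to \mathbb{A}^r_K$, so every hypothesis of the results in Section \ref{TH groups of D} is available. By Monsky's finiteness theorem, $\operatorname{H}^{\bullet}_{\operatorname{dR}}(X)$ is finite-dimensional, and the rigid analytic GAGA isomorphism $\operatorname{H}^{\bullet}_{\operatorname{dR}}(X) = \operatorname{H}^{\bullet}_{\operatorname{dR}}(X^{\operatorname{an}})$ (used already in Corollary \ref{coro hochschild cohomology of analytification of smooth affine varieties}) then guarantees that $\operatorname{H}^n_{\operatorname{dR}}(X^{\operatorname{an}})^b$ is finite-dimensional for every $n$, so in particular the finiteness assumption of Theorem \ref{teo comparison map on degree 2} is in force.

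Next I would unpack (i), (ii) and (iii) on the analytic side. Statement (i) is immediate from Proposition \ref{prop computation 0th taylor cohomology group}, which already identifies $\operatorname{Z}(\wideparen{\D}_{X^{\operatorname{an}}}(X^{\operatorname{an}}))$ with $\operatorname{H}^0_{\operatorname{dR}}(X^{\operatorname{an}})^b$. Statement (ii) follows from Corollary \ref{coro dr and outer}, which gives $\operatorname{Outder}(\wideparen{\D}_{X^{\operatorname{an}}}(X^{\operatorname{an}})) = \operatorname{H}^1_{\operatorname{dR}}(X^{\operatorname{an}})^b$. Statement (iii) uses Theorem \ref{teo comparison map on degree 2}, which under the finiteness just verified yields $\mathscr{E}xt(\wideparen{\D}_{X^{\operatorname{an}}}(X^{\operatorname{an}})) = \operatorname{H}^2_{\operatorname{dR}}(X^{\operatorname{an}})^b$. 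Thus all three invariants of $\wideparen{\D}_{X^{\operatorname{an}}}(X^{\operatorname{an}})$ are identified with the corresponding de Rham spaces of $X^{\operatorname{an}}$.

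For the algebraic side I would quote the classical counterparts of these three identifications for $\D_X(X)$: namely $\operatorname{HH}^0(\D_X(X)) = \operatorname{Z}(\D_X(X))$, $\operatorname{HH}^1(\D_X(X)) = \operatorname{Outder}(\D_X(X))$, $\operatorname{HH}^2(\D_X(X)) = \mathscr{E}xt(\D_X(X))$, together with Etingof's identification $\operatorname{HH}^{\bullet}(\D_X(X)) = \operatorname{H}^{\bullet}_{\operatorname{dR}}(X)$ (recalled in Corollary \ref{coro hochschild cohomology of analytification of smooth affine varieties} via \cite[Proposition 2.3]{etingof2004cherednik}). Combining these with the analytic identifications of the previous paragraph and the GAGA comparison $\operatorname{H}^n_{\operatorname{dR}}(X) = \operatorname{H}^n_{\operatorname{dR}}(X^{\operatorname{an}})$ for $n = 0, 1, 2$ produces the desired isomorphisms (i)--(iii). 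Finally, since $\operatorname{H}^1_{\operatorname{dR}}(X^{\operatorname{an}})^b$ is finite-dimensional, Corollary \ref{coro Der is nf fd-case} applies and gives that $\Der_K(\wideparen{\D}_{X^{\operatorname{an}}}(X^{\operatorname{an}}))$ is a nuclear Fréchet space.

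No step is a genuine obstacle: the finiteness theorem of Monsky plus rigid analytic GAGA for de Rham cohomology of smooth affine varieties are the only external inputs, and all the nontrivial analytic content has already been done in Section \ref{TH groups of D}. The mildest subtlety is verifying that the isomorphisms produced on the two sides match up as $K$-vector spaces (not merely have the same dimension), but this is automatic from the explicit descriptions of $c_1$ and $c_2$ in Proposition \ref{prop explicit derivations} and Theorem \ref{teo comparison map on degree 2}, together with the compatibility of $\D_X(X) \hookrightarrow \wideparen{\D}_{X^{\operatorname{an}}}(X^{\operatorname{an}})$ with the constructions of derivations and infinitesimal deformations on both sides.
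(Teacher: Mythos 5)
Your proposal is correct and follows essentially the same route as the paper: the paper's own proof is the one-line statement that the corollary follows from Corollary \ref{coro hochschild cohomology of analytification of smooth affine varieties} together with the results of Section \ref{TH groups of D}, and your argument is precisely the intended unwinding of that — Monsky finiteness plus rigid GAGA to put the finiteness hypotheses in force, then Proposition \ref{prop computation 0th taylor cohomology group}, Corollary \ref{coro dr and outer}, Theorem \ref{teo comparison map on degree 2} and Corollary \ref{coro Der is nf fd-case} on the analytic side, matched against the Gerstenhaber--Etingof identifications on the algebraic side. Your closing remark about checking that the identifications are compatible via $c_1$, $c_2$ and the inclusion $\D_X(X)\hookrightarrow\wideparen{\D}_{X^{\operatorname{an}}}(X^{\operatorname{an}})$ is a point the paper leaves implicit, so flagging it is a small improvement rather than a deviation.
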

\begin{proof}
This is a consequence of Corollary \ref{coro hochschild cohomology of analytification of smooth affine varieties} and the results of this section. 
\end{proof}
Thus, we have completely clarified the structure of infinitesimal deformations of $\wideparen{\D}_X(X)$, at least  in the case where $\operatorname{H}_{\operatorname{dR}}^{2}(X)$ is finite dimensional. For the general case, the arguments of Theorem \ref{teo comparison map on degree 2} do not hold, and a complete treatment is out of the scope of this paper. However, let us mention a possible approach. First, notice that  by the proof of \cite[Corollary 3.2]{grosse2004rham}, every smooth Stein space $X$ admits an admissible cover $\{U_i\}_{i\geq 0}$ satisfying that $U_i\subset U_{i+1}$ for all $i\geq 0$, and such that each $U_i$ is a smooth Stein space with finite-dimensional de Rham cohomology. Second, the construction of the comparison map $c_2$ from Corollary \ref{coro existence of comparison map} is functorial, and does not require that $\operatorname{H}_{\operatorname{dR}}^{2}(X)$ is finite-dimensional. Hence, a possible approach to the general case could arise from establishing a theory of infinitesimal deformations at the level of sheaves, and using that de Rham cohomology parametrizes such deformations in a cover. This type of study has been carried out in the setting of smooth algebraic $K$-varieties by A. Vitanov in \cite{vitanovdeformation}.\\

As we advertised at the beginning of the section, we will now use our understanding of the infinitesimal deformations of $\wideparen{\D}_X(X)$ to construct a $K$-linear comparison map:
\begin{equation*}
    c_1:\operatorname{H}^{1}_{\operatorname{dR}}(X)^b\xrightarrow[]{}\operatorname{HH}_{\operatorname{T}}^1(\wideparen{\D}_X(X)).
\end{equation*}
Let us start with the following proposition:
\begin{prop}\label{prop explicit derivations}
Every $\lambda \in \Omega_{X/K}^{1,cl}(X)$ induces a  unique bounded $K$-linear derivation:
\begin{equation*}
    c_1(\lambda):\wideparen{\D}_X(X)\rightarrow \wideparen{\D}_X(X),
\end{equation*}
defined for all $f\in \OX_X(X)$, and all $v\in\mathcal{T}_{X/K}(X)$ by the following formulas:
\begin{equation*}
    c_1(\lambda)(f)=0,\, c_1(\lambda)(v)=\lambda(v).
\end{equation*}
\end{prop}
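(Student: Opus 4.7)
The strategy parallels the construction of $c_2$: rather than defining $c_1(\lambda)$ directly, I will build an infinitesimal automorphism $\varphi_\lambda$ of the trivial deformation $\wideparen{\D}_X(X)^{\epsilon}$ and then invoke Corollary \ref{coro infinitesimal deformations} to obtain $c_1(\lambda)$ as the degree-one coefficient in $t$ of $\varphi_\lambda - \operatorname{Id}$. Concretely, I want $\varphi_\lambda(f) = f$ for $f \in \OX_X(X)$ and $\varphi_\lambda(\mathbb{L}_v) = \mathbb{L}_v + t\lambda(v)$ for $v \in \mathcal{T}_{X/K}(X)$, and then $c_1(\lambda)$ will automatically satisfy the required formulas $c_1(\lambda)(f)=0$ and $c_1(\lambda)(v)=\lambda(v)$.

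To justify that these assignments extend to a well-defined $K^{\epsilon}$-algebra endomorphism of $\D_X(X)^{\epsilon}$, I would check the defining relations of $\D_X(X)$. The relation $[\mathbb{L}_v, f] = v(f)$ survives because the extra $t$-term $t\lambda(v)$ commutes with $\OX_X(X)$, while the Lie bracket relation becomes
\begin{equation*}
[\mathbb{L}_v + t\lambda(v),\; \mathbb{L}_w + t\lambda(w)] = \mathbb{L}_{[v,w]} + t\bigl(v(\lambda(w)) - w(\lambda(v))\bigr),
\end{equation*}
which equals $\varphi_\lambda(\mathbb{L}_{[v,w]}) = \mathbb{L}_{[v,w]} + t\lambda([v,w])$ precisely when $\lambda$ is closed, thanks to the Cartan formula $d\lambda(v,w) = v(\lambda(w)) - w(\lambda(v)) - \lambda([v,w])$. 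This is the only place in the verification where the closedness hypothesis is used.

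The remaining technical task is to promote $\varphi_\lambda$ to a bounded morphism on the completion $\wideparen{\D}_X(X)^{\epsilon}$. Using the Fréchet-Stein presentation $\wideparen{\D}_X(X) = \varprojlim_n \widehat{\D}_{n,m(n)}$ of Theorem \ref{teo global sections of co-admissible modules on Stein spaces}, it suffices to show that $\varphi_\lambda$ descends compatibly to each $\widehat{\D}_{n,m(n)}^{\epsilon}$. Since $\lambda \in \Omega^1_{X/K}(X)$ is a bounded $\OX_X(X)$-linear functional on $\mathcal{T}_{X/K}(X)$, its restrictions send the free Lie lattices $\pi^{m(n)}\mathcal{T}_n$ into lattices of the corresponding affine formal models, so (possibly after a controlled power-of-$\pi$ rescaling) $\varphi_\lambda$ is compatible with the $\pi$-adic completions in each step of the inverse limit. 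Since $\varphi_\lambda$ reduces to the identity modulo $t$ and $t^2 = 0$, it is automatically invertible with inverse $\varphi_{-\lambda}$, hence an infinitesimal automorphism.

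Uniqueness is then immediate: any bounded derivation is determined by its restriction to the topological generators $\OX_X(X)$ and $\mathcal{T}_{X/K}(X)$ of $\wideparen{\D}_X(X)$, so the formulas $c_1(\lambda)(f)=0$ and $c_1(\lambda)(v)=\lambda(v)$ pin down $c_1(\lambda)$ uniquely. The main obstacle in the argument is the boundedness step, where one must verify that the prescription for $\varphi_\lambda$ on generators is compatible with the Fréchet-Stein lattices in a controlled way across the inverse system; all other verifications reduce to the closedness of $\lambda$ and the Leibniz rule.
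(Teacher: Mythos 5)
Your proposal follows essentially the same route as the paper's proof: both reduce the statement, via Corollary \ref{coro infinitesimal deformations}, to constructing an infinitesimal automorphism $\varphi$ of the trivial deformation with $\varphi(f)=f$ and $\varphi(\mathbb{L}_v)=\mathbb{L}_v+t\lambda(v)$, and both verifications hinge on the closedness of $\lambda$ to preserve the bracket relation $[\mathbb{L}_v,\mathbb{L}_w]=\mathbb{L}_{[v,w]}$, with uniqueness following from density of $\D_X(X)$ in $\wideparen{\D}_X(X)$. The paper carries out the construction on the uncompleted algebra via the presentation of $\D_X(X)^{\epsilon}$ as a quotient of $\operatorname{Sym}_{K^{\epsilon}}(\mathcal{A}_{t0}(X))$ and treats boundedness more briefly than your lattice-by-lattice discussion, but this is a presentational difference, not a different argument.
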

\begin{proof}
Assume that the formulas given above make $c_1(\lambda)$ a $K$-linear derivation. Then  $c_1(\lambda)$ is bounded. As $\D_X(X)$ is dense in $\wideparen{\D}_X(X)$, 
it follows that $c_1(\lambda)$ is completely determined by its values on $\D_X(X)$. However, $\D_X(X)$ is generated as an algebra by $\OX_X(X)$ and $\mathcal{T}_{X/K}(X)$, so the formulas above uniquely determine $c_1(\lambda)$. Further, by definition of a derivation, for $x,y\in\D_X(X)$ we have:
\begin{equation*}
    c_1(\lambda)(xy)=xc_1(\lambda)(y)+c_1(\lambda)(x)y.
\end{equation*}
Thus, $c_1(\lambda)$ restricts to a bounded $K$-linear derivation:
\begin{equation*}
    c_1(\lambda):\D_X(X)\rightarrow \D_X(X).
\end{equation*}
Thus, we only need to show that the formulas above make $c_1(\lambda)$ a $K$-linear derivation on $\D_X(X)$. By Corollary \ref{coro infinitesimal deformations}, it suffices to construct an infinitesimal automorphism:
\begin{equation*}
   \varphi:\D_X(X)^{\epsilon}\rightarrow \D_X(X)^{\epsilon},
\end{equation*}
satisfying that for $f\in \OX_X(X)$, and $v\in\mathcal{T}_{X/K}(X)$ we have:
\begin{equation*}
    \varphi(f)=f,\, \varphi(\mathbb{L}_v)=\mathbb{L}_v+t\lambda(v).
\end{equation*}
The advantage of this formulation is that we have a presentation of $\D_X(X)^{\epsilon}$ in terms of generators and relations, which makes it easier to define algebra homomorphisms. Indeed, as discussed above,  
$\D_X(X)^{\epsilon}$ is the quotient of $\operatorname{Sym}_{K^{\epsilon}}(\mathcal{A}_{t0}(X))$
by the relations in equation (\ref{equation conditions defining deformation}).\newline
Consider the following $K^{\epsilon}$-linear  automorphism of $\mathcal{A}_{t0}(X)$:
\begin{equation*}
    \varphi:\mathcal{A}_{t0}(X)\rightarrow \mathcal{A}_{t0}(X), 
\end{equation*}
given by $\varphi(f)=f$, and $\varphi(\mathbb{L}_v)=\mathbb{L}_v+t\lambda(v)$ for $f\in \OX_X(X)$, and $v\in\mathcal{T}_{X/K}(X)$. By the properties of $\operatorname{Sym}_{K^{\epsilon}}(-)$, this extends to an automorphism of $K^{\epsilon}$-algebras $\varphi:\operatorname{Sym}_{K^{\epsilon}}(\mathcal{A}_{t0}(X))\rightarrow \operatorname{Sym}_{K^{\epsilon}}(\mathcal{A}_{t0}(X))$, so we may consider the composition:
\begin{equation*}
    \varphi:(\mathcal{A}_{t0}(X))\rightarrow \operatorname{Sym}_{K^{\epsilon}}(\mathcal{A}_{t0}(X))\rightarrow \D_X(X)^{\epsilon}.
\end{equation*}
Hence, $\varphi$ induces an infinitesimal automorphism of $\D_X(X)$ if and only if $\varphi$ preserves the relations in equation (\ref{equation conditions defining deformation}). This follows by straightforward calculations, so we only show this for the more complicated one. Indeed, we have the following identities in $\D_X(X)^{\epsilon}$:
\begin{multline*}
\varphi(\mathbb{L}_{[v,w]})-\varphi([\mathbb{L}_v,\mathbb{L}_w])=
\mathbb{L}_{[v,w]}+t\lambda([v,w])-[\mathbb{L}_v+t\lambda(v),\mathbb{L}_w+t\lambda(w)]\\
=t\left( \lambda([v,w])-v(\lambda(w))+w(\lambda(v))\right)=0,
\end{multline*}
where the last identity follows because $\lambda$ is closed. Hence, $\varphi:\D_X(X)^{\epsilon}\rightarrow \D_X(X)^{\epsilon}$ is an infinitesimal automorphism. Again, in virtue of Corollary \ref{coro infinitesimal deformations}, it follows that for $x,y\in \D_X(X)$ we have $\varphi(x+ty)=x+t(c_1(\lambda)(x)+y)$, where $c_1(\lambda):\D_X(X)\rightarrow \D_X(X)$ is a bounded $K$-linear derivation, and by construction we have $c_1(\lambda)(f)=0$ for $f\in \OX_X(X)$, and $c_1(\lambda)(v)=\lambda(v)$ for $v\in \mathcal{T}_{X/K}(X)$.
\end{proof}
Hence, we obtain a $K$-linear monomorphism:
\begin{equation*}
    \Omega_{X/K}^{1,cl}(X)\xrightarrow[]{c_1}\Der_K^b(\wideparen{\D}_X(X)).
\end{equation*}
Furthermore, if $\lambda=df$ for some $f\in \OX_X(X)$, then we have the following identity :
\begin{equation*}
    c_1(\lambda)(v)=\lambda(v)=df(v):=[v,f],
\end{equation*}
where $v\in \mathcal{T}_{X/K}(X)$. Thus, $c_1(\lambda)$ agrees with the inner derivation induced by $f\in \OX_X(X)$. Therefore, we have shown that we have the following commutative diagram of strict short exact sequences:
\begin{equation*}
    \begin{tikzcd}
0 \arrow[r] & \Omega_{X/K}^{1,ext}(X) \arrow[r] \arrow[d,"c_1"]                      & \Omega_{X/K}^{1,cl}(X) \arrow[r] \arrow[d, "c_1"]           & \operatorname{H}^1_{\operatorname{dR}}(X)^b \arrow[r] \arrow[d, "c_1"]      & 0 \\
0 \arrow[r] & \Inn(\wideparen{\D}_X(X)) \arrow[r] & \operatorname{\Der}_K^b(\wideparen{\D}_X(X)) \arrow[r] & \operatorname{HH}^1_T(\wideparen{\D}_X(X)) \arrow[r] & 0
\end{tikzcd}
\end{equation*}
where the first two vertical maps are monomorphisms. We may use this to show the following:
\begin{prop}\label{prop comparison map dimension 1}
There is a $K$-linear monomorphism:
\begin{equation*}
    c_1:\operatorname{H}_{\operatorname{dR}}^{1}(X)^b\rightarrow \operatorname{HH}^1_{\operatorname{T}}(\wideparen{\D}_X(X)), \, [\lambda]\mapsto [c_1(\lambda)].
\end{equation*}    
\end{prop}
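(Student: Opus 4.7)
The plan is to verify well-definedness of the induced map and then apply the snake lemma to the displayed commutative diagram of strict short exact sequences, reducing injectivity to a centralizer computation in $\wideparen{\D}_X(X)$.

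For well-definedness, note that for $f\in\OX_X(X)$ the inner derivation $\delta_0(f)\colon Q\mapsto [Q,f]$ vanishes on $\OX_X(X)$ and sends $v\in\mathcal{T}_{X/K}(X)$ to $[v,f]=v(f)=df(v)$. Hence $\delta_0(f)$ and $c_1(df)$ are both bounded $K$-linear derivations of $\wideparen{\D}_X(X)$ that agree on the generators $\OX_X(X)\cup\mathcal{T}_{X/K}(X)$ of the dense subalgebra $\D_X(X)$, so they coincide by continuity. It follows that $c_1$ carries $\Omega_{X/K}^{1,ext}(X)$ into $\Inn(\wideparen{\D}_X(X))$ and therefore descends to the claimed $K$-linear map on cokernels.

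For injectivity, suppose $\lambda\in\Omega_{X/K}^{1,cl}(X)$ and $c_1(\lambda)=\delta_0(P)$ for some $P\in\wideparen{\D}_X(X)$. Evaluating at $f\in\OX_X(X)$ yields $0=c_1(\lambda)(f)=[f,P]$, so $P$ centralizes $\OX_X(X)$. Granting the claim that this centralizer equals $\OX_X(X)$ itself, evaluating at $v\in\mathcal{T}_{X/K}(X)$ gives $\lambda(v)=c_1(\lambda)(v)=[v,P]=v(P)=dP(v)$, whence $\lambda=dP$ is exact and $[\lambda]=0$ in $\operatorname{H}^1_{\operatorname{dR}}(X)^b$.

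The main obstacle is thus the centralizer claim. My plan is to use the étale chart $X\to\mathbb{A}^r_K$ to produce coordinate functions $t_1,\ldots,t_r\in\OX_X(X)$ together with a dual $\OX_X(X)$-basis $v_1,\ldots,v_r$ of $\mathcal{T}_{X/K}(X)$ satisfying $v_i(t_j)=\delta_{ij}$. Combining this with the Fréchet--Stein presentation $\wideparen{\D}_X(X)=\varprojlim_m\widehat{\D}_m$ from Theorem \ref{teo global sections of co-admissible modules on Stein spaces} and the PBW theorem for the relevant completed enveloping algebras, every element $P\in\wideparen{\D}_X(X)$ admits a convergent expansion $P=\sum_\alpha a_\alpha v^\alpha$ in the PBW basis, with $a_\alpha\in\OX_X(X)$. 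Commuting with $t_i$ shifts each multi-index down by one in its $i$-th slot, so the hypothesis $[P,t_i]=0$ for all $i$, together with uniqueness of the PBW expansion, forces $a_\alpha=0$ for every $\alpha\neq 0$. Hence $P=a_0\in\OX_X(X)$, concluding the argument.
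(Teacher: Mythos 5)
Your proposal is correct and follows essentially the same route as the paper: exact forms are sent to inner derivations (checked on the generators $\OX_X(X)\cup\mathcal{T}_{X/K}(X)$ and extended by density), and injectivity is reduced to the fact that the centralizer of $\OX_X(X)$ in $\wideparen{\D}_X(X)$ is $\OX_X(X)$ itself. The only difference is that the paper simply cites this centralizer statement from the companion paper (\cite[Proposition 7.5.2]{HochDmod}), whereas you re-derive it via the PBW expansion in étale coordinates and commutation with the coordinate functions --- an argument that is sound and that the paper itself uses in a nearby corollary.
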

\begin{proof}
In virtue of the previous commutative diagram, if suffices to show that given a closed $1$-form $\lambda$ that is not exact, the associated derivation $c_1(\lambda)$ is not an inner derivation. Assume that $c_1(\lambda)$ is inner. In this situation, there is an operator $x\in\wideparen{\D}_X(X)$ satisfying the following identity:
\begin{equation*}
    c_1(\lambda)=[x,-].
\end{equation*}
However, notice that, by construction, the derivation $c_1(\lambda)$ is $\OX_X(X)$-linear. Hence, the operator $x$ must commute with every element in $\OX_X(X)$. However, as shown in \cite[Proposition 7.5.2]{HochDmod}, this implies that $x\in\OX_X(X)$. This, in turn, implies that $\lambda$ is an exact form, a contradiction.
\end{proof}
As in the case of infinitesimal deformations, this result is most potent when applied to the case where $\operatorname{H}_{\operatorname{dR}}^{1}(X)^b$ is a finite-dimensional $K$-vector space. In this situation, we can show the following:
\begin{coro}\label{coro c1 iso fd case}
Assume $\operatorname{H}_{\operatorname{dR}}^{1}(X)^b$ is a finite-dimensional $K$-vector space. Then the map:
\begin{equation*}
    c_1:\operatorname{H}_{\operatorname{dR}}^{1}(X)^b\rightarrow \operatorname{HH}^1_{\operatorname{T}}(\wideparen{\D}_X(X)),
\end{equation*}
is a $K$-linear isomorphism.
\end{coro}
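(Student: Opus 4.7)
The plan is to reduce the statement to a dimension count. By Proposition \ref{prop comparison map dimension 1}, the map $c_1:\operatorname{H}_{\operatorname{dR}}^{1}(X)^b\rightarrow \operatorname{HH}^1_{\operatorname{T}}(\wideparen{\D}_X(X))$ is already known to be an injective $K$-linear morphism, so the only thing left to verify is surjectivity. Since the hypothesis places us in the finite-dimensional regime, it will suffice to show that the source and target have the same (finite) $K$-dimension, as an injective $K$-linear map between finite-dimensional vector spaces of equal dimension is automatically bijective.

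First I would invoke statement $(ii)$ of Proposition \ref{prop comparison map first cohomology group}, which gives the abstract identification
\begin{equation*}
\operatorname{HH}^{1}(\wideparen{\D}_X)\cong \operatorname{HH}_{\operatorname{T}}^1(\wideparen{\D}_X(X))
\end{equation*}
in $\widehat{\mathcal{B}}c_K$. Next, I would chain this with Theorem \ref{teo hochschild cohomology groups as de rham cohomology groups}, which identifies $\operatorname{HH}^{1}(\wideparen{\D}_X)$ with $\operatorname{H}_{\operatorname{dR}}^{1}(X)^b$. Together these provide a (purely homological) isomorphism of $K$-vector spaces between $\operatorname{H}_{\operatorname{dR}}^{1}(X)^b$ and $\operatorname{HH}_{\operatorname{T}}^1(\wideparen{\D}_X(X))$, so in particular
\begin{equation*}
\dim_K \operatorname{HH}_{\operatorname{T}}^1(\wideparen{\D}_X(X)) = \dim_K \operatorname{H}_{\operatorname{dR}}^{1}(X)^b < \infty.
\end{equation*}

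Finally, combining this dimension equality with the injectivity of $c_1$ from Proposition \ref{prop comparison map dimension 1} forces $c_1$ to be a $K$-linear isomorphism, as required. There is no real obstacle here: the substantive work has already been carried out in the construction of $c_1$ (Proposition \ref{prop explicit derivations}) and in the earlier identification of $\operatorname{HH}^1(\wideparen{\D}_X)$ with de Rham cohomology. The only conceptual subtlety worth emphasising in the write-up is that the abstract isomorphism provided by the chain of identifications above and the explicit isomorphism $c_1$ need not a priori coincide; what the argument shows is only that they have the same $K$-dimension, which is enough to conclude that the explicit map $c_1$ is itself bijective.
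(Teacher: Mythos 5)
Your proposal is correct and follows essentially the same route as the paper: the paper's proof cites Corollary \ref{coro dr and outer} (which is precisely the composite of Theorem \ref{teo hochschild cohomology groups as de rham cohomology groups} and Proposition \ref{prop comparison map first cohomology group}(ii) that you chain together) to get the equality of finite dimensions, and then concludes from the injectivity of $c_1$ established in Proposition \ref{prop comparison map dimension 1}. Your remark that the abstract isomorphism need not coincide with $c_1$, and that only the dimension count is used, is exactly the right way to frame the argument.
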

\begin{proof}
By Corollary \ref{coro dr and outer}, and Proposition \ref{prop comparison map dimension 1}, $c_1$ is and injective $K$-linear map between two finite-dimensional $K$-vector spaces of the same dimension. Hence, it must be an isomorphism.
\end{proof}
Along these lines, we can also show the following:
\begin{coro}
Assume $\operatorname{H}_{\operatorname{dR}}^{1}(X)^b$ is finite-dimensional.  Then there is an isomorphism:
\begin{equation*}
    \Outder(\D_X(X))\rightarrow \Outder(\wideparen{\D}_X(X)).
\end{equation*}
\end{coro}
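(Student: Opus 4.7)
The strategy is to construct an algebraic analog $c_1^{\operatorname{alg}}$ of the comparison map $c_1$ and realize the corollary as the statement that both $c_1$ and $c_1^{\operatorname{alg}}$ are isomorphisms under the finite-dimensionality hypothesis, with the map in the corollary being the natural extension by density. I note that the proof of Proposition \ref{prop explicit derivations} in fact first defines the derivation $c_1(\lambda)$ on $\D_X(X)$ from its action on the generators $\OX_X(X)$ and $\mathcal{T}_{X/K}(X)$, and then extends it to $\wideparen{\D}_X(X)$ using density and boundedness. This already provides a $K$-linear map $c_1^{\operatorname{alg}}: \Omega_{X/K}^{1,\operatorname{cl}}(X) \to \Der_K(\D_X(X))$ taking exact forms to inner derivations, hence a well-defined $c_1^{\operatorname{alg}}: \operatorname{H}_{\operatorname{dR}}^{1}(X)^b \to \Outder(\D_X(X))$.

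Next, since $\D_X(X)$ is a dense bornological subalgebra of the complete algebra $\wideparen{\D}_X(X)$, any bounded $K$-linear derivation of $\D_X(X)$ extends uniquely to a bounded derivation of $\wideparen{\D}_X(X)$, and inner derivations extend to inner derivations; this yields a natural map $\phi: \Outder(\D_X(X)) \to \Outder(\wideparen{\D}_X(X))$ fitting into a commutative triangle $c_1 = \phi \circ c_1^{\operatorname{alg}}$. Injectivity of $c_1^{\operatorname{alg}}$ follows by the argument of Proposition \ref{prop comparison map dimension 1} transferred to $\D_X(X)$: if $c_1^{\operatorname{alg}}(\lambda) = [x,-]$ for some $x \in \D_X(X)$, then $x$ commutes with $\OX_X(X)$ since $c_1^{\operatorname{alg}}(\lambda)$ vanishes on $\OX_X(X)$, and by a symbol calculation (the algebraic analog of \cite[Proposition 7.5.2]{HochDmod}, which holds by induction on the order of $x$) this forces $x \in \OX_X(X)$, whence $\lambda = dx$ is exact. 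Under the finite-dimensionality assumption, Corollary \ref{coro c1 iso fd case} guarantees that $c_1$ is an isomorphism; the factorization $c_1 = \phi \circ c_1^{\operatorname{alg}}$ then immediately gives that $\phi$ is surjective and $c_1^{\operatorname{alg}}$ is injective.

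The main obstacle is the remaining implication: showing that $\phi$ is also injective, equivalently that $c_1^{\operatorname{alg}}$ is surjective. I would approach this by a direct Lie--Rinehart argument at the level of $\D_X(X) = U(\mathcal{T}_{X/K}(X))$: given $d \in \Der_K^b(\D_X(X))$, its extension $\wideparen{d}$ is cohomologous to some $c_1(\lambda)$ by Corollary \ref{coro c1 iso fd case}, so $\wideparen{d} - c_1(\lambda) = [y,-]$ for some $y \in \wideparen{\D}_X(X)$, and the issue reduces to upgrading $y$ to lie in $\D_X(X)$ modulo the center, using the constraint $[y,\D_X(X)] \subset \D_X(X)$. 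Equivalently, one may work purely algebraically: after adjusting $d$ by an inner derivation of $\D_X(X)$, reduce to the case where $d|_{\OX_X(X)}$ takes values in $\OX_X(X)$ and defines a tangent field $v_0$, subtract $[\mathbb{L}_{v_0},-]$, then extract a closed $1$-form from the $\OX_X(X)$-linear map $d|_{\mathcal{T}_{X/K}(X)} : \mathcal{T}_{X/K}(X) \to \OX_X(X)$ whose cocycle condition follows from applying $d$ to the Lie bracket relations in $\D_X(X)$. The delicate point is ensuring that $d|_{\OX_X(X)}$ lands in $\OX_X(X)$ rather than in a higher order piece of $\D_X(X)$; this should follow by using the boundedness of $d$ to control the order of $d(f)$ uniformly on bounded subsets, combined with the commutator identity $[d(f), g] = -[f, d(g)]$ applied iteratively to a generating system of $\OX_X(X)$.
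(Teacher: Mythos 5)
Your overall architecture coincides with the paper's: factor $c_1$ through $\Der_K(\D_X(X))$, deduce surjectivity of the induced map $\phi:\Outder(\D_X(X))\to\Outder(\wideparen{\D}_X(X))$ from the fact that $c_1$ is an isomorphism (Corollary \ref{coro c1 iso fd case}), and reduce injectivity of $\phi$ to the statement that any $y\in\wideparen{\D}_X(X)$ with $[y,x]\in\D_X(X)$ for all $x\in\D_X(X)$ must already lie in $\D_X(X)$. Up to that reduction your argument is correct and matches the paper.

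The gap is that this last statement --- which you yourself flag as ``the issue'' --- is never actually proven; both routes you offer end in a heuristic. In the first route you only assert that one must ``upgrade $y$ to lie in $\D_X(X)$''. In the second, the delicate point of forcing $d|_{\OX_X(X)}$ into $\OX_X(X)$ after an inner adjustment is dispatched with ``this should follow by using the boundedness of $d$ to control the order of $d(f)$'', which is not an argument: derivations of $\D_X(X)$ genuinely do not preserve the order filtration (already on the Weyl algebra $[\partial^2,-]$ sends $x$ to $2\partial$), and producing the correct inner correction is essentially equivalent to the computation of $\operatorname{HH}^1(\D_X(X))$ that the corollary is encoding, so this route is at best circular and at worst false as stated. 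The paper closes the gap by an explicit computation in the étale coordinates coming from $X\rightarrow\mathbb{A}^r_K$: writing $f=\sum_{I}a_I\partial_{x_1}^{I(1)}\cdots\partial_{x_r}^{I(r)}\in\wideparen{\D}_X(X)$, the condition $[\partial_{x_i},f]\in\D_X(X)$ forces $\partial_{x_i}(a_I)=0$ for all sufficiently large $I$ and hence $a_I\in K$ for large $I$; then, regrouping $f=\sum_n f_n\partial_{x_i}^n$ and using the identity $x_i\partial_{x_i}^{j+1}-\partial_{x_i}^{j+1}x_i=(j+1)\partial_{x_i}^{j}$, the condition $[x_i,f]\in\D_X(X)$ forces $f_n=0$ for large $n$, whence $f\in\D_X(X)$. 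Some such concrete argument is indispensable; without it the proof is incomplete.
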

\begin{proof}
By the construction above, the map $c_1:\Omega_{X/K}^{1,cl}(X)\rightarrow \operatorname{\Der}_K(\wideparen{\D}_X(X))$ factors as:
\begin{equation*}
    \Omega_{X/K}^{1,cl}(X)\rightarrow \operatorname{\Der}_K(\D_X(X)) \rightarrow \operatorname{\Der}_K(\wideparen{\D}_X(X)).
\end{equation*}
Thus, we get an induced map $\Outder(\D_X(X))\rightarrow \Outder(\wideparen{\D}_X(X))$, which must be a surjection because $c_1:\operatorname{H}^1_{\operatorname{dR}}(X)^b\rightarrow \Outder(\wideparen{\D}_X(X))$ is an isomorphism. We just need to show that this map is injective. As a consequence of the snake Lemma, it follows that $\Outder(\D_X(X))\rightarrow \Outder(\wideparen{\D}_X(X))$ is injective if every $f\in \wideparen{\D}_X(X)$ such that $fx-xf\in \D_X(X)$ for all $x\in \D_X(X)$ satisfies $f\in \D_X(X)$.\\

By assumption, there is an étale map $X\rightarrow \mathbb{A}^r_K$. Thus,  there are functions $x_1,\cdots,x_r\in \OX_X(X)$ with associated vector fields $\partial_{x_1},\cdots, \partial_{x_r}\in \mathcal{T}_{X/K}(X)$ such that:
\begin{equation*}
    \mathcal{T}_{X/K}(X)=\bigoplus_{i=1}^n\OX_X(X)\partial_{x_i}, \quad [\partial_{x_i},\partial_{x_j}]=0, \textnormal{ and } \partial_{x_i}(x_j)=\delta_{i,j}.
\end{equation*}
Notice that, under these conditions, we have the following identity for each $j\geq 0$, and each $1\leq i\leq n$:
\begin{equation}\label{equation trick with local coordinates}
    x_i\partial_{x_i}^{j+1}-\partial_{x_i}^{j+1}x_i=(j+1)\partial_{x_i}^j. 
\end{equation}
Choose $f\in\wideparen{\D}_X(X)$ satisfying the conditions above. Then $f$ has the following form:
\begin{equation*}
    f=\sum_{I\in \mathbb{N}^{r}}a_I\partial_{x_1}^{I(1)}\cdots \partial_{x_r}^{I(r)}.
\end{equation*}
Choose some $1\leq i \leq r$. By assumption, we have:
\begin{equation*}
   \partial_{x_i}f- f\partial_{x_i}=\sum_{I\in \mathbb{N}^{r}}\left(\partial_{x_i}a_I\partial_{x_1}^{I(1)}\cdots \partial_{x_r}^{I(r)}-a_I\partial_{x_1}^{I(1)}\cdots \partial_{x_r}^{I(r)}\partial_{x_i}\right)=\sum_{I\in \mathbb{N}^{r}}\partial_{x_i}(a_I)\partial_{x_1}^{I(1)}\cdots \partial_{x_r}^{I(r)}\partial_{x_i}.
\end{equation*}
As this must be contained in $\D_X(X)$, it follows that $\partial_{x_i}(a_I)=0$ for sufficiently big $I\in \mathbb{N}^r$. As we chose $i$ to be arbitrary, and $\partial_{x_1},\cdots,\partial_{x_r}$ are a basis of $\mathcal{T}_{X/K}(X)$, it follows that $a_I\in K$ for sufficiently big $I\in \mathbb{N}^r$. Again, choose some $1\leq i \leq r$. We may rewrite $f$ as follows:
\begin{equation}\label{equation decomposition}
    f=\sum_{n\geq 0}f_n\partial_{x_i}^n,
\end{equation}
where each of the $f_n$ does not contain any $\partial_{x_i}^n$. Furthermore, this decomposition is unique. Notice that by equation (\ref{equation trick with local coordinates}), we have $f_nx_i-x_if_n=0$ for all $n\geq 0$. Hence, we have:
\begin{equation*}
    x_if-fx_i=\sum_{n\geq 0}(n+1)f_{n+1}\partial_{x_i}^n.
\end{equation*}
Again, as this must be contained in $\D_X(X)$, it follows that $f_n=0$ for big enough $n\geq 0$. The fact that we choose an arbitrary $1\leq i \leq r$, and that the decomposition in (\ref{equation decomposition}) is unique implies that $a_I=0$ for sufficiently high $I$, as we wanted to show.
\end{proof}
Finally, let us mention that for $n\geq 3$ the situation is much more convoluted. For instance, the edge morphisms:
\begin{equation*}
    I(\operatorname{HH}^{n}(\wideparen{\D}_X)) \rightarrow \operatorname{H}^n(\mathcal{L}_I(\wideparen{\D}_X(X))^{\bullet}),
\end{equation*}
are not necessarily epimorphisms. Thus, even if all the de Rham cohomology groups are finite-dimensional, it is not clear if the maps $\delta_n$ are strict for $n\geq 3$. 
\subsection{\texorpdfstring{The TH homology groups of $\wideparen{\D}_X(X)$}{}}\label{section TH homology}
Keeping the assumptions of the previous section, the next step will be carrying out the previous discussion for the TH homology groups of $\wideparen{\D}_X(X)$. First, recall that, in virtue of Proposition \ref{prop relation between TH homology and HH}, the fact that $\wideparen{\D}_X(X)$ is a Fréchet space implies that we have the following identity in $\operatorname{D}(\widehat{\mathcal{B}}c_K)$:
\begin{equation*}
    \operatorname{HH}_{\bullet}(\wideparen{\D}_X(X))=\mathcal{T}(\wideparen{\D}_X(X))^{\bullet}.
\end{equation*}
Therefore, we may use the results of Section \ref{section duality results} to show the following proposition:
\begin{prop}\label{prop Th cohomology groups of DX}
The Taylor-Hochschild homology complex:
\begin{equation*}
    \mathcal{T}(\wideparen{\D}_X(X))^{\bullet}:=\left(   
 \cdots \xrightarrow[]{\delta^{-2}}\widehat{\otimes}_K^2\wideparen{\D}_X(X)\widehat{\otimes}_K\wideparen{\D}_X(X)\xrightarrow[]{\delta^{-1}}\wideparen{\D}_X(X)\widehat{\otimes}_K\wideparen{\D}_X(X)  \xrightarrow[]{\delta^0}\wideparen{\D}_X(X)   \right),
\end{equation*}
is a strict complex of nuclear Fréchet spaces. In particular, for each $n\geq 0$ there is an isomorphism:
\begin{equation*}
    \operatorname{HH}_{n}(\wideparen{\D}_X(X))=\operatorname{H}_{\operatorname{dR}}^{2\operatorname{dim}(X)-n}(X)^b=\operatorname{HH}_n^{\operatorname{T}}(\wideparen{\D}_X(X)),
\end{equation*}
and all the spaces are also nuclear Fréchet spaces.
\end{prop}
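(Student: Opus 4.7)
The plan is to deduce the proposition by combining the identification of $\operatorname{HH}_{\bullet}(\wideparen{\D}_X(X))$ with $\mathcal{T}(\wideparen{\D}_X(X))^{\bullet}$ from Proposition \ref{prop relation between TH homology and HH}, the computation of Hochschild homology via the de Rham complex (Theorem \ref{teo hochschild homology in terms of Tor}), and E. Grosse-Klönne's strictness theorem (Proposition \ref{prop elmar's result on strictness of de Rham complex}). Concretely, since $\wideparen{\D}_X(X)$ is a nuclear Fréchet algebra by Theorem \ref{teo global sections of co-admissible modules on Stein spaces}, the hypothesis of Proposition \ref{prop relation between TH homology and HH} is satisfied with $\mathscr{M}=\wideparen{\D}_X(X)$, and we obtain the identification
\begin{equation*}
    \operatorname{HH}_{\bullet}(\wideparen{\D}_X(X))=I(\mathcal{T}(\wideparen{\D}_X(X))^{\bullet})
\end{equation*}
in $\operatorname{D}(LH(\widehat{\mathcal{B}}c_K))$. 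Hence the cohomology groups $\operatorname{HH}_n(\wideparen{\D}_X(X))$ and $\operatorname{H}^{-n}(I(\mathcal{T}(\wideparen{\D}_X(X))^{\bullet}))$ agree as objects of $LH(\widehat{\mathcal{B}}c_K)$.

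Next, I would invoke Theorem \ref{teo hochschild homology in terms of Tor}, which gives $\operatorname{HH}_{\bullet}(\wideparen{\D}_X(X))=R\Gamma(X,\Omega_{X/K}^{\bullet})[2\operatorname{dim}(X)]$. Since Proposition \ref{prop acyclicity of coherent modules on Stein spaces} shows that the $\Omega_{X/K}^n$ are $\Gamma(X,-)$-acyclic, this is represented by $\Gamma(X,\Omega_{X/K}^{\bullet})[2\operatorname{dim}(X)]$. By Grosse-Klönne's result (Proposition \ref{prop elmar's result on strictness of de Rham complex}), this is a strict complex of nuclear Fréchet spaces whose cohomology groups are themselves nuclear Fréchet. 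Consequently, each cohomology group $\operatorname{HH}_n(\wideparen{\D}_X(X))=\operatorname{H}_{\operatorname{dR}}^{2\operatorname{dim}(X)-n}(X)^b$ lies in the essential image of $I:\widehat{\mathcal{B}}c_K\rightarrow LH(\widehat{\mathcal{B}}c_K)$, and is a nuclear Fréchet space.

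To conclude strictness of $\mathcal{T}(\wideparen{\D}_X(X))^{\bullet}$ itself, I would then argue exactly as in the proof of Theorem \ref{teo hochschild cohomology groups as de rham cohomology groups}. Namely, each $\mathcal{T}(\wideparen{\D}_X(X))^{n}$ is a complete projective tensor product of copies of the nuclear Fréchet space $\wideparen{\D}_X(X)$, hence itself nuclear Fréchet. A complex of complete bornological spaces whose image under $I$ has cohomology groups that all lie in the essential image of $I$ is automatically strict (this is the application of \cite[Proposition 5.12]{bode2021operations} used in Theorem \ref{teo hochschild cohomology groups as de rham cohomology groups}). Therefore $\mathcal{T}(\wideparen{\D}_X(X))^{\bullet}$ is a strict complex of nuclear Fréchet spaces, and its cohomology groups coincide with the corresponding Hochschild homology groups, which in turn equal $\operatorname{H}_{\operatorname{dR}}^{2\operatorname{dim}(X)-n}(X)^b$.

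There is no real technical obstacle here: the proposition is essentially the homological counterpart of Theorem \ref{teo hochschild cohomology groups as de rham cohomology groups}, and all analytic input (acyclicity of $\Omega_{X/K}^n$, strictness of the de Rham complex, nuclearity of the terms and the cohomology) has already been established. The only point requiring a touch of care is the passage from strictness of $I(\mathcal{T}(\wideparen{\D}_X(X))^{\bullet})$ in $LH(\widehat{\mathcal{B}}c_K)$ back to strictness of the original complex in $\widehat{\mathcal{B}}c_K$, but this is automatic once one knows all cohomology groups are (bornologifications of) nuclear Fréchet spaces and the terms of the complex are themselves nuclear Fréchet.
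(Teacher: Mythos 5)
Your proposal is correct and follows essentially the same route as the paper: identify $\mathcal{T}(\wideparen{\D}_X(X))^{\bullet}$ with $\operatorname{HH}_{\bullet}(\wideparen{\D}_X(X))$ via Proposition \ref{prop relation between TH homology and HH}, compute the latter as $\operatorname{H}^{\bullet}_{\operatorname{dR}}(X)^b[2\operatorname{dim}(X)]$ via Theorem \ref{teo hochschild homology in terms of Tor}, and deduce strictness and nuclearity from Grosse-Klönne's theorem together with stability of nuclear Fréchet spaces under $\widehat{\otimes}_K$. The only cosmetic difference is that you spell out the transfer of strictness through the left heart (cohomology lying in the essential image of $I$), which the paper leaves implicit.
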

\begin{proof}
The fact that $\mathcal{T}(\wideparen{\D}_X(X))^{\bullet}$ is a complex of nuclear Fréchet spaces follows by the fact that $\wideparen{\D}_X(X)$ is a nuclear Fréchet space, together with the fact that being nuclear and Fréchet is preserved by $\widehat{\otimes}_K$ (\emph{cf.} \cite[Proposition 19.11]{schneider2013nonarchimedean}). As mentioned above, we have an identification:
\begin{equation*}
    \operatorname{HH}_{\bullet}(\wideparen{\D}_X(X))=\mathcal{T}(\wideparen{\D}_X(X))^{\bullet}
\end{equation*}
Thus, we may use this, together with Theorem \ref{teo hochschild homology in terms of Tor} to obtain the following identities in $\operatorname{D}(\widehat{\mathcal{B}}c_K)$:
\begin{equation*}
 \mathcal{T}(\wideparen{\D}_X(X))^{\bullet}=\operatorname{HH}_{\bullet}(\wideparen{\D}_X(X))=\operatorname{HH}_{\bullet}(\wideparen{\D}_X)=\operatorname{H}^{\bullet}_{\operatorname{dR}}(X)^b[2\operatorname{dim}(X)].  
\end{equation*}
As the rightmost complex is a strict complex, it follows that $\mathcal{T}(\wideparen{\D}_X(X))^{\bullet}$ is also strict. 
\end{proof}
Thus, as advertised above, the relation between the TH homology groups and the Hochschild homology groups of $\wideparen{\D}_X(X)$ is much simpler than in the cohomological case. Namely, they always agree. Even if the spaces $\operatorname{HH}_{\bullet}(\wideparen{\D}_X(X))$ do not have a direct interpretation in terms of deformation theory, the fact that they can be calculated using the complex $\mathcal{T}(\wideparen{\D}_X(X))^{\bullet}$ makes them easier to manipulate. In particular, many of the classical operations on Hochschild (co)-homology of associative algebras are defined in terms of explicit calculations using the complexes $\mathcal{L}(\wideparen{\D}_X(X))^{\bullet}$ and $\mathcal{T}(\wideparen{\D}_X(X))^{\bullet}$. For instance,
it is possible to define a cup product and a bilinear bracket on 
$\mathcal{L}(\wideparen{\D}_X(X))^{\bullet}$ endowing $\operatorname{HH}_{\operatorname{T}}^{\bullet}(\wideparen{\D}_X(X))$ with the structure of a Gerstenhaber algebra (see the monograph \cite[Chapter 1]{witherspoon2019hochschild} for the definitions). The fact that the complex $\mathcal{L}(\wideparen{\D}_X(X))^{\bullet}$ does not calculate $\operatorname{HH}^{\bullet}(\wideparen{\D}_X(X))$ implies that many of these constructions are not directly available for Hochschild cohomology. However, they are available in the homological setting and, in some situations, it is possible to use Van den Bergh duality to bypass the shortcomings of the TH cohomology complex $\mathcal{L}(\wideparen{\D}_X(X))^{\bullet}$.
\subsection{Some operations on Hochschild cohomology}\label{section operations}
As an instance of this, we remark that it is possible to define a bornological version of the Connes differential on Hochschild Homology:
\begin{equation*}
    \mathcal{B}:\operatorname{HH}_{\bullet}(\wideparen{\D}_X(X))\rightarrow \operatorname{HH}_{\bullet}(\wideparen{\D}_X(X))[-1].
\end{equation*}
Indeed, let $\mathscr{A}$ be a complete bornological algebra, and let $\mathcal{T}(\mathscr{A})^{\bullet}$ be its TH homology complex. The Connes differential will ultimately arise from a morphism of chain complexes of bornological spaces:
\begin{equation*}
    \mathcal{B}:\mathcal{T}(\mathscr{A})^{\bullet}\rightarrow \mathcal{T}(\mathscr{A})^{\bullet}[-1].
\end{equation*}
The operator $\mathcal{B}$  is defined via a composition of several different maps. First, for each $n\geq 0$ we define the following morphism of bornological spaces:
\begin{equation*}
    s_n:\mathcal{T}(\mathscr{A})^{-n}\rightarrow \mathcal{T}(\mathscr{A})^{-n-1}, \quad s_n(a_0\otimes \cdots \otimes a_n)\mapsto  1\otimes a_0\otimes \cdots \otimes a_n,
\end{equation*}
where $a_0,\cdots, a_n\in \mathscr{A}$. Next, notice that the group $\mathbb{Z}/(n+1)$ acts on $\mathcal{T}(\mathscr{A})^{-n}$ by signed cyclic permutation of the tensor factors. Namely, the action is induced by the following map:
\begin{equation*}
    t_n:\mathcal{T}(\mathscr{A})^{-n}\rightarrow \mathcal{T}(\mathscr{A})^{-n}, \quad t_n(a_0\otimes \cdots \otimes a_n)\mapsto  (-1)^n a_n\otimes a_0\otimes \cdots \otimes a_{n-1},
\end{equation*}
which is often called the cyclic operator on $\mathcal{T}(\mathscr{A})^{-n}$. We may use these actions to define a map:
\begin{equation*}
N_n:=1+t_n+\cdots+t_n^n:\mathcal{T}(\mathscr{A})^{-n}\rightarrow \mathcal{T}(\mathscr{A})^{-n},
\end{equation*}
which is called the norm operator on $\mathcal{T}(\mathscr{A})^{-n}$. Finally, we can condense all these maps into the definition of the Connes differential:
\begin{Lemma}
For each $n\geq 0$  consider the following bounded map:
\begin{equation*}
    \mathcal{B}_n:=(1-t_{n+1})s_nN_n:\mathcal{T}(\mathscr{A})^{-n}\rightarrow \mathcal{T}(\mathscr{A})^{-n-1}.
\end{equation*}
This map is given on simple tensors $a_0\otimes \cdots \otimes a_n\in \mathcal{T}(\mathscr{A})^{-n}$ by the following formula:
\begin{multline*}
    \mathcal{B}(a_0\otimes \cdots \otimes a_n)= \sum_{i=0}^n(-1)^{ni} 1\otimes a_i\otimes \cdots a_n\otimes a_{0}\otimes \cdots \otimes a_{i-1}\\
    - (-1)^{ni}1\otimes a_i\otimes 1\otimes a_{i+1}\otimes \cdots \otimes a_n\otimes a_{0}\otimes \cdots \otimes a_{i-1}.
\end{multline*}
Furthermore,  these maps induce a morphism of chain complexes of complete bornological spaces:
\begin{equation*}
    \mathcal{B}:\mathcal{T}(\mathscr{A})^{\bullet}\rightarrow \mathcal{T}(\mathscr{A})^{\bullet}[-1],
\end{equation*}
which we call the Connes differential.
\end{Lemma}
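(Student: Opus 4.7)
The proof has two parts, the explicit formula and the chain-map property, both of which reduce to direct calculation once the classical cyclic-homology identities of Connes are reinterpreted in the bornological setting.

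For the formula, the plan is to unwind the composition $\mathcal{B}_n = (1-t_{n+1})\circ s_n \circ N_n$ applied to a simple tensor $a_0 \otimes \cdots \otimes a_n$. Expanding $N_n = \sum_{k=0}^{n} t_n^{k}$ and using that
\begin{equation*}
t_n^{k}(a_0\otimes\cdots\otimes a_n)=(-1)^{nk}\,a_{n-k+1}\otimes\cdots\otimes a_n\otimes a_0\otimes\cdots\otimes a_{n-k},
\end{equation*}
a reindexation via $i \equiv n-k+1 \pmod{n+1}$, together with the observation $(-1)^{n(n+1)}=1$, gives
\begin{equation*}
N_n(a_0\otimes\cdots\otimes a_n) \;=\; \sum_{i=0}^{n} (-1)^{ni}\,a_i\otimes a_{i+1}\otimes\cdots\otimes a_n\otimes a_0\otimes\cdots\otimes a_{i-1}.
\end{equation*}
Applying $s_n$ prepends a $1$ to each summand, and applying $(1-t_{n+1})$ produces the two sums displayed in the statement, the minus sign arising from the $-t_{n+1}$ piece. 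Boundedness is automatic: each of $s_n$, $t_{n+1}$ and $N_n$ is built from the unit map $K\to \mathscr{A}$, the multiplication of $\mathscr{A}$ and permutations of tensor factors, all of which are bounded for $\widehat{\otimes}_K$.

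For the chain-map property, one must show that $\mathcal{B}$ anti-commutes with the Hochschild differential, i.e.\ $b\mathcal{B} + \mathcal{B}b = 0$, where $b = \delta^{\bullet}$. The plan is to verify in the bornological setting the standard Connes identities
\begin{equation*}
b(1-t) = (1-t)b',\quad b'N = Nb, \quad b's + sb' = \operatorname{Id}, \quad (1-t)N = 0,
\end{equation*}
where $b'$ denotes the \emph{degenerate Hochschild boundary}, obtained from $b$ by dropping the final wrap-around term. Each of these is a formal consequence of the associativity and unitality of $\mathscr{A}$ together with the identity $t_n^{n+1}=\operatorname{Id}$; verification on simple tensors is routine and extends to $\widehat{\otimes}_K$ by continuity, since $\widehat{\otimes}_K$ is closed symmetric monoidal on $\widehat{\mathcal{B}}c_K$ and every operator involved is bounded. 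Combining them yields
\begin{equation*}
b\mathcal{B} \;=\; b(1-t)sN \;=\; (1-t)b'sN \;=\; (1-t)(\operatorname{Id}-sb')N \;=\; (1-t)N \,-\, (1-t)sNb \;=\; -\mathcal{B}b,
\end{equation*}
as required.

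The principal obstacle is bookkeeping: the Connes identities are combinatorially delicate, and the verification requires tracking signs across many cyclic rotations. Beyond the combinatorics, one needs at each step that the composition of bounded maps is bounded for the bornology on $\widehat{\otimes}_K^{\,n+1}\mathscr{A}$; this follows from the fact that the operators under consideration are built from the unit, the multiplication and permutations of factors, all of which are bounded by the closed symmetric monoidal structure on $\widehat{\mathcal{B}}c_K$. Once these verifications are in hand, $\mathcal{B}$ is a well-defined morphism of strict complexes $\mathcal{T}(\mathscr{A})^{\bullet} \to \mathcal{T}(\mathscr{A})^{\bullet}[-1]$ in $\widehat{\mathcal{B}}c_K$, and it induces the bornological Connes differential on $\operatorname{HH}_{\bullet}(\mathscr{A})$ announced before the lemma.
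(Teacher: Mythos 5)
Your proposal is correct, but note that the paper does not prove this lemma at all: its ``proof'' is a one-line citation to Loday, \emph{Cyclic homology}, Section 2.1.7, where precisely the computation you outline (the identities $b(1-t)=(1-t)b'$, $b'N=Nb$, $b's+sb'=\operatorname{Id}$, $(1-t)N=0$, and the telescoping $b\mathcal{B}=(1-t)b'sN=(1-t)N-(1-t)sNb=-\mathcal{B}b$) is carried out for associative algebras. What your write-up adds beyond the citation is the (correct) observation that every operator involved is assembled from the unit, the multiplication, and permutations of tensor factors, hence bounded for $\widehat{\otimes}_K$ by the closed symmetric monoidal structure on $\widehat{\mathcal{B}}c_K$, so the classical identities transport verbatim to the bornological setting; this is exactly the justification the paper leaves implicit.
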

\begin{proof}
This is shown in \cite[Section 2.1.7]{lodaycyclic}.
\end{proof}
As $\mathcal{B}$ is a morphism of chain complexes, it induces a map on the Taylor-Hochschild homology:
\begin{defi}
We define the Connes differential on the \emph{TH} homology to be the map:
    \begin{equation*}
    \mathcal{B}:\operatorname{HH}^{\operatorname{T}}_{\bullet}(\mathscr{A})\rightarrow \operatorname{HH}^{\operatorname{T}}_{\bullet}(\mathscr{A})[-1].
\end{equation*}
\end{defi}
Back to our setting, we again let $X$ be a Stein space with free tangent sheaf, and let $\wideparen{\D}_X(X)$ be its algebra of $p$-adic differential operators. In this situation, it follows by Proposition \ref{prop Th cohomology groups of DX} that there is an isomorphism of complexes of nuclear Fréchet spaces $\operatorname{HH}_{\bullet}(\wideparen{\D}_X(X))\cong \operatorname{HH}^{\operatorname{T}}_{\bullet}(\wideparen{\D}_X(X))$. Hence, we obtain a version of the Connes differential for Hochschild homology:
\begin{equation*}
    \mathcal{B}:\operatorname{HH}_{\bullet}(\wideparen{\D}_X(X))\rightarrow \operatorname{HH}_{\bullet}(\wideparen{\D}_X(X))[-1].
\end{equation*}
\begin{obs}
 In fact, in virtue of Proposition \ref{prop relation between TH homology and HH} the Connes differential on Hochschild homology exists whenever $\mathscr{A}$ is a Fréchet algebra. However, for arbitrary $\mathscr{A}$, the \emph{TH} homology groups of $\mathscr{A}$ do not agree with its Hochschild homology groups.   
\end{obs}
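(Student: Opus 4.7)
My plan is to separate the observation into its two independent assertions and treat each one.

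For the first assertion, I would invoke Proposition \ref{prop relation between TH homology and HH} directly. Since $\mathscr{A}$ is a Fréchet algebra, it is (the bornologification of) a metrizable locally convex space, so taking $\mathscr{M}=\mathscr{A}$ in that proposition yields a canonical identification in $\operatorname{D}(\widehat{\mathcal{B}}c_K)$ of the form $\operatorname{HH}_{\bullet}(\mathscr{A})=I(\mathcal{T}(\mathscr{A})^{\bullet})$. The Connes differential $\mathcal{B}:\mathcal{T}(\mathscr{A})^{\bullet}\rightarrow \mathcal{T}(\mathscr{A})^{\bullet}[-1]$ constructed in the preceding lemma is a morphism of chain complexes of complete bornological spaces; applying the exact functor $I:\widehat{\mathcal{B}}c_K\rightarrow LH(\widehat{\mathcal{B}}c_K)$ and using this identification then produces a well-defined morphism $\mathcal{B}:\operatorname{HH}_{\bullet}(\mathscr{A})\rightarrow \operatorname{HH}_{\bullet}(\mathscr{A})[-1]$, which on passing to cohomology yields the Connes operator on the Hochschild homology groups. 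This is essentially a book-keeping argument; the only thing to check is that the map $\mathcal{B}_n$ is bounded, which is clear from its explicit formula as a finite alternating sum of bounded operators.

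For the second assertion, I would argue that the hypothesis of Proposition \ref{prop relation between TH homology and HH} is essential and cannot in general be dropped. The content of the proposition is that the Fréchet (equivalently, metrizable) hypothesis ensures $I(\widehat{\otimes}_K^n\mathscr{A})=\widetilde{\otimes}_K^nI(\mathscr{A})$, which is Proposition \ref{prop comparison of tensor products metrizable spaces}; without this, the bar complex $B(\mathscr{A})^\bullet$, even after applying $I$, need no longer be a flat resolution of $I(\mathscr{A})$ as an $I(\mathscr{A})^e$-module. To exhibit the failure concretely, the plan would be to take $\mathscr{A}$ of the form $\operatorname{diss}(V)$ for some non-metrizable Ind-Banach algebra $V$ (for instance the algebra of global sections $\wideparen{\D}_X(U)$ over a non-quasicompact open, or an algebra such as $\wideparen{E}_X(X^2)$ in a setting where it is not Fréchet), and to compare directly the complex $\mathcal{T}(\mathscr{A})^{\bullet}$, whose differentials involve $\widehat{\otimes}_K$, with the definition $\mathscr{A}\widehat{\otimes}^{\mathbb{L}}_{\mathscr{A}^e}\mathscr{A}$, which is computed via $\widetilde{\otimes}_K$ in $LH(\widehat{\mathcal{B}}c_K)$.

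The main obstacle will be producing an explicit complete bornological algebra in which the discrepancy between $I(\widehat{\otimes}_K^n\mathscr{A})$ and $\widetilde{\otimes}_K^nI(\mathscr{A})$ actually propagates to distinct cohomology groups, rather than washing out in degree zero. Because the observation is framed as a remark rather than a theorem, I would keep the second part informal: it suffices to point out that the identification in Proposition \ref{prop relation between TH homology and HH} relies on the metrizability hypothesis and therefore breaks down in general, leaving the two cohomology theories genuinely distinct outside the Fréchet setting.
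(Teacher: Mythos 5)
Your treatment of the first assertion coincides with the paper's: the claim is justified by applying Proposition \ref{prop relation between TH homology and HH} with $\mathscr{M}=\mathscr{A}$, which gives $\operatorname{HH}_{\bullet}(\mathscr{A})=I(\mathcal{T}(\mathscr{A})^{\bullet})$ in $\operatorname{D}(\widehat{\mathcal{B}}c_K)$, so that the chain map $\mathcal{B}$ descends to Hochschild homology. Nothing more is needed there.

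For the second assertion you locate the discrepancy entirely in the metrizability hypothesis, i.e.\ in the possible failure of $I(\widehat{\otimes}_K^n\mathscr{A})=\widetilde{\otimes}_K^nI(\mathscr{A})$ for non-Fréchet $\mathscr{A}$. That is a genuine obstruction, but it is not the one driving the remark, and it misses the mechanism that operates even in the Fréchet case. By Definition \ref{defi TH homology groups}, the groups $\operatorname{HH}^{\operatorname{T}}_n(\mathscr{A})$ are cokernels computed in $\mathcal{B}c_K$ after applying $J$, whereas $\operatorname{HH}_n(\mathscr{A})$ is the cohomology of $I(\mathcal{T}(\mathscr{A})^{\bullet})$ computed in the left heart $LH(\widehat{\mathcal{B}}c_K)$. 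Even when Proposition \ref{prop relation between TH homology and HH} identifies the two complexes in the derived category, these two notions of homology agree only when the differentials $\delta^{-n}$ are strict; otherwise the left-heart cohomology objects need not lie in the essential image of $I$ and cannot equal a bornological cokernel. This is precisely why Proposition \ref{prop Th cohomology groups of DX} first proves that $\mathcal{T}(\wideparen{\D}_X(X))^{\bullet}$ is strict before concluding $\operatorname{HH}_n=\operatorname{HH}^{\operatorname{T}}_n$ for $\wideparen{\D}_X(X)$, and it is the same phenomenon as in Corollary \ref{coro H1 borno implies delta0 strict} on the cohomological side. A secondary point: the candidate counterexamples you name ($\wideparen{\D}_X(U)$ over a Stein open, or $\wideparen{E}_X(X^2)$ for $X$ affinoid or Stein) are Fréchet algebras in the settings of this paper, so they would not witness a failure of metrizability. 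Since the statement is only a remark no explicit counterexample is required, but the reason you should give for the non-agreement is the failure of strictness of the differentials, not only the failure of Proposition \ref{prop comparison of tensor products metrizable spaces}.
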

Using the fact that $\wideparen{\D}_X(X)$ satisfies Van den Bergh duality, we have a quasi-isomorphism:
\begin{equation*}
 \operatorname{HH}^{\bullet}(\wideparen{\D}_X(X))\cong   \operatorname{HH}_{\bullet}(\wideparen{\D}_X(X))[2\operatorname{dim}(X)].
\end{equation*}
Thus, we obtain an operator on Hochschild cohomology:
\begin{equation*}
    \Delta:\operatorname{HH}^{\bullet}(\wideparen{\D}_X(X))\rightarrow \operatorname{HH}^{\bullet}(\wideparen{\D}_X(X))[-1],
\end{equation*}
defined as the unique morphism map making the following diagram commutative:
\begin{equation*}
\begin{tikzcd}
\operatorname{HH}^{\bullet}(\wideparen{\D}_X(X)) \arrow[r, "\Delta"] \arrow[d, "\cong"]                   & \operatorname{HH}^{\bullet}(\wideparen{\D}_X(X))[-1] \arrow[d, "\cong"]              \\
\operatorname{HH}_{\bullet}(\wideparen{\D}_X(X))[2\operatorname{dim}(X)] \arrow[r, "\mathcal{B}"] & \operatorname{HH}_{\bullet}(\wideparen{\D}_X(X))[2\operatorname{dim}(X)-1]
\end{tikzcd}
\end{equation*}
In particular, for each $n\geq 0$ we obtain a commutative diagram of complete bornological spaces:
\begin{equation*}
\begin{tikzcd}
\operatorname{HH}^{n}(\wideparen{\D}_X(X)) \arrow[r, "\Delta"] \arrow[d]                   & \operatorname{HH}^{n-1}(\wideparen{\D}_X(X)) \arrow[d]              \\
\operatorname{HH}_{2\operatorname{dim}(X)-n}(\wideparen{\D}_X(X)) \arrow[r, "\mathcal{B}"] & \operatorname{HH}_{2\operatorname{dim}(X)-n+1}(\wideparen{\D}_X(X))
\end{tikzcd}
\end{equation*}
In the setting of associative Calabi-Yau $K$-algebras (\emph{cf.} Definition \ref{defi Calabi-Yau algebra}), the operator $\Delta$ is called the Batalin-Vilkovisky operator on Hochschild cohomology, and it is studied in \cite[Section 9.3]{ginzcalabi-yau}. We remark that, if $A$ is a Calabi-Yau $K$-algebra, then the 
the Gerstenhaber bracket, the cup product, and the Batalin-Vilkovisky operator on $\operatorname{HH}^{\bullet}(A)$ are 
related via the following formula:
\begin{equation}\label{equation BV operator}
[\alpha,\beta]= \Delta (\alpha \smile \beta) -\Delta(\alpha)\smile \beta- (-1)^{\vert \alpha \vert}\alpha \smile \Delta(\beta),
\end{equation}
where $\alpha$ and $\beta$ are homogeneous elements in $\operatorname{HH}^{\bullet}(A)$. The Gerstenhaber bracket on $\operatorname{HH}^{\bullet}(A)$ is defined via an explicit formula at the level of chain complexes, so it is not possible to define it directly for complete bornological algebras. However, for any associative $K$-algebra, the cup product on $\operatorname{HH}^{\bullet}(A):=R\Hom_{A^e}(A,A)$ is the Yoneda product, and this can be defined at the bornological level.\\

Indeed, the Yoneda product can be extended naturally to any closed symmetric monoidal category (see \cite[Section 3.3]{bode2021operations} for the definitions).  Let us now give a description of this construction:
\begin{prop}\label{prop endomorphism algebra}
Let $\mathcal{C}$ be a closed symmetric monoidal category with tensor $\otimes$ and inner homomorphisms functor $H$.  Let $\mathscr{A}$ be a monoid in $\mathcal{C}$, and let $H_{\mathscr{A}}$ be the inner homomorphism functor relative to $\mathscr{A}$. Then for $M_1,M_2,M_3\in \Mod_{\mathcal{C}}(\mathscr{A})$ there is a canonical natural map:
\begin{equation*}
  H_{\mathscr{A}}(M_1,M_2)\otimes H_{\mathscr{A}}(M_2,M_3)\rightarrow H_{\mathscr{A}}(M_1,M_3), 
\end{equation*}
which we call the composition map. Furthermore, for any $M\in \Mod_{\mathcal{C}}(\mathscr{A})$, the object $H_{\mathscr{A}}(M,M)$ is also a monoid in $\mathcal{C}$, which we call the endomorphism algebra of $M$ (relative to $\mathscr{A}$).
\end{prop}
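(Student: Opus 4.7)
The plan is to reduce everything to the universal property of the inner hom functor. In any closed symmetric monoidal category $(\mathcal{C}, \otimes, H)$ the counit of the tensor-hom adjunction provides, for every pair of objects, a natural evaluation morphism $\operatorname{ev}_{X,Y}: H(X, Y) \otimes X \to Y$, and all the structure we need will be obtained by iterating this morphism and then currying. Explicitly, given $M_1, M_2, M_3 \in \mathcal{C}$, I would form the composite
\begin{equation*}
H(M_1, M_2) \otimes H(M_2, M_3) \otimes M_1 \xrightarrow{\sigma \otimes \operatorname{id}} H(M_2, M_3) \otimes H(M_1, M_2) \otimes M_1 \xrightarrow{\operatorname{id} \otimes \operatorname{ev}} H(M_2, M_3) \otimes M_2 \xrightarrow{\operatorname{ev}} M_3,
\end{equation*}
where $\sigma$ is the symmetry, and then apply the adjunction $\Hom(- \otimes M_1, M_3) \cong \Hom(-, H(M_1, M_3))$ to obtain the composition map $\circ: H(M_1, M_2) \otimes H(M_2, M_3) \to H(M_1, M_3)$. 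Naturality in all three variables follows from naturality of $\operatorname{ev}$ and $\sigma$.

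Next I would descend this to $H_{\mathscr{A}}$. By construction $H_{\mathscr{A}}(M, N)$ is the equalizer of the two morphisms $H(M, N) \rightrightarrows H(\mathscr{A} \otimes M, N)$ encoding the $\mathscr{A}$-linearity of a map. To see that $\circ$ restricts to a morphism $H_{\mathscr{A}}(M_1, M_2) \otimes H_{\mathscr{A}}(M_2, M_3) \to H_{\mathscr{A}}(M_1, M_3)$, it suffices, by the universal property of the equalizer, to verify that the composite
\begin{equation*}
H_{\mathscr{A}}(M_1, M_2) \otimes H_{\mathscr{A}}(M_2, M_3) \to H(M_1, M_2) \otimes H(M_2, M_3) \xrightarrow{\circ} H(M_1, M_3)
\end{equation*}
equalizes the two maps into $H(\mathscr{A} \otimes M_1, M_3)$. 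Under adjunction this reduces to the commutativity of a diagram of maps $\mathscr{A} \otimes H_{\mathscr{A}}(M_1, M_2) \otimes H_{\mathscr{A}}(M_2, M_3) \otimes M_1 \to M_3$, which is the categorical shadow of the classical assertion that the composite of two $\mathscr{A}$-linear morphisms is $\mathscr{A}$-linear, and it is verified by a direct diagram chase using the defining equalizer of each factor in turn.

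Finally, for the second assertion I would specialize to $M_1 = M_2 = M_3 = M$. The multiplication is $\circ : H_{\mathscr{A}}(M, M) \otimes H_{\mathscr{A}}(M, M) \to H_{\mathscr{A}}(M, M)$, and the unit $\eta: \mathbf{1}_{\mathcal{C}} \to H_{\mathscr{A}}(M, M)$ is the morphism corresponding under adjunction to the left unit isomorphism $\mathbf{1}_{\mathcal{C}} \otimes M \cong M$ (which is automatically $\mathscr{A}$-linear, so it factors through the equalizer). Associativity and the unit axioms reduce via the tensor-hom adjunction to diagrammatic identities among nested evaluation maps, which are formal consequences of the pentagon, triangle and hexagon axioms. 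The only genuinely non-trivial step is the descent in the second paragraph, but this is a bookkeeping exercise rather than a conceptual obstacle: everything follows by formal manipulation in a closed symmetric monoidal category, with the only care required being the tracking of the symmetry $\sigma$ when commuting tensor factors through evaluation maps.
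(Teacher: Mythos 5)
Your proposal is correct and follows essentially the same route as the paper: both reduce the composition map, via the tensor--hom adjunction, to an iterated application of the evaluation counit, and both obtain the unit of $H_{\mathscr{A}}(M,M)$ from the identity of $M$ under the isomorphism $\Hom_{\mathcal{C}}(\mathbf{1}_{\mathcal{C}},H_{\mathscr{A}}(M,M))\cong\Hom_{\Mod_{\mathcal{C}}(\mathscr{A})}(M,M)$. The only difference is organizational: you first build the composition on the absolute inner hom $H$ and then verify it descends to the equalizer defining $H_{\mathscr{A}}$, whereas the paper works with the $\mathcal{C}$-enriched adjunction for $\mathscr{A}$-modules from the outset, so its evaluation maps are already $\mathscr{A}$-linear and no descent step is needed.
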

\begin{proof}
By the usual tensor-Hom adjunction, it is equivalent to define a map in $\mathcal{C}$:
\begin{equation*}
    H_{\mathscr{A}}(M_1,M_2)\rightarrow H(H_{\mathscr{A}}(M_2,M_3),H_{\mathscr{A}}(M_1,M_3))\cong H_{\mathscr{A}}(M_1\otimes H_{\mathscr{A}}(M_2,M_3),M_3).
\end{equation*}
Applying the same adjunction again, this is equivalent to giving an $\mathscr{A}$-linear map:
\begin{equation*}
    M_1\otimes H_{\mathscr{A}}(M_1,M_2)  \otimes H_{\mathscr{A}}(M_2,M_3)\rightarrow M_3.
\end{equation*}
The counit of the tensor-Hom adjunction is the evaluation map, which yields two $\mathscr{A}$-linear maps:
\begin{equation*}
    M_1\otimes H_{\mathscr{A}}(M_1,M_2)\rightarrow M_2, \quad M_2\otimes H_{\mathscr{A}}(M_2,M_3)\rightarrow M_3.
\end{equation*}
Combining these two maps, we obtain a composition:
\begin{equation*}
    M_1\otimes H_{\mathscr{A}}(M_1,M_2)  \otimes H_{\mathscr{A}}(M_2,M_3)\rightarrow M_2\otimes H_{\mathscr{A}}(M_2,M_3)\rightarrow M_3,
\end{equation*}
which is the map we wanted. Fix now some object $M\in \Mod_{\mathcal{C}}(\mathscr{A})$. We need to show that $H_{\mathscr{A}}(M,M)$ is a monoid in $\mathcal{C}$. First, we need to show that the obvious diagram:
\begin{equation*}
\begin{tikzcd}
{H_{\mathscr{A}}(M,M)\otimes H_{\mathscr{A}}(M,M)\otimes H_{\mathscr{A}}(M,M)} \arrow[d] \arrow[r] & {H_{\mathscr{A}}(M,M)\otimes H_{\mathscr{A}}(M,M)} \arrow[d] \\
{H_{\mathscr{A}}(M,M)\otimes H_{\mathscr{A}}(M,M)} \arrow[r]                                       & {H_{\mathscr{A}}(M,M)}                                      
\end{tikzcd}
\end{equation*}
is commutative. This is a straightforward calculation using the same arguments as above. Next, let $1_{\mathcal{C}}$ be the unit of the closed symmetric monoidal structure in $\mathcal{C}$. We need to show that there is an identity map $1_{\mathcal{C}}\rightarrow H_{\mathscr{A}}(M,M)$ satisfying that the composition:
\begin{equation}\label{equation unit morphism}
    1_{\mathcal{C}}\otimes H_{\mathscr{A}}(M,M) \rightarrow H_{\mathscr{A}}(M,M)\otimes H_{\mathscr{A}}(M,M)\rightarrow H_{\mathscr{A}}(M,M),
\end{equation}
is the identity map. We define the unit morphism as the unique map 
$1_{\mathcal{C}}\rightarrow H_{\mathscr{A}}(M,M)$ induced by the identity map $\operatorname{Id}:M\rightarrow M$ and the following isomorphisms:
\begin{equation*}
\Hom_{\mathcal{C}}(1_{\mathcal{C}},H_{\mathscr{A}}(M,M))=\Hom_{\Mod_{\mathcal{C}}(\mathscr{A})}(M\otimes 1_{\mathcal{C}}, M)=\Hom_{\Mod_{\mathcal{C}}(\mathscr{A})}(M, M).
\end{equation*}
With this definition, the fact that the composition in (\ref{equation unit morphism}) is the identity in $H_{\mathscr{A}}(M,M)$ follows by the arguments above, together with the fact that:
\begin{equation*}
 M\otimes 1_{\mathcal{C}}\rightarrow M\otimes H_{\mathscr{A}}(M,M)\rightarrow M,    
\end{equation*}
is the identity map of $M$.
\end{proof}
We may now specialize this construction to our setting to obtain the following corollary:
\begin{coro}
 Let $\mathscr{A}$ be a Fréchet algebra. There is a well defined cup product:
 \begin{equation*}
     \smile: \operatorname{HH}^{\bullet}(\mathscr{A})\widehat{\otimes}_K^{\mathbb{L}}\operatorname{HH}^{\bullet}(\mathscr{A})\rightarrow \operatorname{HH}^{\bullet}(\mathscr{A}),
 \end{equation*}
 making $\operatorname{HH}^{\bullet}(\mathscr{A})$ a monoid in $\operatorname{D}(\widehat{\mathcal{B}}c_K)$. Furthermore, if $X$ is a Stein space with free tangent sheaf, the cup product on $\operatorname{HH}^{\bullet}(\wideparen{\D}_X(X))$ descends to a cup product of graded complete bornological spaces:
 \begin{equation*}
     \smile: \bigoplus_{n\geq 0}\operatorname{HH}^{n}(\wideparen{\D}_X(X))\widehat{\otimes}_K\bigoplus_{n\geq 0}\operatorname{HH}^{n}(\wideparen{\D}_X(X))\rightarrow \bigoplus_{n\geq 0}\operatorname{HH}^{n}(\wideparen{\D}_X(X)).
 \end{equation*}
 In particular $\bigoplus_{n\geq 0}\operatorname{HH}^{n}(\wideparen{\D}_X(X))$ is a graded complete bornological algebra.
\end{coro}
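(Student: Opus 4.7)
The plan is to obtain the cup product as an instance of the abstract endomorphism-monoid construction given in Proposition \ref{prop endomorphism algebra}, applied to a suitable derived closed symmetric monoidal category, and then to descend to cohomology groups via the Künneth argument already established in Lemma \ref{Lemma junneth formula for complexes} and Theorem \ref{teo hochschild cohomology groups as de rham cohomology groups}.

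First, I would promote Proposition \ref{prop endomorphism algebra} to the derived setting. The derived category $\operatorname{D}(\widehat{\mathcal{B}}c_K) \cong \operatorname{D}(LH(\widehat{\mathcal{B}}c_K))$ inherits, via the constructions of Section \ref{section Ind-Ban spaces}, a closed symmetric monoidal structure with tensor $\widehat{\otimes}^{\mathbb{L}}_K$ and inner hom $R\underline{\Hom}_{LH(\widehat{\mathcal{B}}c_K)}(-,-)$. Inside this category, the object $I(\mathscr{A}^e) = I(\mathscr{A})^e$ is a monoid (this uses Proposition \ref{prop comparison of tensor products metrizable spaces}, which applies because $\mathscr{A}$ is Fréchet), and $I(\mathscr{A})$ is a module over it. The abstract argument of Proposition \ref{prop endomorphism algebra} only uses counits of the tensor-Hom adjunction, which are available in the derived setting; thus the same construction produces a composition morphism
\begin{equation*}
R\underline{\Hom}_{I(\mathscr{A})^e}(I(\mathscr{A}),I(\mathscr{A})) \widehat{\otimes}^{\mathbb{L}}_K R\underline{\Hom}_{I(\mathscr{A})^e}(I(\mathscr{A}),I(\mathscr{A})) \longrightarrow R\underline{\Hom}_{I(\mathscr{A})^e}(I(\mathscr{A}),I(\mathscr{A})),
\end{equation*}
together with a unit $K \to R\underline{\Hom}_{I(\mathscr{A})^e}(I(\mathscr{A}),I(\mathscr{A}))$ induced by the identity of $I(\mathscr{A})$. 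By Definition \ref{defi hichschild cohomology}, the target is precisely $\operatorname{HH}^{\bullet}(\mathscr{A})$. Associativity and unitality follow tautologically from the corresponding diagrams in the closed symmetric monoidal derived category, so $\operatorname{HH}^{\bullet}(\mathscr{A})$ becomes a monoid in $\operatorname{D}(\widehat{\mathcal{B}}c_K)$ with cup product $\smile$ given by the Yoneda-type composition above.

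For the second part, specialize to $\mathscr{A} = \wideparen{\D}_X(X)$ for $X$ a smooth Stein space with free tangent sheaf. By Theorem \ref{teo hochschild cohomology groups as de rham cohomology groups} the complex $\operatorname{HH}^{\bullet}(\wideparen{\D}_X(X)) = \operatorname{H}^{\bullet}_{\operatorname{dR}}(X)^b$ is a strict complex of nuclear Fréchet spaces, so its cohomology groups are themselves nuclear Fréchet spaces. By (the proof of) Lemma \ref{Lemma junneth formula for complexes} applied with $X = Y$, the derived tensor product $\operatorname{HH}^{\bullet}(\wideparen{\D}_X(X)) \widehat{\otimes}^{\mathbb{L}}_K \operatorname{HH}^{\bullet}(\wideparen{\D}_X(X))$ is again a strict complex, and its $n$-th cohomology group is canonically identified with $\bigoplus_{p+q=n} \operatorname{HH}^{p}(\wideparen{\D}_X(X)) \widehat{\otimes}_K \operatorname{HH}^{q}(\wideparen{\D}_X(X))$. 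Taking $\operatorname{H}^n$ of the derived cup product yields the desired graded bounded bilinear operation, and associativity plus unitality descend from the derived level. This produces on $\bigoplus_{n\geq 0} \operatorname{HH}^{n}(\wideparen{\D}_X(X))$ the claimed structure of a graded complete bornological algebra.

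The main obstacle is the first step: making precise the sense in which Proposition \ref{prop endomorphism algebra} transfers to the derived category. The clean way is to observe that the proof of that proposition only invokes functoriality of the internal hom, the evaluation counit, and the tensor-hom adjunction isomorphism, all of which remain valid after deriving in a quasi-abelian setting with enough projectives and injectives (as is the case for $\Mod_{LH(\widehat{\mathcal{B}}c_K)}(I(\mathscr{A})^e)$ by Proposition \ref{prop properties of categories of modules}). A secondary subtlety is ensuring that the unit morphism $K \to \operatorname{HH}^{\bullet}(\mathscr{A})$ lands in the expected component $\operatorname{HH}^0(\mathscr{A}) = \operatorname{Z}(\mathscr{A})$ and sends $1_K \mapsto 1_{\mathscr{A}}$; this can be checked directly from the construction of the unit in Proposition \ref{prop endomorphism algebra}, which factors through the identity endomorphism of $I(\mathscr{A})$. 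Everything else is formal.
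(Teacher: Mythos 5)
Your proposal is correct and follows essentially the same route as the paper: both establish that $\operatorname{D}(\widehat{\mathcal{B}}c_K)$ is closed symmetric monoidal (the paper cites Kashiwara--Schapira and Bode for this), use the Fréchet hypothesis and flatness to see that $\mathscr{A}^e$ is a monoid in $\operatorname{D}(\widehat{\mathcal{B}}c_K)$ with $\mathscr{A}$ a module over it, invoke Proposition \ref{prop endomorphism algebra} to get the monoid structure on $R\underline{\Hom}_{\mathscr{A}^e}(\mathscr{A},\mathscr{A})$, and then descend to cohomology groups in the Stein case via strictness and the Künneth identification of $\operatorname{H}^n$ of the derived tensor square. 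The only cosmetic difference is that the paper applies Proposition \ref{prop endomorphism algebra} directly to the derived category viewed as a closed symmetric monoidal category, rather than re-running its proof in the derived setting as you suggest.
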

\begin{proof}
First, recall that by  \cite[Theorem 14.4.8]{Kashiwara2006} and \cite[Corollary 4.17]{bode2021operations} the closed symmetric monoidal structure in $\widehat{\mathcal{B}}c_K$ given by $-\widehat{\otimes}_K-$ and $\underline{\Hom}_{\widehat{\mathcal{B}}c_K}(-,-)$ extends to a closed symmetric monoidal structure on $\operatorname{D}(\widehat{\mathcal{B}}c_K)$ given by $-\widehat{\otimes}^{\mathbb{L}}_K-$ and $R\underline{\Hom}_{\widehat{\mathcal{B}}c_K}(-,-)$. Next, as $\mathscr{A}$ is a Fréchet algebra, it follows that $\mathscr{A}^e$ is also a Fréchet algebra. In particular, $I(\mathscr{A}^e)=I(\mathscr{A})^e$ is a monoid in $LH(\widehat{\mathcal{B}}c_K)$  (\emph{cf.} Proposition \ref{prop comparison of tensor products metrizable spaces}). Furthermore, $I(\mathscr{A})^e$ is flat with respect to $\widetilde{\otimes}_K$. Hence, we have:
\begin{equation*}
I(\mathscr{A})^e\widetilde{\otimes}_K^{\mathbb{L}}I(\mathscr{A})^e=I(\mathscr{A})^e\widetilde{\otimes}_KI(\mathscr{A})^e,
\end{equation*}
and therefore $\mathscr{A}^e$ is a monoid in $\operatorname{D}(\widehat{\mathcal{B}}c_K)$. Similarly, $\mathscr{A}$ is a module over $\mathscr{A}^e$ in $\operatorname{D}(\widehat{\mathcal{B}}c_K)$. Thus, it follows by Proposition \ref{prop endomorphism algebra} that $\operatorname{HH}^{\bullet}(\mathscr{A}):=R\underline{\Hom}_{\mathscr{A}^e}(\mathscr{A},\mathscr{A})$ is a monoid in $\operatorname{D}(\widehat{\mathcal{B}}c_K)$.\\
Let now $X$ be a Stein space with free tangent sheaf. Then $\operatorname{HH}^{\bullet}(\wideparen{\D}_X(X))$ is a strict complex of nuclear Fréchet spaces. In particular, it is a complex of flat modules with flat cohomology groups. Hence, it follows that for each $n\geq 0$ we have:
\begin{equation*}
    \operatorname{H}^n\left(\operatorname{HH}^{\bullet}(\wideparen{\D}_X(X))\widehat{\otimes}_K^{\mathbb{L}}\operatorname{HH}^{\bullet}(\wideparen{\D}_X(X)) \right)=\bigoplus_{r+s=n}\operatorname{HH}^{r}(\wideparen{\D}_X(X))\widehat{\otimes}_K\operatorname{HH}^{s}(\wideparen{\D}_X(X)).
\end{equation*}
Hence, the product on $\operatorname{HH}^{\bullet}(\wideparen{\D}_X(X))$ descends to a morphism of graded complete bornological spaces:
\begin{equation*}
    \smile: \bigoplus_{n\geq 0}\operatorname{HH}^{n}(\wideparen{\D}_X(X))\widehat{\otimes}_K\bigoplus_{n\geq 0}\operatorname{HH}^{n}(\wideparen{\D}_X(X))\rightarrow \bigoplus_{n\geq 0}\operatorname{HH}^{n}(\wideparen{\D}_X(X)).
\end{equation*}
The fact that $\operatorname{HH}^{\bullet}(\wideparen{\D}_X(X))$ is a monoid  and the unit of the closed symmetric monoidal structure on $\operatorname{D}(\widehat{\mathcal{B}}c_K)$ is $K$
imply that $\bigoplus_{n\geq 0}\operatorname{HH}^{n}(\wideparen{\D}_X(X))$ is in fact a graded complete bornological algebra.
\end{proof}
Hence, we have shown that for any $n,m\geq 0$ there is a bounded cup product:
\begin{equation*}
    \smile:\operatorname{HH}^{n}(\wideparen{\D}_X(X))\widehat{\otimes}_K\operatorname{HH}^{m}(\wideparen{\D}_X(X))\rightarrow \operatorname{HH}^{n+m}(\wideparen{\D}_X(X)),
\end{equation*}
and a bounded  Batalin-Vilkovisky operator:
\begin{equation*}
    \Delta:\operatorname{HH}^{n}(\wideparen{\D}_X(X))\rightarrow \operatorname{HH}^{n-1}(\wideparen{\D}_X(X)).
\end{equation*}
Using these two operators, together with equation (\ref{equation BV operator}), we can define a bounded Gerstenhaber bracket on the graded complete bornological space $\bigoplus_{n\geq 0}\operatorname{HH}^{n}(\wideparen{\D}_X(X))$ via the formula:
\begin{equation*}
[\alpha,\beta]= \Delta (\alpha \smile \beta) -\Delta(\alpha)\smile \beta- (-1)^{\vert \alpha \vert}\alpha \smile \Delta(\beta),
\end{equation*}
for $\alpha$, and $\beta$ homogeneous elements in $\bigoplus_{n\geq 0}\operatorname{HH}^{n}(\wideparen{\D}_X(X))$. We will postpone studying the properties of this bracket to a later paper. However, the previous discussion should convince the reader that most of the operations from the classical theory of Hochschild cohomology of associative algebras can also be developed in the complete bornological setting. 
\subsection{Non-commutative differential forms I}\label{section ncdf1}
In order to generalize some of our previous results to the case where the de Rham cohomology is not finite, we will need to use a bornological version of the space of non-commutative differential forms. Our exposition is based on the constructions of  \cite{Cuntz-Quillen}, by J. Cuntz and D. Quillen. Until the end of the section, we let $K$ be a complete non-archimedean extension of $\mathbb{Q}_p$, $\mathscr{A}$ be a complete bornological algebra, and $\mathscr{A}^e$ be its enveloping algebra. 
\begin{defi}
The module of non-commutative differential forms of $\mathscr{A}$ is the following complete bornological $\mathscr{A}^e$-module:
\begin{equation*}
    \ncd(\mathscr{A}):= \operatorname{Ker}\left(\mathscr{A}^e\rightarrow \mathscr{A}  \right),
\end{equation*}
where $\mathscr{A}^e\rightarrow \mathscr{A}$ is the unique $\mathscr{A}^e$-linear bounded map defined on simple tensors by $x\otimes y\mapsto xy$.
\end{defi}
The idea is using $\ncd(\mathscr{A})$ to simplify some of the calculations of Hochschild cohomology. Our strategy is based upon the following lemma:
\begin{Lemma}\label{Lemma corepresentability of derivations}
Let $\mathscr{A}$ be a complete bornological algebra. Then the  functor:
\begin{equation*}
    \Der(\mathscr{A},-):\Mod_{\widehat{\mathcal{B}}c_K}(\mathscr{A}^e)\rightarrow  \widehat{\mathcal{B}}c_K,
\end{equation*}
is co-representable by $\ncd(\mathscr{A})$. In other words, there is a natural isomorphism of functors:
\begin{equation*}
    \underline{\Hom}_{\mathscr{A}^e}(\ncd(\mathscr{A}),-)\cong \operatorname{Der}(\mathscr{A},-).
\end{equation*}
\end{Lemma}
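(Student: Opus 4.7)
The plan is to reproduce the classical corepresentability argument for Kähler differentials, carefully keeping track of boundedness at each step. The key observation is that the defining short exact sequence
\begin{equation*}
0\rightarrow \ncd(\mathscr{A})\rightarrow \mathscr{A}^e\xrightarrow{\mu}\mathscr{A}\rightarrow 0
\end{equation*}
is split in $\widehat{\mathcal{B}}c_K$ by the bounded $K$-linear map $\mathscr{A}\rightarrow \mathscr{A}^e$, $a\mapsto a\otimes 1$. In particular $\ncd(\mathscr{A})$ is genuinely a complete bornological $\mathscr{A}^e$-module, and the bornology on $\ncd(\mathscr{A})$ is just the subspace bornology inherited from $\mathscr{A}^e$.

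First I would define the universal derivation $d\colon \mathscr{A}\rightarrow \ncd(\mathscr{A})$ by $d(a)=1\otimes a - a\otimes 1$. Since $a\mapsto 1\otimes a$ and $a\mapsto a\otimes 1$ are both bounded $K$-linear maps $\mathscr{A}\rightarrow \mathscr{A}^e$ whose difference lands in $\ncd(\mathscr{A})$, the map $d$ is a bounded $K$-linear map. A direct calculation using the bimodule structure $b\cdot(x\otimes y)=bx\otimes y$, $(x\otimes y)\cdot b = x\otimes yb$ shows that $a\cdot d(b) + d(a)\cdot b = d(ab)$, so $d\in \Der(\mathscr{A},\ncd(\mathscr{A}))$. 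This yields a natural transformation
\begin{equation*}
    \Psi_{\mathscr{M}}\colon \underline{\Hom}_{\mathscr{A}^e}(\ncd(\mathscr{A}),\mathscr{M})\rightarrow \Der(\mathscr{A},\mathscr{M}), \quad \psi\mapsto \psi\circ d,
\end{equation*}
which is bounded because it is precomposition with a bounded map followed by restriction to the closed subspace of derivations inside $\underline{\Hom}_{\widehat{\mathcal{B}}c_K}(\mathscr{A},\mathscr{M})$.

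Next I would construct the inverse $\Phi_{\mathscr{M}}$. Given a bounded derivation $D\colon \mathscr{A}\rightarrow \mathscr{M}$, the bounded bilinear map $\mathscr{A}\times \mathscr{A}\rightarrow \mathscr{M}$, $(x,y)\mapsto x\cdot D(y)$ extends via the universal property of $\widehat{\otimes}_K$ to a bounded $K$-linear map $\tilde{D}\colon \mathscr{A}^e\rightarrow \mathscr{M}$. Restricting to $\ncd(\mathscr{A})$, I obtain $\psi_D\colon \ncd(\mathscr{A})\rightarrow \mathscr{M}$, which is bounded. The key verification is $\mathscr{A}^e$-linearity: for $\sum x_i\otimes y_i\in \ncd(\mathscr{A})$ (so $\sum x_i y_i = 0$) and $a\otimes b\in \mathscr{A}^e$, the Leibniz rule gives
\begin{equation*}
    \psi_D\bigl((a\otimes b)\cdot \sum x_i\otimes y_i\bigr) = \sum ax_i D(y_i) b + a\Bigl(\sum x_i y_i\Bigr)D(b) = (a\otimes b)\cdot \psi_D\bigl(\sum x_i\otimes y_i\bigr),
\end{equation*}
where the middle term vanishes by the kernel condition. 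Setting $\Phi_{\mathscr{M}}(D)=\psi_D$, I would verify that $D\mapsto \tilde{D}$ is bounded using the universal property of the bornological projective tensor product, and that passage to the closed subspace $\ncd(\mathscr{A})\subset \mathscr{A}^e$ preserves boundedness.

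Finally, I would check that $\Phi_{\mathscr{M}}$ and $\Psi_{\mathscr{M}}$ are mutually inverse: one direction, $\Psi_{\mathscr{M}}\circ \Phi_{\mathscr{M}}(D)(a)=\psi_D(d(a))=1\cdot D(a)-a\cdot D(1)=D(a)$ using $D(1)=0$; the other direction, $\Phi_{\mathscr{M}}\circ \Psi_{\mathscr{M}}(\psi)=\psi$, follows because both sides are $\mathscr{A}^e$-linear maps agreeing on the generating set $\{d(a):a\in \mathscr{A}\}$ of $\ncd(\mathscr{A})$ as a left $\mathscr{A}^e$-module, combined with continuity/boundedness to extend across the completion. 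Naturality in $\mathscr{M}$ is immediate from the construction. The main technical point to be careful about is showing that the generators $\{d(a)\}_{a\in \mathscr{A}}$ topologically generate $\ncd(\mathscr{A})$ as a complete bornological $\mathscr{A}^e$-module, which I would deduce from the fact that any $\sum x_i\otimes y_i\in \ncd(\mathscr{A})$ satisfies $\sum x_i\otimes y_i = -\sum (x_i\otimes 1)\cdot d(y_i)$ modulo the kernel condition, plus a density/completion argument using boundedness of the multiplication on $\mathscr{A}$.
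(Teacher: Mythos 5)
Your argument is correct, but it takes a genuinely different route from the paper. The paper's proof is purely homological and very short: it invokes Proposition \ref{prop interpretation of Taylor cohomology groups for n=0,1} to identify $\Der(\mathscr{A},\mathscr{M})$ with $\operatorname{Ker}(\delta_1^{\mathscr{M}})$, and then uses the strict exactness of the bar resolution together with left exactness of $\underline{\Hom}_{\mathscr{A}^e}(-,\mathscr{M})$ to rewrite that kernel as $\underline{\Hom}_{\mathscr{A}^e}$ out of $\operatorname{Coker}\left(\mathscr{A}^e\widehat{\otimes}_K\mathscr{A}\widehat{\otimes}_K\mathscr{A}\rightarrow \mathscr{A}^e\widehat{\otimes}_K\mathscr{A}\right)\cong \ncd(\mathscr{A})$. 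You instead rebuild the classical Cuntz--Quillen argument by hand: you exhibit the universal bounded derivation $d(a)=1\otimes a-a\otimes 1$ and construct explicit mutually inverse bounded maps $\Psi$ and $\Phi$. Both are valid; the paper's version buys brevity and reuses machinery already set up in Section \ref{taylor HA}, while yours buys an explicit universal object and explicit formulas, which is genuinely useful (for instance, it makes transparent why $\Psi\Phi(D)=D$ reduces to $D(1)=0$). The only step in your write-up that needs care is the ``density'' argument showing that $\{d(a)\}_{a\in\mathscr{A}}$ generates $\ncd(\mathscr{A})$ topologically: the clean way to close it is to use the bounded projection $p=\operatorname{Id}-\iota\circ\mu$ (with $\iota(a)=a\otimes 1$) onto $\ncd(\mathscr{A})$, note that $p(x\otimes y)=x\otimes y-xy\otimes 1=x\cdot d(y)$, and then observe that $\psi\circ p$ and $\Phi\Psi(\psi)\circ p$ are bounded maps on $\mathscr{A}^e$ agreeing on simple tensors, hence equal by the universal property of $\widehat{\otimes}_K$; restricting along $p|_{\ncd(\mathscr{A})}=\operatorname{Id}$ finishes the argument without any separate completion step.
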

\begin{proof}
The proof is follows the lines of \cite[Proposition 3.1]{Cuntz-Quillen}. Let $\mathscr{M}$ be a complete bornological $\mathscr{A}^e$-module, and recall from Section \ref{taylor HA} the bar resolution of $\mathscr{A}$:
\begin{equation*}
 B(\mathscr{A})^{\bullet}:=  \left(\cdots\rightarrow \mathscr{A}^e\widehat{\otimes}_K\mathscr{A}\rightarrow\mathscr{A}^e\right).
\end{equation*}
By Proposition \ref{prop interpretation of Taylor cohomology groups for n=0,1} the space $\Der(\mathscr{A},\mathscr{M})$ is the kernel of the following map:
\begin{equation*}
    \delta_{1}^\mathscr{M}:\underline{\Hom}_{\mathscr{A}^e}(\mathscr{A}^e\widehat{\otimes}_K\mathscr{A},\mathscr{M})\rightarrow \underline{\Hom}_{\mathscr{A}^e}(\mathscr{A}^e\widehat{\otimes}_K\mathscr{A}\widehat{\otimes}_K\mathscr{A},\mathscr{M}).
\end{equation*}
Recall that the bar resolution $B(\mathscr{A})^{\bullet}$ is split as a complex of complete bornological $\mathscr{A}^e$-modules. In particular, it is strict exact. As $\underline{\Hom}_{\mathscr{A}^e}(-,\mathscr{M})$  is left exact, we get a chain of isomorphisms
\begin{equation*}
\Der(\mathscr{A},\mathscr{M})=\underline{\Hom}_{\mathscr{A}^e}(\operatorname{Coker}\left(\mathscr{A}^e\widehat{\otimes}_K\mathscr{A}\widehat{\otimes}_K\mathscr{A}\rightarrow
 \mathscr{A}^e\widehat{\otimes}_K\mathscr{A}\right),\mathscr{M})=\underline{\Hom}_{\mathscr{A}^e}(\ncd(\mathscr{A}),\mathscr{M}),
\end{equation*}
and this is precisely the identity we wanted to show.
\end{proof}
Thus, $\ncd(\mathscr{A})$ enables us to pass from derivations to homomorphisms, and the latter are much easier to study. Let us mention that for each $n\geq 2$ there are non-commutative analogs of the module of $n$-forms on a commutative algebra. These are defined via the following construction:
\begin{equation*}
    \Omega_{\operatorname{nc}}^n(\mathscr{A}):= \widehat{\otimes}_{\mathscr{A}}^n\ncd(\mathscr{A}),
\end{equation*}
where the $\mathscr{A}^e$-module structure on $\Omega_{\operatorname{nc}}^n(\mathscr{A})$ is given by the action of $\mathscr{A}$ on the leftmost factor, and the action of $\mathscr{A}^{\op}$ on the rightmost factor. As shown in \cite{Cuntz-Quillen},  in the case of associative $K$-algebras these modules are intimately related to the Hochschild cohomology of $\mathscr{A}$. In particular, it can be shown that $\Omega_{\operatorname{nc}}^n(\mathscr{A})$ co-represents 
the functor of $n$-reduced Hochschild cocycles, and it is possible to define a DGA structure on the direct sum:
\begin{equation*}
    \Omega_{\operatorname{nc}}^{\bullet}(\mathscr{A}):=\bigoplus_{n\geq 0}\Omega_{\operatorname{nc}}^n(\mathscr{A}).
\end{equation*}
 Although these constructions also make sense in the bornological setting, we will not pursue this study here. The reason behind this is that for $n\geq 2$ the module $\Omega_{\operatorname{nc}}^n(\mathscr{A})$ is too big, and thus the functor $\underline{\Hom}_{\mathscr{A}^e}(\Omega_{\operatorname{nc}}^n(\mathscr{A}),-)$ does not simplify matters.

\subsection{\texorpdfstring{Non-commutative differential forms II}{}}\label{section ncdf2}
As mentioned above, the goal is using the module of non-commutative differential forms to obtain more information on the Hochschild cohomology of the ring of complete $p$-adic differential operators on a smooth Stein space with free tangent sheaf. For future use, we will actually work in a slightly more general setting. Namely, until the end of the section, we fix a  (two-sided) Fréchet-Stein algebra $\mathscr{A}=\varprojlim_n \mathscr{A}_n$ satisfying the following properties:
\begin{enumerate}[label=(\roman*)]
    \item Every $\mathcal{M}\in \mathcal{C}(\mathscr{A})$ satisfies that the Fréchet-Stein presentation $\mathcal{M}=\varprojlim_n \mathcal{M}_n$ makes $\mathcal{M}$ a nuclear Fréchet space.
    \item Every $\mathcal{M}\in \mathcal{C}(\mathscr{A}^{\op})$ satisfies that the Fréchet-Stein presentation $\mathcal{M}=\varprojlim_n \mathcal{M}_n$ makes $\mathcal{M}$ a nuclear Fréchet space.
    \item The isomorphism of Fréchet algebras $\mathscr{A}^e=\varprojlim_n \mathscr{A}^e_n$ is a Fréchet-Stein presentation, and $\mathscr{A}^e$ also satisfies conditions $(i)$ and $(ii)$.
\end{enumerate}
Let us start the section with the following lemma:
\begin{Lemma}\label{lemma ncd is co-admissible}
The module of non-commutative differential forms $\ncd(\mathscr{A})$  is a co-admissible $\mathscr{A}^e$-module. In particular, there is a Fréchet-Stein presentation:
\begin{equation*}
   \ncd(\mathscr{A})=\varprojlim\ncd(\mathscr{A}_n),
\end{equation*}
and for each co-admissible $\mathscr{A}^e$-module $\mathcal{M}=\varprojlim_n \mathcal{M}_n$ there is a canonical isomorphism:
\begin{equation*}
    \Der(\mathscr{A},\mathcal{M})=\varprojlim \Der(\mathscr{A}_n,\mathcal{M}_n).
\end{equation*}
In particular, the space of bounded derivations $\Der(\mathscr{A},\mathcal{M})$ is a Fréchet space.
\end{Lemma}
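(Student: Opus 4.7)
The plan is to derive all three assertions from the defining short exact sequence of $\ncd(\mathscr{A})$ and the stability of the category of co-admissible modules under kernels, then bootstrap this into a Fréchet–Stein presentation and finally into the corresponding description of derivations.

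First, I would observe that the multiplication map $\mu:\mathscr{A}^e\rightarrow \mathscr{A}$ admits the bounded $K$-linear section $x\mapsto x\otimes 1$, so it is a strict epimorphism of complete bornological $\mathscr{A}^e$-modules. Hence we obtain a strict short exact sequence
\[ 0\rightarrow \ncd(\mathscr{A})\rightarrow \mathscr{A}^e\xrightarrow{\mu}\mathscr{A}\rightarrow 0 \]
in $\Mod_{\widehat{\mathcal{B}}c_K}(\mathscr{A}^e)$. Now $\mathscr{A}$ is a cyclic (hence finitely generated) quotient of $\mathscr{A}^e$, so it is a co-admissible $\mathscr{A}^e$-module, and since $\mathcal{C}(\mathscr{A}^e)$ is an abelian subcategory of $\Mod_{\widehat{\mathcal{B}}c_K}(\mathscr{A}^e)$ closed under kernels, the module $\ncd(\mathscr{A})$ is co-admissible.

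Next, I would apply the functor $\mathscr{A}_n^e\widehat{\otimes}_{\mathscr{A}^e}-$ to the above sequence. By the general $c$-flatness properties built into the Fréchet–Stein presentation assumed in (iii), this functor is exact on co-admissible modules, yielding a strict short exact sequence
\[ 0\rightarrow \mathscr{A}_n^e\widehat{\otimes}_{\mathscr{A}^e}\ncd(\mathscr{A})\rightarrow \mathscr{A}_n^e\rightarrow \mathscr{A}_n^e\widehat{\otimes}_{\mathscr{A}^e}\mathscr{A}\rightarrow 0. \]
The identification $\mathscr{A}_n^e\widehat{\otimes}_{\mathscr{A}^e}\mathscr{A}=\mathscr{A}_n$ follows from the compatibility of the canonical Fréchet–Stein presentation of $\mathscr{A}$ as an $\mathscr{A}^e$-module (via $\mu$) with its presentation as itself, and under this identification the rightmost map is the multiplication $\mu_n:\mathscr{A}_n^e\rightarrow \mathscr{A}_n$. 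Consequently $\mathscr{A}_n^e\widehat{\otimes}_{\mathscr{A}^e}\ncd(\mathscr{A})=\ncd(\mathscr{A}_n)$, and taking inverse limits gives the claimed presentation $\ncd(\mathscr{A})=\varprojlim_n\ncd(\mathscr{A}_n)$.

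For the derivation identity, I would invoke Lemma \ref{Lemma corepresentability of derivations} to rewrite both sides as inner $\Hom$ spaces:
\[ \Der(\mathscr{A},\mathcal{M})=\underline{\Hom}_{\mathscr{A}^e}(\ncd(\mathscr{A}),\mathcal{M}),\quad \Der(\mathscr{A}_n,\mathcal{M}_n)=\underline{\Hom}_{\mathscr{A}_n^e}(\ncd(\mathscr{A}_n),\mathcal{M}_n). \]
Since $\ncd(\mathscr{A})$ and $\mathcal{M}$ are both co-admissible over $\mathscr{A}^e$, the standard description of inner $\Hom$ between co-admissible modules over a Fréchet–Stein algebra (in its bornological incarnation afforded by our assumption (iii)) yields
\[ \underline{\Hom}_{\mathscr{A}^e}(\ncd(\mathscr{A}),\mathcal{M})=\varprojlim_n\underline{\Hom}_{\mathscr{A}_n^e}(\ncd(\mathscr{A}_n),\mathcal{M}_n). \]
Each term on the right is a closed subspace of the Banach space $\underline{\Hom}_{\widehat{\mathcal{B}}c_K}(\mathscr{A}_n,\mathcal{M}_n)$, hence a Banach space, and the inverse limit realises $\Der(\mathscr{A},\mathcal{M})$ as a Fréchet space. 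The main obstacle is the very last step: justifying the inverse limit description of the inner $\Hom$ between two co-admissible $\mathscr{A}^e$-modules in the bornological setting. This amounts to showing that every bounded $\mathscr{A}^e$-linear map from $\ncd(\mathscr{A})$ to a Banach quotient $\mathcal{M}_n$ factors uniquely through the Banach quotient $\ncd(\mathscr{A}_n)$; while morally this is built into the hypotheses (i)–(iii), the bornological bookkeeping needs to be verified carefully to ensure strictness at every stage.
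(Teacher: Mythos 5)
Your proposal follows essentially the same route as the paper: co-admissibility of $\ncd(\mathscr{A})$ from the abelian-ness of $\mathcal{C}(\mathscr{A}^e)$ applied to the kernel of $\mathscr{A}^e\rightarrow\mathscr{A}$, identification of $\ncd(\mathscr{A}_n)$ via the base-changed short exact sequence, and then the corepresentability Lemma \ref{Lemma corepresentability of derivations} combined with the inverse-limit description of $\underline{\Hom}$ between co-admissible modules. The step you flag as the main obstacle is exactly where the paper cites \cite[Lemma 7.2]{ardakov2019}, so no new argument is needed there.

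One caveat: your justification that $\mathscr{A}$ is a co-admissible $\mathscr{A}^e$-module because it is a ``cyclic, hence finitely generated'' quotient of $\mathscr{A}^e$ is not valid as stated --- over a Fréchet--Stein algebra, finitely generated does not imply co-admissible (one needs, e.g., finite presentation, and $\ncd(\mathscr{A})$ need not be finitely generated unless $\mathscr{A}$ is). The paper instead imports this fact, together with the identification $\mathscr{A}_n=\mathscr{A}^e_n\widehat{\otimes}_{\mathscr{A}^e}\mathscr{A}$, as a nontrivial result from \cite[Section 7]{HochDmod}; with that substitution your argument goes through unchanged.
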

\begin{proof}
As shown in \cite[Section 7]{HochDmod}, our assumptions on $\mathscr{A}$ imply that $\mathscr{A}$ is a co-admissible $\mathscr{A}^e$-module, and that for every $n\geq 0$ we have a canonical identification:
\begin{equation*}
    \mathscr{A}_n=\mathscr{A}^e_n\widehat{\otimes}_{\mathscr{A}^e}\mathscr{A}.
\end{equation*}
As the category of co-admissible modules of a Fréchet-Stein algebra is abelian, and the map $\mathscr{A}^e\rightarrow \mathscr{A}$ is $\mathscr{A}^e$-linear, this implies that its kernel $\ncd(\mathscr{A})$ is a co-admissible module. Furthermore, for every $n\geq 0$ we have a short exact sequence:
\begin{equation*}
    0\rightarrow   \mathscr{A}^e_n\widehat{\otimes}_{\mathscr{A}^e}\ncd(\mathscr{A})  \rightarrow \mathscr{A}^e_n\rightarrow \mathscr{A}_n\rightarrow 0,
\end{equation*}
from which it follows that $\mathscr{A}^e_n\widehat{\otimes}_{\mathscr{A}^e}\ncd(\mathscr{A})=\ncd(\mathscr{A}_n)$. Thus, we have $\ncd(\mathscr{A})=\varprojlim_n \ncd(\mathscr{A}_n)$, as we wanted to show. Let now $\mathcal{M}\in \mathcal{C}(\mathscr{A}^e)$ be a co-admissible module. Then by Lemma \ref{Lemma corepresentability of derivations} we have the following chain of identities of complete bornological modules:
\begin{equation*}
    \Der(\mathscr{A},\mathcal{M})=\underline{\Hom}_{\mathscr{A}^e}(\ncd(\mathscr{A}),\mathcal{M})=\varprojlim_n \underline{\Hom}_{\mathscr{A}_n^e}(\ncd(\mathscr{A}_n),\mathcal{M}_n)=\varprojlim_n \Der(\mathscr{A}_n,\mathcal{M}_n),
\end{equation*}
where the third identity follows by \cite[Lemma 7.2]{ardakov2019}. Thus, the lemma holds.
\end{proof}
With this lemma at hand, we can show the following proposition:
\begin{prop}\label{prop out as inverse limit}
Let $\mathcal{M}\in \mathcal{C}(\mathscr{A}^e)$ be a co-admissible module. The following hold:
\begin{enumerate}[label=(\roman*)]
    \item For each $n\geq 0$, we have $\operatorname{HH}^0(\mathscr{A}_n,\mathcal{M})=\operatorname{Z}(\mathscr{A}_n,\mathcal{M}_n)$, and this is a Banach space.
    \item There is an isomorphism of complete bornological spaces:
    \begin{equation*}   \operatorname{HH}^0(\mathscr{A},\mathcal{M})=\operatorname{Z}(\mathcal{M})=\varprojlim \operatorname{Z}(\mathscr{A}_n,\mathcal{M}_n)=\varprojlim \operatorname{HH}^0(\mathscr{A}_n,\mathcal{M}_n).
    \end{equation*}
    Furthermore, $\operatorname{HH}^0(\mathscr{A},\mathcal{M})$ is a nuclear Fréchet space.
    \item Assume that we have the following identity in $\operatorname{D}(\widehat{\mathcal{B}}c_K)$:
    \begin{equation*}
        R\varprojlim \operatorname{Z}(\mathscr{A}_n,\mathcal{M}_n)=\varprojlim \operatorname{Z}(\mathscr{A}_n,\mathcal{M}_n),
    \end{equation*}
    then there is an isomorphism of complete bornological spaces:
\begin{equation*}
    \mathcal{M}/\operatorname{Z}(\mathscr{A},\mathcal{M})=\varprojlim \left(\mathcal{M}_n/\operatorname{Z}(\mathscr{A}_n,\mathcal{M}_n)\right),
\end{equation*}
and this inverse limit makes $\mathcal{M}/\operatorname{Z}(\mathscr{A},\mathcal{M})$ a nuclear Fréchet space.
\end{enumerate}
Assume in addition that $\operatorname{HH}^1(\mathscr{A},\mathcal{M})$ is an object of $\widehat{\mathcal{B}}c_K$. Then the following hold:
\begin{enumerate}[resume,label=(\roman*)]
    \item For each $n\geq 0$, we have $\operatorname{HH}^1(\mathscr{A}_n,\mathcal{M}_n)=\Outder(\mathscr{A}_n,\mathcal{M}_n)$, and this is a Banach space.
    \item There is an isomorphism of complete bornological spaces:
    \begin{equation*}   \operatorname{HH}^1(\mathscr{A},\mathcal{M})=\Outder(\mathscr{A},\mathcal{M})=\varprojlim \Outder(\mathscr{A}_n,\mathcal{M}_n)=\varprojlim \operatorname{HH}^1(\mathscr{A}_n,\mathcal{M}_n),
    \end{equation*}
   and this inverse limit makes $\operatorname{HH}^1(\mathscr{A},\mathcal{M})$ a Fréchet space.
\end{enumerate}
\end{prop}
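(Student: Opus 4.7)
The plan is to reduce each statement to a combination of (a) the bar-complex interpretations from Proposition \ref{prop interpretation of Taylor cohomology groups for n=0,1}, (b) the co-admissibility machinery of Lemma \ref{lemma ncd is co-admissible} together with \cite[Lemma 7.2]{ardakov2019}, and (c) the strictness criterion of Corollary \ref{coro H1 borno implies delta0 strict}. I will tackle the items in order, with the principal difficulty concentrated in the descent of strictness through the Fréchet-Stein tower.

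For (i), I would mimic the argument of Proposition \ref{prop computation 0th taylor cohomology group}: since $I$ is a right adjoint it preserves kernels, giving $\operatorname{HH}^0(\mathscr{A}_n,\mathcal{M}_n)=I(\operatorname{Ker}\delta_0^{\mathcal{M}_n})=I(\operatorname{Z}(\mathscr{A}_n,\mathcal{M}_n))$, and the center is a closed $K$-subspace of the Banach module $\mathcal{M}_n$, hence Banach. The analogous kernel argument yields $\operatorname{HH}^0(\mathscr{A},\mathcal{M})=\operatorname{Z}(\mathcal{M})$ at the Fréchet level for (ii); to obtain the inverse-limit expression I rewrite $\operatorname{Z}(\mathcal{M})=\underline{\Hom}_{\mathscr{A}^e}(\mathscr{A},\mathcal{M})$, note that both $\mathscr{A}$ and $\mathcal{M}$ are co-admissible $\mathscr{A}^e$-modules with compatible Fréchet-Stein presentations, and apply \cite[Lemma 7.2]{ardakov2019} to land on $\varprojlim_n \underline{\Hom}_{\mathscr{A}^e_n}(\mathscr{A}_n,\mathcal{M}_n)=\varprojlim_n \operatorname{Z}(\mathscr{A}_n,\mathcal{M}_n)$. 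Nuclearity then follows because the center embeds as a closed subspace of the nuclear Fréchet $\mathscr{A}^e$-module $\mathcal{M}$.

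For (iii), I will apply the derived inverse-limit six-term exact sequence to the compatible family of strict short exact sequences $0\to \operatorname{Z}(\mathscr{A}_n,\mathcal{M}_n)\to\mathcal{M}_n\to\mathcal{M}_n/\operatorname{Z}(\mathscr{A}_n,\mathcal{M}_n)\to 0$; using the identification of inverse limits from (ii) and the vanishing hypothesis $R^1\varprojlim \operatorname{Z}(\mathscr{A}_n,\mathcal{M}_n)=0$, this yields the desired isomorphism $\mathcal{M}/\operatorname{Z}(\mathscr{A},\mathcal{M})=\varprojlim_n \mathcal{M}_n/\operatorname{Z}(\mathscr{A}_n,\mathcal{M}_n)$. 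Nuclearity is automatic, since the quotient is a strict quotient of the nuclear Fréchet module $\mathcal{M}$ by a closed subspace.

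For (iv) and (v), the starting point is Corollary \ref{coro H1 borno implies delta0 strict}: the bornological hypothesis on $\operatorname{HH}^1(\mathscr{A},\mathcal{M})$ is equivalent to strictness of $\delta_0^{\mathcal{M}}$, which identifies $\operatorname{HH}^1(\mathscr{A},\mathcal{M})=\Outder(\mathscr{A},\mathcal{M})$ as a closed quotient of the Fréchet space $\Der(\mathscr{A},\mathcal{M})=\varprojlim_n \Der(\mathscr{A}_n,\mathcal{M}_n)$ from Lemma \ref{lemma ncd is co-admissible}. Consequently $\Inn(\mathscr{A},\mathcal{M})\cong \mathcal{M}/\operatorname{Z}(\mathscr{A},\mathcal{M})$ is a closed Fréchet subspace of $\Der(\mathscr{A},\mathcal{M})$, and an inverse-limit description of $\Inn$ will then propagate by a second derived inverse-limit computation to an inverse-limit description of $\Outder(\mathscr{A},\mathcal{M})$. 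The hard part will be (iv): strictness of an inverse limit of bounded maps between Banach spaces does not automatically descend to each finite stage, so one must exploit the nuclear Fréchet structure of the tower to first verify $R^1\varprojlim \operatorname{Z}(\mathscr{A}_n,\mathcal{M}_n)=0$ (thereby triggering (iii)), and then argue via an open-mapping argument that each image $\operatorname{Im}(\delta_0^{\mathcal{M}_n})\cong \mathcal{M}_n/\operatorname{Z}(\mathscr{A}_n,\mathcal{M}_n)$ is closed in $\Der(\mathscr{A}_n,\mathcal{M}_n)$, yielding $\operatorname{HH}^1(\mathscr{A}_n,\mathcal{M}_n)=\Outder(\mathscr{A}_n,\mathcal{M}_n)$ as a Banach space.
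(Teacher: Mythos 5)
Your treatment of (i)--(iii), and the overall architecture you propose for (iv)--(v), coincide with the paper's own proof: the paper likewise obtains $\operatorname{HH}^0(\mathscr{A},\mathcal{M})=\underline{\Hom}_{\mathscr{A}^e}(\mathscr{A},\mathcal{M})=\varprojlim\underline{\Hom}_{\mathscr{A}_n^e}(\mathscr{A}_n,\mathcal{M}_n)$ via \cite[Lemma 7.2]{ardakov2019}, gets nuclearity from the closed embedding $\operatorname{Z}(\mathscr{A},\mathcal{M})\rightarrow\mathcal{M}$, proves (iii) by taking the derived inverse limit of the level-wise strict short exact sequences, and for (iv)--(v) combines Corollary \ref{coro H1 borno implies delta0 strict} with the identification $\Der(\mathscr{A},\mathcal{M})=\varprojlim_n\Der(\mathscr{A}_n,\mathcal{M}_n)$ from Lemma \ref{lemma ncd is co-admissible} and a second $R^1\varprojlim$-vanishing for the quotient tower $\left(\mathcal{M}_n/\operatorname{Z}(\mathscr{A}_n,\mathcal{M}_n)\right)_n$, which does follow from nuclearity of $\mathcal{M}=\varprojlim_n\mathcal{M}_n$.

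There is, however, one step in your plan for (iv) that would fail. You propose to \emph{verify} $R\varprojlim\operatorname{Z}(\mathscr{A}_n,\mathcal{M}_n)=\varprojlim\operatorname{Z}(\mathscr{A}_n,\mathcal{M}_n)$ by exploiting the nuclear Fréchet structure of the tower. This is not available: the centers form a tower of closed subspaces whose transition maps need not have dense image, and pre-nuclearity of $(\mathcal{M}_n)_n$ does not pass to arbitrary closed subspaces. In the paper's own application to $\wideparen{\D}_X(X)$ the vanishing is deduced from the finite-dimensionality of $\operatorname{Z}(\widehat{\D}_{n,m(n)})=\operatorname{H}^0_{\operatorname{dR}}(X_n)$ via \cite[Section 2, Corollary 5]{schneider1991cohomology}, not from nuclearity. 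The intended reading is that this vanishing is the hypothesis introduced in (iii) and remains in force for (iv)--(v); once you take it as given, your derived-limit computation goes through. Relatedly, your ``open-mapping argument'' for the closedness of $\operatorname{Im}(\delta_0^{\mathcal{M}_n})$ in $\Der(\mathscr{A}_n,\mathcal{M}_n)$ does not suffice as stated: a bounded injection of Banach spaces need not have closed image, so the open mapping theorem alone yields nothing here. The paper instead identifies $\Inn(\mathscr{A}_n,\mathcal{M}_n)$ with the Banach space $\mathcal{M}_n/\operatorname{Z}(\mathscr{A}_n,\mathcal{M}_n)$ and exhibits the strict exact sequence $0\rightarrow\mathcal{M}/\operatorname{Z}(\mathscr{A},\mathcal{M})\rightarrow\Der(\mathscr{A},\mathcal{M})\rightarrow\Outder(\mathscr{A},\mathcal{M})\rightarrow 0$ as the inverse limit of the level-wise sequences of Banach spaces; if you want to retain your level-wise strictness claim, you need an actual argument for closedness of the image, not an appeal to the open mapping theorem.
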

\begin{proof}
By definition, the Hochschild cohomology of $\mathcal{M}$   is given by the expression:
\begin{equation*}
    \operatorname{HH}^{\bullet}(\mathscr{A},\mathcal{M}):=R\underline{\Hom}_{\mathscr{A}^e}(\mathscr{A},\mathcal{M}).
\end{equation*}
In particular, we have the following chain of identities:
\begin{equation*}
    \operatorname{HH}^{0}(\mathscr{A},\mathcal{M})=\underline{\Hom}_{\mathscr{A}^e}(\mathscr{A},\mathcal{M})=\varprojlim \underline{\Hom}_{\mathscr{A}_n^e}(\mathscr{A}_n,\mathcal{M}_n)=\varprojlim \operatorname{HH}^{0}(\mathscr{A}_n,\mathcal{M}_n),
\end{equation*}
where, again, the second identity is a consequence of \cite[Lemma 7.2]{ardakov2019}. Furthermore, by 
Proposition \ref{prop interpretation of Taylor cohomology groups for n=0,1} we have $\underline{\Hom}_{\mathscr{A}^e}(\mathscr{A},-)=\operatorname{Z}(\mathscr{A},-)$. In particular, $\operatorname{HH}^0(\mathscr{A},\mathcal{M})\rightarrow \mathcal{M}$ is a closed immersion. By assumption, $\mathcal{M}$ is a nuclear Fréchet space. Hence, it follows by \cite[Proposition 19.4]{schneider2013nonarchimedean} that $\operatorname{HH}^0(\mathscr{A},\mathcal{M})$ is a nuclear Fréchet space as well. Thus, statements $(i)$ and $(ii)$ hold. For statement $(iii)$, notice that for any $n\geq 0$ we have the following strict short exact sequence of Banach spaces:
\begin{equation*}
    0 \rightarrow \operatorname{Z}(\mathscr{A}_n,\mathcal{M}_n)\rightarrow \mathcal{M}_n\rightarrow  \mathcal{M}_n/ \operatorname{Z}(\mathscr{A}_n,\mathcal{M}_n) \rightarrow 0.
\end{equation*}
By assumption, we have the following identity in $\operatorname{D}(\widehat{\mathcal{B}}c_K)$:
\begin{equation*}
    R\varprojlim \operatorname{Z}(\mathscr{A}_n,\mathcal{M}_n)=\varprojlim \operatorname{Z}(\mathscr{A}_n,\mathcal{M}_n).
\end{equation*}
In particular, there is a strict short exact sequence of nuclear Fréchet spaces:
\begin{equation*}
    0\rightarrow \operatorname{Z}(\mathscr{A},\mathcal{M})\rightarrow \mathcal{M}\rightarrow \mathcal{M}/\operatorname{Z}(\mathscr{A},\mathcal{M})\rightarrow 0,
\end{equation*}
and we have $\mathcal{M}/\operatorname{Z}(\mathscr{A},\mathcal{M})=\varprojlim_n \left(\mathcal{M}_n/\operatorname{Z}(\mathscr{A}_n,\mathcal{M}_n)\right)$. Hence, claim $(iii)$ also holds.\\

Assume now that $\operatorname{HH}^1(\mathscr{A},\mathcal{M})$ is an object in $\widehat{\mathcal{B}}c_K$. As shown in Corollary \ref{coro H1 borno implies delta0 strict}, this holds if and only if the sequence:
\begin{equation}\label{ses out as inverse limit}
 0\rightarrow   \mathcal{M}/\operatorname{Z}(\mathscr{A},\mathcal{M})\rightarrow \Der(\mathscr{A},\mathcal{M})\rightarrow  \Outder(\mathscr{A},\mathcal{M}) \rightarrow 0,
\end{equation}
is a strict short exact sequence in $\widehat{\mathcal{B}}c_K$, in which case we have $
\operatorname{HH}^1(\mathscr{A},\mathcal{M})\cong \Outder^b(\mathscr{A},\mathcal{M})$. By Lemma \ref{lemma ncd is co-admissible} we have an isomorphism of Fréchet spaces $\Der(\mathscr{A},\mathcal{M})=\varprojlim_n\Der(\mathscr{A}_n,\mathcal{M}_n)$. Hence, it follows that the sequence in (\ref{ses out as inverse limit}) arises as the inverse limit of the following short exact sequences of Banach spaces for each $n\geq 0$:
\begin{equation*}
    0\rightarrow   \mathcal{M}/\operatorname{Z}(\mathscr{A}_n,\mathcal{M}_n)\rightarrow \Der(\mathscr{A}_n,\mathcal{M}_n)\rightarrow  \Outder^b(\mathscr{A}_n,\mathcal{M}_n) \rightarrow 0.
\end{equation*}
Hence, claim $(iv)$ holds. By assumption, the presentation $\mathcal{M}=\varprojlim_n\mathcal{M}_n$ makes $\mathcal{M}$ a nuclear Fréchet space. Hence, we may again apply the contents of \cite[Section 5.3]{bode2021operations}
to deduce that:
\begin{equation*}
   R\varprojlim \left(\mathcal{M}_n/\operatorname{Z}(\mathscr{A}_n,\mathcal{M}_n)\right)=\varprojlim \left(\mathcal{M}_n/\operatorname{Z}(\mathscr{A}_n,\mathcal{M}_n)\right). 
\end{equation*}
Thus, it follows that $\Outder(\mathscr{A},\mathcal{M})=\varprojlim \Outder(\mathscr{A}_n,\mathcal{M}_n)$, and this shows $(v)$.
\end{proof}
\begin{coro}
Assume $K$ is either discretely valued or algebraically closed, and let $X=\varinjlim_n X_n$ be a smooth Stein space with free tangent sheaf. Consider a Fréchet-Stein presentation:
\begin{equation*}
    \wideparen{\D}_X(X)=\varprojlim_n \widehat{\D}_{n,m(n)},
\end{equation*}
as in Theorem \ref{teo global sections of co-admissible modules on Stein spaces}. Then for each $n\geq 0$ we have $\operatorname{Z}(\widehat{\D}_{n,m(n)})=\operatorname{H}^0_{\operatorname{dR}}(X_n)$. In particular, $\operatorname{Z}(\widehat{\D}_{n,m(n)})$  is a finite-dimensional $K$-vector space. Thus, we have the following identity in $\operatorname{D}(\widehat{\mathcal{B}}c_K)$:
\begin{equation*}
    \operatorname{Z}(\wideparen{\D}_X(X))=R\varprojlim_n \operatorname{Z}(\widehat{\D}_{n,m(n)}).
\end{equation*}
In this situation, the maps $\wideparen{\D}_X(X)\rightarrow \widehat{\D}_{n,m(n)}$ induce a natural isomorphism of Fréchet spaces:
\begin{equation*}
    \Outder(\wideparen{\D}_X(X))\rightarrow \varprojlim_n \Outder(\widehat{\D}_{n,m(n)}),
\end{equation*}
and this implies that the map:
\begin{equation*}
    c_1:\operatorname{H}_{\operatorname{dR}}^{1}(X)^b\rightarrow \Outder(\wideparen{\D}_X(X)),\, [\lambda]\mapsto [c_1(\lambda)]
\end{equation*}
is an isomorphism of nuclear Fréchet spaces.
\end{coro}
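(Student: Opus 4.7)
The plan is to exploit the Fréchet--Stein presentation $\wideparen{\D}_X(X) = \varprojlim_n \widehat{\D}_{n,m(n)}$ supplied by Theorem \ref{teo global sections of co-admissible modules on Stein spaces} and apply Proposition \ref{prop out as inverse limit} with $\mathscr{A} = \mathcal{M} = \wideparen{\D}_X(X)$, thereby reducing both the centre and the outer-derivations statements to level-wise computations which will then match the Mittag--Leffler tower $\{\operatorname{H}^\bullet_{\operatorname{dR}}(X_n)^b\}_n$. The hypotheses of Proposition \ref{prop out as inverse limit} are supplied by Theorem \ref{teo global sections of co-admissible modules on Stein spaces} applied to $X$ and, via the isomorphism $\wideparen{\D}_X(X)^e \cong \wideparen{\D}_{X^2}(X^2)$ of Theorem \ref{teo  properties of EX}, to $X^2$ using the étale map $X^2 \to \mathbb{A}^{2r}_K$.

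For the centre at each level, any $z$ central in $\widehat{\D}_{n,m(n)}$ commutes with $\OX_X(X_n)$, and the argument used in the analogous computation at the end of Section \ref{section ncdf2} (ultimately \cite[Proposition 7.5.2]{HochDmod}) forces $z \in \OX_X(X_n)$; then $z$ commutes with the free Lie lattice $\pi^{m(n)}\mathcal{T}_n$, and inverting $\pi$ yields $v(z)=0$ for every $v \in \mathcal{T}_{X/K}(X_n)$, placing $z$ in the locally constant functions, i.e.\ in $\operatorname{H}^0_{\operatorname{dR}}(X_n)$. The reverse inclusion is obvious, and quasi-compactness of $X_n$ gives finite-dimensionality. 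Part (ii) of Proposition \ref{prop out as inverse limit} then yields $\operatorname{Z}(\wideparen{\D}_X(X)) = \varprojlim_n \operatorname{H}^0_{\operatorname{dR}}(X_n)$, and finite-dimensionality makes the system Mittag--Leffler, so $R\varprojlim = \varprojlim$.

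For the outer-derivations identification, the hypothesis of parts (iv)--(v) of Proposition \ref{prop out as inverse limit} --- that $\operatorname{HH}^1(\wideparen{\D}_X(X)) \in \widehat{\mathcal{B}}c_K$ --- follows from the chain $\operatorname{HH}^1(\wideparen{\D}_X(X)) = \operatorname{HH}^1(\wideparen{\D}_X) = \operatorname{H}^1_{\operatorname{dR}}(X)^b$ given by Theorems \ref{teo hochschild cohomology in terms of extendions in DXe 1} and \ref{teo hochschild cohomology groups as de rham cohomology groups}, the last term being a nuclear Fréchet space. Part (v) then delivers the natural Fréchet isomorphism $\Outder(\wideparen{\D}_X(X)) \cong \varprojlim_n \Outder(\widehat{\D}_{n,m(n)})$.

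Finally, to deduce that $c_1$ itself is an isomorphism, the construction of Proposition \ref{prop explicit derivations} is manifestly compatible with the projections $\wideparen{\D}_X(X) \to \widehat{\D}_{n,m(n)}$ and with restriction of closed forms, producing level-wise maps $c_1^{(n)}: \operatorname{H}^1_{\operatorname{dR}}(X_n)^b \to \Outder(\widehat{\D}_{n,m(n)})$ fitting into a commuting ladder with $c_1 = \varprojlim_n c_1^{(n)}$ (the identity $\operatorname{H}^1_{\operatorname{dR}}(X)^b = \varprojlim_n \operatorname{H}^1_{\operatorname{dR}}(X_n)^b$ is another Mittag--Leffler consequence of the finite-dimensionality proved in \cite[Corollary 3.2]{grosse2004rham}). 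Injectivity of each $c_1^{(n)}$ follows from the argument of Proposition \ref{prop comparison map dimension 1}. The main obstacle is surjectivity of $c_1^{(n)}$, equivalently a Hochschild--Kostant--Rosenberg-type identification $\operatorname{HH}^1(\widehat{\D}_{n,m(n)}) \cong \operatorname{H}^1_{\operatorname{dR}}(X_n)^b$ at the Banach level; I expect this to be established by base-changing the Spencer-type resolution underlying Theorem \ref{teo HochDmod main teo} along the flat map $\wideparen{\D}_X(X)^e \to \widehat{\D}_{n,m(n)}^e$ and then invoking the strictness component of Grosse-Klönne's theorem to reduce to the finite-dimensional case covered by Corollary \ref{coro c1 iso fd case}. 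With the level-wise isomorphisms in hand, passing to $\varprojlim_n$ in the ladder gives the desired isomorphism $c_1$.
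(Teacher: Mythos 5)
The first two parts of your argument are essentially sound and close to the paper's. Your hands-on computation of the centre at each Banach level works, though the paper gets it more quickly via Kashiwara's equivalence and side-switching, writing $\operatorname{Z}(\widehat{\D}_{n,m(n)})=\underline{\Hom}_{\widehat{\D}_{n,m(n)}^e}(\widehat{\D}_{n,m(n)},\widehat{\D}_{n,m(n)})=\underline{\Hom}_{\widehat{\D}_{n,m(n)}}(\OX_X(X_n),\OX_X(X_n))=\operatorname{H}^0_{\operatorname{dR}}(X_n)$; and your verification of the hypotheses of Proposition \ref{prop out as inverse limit}, including $\operatorname{HH}^1(\wideparen{\D}_X(X))\in\widehat{\mathcal{B}}c_K$ via Theorems \ref{teo hochschild cohomology in terms of extendions in DXe 1} and \ref{teo hochschild cohomology groups as de rham cohomology groups}, is exactly what is needed for the Fréchet isomorphism $\Outder(\wideparen{\D}_X(X))\cong\varprojlim_n\Outder(\widehat{\D}_{n,m(n)})$.

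The last step, however, has a genuine gap. You propose to write $c_1=\varprojlim_n c_1^{(n)}$ with level-wise maps $c_1^{(n)}:\operatorname{H}^1_{\operatorname{dR}}(X_n)^b\to\Outder(\widehat{\D}_{n,m(n)})$ over the \emph{affinoid} pieces $X_n$ and to prove each $c_1^{(n)}$ is an isomorphism. This cannot work: as recalled in the introduction, the de Rham complex of a smooth affinoid is not strict and $\operatorname{H}^1_{\operatorname{dR}}(X_n)$ is in general infinite-dimensional with the trivial, non-separated topology (e.g.\ for $\mathbb{B}^1_K$), so it is neither finite-dimensional --- your Mittag--Leffler claim $\operatorname{H}^1_{\operatorname{dR}}(X)^b=\varprojlim_n\operatorname{H}^1_{\operatorname{dR}}(X_n)^b$ has no basis, since \cite[Corollary 3.2]{grosse2004rham} concerns the Stein space $X$, not its affinoid pieces --- nor can it be isomorphic to the Banach space $\Outder(\widehat{\D}_{n,m(n)})$ produced by part $(iv)$ of Proposition \ref{prop out as inverse limit}. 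Likewise there is no reason to expect an HKR-type identification $\operatorname{HH}^1(\widehat{\D}_{n,m(n)})\cong\operatorname{H}^1_{\operatorname{dR}}(X_n)^b$ for a single Banach completion $\widehat{U}(\pi^{m(n)}\mathcal{T}_n)_K$: the comparison with de Rham cohomology in this paper is available only for $\wideparen{\D}$ on affinoid or Stein spaces, not level by level. The paper's proof avoids this by switching towers: by the proof of \cite[Corollary 3.2]{grosse2004rham} one exhausts $X$ by Stein subspaces $Y_m$ with finite-dimensional de Rham cohomology, obtains $\Outder(\wideparen{\D}_X(X))=\varprojlim_m\Outder(\wideparen{\D}_X(Y_m))$ and $\operatorname{H}^1_{\operatorname{dR}}(X)^b=\varprojlim_m\operatorname{H}^1_{\operatorname{dR}}(Y_m)^b$ compatibly with the restriction maps, and then applies Corollary \ref{coro c1 iso fd case} to each $Y_m$. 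Replacing your affinoid tower by this Stein tower in the ladder argument repairs the proof.
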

\begin{proof}
Using  Kashiwara's equivalence an the side-changing for bimodules developed in \cite{HochDmod}, it follows that for each $n\geq 0$ we have the following chain of identities:
\begin{equation*}
  \operatorname{Z}(\widehat{\D}_{n,m(n)})=\underline{\Hom}_{\widehat{\D}_{n,m(n)}^e}(\widehat{\D}_{n,m(n)},\widehat{\D}_{n,m(n)})=\underline{\Hom}_{\widehat{\D}_{n,m(n)}}(\OX_X(X_n),\OX_X(X_n))=\operatorname{H}^0_{\operatorname{dR}}(X_n),
\end{equation*}
and this is a finite-dimensional $K$-vector space. As $\operatorname{Z}(\wideparen{\D}_X(X))=R\varprojlim_n \operatorname{Z}(\widehat{\D}_{n,m(n)})$ is an inverse limit of finite-dimensional $K$-vector spaces, it follows by \cite[Section 2 Corollary 5]{schneider1991cohomology} that the sequence:
\begin{equation*}
  0\rightarrow  \operatorname{Z}(\wideparen{\D}_X(X))\rightarrow \prod_n \operatorname{Z}(\widehat{\D}_{n,m(n)})\rightarrow \prod_n \operatorname{Z}(\widehat{\D}_{n,m(n)})\rightarrow 0,
\end{equation*}
is a short exact sequence of $K$-vector spaces, and that all the maps are bounded. As $\prod_n \operatorname{Z}(\widehat{\D}_{n,m(n)})$ is a nuclear Fréchet space, it follows by \cite[Proposition 5.12]{bode2021operations} that the sequence is strict. Hence, by Proposition \ref{prop out as inverse limit} the maps $\wideparen{\D}_X(X)\rightarrow \widehat{\D}_{n,m(n)}$ induce a natural isomorphism of Fréchet spaces:
\begin{equation*}
    \Outder(\wideparen{\D}_X(X))\rightarrow \varprojlim_n \Outder(\widehat{\D}_{n,m(n)}).
\end{equation*}
Thus, the first part of the corollary holds. For the second part, we notice that by the proof of \cite[Corollary 3.2]{grosse2004rham}, we can consider an admissible cover $X=\cup_{m\geq 0}Y_m$ such that $Y_m\subset Y_{m+1}$ and each $Y_m$ is a smooth Stein space with finite-dimensional de Rham cohomology. In this situation, it follows by the discussion above and \emph{loc. cit.} that we have isomorphisms of nuclear Fréchet spaces:
\begin{equation*}
    \Outder(\wideparen{\D}_X(X))=\varprojlim_m \Outder(\wideparen{\D}_X(Y_m)), \textnormal{ and } \operatorname{H}_{\operatorname{dR}}^{1}(X)^b=\varprojlim_m \operatorname{H}_{\operatorname{dR}}^{1}(Y_m)^b.
\end{equation*}
Furthermore, the  identity $\Outder(\wideparen{\D}_X(X))=\varprojlim_m \Outder(\wideparen{\D}_X(Y_m))$ is induced by the restriction maps $\wideparen{\D}_X(X)\rightarrow \wideparen{\D}_X(Y_m)$.
By construction of $c_1:\operatorname{H}_{\operatorname{dR}}^{1}(X)^b\rightarrow \Outder(\wideparen{\D}_X(X))$, it follows that 
this map arises as the inverse limit of the maps:
\begin{equation*}
    c_1:\operatorname{H}_{\operatorname{dR}}^{1}(Y_m)^b\rightarrow \Outder(\wideparen{\D}_X(Y_m)),
\end{equation*}
for each $m\geq 0$. However, by assumption, $\operatorname{H}_{\operatorname{dR}}^{1}(Y_m)^b$ is finite-dimensional, so by Corollary \ref{coro c1 iso fd case} this is an isomorphism for each $m\geq 0$. Thus, the result holds.
\end{proof}
As an upshot, we obtain a generalization of Corollary \ref{coro Der is nf fd-case}:
\begin{defi}[{\cite[pp. 71  ]{schneider2013nonarchimedean}}]
 Let $V$ be a locally convex $K$-vector space. A $\mathcal{R}$-submodule $A\subset V$  is called compactoid (in $V$) if for any open lattice $U\subset V$ there are vectors $v_1,\cdots,v_n\in V$ such that:
 \begin{equation*}
     A\subset U+ \sum_{i=1}^n\mathcal{R}v_i.
 \end{equation*}
\end{defi}
\begin{coro}
Assume $K$ is discretely valued or algebraically closed, and let $X$ be a Stein space with free tangent sheaf. The space of bounded derivations $\Der(\wideparen{\D}_X(X))$ is a nuclear Fréchet space.   
\end{coro}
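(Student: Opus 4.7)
The strategy is to apply Proposition \ref{prop out as inverse limit} to $\mathcal{M}=\wideparen{\D}_X(X)$ in order to place $\Der(\wideparen{\D}_X(X))$ into a strict short exact sequence of Fréchet spaces whose outer terms are nuclear Fréchet, and then to deduce nuclearity from a stability argument.

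First, I would check the hypotheses of Proposition \ref{prop out as inverse limit}(v) for $\mathcal{M}=\wideparen{\D}_X(X)$. The previous corollary identifies $\operatorname{Z}(\widehat{\D}_{n,m(n)})=\operatorname{H}^0_{\operatorname{dR}}(X_n)$, which is finite-dimensional, so $R^1\varprojlim_n\operatorname{Z}(\widehat{\D}_{n,m(n)})=0$ and hypothesis (iii) of that proposition is satisfied. Moreover $\operatorname{HH}^1(\wideparen{\D}_X(X))=\operatorname{H}^1_{\operatorname{dR}}(X)^b$ is nuclear Fréchet by Theorems \ref{teo hochschild cohomology in terms of extendions in DXe 1} and \ref{teo hochschild cohomology groups as de rham cohomology groups} together with Proposition \ref{prop elmar's result on strictness of de Rham complex}, and in particular lies in $\widehat{\mathcal{B}}c_K$. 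Hence Proposition \ref{prop out as inverse limit}(v) delivers a strict short exact sequence of Fréchet spaces
\begin{equation*}
    0\to \wideparen{\D}_X(X)/\operatorname{Z}(\wideparen{\D}_X(X))\to \Der(\wideparen{\D}_X(X))\to \Outder(\wideparen{\D}_X(X))\to 0.
\end{equation*}

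Next I would observe that both outer terms are nuclear Fréchet. The quotient $\wideparen{\D}_X(X)/\operatorname{Z}(\wideparen{\D}_X(X))$ is the quotient of the nuclear Fréchet space $\wideparen{\D}_X(X)$ (Theorem \ref{teo global sections of co-admissible modules on Stein spaces}) by the closed subspace $\operatorname{Z}(\wideparen{\D}_X(X))$, hence nuclear Fréchet by \cite[Proposition 19.4]{schneider2013nonarchimedean}. The previous corollary identifies $\Outder(\wideparen{\D}_X(X))\cong \operatorname{H}^1_{\operatorname{dR}}(X)^b$, again nuclear Fréchet by Proposition \ref{prop elmar's result on strictness of de Rham complex}.

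To conclude, one has to show that the middle term $\Der(\wideparen{\D}_X(X))$ inherits nuclearity from its outer terms. The main obstacle is that the extension property of nuclear Fréchet spaces under strict short exact sequences is not explicitly recorded in the references used throughout the paper. The cleanest way around this is to cover $X$ by the admissible family $\{Y_m\}_{m\geq 0}$ from the proof of \cite[Corollary 3.2]{grosse2004rham}, where each $Y_m$ is a smooth Stein subspace with finite-dimensional de Rham cohomology and $Y_m\subset Y_{m+1}$. By Corollary \ref{coro Der is nf fd-case} each $\Der(\wideparen{\D}_X(Y_m))$ is nuclear Fréchet; comparing the strict short exact sequence above with the analogous sequences for each $Y_m$ (and using the identifications $\wideparen{\D}_X(X)=\varprojlim_m \wideparen{\D}_X(Y_m)$, $\operatorname{Z}(\wideparen{\D}_X(X))=\varprojlim_m\operatorname{Z}(\wideparen{\D}_X(Y_m))$, and $\Outder(\wideparen{\D}_X(X))=\varprojlim_m\Outder(\wideparen{\D}_X(Y_m))$ from the previous corollary, the last one via $c_1$ and the exact sequence for inverse limits of de Rham cohomologies established in the proof of Theorem \ref{teo künneth formula de Rham cohomology}) gives $\Der(\wideparen{\D}_X(X))=\varprojlim_m\Der(\wideparen{\D}_X(Y_m))$. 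A countable projective limit of nuclear Fréchet spaces is nuclear Fréchet by \cite[Proposition 19.7]{schneider2013nonarchimedean}, finishing the proof.
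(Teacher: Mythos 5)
Your setup coincides with the paper's: both start from the strict short exact sequence
\begin{equation*}
0\rightarrow \wideparen{\D}_X(X)/\operatorname{Z}(\wideparen{\D}_X(X))\rightarrow \Der(\wideparen{\D}_X(X))\rightarrow \Outder(\wideparen{\D}_X(X))\rightarrow 0,
\end{equation*}
with the sub a nuclear Fréchet space and the quotient identified via the preceding corollary. Where you diverge is in how the middle term is handled. You correctly observe that ``nuclear Fréchet is stable under extensions'' is not among the cited facts, but the paper does \emph{not} sidestep this by passing to the cover $\{Y_m\}$; it attacks the extension problem directly. Concretely, it invokes the characterization of nuclearity via compact maps (\cite[Lemma 5.4]{bode2021operations}, \cite[Proposition 19.5]{schneider2013nonarchimedean}): given a bounded map $f$ from $\Der(\wideparen{\D}_X(X))$ to a Banach space, nuclearity of the sub plus strictness of the inclusion produce a lattice on which $f$ is compact modulo the quotient, and the quotient $\Outder(\wideparen{\D}_X(X))$ being an inverse limit of \emph{finite-dimensional} spaces forces the remaining Banach quotient to be finite-dimensional, so $f$ is compact. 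This is a purely functional-analytic argument that never needs $\Der(\wideparen{\D}_X(X))$ itself to be exhibited as an inverse limit.

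Your alternative route is plausible but has a genuine gap at its load-bearing step: the identification $\Der(\wideparen{\D}_X(X))=\varprojlim_m\Der(\wideparen{\D}_X(Y_m))$ is asserted, not proved, and it is not a formal consequence of the corresponding identifications for the outer terms. Two things are missing. First, you need compatible bounded restriction maps $\Der(\wideparen{\D}_X(X))\rightarrow\Der(\wideparen{\D}_X(Y_m))$ forming morphisms of the short exact sequences; composing a derivation with $\wideparen{\D}_X(X)\rightarrow\wideparen{\D}_X(Y_m)$ only gives a derivation \emph{into} $\wideparen{\D}_X(Y_m)$, and extending it to a derivation \emph{of} $\wideparen{\D}_X(Y_m)$ along a dense image is not automatic — the clean mechanism is the co-representability $\Der(\mathscr{A},\mathcal{M})=\underline{\Hom}_{\mathscr{A}^e}(\ncd(\mathscr{A}),\mathcal{M})$ together with flat base change of $\ncd$ along $\wideparen{\D}_X(X)^e\rightarrow\wideparen{\D}_X(Y_m)^e$, in the spirit of Lemma \ref{lemma ncd is co-admissible}, which requires the co-admissibility formalism for the cover $\{Y_m\}$. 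Second, having the morphisms of sequences, you still need exactness of the limit, i.e.\ $R^1\varprojlim_m\bigl(\wideparen{\D}_X(Y_m)/\operatorname{Z}(\wideparen{\D}_X(Y_m))\bigr)=0$, before the five lemma gives the middle isomorphism. Both points are fillable (dense transition maps between nuclear Fréchet spaces kill the $R^1\varprojlim$), but as written the proposal hides precisely the content that makes the corollary nontrivial; the paper's compactoid argument is the shorter path.
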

\begin{proof}
As the tangent space of $X$ is free, we have a short exact sequence of Fréchet spaces:
\begin{equation*}
    0\rightarrow \wideparen{\D}_X(X)/\operatorname{Z}(\wideparen{\D}_X(X))\rightarrow \Der(\wideparen{\D}_X(X))\rightarrow \Outder(\wideparen{\D}_X(X))\rightarrow 0.
\end{equation*}
By  \cite[Lemma 5.4]{bode2021operations} and \cite[Proposition 19.5]{schneider2013nonarchimedean}, it suffices to show  that for every Banach space $V$ every bounded map $f:\Der(\wideparen{\D}_X(X))\rightarrow V$ is compact. That is, that there is an open lattice $U\subset \Der(\wideparen{\D}_X(X))$ such that the closure $\overline{f(U)}$ is compactoid. As $W:=\wideparen{\D}_X(X)/\operatorname{Z}(\wideparen{\D}_X(X))$
is a nuclear Fréchet space, it follows by \emph{loc. cit.} that every bounded map from $W$ to a Banach space is compact. As the map $W\rightarrow \Der(\wideparen{\D}_X(X))$ is strict, this implies that there is an open lattice $U\subset \Der(\wideparen{\D}_X(X))$ such that the space $\overline{f(U\cap W)}$ is bounded and $c$-compact in $V$. Completing with respect to the norm induced by this lattice, we obtain a short exact sequence of Banach spaces:
\begin{equation*}
    0\rightarrow V_1\rightarrow V_2\rightarrow V_3\rightarrow 0,
\end{equation*}
satisfying that the map $f$ extends uniquely to a map $f':V_2\rightarrow V$, and such that the restriction of $f'$ to $V_1$ is compact. By the previous corollary, $\Outder(\wideparen{\D}_X(X))$ is an inverse limit of finite-dimensional spaces. Thus, it follows that $V_3$ is finite-dimensional. Hence, the map $f':V_2\rightarrow V$ is also compact, and therefore $\Der(\wideparen{\D}_X(X))$ is a nuclear Fréchet space, as we wanted to show.
\end{proof}
\bibliographystyle{plain}

Mathematisch-Naturwissenschaftliche Fakult\"at der Humboldt-Universit\"at zu Berlin, Rudower Chaussee 25, 12489 Berlin, Germany \newline
\textit{Email address: fpvmath@gmail.com}
\end{document}